\numberwithin{equation}{section}
 \newtheorem{dfn}{Definition}[section]
 \newtheorem{thm}[dfn]{Theorem}
 \newtheorem{prp}[dfn]{Proposition}
 \newtheorem{lem}[dfn]{Lemma}
 \newtheorem{cor}[dfn]{Corollary}
 \newtheorem{rmk}[dfn]{Remark}
  \newtheorem{con}[dfn]{Conjecture}
 \newtheorem*{ack}{Acknowledgements}
\newcommand{\Tr}{\operatorname{Tr}}
\newcommand{\seteq}{\mathbin{:=}}
\newcommand{\G}{{\mathsf G}}
\newcommand{\Lv}{{\mathsf L}}
\newcommand{\I}{{\mathsf I}}
\newcommand{\Svir}{{ {\mathcal{SV}ir\!}_{q,k} }}
\newcommand{\ignore}[1]{}
\newcommand{\no}{
{
{\!\!\begin{array}{c}
\scriptstyle \circ\\[-2mm]
\scriptstyle\circ\end{array}\!\!}
}}
\newcommand{\beq}{\begin{equation}}
\newcommand{\eeq}{\end{equation}}
\newcommand{\beqa}{\begin{eqnarray}}
\newcommand{\eeqa}{\end{eqnarray}}
\newcommand{\CR}{\nonumber \\}
\newcommand{\gq}{\mathfrak{q}}
\newcommand{\gd}{\mathfrak{d}}
\title{
A quantum deformation of the ${\mathcal N}=2$ superconformal algebra}
\author{Hidetoshi~Awata}
\author{Koichi~Harada}
\author{Hiroaki~Kanno}
\author{Jun'ichi~Shiraishi}
\address{H.Awata:~Graduate School of Mathematics, Nagoya University, Nagoya 464-8602, Japan}
\email{awata@math.nagoya-u.ac.jp}
\address{K.Harada: Department of Physics, The University of Tokyo, Hongo, Tokyo 113-0033 Japan}
\email{kkhharada@gmail.com}
\address{H.Kanno:~Graduate School of Mathematics and Kobayashi-Maskawa Institute, Nagoya University, Nagoya 464-8602, Japan}
\email{kanno@math.nagoya-u.ac.jp}
\address{J.Shiraishi: Graduate School of Mathematical Sciences, The University of Tokyo, Komaba, Tokyo 153-8914, Japan}
\email{shiraish@ms.u-tokyo.ac.jp}
\begin{document}
\begin{flushright}
Revised Version
\end{flushright}

\setlength\dashlinedash{1.4pt}
\setlength\dashlinegap{1.3pt}

\bigskip

\begin{abstract}
We introduce a unital associative algebra $\Svir$, 
having $q$ and $k$ as complex parameters, generated by the elements $K^\pm_m$ ($\pm m\geq 0$),
$T_m$ ($m\in  \mathbb{Z}$), and $G^\pm_m$ ($m\in  \mathbb{Z}+{1\over 2}$ in the Neveu-Schwarz sector, 
$m\in  \mathbb{Z}$ in the Ramond sector), satisfying relations which are at most quartic. 
Calculations of some low-lying Kac determinants are made, 
providing us with a conjecture for the factorization property of the Kac determinants. 
The analysis of the screening operators gives a supporting 
evidence for our conjecture.
It is shown that by taking the limit $q\rightarrow 1$ of $\Svir$ we recover the 
ordinary ${\mathcal N}=2$ superconformal algebra. 
We also give a nontrivial Heisenberg representation of the algebra $\Svir$, 
making a twist of the $U(1)$ boson in the  Wakimoto representation of the quantum affine algebra 
$U_q(\widehat{\mathfrak{sl}}_2)$, which naturally follows 
from the construction of $\Svir$ by gluing the deformed $Y$-algebras 
of Gaiotto and Rap$\check{\mathrm{c}}$\'{a}k.
\end{abstract}

\bigskip

\maketitle

\setcounter{tocdepth}{1}
\tableofcontents


\section{Introduction}

The ${\mathcal N}=2$ superconformal algebra (SCA) is a supersymmetric extension 
of the Virasoro algebra with a pair of supercurrents $G^{\pm}(z)$. 
We already have a huge list of literature on various aspects of the ${\mathcal N}=2$ SCA.
In 1980's the ${\mathcal N}=2$ SCA was intensively explored, since a compactification of the type II superstring theory on Calabi-Yau 3-folds
was expected to provide a unified theory of elementary particles including gravity (\cite{Polchinsky} and references therein).
The representation theory of the ${\mathcal N}=2$ SCA provided useful tools
in the algebraic approach to the compactification on Calabi-Yau 3-folds.
In mathematical physics, through the idea of the chiral ring \cite{Lerche:1989uy} and the topological twist \cite{Eguchi:1990vz},
the ${\mathcal N}=2$ SCA is also closely related to the mirror symmetry 
of Calabi-Yau 3-folds \cite{Witten:1991zz}.

On the other hand, there is a quantum deformation of the Virasoro algebra, which is a unital associative algebra
with deformation parameters $q$ and $t$. We also have quantum deformations 
of the $\mathcal{W}$-algebras \cite{Awata:1995zk}, \cite{FR1995}, \cite{Feigin:1995sf},
as its generalizations with higher spin currents. 
In \cite{Shiraishi:1995rp} the deformed Virasoro algebra was discovered by looking for an associative algebra 
whose singular vectors in the Verma module are related to the Macdonald symmetric polynomials with parameters $(q,t)$. 
In \cite{FR1995} it was constructed as a Poisson algebra by using the Wakimoto realization of the quantum affine algebra 
at the critical level. 
Physically it controls an off-critical deformation of two dimensional lattice model. 
For example the deformed Virasoro algebra appears as a symmetry of the Andrews-Baxter-Forestor model \cite{Lukyanov:1996qs}.
It also plays an important role in the investigation
of quantum integrable system arising from massive deformations of conformal field theories.
Namely the deformed Virasoro algebra ensures the integrability, or the existence of
infinitely many conserved quantities. 
From the viewpoint of symmetry, we can regard it as a kind of elliptic algebras \cite{Odake:1999un},
in the sense that the ratio of the structure functions of the algebra is an elliptic function. 
One can also see that the screening currents satisfy an elliptic analogue of the Drinfeld relations for the 
quantum affine algebras \cite{Feigin:1995sf}. 
Later these deformed algebras appeared in the proposal of a five dimensional uplift of AGT correspondence \cite{Awata:2009ur}.
The relation to the quantum toroidal algebra of type $\mathfrak{gl}_1$ (Ding-Iohara-Miki algebra) 
was also clarified \cite{FHSSY}.

It is natural to combine these two generalizations of the Virasoro algebra, namely to try to find a quantum deformation of
the ${\mathcal N}=2$ SCA. Surprisingly enough, up to now there are no literatures 
on such an algebra as far as we know. 
In this paper we propose a quantum deformation of the ${\mathcal N}=2$ SCA, which we denote $\Svir$ for short,
as a unital associative algebra with two parameters $q$ and $k$, where the second parameter $k$ comes from
the level of the quantum affine algebra $U_q(\widehat{\mathfrak{sl}}_2)$. 
In this paper, we assume that $q$ and $k$ are generic. The level is not critical $k \neq -2$ and $q$ is not a root of unity.
We would like to emphasize that the defining relations of $\Svir$ are at most quartic in the generators,
which has not been appreciated for a long time. We also show that the limit $q \to 1$ of $\Svir$ correctly 
reproduces the original ${\mathcal N}=2$ SCA with the central charge $c=\frac{3k}{k+2}$. 

\smallskip


We denote by $\mathbb{Z}^\mathrm{R}\seteq \mathbb{Z}$ the set of integers, and 
by $\mathbb{Z}^\mathrm{NS}\seteq \mathbb{Z}+{1\over 2}$ the
set of half-integers 
$\left\{ n+{1\over 2}|n\in \mathbb{Z}\right\}$, where $\mathrm{R}$ and $\mathrm{NS}$ stand 
for the Ramond and the Neveu-Schwarz sector, respectively.
Let $q$ and $k$ be generic complex numbers satisfying the conditions
$|q|\leq 1$ and $|q^k|\leq 1$.
We use the standard notation $[u]$ for the $q$-number 
$[u]=(q^u-q^{-u})/(q-q^{-1})$.

\subsection{Definition of the algebra $\Svir$}

\begin{dfn}\label{DefSvir}
The quantum deformation of the ${\mathcal N}=2$ superconformal algebra, 
which we call $\Svir$,  
in the Neveu-Schwarz {\rm (NS)} sector (or in the Ramond {\rm (R)} sector) is 
defined to be 
the unital associative algebra generated by  the elements
\begin{align*}
&K^\pm_m \qquad (\pm m\in \mathbb{Z}_{\geq 0}),
\qquad T_m \qquad (m \in \mathbb{Z}),\\
&G^\pm_m \qquad (m \in \mathbb{Z}+{1\over 2}\mbox{ for {\rm NS} sector}, \,\,
m \in \mathbb{Z}\mbox{ for {\rm R} sector}),
\end{align*}
satisfying the set of relations (\ref{r-1})-(\ref{r-11}) below for the sector {\rm A} {\rm (}$=$~{\rm NS} or {\rm R}{\rm )}:
\begin{align}
&K^+_0=K^-_0,\mbox{ and\,\,  } K^\pm_0 \mbox{ are invertible},\label{r-1}\\
& K^\pm_m K^\pm_n=K^\pm_m K^\pm_n,\label{r-2}\\
&K^-_m K^+_n=K^+_n K^-_m-(q-q^{-1})^2
\sum_{\ell= 1}^{-m}{[k][k+2][2\ell]\over [2]}K^+_{n-\ell}K^-_{m+\ell},\label{r-3}\\
&K^-_mT_n  =T_n K^-_m+(q-q^{-1})^2
\sum_{\ell=1}^{-m}{[k+2][(k+3)\ell]\over [k+3]}T_{n-\ell}K^-_{m+\ell},\label{r-4}\\
& T_mK^+_n=K^+_nT_m +(q-q^{-1})^2
\sum_{\ell= 1}^n {[k+2][(k+3)\ell]\over [k+3]}K^+_{n-\ell}T_{m+\ell},\label{r-5}\\
&K^-_{m} G^\pm_n =q^{\pm (k+2)} G^\pm_n K^-_{m}
\pm (q-q^{-1}) [k+2]
\sum_{\ell= 1}^{-m}q^{\pm 2(k+2)\ell }G^\pm_{n-\ell}K^-_{m+\ell},\label{r-6}\\
&G^\pm_m K^+_{n} =q^{\mp (k+2)}K^+_{n}G^\pm_m 
\mp (q-q^{-1}) [k+2]
\sum_{\ell= 1}^n q^{\mp 2(k+2)\ell } K^+_{n-\ell}G^\pm_{m+\ell},\label{r-7}\\
&G^\pm_mG^\pm_n+G^\pm_mG^\pm_n=0,\label{r-8}\\
&
G^+_mG^-_n+G^-_nG^+_m = 
{1\over (q-q^{-1})^2}\Biggl(
{1\over [k+1]}q^{(2k+2)(m-n)}
\sum_{\alpha,\beta\geq 0}
\delta_{m+n+\alpha-\beta,0}K^-_{-\alpha}K^+_{\beta} \nonumber\\
&~~~\hspace{6cm}
-q^{(k+2)(m-n)}\delta_{m+n,0}
+q^{(k+1)(m-n)} T_{m+n} \Biggr),\label{r-9} \\
&G^\pm_mT_n-T_nG^\pm_m
=\pm (q-q^{-1}){[k+2]\over[k+1]}q^{\pm (k+1)(2m-n)}
\sum_{\alpha,\beta\geq 0}
K^-_{-\alpha}G^\pm_{m+n+\alpha-\beta}K^+_{\beta},\label{r-10}\\
&T_mT_n-T_nT_m
=(q-q^{-1})^2{[k+2]^2\over [k+1]}[(m-n)(k+1)]
\sum_{\alpha,\beta\geq 0}
K^-_{-\alpha}W_{m+n+\alpha-\beta}K^+_{\beta}. \label{r-11}
\end{align}
In the last relation (\ref{r-11}), we have used the shorthand notation
\begin{align}\label{W_m}
&W_m=\sum_{\mu,\nu\geq 0}
c^{\rm A}_{m}(2k+2) \,\delta_{m+\mu-\nu,0}{1\over [k+1]}
K^-_{-\mu} K^+_{\nu} -c^{\rm A}_{m}(k+2)\,\delta_{m,0} \nonumber \\
&\qquad \qquad \qquad + c^{\rm A}_{m}(k+1)\,T_{m}
-(q-q^{-1})^3 
\sum_{\gamma\in \mathbb{Z}^\mathrm{A}} 
\no 
G^+_{-\gamma}G^-_{m+\gamma}
\no, 
\end{align}
where we have the numerical coefficients
\begin{align}\label{cAdef}
c^{\rm NS}_\alpha(u)=
\begin{cases}
\displaystyle  {1\over[u]} & (\alpha :{\rm even}), \\
\displaystyle {1\over[u]}{q^{u}+q^{-u}\over 2}& (\alpha :{\rm odd}),
\end{cases}
\quad 
c^{\rm R}_\alpha(u)=
\begin{cases}
\displaystyle  {1\over[u]}{q^{u}+q^{-u}\over 2} & (\alpha :{\rm even}), \\
\displaystyle {1\over[u]}& (\alpha :{\rm odd}),
\end{cases}
\end{align}
and the symbol $\no \bullet\no $ for the normal ordered product
\begin{align}\label{noGmodes}
 \no G^+_{m}G^-_{n}\no ~=
\begin{cases}
G^+_{m}G^-_{n}& (m<n), \\
\displaystyle {1\over 2}\left(G^+_{m}G^-_{m}-G^-_{m}G^+_{m}\right) & (m=n), \\
\displaystyle -G^-_{n}G^+_{m}& (m>n).
\end{cases}
\end{align}

\end{dfn}

\begin{rmk}\label{Inv}
The algebra $\Svir$ has the following involutive symmetry.
\begin{equation}
q \rightarrow q^{-1},\qquad k\rightarrow k,\qquad
K^{\pm}_m \rightarrow  K^{\pm}_m,\qquad 
 G^{\pm}_m\rightarrow G^{\mp}_m,\qquad 
 T_m \rightarrow T_m.
\end{equation}
\end{rmk}

\subsection{Generating functions}
\begin{dfn}

Introduce the generating functions $K^\pm(z),T(z)$ and $G^\pm(z)$ 
for the generators $K^\pm_m,T_m$ and $G^\pm_m$ as
\begin{align}
&K^\pm(z)=\sum_{\pm m\geq 0}K^\pm_m z^{-m},\qquad T(z)=\sum_{ m\in \mathbb{Z}}T_m z^{-m},\\
&G^\pm(z)=\sum_{ m\in \mathbb{Z}^\mathrm{A}}G^\pm_m z^{-m},
\end{align}
in the sector {\rm A} {\rm (}$=$~{\rm NS} or {\rm R}{\rm )}.
Set for simplicity that 
\begin{align}
&K(z)=K^-(z)K^+(z).
\end{align}
\begin{rmk}\label{NSvsR}
It should be emphasized that our convention of the mode expansion of $G^\pm(z)$ is different from the standard two 
dimensional superconformal field theory. We expand $G^\pm(z)$ in the integral powers of $z$ in the R sector
and in the half-integral powers of $z$ in the NS sector.
\end{rmk}
We also introduce the generating function $W(z)$ for the modes
$W_m$ in (\ref{W_m}) as
\begin{align}\label{W-current}
&W(z)=\sum_{ m\in \mathbb{Z}}W_m z^{-m} \CR
&={1\over [k+1]}  \widetilde{K}^{\rm A}(z)
-c^{\rm A}_0(k+2)+\widetilde{T}^{\rm A}(z)
-(q-q^{-1})^3 \no G^+(z)G^-(z)\no, 
\end{align}
where
\begin{align*}
&\widetilde{K}^{\rm A}(z)
= c^{\rm A}_{0}(2k+2) {1\over 2} \Bigl(K(z)+K(-z)\Bigr)+ c^{\rm A}_{1}(2k+2) {1\over 2} \Bigl(K(z)-K(-z)\Bigr),\\
&\widetilde{T}^{\rm A}(z)
= c^{\rm A}_{0}(k+1) {1\over 2} \Bigl(T(z)+T(-z)\Bigr)+ c^{\rm A}_{1}(k+1) {1\over 2} \Bigl(T(z)-T(-z)\Bigr).
\end{align*}
\end{dfn}

In Section \ref{Proof-1}
we will provide the defining relations of $\Svir$ in terms of the generating functions.

\begin{rmk}
In terms of the generating functions the involutive symmetry \eqref{Inv} is
\begin{equation*}
q \rightarrow q^{-1},\qquad k\rightarrow k,\qquad
K^{\pm}(z)\rightarrow  K^{\pm}(z),\qquad 
 G^{\pm}(z) \rightarrow G^{\mp}(z),\qquad 
 T(z)\rightarrow T(z).
\end{equation*}
\end{rmk}

\subsection{Heisenberg subalgebra}\label{Heisenberg}

Recall that the generators $K^\pm_0 $ (satisfying the condition $K^+_0=K^-_0$) are invertible. 
\begin{prp}
Defining $K^\pm_0 =q^{H_0}$, we have 
\begin{align*}
&q^{H_0} K^\pm(z) q^{-H_0}=K^\pm(z) ,\qquad q^{H_0}T(z) q^{-H_0}=T(z),\\
&q^{H_0} G^\pm(z) q^{-H_0}=q^{\pm (k+2)}G^\pm(z),
\end{align*}
namely
\begin{align}
[H_0,  K^\pm(z)]=0,\qquad [H_0, T(z)]=0,\qquad [H_0,  G^\pm(z)]=(k+2)G^\pm(z).\label{H_0}
\end{align}
\end{prp}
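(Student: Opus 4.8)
The plan is to read off all three statements directly from the relations \eqref{r-2}--\eqref{r-7}, specializing one index to $0$ and using the support conditions $K^+_m=0$ for $m<0$ and $K^-_m=0$ for $m>0$ that are built into the mode expansions of $K^\pm(z)$. By \eqref{r-1} the element $K^+_0=K^-_0=q^{H_0}$ is invertible, so conjugating a current by it is well defined, and I will compute that conjugation modewise.

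First I would prove $[H_0,K^\pm(z)]=0$. Same-sign modes commute by \eqref{r-2}, so only the commutator of $K^\pm_0$ with an opposite-sign mode is at issue, and this is governed by \eqref{r-3}. Setting $m=0$ there makes the correction sum $\sum_{\ell=1}^{0}$ empty and yields $K^-_0K^+_n=K^+_nK^-_0$; setting instead $n=0$ makes every summand carry the out-of-range factor $K^+_{-\ell}$ with $\ell\geq1$, which vanishes, and yields $K^-_mK^+_0=K^+_0K^-_m$. Since $K^+_0=K^-_0$, the element $q^{H_0}$ commutes with every mode of $K^\pm(z)$. The relation $[H_0,T(z)]=0$ is obtained identically: putting $m=0$ in \eqref{r-4} and $n=0$ in \eqref{r-5} empties the respective sums and leaves $K^-_0T_n=T_nK^-_0$ and $T_mK^+_0=K^+_0T_m$.

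For the supercurrents I would put $m=0$ in \eqref{r-6}, whose sum is again empty, to get $K^-_0G^\pm_n=q^{\pm(k+2)}G^\pm_nK^-_0$; putting $n=0$ in \eqref{r-7} produces the same conjugation law for $K^+_0$, consistently with $K^+_0=K^-_0$. In terms of $K^\pm_0=q^{H_0}$ this reads $q^{H_0}G^\pm_nq^{-H_0}=q^{\pm(k+2)}G^\pm_n$ for each mode, hence $q^{H_0}G^\pm(z)q^{-H_0}=q^{\pm(k+2)}G^\pm(z)$. To pass to the commutator form \eqref{H_0}, I would use that conjugation by $q^{H_0}$ is the automorphism $\exp(\log q\cdot\ad_{H_0})$, so that an eigenrelation $q^{H_0}Xq^{-H_0}=q^{c}X$ on a homogeneous element is equivalent to $[H_0,X]=c\,X$; applying this with $c=0$ for $K^\pm(z)$ and $T(z)$ and with $c=\pm(k+2)$ for $G^\pm(z)$ gives \eqref{H_0}.

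The calculation is essentially mechanical, and the one point that genuinely needs attention is the vanishing of the correction terms, which occurs by two different mechanisms: an empty summation range when the nonpositive index of $K^-$ is set to $0$, as against the identical vanishing of the out-of-range mode $K^+_{-\ell}$ when the nonnegative index of $K^+$ is set to $0$. Both appear already in \eqref{r-3}, and distinguishing them is the main subtlety. A purely conceptual caveat is that $H_0$ is defined only through $q^{H_0}=K^\pm_0$, so \eqref{H_0} is to be read via the automorphism interpretation above rather than as an a priori relation among independent generators.
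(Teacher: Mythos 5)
Your proof is correct and takes essentially the same route as the paper, whose entire argument is the one-line assertion that the relations follow from (2)--(7) of the definition; you have simply supplied the mode-by-mode details, correctly isolating the two distinct vanishing mechanisms (empty summation range versus the out-of-range mode $K^+_{-\ell}=0$). Note only that your commutator form $[H_0,G^\pm(z)]=\pm(k+2)G^\pm(z)$ carries the $\pm$ sign that the paper's displayed equation evidently intends but omits.
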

\begin{proof}
These relations follow from (\ref{r-2})-(\ref{r-7}).
\end{proof}

\begin{dfn}
Define the elements $H_m$ $(m\neq 0)$ by 
\begin{align}
&
\exp\left((q-q^{-1}) \sum_{m> 0}H_{\pm m}z^{\mp m}\right)=(K^\pm_0)^{-1}K^\pm(z)=
1+
\sum_{m> 0}(K^\pm_0)^{-1}K^\pm_{\pm m}z^{\mp m}.\label{Kpm(z)}
\end{align}

\end{dfn}

\begin{prp}\label{Hei}
We have the Heisenberg commutation relations
\begin{align}
&
[H_m,H_n]={[(k+2)m][km]\over m} \delta_{m+n,0}.\label{H-H}
\end{align}
\end{prp}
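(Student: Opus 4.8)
The plan is to collapse the whole statement into a single functional exchange relation for the currents $K^\pm(z)$ and then read the bracket off from it. First I would record two cheap consequences of the defining relations. By (\ref{r-2}) the modes $K^+_m$ ($m\ge 0$) mutually commute and the modes $K^-_m$ ($m\le 0$) mutually commute, and taking $n=0$ in (\ref{r-3}) shows $K^\pm_0$ is central in the subalgebra generated by the $K^\pm_m$ (the correction sum dies because $K^+_{-\ell}=0$ for $\ell\ge 1$). Since, by (\ref{Kpm(z)}), each $H_m$ with $m>0$ is a polynomial in the $(K^+_0)^{-1}K^+_j$ and each $H_{-m}$ a polynomial in the $(K^-_0)^{-1}K^-_{-j}$, these two families lie in commutative subalgebras; hence $[H_m,H_n]=0$ whenever $m,n$ have the same sign, which already disposes of all equal-sign terms in (\ref{H-H}).

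Next I would turn (\ref{r-3}) into an identity between generating functions. Multiplying by $z^{-m}w^{-n}$ and summing over $m\le 0$, $n\ge 0$, I reindex the correction term by $a=n-\ell\ge 0$, $b=m+\ell\le 0$ (the terms with $a<0$ or $b>0$ vanish), which makes the double sum factor as $K^+(w)K^-(z)$ times $\sum_{\ell\ge 1}[2\ell](z/w)^\ell$. Using $\sum_{\ell\ge 1}[2\ell]x^\ell=\frac{[2]\,x}{(1-q^2x)(1-q^{-2}x)}$ together with $(q-q^{-1})^2[k][k+2]=q^{2k+2}+q^{-2k-2}-q^2-q^{-2}$, the rational part telescopes into a single ratio, giving
\[
K^-(z)K^+(w)=f(z/w)\,K^+(w)K^-(z),\qquad
f(x)=\frac{(1-q^{2k+2}x)(1-q^{-2k-2}x)}{(1-q^2x)(1-q^{-2}x)}.
\]

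Finally I would substitute the exponential presentation. Writing $(K^\pm_0)^{-1}K^\pm(z)=\exp(\phi^\pm(z))$ with $\phi^+(w)=(q-q^{-1})\sum_{n>0}H_n w^{-n}$ and $\phi^-(z)=(q-q^{-1})\sum_{m>0}H_{-m}z^{m}$ and cancelling the central factors $K^\pm_0$, the exchange relation becomes the group-commutator identity $e^{\phi^-(z)}e^{\phi^+(w)}e^{-\phi^-(z)}e^{-\phi^+(w)}=f(z/w)$. Because the right-hand side is a scalar, the bracket $[\phi^-(z),\phi^+(w)]$ is central and equals $\log f(z/w)$; expanding $\log f(x)=-(q-q^{-1})^2\sum_{\ell>0}\frac{[(k+2)\ell][k\ell]}{\ell}\,x^\ell$ and matching the coefficient of $z^m w^{-n}$ forces $m=n$ and gives $[H_{-m},H_m]=-\frac{[(k+2)m][km]}{m}$, i.e. exactly (\ref{H-H}).

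The main obstacle is the passage in the last step from the exponentiated exchange relation to the linear bracket: it requires knowing that $[\phi^-(z),\phi^+(w)]$ is central, equivalently that the higher Baker--Campbell--Hausdorff corrections in the group commutator drop out. I would justify this by the standard free-field argument—the group commutator of the two exponentials being the scalar $f(z/w)$ for all $z,w$, combined with the already established vanishing of the same-sign brackets, forces the leading bracket to be central—or, if one prefers a direct check, by verifying that repeated use of (\ref{r-3}) collapses $[H_{-m},H_n]$ to a scalar. The remaining content is the geometric-series resummation producing $f$, which is routine bookkeeping.
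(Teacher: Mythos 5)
Your proposal is correct and follows essentially the same route as the paper: the paper's proof is simply ``this follows from (\ref{r-3})'', with the resummation of $\sum_{\ell\geq 1}[2\ell](z/w)^{\ell}$ into the structure function $\frac{(1-q^{2k+2}z/w)(1-q^{-2k-2}z/w)}{(1-q^{2}z/w)(1-q^{-2}z/w)}$ already carried out in Proposition \ref{generating_functions}, after which one takes logarithms of the exponential presentation (\ref{Kpm(z)}) exactly as you do. Your writeup is in fact more careful than the paper's, since you make explicit both the vanishing of the equal-sign brackets and the centrality of $[\phi^-(z),\phi^+(w)]$ needed to pass from the group commutator to the linear bracket.
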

\begin{proof}
This follows from (\ref{r-3}).
\end{proof}


\subsection{Scope of the present article}
We have introduced the algebra $\Svir$ presented by the generators and relations (Definition \ref{DefSvir}, Proposition \ref{generating_functions}),
having the Heisenberg subalgebra (Proposition \ref{Hei}) and the involutive symmetry (Remark \ref{Inv}).
Some explanations are in order concerning the authors' motivation for
considering this quartic algebra $\Svir$, and what are planned to be investigated in the present article.

It is known that the $\mathcal{N}=2$ superconformal (or super Virasoro) algebra can be
realized by twisting the Wakimoto representation of the 
affine Lie algebra $\widehat{\mathfrak{sl}}_2$, 
while the $\mathbb{Z}_k$ parafermion structure kept intact. 
More explicitly, in terms of the parafermion currents $\psi^{\pm}(z)$ defined by \cite{Fateev:1985mm}, \cite{Fateev:1985ig}, \cite{Zamolodchikov:1986gh};
\begin{equation}
\psi^{\pm}(z) = 
\left( \sqrt{\frac{k+2}{2}} \partial \phi_1(z) \pm i \sqrt{\frac{k}{2}} \partial \phi_2(z) \right)
e^{\pm i \sqrt{\frac{2}{k}} \phi_2(z)},
\end{equation}
the affine Lie algebra $\widehat{\mathfrak{sl}}_2$ with level $k$ is realized as 
\begin{equation}
J^{\pm}(z) = \psi^{\pm}(z) e^{\pm \sqrt{\frac{2}{k}} \phi_0(z)}, \qquad J_3(z) = \sqrt{\frac{k}{2}} \partial \phi_0(z).
\end{equation}
Here 
$\phi_i(z)\phi_j(w) \sim \delta_{i,j}\log(z-w)$ are free bosons. The parafermion currents are characterized as 
the kernel of the two fermionic screening charges;
\begin{equation}
S^{\pm} = \oint dz~S^{\pm}(z), \qquad S^{\pm}(z) = e^{\sqrt{\frac{k+2}{2}} \phi_1(z)
 \pm i \sqrt{\frac{k}{2}}\phi_2(z)}.
\end{equation}
On the other hand the supercurrents $G^{\pm}(z)$ of the $\mathcal{N}=2$ superconformal algebra are
realized as 
\begin{equation}
G^{\pm}(z) = \psi^{\pm}(z) e^{\pm \sqrt{\frac{k+2}{k}}\phi_0(z)},
\end{equation}
and the remaining currents $K(z)$ and $T(z)$ are generated by the fusion (OPE) of $G^{\pm}(z)$.
Thus, we see that the currents $J^{\pm}(z)$ and $G^{\pm}(z)$ share the common parafermion currents $\psi^{\pm}(z)$
and their difference is the unit length of $\mathbb{Z}$ lattice (or \lq\lq compactification radius\rq\rq) of the $U(1)$ boson 
$\phi_0(z)$ in the vertex operator. 
This change also implies that the conformal weight of $G^{\pm}(z)$ is $\frac{3}{2}$, while $J^{\pm}(z)$ are spin $1$ currents.

Then naturally 
the present authors have lead to guess what should be a reasonable $q$-analogue of the 
$\mathcal{N}=2$ superconformal algebra, 
playing similar with the Wakimoto representation of the quantum affine algebra $U_q(\widehat{\mathfrak{sl}}_2)$. 
Note that, a priori, this kind of guess work could be problematic. Even if it has a good meaning, it
may not have a unique solution, because a different choice of generators
leads to a different set of relations. 
After some experience, fortunately, the authors found the algebra $\Svir$, among many other equivalent but somewhat 
more complicated definitions. Since $\Svir$ seems better behaving than any others, it 
has been chosen as the main object of the present paper. 

The questions we address in this paper are the following.
\begin{itemize}
\item
What kind of structures do we have for the Verma modules of $\Svir$? 
Can we find and prove, or guess at least, a factorization formula for the corresponding Kac determinants?
\item
Can we confirm by taking the $q\rightarrow 1$ limit of $\Svir$ that we recover the 
ordinary $\mathcal{N}=2$ superconformal (or super Virasoro) algebra?
\item Is there any nontrivial representation of $\Svir$, such as 
a Heisenberg  (or a Wakimoto-type)  representation?
\end{itemize}

For lack of space, in the present article we do not study 
explicit formulas for the singular vectors of $\Svir$ in the Wakimoto representation.
It seems an interesting problem to find 
a relation between these singular vectors and  
the supersymmetric version of the Jack or Macdonald polynomials 
(see \cite{Alarie-Vezina:2013qfa}, \cite{Alarie-Vezina:2018obz}, \cite{Alarie-Vezina:2019ohz}, \cite{Blondeau-Fournier:2016jth} 
and \cite{Desrosiers:2012zt} and references therein).

The paper is organized as follows;
In the next section we prove the defining relations of $\Svir$ in terms of the generating functions.
A conjecture on the Kac determinants is proposed in section 3. To support the conjecture, some examples of 
lower level singular vectors in the Verma module are worked out explicitly.
In section 4, we set $q=e^\hbar$ and make the $\hbar$ expansion of the generating functions and the defining relations of $\Svir$.
It is confirmed that the limit $q \to 1$ of $\Svir$ correctly reproduces the ordinary $\mathcal{N}=2$ SCA. 
After reviewing the Wakimoto representation of the quantum affine algebra $U_q(\widehat{\mathfrak{sl}}_2)$ and 
introducing basic vertex operators for the deformed parafermion sector in section 5, 
we argue the construction of $\Svir$ from the deformed $Y$-algebras (a.k.a. corner vertex operator algebras) in section 6.
Namely, $\Svir$ is obtained as a result of gluing two deformed $Y$-algebras, one of which is identified with the deformed parafermion
and the other provides the additional deformed Heisenberg algebra. 
It implies that we should twist the $U(1)$ boson in the Wakimoto representation of 
$U_q(\widehat{\mathfrak{sl}}_2)$, while keeping the parafermions intact. 
It also reveals a connection to the Fock representation of the quantum toroidal algebras.
Finally in section 7, we work out a Heisenberg representation of $\Svir$ by twisting the Wakimoto 
representation of $U_q(\widehat{\mathfrak{sl}}_2)$. 
Some of technical details are provided in Appendices. In Appendix A we summarize computations of operator product expansion (OPE) 
among vertex operators appearing in the Wakimoto representation of $U_q(\widehat{\mathfrak{sl}}_2)$. In particular 
the proof of the free field representation of $\Svir$ in section 7 relies on the OPE relations 
in the deformed parafermion sector shown in Appendix A. Appendix B supplements the proof in section 7. 
We conclude the paper with Appendix C, where as a first step to a proof of the conjecture on the Kac determinants,
we compute the vanishing lines arising from the screening operators among the Fock modules defined in section 7.


\section{$\Svir$ in terms of the generating functions}\label{Proof-1}

\begin{dfn}
Define the delta functions $\delta^{\rm NS}(z)$, $\delta^{\rm R}(z)$  and $\delta(z)$ by 
\begin{align}
\delta^{\rm NS}(z)=\sum_{ m\in \mathbb{Z}+{1\over 2}} z^m,\qquad 
\delta^{\rm R}(z)=\delta(z)=\sum_{ m\in \mathbb{Z}} z^m.
\end{align}
\end{dfn}

\begin{prp}\label{generating_functions}
The relations (\ref{r-2})-(\ref{r-11})
in the sector {\rm A} {\rm (}$=${\rm NS} or {\rm R)} are written in terms of the generating functions as
\begin{align}
&K^\pm(z)K^\pm(w)=K^\pm(w)K^\pm(z),\label{rr-2}\\
&
K^-(z)K^+(w)= {(1-q^{2k+2}z/w)(1-q^{-2k-2}z/w)\over (1-q^{2}z/w)(1-q^{-2}z/w)}
K^+(w)K^-(z),\label{rr-3}\\
&K^-(z)T(w)={(1-q^{k+1} z/w)(1-q^{-k-1}z/w)\over (1-q^{k+3}z/w)(1-q^{-k-3}z/w)} T(w)K^-(z),\label{rr-4}\\
&T(z)K^+(w)={(1-q^{k+1}z/w)(1-q^{-k-1}z/w)\over (1-q^{k+3}z/w)(1-q^{-k-3} z/w)} K^+(w)T(z),\label{rr-5}\\
&K^-(z)G^\pm (w)=q^{\pm (k+2)}{1-z/w\over 1-q^{\pm 2(k+2)}z/w}G^\pm(w) K^-(z),\label{rr-6}\\
&G^\pm (z)K^+(w)= q^{\mp (k+2)}{1-z/w\over 1-q^{\mp2(k+2)}z/w} K^+(w) G^\pm (z) ,\label{rr-7}\\
&G^\pm(z)G^\pm(w)+G^\pm(w)G^\pm (z)=0,\label{rr-8}\\
&
G^+(z)G^-(w)+G^-(w)G^+(z) 
= {1\over (q-q^{-1})^2}\Biggl(
\delta^{\rm A}\left( q^{4(k+1)} { w\over z}\right){1\over [k+1]}K(q^{2(k+1)}w)
\nonumber \\
&~~~\hspace{5cm}
-\delta^{\rm A}\left(q^{2(k+2)}{ w\over  z}\right)
+\delta^{\rm A}\left(q^{2(k+1)}{ w\over z}\right)  T(q^{k+1}w) \Biggr),\label{rr-9} \\
&G^\pm(z)T(w)-T(w)G^\pm(z) \CR
&=\pm (q-q^{-1}){[k+2]\over[k+1]}
\delta^{\rm A}\left(q^{\pm3(k+1)}{w\over z} \right)K^-(q^{\pm (k+1)}w)G^\pm(q^{\pm (k+1)}w)K^+(q^{\pm (k+1)}w), \label{rr-10}\\
&T(z)T(w)-T(w)T(z)
=(q-q^{-1}){[k+2]^2\over [k+1]}\CR
&\qquad \times \Biggl( \delta\left(q^{+2(k+1)}{w\over z} \right) K^- (q^{+k+1}w)  W(q^{+k+1}w) K^+(q^{+k+1}w)\nonumber \\
&\qquad \quad- \delta\left(q^{-2(k+1)}{w\over z} \right) K^- (q^{-k-1}w)  W(q^{-k-1}w) K^+(q^{-k-1}w)\Biggr).\label{rr-11}
\end{align}
Here the rational factors in (\ref{rr-3}), (\ref{rr-4}), (\ref{rr-5}), (\ref{rr-6}) and (\ref{rr-7}) 
should be read as the Taylor series of those in the domain $|z|\ll|w|$. 
\end{prp}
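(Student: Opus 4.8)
The plan is to prove the stated equivalence by the elementary but bookkeeping-heavy method of comparing Fourier coefficients: each generating-function identity (\ref{rr-2})--(\ref{rr-11}) is expanded into a double formal power series in $z$ and $w$, and the coefficient of every monomial is matched against the corresponding mode relation (\ref{r-2})--(\ref{r-11}). Since the generating functions are invertible formal sums of the modes, the two formulations are manifestly equivalent once every coefficient comparison is verified, so I would only need to check the statement in one direction.

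First I would treat the relations (\ref{rr-2})--(\ref{rr-7}) carrying a rational structure factor. Writing $x=z/w$ and expanding, say, $\frac{(1-q^{2k+2}x)(1-q^{-2k-2}x)}{(1-q^{2}x)(1-q^{-2}x)}=\sum_{\ell\ge 0}c_\ell\,x^\ell$ as a Taylor series in the domain $|z|\ll|w|$, I would substitute the mode expansions of $K^\pm(z)$, $T(z)$, $G^\pm(z)$ and extract the coefficient of $z^{a}w^{-b}$. The key computational lemma is the identification of the Taylor coefficients $c_\ell$ with the $q$-number structure constants of (\ref{r-3})--(\ref{r-7}); for (\ref{rr-3}) one checks $c_\ell=-(q-q^{-1})^{2}[k][k+2][2\ell]/[2]$, which at $\ell=1$ already reduces to the identity $(q^{k}-q^{-k})(q^{k+2}-q^{-k-2})=(q^{2k+2}+q^{-2k-2})-(q^{2}+q^{-2})$, and the general $\ell$ is a short manipulation of $q$-numbers. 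The finite range of the mode sums (up to $-m$ or $n$) then follows automatically from the conventions $K^+_j=0$ for $j<0$ and $K^-_j=0$ for $j>0$, which truncate the coefficient-comparison sum that is a priori bounded only by $\min(a,b)$.

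Next I would handle the delta-function relations (\ref{rr-8})--(\ref{rr-11}). Here the starting point is the mode expansion $\delta^{\mathrm A}(q^{c}w/z)=\sum_{\ell\in\bbZ^{\mathrm A}}q^{c\ell}(w/z)^{\ell}$, whose effect is to impose a single linear constraint among the exponents of $z$ and $w$; this constraint reproduces exactly the Kronecker deltas $\delta_{m+n+\alpha-\beta,0}$, $\delta_{m+n,0}$, etc.\ of (\ref{r-9})--(\ref{r-11}). The argument shifts in $K(q^{2(k+1)}w)$, $T(q^{k+1}w)$, $K^\pm(q^{\pm(k+1)}w)$ contribute the powers of $q$ that must reassemble into the prefactors $q^{(2k+2)(m-n)}$, $q^{(k+2)(m-n)}$, $q^{(k+1)(m-n)}$ of the mode relations; tracking these factors and checking their combination on the support of the delta, e.g.\ $q^{4(k+1)m}q^{2(k+1)(\alpha-\beta)}=q^{2(k+1)(m-n)}$ in (\ref{rr-9}), is the routine but essential verification.

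The subtlety requiring genuine care is the sector dependence carried by the coefficients $c^{\mathrm A}_\alpha(u)$, which distinguish the parity of the mode index. I would handle this by the even/odd decomposition $\tfrac12\bigl(F(z)\pm F(-z)\bigr)$, which projects a current $F(z)$ onto its even and odd modes; feeding this into $\sum_{m}c^{\mathrm A}_{m}(u)F_{m}z^{-m}=c^{\mathrm A}_{0}(u)\tfrac12(F(z)+F(-z))+c^{\mathrm A}_{1}(u)\tfrac12(F(z)-F(-z))$ is precisely what produces the combinations $\widetilde K^{\mathrm A}(z)$ and $\widetilde T^{\mathrm A}(z)$ in the definition (\ref{W-current}) of $W(z)$, thereby showing that the mode-level $W_m$ assembles into the stated current. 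The hardest single relation is (\ref{rr-11}): besides the parity bookkeeping above, one must show that the normal-ordered fermionic bilinear (\ref{noGmodes}), including the regularized diagonal $m=n$ term, reassembles into $\no G^+(z)G^-(z)\no$, and that the scalar factor $[(m-n)(k+1)]$ in (\ref{r-11}) is reproduced by the difference of the two oppositely shifted delta terms $\delta(q^{\pm2(k+1)}w/z)$ on the right of (\ref{rr-11}). I expect this reassembly of the composite $W$-current, rather than any individual $q$-number identity, to be the main obstacle.
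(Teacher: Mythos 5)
Your proposal is correct and follows essentially the same route as the paper: Taylor-expanding the rational structure functions and matching their coefficients to the $q$-number structure constants for (\ref{rr-2})--(\ref{rr-7}), converting the shifted delta functions into Kronecker-delta constraints with $q$-power prefactors for (\ref{rr-8})--(\ref{rr-11}) (the paper's Lemma \ref{delta-op}), and using the even/odd projection $\tfrac12(F(z)\pm F(-z))$ to assemble the sector-dependent coefficients $c^{\mathrm A}_m(u)$ into $\widetilde K^{\mathrm A}$ and $\widetilde T^{\mathrm A}$. The only point where you anticipate more difficulty than there is: the reassembly of the normal-ordered modes $\no G^+_{-\gamma}G^-_{m+\gamma}\no$ into the current $\no G^+(z)G^-(z)\no$ is definitional, and the factor $[(m-n)(k+1)]$ does indeed arise from the difference of the two oppositely shifted deltas exactly as you describe.
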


We need the following lemma. 
\begin{lem}\label{delta-op}
We have
\begin{align*}
\sum_{m}(q^{\alpha} w/z)^m \sum_{l}(q^\beta w)^{-l} \Phi_l=\sum_{m} \sum_{n} z^{-m} w^{-n} q^{(\alpha-\beta)m-\beta n} \Phi_{m+n},
\end{align*}
where the indices $l$ and $m$ run over either $ \mathbb{Z}$ or $ \mathbb{Z}+{1\over 2}$ and the range of $n=l -m$ is fixed accordingly. 
\end{lem}

Then Proposition \ref{generating_functions} is proved in a straightforward manner as follows.

\begin{proof}
It is clear that (\ref{r-2}) and (\ref{rr-2}) are the same. The equivalence between 
(\ref{r-3})-(\ref{rr-3}), (\ref{r-4})-(\ref{rr-4}), (\ref{r-5})-(\ref{rr-5}), (\ref{r-6})-(\ref{rr-6}), and (\ref{r-7})-(\ref{rr-7}) follow from
the Taylor series
\begin{align*}
&{(1-q^{2k+2}z/w)(1-q^{-2k-2}z/w)\over (1-q^{2}z/w)(1-q^{-2}z/w)}=
1-(q-q^{-1})^2\sum_{\ell \geq 1}{[k][k+2][2\ell]\over [2]} (z/w)^\ell,\\
&{(1-q^{k+1}z/w)(1-q^{-k-1}z/w)\over (1-q^{k+3}z/w)(1-q^{-k-3}z/w)}=
1+(q-q^{-1})^2\sum_{\ell \geq 1}{[k+2][(k+3)\ell]\over [k+3]} (z/w)^\ell,\\
&q^{\pm(k+2)}{1-z/w\over 1-q^{\pm 2(k+2)}z/w}=
q^{\pm(k+2)}\pm(q-q^{-1})[k+2]\sum_{\ell \geq 1} (q^{\pm 2(k+2)}z/w)^\ell.
\end{align*}
We immediately  see that (\ref{r-8}) and (\ref{rr-8}) are the same. 

Noting $K(w)=\sum_{\alpha,\beta\geq 0} w^{\alpha-\beta}K^{-}_{-\alpha} K^{+}_{\beta} $, and
using Lemma \ref{delta-op}, we have
\begin{align*}
\delta^{\rm A}\left( q^{4(k+1)} { w\over z}\right)K(q^{2(k+1)}w)
&=\sum_{m,n \in \mathbb{Z}^\mathrm{A} } z^{-m} w^{-n}  q^{(2k+2)(m-n)} \sum_{\alpha,\beta\geq 0}\delta_{m+n+\alpha-\beta ,0} K^{-}_{-\alpha} K^{+}_{\beta},\\
\delta^{\rm A}\left( q^{2(k+2)} { w\over z}\right)
&=\sum_{m,n \in \mathbb{Z}^\mathrm{A}}  z^{-m} w^{-n}   q^{(k+2)(m-n)} \delta_{m+n,0},\\
\delta^{\rm A}\left( q^{2(k+1)} { w\over z}\right)T(q^{k+1}w)
&=\sum_{m,n \in \mathbb{Z}^\mathrm{A}} z^{-m}w^{-n}    q^{(k+1)(m-n)} T_{m+n},
\end{align*}
showing the equivalence between (\ref{r-9})-(\ref{rr-9}).

Noting that
\begin{align*}
K^-(w) G^\pm(w) K^+(w)
&= \sum_{\alpha,\beta\geq 0}\sum_{m \in \mathbb{Z}^\mathrm{A}}  
w^{\alpha-m-\beta}K^-_{-\alpha} G^\pm_m K^+_\beta \CR
&= \sum_{m \in \mathbb{Z}^\mathrm{A}}  w^{-m} \sum_{\alpha,\beta\geq 0} K^-_{-\alpha} G^\pm_{m+\alpha-\beta} K^+_\beta,
\end{align*}
and using Lemma \ref{delta-op} with $m, l \in \mathbb{Z}^\mathrm{A}$, we have
\begin{align*}
&\delta^{\rm A}\left(q^{\pm3(k+1)}{w\over z} \right)K^-(q^{\pm (k+1)}w)G^\pm(q^{\pm (k+1)}w)K^+(q^{\pm (k+1)}w)\\
&= \sum_{m \in \mathbb{Z}^\mathrm{A}}  \sum_{n \in \mathbb{Z}} z^{-m} w^{-n} q^{\pm(k+1)(2m- n)}  
\sum_{\alpha,\beta\geq 0} K^-_{-\alpha} G^\pm_{m+n+\alpha-\beta} K^+_\beta,
\end{align*}
showing the equivalence between (\ref{r-10})-(\ref{rr-10}).

In the same way we have
\begin{align*}
&\delta\left(q^{+2(k+1)}{w\over z} \right) K^- (q^{+k+1}w)  W(q^{+k+1}w) K^+(q^{+k+1}w)\\
&- \delta\left(q^{-2(k+1)}{w\over z} \right) K^- (q^{-k-1}w)  W(q^{-k-1}w) K^+(q^{-k-1}w)\\
&=\sum_{m} \sum_{n} z^{-m} w^{-n}  \Bigl( q^{(k+1)(m-n)}-q^{-(k+1)(m-n)} \Bigr) 
\sum_{\alpha,\beta\geq 0} K^-_{-\alpha} W_{m+n+\alpha-\beta} K^+_\beta,
\end{align*}
giving the equivalence between (\ref{r-11})-(\ref{rr-11}).
\end{proof}

For later use, here we summarize the basic properties of the delta-functions $\delta^{\rm NS}(z)$
and $\delta^{\rm R}(z)=\delta(z)$.

\begin{lem}\label{delta-flip}
We have in the sector {\rm A}$=${\rm NS} or {\rm R} that
\begin{align}
&G^\pm(w) \delta \left( {w\over z}\right)=G^\pm(z) \delta^{\rm A}\left( {w\over z}\right),\label{delta-2}\\
&K^\pm(w) \delta^{\rm A}\left( {w\over z}\right)=K^\pm(z) \delta^{\rm A}\left( {w\over z}\right),
\qquad 
T(w) \delta^{\rm A}\left( {w\over z}\right)=T(z) \delta^{\rm A}\left( {w\over z}\right).\label{delta-3}
\end{align}
Note that the exchange $\delta \leftrightarrow \delta^{\rm NS}$ occurs, when we change
the argument of $G^\pm$ in the NS sector by using the delta-function. 
\end{lem}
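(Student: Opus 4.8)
The plan is to reduce all three identities to a single elementary property of the formal delta-series and then to apply it mode by mode to each generating function; no analytic input is needed, since everything takes place among formal power series in $w^{\pm 1}$ and $z^{\pm 1}$.

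First I would record the basic shift identity. From $\delta(w/z)=\sum_{n\in\mathbb{Z}}w^{n}z^{-n}$ and $\delta^{\rm NS}(w/z)=\sum_{n\in\mathbb{Z}+{1\over 2}}w^{n}z^{-n}$, a shift of the summation index yields, for $a\in\mathbb{Z}$,
\[
w^{a}\,\delta(w/z)=z^{a}\,\delta(w/z),\qquad
w^{a}\,\delta^{\rm NS}(w/z)=z^{a}\,\delta^{\rm NS}(w/z),
\]
since an integer shift preserves both index lattices $\mathbb{Z}$ and $\mathbb{Z}+{1\over 2}$; whereas for $a\in\mathbb{Z}+{1\over 2}$,
\[
w^{a}\,\delta(w/z)=z^{a}\,\delta^{\rm NS}(w/z),\qquad
w^{a}\,\delta^{\rm NS}(w/z)=z^{a}\,\delta(w/z),
\]
because a half-integer shift interchanges the two lattices. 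These four relations are the only computations involved.

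The identities (\ref{delta-3}) then follow at once. The generating functions $K^{\pm}(w)=\sum_{m}K^{\pm}_{m}w^{-m}$ and $T(w)=\sum_{m}T_{m}w^{-m}$ carry only integer mode indices $m$, so multiplying $\delta^{\rm A}(w/z)$ termwise by $w^{-m}$ leaves the delta-sector untouched and merely replaces $w^{-m}$ by $z^{-m}$ via the first case above. Resumming over $m$ (over whichever range is relevant, $m\ge 0$, $m\le 0$, or all of $\mathbb{Z}$) gives $K^{\pm}(w)\,\delta^{\rm A}(w/z)=K^{\pm}(z)\,\delta^{\rm A}(w/z)$ and $T(w)\,\delta^{\rm A}(w/z)=T(z)\,\delta^{\rm A}(w/z)$ simultaneously in both sectors.

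For (\ref{delta-2}) the sector exchange is exactly the point. In the R sector $G^{\pm}(w)$ has integer modes, so the first case gives $G^{\pm}(w)\,\delta(w/z)=G^{\pm}(z)\,\delta(w/z)=G^{\pm}(z)\,\delta^{\rm R}(w/z)$ with the delta unchanged. In the NS sector $G^{\pm}(w)=\sum_{m\in\mathbb{Z}+{1\over 2}}G^{\pm}_{m}w^{-m}$ has half-integer modes, so each term obeys $w^{-m}\,\delta(w/z)=z^{-m}\,\delta^{\rm NS}(w/z)$ by the second case, and resumming produces $G^{\pm}(w)\,\delta(w/z)=G^{\pm}(z)\,\delta^{\rm NS}(w/z)$. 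The only step requiring any care — and the source of the remark that $\delta$ and $\delta^{\rm NS}$ get exchanged — is this lattice bookkeeping: a half-integer power acting on the integer-indexed $\delta$ produces a half-integer-indexed series, which is precisely $\delta^{\rm NS}$. I expect this parity accounting, rather than any genuine difficulty, to be the one place where a careless computation could go astray.
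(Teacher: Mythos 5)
Your proof is correct and follows essentially the same route as the paper: the paper's argument is exactly the reindexing of the double sum $\sum_{m}\sum_{n}w^{-m+n}z^{-n+m}\,z^{-m}\Phi_m$, which is what your monomial shift identities $w^{a}\delta(w/z)=z^{a}\delta(w/z)$ (for $a\in\mathbb{Z}$) and $w^{a}\delta(w/z)=z^{a}\delta^{\rm NS}(w/z)$ (for $a\in\mathbb{Z}+\tfrac12$) encode term by term. Isolating the shift identity as a separate step is a minor organizational difference, not a different method, and your lattice bookkeeping for the $\delta\leftrightarrow\delta^{\rm NS}$ exchange matches the paper's.
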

\begin{proof}
We demonstrate (\ref{delta-2}) and the second equality in (\ref{delta-3}).
We have
\begin{align*}
G^\pm(w) \delta\left( {w\over z}\right) &=
\sum_{ m \in\mathbb{Z}^\mathrm{A}} w^{-m}G^\pm_{m}\sum_{ n \in\mathbb{Z}}w^{n}z^{-n}
= \sum_{ m \in\mathbb{Z}^\mathrm{A}} \sum_{ n \in\mathbb{Z}} w^{-m+n}z^{-n+m}z^{-m} G^\pm_{m} \\
& = G^\pm(z) \delta^{\rm A}\left( {w\over z}\right).
\end{align*}
Similarly 
\begin{align*}
&T(w) \delta^{\rm A}\left( {w\over z}\right)=
\sum_{ m \in \mathbb{Z}} w^{-m}T_{m}\sum_{ n \in \mathbb{Z}^\mathrm{A}}w^{n}z^{-n} \\
&=
\sum_{ m \in \mathbb{Z}} \sum_{ n \in \mathbb{Z}^\mathrm{A}} w^{-m+n}z^{-n+m}z^{-m} T_{m}
=T(z) \delta^{\rm A}\left( {w\over z}\right).
\end{align*}

\end{proof}

\begin{rmk}
The commutation relation \eqref{rr-11} is equivalent to
\begin{align*}
&T(z)T(w)-T(w)T(z)\\
=&-(q-q^{-1})^4{[k+2]^2\over [k+1]}
\left( \delta\left(q^{2k+2}{w\over z} \right)
K^{-}(q^{k+1}w) G^+(q^{k+1}w) G^-(q^{-k-1}z) K^{+}(q^{-k-1}z)
\right.\\
& \left.- \delta\left(q^{-2k-2}{w\over z} \right)
K^{-}(q^{k+1}z) G^+(q^{k+1}z) G^-(q^{-k-1}w) K^{+}(q^{-k-1}w)\right).
\end{align*}
Note that in the relation \eqref{rr-11} $G^{+}(z)$ and $G^{-}(z)$ are normal ordered (see the definition of $W(z)$).
\end{rmk}
To show the equivalence we prove the following lemma.
\begin{lem}\label{GG=W}
\begin{align*}
G^+(z) G^-(z) = - (q-q^{-1})^3 W(z).
\end{align*}
\end{lem}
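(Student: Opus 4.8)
The plan is to establish the identity mode by mode, by comparing the bare product $G^+(z)G^-(z)$ with the normal-ordered product $\no G^+(z)G^-(z)\no$ that already sits inside $W(z)$ in \eqref{W-current}. Reading off the coefficient of $z^{-s}$, the two products differ only through the pairs of modes $(m,n)$ with $m\ge n$, where the prescription \eqref{noGmodes} replaces $G^+_mG^-_n$ by $-G^-_nG^+_m$ (when $m>n$) or by the antisymmetrized combination (when $m=n$). Thus
\begin{align*}
G^+(z)G^-(z)-\no G^+(z)G^-(z)\no=\sum_{s}z^{-s}\,\Omega_s,\qquad
\Omega_s=\sum_{\substack{m+n=s\\ m>n}}\{G^+_m,G^-_n\}+\tfrac12\sum_{\substack{m+n=s\\ m=n}}\{G^+_m,G^-_m\},
\end{align*}
and each anticommutator $\{G^+_m,G^-_n\}$ is given explicitly by \eqref{r-9}. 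The lemma therefore reduces to showing that $\Omega(z)=\sum_sz^{-s}\Omega_s$ reproduces, up to the overall constant, precisely the non-normal-ordered part $\tfrac{1}{[k+1]}\widetilde K^{\rm A}(z)-c^{\rm A}_0(k+2)+\widetilde T^{\rm A}(z)$ of $W(z)$.

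First I would substitute \eqref{r-9} and set $d=m-n$. Each of the three structures there (the $K$-bilinear, the scalar $\delta_{m+n,0}$, and $T_{m+n}$) carries a factor $q^{\lambda d}$ with $\lambda=2k+2,\,k+2,\,k+1$ respectively, while its operator content depends only on $s=m+n$. Summing over the region $m>n$, together with the half-weighted $m=n$ diagonal, factorizes into parity-restricted geometric sums $\sum_{d}q^{\lambda d}$, the parity of $d$ being forced by $s$ and by the sector A. The crux is that each such sum, once the diagonal weight $\tfrac12$ is included, collapses up to an explicit power of $q-q^{-1}$ onto the numerical coefficient $c^{\rm A}_s(\lambda)$ of \eqref{cAdef}. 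Feeding this back, the $K$-part of $\Omega_s$ reproduces the mode-$s$ coefficient of $\tfrac{1}{[k+1]}\widetilde K^{\rm A}(z)$, the scalar part reproduces $-c^{\rm A}_0(k+2)\delta_{s,0}$, and the $T$-part reproduces the mode-$s$ coefficient of $\widetilde T^{\rm A}(z)$; here one uses that the even/odd projections in the definitions of $\widetilde K^{\rm A}$ and $\widetilde T^{\rm A}$ turn $c^{\rm A}_0,c^{\rm A}_1$ into $c^{\rm A}_s$ acting on the modes of $K(z)$ and $T(z)$. Adding back the normal-ordered product assembles $W(z)$, and tracking the accumulated powers of $q-q^{-1}$ — one factor from \eqref{r-9} and the rest from the geometric sums — fixes the proportionality constant stated in the lemma.

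The hard part will be making the formal geometric sums precise and keeping the parity bookkeeping straight. The series $\sum_{d>0}q^{\lambda d}$ converge only in the domain fixed by $|q|\le1$ and $|q^k|\le1$, and must be expanded in the same $|z|\ll|w|$ convention used for \eqref{rr-3}--\eqref{rr-7}; this is exactly what regularizes the a priori ill-defined coincident product $G^+(z)G^-(z)$. The bookkeeping is delicate because the roles of even and odd $\alpha$ in $c^{\rm A}_\alpha$ are interchanged between NS and R in \eqref{cAdef}, and because the parity of $d$ relative to $s$ differs in the two sectors (half-integer versus integer modes). Finally, the $m=n$ term must be taken with the antisymmetrized normal ordering of \eqref{noGmodes}: it is precisely the weight $\tfrac12$ there that makes the parity-restricted sum close onto $c^{\rm A}_s$. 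As an independent check, the resulting identity is exactly what is needed to rewrite \eqref{rr-11} in the equivalent form of the preceding Remark, in which $G^+$ and $G^-$ appear un-normal-ordered.
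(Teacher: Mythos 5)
Your proposal is correct and is in substance the paper's own proof: the paper likewise expands $G^+(w)G^-(w)$ in modes sector by sector, reorders the terms with $m\ge n$ (half-weighting the diagonal exactly as in \eqref{noGmodes}) via the anticommutator \eqref{r-9}, and resums the resulting parity-restricted geometric series into the coefficients $c^{\rm A}_\alpha(2k+2)$, $c^{\rm A}_\alpha(k+2)$, $c^{\rm A}_\alpha(k+1)$, which then match the definition \eqref{W_m} of $W_m$ with the stated power of $(q-q^{-1})$. The parity and sector bookkeeping you flag as the delicate point is precisely what the paper handles by its explicit even/odd and NS/R case split, so nothing is missing.
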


\begin{proof}
The issue is the normal ordering of $G^{\pm}(z)$ and
we have to work with the mode expansion. 
Hence, we consider the R sector and the NS sector separately. 

\smallskip

(1)~R sector:
\begin{align*}
&G^+(w) G^-(w)
=\sum_{m,n\in \mathbb{Z}}w^{-m}w^{-n}G^+_mG^-_n
= \sum_{\alpha\in \mathbb{Z}} w^{-\alpha}\sum_{m\in \mathbb{Z}} 
G^+_mG^-_{\alpha-m}\\
=&
\sum_{\alpha:{\rm even}} w^{-\alpha} \Biggl[G^+_{{\alpha\over2}}G^-_{{\alpha\over2}}+
\sum_{m>0} 
G^+_{{\alpha\over2}-m}G^-_{{\alpha\over2}+m}-
\sum_{m> 0} 
G^-_{{\alpha\over2}-m}G^+_{{\alpha\over2}+m}\\
&+{1\over (q-q^{-1})^2}\sum_{m> 0} \left( q^{2(k+1)2m}{1\over [k+1]}K_{\alpha}-
q^{(k+2)2m} \delta_{\alpha,0}+q^{(k+1)2m} T_{\alpha}\right)\Biggr] \\
&
+\sum_{\alpha:{\rm odd}} w^{-\alpha} \Biggl[ \sum_{m\geq0} 
 G^+_{{\alpha\over2}-{1\over 2}-m}G^-_{{\alpha\over2}+{1\over 2}+m}-
\sum_{m\geq0} 
G^-_{{\alpha\over2}-{1\over 2}-m}G^+_{{\alpha\over2}+{1\over 2}+m}\\
&+{1\over (q-q^{-1})^2}\sum_{m\geq0} \left( q^{2(k+1)(2m+1)}{1\over [k+1]}K_{\alpha}-
q^{(k+2)(2m+1)} \delta_{\alpha,0}+q^{(k+1)(2m+1)} T_{\alpha}\right) \Biggr] \\
=&
\sum_{\alpha:{\rm even}} w^{-\alpha} \Biggr[G^+_{{\alpha\over2}}G^-_{{\alpha\over2}}+
\sum_{m>0} 
G^+_{{\alpha\over2}-m}G^-_{{\alpha\over2}+m}-
\sum_{m> 0} 
G^-_{{\alpha\over2}-m}G^+_{{\alpha\over2}+m}\\
&+{1\over (q-q^{-1})^2}\left( {q^{4k+4}\over 1-q^{4k+4}}{1\over [k+1]}K_{\alpha}-
{q^{2k+4}\over 1-q^{2k+4}}\delta_{\alpha,0}+
{q^{2k+2}\over 1-q^{2k+2}} T_{\alpha}\right)\Biggr] \\
&
+\sum_{\alpha:{\rm odd}} w^{-\alpha} \Biggl[ \sum_{m\geq0} 
 G^+_{{\alpha\over2}-{1\over 2}-m}G^-_{{\alpha\over2}+{1\over 2}+m}-
\sum_{m\geq0} 
G^-_{{\alpha\over2}-{1\over 2}-m}G^+_{{\alpha\over2}+{1\over 2}+m}\\
&+{1\over (q-q^{-1})^2}\left( {q^{2k+2}\over 1-q^{4k+4}}{1\over [k+1]}K_{\alpha}-
{q^{k+2}\over 1-q^{2k+4}}\delta_{\alpha,0}+
{q^{k+1}\over 1-q^{2k+2}}T_{\alpha}\right)\Biggr] \\
=&
\sum_{\alpha\in \mathbb{Z}}w^{-\alpha} 
\Biggl[ \sum_{m\in \mathbb{Z}} \no G^+_{-m}G^-_{\alpha+m} \no \CR
& \qquad
- {1\over (q-q^{-1})^3}\Biggl( c_\alpha^{\rm R}(2k+2){1\over [k+1]}K_{\alpha}-
c_\alpha^{\rm R}(k+2)\delta_{\alpha,0}+
c_\alpha^{\rm R}(k+1)T_{\alpha}\Biggr)\Biggr],
\end{align*}
where the normal ordered product $\no G^+_{m}G^-_{n}\no$  is defined  by \eqref{noGmodes}
and the numerical coefficient $c_\alpha^{\rm R}(u)$ is defined by \eqref{cAdef}.
Comparing with the definition of $W_m$ given by \eqref{W_m}, we obtain the relation in {\rm R} sector.

\smallskip

(2)~NS sector:
\begin{align*}
&G^+(w) G^-(w)
= \sum_{m,n\in \mathbb{Z}^\mathrm{NS}}w^{-m}w^{-n}G^+_mG^-_n
= \sum_{\alpha\in \mathbb{Z}} w^{-\alpha}\sum_{m\in \mathbb{Z}^\mathrm{NS}} 
G^+_mG^-_{\alpha-m}\\
=&
\sum_{\alpha:{\rm even}} w^{-\alpha} \Biggl[
\sum_{m\geq 0} 
G^+_{{\alpha\over2}-m-{1\over 2}}G^-_{{\alpha\over2}+m+{1\over 2}}-
\sum_{m\geq 0} 
G^-_{{\alpha\over2}-m-{1\over 2}}G^+_{{\alpha\over2}+m+{1\over 2}}\\
&+{1\over (q-q^{-1})^2}\sum_{m\geq  0} \Biggl( q^{2(k+1)(2m+1)}{1\over [k+1]}K_{\alpha}-
q^{(k+2)(2m+1)} \delta_{\alpha,0}+q^{(k+1)(2m+1)} T_{\alpha}\Biggr)\Biggr]\\
&
+\sum_{\alpha:{\rm odd}} w^{-\alpha} \Biggl[
G^+_{{\alpha\over2}}G^-_{{\alpha\over2}}+
\sum_{m>0} 
 G^+_{{\alpha\over2}-m}G^-_{{\alpha\over2}+m}-
\sum_{m>0} 
G^-_{{\alpha\over2}-m}G^+_{{\alpha\over2}+m}\\
&+{1\over (q-q^{-1})^2}\sum_{m>0} \Biggl( q^{2(k+1)2m}{1\over [k+1]}K_{\alpha}-
q^{(k+2)2m} \delta_{\alpha,0}+q^{(k+1)2m} T_{\alpha}\Biggr)\Biggr]\\
=&
\sum_{\alpha\in \mathbb{Z}}w^{-\alpha} 
\Biggl[ \sum_{m\in \mathbb{Z}^\mathrm{NS}} \no G^+_{-m}G^-_{\alpha+m}\no\\
&\qquad-
{1\over (q-q^{-1})^3}\Biggl( c^{\rm NS}_\alpha(2k+2){1\over [k+1]}K_{\alpha}-
c^{\rm NS}_\alpha(k+2)\delta_{\alpha,0}+
c^{\rm NS}_\alpha(k+1)T_{\alpha}\Biggr)\Biggr],
\end{align*}
where $\no G^+_{m}G^-_{n}\no$ is defined as before
and the numerical coefficient in the NS sector is defined by \eqref{cAdef}.
We see the relation in {\rm NS} sector is also valid.

Substituting the definition  \eqref{W_m} of the modes $W_m$, we have the following relations
in each sector $\mathrm{A}(=\mathrm{R}$ or $\mathrm{NS})$.
\begin{align}
&\sum_{\alpha,\beta\geq 0}K^-_{-\alpha}W_{m+\alpha-\beta}K^+_{\beta}
= \sum_{\alpha,\beta\geq 0}\sum_{\gamma\in \mathbb{Z}^\mathrm{A}} 
K^-_{-\alpha}
\no G^+_{-\gamma}G^-_{m+\alpha-\beta+\gamma} \no K^+_{\beta} \CR
&\qquad-
{1\over (q-q^{-1})^3}\sum_{\alpha,\beta\geq 0}\Biggl( c_{m+\alpha-\beta}^{\mathrm{A}}(2k+2){1\over [k+1]}
K^-_{-\alpha}K_{m+\alpha-\beta}K^+_{\beta} \CR
&\qquad-
c_{m+\alpha-\beta}^{\rm A}(k+2)\delta_{m+\alpha-\beta,0}K^-_{-\alpha}K^+_{\beta}+
c_{m+\alpha-\beta}^{\rm R}(k+1)K^-_{-\alpha}T_{m+\alpha-\beta}K^+_{\beta}\Biggr),
\end{align}
and 
\begin{align}\label{TTmodes-com}
&T_mT_n-T_nT_m 
= (q-q^{-1})^2{[k+2]^2\over [k+1]}[(m-n)(k+1)] \CR
&\times\sum_{\alpha,\beta\geq 0}\Biggl( 
\sum_{\mu,\nu\geq 0}c_{-\mu+\nu}^{\rm A}(2k+2) \delta_{m+n+\alpha-\beta+\mu-\nu,0}{1\over [k+1]}
K^-_{-\alpha}K^-_{-\mu} K^+_{\nu}K^+_{\beta} \CR
&\qquad-
c_{0}^{\rm A}(k+2)\delta_{m+n+\alpha-\beta,0}K^-_{-\alpha}K^+_{\beta}+
c_{m+n+\alpha-\beta}^{\rm A}(k+1)K^-_{-\alpha}T_{m+n+\alpha-\beta}K^+_{\beta}\Biggr) \CR
&-(q-q^{-1})^5{[k+2]^2\over [k+1]}[(m-n)(k+1)]
\sum_{\alpha,\beta\geq 0}\sum_{\gamma\in \mathbb{Z}^\mathrm{A}} 
K^-_{-\alpha}
\no G^+_{-\gamma}G^-_{m+n+\alpha-\beta+\gamma}\no K^+_{\beta}.
\end{align}

Now to obtain the commutation relation of the generating function $T(z)$, 
let us multiply both sides of \eqref{TTmodes-com} with $z^{-m}w^{-n}$ and 
take the summation over $m,n \in \mathbb{Z}$. 
The term with $c_{0}^{\rm A}(k+2)$ gives
\begin{align}
\sum_{m,n\in \mathbb{Z}}z^{-m}w^{-n}
\sum_{\alpha,\beta\geq 0}\delta_{m+n+\alpha-\beta,0}K^-_{-\alpha}K^+_{\beta}
=& \sum_{m\in \mathbb{Z}}z^{-m}w^{m}\sum_{\alpha,\beta\geq 0}
w^{\alpha}K^-_{-\alpha}w^{-\beta}K^+_{\beta} \CR
=& \delta\left({w\over z} \right)K^-(w)K^+(w).
\end{align}
Similarly we have
\begin{align}
&\sum_{m,n\in \mathbb{Z}}z^{-m}w^{-n}
\sum_{\alpha,\beta\geq 0}c_{m+n+\alpha-\beta}^{\mathrm A} (k+1)K^-_{-\alpha}T_{m+n+\alpha-\beta}K^+_{\beta} \CR
=&\sum_{m\in \mathbb{Z}}z^{-m}w^{m}
\sum_{\alpha,\beta\geq 0}\sum_{n\in \mathbb{Z}}
w^{\alpha}K^-_{-\alpha}
w^{-m-n-\alpha+\beta}
c_{m+n+\alpha-\beta}^{\mathrm A}(k+1)T_{m+n+\alpha-\beta}
w^{-\beta}K^+_{\beta} \CR
=& \delta\left({w\over z} \right)K^-(w)\widetilde{T}^{\mathrm A}(w)K^+(w),
\end{align}
where we have defined 
\begin{align}
\widetilde{T}^{\mathrm A}(z)=&\sum_{m\in \mathbb{Z}}c_{m}^{\mathrm A}(k+1)z^{-m}T_{m} \CR
=&~c_{0}^{\mathrm A}(k+1) {1\over 2} (T(z)+T(-z))+ c_{1}^{\mathrm A}(k+1) {1\over 2} (T(z)-T(-z)).
\end{align}
Finally the quartic term in the modes $K_m$ is
\begin{align}
&\sum_{m,n\in \mathbb{Z}}z^{-m}w^{-n}
\sum_{\alpha,\beta\geq 0}\sum_{\mu,\nu\geq 0}c_{-\mu+\nu}^{\mathrm A}(2k+2) \delta_{m+n+\alpha-\beta+\mu-\nu,0}{1\over [k+1]}
K^-_{-\alpha}K^-_{-\mu} K^+_{\nu}K^+_{\beta} \CR
&=\sum_{m\in \mathbb{Z}}z^{-m}w^{m}
\sum_{\alpha,\beta\geq 0}\sum_{\mu,\nu\geq 0}c_{-\mu+\nu}^{\mathrm A}(2k+2) 
{1\over [k+1]}
w^{\alpha}K^-_{-\alpha}w^{\mu}K^-_{-\mu} w^{-\nu}K^+_{\nu}w^{-\beta}K^+_{\beta} \CR
&={1\over [k+1]} \delta\left({w\over z} \right)K^- (w)\widetilde{K}^{\mathrm A}(w)K^+(w),
\end{align}
where we have defined 
\begin{align}
&\widetilde{K}^{\mathrm A}(z)=\sum_{m\in \mathbb{Z}}c_{m}^{\mathrm A}(2k+2)z^{-m}K_{m} \CR
=&~c_{0}^{\mathrm A}(2k+2) {1\over 2} (K(z)+K(-z))+ c_{1}^{\mathrm A}(2k+2) {1\over 2} (K(z)-K(-z)).
\end{align}
In summary we obtain
\begin{align}
T(z)T(w)-T(w)T(z) =&~(q-q^{-1}){[k+2]^2\over [k+1]}
\sum_{\epsilon=\pm 1}\epsilon \,\,
\delta\left(q^{(2k+2)\epsilon }{w\over z} \right)
K^- (q^{(k+1)\epsilon}w) \CR
&\times\Biggl({1\over [k+1]}  \widetilde{K}^{\mathrm A}(q^{(k+1)\epsilon}w)
-c_0^{\mathrm A}(k+2)+\widetilde{T}^{\mathrm A}(q^{(k+1)\epsilon}w) \CR
&\qquad - (q-q^{-1})^3 :G^+(q^{(k+1)\epsilon}w)G^-(q^{(k+1)\epsilon}w):\Biggr)K^+(q^{(k+1)\epsilon}w).
\end{align}

\end{proof}

\begin{rmk}
In the case of the deformed Virasoro algebra and the deformed $W_3$ algebra,
the commutation relation of $T(z)$ are
\begin{equation}\label{qVir}
 f^{(2)}(w/z)T(z)T(w) - f^{(2)}(z/w)T(w)T(z) 
=\frac{(1-q_1)(1-q_2)}{1- q_3^{-1}} \left( \delta(q_3w/z) - \delta(w/q_3z) \right),
\end{equation}
and 
\beqa
&&f^{(3)}(w/z) T(z) T(w) - f^{(3)}(z/w) T(w) T(z) \CR
&=& \frac{(1-q_1)(1-q_2)}{1- q_3^{-1}} \left( \delta(q_3w/z)W(q_3^{1/2}w) - \delta(w/q_3z) W(q_3^{1/2}z)\right),
\eeqa
where $q_1=q,q_2=t^{-1}$ and $q_3=p^{-1}=(q_1q_2)^{-1}$. $W(z)$ is a higher current in the $W_3$ algebra.\footnote{
$W(z)$ has nothing to do with the $W$-current defined by \eqref{W-current} and appears in \eqref{rr-11}.}
Thus, the structure function (the generating function of the structure constants) 
\beq\label{structuregeneral}
f^{(N)}(z) = \exp \left( \sum_{n=1}^\infty \frac{z^n}{n}(1-q_1^n)(1-q_2^n) 
\frac{1-q_3^{-(N-1)n}}{1- q_3^{-Nn}}
\right)
\eeq
is required for writing down the commutation relation of $T(z)$. 
Contrary to these cases, for the deformed $\mathcal{N}=2$ superconformal algebra, 
we do not have such structure functions in $G$-$G$, $G$-$T$ and $T$-$T$ commutation relations.
In the free field representation to be discussed in section \ref{Twist-Wakimoto}, we will see that
this is a result of the cancellation of OPE coefficients coming from the parafermion sector and 
the $U(1)$ boson sector.
\end{rmk}

\begin{rmk}
The associative algebra $\Svir$ does not contain the $q$-deformed algebra as a subalgebra.
This is a common feature to the $q$-deformed $W$-algebras \cite{Awata:1995zk}.
\end{rmk}

\section{Verma Modules and the Kac determinants}

The formulae for the Kac determinants of the $\mathcal{N}=2$ superconformal algebra were worked out by several groups;
\cite{Boucher:1986bh}, \cite{DiVecchia:1985ief}, \cite{DiVecchia:1986fwg}, \cite{Nam:1985qe}, \cite{Kato:1986td}, \cite{KM-adv}.
For the deformed Virasoro algebra, an explicit formula of the Kac determinant was conjectured in 
\cite{Shiraishi:1995rp}. See also \cite{BP}. 
In this section we explore the singular vectors in the Verma modules of $\Svir$
and give a conjecture on the factorization property of the Kac determinants. 
Useful methods of obtaining explicit forms of the Virasoro singular vectors are given, for example,
in \cite{Benoit:1988aw}, \cite{Bauer:1991qm}, \cite{MillionschikovA}, \cite{MillionschikovB}.
For explicit forms of the low-lying singular vectors in the case of the deformed Virasoro algebra,
see \cite{Shiraishi-SGC}.

\subsection{Verma modules in the {\rm NS} sector}
\begin{dfn}
Let $h$ and $u$ be complex parameters. Consider the {\rm NS} sector of $\Svir$.
Let $|h,u\rangle$ be the
highest weight vector satisfying the conditions
\begin{align}\label{hwv}
&K^\pm_0 |h,u\rangle=u |h,u\rangle,\qquad 
T_0|h,u\rangle=h |h,u\rangle,\nonumber\\
&K^+_m |h,u\rangle=T_m |h,u\rangle=0\qquad (m>0), \\
&G^\pm_m |h,u\rangle=0\qquad (m\geq 1/2).\nonumber
\end{align}
The Verma module $M_{h,u}$ in the {\rm NS} sector is defined to be the left 
$\Svir$ module $M_{h,u}=\Svir|h,u\rangle$.
\end{dfn}

\begin{dfn}\label{hwv-dual}
The dual Verma module $M^*_{h,u}$ in the {\rm NS} sector is defined to be the right $\Svir$ module
$M^*_{h,u}=\langle h,u|\Svir$, with the vector $\langle h,u|$ 
satisfying the conditions
\begin{align}
&\langle h,u|h ,u\rangle=1,\nonumber\\
&\langle h,u|K^\pm_0 = u \langle h,u|,\qquad 
\langle h,u|T_0= h \langle h,u|,\nonumber\\
&\langle h,u|K^-_m = \langle h,u|T_m =0\qquad (m<0), \\
&\langle h,u|G^\pm_m =0\qquad (m\leq -1/2).\nonumber
\end{align}
\end{dfn}

Recall that we have the character formula \cite{Boucher:1986bh};
\begin{align}
{\rm ch}_{\rm NS}(p,x)= \Tr_{\mathcal{V}_{\mathrm{NS}}} \left( p^{\mathsf{L}_0 - h} x^{\mathsf{I}_0-u} \right) 
= \prod_{i=0}^\infty {(1+p^{i+1/2}x)(1+p^{i+1/2}x^{-1})\over (1-p^{i+1}) (1-p^{i+1})},
\qquad |p|, |x| <1 \label{ch-NS}
\end{align}
for the Verma module $\mathcal{V}_{\mathrm{NS}}$
of the ordinary $\mathcal{N}=2$ superconformal algebra in the {\rm NS} sector. 
$\mathsf{L}_0$ and $\mathsf{I}_0$ are mutually commuting zero modes of
the $\mathcal{N}=2$ superconformal algebra (see section \ref{sec:CFT-limit}) and 
they have eigenvalues $h$ and $u$ on the ground state of $\mathcal{V}_{\mathrm{NS}}$.
Since we lack a $q$-analogue of the Poincar\'e-Birkoff-Witt (PBW) theorem for $\Svir$, 
we simply assume that we have the same  character for $M_{h,u}$ by the rule
\begin{align*}
&K^\pm_{-m},T_{-m}, G^\pm_{-m} \mbox{ have the $p$-degree } m,\\
&K^\pm_{-m},T_{-m}\mbox{ have the $x$-degree } 0,\\
&G^\pm_{-m} \mbox{ have the $x$-degree } \pm 1.
\end{align*}

\begin{dfn}
A finite non-increasing sequence of positive integers 
$\lambda=(\lambda_1,\lambda_2,\ldots,\lambda_l)$
($\lambda_i\in \mathbb{Z}, \lambda_1\geq \lambda_2\geq \cdots \geq \lambda_l>0$)
is called a partition. We denote by $\ell(\lambda)=l$ the length of $\lambda$. 
The set of partitions is denoted by $\mathcal{P}$.
A finite strictly decreasing sequence of positive half-integers 
$\alpha=(\alpha_1,\alpha_2,\ldots,\alpha_l)$
($\alpha_i\in \mathbb{Z}+1/2, \alpha_1> \alpha_2> \cdots > \alpha_l>0$)
is called a fermionic partition in the {\rm NS} sector. We denote by $\ell(\alpha)=l$ the length of $\alpha$. 
The set of fermionic partitions in the {\rm NS} sector is denoted by $\mathcal{P}^{\rm NS}$.
\end{dfn}

Let $\lambda=(\lambda_1,\ldots,\lambda_l)$ be a partition and 
$\alpha=(\alpha_1,\ldots,\alpha_a)$ be a fermionic partition in the {\rm NS} sector.
We introduce the following notations for the ordered products of generators
\begin{align*}
&K^-_{-\lambda}=K^-_{-\lambda_1}K^-_{-\lambda_2}\cdots K^-_{-\lambda_l},\qquad 
K^+_{\lambda}=K^+_{\lambda_l}\cdots K^+_{\lambda_2}K^+_{\lambda_1}
\\
&T_{-\lambda}=T_{-\lambda_1}T_{-\lambda_2}\cdots T_{-\lambda_l},\qquad \,\,\,\,\,\,
T_{\lambda}=T_{\lambda_l}\cdots T_{\lambda_2}T_{\lambda_1},\\
&G^\pm_{-\alpha}=G^\pm_{-\alpha_1}G^\pm_{-\alpha_2}\cdots G^\pm_{-\alpha_a},\qquad
G^\pm_{\alpha}=G^\pm_{\alpha_a}\cdots G^\pm_{\alpha_2} G^\pm_{\alpha_1}.
\end{align*}
Then, for a pair of partitions
$\lambda=(\lambda_1,\ldots,\lambda_l),\mu=(\mu_1,\ldots,\mu_m)$, and 
a pair of fermionic partitions in the {\rm NS} sector
$\alpha=(\alpha_1,\ldots,\alpha_a),\beta=(\beta_1,\ldots,\beta_b)$, 
set 
\begin{align*}
&|\lambda,\mu,\alpha,\beta\rangle=|\lambda,\mu,\alpha,\beta;h,u\rangle=
K^-_{-\lambda}T_{-\mu}G^+_{-\alpha}G^-_{-\beta}|h,u\rangle,\\
&\langle \lambda,\mu,\alpha,\beta|=\langle\lambda,\mu,\alpha,\beta;h,u|=
\langle h,u|G^+_{\beta}G^-_{\alpha}T_{\mu}K^+_{\lambda}.
\end{align*}
Note that we have the lexicographical orderings on the sets $\mathcal{P}$ and $\mathcal{P}^{\rm NS}$. 
Hence one may introduce the associated total ordering on the set
$\mathcal{P}\times \mathcal{P}\times \mathcal{P}^{\rm NS}\times \mathcal{P}^{\rm NS}$.

\bigskip

Example: for the subspace with $p$-degree $2$ and $x$-degree $0$ we have
\begin{align*}
&((2),\emptyset,\emptyset,\emptyset)>((1^2),\emptyset,\emptyset,\emptyset)>((1),(1),\emptyset,\emptyset)>
((1),\emptyset,(1/2),(1/2))
>(\emptyset,(2),\emptyset,\emptyset)\\
&>(\emptyset,(1^2),\emptyset,\emptyset)>
(\emptyset,(1),(1/2),(1/2))
>(\emptyset,\emptyset,(3/2),(1/2))>(\emptyset,\emptyset,(1/2),(3/2)).
\end{align*}

We define the $p$- and the $x$-degrees for the states in $M_{h,u}$ and $M^*_{h,u}$ as follows.
\begin{center}
\begin{tabular}{c|c|c}
state& $p$-degree& $x$-degree\\[2mm]\hline
$ |\lambda,\mu,\alpha,\beta;h,u\rangle$&$|\lambda|+|\mu|+|\alpha|+|\beta|$& $\ell(\alpha)-\ell(\beta)$\\[2mm]\hline
$ \langle \lambda,\mu,\alpha,\beta;h,u|$&$|\lambda|+|\mu|+|\alpha|+|\beta|$& $\ell(\alpha)-\ell(\beta)$
\end{tabular}
\end{center}

\begin{prp}
The ordered collection $(|\lambda,\mu,\alpha,\beta\rangle)_{\lambda,\mu \in \mathcal{P}, \alpha,\beta\in \mathcal{P}^{\rm NS}}$ 
forms a basis of  $M_{h,u}$. Similarly, $(\langle\lambda,\mu,\alpha,\beta|)_{\lambda,\mu \in \mathcal{P}, 
\alpha,\beta\in \mathcal{P}^{\rm NS}}$ forms a basis of $M^*_{h,u}$.
\end{prp}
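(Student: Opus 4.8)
The plan is to split the statement into a \emph{spanning} claim and a \emph{linear independence} claim, and to deduce the latter from the assumed character \eqref{ch-NS} once the former is in hand. The first thing to record is a counting identity: the number of ordered monomials $|\lambda,\mu,\alpha,\beta\rangle$ of fixed $p$-degree $n$ and $x$-degree $j$ is exactly the coefficient of $p^n x^j$ in $\mathrm{ch}_{\rm NS}(p,x)$. Indeed the two bosonic families $K^-_{-\lambda}$ and $T_{-\mu}$, indexed by arbitrary partitions, produce the two denominator factors $\prod_{i\geq 0}(1-p^{i+1})^{-1}$, while the fermionic families $G^+_{-\alpha}$ and $G^-_{-\beta}$, indexed by strictly decreasing positive half-integers, produce $\prod_{i\geq 0}(1+p^{i+1/2}x)$ and $\prod_{i\geq 0}(1+p^{i+1/2}x^{-1})$. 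Hence, granting the assumed dimension of each bigraded piece, a spanning set of the correct cardinality is automatically a basis, and the whole proposition reduces to showing that the ordered monomials span $M_{h,u}$.

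For the spanning claim I would argue by reduction. Every vector of $M_{h,u}$ is a linear combination of words $X_1\cdots X_r|h,u\rangle$ in the generators. Using \eqref{r-1} and the reordering relations one moves all annihilation operators (the modes $K^+_m,T_m$ with $m>0$ and $G^\pm_m$ with $m\geq 1/2$) to the right, where by \eqref{hwv} they either annihilate $|h,u\rangle$ or act by the scalars $u$ and $h$; one then sorts the surviving creation operators into the canonical block order $K^-<T<G^+<G^-$ and, within each block, into partition order. The essential structural point is that every relation \eqref{r-2}--\eqref{r-11} is homogeneous for the bigrading, preserving both the $p$-degree $-\sum(\text{modes})$ and the $x$-degree, so the reduction stays inside a fixed finite-dimensional bigraded piece. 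One then runs a double induction, outer on the $p$-degree and inner on a secondary statistic, to conclude that the process terminates on a combination of ordered monomials.

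Linear independence now follows from the counting remark: in each bigrading the ordered monomials span the graded piece of $M_{h,u}$ and are precisely $\dim$-many, so they form a basis. The dual statement for $M^*_{h,u}$ is established in the same manner, moving the annihilation operators (now the negative modes of Definition \ref{hwv-dual}) to the left and reading each relation from right to left; alternatively it follows from the nondegeneracy, on each bigraded piece, of the pairing $\langle\cdot|\cdot\rangle$.

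The main obstacle is the termination and consistency of the reduction, i.e.\ exactly the PBW content the paper otherwise sidesteps. The delicate relations are the $G^+$-$G^-$ relation \eqref{r-9}, the $G$-$T$ relation \eqref{r-10}, and above all the quartic $T$-$T$ relation \eqref{r-11}, whose right-hand side, through $W_m$ in \eqref{W_m}, reintroduces bilinears $G^+G^-$ alongside quartic products of the $K$'s. Consequently no count of a single generator type is monotone under reduction (two $T$'s can be traded for one $T$ at the cost of producing a $G^+G^-$ pair), and the secondary statistic must be a carefully weighted lexicographic combination guaranteeing that every correction term is strictly smaller and that no cycle occurs. The clean way to make this rigorous is Bergman's diamond lemma: orient \eqref{r-2}--\eqref{r-11} as bigrading-compatible rewriting rules and verify confluence by checking that every overlap ambiguity, such as the triple words $G^+G^-T$, $TTT$, $G^\pm G^\pm G^\mp$ and $K^-K^+T$, resolves to a single normal form. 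Verifying confluence is where the quartic relation \eqref{r-11} makes the bookkeeping heavy, and it is precisely this step that the character-counting route above is designed to bypass for the linear-independence half of the argument.
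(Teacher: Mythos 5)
Your spanning argument coincides with the paper's first step (the defining relations are read as normal-ordering rules), and you are in fact more explicit than the paper about the real difficulty there --- termination of the rewriting in the presence of the quartic relation \eqref{r-11}, which you propose to handle via the diamond lemma. Neither you nor the paper actually carries this out, so on that half you are at the same level of rigor as the paper's ``Sketch of proof''.

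The genuine gap is in your linear-independence step. You deduce it by counting ordered monomials against the character \eqref{ch-NS} of $M_{h,u}$. But that character is not a known fact about $M_{h,u}$: the paper introduces it with the words ``Since we lack a $q$-analogue of the PBW theorem for $\Svir$, we simply \emph{assume} that we have the same character''. Once spanning is established, the statement that each bigraded piece of $M_{h,u}$ has dimension equal to the number of ordered monomials is logically equivalent to the linear independence you are trying to prove; invoking the assumed character therefore begs the question, and the Proposition would carry no content beyond restating the assumption. The paper avoids this circularity by a deformation argument: setting $q=e^\hbar$ and using the renormalized generators $K'(z)=\hbar^{-1}(K(z)-1)$ and $T'(z)=\hbar^{-2}(T(z)-\tfrac{k}{k+1}K(z)-1)$, the $\hbar$-expansions of Propositions \ref{prp:K-ser}, \ref{Exp-1} and Definition \ref{CFTcurrents} show that the collection $({K'}_{-\lambda}{T'}_{-\mu}G^+_{-\alpha}G^-_{-\beta}|h,u\rangle)$ degenerates at $\hbar=0$ to a PBW basis of the ordinary $\mathcal{N}=2$ superconformal Lie superalgebra, where the classical PBW theorem gives linear independence; holomorphy in $\hbar$ of the relevant determinant then extends nonvanishing to small $\hbar\neq 0$, and an upper-triangular transition matrix with nonzero diagonal relates this collection to $(|\lambda,\mu,\alpha,\beta\rangle)$. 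If you want to keep your counting scheme, you must first prove the character formula for $M_{h,u}$ by some independent means --- which is exactly what the degeneration argument supplies.
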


Sketch of proof~:~
First, observe that all the defining relations in Definition \ref{DefSvir} 
play the role of the  normal ordering rules.  
Hence the collection $(|\lambda,\mu,\alpha,\beta\rangle)_{\lambda,\mu \in \mathcal{P}, \alpha,\beta\in \mathcal{P}^{\rm NS}}$ 
spans  $M_{h,u}$.

Next, we show that the collection is linearly independent. 
We use a deformation argument with respect to the parameter $q=e^\hbar$.
From Propositions \ref{prp:K-ser}, \ref{Exp-1}, Definition \ref{CFTcurrents} below, we have
the $\hbar$ expansions of the form
\begin{align*}
&G^\pm(z)=\sqrt{k+2\over 2} \mathsf{G}^\pm(z)+ \mathcal{O}(\hbar),\\
&K'(z):={1\over \hbar} (K(z)-1)=2(k+2) \mathsf{I}(z)+ \mathcal{O}(\hbar),\\
&T'(z):={1\over \hbar^2}\left(T(z)-{k\over k+1} K(z)-1\right)=
\left(  4(k+2) \mathsf{L}(z)-{k(2k+1)\over 6(k+1)}\right)+ \mathcal{O}(\hbar),
\end{align*}
where $ \mathsf{G}^\pm(z), \mathsf{I}(z),  \mathsf{L}(z)$ satisfy the $\mathcal{N}=2$ superconformal algebra 
as in Theorem \ref{CFT-Theorem}.
Hence in the limit $\hbar \rightarrow 0$ the ordered collection 
$({K'}_{-\lambda}{T'}_{-\mu}G^+_{-\alpha}G^-_{-\beta}|h,u\rangle)$
tends to a PBW basis of the $\mathcal{N}=2$ superconformal Lie superalgebra, 
proving the linear independence of the collection for $\hbar=0$. 
Then the deformation argument shows that we have the linear independence 
also for $\hbar\neq 0$.\footnote{
Since the determinant is a holomorphic function of $\hbar$, if it is non-vanishing at $\hbar=0$,
the same is valid for $\hbar \neq 0$ with sufficiently small $|\hbar|$.}
Recalling that $K(z)=K^-(z)K^+(z)$, 
we know that the transition matrix from 
$({K'}_{-\lambda}{T'}_{-\mu}G^+_{-\alpha}G^-_{-\beta}|h,u\rangle)$ to 
$( |\lambda,\mu,\alpha,\beta;h,u\rangle)$ is an upper triangular matrix with 
non vanishing diagonal entries. \hfill $\square$

Recall the definition of a singular vector.
A singular vector $|\chi\rangle \in M_{h,u}$ is a non-zero vector satisfying
\begin{align}\label{singular}
&K^\pm_0 |\chi\rangle=u_\chi|\chi\rangle,\qquad
T_0|\chi\rangle=h_\chi |\chi\rangle\qquad ((h_\chi,u_\chi)\neq (h,u)
\,\, \mbox{i.e.}\,\, |\chi\rangle \not \propto |h,u\rangle ),\nonumber\\
&K^+_m |\chi\rangle=T_m |\chi\rangle=0\qquad (m>0), \\
&G^\pm_m |\chi\rangle=0\qquad (m\geq 1/2).\nonumber
\end{align}
Hence, the information about the zero's of the Kac determinant plays an essential role for
finding the singular vectors.
When there exists a singular vector $|\chi\rangle \in M_{h,u}$, we have a proper submodule 
$\Svir|\chi\rangle \subsetneq M_{h,u}$.

\subsection{Notations}
We prepare some notations necessary for our description of the Kac determinants.

\begin{dfn}
Let $P^{\rm NS} (n,j)$ denotes the multiplicity of the {\rm NS}-character:
\begin{align}
\sum_{n\in {1\over 2}\mathbb{Z}_{\geq 0}}\sum_{j\in \mathbb Z} P^{\rm NS} (n,j)
p^n x^j=
\prod_{i=0}^\infty {(1+p^{i+1/2}x)(1+p^{i+1/2}x^{-1})\over (1-p^{i+1}) (1-p^{i+1})}.
\end{align}
For $\ell \in \mathbb{Z}+{1\over 2}$, let
$\widetilde{P}^{\rm NS} (n,j;\ell)$ denotes the multiplicity of the character
with one fermion with the weight $p^{|\ell|} x^{{\rm sgn}(\ell) }$ missing:
\begin{align}
\sum_{n\in {1\over 2}\mathbb{Z}_{\geq 0}}\sum_{j\in \mathbb Z} 
\widetilde{P}^{\rm NS} (n,j;\ell)
p^n x^j={1\over 1+p^{|\ell|} x^{{\rm sgn}(\ell) }}
\prod_{i=0}^\infty {(1+p^{i+1/2}x)(1+p^{i+1/2}x^{-1})\over (1-p^{i+1}) (1-p^{i+1})},
\end{align}
where ${\rm sgn}(\ell)=\ell/|\ell|$.
\end{dfn}

\begin{dfn}
Set
\begin{align}
f(r,s;u,v) =&~u^4 (q-q^{-1})^4
\Bigl( q^{1-r+(k+2)s} v - q^{-1+r-(k+2)s} v^{-1}\Bigr) \CR
& \qquad \times \Bigl( q^{-1-r+(k+2)s} v^{-1} - q^{1+r-(k+2)s} v\Bigr),
\end{align}
and
\begin{align}
g(\ell;u,v) =&~{-u \over (q-q^{-1})^2} 
\Bigl( q^{\ell+1\over 2} u^{1\over 2} v^{1\over 2}-
q^{-{\ell+1\over 2}} u^{-{1\over 2}} v^{-{1\over 2}} \Bigr) \CR
& \quad \times  \Bigl( q^{\ell-1\over 2} u^{1\over 2} v^{-{1\over 2}}-
q^{-{\ell-1\over 2}} u^{-{1\over 2}} v^{{1\over 2}} \Bigr).
\end{align}
\end{dfn}

\subsection{Kac determinants in the {\rm NS} sector}
We study the Kac determinants associated with the Verma module $M_{h,u}$ in the NS sector.
We denote by ${\rm det}_{n,j}={\rm det}_{n,j}^{\rm NS}(h,u)$
the Kac determinant in the NS sector associated with the subspace in $M_{h,u}$ 
having the $p$-degree $n$ and the $x$-degree $j$.

\begin{dfn}
Let $v \in \mathbb{C}^{\times}$ be a generic parameter. 
Introduce the parametrization of $h$ in terms of the parameter $v$ as
\begin{align}\label{huv}
h (u,v) :=q u v- {[k+2]\over [k+1]}u^2+q^{-1} u v^{-1}.
\end{align}
\end{dfn}

Note that we have
\begin{equation*}
f(r,s;u,v)
=u^4 (q-q^{-1})^4
\biggl((q^{r-(k+2)s}+q^{-r+(k+2)s})^2-h^2 u^{-2}
- 2 h {[k+2]\over [k+1]} -u^2 {[k+2]^2\over [k+1]^2}\Biggr),
\end{equation*}
and 
\begin{equation*}
g(\ell;u,v)
={1\over (q-q^{-1})^2}
\biggl(-q^\ell u+ {[k+2]\over [k+1]} u^2+h-q^{-\ell}\biggr).
\end{equation*}

\begin{con}\label{Kac-det-NS}
We have
\begin{align}
{\rm det}_{n,j}^{\rm NS}(h,u) =
{\rm cst.}
\prod_{\scriptstyle r,s \in \mathbb{Z}_{> 0}
\atop 
\scriptstyle 1\leq rs\leq n} \Bigl(f(r,s;u,v)\Bigr)^{P^{\rm NS}(n-rs,j)}
\prod_{\ell\in \mathbb{Z}+{1\over 2}}
\Bigl(g(2\ell;u,v)\Bigr)^{\widetilde{P}^{\rm NS}(n-|\ell|,j-{\rm sgn}(\ell);\ell)},
\end{align}
where {\rm cst.} is a certain non zero constant not depending on $u$ or $v$.
\end{con}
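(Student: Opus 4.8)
The plan is to follow the classical strategy for Kac determinants (Kac--Kazhdan, Feigin--Fuchs), adapted to the quartic algebra $\Svir$. Fix the $p$-degree $n$ and the $x$-degree $j$, and let $\mathrm{Gr}_{n,j}$ be the matrix of the contravariant pairing between the homogeneous subspaces of $M_{h,u}$ and $M^*_{h,u}$ in the basis $(|\lambda,\mu,\alpha,\beta\rangle)$ furnished by the basis Proposition above, so that $\det_{n,j}^{\rm NS}(h,u)=\det \mathrm{Gr}_{n,j}$. First I would establish that, after the substitution $h=h(u,v)$ of \eqref{huv}, each matrix entry is a Laurent polynomial in $u$ and $v$, hence so is $\det_{n,j}^{\rm NS}$; the key input is that every defining relation of Definition \ref{DefSvir} acts as a normal-ordering rule, so that sweeping all annihilation operators to the right against $|h,u\rangle$ terminates and produces entries of controlled degree.

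The heart of the argument is to locate the zero locus together with its multiplicities. For the bosonic factors I would show that whenever $f(r,s;u,v)=0$ the module $M_{h,u}$ carries a singular vector at $p$-degree $rs$ and $x$-degree $0$; this is exactly the content of the rewritten $q$-deformed Kac relation $(q^{r-(k+2)s}+q^{-r+(k+2)s})^2=h^2u^{-2}+2h[k+2]/[k+1]+u^2[k+2]^2/[k+1]^2$. For the fermionic factors, the linear locus $g(2\ell;u,v)=0$ is where a charged singular vector, built from a single $G^\pm$-type descendant, appears at $p$-degree $|\ell|$ and $x$-degree $\mathrm{sgn}(\ell)$. Each singular vector generates a proper submodule whose homogeneous dimensions are counted exactly by $P^{\rm NS}(n-rs,j)$ (respectively $\widetilde{P}^{\rm NS}(n-|\ell|,j-\mathrm{sgn}(\ell);\ell)$, the character with the one used fermionic mode removed), so the pairing degenerates on it and $\det_{n,j}^{\rm NS}$ is divisible by the corresponding power of $f$ (resp.\ of $g$). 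This yields the desired lower bound on the order of vanishing along each line.

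To actually produce the singular vectors and to verify that the listed lines are complete, I would use the screening operators: realising $M_{h,u}$ inside a Fock module of the twisted Wakimoto representation of section \ref{Twist-Wakimoto}, the intertwiners built from the two screening currents $S^\pm$ degenerate precisely on the loci $f(r,s;u,v)=0$ and $g(2\ell;u,v)=0$. This is the vanishing-line computation carried out in Appendix C, and it simultaneously exhibits the singular vectors and shows there are no further reducibility loci. A degree count then closes the argument: the total degree of the conjectured product (each bosonic factor contributing a fixed degree, quadratic in $h$, and each fermionic factor half as much, linear in $h$, weighted by the multiplicities) is matched against the degree of $\det_{n,j}^{\rm NS}$ read off from the product of diagonal Gram entries. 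Once the degrees agree, the two sides coincide up to a factor independent of $u$ and $v$, and that constant is pinned down by the $\hbar\to 0$ expansions recorded in the sketch of proof (for $G^\pm$, $K'$ and $T'$), which reduce $\det_{n,j}^{\rm NS}$ to the known $\mathcal{N}=2$ superconformal Kac determinant of Theorem \ref{CFT-Theorem}.

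The main obstacle is the exact matching of multiplicities in the absence of a PBW theorem for $\Svir$. The divisibility argument only bounds the order of vanishing along each line from below, while completeness of the screening lines only bounds the total degree from the other side; closing the gap requires showing that both bounds are simultaneously saturated for every $(n,j)$, i.e.\ that the determinant has neither spurious extra zeros nor embedded singular vectors forcing higher-order vanishing. Controlling this — essentially proving the combinatorial identity relating $P^{\rm NS}$ and $\widetilde{P}^{\rm NS}$ and ruling out degenerations of the Fock resolution at non-generic $(q,k)$ — is precisely the step that the present paper leaves open and that any complete proof must supply.
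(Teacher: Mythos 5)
The statement you are addressing is presented in the paper as Conjecture \ref{Kac-det-NS}; the paper offers no proof of it, only supporting evidence: explicit (partly Mathematica-assisted, and for $n=2,5/2$ themselves conjectural) factorizations of ${\rm det}^{\rm NS}_{n,j}$ at low level, the screening-operator vanishing lines of Appendix \ref{app:screening}, and consistency with the classical $\mathcal{N}=2$ Kac determinant in the $q\to1$ limit. Your outline assembles essentially these same three ingredients into the standard Feigin--Fuchs template, so as a description of the available evidence it is faithful; but as a proof it has the gaps you yourself flag at the end, and it is worth being precise about why they are not merely technical.

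First, the divisibility step: to conclude that $f(r,s;u,v)^{P^{\rm NS}(n-rs,j)}$ divides ${\rm det}^{\rm NS}_{n,j}$ you need the submodule generated by the degree-$(rs,0)$ singular vector to have graded dimension $P^{\rm NS}(n-rs,j)$ at level $(n,j)$, i.e.\ to be a full Verma module; without a PBW theorem for $\Svir$ this is exactly the injectivity statement that is unavailable, and the paper only \emph{assumes} the Verma character to begin with. The issue is sharper for the fermionic factors: the exponent $\widetilde{P}^{\rm NS}$ encodes the extra relation annihilating the charged singular vector (the analogue of \eqref{level2/3-vanishing}), which the paper verifies only at low level by machine; you would need this vanishing, and the absence of further relations, at all levels. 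Second, reading the $(u,v)$-degree of the determinant off the diagonal Gram entries presupposes that the diagonal term dominates, which again rests on a triangularity valid for generic $q$ and not just in the $\hbar\to0$ limit. Third, the screening charges of Appendix \ref{app:screening} produce singular vectors in the \emph{Fock} modules; transferring them to the Verma module and arguing that the listed lines are \emph{complete} requires the Fock and Verma modules to be isomorphic for generic $(u,v)$, whereas the paper only observes that their characters agree and that irreducibility is ``expected''. None of these steps is carried out in the paper, which is why the statement remains a conjecture; your proposal correctly identifies the missing PBW/multiplicity input as the essential obstruction, so it should be presented as a strategy and supporting evidence rather than a proof.
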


It is possible to take the limit $q \to 1$ of Conjecture \ref{Kac-det-NS}, 
which reproduces the formula of the Kac determinant of the $\mathcal{N}=2$ superconformal 
algebra (\it{e.g.} in \cite{Kato:1986td}).
Explicit examples of the low-lying singular vectors are given in the next subsection. 
In Appendix \ref{app:screening} we work out the condition for the screening operators
between Fock modules to produce singular vectors, which provides a supporting
evidence for the Conjecture \ref{Kac-det-NS}.

\subsection{Examples of the Kac determinants in the {\rm NS} sector}
\subsubsection{Case $n=0,j=0$}
For the subspace with the $p$-degree zero in $M_{h,u}$, we 
only have the highest weight vector $|h,u\rangle\in M_{h,u}$ with the $x$-degree zero.
Hence we have
\begin{align*}
&{\rm det}^{\rm NS}_{0,0}=\langle h,u|h,u\rangle=1.
\end{align*}

\subsubsection{Case $n=1/2$, $j=\pm 1$}
For the subspace with the $p$-degree $1/2$, we 
have two vectors $G^+_{-1/2}|h,u\rangle,G^-_{-1/2}|h,u\rangle\in M_{h,u}$ with the $x$-degrees $ +1$ and $-1$. 
We have
\begin{align*}
&{\rm det}^{\rm NS}_{1/2,1}=\langle h,u|G^-_{1/2}G^+_{-1/2}|h,u\rangle\\
&=
{1\over (q-q^{-1})^2}\Biggl({1\over [k+1]}q^{-2k-2}u^2-q^{-k-2}+q^{-k-1} h\Biggr)=q^{-k-1} g(+1;u,v),\\
&{\rm det}^{\rm NS}_{1/2,-1}=\langle h,u|G^+_{1/2}G^-_{-1/2}|h,u\rangle\\
&=
{1\over (q-q^{-1})^2}\Biggl({1\over [k+1]}q^{+2k+2}u^2-q^{+k+2}+q^{+k+1} h\Biggr)=q^{+k+1} g(-1;u,v).
\end{align*}

\begin{prp}
If $g(\pm 1;u,v)=0$, then $G^\pm_{-1/2}|h,u\rangle$ is a singular vector.
\end{prp}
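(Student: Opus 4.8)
The plan is to verify the defining conditions \eqref{singular} directly for the candidate vector $G^\pm_{-1/2}|h,u\rangle$ under the hypothesis $g(\pm 1;u,v)=0$. First I would check the weight conditions: by Proposition \ref{Hei} (more precisely by \eqref{H_0}) the operator $G^\pm_{-1/2}$ shifts the $H_0$-eigenvalue, so $G^\pm_{-1/2}|h,u\rangle$ is still a $K^\pm_0$- and $T_0$-eigenvector, with eigenvalues that differ appropriately from $(h,u)$; in particular it is not proportional to $|h,u\rangle$, so it is a genuine candidate. Next I would confirm that $G^\pm_{-1/2}|h,u\rangle$ is \emph{nonzero} precisely when $g(\pm 1;u,v)=0$ is the \emph{only} relevant vanishing: this follows because the one-dimensional Gram matrix at $p$-degree $1/2$, $x$-degree $\pm 1$ is exactly $\mathrm{det}^{\rm NS}_{1/2,\pm 1}=q^{\mp(k+1)}g(\pm 1;u,v)$ as computed just above, so nonvanishing of the vector itself is automatic for generic parameters satisfying the single scalar constraint, and the computed inner product being zero is what forces singularity rather than nullity.

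The heart of the proof is to show the annihilation conditions, i.e. that $G^\pm_{-1/2}|h,u\rangle$ is killed by all positive modes. The highest-weight conditions \eqref{hwv} already give $K^+_m|h,u\rangle=T_m|h,u\rangle=0$ for $m>0$ and $G^\pm_m|h,u\rangle=0$ for $m\geq 1/2$. I would then commute each raising operator past $G^\pm_{-1/2}$ using the relations \eqref{r-6}--\eqref{r-11}. For the action of $K^+_m$ ($m>0$) I would use \eqref{r-7}, for $T_m$ ($m>0$) relation \eqref{r-10}, and for $G^\pm_{m}$ ($m\geq 1/2$) the anticommutator relations \eqref{r-8} and \eqref{r-9}. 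In most cases the commutator produces terms that either annihilate $|h,u\rangle$ outright or reduce to lower modes that do; the nontrivial check is the lowest-lying one, namely $G^\mp_{1/2}G^\pm_{-1/2}|h,u\rangle$, which by \eqref{r-9} equals a specific linear combination of $K^\pm_0$, the identity, and $T_0$ acting on $|h,u\rangle$. This scalar is exactly the inner product already computed, equal to $q^{\mp(k+1)}g(\pm 1;u,v)$ up to the prefactor, and the hypothesis $g(\pm 1;u,v)=0$ makes it vanish. Since $M_{h,u}$ has no null vectors at this degree for generic parameters, vanishing of this Gram entry forces $G^\mp_{1/2}G^\pm_{-1/2}|h,u\rangle=0$ as a vector.

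The main obstacle will be organizing the mode computations cleanly, especially keeping track of which terms in the summations of \eqref{r-7} and \eqref{r-10} survive. The summation ranges there ($\sum_{\ell=1}^n$ or $\sum_{\alpha,\beta\geq0}$) conspire with the highest-weight conditions so that only finitely many terms contribute, but one must verify that each contributing term either lands on an annihilated state or reassembles into a multiple of the Gram entry above. I expect the $T$-$G$ relation \eqref{r-10} to require the most care, since it couples $G^\pm$ back into the right-hand side through the dressing $K^-\,G^\pm\,K^+$, and one needs $T_m|h,u\rangle=0$ together with $G^\pm_{m'}|h,u\rangle=0$ for $m'\geq 1/2$ to collapse it. A clean way to streamline all of this is to argue in the generating-function language of Proposition \ref{generating_functions}: apply the current identities \eqref{rr-7}, \eqref{rr-9}, \eqref{rr-10} to $|h,u\rangle$, extract the relevant modes, and observe that every positive-mode obstruction reduces to the single scalar $g(\pm 1;u,v)$, which vanishes by assumption. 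This confirms all conditions in \eqref{singular} and completes the proof that $G^\pm_{-1/2}|h,u\rangle$ is a singular vector.
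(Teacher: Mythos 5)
Your proposal is correct and follows essentially the argument the paper leaves implicit: the only nontrivial annihilation condition is $G^\mp_{1/2}G^\pm_{-1/2}|h,u\rangle$, and relation \eqref{r-9} applied to the highest weight vector reduces it to the scalar $q^{\mp(k+1)}g(\pm 1;u,v)$, which is exactly the computed determinant $\mathrm{det}^{\rm NS}_{1/2,\pm 1}$. Note only that the final appeal to nondegeneracy is unnecessary, since \eqref{r-9} already gives $G^\mp_{1/2}G^\pm_{-1/2}|h,u\rangle=q^{\mp(k+1)}g(\pm 1;u,v)\,|h,u\rangle$ as an identity of vectors, not merely of inner products.
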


\begin{rmk}
Write 
$$
\vert \chi(1/2,\pm 1)\rangle=G^\pm_{-1/2}|h,u\rangle,
$$
for arbitrary $u$ and $v$.
Note that we have
\begin{align*}
G^\pm_{-1/2}\vert \chi(1/2,\pm 1)\rangle=0.
\end{align*}
This explains the factor ${1/(1+p^{1/2}x^{\pm 1}})$ in 
the character of the descendants of $G^\pm_{-1/2}|h,u\rangle$  given by 
$$
{1\over 1+p^{1/2}x^{\pm 1}}
\prod_{i=0}^\infty {(1+p^{i+1/2}x)(1+p^{i+1/2}x^{-1})\over (1-p^{i+1}) (1-p^{i+1})}
=
\sum_{n\in {1\over 2}\mathbb{Z}_{\geq 0} }\sum_{j\in \mathbb{Z}}
\widetilde{P}^{\rm NS}(n,j,\pm 1/2) p^n x^j.
$$
\end{rmk}

\subsubsection{Case $n=1$, $j=0$}
For the subspace with the $p$-degree $1$, we 
have three vectors $K^-_{-1}|h,u\rangle,T^-_{-1}|h,u\rangle,
G^+_{-1/2}G^-_{-1/2}|h,u\rangle\in M_{h,u}$ with the $x$-degree $0$. 
We have
\begin{align*}
&{\rm det}^{\rm NS}_{1,0}\\
&=\left|
\begin{array}{lll}
\langle h,u|K^+_{1}K^-_{-1}|h,u\rangle&
\langle h,u|K^+_{1}T_{-1}|h,u\rangle&
\langle h,u|K^+_{1}G^+_{-1/2}G^-_{-1/2}|h,u\rangle\\
\langle h,u|T_{1}K^-_{-1}|h,u\rangle&
\langle h,u|T_{1}T_{-1}|h,u\rangle&
\langle h,u|T_{1}G^+_{-1/2}G^-_{-1/2}|h,u\rangle\\
\langle h,u|G^+_{1/2}G^-_{1/2}K^-_{-1}|h,u\rangle&
\langle h,u|G^+_{1/2}G^-_{1/2}T_{-1}|h,u\rangle&
\langle h,u|G^+_{1/2}G^-_{1/2}G^+_{-1/2}G^-_{-1/2}|h,u\rangle
\end{array}
\right|\\
&=[k+2]^2 g(+1;u,v)g(-1;u,v)f(1,1;u,v).
\end{align*}

\begin{prp}
If $f(1,1;u,v)=0$, then 
\begin{align}
&\biggl(-1+{[k+2]\over[k+1]} q^{k+1}u\biggr)K^-_{-1}|h,u\rangle+T_{-1}|h,u\rangle\\
&+
(q-q^{-1})^3[k+2] \biggl({q^{k+2}u\over u-q^{k+2}}\biggr)
G^+_{-1/2}G^-_{-1/2}|h,u\rangle,\nonumber
\end{align}
 is a singular vector.
 \end{prp}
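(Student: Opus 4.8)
The plan is to verify directly that the displayed vector, which I write as
$|\chi\rangle=a\,K^-_{-1}|h,u\rangle+b\,T_{-1}|h,u\rangle+c\,G^+_{-1/2}G^-_{-1/2}|h,u\rangle$ with the indicated coefficients $a,b,c$, meets the defining conditions \eqref{singular} of a singular vector at $p$-degree $1$ and $x$-degree $0$. Because $|\chi\rangle$ lives at level one, any positive mode of $p$-degree larger than one annihilates it for pure degree reasons, so only $K^+_1$, $T_1$, $G^+_{1/2}$ and $G^-_{1/2}$ must be tested. The eigenvalue conditions come almost for free: each of the three spanning vectors has $H_0$-weight zero by \eqref{H_0}, whence $K^\pm_0|\chi\rangle=u|\chi\rangle$ automatically; and since we assume only $f(1,1;u,v)=0$ while $g(\pm1;u,v)\neq0$, the computed determinant ${\rm det}^{\rm NS}_{1,0}=[k+2]^2g(+1)g(-1)f(1,1)$ has a simple zero, so the radical of the contravariant form on the bidegree-$(1,0)$ subspace is one dimensional. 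As the zero modes $T_0,K^\pm_0$ are self-adjoint for that form and preserve the bidegree, they stabilise the radical; a nonzero highest weight vector lying in it is therefore automatically a $T_0$-eigenvector. Thus it remains only to establish the annihilation conditions.

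A useful first reduction removes the quartic relation \eqref{r-11} from the picture. Setting $m=n=1/2$ in \eqref{r-9} gives $T_1=(q-q^{-1})^2(G^+_{1/2}G^-_{1/2}+G^-_{1/2}G^+_{1/2})-{1\over[k+1]}\sum_{\alpha\geq0}K^-_{-\alpha}K^+_{\alpha+1}$, so that on $|\chi\rangle$ the identity $T_1|\chi\rangle=0$ follows at once from $G^\pm_{1/2}|\chi\rangle=0$ together with $K^+_1|\chi\rangle=0$ (the $K$-sum collapses to $K^-_0K^+_1|\chi\rangle$). Hence I would reduce the whole statement to the three conditions $K^+_1|\chi\rangle=0$ and $G^\pm_{1/2}|\chi\rangle=0$.

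Each of these is a finite computation in which the positive mode is commuted to the right until it hits $|h,u\rangle$. For $K^+_1|\chi\rangle$ one uses \eqref{r-3} on $K^-_{-1}$, \eqref{r-5} on $T_{-1}$ and \eqref{r-7} on the fermion bilinear, the only surviving fermionic contribution being $G^+_{1/2}G^-_{-1/2}|h,u\rangle=q^{k+1}g(-1;u,v)|h,u\rangle$, which is read off from \eqref{r-9}. For $G^-_{1/2}|\chi\rangle$ (and symmetrically $G^+_{1/2}|\chi\rangle$ via the involution of Remark \ref{Inv}) one applies \eqref{r-6} to $K^-_{-1}$, \eqref{r-10} to $T_{-1}$, and \eqref{r-9} together with \eqref{r-8} to $G^+_{-1/2}G^-_{-1/2}|h,u\rangle$. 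The delicate point here is that \eqref{r-9} must be applied to the fermionic descendant $G^-_{-1/2}|h,u\rangle$ rather than to the highest weight vector, which forces one to evaluate the action of $T_0$ and of $\sum_\alpha K^-_{-\alpha}K^+_\alpha$ on $G^-_{-1/2}|h,u\rangle$ (again through \eqref{r-10} and the centrality of $K^\pm_0$). In every case the outcome is a scalar multiple of $|h,u\rangle$ or of $G^\mp_{-1/2}|h,u\rangle$, the scalar being affine-linear in $h$ and expressible through $g(\pm1;u,v)$.

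Finally I would substitute the parametrization \eqref{huv} of $h$ and show that the three scalar equations are simultaneously satisfied by the stated $(a,b,c)$ exactly on the locus $f(1,1;u,v)=0$; equivalently, that the $3\times3$ matrix carrying $(a,b,c)$ to the coefficients of $K^+_1|\chi\rangle$, $G^+_{1/2}|\chi\rangle$, $G^-_{1/2}|\chi\rangle$ degenerates precisely there, with kernel spanned by the displayed vector. I expect the genuine obstacle to be bookkeeping rather than structure: one must check that the single condition $f(1,1)=0$ forces all three equations at once, and in particular that the pole of $c$ at $u=q^{k+2}$ is cancelled by the factor $(u-q^{k+2})$ thrown up when the $G^\pm_{1/2}$-conditions are cleared of denominators, so that $|\chi\rangle$ stays regular and nonzero on the vanishing locus. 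Organising the algebra around the factorisation $f(1,1;u,v)=u^4(q-q^{-1})^4(q^{k+2}v-q^{-k-2}v^{-1})(q^{k}v^{-1}-q^{-k}v)$ should keep this last verification manageable.
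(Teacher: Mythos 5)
Your overall strategy is the right one and, as far as one can tell, the same one the paper uses: the Proposition is stated with no printed proof, and these low-level singular vectors are evidently obtained by brute-force verification against the defining relations (the authors mention Mathematica for exactly this kind of check). Your reductions are all sound: by degree only $K^+_1$, $T_1$, $G^{\pm}_{1/2}$ need testing; the $K^{\pm}_0$-eigenvalue is automatic from the $H_0$-grading \eqref{H_0}; eliminating $T_1$ via the $m=n=1/2$ specialization of \eqref{r-9} is a genuine simplification; and the identification of \eqref{r-3}, \eqref{r-5}, \eqref{r-7}, \eqref{r-9}, \eqref{r-10} as the relations needed to push each positive mode to the vacuum is correct (e.g.\ $K^+_1K^-_{-1}|h,u\rangle=(q-q^{-1})^2[k][k+2]u^2|h,u\rangle$, $K^+_1T_{-1}|h,u\rangle=-(q-q^{-1})^2[k+2]uh\,|h,u\rangle$, and the fermion bilinear contributes through $G^+_{1/2}G^-_{-1/2}|h,u\rangle=q^{k+1}g(-1;u,v)|h,u\rangle$). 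The radical argument for the $T_0$-eigenvalue works, though a direct computation of $T_0|\chi\rangle$ is of the same difficulty and avoids invoking the determinant formula.

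The step that will not close as you describe it is the last one: ``show that the three scalar equations are simultaneously satisfied by the stated $(a,b,c)$ exactly on the locus $f(1,1;u,v)=0$.'' By the rewriting of $f$ given below \eqref{huv}, $f(1,1;u,v)$ is, at fixed $u$, a quadratic in $h$ whose two roots are $h_{\pm}=\pm(q^{k+1}+q^{-k-1})u-\frac{[k+2]}{[k+1]}u^2$ (i.e.\ $v\in\{q^{k},q^{-k-2}\}$ versus $v\in\{-q^{k},-q^{-k-2}\}$); since the displayed coefficients involve neither $v$ nor $h$, they can span the one-dimensional kernel of the Gram matrix on at most one of these components. Carrying your computation through, $K^+_1|\chi\rangle$ is proportional to $a[k]u-h+[k+2]\frac{q^{k+2}u}{\,u-q^{k+2}}\bigl(\frac{q^{k}}{[k+1]}u^2-1+q^{-1}h\bigr)$, and the last bracket factors as $-q^{-2}(u-q^{k+2})(u-q^{-k})$ when $h=h_+$ — so the pole of $c$ cancels and the whole expression vanishes identically in $u$ — but as $-q^{-2}(u+q^{k+2})(u+q^{-k})$ when $h=h_-$, leaving an uncancelled pole against the remaining polynomial terms, so $K^+_1|\chi\rangle\neq0$ there. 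You should therefore restrict the verification to the component $h=h_+$ (the one that survives the $q\to1$ limit), or re-derive $(a,b,c)$ as cofactors of the Gram matrix so that they depend on $h$ as well as $u$; as planned, the verification would run into a contradiction on the other component rather than establish the statement on all of $\{f(1,1;u,v)=0\}$.
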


\subsubsection{Case $n=3/2$, $j=\pm 1$}
For the subspace with the $p$-degree $1$, we 
have three vectors $K^-_{-1}G^\pm_{-1/2}|h,u\rangle,T^-_{-1}G^\pm_{-1/2}|h,u\rangle,
G^\pm_{-3/2}|h,u\rangle\in M_{h,u}$ with the $x$-degrees $\pm 1$. 
We have
\begin{align*}
&{\rm det}^{\rm NS}_{3/2,+1}=q^{-3k-1}[k+2]^2 g(+1;u,v)^2g(+3;u,v)f(1,1;u,v),\\
&{\rm det}^{\rm NS}_{3/2,-1}=q^{+3k+1}[k+2]^2 g(-1;u,v)^2g(-3;u,v)f(1,1;u,v).
\end{align*}

\begin{prp}
If $g(3;u,v)=0$, then 
\begin{align}
&{ q^{-k}u\over[k+1]} \,K^-_{-1}G^+_{-1/2}|h,u\rangle+T_{-1}G^+_{-1/2}|h,u\rangle
+
q^{k}(1-q^{-2})(1-q^{4}u^2)
G^+_{-3/2}|h,u\rangle,\nonumber
\end{align}
 is a singular vector.
If $g(-3;u,v)=0$, then 
\begin{align}
&{ q^{k}u\over[k+1]} \,K^-_{-1}G^-_{-1/2}|h,u\rangle+T_{-1}G^-_{-1/2}|h,u\rangle
+
q^{-k}(1-q^2)(1-q^{-4}u^2)
G^-_{-3/2}|h,u\rangle,\nonumber
\end{align}
 is a singular vector.
 \end{prp}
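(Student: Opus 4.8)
The plan is to identify the stated vector as the unique (up to scale) null vector of the Shapovalov pairing on the weight space of $p$-degree $3/2$ and $x$-degree $+1$, and then to deduce the $x$-degree $-1$ statement by symmetry. Write the general candidate as
$$|\chi\rangle = a\,K^-_{-1}G^+_{-1/2}|h,u\rangle + b\,T_{-1}G^+_{-1/2}|h,u\rangle + c\,G^+_{-3/2}|h,u\rangle.$$
A vector of this degree is singular precisely when it is annihilated by every positive mode, and here the only potentially non-trivial conditions come from $K^+_1$, $T_1$, $G^-_{1/2}$ and $G^-_{3/2}$: the modes $K^+_m,T_m$ with $m\ge 2$ and $G^-_m$ with $m\ge 5/2$ map into weight spaces of negative $p$-degree, while $G^+_{1/2}$ and $G^+_{3/2}$ raise the $x$-degree to $+2$, landing in spaces (of $p$-degree $1$ and $0$) that are spanned by $(G^+_{-1/2})^2|h,u\rangle=0$ or empty, and hence vanish by (\ref{rr-8}). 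Equivalently, by the grading $|\chi\rangle$ is singular iff it is orthogonal to the three dual basis vectors $\langle h,u|G^-_{1/2}K^+_1$, $\langle h,u|G^-_{1/2}T_1$ and $\langle h,u|G^-_{3/2}$; pairing these against $K^-_{-1}G^+_{-1/2}|h,u\rangle$, $T_{-1}G^+_{-1/2}|h,u\rangle$ and $G^+_{-3/2}|h,u\rangle$ produces exactly the $3\times 3$ Gram matrix whose determinant is the Kac determinant ${\rm det}^{\rm NS}_{3/2,+1}=q^{-3k-1}[k+2]^2 g(1;u,v)^2 g(3;u,v) f(1,1;u,v)$ computed just above.

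Since on the locus $g(3;u,v)=0$ the remaining factors $g(1;u,v)$ and $f(1,1;u,v)$ are generically non-zero (their vanishing is what controls the lower-degree determinants ${\rm det}^{\rm NS}_{1/2,\pm1}$ and ${\rm det}^{\rm NS}_{1,0}$), this Gram matrix has rank exactly $2$, so its kernel is one-dimensional and a singular vector exists and is unique up to scale. To exhibit it, I would evaluate the three pairing functionals on the general $|\chi\rangle$: each computation proceeds by commuting the positive mode to the right through $K^-_{-1}$, $T_{-1}$, $G^+_{-3/2}$ or $G^+_{-1/2}$ using the generating-function relations (\ref{rr-3})--(\ref{rr-10}) and Lemma \ref{delta-op}, until everything reduces to the level $\le 1$ matrix elements already evaluated in the preceding cases, in particular $\langle h,u|G^-_{1/2}G^+_{-1/2}|h,u\rangle=q^{-k-1}g(1;u,v)$ and the entries of ${\rm det}^{\rm NS}_{1,0}$. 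Imposing the vanishing of all three functionals yields two homogeneous linear relations among $a,b,c$; solving them and normalising $b=1$ returns $a=q^{-k}u/[k+1]$ and $c=q^{k}(1-q^{-2})(1-q^{4}u^2)$, the consistency of the a priori overdetermined system being guaranteed by the factor $g(3;u,v)$ in the determinant. In practice it is quicker to substitute the stated coefficients directly and verify that each of the three pairings vanishes once $g(3;u,v)=0$.

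The second assertion (condition $g(-3;u,v)=0$, supercurrent $G^-$) then follows from the first without further computation by applying the involution of Remark \ref{Inv} together with the substitution $v\to v^{-1}$: this fixes $u$ and $h(u,v)$, exchanges $g(\ell;u,v)\leftrightarrow g(-\ell;u,v)$, sends $G^+\mapsto G^-$, and carries the coefficients $q^{-k}u/[k+1]$ and $q^{k}(1-q^{-2})(1-q^{4}u^2)$ to $q^{k}u/[k+1]$ and $q^{-k}(1-q^{2})(1-q^{-4}u^2)$, exactly the coefficients in the second singular vector. The main obstacle is the evaluation of the Gram entries that require moving a $G^-$ mode past a $G^+$ mode, namely $G^-_{3/2}$ against $G^+_{-3/2}$ and $G^-_{1/2}$ against $K^-_{-1}G^+_{-1/2}$ and $T_{-1}G^+_{-1/2}$: these invoke the quartic relation (\ref{rr-9}) with its normal-ordered $G^+G^-$ term and its three delta-function channels producing $K$, the constant, and $T$, so one must track the modes carefully through (\ref{rr-6}), (\ref{rr-7}) and (\ref{rr-10}) and keep the non-vanishing normalisations $g(1;u,v)$ and $[k+2]$ straight. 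This bookkeeping, rather than any conceptual difficulty, is where the work and the risk of error lie.
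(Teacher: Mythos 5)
Your approach is sound and is, in substance, the same one the paper relies on: the proposition is established by direct evaluation of the level-$(3/2,\pm 1)$ matrix elements, and your reduction of the singular-vector conditions to orthogonality against the three dual states $\langle h,u|G^-_{1/2}K^+_{1}$, $\langle h,u|G^-_{1/2}T_{1}$, $\langle h,u|G^-_{3/2}$ is correct — the conditions from $K^+_m,T_m$ ($m\geq 2$), $G^-_m$ ($m\geq 5/2$) and $G^+_m$ ($m\geq 1/2$) land in zero weight spaces, and the passage from "in the radical" back to "annihilated by $G^-_{1/2}$" works because the level-$(1,0)$ and $(1/2,\pm1)$ Gram matrices are nondegenerate on a generic point of the locus $g(3;u,v)=0$ (the third level-$(1,0)$ pairing being killed by $(G^-_{1/2})^2=0$). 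Your involution argument for the $g(-3;u,v)=0$ case is also correct: one checks from the form $g(\ell;u,v)=\frac{1}{(q-q^{-1})^2}\bigl(-q^{\ell}u+\frac{[k+2]}{[k+1]}u^2+h-q^{-\ell}\bigr)$ that $q\to q^{-1}$, $v\to v^{-1}$ fixes $h$ and sends $g(\ell)$ to $g(-\ell)$, and the stated coefficients transform exactly as required; this is a tidier derivation of the second half than redoing the computation. Two caveats: the claim that the Gram matrix has rank exactly $2$ on the vanishing locus does not follow merely from the determinant carrying a single factor of $g(3;u,v)$ (uniqueness of the singular vector is not needed for the proposition, so this costs nothing, but as stated it is unjustified); and the decisive step — substituting the coefficients $a=q^{-k}u/[k+1]$, $b=1$, $c=q^{k}(1-q^{-2})(1-q^{4}u^{2})$ and checking that the three pairings vanish modulo $g(3;u,v)$ — is described but not carried out, so the specific normalisation of the vector remains unverified in your write-up, just as it is left to machine computation in the paper.
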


\begin{rmk}
Write 
\begin{equation}\label{level2/3-singular}
\vert \chi(3/2,\pm 1)\rangle=
\Bigl({ q^{\mp k}u\over[k+1]} \,K^-_{-1}G^\pm_{-1/2}+T_{-1}G^\pm_{-1/2}
+
q^{\pm k}(1-q^{\mp 2})(1-q^{\pm 4}u^2)
G^\pm_{-3/2}\Bigr)|h,u\rangle,
\end{equation}
for {\it arbitrary} $u$ and $v$.
We can check by using the Mathematica that
\begin{align}\label{level2/3-vanishing}
&
\Bigl({ q^{\mp k}(q^{\pm (k+2)}u)\over[k+1]} \,K^-_{-1}G^\pm_{-1/2}+T_{-1}G^\pm_{-1/2} \CR
&\qquad +
q^{\pm k}(1-q^{\mp 2})(1-q^{\pm 4}(q^{\pm (k+2)}u)^2)
G^\pm_{-3/2}\Bigr)\vert \chi(3/2,\pm 1)\rangle=0.
\end{align}
This explains the factor ${1/(1+p^{3/2}x^{\pm 1}})$ in 
the character of the descendants of $\vert \chi(3/2,\pm 1)\rangle$  given by 
$$
{1\over 1+p^{3/2}x^{\pm 1}}
\prod_{i=0}^\infty {(1+p^{i+1/2}x)(1+p^{i+1/2}x^{-1})\over (1-p^{i+1}) (1-p^{i+1})}
=
\sum_{n\in {1\over 2}\mathbb{Z}_{\geq 0} }\sum_{j\in \mathbb{Z}}
\widetilde{P}^{\rm NS}(n,j,\pm 3/2) p^n x^j.
$$
\end{rmk}

It should be emphasized that the vanishing property \eqref{level2/3-vanishing} is valid,
even if \eqref{level2/3-singular} is not a singular vector.
In fact when the parameters $u$ and $v$ meet the condition for \eqref{level2/3-singular} to be a singular vector,
which is an image of the fermionic screening operator (see Appendix \ref{app:screening}),
\eqref{level2/3-vanishing} is expected by the fermionic nature of the screening.
We observe that when we write the product of operators in \eqref{level2/3-singular} and \eqref{level2/3-vanishing}
in terms of the PBW basis, the coefficients are independent of $h$, which may explain the validity of \eqref{level2/3-vanishing} 
for generic parameters.

\subsubsection{Case $n=2$, $j=\pm 2,0$}
For the subspace with the $p$-degree $2$, 
we have two vectors $G^+_{-3/2}G^+_{-1/2}|h,u\rangle$, 
$G^-_{-3/2}G^-_{-1/2}|h,u\rangle$ with  the $x$-degrees $+2$ and $-2$. 
We have nine vectors for the subspace with the $p$-degree $2$ and the $x$-degree $0$:
\begin{align*}
&K^-_{-2}|h,u\rangle,K^-_{-1}K^-_{-1}|h,u\rangle, K^-_{-1}T_{-1}|h,u\rangle, 
K^-_{-1}G^+_{-1/2}G^-_{-1/2}|h,u\rangle, T_{-2}|h,u\rangle, T_{-1}T_{-1}|h,u\rangle, \\
&T_{-1}G^+_{-1/2}G^-_{-1/2}|h,u\rangle,
G^+_{-3/2}G^-_{-1/2}|h,u\rangle,G^+_{-1/2}G^-_{-3/2}|h,u\rangle.
\end{align*}
We have
\begin{align*}
&{\rm det}^{\rm NS}_{2,+2}=q^{-4k-4} g(+1;u,v)g(+3;u,v),\\
&{\rm det}^{\rm NS}_{2,-2}=q^{+4k+4} g(-1;u,v)g(-3;u,v).
\end{align*}
A remark is in order
as to the calculation of the nine by nine determinant ${\rm det}^{\rm NS}_{2,0}$ by Mathematica. 
It seems not easy to have ${\rm det}^{\rm NS}_{2,0}$ factorized within a 
reasonably practical time duration for arbitrary $q,k,u,v$. 
However, if we substitute various prime numbers to some of the variable $u,v$ and $k$
we can easily factorize the reduced determinant, providing us with a possibility to guess 
the exact formula. Hence we conjecture that
\begin{align*}
&{\rm det}^{\rm NS}_{2,0}=[2]^2[k+2]^8[2k+4]^2\\
&\times g(+3;u,v) g(+1;u,v)^3g(-1;u,v)^3g(-3;u,v)\nonumber\\
&\times f(1,1;u,v)^3 f(2,1;u,v)f(1,2;u,v) .
\end{align*}

\begin{prp}
If we have $f(2,1;u,v)=0$, then 
\begin{align*}
&c_1K^-_{-2}|h,u\rangle+c_2K^-_{-1}K^-_{-1}|h,u\rangle+c_3 K^-_{-1}T_{-1}|h,u\rangle+c_4K^-_{-1}G^+_{-1/2}G^-_{-1/2}|h,u\rangle
\\
&+c_5 T_{-2}|h,u\rangle+c_6T_{-1}T_{-1}|h,u\rangle+c_7 T_{-1}G^+_{-1/2}G^-_{-1/2}|h,u\rangle\\
&+
c_8G^+_{-3/2}G^-_{-1/2}|h,u\rangle+c_9G^+_{-1/2}G^-_{-3/2}|h,u\rangle,
\end{align*}
is a singular vector, where
\begin{align*}
&c_1=-q^{3k+4}u^4 {(q-q^{-1})^2[k+2]^2\over (u-q^{k+3})[k+1]}
\biggl(1-{[k+1]\over q^{k+2}u [k+2]} -{1\over q^{k-1} u}\\
&\qquad +
{[k+1]^2\over q^{2k+2}u^2 [k+2]^2}+
{[k+1]\over q^{2k+1}u^3 [k+2]^2}
\biggr),\\
&c_2=q^{2k+2}u^2{[k+2]^2\over [k+1]^2}
\biggl( 1-{[2][k+1]\over q^{k+1}u [k+2]}+
{[k+1]^2\over q^{2k+2}u^2 [k+2]^2}
\biggr),\\
&c_3=q^{k+1}u {[k+2]\over [k+1]}\biggl( 2-{[2][k+1]\over q^{k+1}u [k+2]}\biggr),\\
&c_4=
q^{2k+4}u^3 {(q-q^{-1})^3[2][k+2]^2\over  (u-q^{k+1})(u-q^{k+3})[k+1]}
\biggl(1-{[2k+3]\over u[k+2]}+
{[4][k+1]\over u^2[2]^2[k+2]}\biggr),\\
&c_5=
-q^{k+1}u^2 (q-q^{-1})^2{[k+2]^2\over [k+1]}
\biggl(1- {[k+1]^2\over q^{k+1}u[k+1]^2} \biggr),\\
&c_6=1,\\
&c_7=
q^{2k+4}u^2 {(q-q^{-1})^3[2k+4]\over  (u-q^{k+1})(u-q^{k+3})}
\biggl( 1-{[2][k+2]\over u[2k+4]}\biggr),\\
&c_8=
-q^{2k+5}u^3 {(q-q^{-1})^4(q^{k+1}u-1)[k+2]^2\over  (u-q^{k+1})(u-q^{k+3})}
\biggl( 1-{[k+3]\over q^{-k}u[k+2]}+{1\over u[k+2]}\biggr),\\
&c_9=
q^{k+2}u^3 {(q-q^{-1})^4[k+2]^2\over  u-q^{k+3}}
\biggl( 1-{[k+3]\over q^k u[k+2]}+{1\over u[k+2]}\biggr).
\end{align*}
\end{prp}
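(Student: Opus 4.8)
The plan is to verify directly that the displayed vector, which I abbreviate as $|\chi\rangle=\sum_{i=1}^{9}c_i|v_i\rangle$ with $|v_1\rangle,\dots,|v_9\rangle$ the nine ordered basis vectors of the $(p,x)$-degree $(2,0)$ subspace listed above, satisfies the highest-weight conditions \eqref{singular} whenever $f(2,1;u,v)=0$ and $h=h(u,v)$. Because $|\chi\rangle$ lives at $p$-degree $2$, every positive mode of degree exceeding $2$ annihilates it for degree reasons, so only $K^+_1,K^+_2,T_1,T_2,G^\pm_{1/2},G^\pm_{3/2}$ can act nontrivially. First I would reduce these to the four generators $K^+_1,K^+_2,G^+_{1/2},G^-_{1/2}$: relation \eqref{r-7} with $(m,n)=(1/2,1)$ expresses $G^\pm_{3/2}$ through $G^\pm_{1/2}$, $K^+_1$ and the invertible $K^+_0$, so $G^\pm_{1/2}|\chi\rangle=K^+_1|\chi\rangle=0$ forces $G^\pm_{3/2}|\chi\rangle=0$; and relation \eqref{r-9} solves $T_1$ (resp.\ $T_2$) in terms of $G^+_{1/2}G^-_{1/2}$ (resp.\ $G^+_{3/2}G^-_{1/2}$) plus bilinears $K^-_{-\alpha}K^+_{\alpha+1}$ (resp.\ $K^-_{-\alpha}K^+_{\alpha+2}$), so the four vanishings above propagate to $T_1|\chi\rangle=T_2|\chi\rangle=0$. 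I note that this reduction conveniently avoids relation \eqref{r-11} and the normal-ordered current $W(z)$ altogether. Finally $K^\pm_0|\chi\rangle=u|\chi\rangle$ holds automatically, since $K^\pm_0=q^{H_0}$ acts as $u\,q^{(k+2)j}$ on the $x$-degree-$j$ part by \eqref{H_0}, and $T_0|\chi\rangle\propto|\chi\rangle$ then follows because $T_0|\chi\rangle$ is again annihilated by the same four generators—$[K^+_m,T_0]$ and $[G^\pm_{1/2},T_0]$ produce, via \eqref{r-5} and \eqref{r-10}, only terms in which a positive mode acts first on $|\chi\rangle$—provided the joint kernel in this subspace is one-dimensional, which holds generically on the locus $f(2,1;u,v)=0$ by the corank-one drop of the Kac matrix there.

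The second step is the actual computation of $K^+_1|\chi\rangle$, $K^+_2|\chi\rangle$, $G^+_{1/2}|\chi\rangle$ and $G^-_{1/2}|\chi\rangle$. For each I would push the positive mode to the right through the ordered monomials $K^-_{-\lambda}T_{-\mu}G^+_{-\alpha}G^-_{-\beta}$ defining the $|v_i\rangle$ until it reaches $|h,u\rangle$ and is absorbed by \eqref{hwv}. The $K^+$-conditions use only \eqref{r-3}, \eqref{r-5} and \eqref{r-7} and so stay inside the Heisenberg and supercurrent sectors, while the fermionic conditions invoke \eqref{r-6}, \eqref{r-8}, \eqref{r-9} and \eqref{r-10}. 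The four results land in the subspaces of degrees $(1,0)$, $(0,0)$, $(3/2,+1)$, $(3/2,-1)$, of dimensions $3,1,3,3$; re-expanding each in the corresponding ordered basis and inserting $K^+_0|h,u\rangle=u|h,u\rangle$, $T_0|h,u\rangle=h(u,v)|h,u\rangle$ turns the singularity requirement into $3+1+3+3=10$ linear equations in $c_1,\dots,c_9$ with coefficients rational in $q,k,u,v$. Proving the proposition then amounts to checking that the stated $c_i$ (normalized by $c_6=1$) solve this overdetermined system; the apparent poles at $u=q^{k+1}$ and $u=q^{k+3}$ in the $c_i$ are artifacts of this normalization and of solving the relevant minors, and the single algebraic relation that renders the system consistent is precisely $f(2,1;u,v)=0$.

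The main obstacle is computational bookkeeping rather than any conceptual difficulty. The delicate pieces are the actions of $G^\pm_{1/2}$ on the mixed basis vectors $K^-_{-1}G^+_{-1/2}G^-_{-1/2}|h,u\rangle$, $T_{-1}G^+_{-1/2}G^-_{-1/2}|h,u\rangle$, $G^+_{-3/2}G^-_{-1/2}|h,u\rangle$ and $G^+_{-1/2}G^-_{-3/2}|h,u\rangle$: here \eqref{r-9} converts each $G^+G^-$ encounter into $T$- and $KK$-contributions, \eqref{r-10} reshuffles $G$ past $T$, and \eqref{r-6} drags $K^-$ across the supercurrents, so a single monomial spawns many terms that must cancel in groups. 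I would therefore organize the calculation sector by sector—first verifying the two comparatively rigid $K^+$-conditions, then the two fermionic conditions—tracking the coefficient of each target basis vector separately; this is the computation realized on the computer. As an independent sanity check I would use the conjectured factorization ${\rm det}^{\rm NS}_{2,0}\propto f(1,1)^3 f(2,1)f(1,2)\cdots$ displayed above: it predicts that on $f(2,1;u,v)=0$ the Kac matrix drops corank exactly one, confirming both that a singular vector exists and that it is unique up to scale, hence equal to $|\chi\rangle$.
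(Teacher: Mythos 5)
Your proposal is essentially the paper's own (unwritten) proof: this proposition is one of the low-lying examples the authors verify by direct Mathematica computation of the highest-weight conditions \eqref{singular}, and your plan is exactly that computation, organized sensibly. Your reduction of the annihilation conditions to the four operators $K^+_1$, $K^+_2$, $G^+_{1/2}$, $G^-_{1/2}$ is sound: \eqref{r-7} with $(m,n)=(1/2,1)$ forces $G^\pm_{3/2}|\chi\rangle=0$ once $K^+_1|\chi\rangle=G^\pm_{1/2}|\chi\rangle=0$ (using the invertibility of $K^+_0$), and \eqref{r-9} at $(m,n)=(1/2,1/2)$ and $(3/2,1/2)$ then yields $T_1|\chi\rangle=T_2|\chi\rangle=0$, so your $10=3+1+3+3$ linear conditions on $c_1,\dots,c_9$ are the complete system, consistent precisely on $f(2,1;u,v)=0$. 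The one step you should tighten is the $T_0$-eigenvalue condition: you infer $T_0|\chi\rangle\propto|\chi\rangle$ from a corank-one claim that rests on the conjectural factorization of ${\rm det}^{\rm NS}_{2,0}$, which is circular here since that factorization is itself only guessed from specializations; but this is easily repaired, because $T_0|\chi\rangle$ lands in the same nine-dimensional subspace and is computable by the same mode manipulations (and must be computed anyway to identify the new eigenvalue $h_\chi$), so verify the proportionality directly rather than deducing it from uniqueness.
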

We omit writing the singular vector for the case $f(1,2;u,v)=0$.

\subsubsection{Case $n=5/2$, $j=\pm 1$}
We have for the subspace with $p$-degree $5/2$, nine vectors 
\begin{align*}
&K^-_{-2}G^+_{-1/2}|h,u\rangle,K^-_{-1}K^-_{-1}G^+_{-1/2}|h,u\rangle, K^-_{-1}T_{-1}G^+_{-1/2}|h,u\rangle,\\
&K^-_{-1}G^+_{-3/2}|h,u\rangle, T_{-2}G^+_{-1/2}|h,u\rangle, T_{-1}T_{-1}G^+_{-1/2}|h,u\rangle, \\
&T_{-1}G^+_{-3/2}|h,u\rangle,
G^+_{-5/2}|h,u\rangle,G^+_{-3/2}G^+_{-1/2}G^-_{-1/2}|h,u\rangle,
\end{align*}
with $x$-degree $+1$, and nine vectors
\begin{align*}
&K^-_{-2}G^-_{-1/2}|h,u\rangle,K^-_{-1}K^-_{-1}G^-_{-1/2}|h,u\rangle, K^-_{-1}T_{-1}G^-_{-1/2}|h,u\rangle,\\
&K^-_{-1}G^-_{-3/2}|h,u\rangle, T_{-2}G^-_{-1/2}|h,u\rangle, T_{-1}T_{-1}G^-_{-1/2}|h,u\rangle, \\
&T_{-1}G^-_{-3/2}|h,u\rangle,
G^+_{-1/2}G^-_{-3/2}G^-_{-1/2}|h,u\rangle,G^-_{-5/2}|h,u\rangle,
\end{align*}
with $x$-degree $-1$.

We have the conjecture:
\begin{align*}
&{\rm det}^{\rm NS}_{5/2,+1}=q^{-9k+1} 
[2]^2[k+2]^8[2k+4]^2\\
&\times
g(-1;u,v)g(+1;u,v)^6 g(+3;u,v)^3 g(+5;u,v)\\
&\times  f(1,1;u,v)^3 f(2,1;u,v) f(1,2;u,v),\\
&{\rm det}^{\rm NS}_{5/2,-1}=q^{+9k-1} 
[2]^2[k+2]^8[2k+4]^2\\
&\times
g(+1;u,v)g(-1;u,v)^6 g(-3;u,v)^3 g(-5;u,v)\\
&\times  f(1,1;u,v)^3 f(2,1;u,v) f(1,2;u,v).
\end{align*}

We omit writing the singular vectors for the cases $g(\pm5;u,v)=0$.


\subsection{Verma modules in the {\rm R} sector}
\begin{dfn}
Let $h$ and $u$ be complex parameters. Consider the {\rm R} sector of $\Svir$.
Let $|h,u\rangle$ be the
highest weight vector satisfying the conditions
\begin{align}\label{hwvR}
&K^\pm_0 |h,u\rangle=q^{k/2}u |h,u\rangle,\qquad 
T_0|h,u\rangle=q^{-1}h |h,u\rangle,\nonumber\\
&K^+_m |h,u\rangle=T_m |h,u\rangle=0\qquad (m>0), \\
&G^+_m |h,u\rangle=0\qquad (m\geq 0),\nonumber \\
&G^-_m |h,u\rangle=0\qquad (m>0).\nonumber
\end{align}
The Verma module $M_{h,u}$ in the {\rm R} sector is defined to be 
the left $\Svir$ module $M_{h,u}=\Svir|h,u\rangle$.
\end{dfn}

\begin{dfn}
The dual Verma module $M^*_{h,u}$ in the {\rm R} sector is defined to be the right $\Svir$ module
$M^*_{h,u}=\langle h,u|\Svir$, with the vector $\langle h,u|$ 
satisfying the conditions
\begin{align}
&\langle h,u|h ,u\rangle=1,\nonumber\\
&\langle h,u|K^\pm_0 = q^{k/2}u \langle h,u|,\qquad 
\langle h,u|T_0= q^{-1}h \langle h,u|,\nonumber\\
&\langle h,u|K^-_m = \langle h,u|T_m =0\qquad (m<0), \\
&\langle h,u|G^-_m =0\qquad (m\leq 0),\nonumber\\
&\langle h,u|G^+_m =0\qquad (m<0).\nonumber
\end{align}
\end{dfn}

Recall that we have the character formula\footnote{In \cite{Boucher:1986bh} the factor $1+x^{-1}$ in the numerator 
is replaced with $x^{\frac{1}{2}} + x^{-\frac{1}{2}}$ so that the character is symmetric in $x$ and $x^{-1}$.}
\begin{align}
{\rm ch}_{\rm R}(p,x) = \Tr_{\mathcal{V}_{\mathrm{R}}} \left( p^{\mathsf{L}_0 - h}x^{\mathsf{I}_0 - u}\right) 
= \prod_{i=0}^\infty {(1+p^{i+1}x)(1+p^{i}x^{-1})\over (1-p^{i+1}) (1-p^{i+1})}, 
\qquad |p|, |x| <1 \label{ch-R}
\end{align}
for the Verma module $\mathcal{V}_{\mathrm{R}}$ of the ordinary $\mathcal{N}=2$ superconformal algebra in the {\rm R} sector,
where the ground states are doubly degenerate and they are connected by the zero modes of the super currents. 
$u$ and $u-1$ are the $U(1)$ charges the ground states, which leads to the factor $1+x^{-1}$ in the numerator.
As is the case of {\rm NS} sector, we assume that we have the same  character for $M_{h,u}$ by the rule
\begin{align*}
&K^\pm_{-m},T_{-m}, G^\pm_{-m} \mbox{ have the $p$-degree } m,\\
&K^\pm_{-m},T_{-m}\mbox{ have the $x$-degree } 0,\\
&G^\pm_{-m} \mbox{ have the $x$-degree } \pm 1.
\end{align*}

\begin{dfn}
A finite strictly decreasing sequence of positive integers 
$\alpha=(\alpha_1,\alpha_2,\ldots,\alpha_l)$
($\alpha_i\in \mathbb{Z}, \alpha_1> \alpha_2> \cdots > \alpha_l>0$)
is called a fermionic partition in the {\rm R} sector. We denote by $\ell(\alpha)=l$ the length of $\alpha$. 
The set of  fermionic partitions in the {\rm R} sector is denoted by $\mathcal{P}^{\rm R}$.
A finite strictly decreasing sequence of non negative integers 
$\beta=(\beta_1,\beta_2,\ldots,\beta_l)$
($\beta_i\in \mathbb{Z}, \beta_1> \beta_2> \cdots > \beta_l\geq 0$)
is called a fermionic partition  with zero mode in the {\rm R} sector. We denote by $\ell(\beta)=l$ the length of $\beta$. 
The set of fermionic partitions with zero mode in the {\rm R} sector is denoted by $\mathcal{P}^{\rm R}_0$.
\end{dfn}

Let $\lambda=(\lambda_1,\ldots,\lambda_l)$
be a partition, 
$\alpha=(\alpha_1,\ldots,\alpha_a)$
be a fermionic partition in the {\rm R} sector, and 
$\beta=(\beta_1,\beta_2,\ldots,\beta_l)$ be a fermionic partition with zero mode in the {\rm R} sector.
We introduce the following notations for the ordered products of generators
\begin{align*}
&K^-_{-\lambda}=K^-_{-\lambda_1}K^-_{-\lambda_2}\cdots K^-_{-\lambda_l},\qquad 
K^+_{\lambda}=K^+_{\lambda_l}\cdots K^+_{\lambda_2}K^+_{\lambda_1}
\\
&T_{-\lambda}=T_{-\lambda_1}T_{-\lambda_2}\cdots T_{-\lambda_l},\qquad \,\,\,\,\,\,
T_{\lambda}=T_{\lambda_l}\cdots T_{\lambda_2}T_{\lambda_1},\\
&G^+_{-\alpha}=G^+_{-\alpha_1}G^+_{-\alpha_2}\cdots G^+_{-\alpha_a},\qquad
G^-_{\alpha}=G^-_{\alpha_a}\cdots G^-_{\alpha_2} G^-_{\alpha_1},\\
&G^-_{-\beta}=G^-_{-\beta_1}G^-_{-\beta_2}\cdots G^-_{-\beta_a},\qquad
G^+_{\beta}=G^+_{\beta_a}\cdots G^+_{\beta_2} G^+_{\beta_1}.
\end{align*}
Then, for a pair of partitions
$\lambda=(\lambda_1,\ldots,\lambda_l),\mu=(\mu_1,\ldots,\mu_m)$, and 
a pair of fermionic partitions in the {\rm NS} sector
$\alpha=(\alpha_1,\ldots,\alpha_a),\beta=(\beta_1,\ldots,\beta_b)$, 
set 
\begin{align*}
&|\lambda,\mu,\alpha,\beta\rangle=|\lambda,\mu,\alpha,\beta;h,u\rangle=
K^-_{-\lambda}T_{-\mu}G^+_{-\alpha}G^-_{-\beta}|h,u\rangle,\\
&\langle \lambda,\mu,\alpha,\beta|=\langle\lambda,\mu,\alpha,\beta;h,u|=
\langle h,u|G^+_{\beta}G^-_{\alpha}T_{\mu}K^+_{\lambda}.
\end{align*}
Note that we have the lexicographical orderings on the sets $\mathcal{P}$, $\mathcal{P}^{\rm R}$ and $\mathcal{P}^{\rm R}_0$. 
Hence one may introduce the associated total ordering on the set
$\mathcal{P}\times \mathcal{P}\times \mathcal{P}^{\rm R}\times \mathcal{P}^{\rm R}_0$.

\begin{prp}
The ordered collection $(|\lambda,\mu,\alpha,\beta\rangle)_{\lambda,\mu \in \mathcal{P}, 
\alpha\in \mathcal{P}^{\rm R},\beta \in \mathcal{P}^{\rm R}_0}$ forms a basis of  $M_{h,u}$.
Similarly, $(\langle\lambda,\mu,\alpha,\beta|)_{\lambda,\mu \in \mathcal{P}, 
\alpha\in \mathcal{P}^{\rm R},\beta \in \mathcal{P}^{\rm R}_0}$
forms a basis of $M^*_{h,u}$.
\end{prp}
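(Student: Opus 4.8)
The plan is to reproduce, almost verbatim, the two-step argument already used for the NS sector, adapting it to the fermionic zero modes that are special to the R sector. First I would show that the collection spans $M_{h,u}$. Exactly as before, each relation of Definition~\ref{DefSvir} can be read as a rewriting rule that reorders any monomial in the generators into the prescribed shape $K^-_{-\lambda}T_{-\mu}G^+_{-\alpha}G^-_{-\beta}$, pushing the non-negative modes of $K^+$, $T$, $G^+$ and the positive modes of $G^-$ to the right, where they kill $|h,u\rangle$ by~\eqref{hwvR}. The only genuinely new issue is the treatment of $G^+_0$ and $G^-_0$. From~\eqref{r-8} at $m=n=0$ we have $(G^\pm_0)^2=0$, so each fermionic zero mode appears at most once in a reduced word; since $G^+_0|h,u\rangle=0$ while $G^-_0|h,u\rangle\neq 0$, the $G^+$-fermions are labelled by strictly positive integers ($\alpha\in\mathcal{P}^{\rm R}$) and the $G^-$-fermions may include the zero mode ($\beta\in\mathcal{P}^{\rm R}_0$). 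Relation~\eqref{r-9} at $m=n=0$ rewrites $G^+_0G^-_0+G^-_0G^+_0$ in terms of $K^\pm_0$ and $T_0$, which is precisely what is needed to commute $G^+_0$ to the right past $G^-_0$ and thereby close the rewriting system.

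For linear independence I would again deform in $\hbar$ with $q=e^\hbar$. Using the $\hbar$-expansions of $G^\pm(z)$, $K'(z)$ and $T'(z)$ (Propositions~\ref{prp:K-ser}, \ref{Exp-1} and Definition~\ref{CFTcurrents}), the leading terms generate the ordinary $\mathcal{N}=2$ superconformal Lie superalgebra acting on the R-sector highest weight. Hence at $\hbar=0$ the ordered collection $({K'}_{-\lambda}{T'}_{-\mu}G^+_{-\alpha}G^-_{-\beta}|h,u\rangle)$ degenerates to a PBW basis of that Lie superalgebra, which is linearly independent; the index sets $\mathcal{P}^{\rm R}$ and $\mathcal{P}^{\rm R}_0$ match the classical PBW ordering, with a single allowed copy of $G^-_0$ and no $G^+_0$. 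The determinant of the matrix expressing this collection in a fixed reference frame is holomorphic in $\hbar$ and non-zero at $\hbar=0$, hence non-zero for sufficiently small $|\hbar|\neq 0$, giving linear independence for generic $q$. Since $K(z)=K^-(z)K^+(z)$, the change of basis from the $K'$-monomials to the $K^-$-monomials is upper triangular with non-vanishing diagonal, so $(|\lambda,\mu,\alpha,\beta\rangle)$ is itself a basis of $M_{h,u}$; the claim for $M^*_{h,u}$ follows by running the identical argument on the right module, with the roles of $G^+$ and $G^-$ interchanged.

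The step I expect to be the main obstacle is making the zero-mode bookkeeping rigorous throughout. In the spanning argument one must check that the rewriting system genuinely terminates at the level-zero fermions, i.e.\ that~\eqref{r-9} at the origin never reintroduces a higher-degree term; and in the deformation argument one must verify that the $\hbar\to 0$ limit is exactly the standard R-sector Verma module of the $\mathcal{N}=2$ superconformal algebra, whose doubly degenerate ground state is accounted for by the single zero mode $G^-_0$, so that no spurious linear relation among the level-zero states survives the limit. Once this identification is in place, the holomorphy-in-$\hbar$ argument closes the proof exactly as in the NS case.
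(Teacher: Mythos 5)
Your proposal is correct and follows essentially the same route as the paper, which states this R-sector proposition without proof and implicitly relies on the two-step argument (normal-ordering/spanning plus $\hbar$-deformation for linear independence) given as a sketch in the NS case. Your additional bookkeeping of the fermionic zero modes — $(G^\pm_0)^2=0$ from \eqref{r-8}, $G^+_0|h,u\rangle=0$ versus $G^-_0|h,u\rangle\neq 0$ explaining the index sets $\mathcal{P}^{\rm R}$ and $\mathcal{P}^{\rm R}_0$, and the degree-zero closure of \eqref{r-9} at $m=n=0$ — is exactly the adaptation the paper leaves to the reader.
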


\subsection{Kac determinants in the {\rm R} sector}
We study the Kac determinants associated with the Verma module $M_{h,u}$ in the {\rm R} sector.
We denote by ${\rm det}_{n,j}={\rm det}_{n,j}^{\rm R}(h,u)$
the Kac determinant in the {\rm R} sector associated with the subspace in $M_{h,u}$ 
having the $p$-degree $n$ and the $x$-degree $j$.

\begin{dfn}
Set
\begin{align}
\sum_{n\in \mathbb{Z}_{\geq 0}}\sum_{j\in \mathbb Z} P^{\rm R} (n,j)
p^n x^j=
\prod_{i=0}^\infty {(1+p^{i+1}x)(1+p^{i}x^{-1})\over (1-p^{i+1}) (1-p^{i+1})},
\end{align}
and set for $\ell \in \mathbb{Z}$ 
\begin{align}
\sum_{n\in \mathbb{Z}_{\geq 0}}\sum_{j\in \mathbb Z} 
\widetilde{P}^{\rm R} (n,j;\ell)
p^n x^j={1\over 1+p^{|\ell|} x^{{\rm sgn}(\ell) }}
\prod_{i=0}^\infty {(1+p^{i+1}x)(1+p^{i}x^{-1})\over (1-p^{i+1}) (1-p^{i+1})},
\end{align}
where ${\rm sgn}(\ell)=1$ for $\ell>0$ and ${\rm sgn}(\ell)=-1$ for $\ell\leq 0$.
Note that ${\rm sgn}(0)=-1$ in our convention. 
\end{dfn}

Recall the parametrization \eqref{huv} of $h$ in terms of the parameter $v$
%

\begin{con}\label{Kac-det-R}
We have
\begin{align}
{\rm det}_{n,j}^{\rm R}(h,u) =
{\rm cst.}
\prod_{\scriptstyle r,s \in \mathbb{Z}_{> 0}
\atop 
\scriptstyle 1\leq rs\leq n} \Bigl(f(r,s;u,v)\Bigr)^{P^{\rm R}(n-rs,j)}
\prod_{\ell\in \mathbb{Z}}
\Bigl(g(2\ell-1;u,v)\Bigr)^{\widetilde{P}^{\rm R}(n-|\ell|,j-{\rm sgn}(\ell);\ell)},
\end{align}
where {\rm cst.} is a certain non zero constant not depending on $u$ or $v$.
\end{con}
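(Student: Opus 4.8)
The plan is to prove Conjecture \ref{Kac-det-R} by the Feigin--Fuchs strategy, treating $\det_{n,j}^{\rm R}(h,u)$ as a polynomial in $h$ for fixed generic $u,k,q$ and running in parallel with the NS analysis of Conjecture \ref{Kac-det-NS}, while isolating the extra features produced by the Ramond zero mode $G^-_0$. First I would check that every Gram matrix entry $\langle\lambda,\mu,\alpha,\beta|\lambda',\mu',\alpha',\beta'\rangle$ is, after reduction by \eqref{r-1}--\eqref{r-11} and evaluation on $|h,u\rangle$, a polynomial in $h$ (entering only through $T_0\mapsto q^{-1}h$) with coefficients Laurent in $u,q,q^k$. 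The quantitative core is the degree. Using the alternative forms recorded after \eqref{huv}, $f(r,s;u,v)$ is quadratic and $g(\ell;u,v)$ linear in $h$, so the conjectured right-hand side is a polynomial in $h$ of total degree $D_{n,j}=2\sum_{1\le rs\le n}P^{\rm R}(n-rs,j)+\sum_{\ell\in\mathbb{Z}}\widetilde{P}^{\rm R}(n-|\ell|,j-{\rm sgn}(\ell);\ell)$. The goal of this step is to prove $\deg_h\det_{n,j}^{\rm R}=D_{n,j}$ by exhibiting an ordering of the basis in which the top-degree-in-$h$ part of the Gram matrix is diagonal (the leading $h$ arising from $T_0$ in the $T$--$T$ and $G^+$--$G^-$ brackets, cf. \eqref{r-9}); the resulting combinatorial identity between $\sum_{\rm basis}(\text{number of }h\text{-producing modes})$ and $D_{n,j}$ is the analogue of $\sum_\lambda\ell(\lambda)=\sum_{rs\le n}p(n-rs)$.

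Next I would locate the zeros. Whenever $f(r,s;u,v)=0$ the module $M_{h,u}$ must contain a charge-zero singular vector at $p$-degree $rs$, and whenever $g(2\ell-1;u,v)=0$ it must contain a fermionic singular vector carrying $x$-degree ${\rm sgn}(\ell)$; each forces a linear dependence among the rows of the Gram matrix and hence a zero of the determinant. The natural source of these vectors is the screening construction of Appendix \ref{app:screening}: realizing $M_{h,u}$ inside a Fock module of the twisted Wakimoto representation, one shows that a suitable power of the bosonic screening charge produces the singular vector exactly on the line $f(r,s;u,v)=0$, while a single fermionic screening does so on $g(2\ell-1;u,v)=0$. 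The explicit low-level data (the R-sector analogues of $\chi(1/2,\pm1)$ and $\chi(3/2,\pm1)$ together with the level-$1$ and level-$2$ vectors) supply the base cases and pin down normalizations.

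The multiplicities then follow from a character count of the generated submodules. A charge-zero singular vector at level $rs$ generates a sub-Verma module whose $(n,j)$ component has dimension $P^{\rm R}(n-rs,j)$, giving the exponent of $f(r,s;u,v)$. A fermionic singular vector built on $G^\pm_{-|\ell|}$ is annihilated by that same mode, since $(G^\pm)^2=0$ by \eqref{r-8} and as the vanishing identity \eqref{level2/3-vanishing} illustrates; its descendant character therefore acquires the factor $1/(1+p^{|\ell|}x^{{\rm sgn}(\ell)})$, producing the exponent $\widetilde{P}^{\rm R}(n-|\ell|,j-{\rm sgn}(\ell);\ell)$ of $g(2\ell-1;u,v)$. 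Summing these contributions reproduces exactly $D_{n,j}$, so by the degree match of the first step the located factors exhaust the determinant and equality holds up to a factor independent of $h$ (equivalently of $v$ at fixed $u$). Comparing the top $h$-coefficients, or evaluating both sides in the limit $q=e^\hbar\to1$ via the expansions in the Sketch of Proof preceding Theorem \ref{CFT-Theorem} where the form degenerates to the classical $\mathcal{N}=2$ Shapovalov form, then removes the residual $u$-dependence and fixes the constant.

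I expect the principal obstacle to be twofold. First, the absence of a $q$-PBW theorem for $\Svir$ means the triangularity and hence the degree identity of the first step are not automatic; one must derive them directly from \eqref{r-1}--\eqref{r-11}, or transport them from $\hbar=0$, where only the leading order is controlled. Second, and this is why the statement remains a conjecture, upgrading the screening evidence of Appendix \ref{app:screening} into a proof requires showing that the Fock realization is generically faithful and that the screening charges account for \emph{every} zero with exactly the stated multiplicity, with no spurious or missing factors. The Ramond sector sharpens this through the zero mode: the ground space is two-dimensional, spanned by $|h,u\rangle$ and $G^-_0|h,u\rangle$ (whence $\beta\in\mathcal{P}^{\rm R}_0$), the $\ell=0$ factor $g(-1;u,v)$ enters through the convention ${\rm sgn}(0)=-1$, and controlling the null vector $G^-_0|h,u\rangle$ when $g(-1;u,v)=0$ is essential to seed the induction correctly.
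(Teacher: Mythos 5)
The statement you are addressing is presented in the paper as a conjecture, and the paper supplies no proof of it: the only support given consists of the explicit low-level determinant computations in Section 3, the consistency check against the classical $q\to1$ formula of Kato--Matsuda, and the screening-operator analysis of Appendix \ref{app:screening}, which shows that the fermionic charges $Q^\pm$ produce singular vectors precisely on the lines $g(2\ell-1;u,v)=0$ and the bosonic charges $\mathbf{Q}_r$ on the lines $f(r,s;u,v)=0$. Your programme is the standard Feigin--Fuchs strategy and is consistent with that evidence; you also correctly flag the R-sector subtleties (the zero mode $G^-_0$, the convention ${\rm sgn}(0)=-1$ producing the $\ell=0$ factor $g(-1;u,v)$, and the doubled ground space encoded in $\mathcal{P}^{\rm R}_0$). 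But what you have written is a plan, not a proof, and you acknowledge this yourself.

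Two steps are genuinely open, and they are exactly why the paper leaves the statement as a conjecture. First, the degree count $\deg_h {\rm det}_{n,j}^{\rm R}=D_{n,j}$: your proposed diagonal-leading-term argument presupposes a triangularity statement for the Gram matrix in the monomial basis that the paper never establishes --- it explicitly lacks a $q$-analogue of the PBW theorem and only argues linear independence of the basis by deformation from $\hbar=0$, which controls the rank of the Gram matrix but not the top $h$-degree of individual entries after reduction by the quartic relations \eqref{r-1}--\eqref{r-11}. Second, and more seriously, the exhaustion of zeros with the stated multiplicities: exhibiting a singular vector on each vanishing line gives divisibility by each factor to order at least one, but the exponents $P^{\rm R}(n-rs,j)$ and $\widetilde{P}^{\rm R}(n-|\ell|,j-{\rm sgn}(\ell);\ell)$ require the full embedding structure of the Fock/Verma modules (a BGG-type resolution), including the claim that the submodule generated by a fermionic singular vector has the truncated character with the factor $1/(1+p^{|\ell|}x^{{\rm sgn}(\ell)})$; the paper only verifies the analogous vanishing identity \eqref{level2/3-vanishing} at low level, in the NS sector, and by computer. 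Neither step is wrong in conception, but neither is carried out here or in the paper, so the proposal reproduces the paper's supporting heuristic at the level of a strategy rather than closing the conjecture.
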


As in the case of \rm{NS} sector we can take the limit $q \to 1$ of Conjecture \ref{Kac-det-R}, 
which reproduces the formula in \cite{Kato:1986td}.
For an evidence for the Conjecture \ref{Kac-det-R}, see Appendix \ref{app:screening}, where we investigate 
the screening operators which are intertwiners among Fock representations.

\section{Conformal field theory limit of $\Svir$}
\label{sec:CFT-limit}

In this section, we set $q=e^\hbar$ and investigate the $\hbar$-expansions of generators and relations of $\Svir$, 
extracting the ordinary ${\mathcal N}=2$ superconformal Lie superalgebra 
defined by the generators
\begin{align*}
&c,\qquad \I_m, \qquad \Lv_m \qquad (m \in \mathbb{Z}),\\
&\G^\pm_m \qquad (m \in \mathbb{Z}+{1\over 2}\mbox{ for {\rm NS} sector}, \,\,
m \in \mathbb{Z}\mbox{ for {\rm R} sector}),
\end{align*}
and the relations \cite{Ademollo:1975an}, \cite{Ademollo:1976pp},
\begin{align*}
&c:\mbox{  central},\\
&\{\G^\pm_m,\G^\pm_n\}=0,\\
&\{\G^+_m,\G^-_n\}=2 \Lv_{m+n} +(m-n) \I_{m+n} +{c\over 3} \left( m^2-{1\over 4}\right) \delta_{m+n,0}\\
&[\Lv_m,\G^\pm_n]=\left({m\over 2}-n \right)\G^\pm_{m+n},\\
&[\I_m,\G^\pm_n]=\pm \G^\pm_{m+n},\\
&[\I_m,\I_n]= {c\over 3} m \delta_{m+n,0},\\ 
&[\I_m,\Lv_n]= m \I_{m+n},\\
&[\Lv_m,\Lv_n]=(m-n)\Lv_{m+n}+ {c\over 12} m(m^2-1) \delta_{m+n,0}.
\end{align*}

\subsection{$\hbar$-expansions of the generators}
For simplicity set $K^\pm_m=0$ for $\mp m>0$.
We assume that the generators $K^\pm_m,G^\pm_m$ and $T_m$ of $\Svir$ are expanded in
positive powers in $\hbar$ as
\begin{align*}
&K^\pm_m=\sum_{i\geq 0} \hbar^i K^{\pm(i)}_m,\qquad 
G^\pm_m=\sum_{i\geq 0} \hbar^i G^{\pm(i)}_m,\qquad 
T_m=\sum_{i\geq 0} \hbar^i T^{(i)}_m.
\end{align*}
Introduce the generating functions $K^{\pm(i)}(z)$, $G^{\pm(i)}(z)$ and $T^{(i)}(z)$ $(i\geq 0)$ as
\begin{align*}
&K^{\pm(i)}(z)=\sum_{m \in \mathbb{Z}} K^{\pm(i)}_m z^{-m},\qquad G^{\pm(i)}(z)=\sum_{m \in \mathbb{Z}^\mathrm{A}} G^{\pm(i)}_m z^{-m},\qquad
T^{(i)}(z)=\sum_{m \in \mathbb{Z}} T^{(i)}_m z^{-m}.
\end{align*}
Then we have
\begin{align*}
&K^\pm(z)=\sum_{i\geq 0} \hbar^i K^{\pm(i)}(z),\qquad G^\pm(z)=\sum_{i\geq 0} \hbar^i G^{\pm(i)}(z),\qquad 
T(z)=\sum_{i\geq 0} \hbar^i T^{(i)}(z),
\end{align*}
We also need the $\hbar$-expansions of $K(z)=K^-(z)K^+(z)$ as
\begin{align*}
&K(z)=\sum_{i\geq 0} \hbar^i K^{(i)}(z),\qquad
K^{(i)}(z)=\sum_{m \in \mathbb{Z}} K^{(i)}_m z^{-m}.
\end{align*}

First, in view of the relations for the Heisenberg subalgebra (\ref{H-H}) and (\ref{H_0}), 
we assume that the element $H_0$ does not depend on $\hbar$, and 
the Heisenberg generators $H_m$ ($m\neq 0$) have the
expansions as $H_m=\sum_{i\geq 0} \hbar^i H^{(i)}_m$. Then we have the following 
description of the leading terms of the $\hbar$ expansions for $K^{\pm}(z)$, and several low lying 
terms of the $\hbar$ expansions for $K(z)$.

\begin{prp}\label{prp:K-ser}
We have
\begin{align}
&K^{\pm (0) }_m=\delta_{m,0},\qquad K^{\pm (0) }(z)=1,\\
&K^\pm(z)=1+\sum_{i\geq 1} \hbar^i K^{\pm(i)}(z),\\
&K(z)=1+\sum_{i\geq 1} \hbar^i K^{(i)}(z), \label{K=1+}\\
&K^{(0)}(z)=1,\CR
&K^{(1)}(z)=K^{-(1)}(z)+K^{+(1)}(z),\CR
&K^{(2)}(z)=K^{-(2)}(z)+K^{-(1)}(z)K^{+(1)}(z)+K^{+(2)}(z). \nonumber
\end{align}
\end{prp}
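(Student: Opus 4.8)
The plan is to read off the leading ($\hbar^0$) behaviour of $K^\pm(z)$ from the Heisenberg presentation \eqref{Kpm(z)}, and then to obtain the expansion of $K(z)$ simply by multiplying out $K(z)=K^-(z)K^+(z)$ order by order in $\hbar$. The key observation is that, with $q=e^{\hbar}$, the structure constant $q-q^{-1}=2\sinh\hbar=2\hbar+O(\hbar^3)$ carries an explicit factor of $\hbar$.

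First I would establish $K^{\pm(0)}(z)=1$. For the zero mode, $K^\pm_0=q^{H_0}=e^{\hbar H_0}=1+O(\hbar)$ since $H_0$ is assumed $\hbar$-independent, so $K^{\pm(0)}_0=1$ and $(K^\pm_0)^{-1}=1+O(\hbar)$. For the nonzero modes I would invoke \eqref{Kpm(z)}: because each $H_{\pm m}$ ($m>0$) has an expansion $H_{\pm m}=\sum_{i\geq 0}\hbar^i H^{(i)}_{\pm m}$ starting at order $\hbar^0$, the exponent $(q-q^{-1})\sum_{m>0}H_{\pm m}z^{\mp m}$ is $O(\hbar)$, whence $(K^\pm_0)^{-1}K^\pm(z)=1+O(\hbar)$. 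Multiplying by $K^\pm_0=1+O(\hbar)$ gives $K^\pm(z)=1+O(\hbar)$, that is $K^{\pm(0)}(z)=1$, equivalently $K^{\pm(0)}_m=\delta_{m,0}$. The second displayed line $K^\pm(z)=1+\sum_{i\geq1}\hbar^i K^{\pm(i)}(z)$ then follows directly from the definition of the $\hbar$-expansion.

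Next I would substitute these two expansions into $K(z)=K^-(z)K^+(z)$, keeping the order $K^-$-then-$K^+$, and collect powers of $\hbar$ in
\begin{align*}
K(z)=\Bigl(1+\sum_{i\geq1}\hbar^iK^{-(i)}(z)\Bigr)\Bigl(1+\sum_{j\geq1}\hbar^jK^{+(j)}(z)\Bigr).
\end{align*}
The $\hbar^0$ term is $1$, giving \eqref{K=1+} with $K^{(0)}(z)=1$; the $\hbar^1$ term is $K^{-(1)}(z)+K^{+(1)}(z)$; and the $\hbar^2$ term is $K^{-(2)}(z)+K^{-(1)}(z)K^{+(1)}(z)+K^{+(2)}(z)$, matching the stated formulas for $K^{(1)}(z)$ and $K^{(2)}(z)$. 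Note that the ordering of the cross term $K^{-(1)}(z)K^{+(1)}(z)$ is fixed by $K^-$ standing to the left.

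The only genuine subtlety, and the step I would be most careful about, is the claim $K^{\pm(0)}(z)=1$ on the nonzero modes. It rests on two facts working in tandem: that the structure constant $q-q^{-1}$ vanishes at $\hbar=0$, and that each Heisenberg generator $H_{\pm m}$ has a finite, order-$\hbar^0$, leading coefficient, as stipulated in the standing hypotheses preceding the proposition. Were $H_{\pm m}$ allowed to diverge like $\hbar^{-1}$, the nonzero modes of $K^\pm$ could survive at leading order and the conclusion would fail; I would therefore invoke that hypothesis explicitly. Everything after that is a mechanical bookkeeping of $\hbar$-orders in the product of two formal series.
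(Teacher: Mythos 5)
Your proof is correct and follows exactly the route the paper intends: the paper states Proposition \ref{prp:K-ser} without a written proof, but the preceding paragraph sets up precisely the hypotheses you invoke ($H_0$ independent of $\hbar$, $H_{\pm m}=\sum_{i\geq 0}\hbar^i H^{(i)}_{\pm m}$, so that $q-q^{-1}=O(\hbar)$ kills the exponent in \eqref{Kpm(z)} at leading order), after which the expansion of $K(z)=K^-(z)K^+(z)$ is the mechanical product of series you carry out. Your explicit remark about why the nonzero modes of $K^{\pm(0)}(z)$ vanish is a useful clarification of what the paper leaves implicit.
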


Combined with the $\hbar$-expansions of $K(z)=K^-(z)K^+(z)$, 
the relation (\ref{rr-9}) plays a crucial role 
to find out the basic structures and the roles of the several
leading terms of the $\hbar$-expansion of the generators of $\Svir$.

\begin{prp}\label{Exp-1}
By using the $\hbar$ expansions of  $K^\pm(z), K(z)$ in Proposition \ref{prp:K-ser}, and the $\hbar$ expansion of 
the relation (\ref{rr-9}),
we have that 
\begin{enumerate}
\item $G^{\pm(0)}(z)$ is the first nontrivial element in the $\hbar$-expansion of  $G^{\pm}(z)$,
\item $T^{(0)}(z)= {k\over k+1}$, 
$T^{(1)}(z)= -{1\over k+1}K^{(1)}(z)$, and $T^{(2)}(z)$ is the first newly appearing nontrivial element in the $\hbar$-expansion of $T(z)$. 
\end{enumerate}
\end{prp}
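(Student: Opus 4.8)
The plan is to substitute $q=e^\hbar$ into the generating-function relation \eqref{rr-9} and extract its coefficients order by order in $\hbar$, feeding in the expansions $K^{\pm}(z)=1+\sum_{i\ge 1}\hbar^i K^{\pm(i)}(z)$ and $K(z)=1+\sum_{i\ge 1}\hbar^i K^{(i)}(z)$ of Proposition~\ref{prp:K-ser} together with the postulated expansions $G^\pm(z)=\sum_{i\ge0}\hbar^iG^{\pm(i)}(z)$ and $T(z)=\sum_{i\ge0}\hbar^iT^{(i)}(z)$. The decisive structural point is a comparison of the lowest powers of $\hbar$ on the two sides: the left-hand side $G^+(z)G^-(w)+G^-(w)G^+(z)$ is a series with only non-negative powers of $\hbar$, while the scalar prefactor $1/(q-q^{-1})^2=(2\sinh\hbar)^{-2}=\tfrac{1}{4\hbar^2}\bigl(1+O(\hbar^2)\bigr)$ on the right carries a double pole. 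Writing the bracket on the right as $B(z,w)=\sum_{n\ge0}\hbar^nB^{(n)}(z,w)$, the absence of negative powers of $\hbar$ forces $B^{(0)}=B^{(1)}=0$, and these two conditions are precisely what determine $T^{(0)}$ and $T^{(1)}$.

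First I would expand the three delta functions through $\delta^{\rm A}(q^{\alpha}w/z)=\delta^{\rm A}(w/z)+\alpha\hbar\,\delta^{\rm A}_1(w/z)+O(\hbar^2)$ with $\delta^{\rm A}_1(w/z):=\sum_m m\,(w/z)^m$, and record that $[k+1]=\sinh\bigl((k+1)\hbar\bigr)/\sinh\hbar$ is even in $\hbar$, so $1/[k+1]=\tfrac{1}{k+1}+O(\hbar^2)$ contributes nothing at odd order. At order $\hbar^0$ all three deltas collapse to $\delta^{\rm A}(w/z)$, $K(q^{2(k+1)}w)\to1$ and $T(q^{k+1}w)\to T^{(0)}(w)$, so that $B^{(0)}=\delta^{\rm A}(w/z)\bigl(\tfrac{1}{k+1}-1+T^{(0)}(w)\bigr)$; its vanishing gives the constant $T^{(0)}(z)=\tfrac{k}{k+1}$. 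At order $\hbar^1$ the derivative-of-delta pieces come with the total coefficient $4(k+1)\cdot\tfrac{1}{k+1}-2(k+2)+2(k+1)\cdot\tfrac{k}{k+1}=4-2(k+2)+2k=0$ and cancel, leaving $B^{(1)}=\delta^{\rm A}(w/z)\bigl(\tfrac{1}{k+1}K^{(1)}(w)+T^{(1)}(w)\bigr)$; its vanishing gives $T^{(1)}(z)=-\tfrac{1}{k+1}K^{(1)}(z)$. This proves both explicit formulas in claim~(2).

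Next I would read off the $\hbar^0$ coefficient of the full relation, which (after the $\hbar^{-2}$ and $\hbar^{-1}$ pieces have been killed) reads
\[
G^{+(0)}(z)G^{-(0)}(w)+G^{-(0)}(w)G^{+(0)}(z)=\tfrac14\,B^{(2)}(z,w).
\]
Inside $B^{(2)}$ the second-order term of $T(q^{k+1}w)$ introduces $T^{(2)}(w)$, which has not occurred at any lower order (there $T^{(0)}$ and $T^{(1)}$ were already fixed by constants and by $K^{(1)}$); hence $T^{(2)}$ is the first newly appearing nontrivial element, as asserted. Comparing this identity with the superconformal relation $\{\G^+_m,\G^-_n\}=2\Lv_{m+n}+(m-n)\I_{m+n}+\tfrac{c}{3}\bigl(m^2-\tfrac14\bigr)\delta_{m+n,0}$ of Theorem~\ref{CFT-Theorem} identifies $G^{\pm(0)}$ with $\sqrt{(k+2)/2}\,\G^\pm$ and exhibits the right-hand side as genuinely nonzero, so $G^{\pm(0)}\ne0$; thus $G^{\pm(0)}$ is the leading, and hence first nontrivial, term of the expansion of $G^\pm$, which is claim~(1).

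The hard part will be the order-$\hbar^2$ bookkeeping needed to pin down $B^{(2)}$: it gathers the second-order expansions of all three delta functions, the argument shifts $q^{\alpha(k+1)}w=e^{\alpha(k+1)\hbar}w$ acting on $T^{(0)}$, $T^{(1)}$ and $K^{(1)}$, the $O(\hbar^2)$ corrections of $1/[k+1]$, and the new term $K^{(2)}$. To collect these into a single manifestly local expression one must repeatedly apply Lemma~\ref{delta-flip} to trade $\delta^{\rm A}(w/z)f(w)$ for $\delta^{\rm A}(w/z)f(z)$ and to reorganize the surviving derivative-of-delta contributions. I expect the order-$\hbar^1$ cancellation $4-2(k+2)+2k=0$ to be the delicate consistency check that makes $T^{(1)}$ well defined, and the extraction of $T^{(2)}$ (together with $K^{(2)}$ and the central term) from $B^{(2)}$ to be the most calculation-heavy step.
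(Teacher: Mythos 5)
Your proposal is correct and follows essentially the same route as the paper: expand relation \eqref{rr-9} in $\hbar$, use the double pole of the prefactor $1/(q-q^{-1})^2$ against the regularity of the left-hand side to force the $\hbar^0$ and $\hbar^1$ coefficients of the bracket to vanish (yielding $T^{(0)}=k/(k+1)$ and $T^{(1)}=-K^{(1)}/(k+1)$, with the same cancellation $4-2(k+2)+2k=0$ of the $D\delta^{\rm A}$ terms), and then read off the order-$\hbar^0$ identity, which is exactly the paper's Proposition \ref{P2} and shows that $T^{(2)}$ and $G^{\pm(0)}$ first enter there. Your observation that the nonvanishing central term $\tfrac{k}{2}(D^2\delta^{\rm A})(w/z)$ in that identity forces $G^{\pm(0)}\neq 0$ is a valid justification of claim (1).
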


For a proof of the second statement see the next subsection (\eqref{T0} and \eqref{T1}). 
Hence, with these particular elements explicitly written, we have 
\begin{align*}
&G^\pm(z)= G^{\pm(0)}(z)+{\mathcal O}(\hbar^1),\\
&K^\pm(z)= 1+\hbar\,\, K^{\pm(1)}(z) +\hbar^2\,\, K^{\pm(2)}(z) +{\mathcal O}(\hbar^3),\\
&T(z)= {k\over k+1}+\hbar\,\, \Biggl(-{1\over k+1}K^{(1)}(z)\Biggr) +\hbar^2\,\, T^{(2)}(z) +{\mathcal O}(\hbar^3).
\end{align*}

We decided that (1) the odd generators $G^\pm(z)$ should be expanded up to the order of $\hbar^0$,
and (2) even generators $K^\pm(z),K(z),T(z)$ should be expanded up to the order of $\hbar^2$. 
Then we know to what extent we should perform the exact $\hbar$-expansions of the relations as follows;
\begin{table}[h]
\begin{tabular}{c|c|c|c}
types & relations & required orders & worked out in \\  [2mm] \hline 
$G^\pm$ vs. $G^\pm$&(\ref{rr-8}) and (\ref{rr-9})&up to  $\hbar^0$ & \S\ref{sub42}\\[2mm]
$K^\pm$ vs. $G^\pm$& (\ref{rr-6}) and (\ref{rr-7})& up to  $\hbar^2$ & \S\ref{sub43} \\[2mm]
$G^\pm$ vs. $T$ &(\ref{rr-10})&up to  $\hbar^2$ &  \S\ref{sub44}  \\[2mm]
$K^\pm$ vs. $T$&(\ref{rr-4}) and (\ref{rr-5})&up to  $\hbar^4$ & \S\ref{sub45} \\[2mm]
$K^\pm$ vs. $K^\pm$& (\ref{rr-2}) and (\ref{rr-3})& up to  $\hbar^4$ & \S\ref{sub46} \\[2mm]
$T$ vs. $T$ & (\ref{rr-11})& up to  $\hbar^4$ & \S\ref{sub47}
\end{tabular}
\end{table}

\begin{dfn}\label{CFTcurrents}
Rescaling or combining the above leading elements $G^{\pm(0)}(z)$, $K^{(1)}(z)$, $K^{(2)}(z)$ and $T^{(2)}(z)$,
set\footnote{In the $q$-deformed case it is convenient to define the generating currents without 
the degree shift by the conformal weight. When we compute the $\hbar$ expansion, the commutation relation is more tractable 
than the operator product expansion.}
\begin{align*}
&\G^\pm(z)=\sum_{m \in \mathbb{Z}^\mathrm{A}}\G^\pm_m z^{-m}=\sqrt{{2\over k+2}}G^{\pm(0)}(z),\\
&\I(z)=\sum_{m \in \mathbb{Z}}\I_m z^{-m}={1\over 2(k+2)}K^{(1)}(z),\\
&\Lv(z)=\sum_{m \in \mathbb{Z}}\Lv_m z^{-m}= {1\over 4(k+2)} T^{(2)}(z)+{1\over 4(k+1)(k+2)}K^{(2)}(z) +{k(2k+1)\over 24(k+1)(k+2)}.
\end{align*}
\end{dfn}

\begin{dfn}
Denote by $D$ the Euler differential $D=z{\partial \over \partial z}$.
Set
\begin{align*}
&
(D\delta^{\rm A})(z)=\sum_{m \in \mathbb{Z}^\mathrm{A}} m z^m ,\qquad (D^2\delta^{\rm A})(z)=\sum_{m \in \mathbb{Z}^\mathrm{A}} m^2 z^m,
\end{align*}
where $\mathrm{A}=\mathrm{NS}$ or $\mathrm{R}$. Recall that 
$\mathbb{Z}^\mathrm{NS} = \mathbb{Z} + \frac{1}{2}$ and $\mathbb{Z}^\mathrm{R} = \mathbb{Z}$.
The definition for $\delta(z)$ is the same as $\delta^{\rm R}(z)$.
\end{dfn}

\begin{thm}\label{CFT-Theorem}
The elements $\G^\pm(z)$, $\I(z)$ and $\Lv(z)$ satisfy the relations for the 
${\mathcal N}=2$ superconformal algebra with the central charge\footnote{
The central charge agrees with the sum of the central charges of the level $k$ parafermion and a $\mathfrak{u}_1$ free boson.}
\begin{equation}
c=\frac{3k}{k+2}.
\end{equation}
Namely we have;
\begin{align}
\label{N=2comrel-1}
&\{\G^\pm(z),\G^\pm(w)\}=0,\\
\label{N=2comrel-2}
&\{\G^+(z),\G^-(w)\}=2 \delta^{\rm A} \left({w\over z}\right)\Lv(w)+
(D\delta^{\rm A}) \left({w\over z}\right)\Bigl(\I(w)+\I(z)\Bigr) \CR
&\qquad\qquad  +
{c\over 3} \left(  (D^2\delta^{\rm A}) \left({w\over z}\right) -
{1\over 4}\delta^{\rm A} \left({w\over z}\right)
\right) ,\\
\label{N=2comrel-3}
&[\Lv(z),\G^\pm(w)]={1\over 2}(D\delta) \left({w\over z}\right)\G^\pm(w)+
(D\delta^{\rm A}) \left({w\over z}\right)\G^\pm(z),\\
\label{N=2comrel-4}
&[\I(z),\G^\pm(w)]=\pm~\delta\left({w\over z}\right)\G^\pm(w),\\
\label{N=2comrel-5}
&[\I(z),\I(w)]= {c\over 3} (D\delta) \left({w\over z}\right),\\ 
\label{N=2comrel-6}
&[\I(z),\Lv(w)]=  (D\delta) \left({w\over z}\right) \I(w),\\
\label{N=2comrel-7}
&[\Lv(z),\Lv(w)]=
 (D\delta) \left({w\over z}\right)\Bigl( \Lv(z)+\Lv(w)\Bigr) 
+ {c\over 12} \Bigl((D^3\delta) \left({w\over z}\right)
- \delta\left({w\over z}\right) \Bigr).
\end{align}
\end{thm}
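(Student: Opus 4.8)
The plan is to substitute the $\hbar$-expansions of the generators established in Propositions \ref{prp:K-ser} and \ref{Exp-1} into the generating-function relations \eqref{rr-2}--\eqref{rr-11}, and then to read off, at the lowest nontrivial order in $\hbar$, the corresponding $\mathcal{N}=2$ relation \eqref{N=2comrel-1}--\eqref{N=2comrel-7} after the rescalings of Definition \ref{CFTcurrents}. The orders to which each relation must be expanded are exactly those recorded in the table preceding Definition \ref{CFTcurrents}. Two elementary tools are used throughout. The first is the expansion of the $q$-numbers, $[u]=u+\tfrac{\hbar^2}{6}u(u^2-1)+O(\hbar^4)$, obtained from $[u]=\sinh(\hbar u)/\sinh(\hbar)$. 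The second, and the more important, is the expansion of the shifted delta functions,
\begin{align*}
\delta^{\rm A}\!\left(q^a\frac{w}{z}\right)=\sum_{i\geq 0}\frac{(\hbar a)^i}{i!}\,(D^i\delta^{\rm A})\!\left(\frac{w}{z}\right),
\end{align*}
which follows from $\delta^{\rm A}(q^aw/z)=\sum_m q^{am}(w/z)^m$ together with $q^{am}=e^{\hbar am}$. This is the precise mechanism by which the derivatives $(D\delta)$, $(D^2\delta)$ and $(D^3\delta)$ on the right-hand sides of the $\mathcal{N}=2$ relations are generated from the single shifted arguments appearing in \eqref{rr-9}, \eqref{rr-10} and \eqref{rr-11}.

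I would proceed in order of increasing difficulty. Relation \eqref{rr-8} is $\hbar$-independent and yields \eqref{N=2comrel-1} at once. The relation \eqref{N=2comrel-5} for $[\I(z),\I(w)]$ is read off directly from the Heisenberg relation \eqref{H-H} of Proposition \ref{Hei}: since $\I_m$ is, to leading order, the rescaled $H_m$ and $[(k+2)m][km]/m\to k(k+2)\,m$ as $\hbar\to 0$, the normalization in Definition \ref{CFTcurrents} produces exactly $\tfrac{c}{3}(D\delta)$ with $c=3k/(k+2)$. Next, \eqref{N=2comrel-4} for $[\I(z),\G^\pm(w)]$ comes from the $K$--$G$ relations \eqref{rr-6}, \eqref{rr-7} at order $\hbar^1$, where $q^{\pm(k+2)}=1\pm\hbar(k+2)+\cdots$ combines with $K^\pm(z)=1+\hbar K^{\pm(1)}(z)+\cdots$; likewise \eqref{N=2comrel-6} for $[\I(z),\Lv(w)]$ and \eqref{N=2comrel-3} for $[\Lv(z),\G^\pm(w)]$ are extracted from \eqref{rr-4}, \eqref{rr-5} and from \eqref{rr-10} at the orders indicated in the table. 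In each case the delta-function expansion supplies the single $(D\delta)$ factor demanded by the $\mathcal{N}=2$ side.

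The relation \eqref{N=2comrel-2} for $\{\G^+(z),\G^-(w)\}$ is obtained from \eqref{rr-9}, and here a subtlety already appears: the overall prefactor $1/(q-q^{-1})^2\sim 1/(4\hbar^2)$ means that the $\hbar^0$ part of the relation is governed by the $\hbar^2$-part of the bracket on the right of \eqref{rr-9}. One first checks that the $\hbar^0$ and $\hbar^1$ parts of that bracket cancel, using $\tfrac{1}{k+1}-1+\tfrac{k}{k+1}=0$ and $T^{(1)}(z)=-\tfrac{1}{k+1}K^{(1)}(z)$ from Proposition \ref{Exp-1}. The $\hbar^2$ part, expanded through the delta tool and through $K(z)=1+\hbar K^{(1)}+\hbar^2K^{(2)}+\cdots$, $T(z)=\tfrac{k}{k+1}-\tfrac{\hbar}{k+1}K^{(1)}+\hbar^2T^{(2)}+\cdots$, then produces simultaneously $2\delta^{\rm A}\Lv$, $(D\delta^{\rm A})(\I(w)+\I(z))$ and the central term $\tfrac{c}{3}((D^2\delta^{\rm A})-\tfrac14\delta^{\rm A})$. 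This is the computation that fixes the specific combination of $T^{(2)}(z)$ and $K^{(2)}(z)$, and the constant shift $\tfrac{k(2k+1)}{24(k+1)(k+2)}$, chosen in the definition of $\Lv(z)$; Lemma \ref{delta-flip} is used to move the arguments of the currents across the delta functions.

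The main obstacle is the $T$--$T$ relation \eqref{N=2comrel-7}, which must be expanded to order $\hbar^4$ starting from \eqref{rr-11}. Its right-hand side involves the composite current $W(z)$, which by Lemma \ref{GG=W} contains the normal-ordered product $\no G^+(z)G^-(z)\no$ together with quartic expressions in the modes of $K$; tracking all of these to fourth order, and combining them with the $\hbar$-expansions of the prefactor $(q-q^{-1})[k+2]^2[(m-n)(k+1)]/[k+1]$ and of the two oppositely shifted delta functions $\delta(q^{\pm2(k+1)}w/z)$, is the heart of the argument. The delicate point is the emergence of the third-derivative term $(D^3\delta)$ from the cubic order of the delta expansion, and the precise cancellations that isolate the central term $\tfrac{c}{12}((D^3\delta)-\delta)$ with exactly $c=3k/(k+2)$; this is where the constant shift in $\Lv(z)$ earns its keep. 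Once \eqref{N=2comrel-7} is verified, all seven relations are established and the theorem follows.
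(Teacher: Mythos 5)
Your overall strategy --- expand the generating-function relations \eqref{rr-2}--\eqref{rr-11} in $\hbar$ to the orders listed in the table, using $\delta^{\rm A}(q^a w/z)=\sum_{i\geq 0}\frac{(\hbar a)^i}{i!}(D^i\delta^{\rm A})(w/z)$ to generate the derivative terms --- is exactly the paper's, and your treatment of \eqref{N=2comrel-1}, \eqref{N=2comrel-2} and \eqref{N=2comrel-4} matches Propositions \ref{P1}, \ref{P2} and \ref{P4}, including the preliminary identities $T^{(0)}=k/(k+1)$ and $T^{(1)}=-\frac{1}{k+1}K^{(1)}$ extracted from the singular orders of \eqref{rr-9}. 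Reading \eqref{N=2comrel-5} off the Heisenberg relation \eqref{H-H} is a legitimate shortcut (the paper instead re-derives it from the $\hbar^2$ orders of \eqref{TK=KT} and \eqref{KK=KK}), though it saves little since those expansions are needed anyway at higher order.

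The genuine gap is in your account of \eqref{N=2comrel-3}, \eqref{N=2comrel-6} and above all \eqref{N=2comrel-7}: you present each as obtainable from a single defining relation, but the $\hbar$-expansion of any one relation does not close on the currents of Definition \ref{CFTcurrents}. Concretely, the $\hbar^4$ part of $[T(z),T(w)]$ is $-\frac{1}{k+1}\bigl([K^{(1)}(z),T^{(3)}(w)]+[T^{(3)}(z),K^{(1)}(w)]\bigr)+[T^{(2)}(z),T^{(2)}(w)]$, so even after you have expanded the right-hand side of \eqref{rr-11} (with $W(z)$, Lemma \ref{GG=W} and the shifted deltas) perfectly, you are left with an identity containing the unknown coefficient $T^{(3)}(z)$, which appears nowhere in $\Lv(z)$. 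The paper removes it by a three-way combination: the antisymmetrized $\hbar^4$ order of the $K$--$T$ relation (Corollary \ref{CorT3T1}) together with the $\hbar^4$ order of the $K$--$K$ relation \eqref{K3K1} yields Proposition \ref{prpT3K1}, which expresses $[K^{(1)},T^{(3)}]+[T^{(3)},K^{(1)}]$ (and disposes of $K^{(3)}$) in terms of $T^{(2)}$, $K^{(2)}$ and $K^{(1)}$ alone; substituting this into the $\hbar^4$ order \eqref{T3r11} of \eqref{rr-11} gives \eqref{N=2comrel-7}. The same mechanism is needed at lower order: \eqref{N=2comrel-6} requires combining the $\hbar^3$ orders of \eqref{TK=KT} and \eqref{rr-11} to isolate $[K^{(1)},T^{(2)}+\frac{1}{k+1}K^{(2)}]$, and \eqref{N=2comrel-3} requires combining the $\hbar^2$ orders of \eqref{rr-6}--\eqref{rr-7} with that of \eqref{rr-10} to eliminate $G^{\pm(1)}$. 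Your proposal cites the table that mandates the $\hbar^4$ expansion of the $K$--$K$ and $K$--$T$ relations but never uses them, and without them the computation you describe for \eqref{rr-11} cannot terminate.
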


In \eqref{N=2comrel-3} the first term should involve $D\delta$, while we have 
$D\delta^{\rm A}$ in the second term. We can see this as follows;
\begin{align*}
[\Lv(z),\G^\pm(w)] &= \sum_{m \in \mathbb{Z}} \sum_{n \in \mathbb{Z}^\mathrm{A}} z^{-m} w^{-n} [L_m, G^\pm_n] 
= \sum_{m \in \mathbb{Z}} \sum_{n \in \mathbb{Z}^\mathrm{A}} z^{-m} w^{-n} \left( \frac{m}{2} -n \right)G^\pm_{m+n} \\
&= \sum_{m \in \mathbb{Z}} \sum_{n \in \mathbb{Z}^\mathrm{A}} \left[ \frac{m}{2} \left(\frac{w}{z}\right)^m w^{-n-m}
-n \left(\frac{z}{w}\right)^n z^{-n-m} \right]G^\pm_{m+n}  \\
&= \frac{1}{2} D\delta \left(\frac{w}{z} \right) G^\pm(w) +  D\delta^\mathrm{A} \left(\frac{w}{z} \right) G^\pm(z).
\end{align*}

The following table shows where each relation is proved.\footnote{Some of the relations are proved in the form where 
$z$ and $w$ are exchanged.}

\begin{table}[h]
\begin{tabular}{l|c|c|c|c|c|c|c}
Relation & \eqref{N=2comrel-1} & \eqref{N=2comrel-2} & \eqref{N=2comrel-3} & \eqref{N=2comrel-4} 
& \eqref{N=2comrel-5} & \eqref{N=2comrel-6} & \eqref{N=2comrel-7} \\
\hline
Proved as & Prop.\ref{P1} & Prop.\ref{P2}  & Prop.\ref{P3}  & Prop.\ref{P4} & Prop.\ref{P5} & Prop.\ref{P6} & Prop.\ref{P7}
\end{tabular}
\end{table}

\subsection{$\hbar$-expansions of the relations for $G^{\pm}$ vs. $G^{\pm}$ }\label{sub42}
We start our investigation of the relations from the $\hbar$-expansions of (\ref{rr-8}) and  (\ref{rr-9}) up to 
the order of $\hbar^0$.

\begin{prp}\label{P1}
{}Taking the terms of the order of $\hbar^0$ in the $\hbar$-expansions of the relation (\ref{rr-8}),
we have 
\begin{align*}
G^{\pm(0)}(z) G^{\pm(0)}(w)+G^{\pm(0)}(w)G^{\pm(0)}(z)=0.
\end{align*}
Namely we have
\begin{align*}
\G^{\pm}(z) \G^{\pm}(w)+\G^{\pm}(w)\G^{\pm}(z)=0.
\end{align*}
\end{prp}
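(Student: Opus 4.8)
The plan is to exploit the fact that relation \eqref{rr-8}, namely $G^\pm(z)G^\pm(w)+G^\pm(w)G^\pm(z)=0$, is an exact identity in $\Svir$ that holds for every value of $q=e^\hbar$ and, crucially, carries no $q$-dependent structure coefficients of its own. Consequently the entire $\hbar$-dependence of the left-hand side enters only through the generators $G^\pm(z)=\sum_{i\geq 0}\hbar^i G^{\pm(i)}(z)$, so I need only substitute this expansion and collect the term of lowest order in $\hbar$.

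First I would form the Cauchy product
\[
G^\pm(z)G^\pm(w)=\sum_{N\geq 0}\hbar^N\sum_{i+j=N}G^{\pm(i)}(z)\,G^{\pm(j)}(w),
\]
together with the analogous expansion of $G^\pm(w)G^\pm(z)$. By Proposition \ref{Exp-1} the series for $G^\pm(z)$ genuinely begins at $i=0$, so the coefficient of $\hbar^0$ in the two products is $G^{\pm(0)}(z)G^{\pm(0)}(w)$ and $G^{\pm(0)}(w)G^{\pm(0)}(z)$, respectively. Since the full sum $G^\pm(z)G^\pm(w)+G^\pm(w)G^\pm(z)$ vanishes identically as a power series in $\hbar$, every coefficient must vanish; reading off the $\hbar^0$ coefficient yields exactly $G^{\pm(0)}(z)G^{\pm(0)}(w)+G^{\pm(0)}(w)G^{\pm(0)}(z)=0$. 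Finally, invoking the rescaling $\G^\pm(z)=\sqrt{2/(k+2)}\,G^{\pm(0)}(z)$ from Definition \ref{CFTcurrents} and multiplying through by $2/(k+2)$ produces the stated anticommutation $\G^\pm(z)\G^\pm(w)+\G^\pm(w)\G^\pm(z)=0$.

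There is essentially no genuine obstacle here: unlike the $G$--$G$ and $T$--$T$ relations \eqref{rr-9} and \eqref{rr-11}, whose right-hand sides contain delta-functions and $q$-numbers that must themselves be expanded in $\hbar$, relation \eqref{rr-8} is purely homogeneous and $q$-independent, so passing to the leading order is immediate. The only point deserving a word is the legitimacy of extracting coefficients order by order, which rests on the standing assumption that the generators admit the $\hbar$ power-series expansions posited at the start of the section; granting that, the result is a one-line consequence.
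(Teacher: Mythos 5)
Your proof is correct and follows exactly the route the paper takes (the paper treats this as immediate, folding the whole argument into the statement "taking the terms of order $\hbar^0$"): since \eqref{rr-8} has no $q$-dependent structure coefficients, the $\hbar^0$ coefficient of the Cauchy product is just the anticommutator of the leading terms $G^{\pm(0)}$, and the rescaling $\G^\pm(z)=\sqrt{2/(k+2)}\,G^{\pm(0)}(z)$ gives the second form. Nothing is missing.
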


Next, we turn to the $\hbar$ expansion of (\ref{rr-9}). We show that
three nontrivial relations will appear by considering the 
terms of the orders of $\hbar^{-2}$, $\hbar^{-1}$ and $\hbar^{0}$. 

\begin{prp}
Using (\ref{K=1+}), we have
\begin{align*}
\mbox{{\rm RHS of (\ref{rr-9})}}= {1\over 4\hbar^{2}}\left( T^{(0)}(w)-{k\over k+1}\right)+{\mathcal O}(\hbar^{-1}).
\end{align*}
On the other hand  we have $\mbox{\rm LHS of (\ref{rr-9})}={\mathcal O}(\hbar^{0})$. 
Hence we have the relation
\begin{align}
&T^{(0)}(z)={k\over k+1}.\label{T0}
\end{align}
\end{prp}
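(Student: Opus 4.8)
The plan is to set $q=e^{\hbar}$ and compare the two sides of \eqref{rr-9} order by order in $\hbar$, reading off the most singular term. The only leading coefficient that is not yet fixed is $T^{(0)}(w)$, so the idea is to determine it by imposing that the orders of $\hbar$ on the two sides be consistent. The two inputs I would use are \eqref{K=1+} (equivalently Proposition \ref{prp:K-ser}), which gives $K^{(0)}(z)=1$, and Proposition \ref{Exp-1}(1), which guarantees that $G^{\pm}(z)=G^{\pm(0)}(z)+\mathcal{O}(\hbar)$ starts at order $\hbar^{0}$.

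First I would expand the scalar factors. From $q-q^{-1}=2\sinh\hbar=2\hbar+\mathcal{O}(\hbar^{3})$ one gets $(q-q^{-1})^{-2}=\tfrac{1}{4\hbar^{2}}\bigl(1+\mathcal{O}(\hbar^{2})\bigr)$, and from $[u]=\sinh(u\hbar)/\sinh\hbar=u+\mathcal{O}(\hbar^{2})$ one gets $[k+1]^{-1}=\tfrac{1}{k+1}+\mathcal{O}(\hbar^{2})$. Moreover every prefactor $q^{a}=1+\mathcal{O}(\hbar)$, so the shifted delta functions and shifted arguments all collapse at leading order: $\delta^{\rm A}(q^{a}w/z)=\delta^{\rm A}(w/z)+\mathcal{O}(\hbar)$, $K(q^{2(k+1)}w)=K^{(0)}(w)+\mathcal{O}(\hbar)=1+\mathcal{O}(\hbar)$, and $T(q^{k+1}w)=T^{(0)}(w)+\mathcal{O}(\hbar)$.

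Substituting these into the bracket of \eqref{rr-9}, the three terms share the common factor $\delta^{\rm A}(w/z)$ at leading order, and the bracket becomes
\[
\Bigl(\tfrac{1}{k+1}-1+T^{(0)}(w)\Bigr)\,\delta^{\rm A}\!\left(\tfrac{w}{z}\right)+\mathcal{O}(\hbar)
=\Bigl(T^{(0)}(w)-\tfrac{k}{k+1}\Bigr)\,\delta^{\rm A}\!\left(\tfrac{w}{z}\right)+\mathcal{O}(\hbar).
\]
Multiplying by $(q-q^{-1})^{-2}$ then gives the RHS of \eqref{rr-9} as $\tfrac{1}{4\hbar^{2}}\bigl(T^{(0)}(w)-\tfrac{k}{k+1}\bigr)\delta^{\rm A}(w/z)+\mathcal{O}(\hbar^{-1})$, which is the asserted formula with the common delta suppressed. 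On the other hand, by Proposition \ref{Exp-1}(1) the left-hand side is $G^{+(0)}(z)G^{-(0)}(w)+G^{-(0)}(w)G^{+(0)}(z)+\mathcal{O}(\hbar)=\mathcal{O}(\hbar^{0})$, i.e. it carries no pole in $\hbar$.

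Matching the two expansions forces the coefficient of $\hbar^{-2}$ on the right to vanish, so $\bigl(T^{(0)}(w)-\tfrac{k}{k+1}\bigr)\delta^{\rm A}(w/z)=0$. The main (if routine) point is to convert this delta-function identity into the formal-series identity $T^{(0)}(z)=\tfrac{k}{k+1}$; I would do this by mode expansion, writing $T^{(0)}(w)-\tfrac{k}{k+1}=\sum_{n}c_{n}w^{-n}$ and $\delta^{\rm A}(w/z)=\sum_{m\in\mathbb{Z}^{\rm A}}z^{-m}w^{m}$, so that the coefficient of $z^{-a}w^{-b}$ in the product is $c_{a+b}$; its vanishing for all $a,b$ forces every $c_{N}=0$. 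The only thing to keep honest here is the sector bookkeeping (half-integer powers for $\delta^{\rm NS}$ versus integer powers for $\delta^{\rm R}$), but in both sectors $a+b$ ranges over all of $\mathbb{Z}$, so $c_{N}=0$ for every integer mode and hence $T^{(0)}(z)=\tfrac{k}{k+1}$, which is \eqref{T0}.
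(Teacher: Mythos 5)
Your argument is correct and follows the same route as the paper: expand the prefactors $(q-q^{-1})^{-2}=\tfrac{1}{4\hbar^2}(1+\mathcal{O}(\hbar^2))$ and $[k+1]^{-1}=\tfrac{1}{k+1}+\mathcal{O}(\hbar^2)$, use $K^{(0)}=1$ from \eqref{K=1+} to reduce the bracket of \eqref{rr-9} to $\bigl(T^{(0)}(w)-\tfrac{k}{k+1}\bigr)\delta^{\rm A}(w/z)+\mathcal{O}(\hbar)$, and match against the pole-free left-hand side. You in fact supply more detail than the paper does, notably the mode-by-mode argument that $\bigl(T^{(0)}(w)-\tfrac{k}{k+1}\bigr)\delta^{\rm A}(w/z)=0$ forces every Fourier coefficient to vanish, and the observation that the paper's displayed formula tacitly suppresses the common factor $\delta^{\rm A}(w/z)$.
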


\begin{prp}
Using (\ref{K=1+}) and (\ref{T0}), we have
\begin{align*}
\mbox{{\rm RHS of (\ref{rr-9})}}= {1\over 4\hbar}\left( T^{(1)}(w)+{1\over k+1}K^{(1)}(w)\right)+{\mathcal O}(\hbar^{0}).
\end{align*}
Hence by the same reason as above, we have the relation 
\begin{align}
&T^{(1)}(z)=-{1\over k+1}K^{(1)}(z).\label{T1}
\end{align}
\end{prp}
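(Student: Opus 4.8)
The plan is to push the $\hbar$-expansion of the right-hand side of (\ref{rr-9}) one order further than in the derivation of (\ref{T0}), i.e.\ down to order $\hbar^{-1}$, and then to repeat the vanishing argument. The first observation I would record is that the scalar prefactor is even in $\hbar$: from $(q-q^{-1})^2=4\sinh^2\hbar$ one has
\begin{align*}
{1\over (q-q^{-1})^2}={1\over 4\hbar^2}\bigl(1+{\mathcal O}(\hbar^2)\bigr),\qquad {1\over [k+1]}={1\over k+1}+{\mathcal O}(\hbar^2),
\end{align*}
so that neither factor carries an $\hbar^{-1}$ nor an $\hbar^{1}$ correction. Since the bracket in (\ref{rr-9}) is itself ${\mathcal O}(\hbar^0)$ and its $\hbar^0$-part already vanishes by (\ref{T0}), the $\hbar^{-1}$-part of the whole right-hand side equals precisely ${1\over 4\hbar^2}$ times the $\hbar^{1}$-coefficient of the bracket; this is the only coefficient I need to extract.

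Next I would expand the three ingredients of the bracket to first order. For the shifted delta functions the basic expansion is
\begin{align*}
\delta^{\rm A}\!\left(q^{\alpha}{w\over z}\right)=\delta^{\rm A}\!\left({w\over z}\right)+\alpha\,\hbar\,(D\delta^{\rm A})\!\left({w\over z}\right)+{\mathcal O}(\hbar^2),
\end{align*}
used with $\alpha=4(k+1),\,2(k+2),\,2(k+1)$ respectively. For the two currents I would invoke (\ref{K=1+}) and (\ref{T0}) to write $K(q^{2(k+1)}w)=1+\hbar\,K^{(1)}(w)+{\mathcal O}(\hbar^2)$ and $T(q^{k+1}w)={k\over k+1}+\hbar\,T^{(1)}(w)+{\mathcal O}(\hbar^2)$; here the multiplicative shifts $q^{2(k+1)},q^{k+1}$ of the argument act on $K^{(1)},T^{(1)}$ only at the next order and may be dropped, while the constant $T^{(0)}={k\over k+1}$ feels no shift at all.

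Collecting the $\hbar^{1}$-coefficient of the bracket then splits into a $(D\delta^{\rm A})(w/z)$-part and a $\delta^{\rm A}(w/z)$-part. The derivative-of-delta terms come with total weight $4-2(k+2)+2k=0$ and cancel, and the surviving terms assemble into $\delta^{\rm A}(w/z)\bigl(T^{(1)}(w)+{1\over k+1}K^{(1)}(w)\bigr)$, yielding
\begin{align*}
\mbox{RHS of (\ref{rr-9})}={1\over 4\hbar}\,\delta^{\rm A}\!\left({w\over z}\right)\Bigl(T^{(1)}(w)+{1\over k+1}K^{(1)}(w)\Bigr)+{\mathcal O}(\hbar^0),
\end{align*}
which is the claimed expansion. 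To finish, I would note, exactly as for (\ref{T0}), that the left-hand side $G^+(z)G^-(w)+G^-(w)G^+(z)$ is ${\mathcal O}(\hbar^0)$ because $G^\pm(z)$ begins at order $\hbar^0$, so it carries no $\hbar^{-1}$ term; matching $\hbar^{-1}$-coefficients then forces $T^{(1)}(w)+{1\over k+1}K^{(1)}(w)=0$, which is (\ref{T1}). I expect the computation to be entirely routine; the one step deserving genuine care is the cancellation $4-2(k+2)+2k=0$ of the $(D\delta^{\rm A})$ contributions, since it is exactly this that leaves a clean multiple of $\delta^{\rm A}(w/z)$, and hence a local relation among the modes rather than a spurious derivative term.
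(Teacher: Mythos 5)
Your proposal is correct and follows exactly the computation the paper intends: expand the prefactor (even in $\hbar$), the shifted delta functions, and the currents to the relevant order, observe the cancellation $4-2(k+2)+2k=0$ of the $(D\delta^{\rm A})$ terms, and match the $\hbar^{-1}$ coefficients against the ${\mathcal O}(\hbar^{0})$ left-hand side. Your version is in fact slightly more careful than the paper's statement, since you retain the overall $\delta^{\rm A}(w/z)$ factor, from which the mode-by-mode relation $T^{(1)}_m=-\frac{1}{k+1}K^{(1)}_m$ is then correctly extracted.
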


%

\begin{prp}\label{P2}
Using (\ref{K=1+}), (\ref{T0}) and (\ref{T1}), 
taking the terms of the order of $\hbar^0$ in the $\hbar$ expansion of (\ref{rr-9}), we have
\begin{align}\label{G0G0}
&G^{+(0)}(z)G^{-(0)}(w)+G^{-(0)}(w)G^{+(0)}(z) \CR
&=\left({1\over 4} T^{(2)}(w)+{1\over 4} (DK^{(1)})(w) +{1\over 4(k+1)}K^{(2)}(w) -{k(k+2)\over 24(k+1)}\right)
\delta^{\rm A}\left({w\over z}\right)\nonumber\\
&\qquad+{1\over 2}K^{(1)}(w) (D\delta^{\rm A})\left({w\over z}\right)+{k\over 2}(D^2\delta^{\rm A})\left({w\over z}\right),
\end{align}
which is recast as
\begin{align*}
&\G^{+}(z)\G^{-}(w)+\G^{-}(w)\G^{+}(z)\\
&=2 
\delta^{\rm A}\left({w\over z}\right)\Lv(w)
+(D\delta^{\rm A})\left({w\over z}\right) \Bigl(\I(z)+\I(w) \Bigr)+
{k\over k+2 }\Bigl( (D^2\delta^{\rm A})\left({w\over z}\right) -
{1\over 4} \delta^{\rm A}\left({w\over z}\right) \Bigr).
\end{align*}
\end{prp}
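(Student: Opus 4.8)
The plan is to set $q=e^\hbar$ in (\ref{rr-9}) and extract the coefficient of $\hbar^0$; the coefficients of $\hbar^{-2}$ and $\hbar^{-1}$ have already been consumed to produce (\ref{T0}) and (\ref{T1}). Since $G^\pm(z)=G^{\pm(0)}(z)+\mathcal{O}(\hbar)$, the left-hand side of (\ref{rr-9}) starts at order $\hbar^0$ with exactly $G^{+(0)}(z)G^{-(0)}(w)+G^{-(0)}(w)G^{+(0)}(z)$, so the whole proposition reduces to computing the order-$\hbar^0$ part of the right-hand side.

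First I would assemble the $\hbar$-expansion of every factor on the right. The prefactor gives $1/(q-q^{-1})^2=\frac{1}{4\hbar^2}(1+\mathcal{O}(\hbar^2))$ and $1/[k+1]=\frac{1}{k+1}+\mathcal{O}(\hbar^2)$. Writing $q^{am}=e^{a\hbar m}$ inside the mode sum, each shifted delta expands as $\delta^{\rm A}(q^a w/z)=\delta^{\rm A}(w/z)+a\hbar\,(D\delta^{\rm A})(w/z)+\tfrac12 a^2\hbar^2\,(D^2\delta^{\rm A})(w/z)+\mathcal{O}(\hbar^3)$. For the currents I would combine their explicit $\hbar$-dependence, via Proposition \ref{prp:K-ser} and (\ref{T0}), (\ref{T1}), with the implicit dependence through the $q$-shift of the argument; with $D$ the Euler operator this gives $K(q^{2(k+1)}w)=1+\hbar K^{(1)}(w)+\hbar^2\bigl(K^{(2)}(w)+2(k+1)(DK^{(1)})(w)\bigr)+\mathcal{O}(\hbar^3)$ and $T(q^{k+1}w)=\frac{k}{k+1}-\frac{\hbar}{k+1}K^{(1)}(w)+\hbar^2\bigl(T^{(2)}(w)-(DK^{(1)})(w)\bigr)+\mathcal{O}(\hbar^3)$, the $(DK^{(1)})$ pieces arising purely from the argument shifts.

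Substituting these into the bracket, multiplying by $\frac{1}{4\hbar^2}$, and confirming once more that the $\hbar^{-2}$ and $\hbar^{-1}$ coefficients vanish by (\ref{T0}) and (\ref{T1}), I would collect the surviving $\hbar^0$ term and group it by $\delta^{\rm A}$, $(D\delta^{\rm A})$ and $(D^2\delta^{\rm A})$ to reproduce (\ref{G0G0}). As partial checks, the $(D^2\delta^{\rm A})$ coefficient assembles as $\frac14\bigl(8(k+1)-2(k+2)^2+2k(k+1)\bigr)=\frac{k}{2}$, matching the stated value, and the $(DK^{(1)})\delta^{\rm A}$ contribution comes solely from the two argument shifts above. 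The genuinely delicate piece is the numerical constant $-\frac{k(k+2)}{24(k+1)}$, which collects all the residual $\hbar^2$ parts of the $q$-numbers and of the $e^{a\hbar m}$ expansions and ultimately encodes the central charge.

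Finally, to recast (\ref{G0G0}) in superconformal form I would insert Definition \ref{CFTcurrents}. Since $G^{\pm(0)}=\sqrt{(k+2)/2}\,\G^{\pm}$, the left-hand side equals $\frac{k+2}{2}\bigl(\G^+(z)\G^-(w)+\G^-(w)\G^+(z)\bigr)$, so I would multiply through by $\frac{2}{k+2}$ and substitute $\I=\frac{1}{2(k+2)}K^{(1)}$ together with $\Lv=\frac{1}{4(k+2)}T^{(2)}+\frac{1}{4(k+1)(k+2)}K^{(2)}+\frac{k(2k+1)}{24(k+1)(k+2)}$. The one non-mechanical step is converting the single-argument term $\tfrac12 K^{(1)}(w)(D\delta^{\rm A})$ into the symmetric $(D\delta^{\rm A})(\I(z)+\I(w))$ demanded by (\ref{N=2comrel-2}); this uses the mode-level identity $a(z)(D\delta^{\rm A})(w/z)=a(w)(D\delta^{\rm A})(w/z)+(Da)(w)\,\delta^{\rm A}(w/z)$ (the same $\delta$-calculus the authors use right after Theorem \ref{CFT-Theorem}), whose remainder $(D\I)(w)\delta^{\rm A}$ exactly absorbs the $\tfrac14(DK^{(1)})\delta^{\rm A}$ term, and I would then verify that the leftover constant reorganizes into $\frac{c}{3}\bigl(-\tfrac14\bigr)\delta^{\rm A}$ with $c=3k/(k+2)$. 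The main obstacle is thus the order-$\hbar^0$ bookkeeping of this constant and of the $(DK^{(1)})$ terms; everything else is routine expansion and $\delta$-function calculus.
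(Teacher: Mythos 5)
Your proposal is correct and follows essentially the same route as the paper: expand the three shifted delta functions and the shifted currents $K(q^{2(k+1)}w)$, $T(q^{k+1}w)$ to order $\hbar^2$, collect the $\hbar^0$ coefficient (your checks of the $D^2\delta^{\rm A}$ coefficient $k/2$, the $\tfrac12 K^{(1)}(D\delta^{\rm A})$ term, and the constant $-k(k+2)/24(k+1)$ all agree with \eqref{G0G0}), and then symmetrize $K^{(1)}(w)(D\delta^{\rm A})$ so that the resulting $-\tfrac14(DK^{(1)})\delta^{\rm A}$ cancels the $+\tfrac14(DK^{(1)})\delta^{\rm A}$ already present. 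The paper's own proof only displays the final rearrangement into the superconformal form, so your write-up actually supplies the expansion details the paper leaves implicit.
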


\begin{proof}
Rearranging  (\ref{G0G0}) slightly, we have
\begin{align*}
&{2\over k+2}\Bigl(G^{+(0)}(z)G^{-(0)}(w)+G^{-(0)}(w)G^{+(0)}(z)\Bigr)\\
&=2 \left({1\over 4(k+2)} T^{(2)}(w)+{1\over 4(k+1)(k+2)}K^{(2)}(w) +{k(2k+1)\over 24(k+1)(k+2)}\right)
\delta^{\rm A}\left({w\over z}\right)\\
&\qquad+(D\delta^{\rm A})\left({w\over z}\right) \Bigl({1\over 2(k+2)}K^{(1)}(w) +{1\over 2(k+2)}K^{(1)}(z) \Bigr)\\
&\qquad+
{k\over k+2 }\Bigl( (D^2\delta^{\rm A})\left({w\over z}\right) -
{1\over 4} \delta^{\rm A}\left({w\over z}\right) \Bigr).
\end{align*}
\end{proof}

\subsection{$\hbar$ expansions of the relations for $K^{\pm}$ vs. $G^{\pm}$ }\label{sub43}
We study the $\hbar$ expansions of the relations  (\ref{rr-6}) and (\ref{rr-7}), 
up to the order of $\hbar^2$.

\begin{dfn}
Set
\begin{align*}
&s(z)={1\over 2}+\sum_{\ell>0} z^\ell,\qquad
(Ds)(z)=\sum_{\ell>0} \ell z^\ell,\qquad (D^2s)(z)=\sum_{\ell>0} \ell^2 z^\ell.
\end{align*}
\end{dfn}
\begin{lem}
Note that we have
\begin{align*}
&\delta(z)=s(z)+s(z^{-1}),\qquad s(z)^2={1\over 4}+(Ds)(z),\\
&(D\delta)(z)=(Ds)(z)-(Ds)(z^{-1}),\qquad (D^2\delta)(z)=(D^2s)(z)+(D^2s)(z^{-1}).
\end{align*}
\end{lem}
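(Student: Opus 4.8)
The plan is to treat all four identities as equalities of formal series, proving each by comparing coefficients after expanding the definitions of $s(z)$, $(Ds)(z)$, $(D^2s)(z)$ together with $\delta(z)=\sum_{m\in\mathbb{Z}}z^m$. Three of the four are immediate from splitting a sum over $\mathbb{Z}$ into its positive, negative and zero parts; only the identity for $s(z)^2$ requires a small combinatorial count.

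First I would dispose of $\delta(z)=s(z)+s(z^{-1})$. Writing $s(z)=\frac12+\sum_{\ell>0}z^\ell$ and $s(z^{-1})=\frac12+\sum_{\ell>0}z^{-\ell}$ and adding, the two constants combine to $1$ (the $m=0$ term of $\delta$), while the positive- and negative-power sums reproduce the $m>0$ and $m<0$ parts of $\delta(z)=\sum_{m\in\mathbb{Z}}z^m$. For the two derivative identities I apply $D=z\,\partial_z$ termwise, using $D z^m=m z^m$. Then $(D\delta)(z)=\sum_{m\in\mathbb{Z}}m z^m$ splits into the $m>0$ piece $(Ds)(z)$ and the $m<0$ piece; substituting $m\mapsto-\ell$ identifies the latter with $-(Ds)(z^{-1})$ (the $m=0$ term drops out since its coefficient vanishes), giving $(D\delta)(z)=(Ds)(z)-(Ds)(z^{-1})$. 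The same splitting, now with the weight $m^2$ which is invariant under $m\mapsto-m$, yields $(D^2\delta)(z)=(D^2s)(z)+(D^2s)(z^{-1})$.

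The one genuinely computational step is $s(z)^2=\frac14+(Ds)(z)$, which I would verify in $\mathbb{C}[[z]]$, where the square of the one-sided series $s(z)$ is well defined (by contrast $\delta(z)^2$ is ill defined, so passing to $s$ rather than $\delta$ is essential here). Expanding $\left(\frac12+\sum_{\ell>0}z^\ell\right)^2$ produces the constant $\frac14$, the cross term $\sum_{\ell>0}z^\ell$, and the Cauchy product $\sum_{\ell,m>0}z^{\ell+m}$. The coefficient of $z^n$ in the Cauchy product is $\#\{(\ell,m):\ell,m\geq 1,\ \ell+m=n\}=n-1$ for $n\geq 2$; adding the coefficient $1$ from the cross term gives coefficient $n$ for every $n\geq 1$, so the non-constant part is exactly $\sum_{n\geq 1}n z^n=(Ds)(z)$.

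I do not expect any real obstacle: everything is elementary formal-series bookkeeping. The only points needing care are (i) performing the computations at the level of formal series, so that the square $s(z)^2$ makes sense even though $\delta(z)$ is merely a formal distribution, and (ii) the elementary count $\#\{(\ell,m):\ell,m\geq 1,\ \ell+m=n\}=n-1$ in the Cauchy product.
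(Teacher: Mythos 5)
Your proof is correct and is precisely the elementary coefficient-by-coefficient verification the paper has in mind (the lemma is stated there as a ``Note'' with no written proof). All four identities check out, including the only nontrivial one, $s(z)^2=\tfrac14+(Ds)(z)$, via the count $\#\{(\ell,m):\ell,m\geq 1,\ \ell+m=n\}=n-1$, and your remark that one must square the one-sided series $s(z)$ rather than the formal distribution $\delta(z)$ is exactly the right point of care.
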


\begin{lem}
We have
\begin{align*}
&q^{\pm (k+2)}{1-w/z\over 1-q^{\pm 2(k+2)}w/z}=1\pm \hbar\,\, 2(k+2) s\left({w\over z} \right)\\
&\qquad +
\hbar^2 \,\,\Bigl( {(k+2)^2\over 2}+4(k+2)^2 (Ds)\left({w\over z} \right)\Bigr)+{\mathcal O}(\hbar^3).
\end{align*}
\end{lem}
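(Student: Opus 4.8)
The plan is to compute both sides as formal power series in $x = w/z$, valid in the domain $|w|\ll|z|$, and then match the two expansions order by order in $\hbar$ after setting $q=e^{\hbar}$. The right-hand side is already organized by powers of $\hbar$ and, through the functions $s$ and $Ds$, by powers of $x$; so the task reduces to expanding the left-hand rational function in the same two gradings and checking agreement up to $\mathcal O(\hbar^3)$.

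First I would expand the rational factor geometrically. Writing $b=q^{\pm 2(k+2)}$, one has
\[
\frac{1-w/z}{1-q^{\pm 2(k+2)}w/z}=(1-x)\sum_{\ell\geq 0} b^{\ell}x^{\ell}
=1+\sum_{\ell\geq 1} b^{\ell-1}(b-1)\,x^{\ell},
\]
and multiplying by $q^{\pm(k+2)}$ collapses the prefactors into a single exponent, giving the constant term $q^{\pm(k+2)}$ and the coefficient $q^{\pm(k+2)(2\ell-1)}\bigl(q^{\pm 2(k+2)}-1\bigr)$ of $x^{\ell}$ for $\ell\geq 1$. Next I would Taylor expand each ingredient in $\hbar$, namely $q^{\pm(k+2)}=1\pm(k+2)\hbar+\tfrac{(k+2)^2}{2}\hbar^2+\mathcal O(\hbar^3)$, $q^{\pm 2(k+2)}-1=\pm 2(k+2)\hbar+2(k+2)^2\hbar^2+\mathcal O(\hbar^3)$, and $q^{\pm(k+2)(2\ell-1)}=1\pm(k+2)(2\ell-1)\hbar+\mathcal O(\hbar^2)$. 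The order $\hbar^0$ term is simply $1$, and at order $\hbar^1$ the coefficient of $x^{\ell}$ equals $\pm 2(k+2)$ for every $\ell\geq 1$ together with $\pm(k+2)$ at $\ell=0$, which reassembles into $\pm 2(k+2)\,s(w/z)$ via the identity $2s(z)=1+2\sum_{\ell>0}z^{\ell}$.

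The decisive step is the order $\hbar^2$ coefficient of $x^{\ell}$ for $\ell\geq 1$: the product $q^{\pm(k+2)(2\ell-1)}\bigl(q^{\pm2(k+2)}-1\bigr)$ contributes $2(k+2)^2+2(k+2)^2(2\ell-1)=4(k+2)^2\,\ell$, so the $\ell$-dependence is exactly \emph{linear}, and the sum $\sum_{\ell>0}4(k+2)^2\ell\,x^{\ell}$ becomes $4(k+2)^2\,(Ds)(w/z)$; adding the $\ell$-independent piece $\tfrac{(k+2)^2}{2}$ from $q^{\pm(k+2)}$ reproduces the claimed $\hbar^2$ term. I expect the only real obstacle to be this bookkeeping: one must track that the $\ell$-linear correction from expanding $q^{\pm(k+2)(2\ell-1)}$ combines with the $\hbar^2$ part of $q^{\pm2(k+2)}-1$ to give precisely $4(k+2)^2\ell$ — it is this cancellation into a purely linear form that yields $(Ds)$ rather than a function also involving $(D^2s)$, the latter appearing only at order $\hbar^3$ and hence safely absorbed into the error term.
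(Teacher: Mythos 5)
Your proposal is correct: the geometric expansion of $(1-x)/(1-q^{\pm2(k+2)}x)$, the resulting closed form $q^{\pm(k+2)(2\ell-1)}(q^{\pm2(k+2)}-1)$ for the coefficient of $x^\ell$, and the $\hbar$-expansion giving $\pm(k+2)$, $\tfrac{(k+2)^2}{2}$ at $\ell=0$ and $\pm2(k+2)$, $4(k+2)^2\ell$ at $\ell\geq 1$ all check out and reassemble into $s$ and $Ds$ exactly as claimed. The paper states this lemma without proof, and your direct computation is the natural verification it implicitly relies on.
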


\begin{prp}\label{P4}
We have no nontrivial relation by 
taking the terms of the order of $\hbar^0$ in the $\hbar$ expansions of the relations  (\ref{rr-6}) and (\ref{rr-7}).
Taking the terms of the order of $\hbar^1$ in them, we have
\begin{align}
&[K^{-(1)}(w) ,G^{\pm (0)}(z)]=\pm 2(k+2) s\left({w\over z}\right)G^{\pm (0)}(z),\label{K-1G0}\\
&[ G^{\pm (0)}(z),K^{+(1)}(w)]=\mp 2(k+2) s\left({z\over w}\right)G^{\pm (0)}(z),\label{G0K+1}
\end{align}
which are equivalent to the single equation
\begin{align}
&[K^{(1)}(w) ,G^{\pm (0)}(z)]=\pm 2(k+2) \delta\left({w\over z}\right)G^{\pm (0)}(z).\label{G0K}
\end{align}
Namely we have
\begin{align*}
&[\I(w) ,\G^{\pm }(z)]=\pm \delta\left({w\over z}\right)\G^{\pm }(z),
\end{align*}
which is \eqref{N=2comrel-5} with $z$ and $w$ being exchanged. 
\end{prp}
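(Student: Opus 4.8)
The plan is to substitute the $\hbar$-expansions of the generating functions into the generating-function relations \eqref{rr-6} and \eqref{rr-7} and read off the coefficient of each power of $\hbar$. From Propositions \ref{prp:K-ser} and \ref{Exp-1} I have $K^\pm(z)=1+\hbar K^{\pm(1)}(z)+\mathcal{O}(\hbar^2)$ and $G^\pm(z)=G^{\pm(0)}(z)+\mathcal{O}(\hbar)$, while the Lemma preceding the proposition supplies the expansion of the scalar prefactor. The only bookkeeping point to keep straight is that this Lemma is stated in the variable $w/z$, whereas the prefactors in \eqref{rr-6} and \eqref{rr-7} carry $z/w$ and opposite signs; reading the Lemma with $z\leftrightarrow w$ (and $\pm\to\mp$ for \eqref{rr-7}) gives $q^{\pm(k+2)}\frac{1-z/w}{1-q^{\pm2(k+2)}z/w}=1\pm\hbar\,2(k+2)\,s(z/w)+\mathcal{O}(\hbar^2)$ and $q^{\mp(k+2)}\frac{1-z/w}{1-q^{\mp2(k+2)}z/w}=1\mp\hbar\,2(k+2)\,s(z/w)+\mathcal{O}(\hbar^2)$.

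At order $\hbar^0$ both sides of \eqref{rr-6} reduce to $G^{\pm(0)}(w)$ and both sides of \eqref{rr-7} to $G^{\pm(0)}(z)$, so no relation is imposed; this settles the first assertion. At order $\hbar^1$ I collect, for \eqref{rr-6}, the terms $K^{-(1)}(z)G^{\pm(0)}(w)+G^{\pm(1)}(w)$ on the left against $\pm2(k+2)\,s(z/w)\,G^{\pm(0)}(w)+G^{\pm(0)}(w)K^{-(1)}(z)+G^{\pm(1)}(w)$ on the right. The key observation is that the unknown subleading term $G^{\pm(1)}(w)$ cancels, leaving a clean commutator of the leading elements; after exchanging $z\leftrightarrow w$ this is exactly \eqref{K-1G0}. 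The same manipulation applied to \eqref{rr-7}, where now $G^{\pm(1)}(z)$ cancels, yields \eqref{G0K+1} directly.

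To pass to the single equation \eqref{G0K} I would add the two relations. Rewriting \eqref{G0K+1} as $[K^{+(1)}(w),G^{\pm(0)}(z)]=\pm2(k+2)\,s(z/w)\,G^{\pm(0)}(z)$ and using $K^{(1)}(z)=K^{-(1)}(z)+K^{+(1)}(z)$ from Proposition \ref{prp:K-ser}, the sum of the two right-hand sides involves $s(w/z)+s(z/w)$, which collapses to $\delta(w/z)$ by the identity $\delta(z)=s(z)+s(z^{-1})$ recorded in the Lemma above; this produces \eqref{G0K}. Finally, substituting the rescaled currents of Definition \ref{CFTcurrents}, namely $K^{(1)}=2(k+2)\I$ and $G^{\pm(0)}=\sqrt{(k+2)/2}\,\G^\pm$, and cancelling the common scalar factor $2(k+2)\sqrt{(k+2)/2}$ turns \eqref{G0K} into $[\I(w),\G^\pm(z)]=\pm\delta(w/z)\,\G^\pm(z)$, which is the $\I$--$\G^\pm$ relation \eqref{N=2comrel-4} of Theorem \ref{CFT-Theorem} with $z$ and $w$ exchanged.

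Since every step is an explicit comparison of Laurent coefficients, there is no serious analytic obstacle; the work is entirely bookkeeping. The one place that demands care, and the natural source of sign errors, is matching the variable and sign conventions of the prefactor Lemma to the two relations, together with the merging step where the two half-delta functions $s(w/z)$ and $s(z/w)$ must be recognized as combining into the full $\delta(w/z)$. I would guard against slips by first verifying the order-$\hbar^0$ consistency and by confirming that the $G^{\pm(1)}$ terms cancel identically before extracting the commutators.
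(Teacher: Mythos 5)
Your proof is correct and follows the same route the paper intends: expand the scalar prefactors of \eqref{rr-6} and \eqref{rr-7} via the preceding Lemma (with the variables read as $z/w$), note that the unknown subleading terms $G^{\pm(1)}$ cancel at order $\hbar^1$, and merge the two resulting half-sums using $\delta(z)=s(z)+s(z^{-1})$ together with $K^{(1)}=K^{-(1)}+K^{+(1)}$. Your identification of the final relation as \eqref{N=2comrel-4} (the $\I$--$\G^\pm$ relation) is the correct one; the citation of \eqref{N=2comrel-5} in the proposition's statement is a typo, as the paper's own table assigning \eqref{N=2comrel-4} to this proposition confirms.
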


\begin{lem}
{}From (\ref{K-1G0}) and  (\ref{G0K+1}), we have
\begin{align}\label{K+K-G0}
&[K^{-(1)}(w) K^{+(1)}(w) ,G^{\pm (0)}(z)] \CR
&=
\pm 2(k+2) s\left({w\over z}\right)G^{\pm (0)}(z)K^{+(1)}(w)
\pm 2(k+2) s\left({z\over w}\right)K^{-(1)}(w)G^{\pm (0)}(z).
\end{align}
\end{lem}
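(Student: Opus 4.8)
The plan is to invoke the Leibniz rule for commutators. Since $K^{-(1)}(w)K^{+(1)}(w)$ is a product, I would apply the identity
\[
[AB,C]=A[B,C]+[A,C]B
\]
with $A=K^{-(1)}(w)$, $B=K^{+(1)}(w)$, and $C=G^{\pm(0)}(z)$, which yields
\[
[K^{-(1)}(w)K^{+(1)}(w),G^{\pm(0)}(z)]
=K^{-(1)}(w)\,[K^{+(1)}(w),G^{\pm(0)}(z)]+[K^{-(1)}(w),G^{\pm(0)}(z)]\,K^{+(1)}(w).
\]

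Next I would substitute the two hypotheses. The second commutator on the right is exactly \eqref{K-1G0}, contributing $\pm 2(k+2)\,s(w/z)\,G^{\pm(0)}(z)K^{+(1)}(w)$. For the first commutator I would rewrite \eqref{G0K+1} by flipping its sign, so that $[K^{+(1)}(w),G^{\pm(0)}(z)]=\pm 2(k+2)\,s(z/w)\,G^{\pm(0)}(z)$, and insert it keeping the factor $K^{-(1)}(w)$ on the left, producing $\pm 2(k+2)\,s(z/w)\,K^{-(1)}(w)G^{\pm(0)}(z)$. Adding the two contributions reproduces \eqref{K+K-G0} verbatim.

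There is essentially no obstacle; the computation is purely formal. The only points requiring care are the sign bookkeeping when passing from \eqref{G0K+1} to the commutator $[K^{+(1)}(w),G^{\pm(0)}(z)]$, and preserving the left/right placement of the two bosonic factors---with $K^{-(1)}(w)$ multiplying from the left in the first term and $K^{+(1)}(w)$ from the right in the second---which is precisely the asymmetry displayed by the two terms of \eqref{K+K-G0}.
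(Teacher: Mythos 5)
Your proposal is correct and is essentially identical to the paper's own proof: both apply the Leibniz rule for commutators to the product $K^{-(1)}(w)K^{+(1)}(w)$ and then substitute \eqref{K-1G0} and the sign-flipped form of \eqref{G0K+1}. The sign and ordering bookkeeping you describe is exactly right.
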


\begin{proof}
We have
\begin{align*}
&[K^{-(1)}(w) K^{+(1)}(w) ,G^{\pm (0)}(z)]\\
=&~[K^{-(1)}(w)  ,G^{\pm (0)}(z)]K^{+(1)}(w)+K^{-(1)}(w) [K^{+(1)}(w) ,G^{\pm (0)}(z)]\\
=&
\pm 2(k+2) s\left({w\over z}\right)G^{\pm (0)}(z)K^{+(1)}(w)
\pm 2(k+2) s\left({z\over w}\right)K^{-(1)}(w)G^{\pm (0)}(z).
\end{align*}
\end{proof}

\begin{prp}
Taking the terms of the order of $\hbar^2$ in the $\hbar$ expansions of the relations  (\ref{rr-6}) and (\ref{rr-7}), 
we have
\begin{align}
&[K^{-(1)}(w) ,G^{\pm (1)}(z)]+[K^{-(2)}(w) ,G^{\pm (0)}(z)] \CR
&=\pm 2(k+2) s\left({w\over z}\right)\Bigl(G^{\pm (1)}(z)+G^{\pm (0)}(z)K^{-(1)}(w) \Bigr)\CR
&\qquad
+{(k+2)^2\over 2}G^{\pm (0)}(z)+4(k+2)^2 (Ds)\left({w\over z}\right) G^{\pm (0)}(z),\label{K-1G1}\\
&[G^{\pm (1)}(z),K^{+(1)}(w) ]+[G^{\pm (0)}(z),K^{+(2)}(w) ] \CR
&=\mp 2(k+2) s\left({z\over w}\right)\Bigl(G^{\pm (1)}(z)+K^{+(1)}(w)G^{\pm (0)}(z) \Bigr)\CR
&\qquad
+{(k+2)^2\over 2}G^{\pm (0)}(z)+4(k+2)^2 (Ds)\left({z\over w}\right) G^{\pm (0)}(z), \label{G1K+1}
\end{align}
which are equivalent to the single equation 
\begin{align}\label{KG-2}
&[K^{(1)}(w) ,G^{\pm (1)}(z)]+[K^{(2)}(w) ,G^{\pm (0)}(z)] \CR
=&\pm 2(k+2) \delta\left({w\over z}\right)\Bigl(G^{\pm (1)}(z)+K^{-(1)}(w)G^{\pm (0)}(z)
+G^{\pm (0)}(z)K^{+(1)}(w) \Bigr) \CR
=&\pm 2(k+2) \delta^{\rm A}\left({w\over z}\right)\Bigl(G^{\pm (1)}(w)+K^{-(1)}(w)G^{\pm (0)}(w)
+G^{\pm (0)}(w)K^{+(1)}(w) \Bigr),
\end{align}
under the condition (\ref{G0K}).
\end{prp}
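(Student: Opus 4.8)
The plan is to obtain \eqref{K-1G1} and \eqref{G1K+1} by a direct order-by-order expansion, and then to assemble the single relation \eqref{KG-2} by algebraic manipulation of the lower-order data. First I would substitute $K^-(w)=1+\hbar K^{-(1)}(w)+\hbar^2 K^{-(2)}(w)+\mathcal{O}(\hbar^3)$ and $G^\pm(z)=G^{\pm(0)}(z)+\hbar G^{\pm(1)}(z)+\mathcal{O}(\hbar^2)$ into \eqref{rr-6}, read with $z$ and $w$ interchanged, together with the $\hbar$-expansion of the structure function $q^{\pm(k+2)}(1-w/z)/(1-q^{\pm2(k+2)}w/z)$ supplied by the preceding Lemma. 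Matching the coefficient of $\hbar^2$ on both sides, the term $G^{\pm(2)}(z)$ cancels, and bringing the order-preserving products $G^{\pm(1)}(z)K^{-(1)}(w)$ and $G^{\pm(0)}(z)K^{-(2)}(w)$ to the left turns the remaining differences into the two commutators of \eqref{K-1G1}, equated to the right-hand side built from $s(w/z)$ and $(Ds)(w/z)$. The identical computation applied directly to \eqref{rr-7}, whose structure function expands with $\mp$ and argument $z/w$, yields \eqref{G1K+1}. These two steps are routine bookkeeping.

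The substantive step is to show that \eqref{K-1G1} and \eqref{G1K+1} together are equivalent to \eqref{KG-2}. Here I would use the factorizations $K^{(1)}(w)=K^{-(1)}(w)+K^{+(1)}(w)$ and $K^{(2)}(w)=K^{-(2)}(w)+K^{-(1)}(w)K^{+(1)}(w)+K^{+(2)}(w)$ of Proposition \ref{prp:K-ser} to split the combination $[K^{(1)}(w),G^{\pm(1)}(z)]+[K^{(2)}(w),G^{\pm(0)}(z)]$ into three pieces: the left-hand side of \eqref{K-1G1}; the negative of the left-hand side of \eqref{G1K+1}, since $[G,K]=-[K,G]$; and the cross commutator $[K^{-(1)}(w)K^{+(1)}(w),G^{\pm(0)}(z)]$, which is exactly \eqref{K+K-G0}. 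Summing the three right-hand sides, the $G^{\pm(1)}(z)$ contributions immediately combine through $s(w/z)+s(z/w)=\delta(w/z)$ into $\pm 2(k+2)\delta(w/z)G^{\pm(1)}(z)$.

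The delicate part, and the main obstacle, is the bookkeeping of the remaining terms that are quadratic in the half-delta $s$. To bring the $K^{-(1)}$- and $K^{+(1)}$-terms into the normal-ordered shape $\delta(w/z)K^{-(1)}(w)G^{\pm(0)}(z)$ and $\delta(w/z)G^{\pm(0)}(z)K^{+(1)}(w)$ demanded by \eqref{KG-2}, I would reorder $K^{\pm(1)}$ past $G^{\pm(0)}$ using the first-order relations \eqref{K-1G0} and \eqref{G0K+1}, i.e. the condition \eqref{G0K}. Each such reordering produces a term proportional to $s(w/z)^2$ or $s(z/w)^2$, which I would reduce via $s(z)^2=\tfrac14+(Ds)(z)$. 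One must then verify that all the leftover $(k+2)^2$ constants cancel and that the $(Ds)(w/z)$ and $(Ds)(z/w)$ terms — those spawned by the reorderings together with those appearing directly in \eqref{K-1G1}--\eqref{G1K+1} — cancel as well; this cancellation rests on $(D\delta)(w/z)=(Ds)(w/z)-(Ds)(z/w)$ and is where sign errors are easiest to make. What survives is precisely the first line of \eqref{KG-2}. Finally, the second line follows from the first by the flip $G^\pm(w)\delta(w/z)=G^\pm(z)\delta^{\rm A}(w/z)$ of \eqref{delta-2} together with the invariances $K^\pm(w)\delta^{\rm A}(w/z)=K^\pm(z)\delta^{\rm A}(w/z)$ of \eqref{delta-3}, applied to the leading and subleading $\hbar$-components of the currents.
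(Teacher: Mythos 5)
Your proposal is correct and follows essentially the same route as the paper: expand \eqref{rr-6} and \eqref{rr-7} to order $\hbar^2$ to get \eqref{K-1G1}--\eqref{G1K+1}, then assemble \eqref{KG-2} from the decomposition $K^{(1)}=K^{-(1)}+K^{+(1)}$, $K^{(2)}=K^{-(2)}+K^{-(1)}K^{+(1)}+K^{+(2)}$ together with the cross commutator \eqref{K+K-G0}, the first-order reorderings \eqref{K-1G0}, \eqref{G0K+1}, and the identities $s(z)^2=\tfrac14+(Ds)(z)$, $s(z)+s(z^{-1})=\delta(z)$. The only cosmetic differences are that the paper performs the reordering inside each of \eqref{K-1G1} and \eqref{G1K+1} before summing, and that the $(Ds)$ terms in fact cancel directly within each relation (the constants $(k+2)^2/2$ cancel only upon summing), rather than through the antisymmetrization identity you cite.
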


\begin{proof}
Using (\ref{G0K}), we have from (\ref{K-1G1}) and  (\ref{G1K+1}) that
\begin{align*}
&[K^{-(1)}(w) ,G^{\pm (1)}(z)]+[K^{-(2)}(w) ,G^{\pm (0)}(z)]\\
=&\pm 2(k+2) s\left({w\over z}\right)\Bigl(G^{\pm (1)}(z)+K^{-(1)}(w)G^{\pm (0)}(z) \Bigr)
-{(k+2)^2\over 2}G^{\pm (0)}(z),\\
&[G^{\pm (1)}(z),K^{+(1)}(w) ]+[G^{\pm (0)}(z),K^{+(2)}(w) ]\\
=&\mp 2(k+2) s\left({z\over w}\right)\Bigl(G^{\pm (1)}(z)+G^{\pm (0)}(z) K^{+(1)}(w)\Bigr)
-{(k+2)^2\over 2}G^{\pm (0)}(z).
\end{align*}
Then by using (\ref{K+K-G0}), one finds that these are equivalent to the single equation
\begin{align*}
&[K^{(1)}(w) ,G^{\pm (1)}(z)]+[K^{(2)}(w) ,G^{\pm (0)}(z)]\\
=&\pm 2(k+2) \delta\left({w\over z}\right)\Bigl(G^{\pm (1)}(z)+K^{-(1)}(w)G^{\pm (0)}(z)
+G^{\pm (0)}(z)K^{+(1)}(w)
 \Bigr).
\end{align*}

\end{proof}

\subsection{$\hbar$ expansions of the relations for $G^{\pm}$ vs. $T$ }\label{sub44}

We study the $\hbar$ expansions
of the relations (\ref{rr-10}),
considering the terms of the orders up to $\hbar^2$.

\begin{prp}
We have no nontrivial relation by 
taking the terms of the order of $\hbar^0$ in the $\hbar$ expansions of the relations  (\ref{rr-10}).
Taking the terms of the order of $\hbar^1$ in them, we have
\begin{align*}
&-{1\over k+1} [G^{\pm(0)}(z), K^{(1)}(w)]=\pm {2(k+2)\over k+1} G^{\pm(0)}(w) \delta^{\rm A}\left( {w\over z}\right),
\end{align*}
which are the same as (\ref{G0K}),\footnote{In \eqref{G0K} the argument of $G^\pm$ is the same for both side, while they are
different here. This is the reason why we have $\delta$ in \eqref{G0K}, but $\delta^{\rm A}$ here. 
See Lemma \ref{delta-flip}} 
obtaining no new relations.
\end{prp}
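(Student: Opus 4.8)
The plan is to set $q=e^\hbar$ and expand both sides of \eqref{rr-10} in powers of $\hbar$, reading off relations order by order, exactly as in the preceding subsections. The only inputs needed are the leading behaviours recorded in Propositions \ref{prp:K-ser} and \ref{Exp-1}: namely $G^\pm(z)=G^{\pm(0)}(z)+\mathcal{O}(\hbar)$, $K^\pm(z)=1+\mathcal{O}(\hbar)$, and $T(w)=\tfrac{k}{k+1}+\hbar\,T^{(1)}(w)+\mathcal{O}(\hbar^2)$ with $T^{(1)}(w)=-\tfrac{1}{k+1}K^{(1)}(w)$ by \eqref{T1}.

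First I would pin down the orders of the two sides. The left-hand side is the bracket $[G^\pm(z),T(w)]$; since the leading term $\tfrac{k}{k+1}$ of $T(w)$ is a scalar it drops out of the commutator, so the left-hand side is $\mathcal{O}(\hbar)$ and its $\hbar^0$ component vanishes identically. On the right-hand side the scalar prefactor expands as $(q-q^{-1})=2\hbar+\mathcal{O}(\hbar^3)$ and $\tfrac{[k+2]}{[k+1]}=\tfrac{k+2}{k+1}+\mathcal{O}(\hbar^2)$, while $\delta^{\rm A}(q^{\pm3(k+1)}w/z)=\delta^{\rm A}(w/z)+\mathcal{O}(\hbar)$ and the triple product $K^-G^\pm K^+$ evaluated at $q^{\pm(k+1)}w$ reduces to $G^{\pm(0)}(w)+\mathcal{O}(\hbar)$. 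The single factor $(q-q^{-1})$ already supplies the entire $\hbar^1$, so the right-hand side is likewise $\mathcal{O}(\hbar)$ with vanishing $\hbar^0$ part; this establishes the first claim, that order $\hbar^0$ yields only $0=0$.

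At order $\hbar^1$ I would collect coefficients. The left-hand side contributes $[G^{\pm(0)}(z),T^{(1)}(w)]=-\tfrac{1}{k+1}[G^{\pm(0)}(z),K^{(1)}(w)]$ by \eqref{T1}, there being no contribution from $[G^{\pm(1)}(z),T^{(0)}(w)]$ because $T^{(0)}$ is scalar. The right-hand side contributes $\pm\tfrac{2(k+2)}{k+1}\,\delta^{\rm A}(w/z)\,G^{\pm(0)}(w)$, where the $q$-shifted arguments $q^{\pm(k+1)}w$ and $q^{\pm3(k+1)}w/z$ are set to their $\hbar\to0$ values since every $q$-shift enters only at the next order. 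Equating these two coefficients is precisely the displayed equation of the proposition.

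It then remains to identify this with \eqref{G0K}, confirming that no new relation appears. Rewriting and negating gives $[K^{(1)}(w),G^{\pm(0)}(z)]=\pm2(k+2)\,\delta^{\rm A}(w/z)\,G^{\pm(0)}(w)$. The crux is Lemma \ref{delta-flip}: by the companion of \eqref{delta-2} obtained from the identical mode computation, $G^\pm(w)\delta^{\rm A}(w/z)=G^\pm(z)\delta(w/z)$, which at order $\hbar^0$ turns the right-hand side into $\pm2(k+2)\,\delta(w/z)\,G^{\pm(0)}(z)$, exactly \eqref{G0K}. I expect this sector-dependent exchange $\delta^{\rm A}\leftrightarrow\delta$ to be the only real subtlety, and it is the point the footnote flags: in \eqref{G0K} the current $G^\pm$ carries the same argument on both sides, whereas here the argument $q^{\pm(k+1)}w$ on the right collapses to $w$ rather than to $z$, so reconciling the two requires the delta-function identity of Lemma \ref{delta-flip} rather than a naive substitution.
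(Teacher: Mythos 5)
Your proposal is correct and follows essentially the same route the paper intends: expand \eqref{rr-10} with $T^{(0)}=\tfrac{k}{k+1}$ scalar (so order $\hbar^0$ is vacuous), use $T^{(1)}=-\tfrac{1}{k+1}K^{(1)}$ at order $\hbar^1$, and reconcile with \eqref{G0K} via the mode identity $G^{\pm}(w)\,\delta^{\rm A}(w/z)=G^{\pm}(z)\,\delta(w/z)$ of Lemma \ref{delta-flip}. You correctly isolate the $\delta$ versus $\delta^{\rm A}$ exchange as the only genuine subtlety, which is precisely the point the paper's footnote flags.
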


\begin{prp}
Taking the terms of the order of $\hbar^2$ in the $\hbar$ expansions of the relations (\ref{rr-10}), we have
\begin{align}\label{KT-2}
&-{1\over k+1}[G^{\pm(1)}(z), K^{(1)}(w)]+ [G^{\pm(0)}(z), T^{(2)}(w)] \CR
&=\pm {2(k+2)\over k+1} 
\Bigl(G^{\pm(1)}(w)+ K^{-(1)}(w)G^{\pm(0)}(w)+G^{\pm(0)}(w)K^{+(1)}(w)\Bigr) \delta^{\rm A}\left( {w\over z}\right) \CR
&\qquad+2(k+2) (D G^{\pm(0)}) (w)  \delta^{\rm A}\left( {w\over z}\right)+6(k+2) G^{\pm(0)}(w) (D\delta^{\rm A})\left( {w\over z}\right).
\end{align}
\end{prp}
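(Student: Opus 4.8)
The plan is to substitute $q = e^\hbar$ into the generating-function relation (\ref{rr-10}) and extract the coefficient of $\hbar^2$ on both sides, matching them to obtain (\ref{KT-2}). The essential inputs are the expansions recorded in Proposition \ref{prp:K-ser} (in particular $K^{\pm(0)} = 1$), together with the already-established leading terms $T^{(0)}(z) = k/(k+1)$ and $T^{(1)}(z) = -K^{(1)}(z)/(k+1)$ from (\ref{T0}) and (\ref{T1}). Since the preceding proposition showed that the $\hbar^1$ order of (\ref{rr-10}) merely reproduces (\ref{G0K}), the first genuinely new information appears at order $\hbar^2$.

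First I would treat the left-hand side $[G^\pm(z), T(w)]$. Because $T^{(0)}(w)$ is a constant it drops out of the commutator, so the $\hbar^2$ coefficient receives contributions only from $[G^{\pm(0)}(z), T^{(2)}(w)]$ and from $[G^{\pm(1)}(z), T^{(1)}(w)]$. Using $T^{(1)}(w) = -K^{(1)}(w)/(k+1)$, the latter becomes $-\frac{1}{k+1}[G^{\pm(1)}(z), K^{(1)}(w)]$, and the two together reproduce exactly the left-hand side of (\ref{KT-2}).

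The right-hand side requires more care, and this is where the main obstacle lies. The scalar prefactor $\pm(q - q^{-1})[k+2]/[k+1]$ expands as $\pm\frac{2(k+2)}{k+1}\hbar + \mathcal{O}(\hbar^3)$, using $q - q^{-1} = 2\hbar + \mathcal{O}(\hbar^3)$ and $[u] = u + \mathcal{O}(\hbar^2)$; being $\mathcal{O}(\hbar)$, it forces me to extract only the $\hbar^1$ coefficient of the operator product $\delta^{\rm A}(q^{\pm 3(k+1)}w/z)\, K^-(q^{\pm(k+1)}w)\, G^\pm(q^{\pm(k+1)}w)\, K^+(q^{\pm(k+1)}w)$. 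The delicate point is that the arguments are themselves shifted by powers of $q$: for any field $\Phi(z) = \sum_m \Phi_m z^{-m}$ one has $\Phi(q^a w) = \Phi(w) + a\hbar\,(D\Phi)(w) + \mathcal{O}(\hbar^2)$ with $D = z\,\partial_z$, and likewise $\delta^{\rm A}(q^{3a}w/z) = \delta^{\rm A}(w/z) + 3a\hbar\,(D\delta^{\rm A})(w/z) + \mathcal{O}(\hbar^2)$. With $a = \pm(k+1)$, the shift inside $G^\pm$ produces the term $a\,(DG^{\pm(0)})(w)$ and the shift inside the delta function produces $3a\,(D\delta^{\rm A})(w/z)G^{\pm(0)}(w)$; naively freezing the arguments at $w$ and $w/z$ would lose precisely the conformal-weight contributions $2(k+2)(DG^{\pm(0)})(w)\delta^{\rm A}(w/z)$ and $6(k+2)G^{\pm(0)}(w)(D\delta^{\rm A})(w/z)$ in (\ref{KT-2}).

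Collecting the $\hbar^1$ coefficient of the product, and using $K^{\pm(0)} = 1$ so that only the first-order pieces of $K^\pm$ survive (the argument shifts in the $\hbar^1$ pieces contribute only at order $\hbar^2$), I obtain $\delta^{\rm A}(w/z)\bigl[G^{\pm(1)}(w) + K^{-(1)}(w)G^{\pm(0)}(w) + G^{\pm(0)}(w)K^{+(1)}(w) + a\,(DG^{\pm(0)})(w)\bigr] + 3a\,(D\delta^{\rm A})(w/z)G^{\pm(0)}(w)$. Multiplying by the prefactor $\pm\frac{2(k+2)}{k+1}$ and substituting $a = \pm(k+1)$, so that the two equal sign factors combine to $+1$ in the two derivative terms, yields exactly the right-hand side of (\ref{KT-2}), which would complete the proof. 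The only real work is the careful bookkeeping of the argument shifts; everything else is a routine term-by-term expansion.
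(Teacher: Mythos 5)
Your proposal is correct and follows exactly the route the paper intends: the paper states this proposition without an explicit proof, treating it as a routine order-by-order expansion of (\ref{rr-10}), and your computation carries that out correctly — in particular you correctly isolate the only subtle points, namely that the prefactor $\pm(q-q^{-1})[k+2]/[k+1]=\pm\tfrac{2(k+2)}{k+1}\hbar+\mathcal{O}(\hbar^3)$ reduces the problem to the $\hbar^1$ coefficient of the operator product, and that the $q$-shifts of the arguments of $G^\pm$ and of $\delta^{\rm A}$ are the source of the $2(k+2)(DG^{\pm(0)})(w)\delta^{\rm A}(w/z)$ and $6(k+2)G^{\pm(0)}(w)(D\delta^{\rm A})(w/z)$ terms, with the sign factors $(\pm)\cdot(\pm(k+1))$ combining to $+(k+1)$.
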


\begin{prp}\label{P3}
Combining (\ref{KG-2}) and (\ref{KT-2}), we have 
\begin{align}
& [G^{\pm(0)}(z), {1\over 4(k+2)}T^{(2)}(w)+{1\over 4(k+1)(k+2)} K^{(2)}(w)] \CR
&={1\over 2} (D G^{\pm(0)}) (w)  \delta^{\rm A}\left( {w\over z}\right)+{3\over 2} G^{\pm(0)}(w) (D\delta^{\rm A})\left( {w\over z}\right),
\end{align}
namely 
\begin{align*}
 [\G^{\pm}(z),\Lv(w)]
&={1\over 2} (D \G^{\pm}) (w)  \delta^{\rm A}\left( {w\over z}\right)+{3\over 2} \G^{\pm}(w) (D\delta^{\rm A})\left( {w\over z}\right) \CR
&={1\over 2} \G^{\pm}(z) (D\delta) \left( {w\over z}\right) + \G^{\pm}(w) (D\delta^{\rm A})\left( {w\over z}\right),
\end{align*}
where we used the lemma below for the last equality. Since  $(D\delta^{\rm A})\left(z\right)= -  (D\delta^{\rm A})\left(z^{-1}\right)$,
this is the desired relation with $w \leftrightarrow z$. 
\end{prp}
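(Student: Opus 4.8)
The plan is to read \eqref{KG-2} and \eqref{KT-2} as two linear relations among the three operator-valued expressions $[G^{\pm(0)}(z),K^{(2)}(w)]$, $[G^{\pm(0)}(z),T^{(2)}(w)]$ and the auxiliary commutator $C:=[G^{\pm(1)}(z),K^{(1)}(w)]$, and then to form precisely the weighting dictated by Definition \ref{CFTcurrents} so that $C$ is eliminated. Writing $X:=G^{\pm(1)}(w)+K^{-(1)}(w)G^{\pm(0)}(w)+G^{\pm(0)}(w)K^{+(1)}(w)$ for the bracketed sum that appears on the right of both relations, I would first move every commutator of $G^{\pm(0)}(z)$ to the left. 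Using $[K^{(1)}(w),G^{\pm(1)}(z)]=-C$, relation \eqref{KG-2} becomes
\begin{equation*}
[G^{\pm(0)}(z),K^{(2)}(w)]=-C\mp 2(k+2)\,\delta^{\rm A}\!\left({w\over z}\right)X,
\end{equation*}
while \eqref{KT-2} becomes
\begin{align*}
[G^{\pm(0)}(z),T^{(2)}(w)]={}&{1\over k+1}C\pm{2(k+2)\over k+1}X\,\delta^{\rm A}\!\left({w\over z}\right)\\
&+2(k+2)(DG^{\pm(0)})(w)\,\delta^{\rm A}\!\left({w\over z}\right)+6(k+2)G^{\pm(0)}(w)(D\delta^{\rm A})\!\left({w\over z}\right).
\end{align*}

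Next I would compute ${1\over 4(k+2)}$ times the second display plus ${1\over 4(k+1)(k+2)}$ times the first. The two weights are chosen exactly so that the $C$-contributions, namely ${1\over 4(k+1)(k+2)}C$ and $-{1\over 4(k+1)(k+2)}C$, cancel, and simultaneously the two $X$-terms, $\pm{1\over 2(k+1)}X\,\delta^{\rm A}$ and $\mp{1\over 2(k+1)}\,\delta^{\rm A}X$, cancel because $\delta^{\rm A}(w/z)$ has scalar coefficients and therefore commutes with $X$. What survives is precisely ${1\over 2}(DG^{\pm(0)})(w)\,\delta^{\rm A}(w/z)+{3\over 2}G^{\pm(0)}(w)(D\delta^{\rm A})(w/z)$, which is the first asserted identity; the content of the argument is that this double cancellation occurs for this one privileged combination and no other.

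For the second (``namely'') form, I would observe that the constant ${k(2k+1)\over 24(k+1)(k+2)}$ in the definition of $\Lv(z)$ commutes with $G^{\pm(0)}(z)$, so the left-hand combination is exactly $[G^{\pm(0)}(z),\Lv(w)]$; multiplying through by the common scale $\sqrt{2/(k+2)}$ of Definition \ref{CFTcurrents} converts every $G^{\pm(0)}$ into $\G^\pm$ and gives $[\G^\pm(z),\Lv(w)]={1\over2}(D\G^\pm)(w)\,\delta^{\rm A}(w/z)+{3\over2}\G^\pm(w)(D\delta^{\rm A})(w/z)$. To reach the stated form I would invoke the delta-function identity $(D\G^\pm)(w)\,\delta^{\rm A}(w/z)+\G^\pm(w)(D\delta^{\rm A})(w/z)=\G^\pm(z)(D\delta)(w/z)$ — the derivative analogue of Lemma \ref{delta-flip}, proved by the same mode-matching computation, with the exchange $\delta\leftrightarrow\delta^{\rm A}$ forced by the half-integer support of the $\G^\pm$-modes. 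Substituting it yields ${1\over2}\G^\pm(z)(D\delta)(w/z)+\G^\pm(w)(D\delta^{\rm A})(w/z)$. Finally, using that $D\delta$ and $D\delta^{\rm A}$ are odd under inversion of their argument, one rewrites this as \eqref{N=2comrel-3} with $z$ and $w$ interchanged.

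The computation is linear algebra once the bookkeeping is set up, so the only genuine subtlety I anticipate is the delta-function manipulation in the last step: keeping straight when a factor is $\delta$ versus $\delta^{\rm A}$, and correctly tracking the flip $\G^\pm(w)\to\G^\pm(z)$ together with the parity of $D\delta^{\rm A}$ under $z\leftrightarrow w$. Everything else — isolating the two target commutators and checking that the weights ${1\over4(k+2)}$ and ${1\over4(k+1)(k+2)}$ annihilate both $C$ and $X$ — is routine.
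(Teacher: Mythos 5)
Your proposal is correct and follows essentially the same route as the paper: you form the weighted combination $\tfrac{1}{4(k+2)}\times$\eqref{KT-2}$+\tfrac{1}{4(k+1)(k+2)}\times$\eqref{KG-2} so that both the auxiliary commutator $[G^{\pm(1)}(z),K^{(1)}(w)]$ and the $X$-terms cancel, and then invoke exactly the identity of Lemma \ref{DdeltaA} (the ``lemma below'' in the paper) for the final rewriting. The only difference is that you spell out the bookkeeping that the paper compresses into ``Combining (\ref{KG-2}) and (\ref{KT-2})'', and your coefficients and sign conventions all check out.
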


\begin{lem}\label{DdeltaA}
We have
\begin{align}
  (D\delta^{\rm A})\left( {w\over z}\right)g(w)= (D\delta) \left( {w\over z}\right)g(z)-\delta^{\rm A}\left( {w\over z}\right)(Dg)(w),
\end{align}
where the modes of $g(z)$ are supposed to be indexed by $\mathbb{Z}^\mathrm{A}$.
\end{lem}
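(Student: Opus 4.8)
The plan is to reduce the identity to a single application of the Euler operator in the variable $w$ to an elementary ``sector-flip'' identity for delta functions. The key preliminary observation is that, for any formal series $g(z)=\sum_{m\in\mathbb{Z}^\mathrm{A}}g_m z^{-m}$ whose modes are indexed by $\mathbb{Z}^\mathrm{A}$, one has the companion of \eqref{delta-2},
\begin{align}\label{flip-base}
g(w)\,\delta^{\rm A}\left(\frac{w}{z}\right)=g(z)\,\delta\left(\frac{w}{z}\right).
\end{align}
I would prove this exactly as in Lemma~\ref{delta-flip}: expanding both sides in modes gives $\sum_{m,n\in\mathbb{Z}^\mathrm{A}}g_m\,w^{n-m}z^{-n}$, and the change of summation variable $p=n-m$ lands in $\mathbb{Z}$ precisely because $m$ and $n$ lie in the same sector $\mathbb{Z}^\mathrm{A}$; this is the mechanism that turns $\delta^{\rm A}$ on the left into $\delta$ on the right, as already observed in Lemma~\ref{delta-flip}. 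The whole content of the present lemma is then encoded in this flip.

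Next I would apply the Euler operator $w\tfrac{\partial}{\partial w}$ to both sides of \eqref{flip-base}. Acting on the scalar delta series it produces $(D\delta^{\rm A})(w/z)$ and $(D\delta)(w/z)$ respectively, and acting on the operator series $g(w)=\sum_m g_m w^{-m}$ it produces $(Dg)(w)=\sum_m(-m)g_m w^{-m}$; note that $g(z)$ on the right is annihilated, since it carries no dependence on $w$. Because $w\tfrac{\partial}{\partial w}$ is a derivation on formal power series and the scalar delta series commute with the operator coefficients, the Leibniz rule applies coefficientwise, giving
\begin{align*}
(Dg)(w)\,\delta^{\rm A}\left(\frac{w}{z}\right)+g(w)\,(D\delta^{\rm A})\left(\frac{w}{z}\right)=g(z)\,(D\delta)\left(\frac{w}{z}\right).
\end{align*}
Transposing the first term to the right-hand side, and using that the scalar factors commute with $g$, yields exactly the asserted identity $(D\delta^{\rm A})(w/z)\,g(w)=(D\delta)(w/z)\,g(z)-\delta^{\rm A}(w/z)\,(Dg)(w)$.

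I do not expect a genuine obstacle here: the argument is a routine manipulation of formal distributions. The only point that requires care, and the one I would state explicitly, is the bookkeeping of sectors, namely verifying that the index shift $p=n-m$ in \eqref{flip-base} produces $\mathbb{Z}$ rather than $\mathbb{Z}^\mathrm{A}$, which is what drives the exchange $\delta^{\rm A}\leftrightarrow\delta$. One should also confirm that applying $w\tfrac{\partial}{\partial w}$ is legitimate as a formal operation and commutes with the operator-valued coefficients, but this is immediate because the deltas are scalar. An alternative and equally short route would be to expand both sides of the target directly in modes and match coefficients of $z^{-m}w^{-n}$; however, differentiating \eqref{flip-base} is cleaner and makes the structural origin of the identity transparent.
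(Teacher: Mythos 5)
Your proof is correct, but it is organized differently from the paper's. The paper proves Lemma \ref{DdeltaA} by a single direct mode computation: it expands $(D\delta^{\rm A})(w/z)\,g(w)=\sum_{m,n\in\mathbb{Z}^{\rm A}} m\,(w/z)^m w^{-n} g_n$ and splits the coefficient as $m=(m-n)+n$ together with the matching factorization of the monomial $w^{m-n}z^{-m}$, recognizing the two resulting sums as $(D\delta)(w/z)\,g(z)$ and $-\delta^{\rm A}(w/z)\,(Dg)(w)$. You instead first isolate the undifferentiated flip identity $g(w)\,\delta^{\rm A}(w/z)=g(z)\,\delta(w/z)$ --- whose proof is the same index shift $n-m\in\mathbb{Z}$ that drives Lemma \ref{delta-flip} --- and then obtain the lemma by applying $w\partial_w$ and the Leibniz rule; the Leibniz rule automates exactly the split $m=(m-n)+n$ that the paper performs by hand. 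Both arguments are sound and rest on the same combinatorial core (two indices in $\mathbb{Z}^{\rm A}$ differ by an integer, which is what exchanges $\delta^{\rm A}$ and $\delta$). Your route has the advantage of explaining the identity structurally as the Euler derivative of a flip identity and of reusing the mechanism of Lemma \ref{delta-flip}, at the cost of one extra intermediate statement; the paper's version is a single self-contained display. Your side remarks --- that $w\partial_w$ is a legitimate formal operation commuting with the operator-valued coefficients, and that the index bookkeeping is the only delicate point --- are accurate.
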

\begin{proof}
\begin{align*}
  (D\delta^{\rm A})\left( {w\over z}\right)g(w) &=
 \sum_{m \in \mathbb{Z}^\mathrm{A}}  \sum_{n \in \mathbb{Z}^\mathrm{A}} m \left(\frac{w}{z}\right)^m g_n \\
 &=  \sum_{m \in \mathbb{Z}^\mathrm{A}}  \sum_{n \in \mathbb{Z}^\mathrm{A}} \left[ (m-n) \left(\frac{w}{z}\right)^{m-n} z^{-n} g_n
 + n \left(\frac{w}{z}\right)^m w^{-n} g_n \right] \\
 &= (D\delta) \left( {w\over z}\right)g(z)-\delta^{\rm A}\left( {w\over z}\right)(Dg)(w).
\end{align*}
\end{proof}

\subsection{$\hbar$ expansions of the relations for $K^{\pm}$ vs. $T$ }\label{sub45}
Combining the relations (\ref{rr-4}) and (\ref{rr-5}),
we have
\begin{align}
{(1-q^{k+1}w/z)(1-q^{-k-1}w/z)\over (1-q^{k+3}w/z)(1-q^{-k-3}w/z)}T(z)K(w)=
{(1-q^{k+1}z/w)(1-q^{-k-1}z/w)\over (1-q^{k+3}z/w)(1-q^{-k-3}z/w)}K(w)T(z).\label{TK=KT}
\end{align}
We study the $\hbar$ expansion
of (\ref{TK=KT}),
considering the terms of the orders up to $\hbar^4$.

\begin{lem}
We have
\begin{align*}
&{(1-q^{k+1}w/z)(1-q^{-k-1}w/z)\over (1-q^{k+3}w/z)(1-q^{-k-3}w/z)}=
1+\hbar^2 \,\,4(k+2)(Ds)\left( {w\over z}\right)\\
&+\hbar^4 \left( {2(k+2)(k+3)^2\over 3} (D^3s)\left( {w\over z}\right)
-{4(k+2)^2\over 3} (Ds)\left( {w\over z}\right)\right)+{\mathcal O}(\hbar^6).
\end{align*}
\end{lem}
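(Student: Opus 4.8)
The plan is to pass to logarithms, where the product structure of the rational factor collapses into a single sum over $\ell$, and then to expand in $\hbar$ using $q=e^\hbar$. Writing $x=w/z$ and
\[
F(x)=\frac{(1-q^{k+1}x)(1-q^{-k-1}x)}{(1-q^{k+3}x)(1-q^{-k-3}x)},
\]
I would first record the structural observation that $F$ is invariant under $q\to q^{-1}$ (this merely swaps the two numerator factors with each other and the two denominator factors with each other), so that $\log F$ is an even function of $\hbar$ and only even powers of $\hbar$ can occur. This already accounts for the shape $1+\hbar^2(\cdots)+\hbar^4(\cdots)+\mathcal{O}(\hbar^6)$ asserted in the statement.

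Next, applying $\log(1-y)=-\sum_{\ell\geq1}y^\ell/\ell$ to each of the four factors and collecting by powers of $x$, I obtain
\begin{align*}
\log F
&=-\sum_{\ell\geq1}\frac{x^\ell}{\ell}\Bigl(q^{(k+1)\ell}+q^{-(k+1)\ell}-q^{(k+3)\ell}-q^{-(k+3)\ell}\Bigr)\\
&=-2\sum_{\ell\geq1}\frac{x^\ell}{\ell}\bigl(\cosh((k+1)\ell\hbar)-\cosh((k+3)\ell\hbar)\bigr),
\end{align*}
since $q=e^\hbar$. Expanding the hyperbolic cosines in $\hbar$ and using the elementary identities $(k+1)^2-(k+3)^2=-4(k+2)$ and $(k+1)^4-(k+3)^4=-8(k+2)(k^2+4k+5)$, the $\ell$-dependence reassembles into the series $(Ds)(x)=\sum_{\ell\geq1}\ell\,x^\ell$ and $(D^3s)(x)=\sum_{\ell\geq1}\ell^3 x^\ell$, yielding
\[
\log F=4(k+2)\,\hbar^2\,(Ds)(x)+\frac{2(k+2)(k^2+4k+5)}{3}\,\hbar^4\,(D^3s)(x)+\mathcal{O}(\hbar^6).
\]

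Finally I would exponentiate via $F=1+\log F+\tfrac12(\log F)^2+\cdots$. The only new contribution at order $\hbar^4$ comes from $\tfrac12(\log F)^2$, producing the term $8(k+2)^2\hbar^4\,(Ds)(x)^2$. The closing step is to reduce $(Ds)^2$ to $(Ds)$ and $(D^3s)$: from the closed form $s(x)=\tfrac12(1+x)/(1-x)$ (equivalently from the relation $s^2=\tfrac14+(Ds)$ recorded earlier) one computes $(Ds)=x/(1-x)^2$ and $(D^3s)=(x+4x^2+x^3)/(1-x)^4$, whence $(D^3s)-(Ds)=6x^2/(1-x)^4=6(Ds)^2$, i.e.\ $(Ds)^2=\tfrac16\bigl((D^3s)-(Ds)\bigr)$. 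Substituting this into the $\hbar^4$ coefficient and using $(k^2+4k+5)+2(k+2)=(k+3)^2$ converts it into $\tfrac{2}{3}(k+2)(k+3)^2\,(D^3s)-\tfrac{4}{3}(k+2)^2\,(Ds)$, which is precisely the claimed expression. The computation is mechanical once the logarithm is taken; the only point that genuinely needs care is the bookkeeping of the $\tfrac12(\log F)^2$ cross term in the exponential together with the quadratic reduction of $(Ds)^2$, without which the stated closed form in terms of only $(Ds)$ and $(D^3s)$ would not emerge.
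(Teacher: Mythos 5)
Your proof is correct, and it is essentially the computation the paper intends: the lemma is stated without proof, but the surrounding text records exactly the quadratic identity $(Ds)^2=\tfrac16\bigl((D^3s)-(Ds)\bigr)$ (Lemma \ref{noK}) that your exponentiation step relies on, and the log-then-exponentiate expansion with the algebraic identities $(k+3)^2-(k+1)^2=4(k+2)$ and $(k^2+4k+5)+2(k+2)=(k+3)^2$ is the natural route. All the coefficients check out, so nothing further is needed.
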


\begin{prp}\label{P5}
We have no nontrivial relations by 
taking the terms of the order of $\hbar^0$ and $\hbar^1$ in the $\hbar$ expansion of the relation (\ref{TK=KT}).
Taking the coefficients of the order of $\hbar^2$ in that, we have
\begin{align*}
&-{1\over k+1} K^{(1)}(z)K^{(1)}(w)+{4k(k+2)\over k+1}(Ds)\left({w\over z} \right)\\
&=-{1\over k+1} K^{(1)}(w)K^{(1)}(z)+{4k(k+2)\over k+1}(Ds)\left({z\over w} \right),
\end{align*}
namely
\begin{equation*}
 [K^{(1)}(z),K^{(1)}(w)]=4k(k+2)(D\delta)\left({w\over z} \right),
\end{equation*}
which implies \eqref{N=2comrel-5}.
\end{prp}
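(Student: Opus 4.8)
The plan is to expand the combined relation \eqref{TK=KT} as a power series in $\hbar$ and to read off the constraint that first appears at order $\hbar^2$. First I would assemble the ingredients. From Propositions \ref{prp:K-ser} and \ref{Exp-1} we have
\[
K(w)=1+\hbar K^{(1)}(w)+\hbar^2 K^{(2)}(w)+\mathcal{O}(\hbar^3),\qquad
T(z)={k\over k+1}+\hbar T^{(1)}(z)+\hbar^2 T^{(2)}(z)+\mathcal{O}(\hbar^3),
\]
together with $T^{(1)}(z)=-{1\over k+1}K^{(1)}(z)$, while the preceding lemma expands the rational prefactor as $1+4(k+2)\hbar^2 (Ds)(w/z)+\mathcal{O}(\hbar^4)$. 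The vanishing of the $\hbar^1$-coefficient of the prefactor is exactly why no relation can arise below order $\hbar^2$.

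Next I would compare the two sides order by order. At orders $\hbar^0$ and $\hbar^1$ the sides agree identically: the leading term $T^{(0)}=k/(k+1)$ is a scalar and hence commutes past every mode of $K$, and the prefactor contributes nothing at these orders, so both sides collapse to $k/(k+1)$ and to ${k\over k+1}K^{(1)}(w)+T^{(1)}(z)$ respectively. Extracting the coefficient of $\hbar^2$ and cancelling the common terms ${k\over k+1}K^{(2)}(w)$ and $T^{(2)}(z)$ leaves
\[
T^{(1)}(z)K^{(1)}(w)+{4k(k+2)\over k+1}(Ds)\!\left({w\over z}\right)
=K^{(1)}(w)T^{(1)}(z)+{4k(k+2)\over k+1}(Ds)\!\left({z\over w}\right),
\]
which, after substituting $T^{(1)}=-{1\over k+1}K^{(1)}$, is precisely the displayed identity of the proposition.

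To reach the commutator form I would multiply through by $-(k+1)$, obtaining $[K^{(1)}(z),K^{(1)}(w)]=4k(k+2)\bigl[(Ds)(w/z)-(Ds)(z/w)\bigr]$, and then invoke the identity $(D\delta)(z)=(Ds)(z)-(Ds)(z^{-1})$ recorded in Subsection \ref{sub43} to rewrite the bracket as $(D\delta)(w/z)$, giving the stated relation. The connection to \eqref{N=2comrel-5} then follows by substituting $K^{(1)}(z)=2(k+2)\I(z)$ from Definition \ref{CFTcurrents}: the left-hand side becomes $4(k+2)^2[\I(z),\I(w)]$ and the relation reduces to $[\I(z),\I(w)]={k\over k+2}(D\delta)(w/z)={c\over 3}(D\delta)(w/z)$ with $c=3k/(k+2)$.

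The computation is entirely routine; the only point requiring care is the bookkeeping of which contributions survive. In particular, recognizing that the scalar nature of $T^{(0)}$ and the absence of an $\hbar^1$ term in the prefactor force all lower-order pieces to cancel is what makes the Heisenberg relation emerge cleanly at order $\hbar^2$, with no interference from $K^{(2)}$ or $T^{(2)}$.
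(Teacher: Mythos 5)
Your proposal is correct and follows the same route as the paper: expand the prefactor (even powers of $\hbar$ only, first nontrivial term $4(k+2)\hbar^2(Ds)(w/z)$), note that the scalar $T^{(0)}=k/(k+1)$ kills all relations below order $\hbar^2$, cancel the $K^{(2)}$ and $T^{(2)}$ terms at order $\hbar^2$, substitute $T^{(1)}=-\tfrac{1}{k+1}K^{(1)}$, and use $(D\delta)(z)=(Ds)(z)-(Ds)(z^{-1})$ together with $K^{(1)}=2(k+2)\I$ to land on \eqref{N=2comrel-5}. Nothing to add.
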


\begin{prp}\label{prpT2K1}
Taking the coefficients of the order of $\hbar^3$ in   the $\hbar$ expansion of the relation (\ref{TK=KT}), we have
\begin{align*}
&T^{(2)}(z)K^{(1)}(w)-{1\over k+1} K^{(1)}(z)K^{(2)}(w)\\
&\qquad +4(k+2)(Ds)\left({w\over z} \right)\Bigl( -{1\over k+1} K^{(1)}(z)+{k\over k+1} K^{(1)}(w)\Bigr) \\
=&~K^{(1)}(w)T^{(2)}(z)-{1\over k+1} K^{(2)}(w)K^{(1)}(z)\\
&\qquad+4(k+2)(Ds)\left({z\over w} \right)\Bigl( -{1\over k+1} K^{(1)}(z)+{1\over k+1} K^{(1)}(w)\Bigr),
\end{align*}
namely
\begin{align}\label{T2K1}
&-[T^{(2)}(z),K^{(1)}(w)]+{1\over k+1} [K^{(1)}(z),K^{(2)}(w)] \CR
&=4(k+2)(D\delta)\left({w\over z} \right)\Bigl( -{1\over k+1}K^{(1)}(z)+ {k\over k+1} K^{(1)}(w)\Bigr).
\end{align}
\end{prp}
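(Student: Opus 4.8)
The plan is to substitute the $\hbar$-expansions of the generating functions directly into the combined relation \eqref{TK=KT} and read off the coefficient of $\hbar^3$. From Propositions \ref{prp:K-ser} and \ref{Exp-1} together with \eqref{T0} and \eqref{T1} I have $K(z)=1+\hbar K^{(1)}(z)+\hbar^2K^{(2)}(z)+\hbar^3K^{(3)}(z)+{\mathcal O}(\hbar^4)$ and $T(z)=\frac{k}{k+1}-\frac{\hbar}{k+1}K^{(1)}(z)+\hbar^2T^{(2)}(z)+\hbar^3T^{(3)}(z)+{\mathcal O}(\hbar^4)$, while the $\hbar$-expansion of the rational prefactor established just above gives $F(w/z):=1+\hbar^2\,4(k+2)(Ds)(w/z)+{\mathcal O}(\hbar^4)$, so that \eqref{TK=KT} reads $F(w/z)\,T(z)K(w)=F(z/w)\,K(w)T(z)$. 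I would organize the computation around this factored form.

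First I would expand each side as a product of power series in $\hbar$ and collect the $\hbar^3$ coefficient. On the left the product $T(z)K(w)$ contributes its own degree-$3$ part, while the $\hbar^2$ piece of $F(w/z)$ multiplies the degree-$1$ part $\frac{k}{k+1}K^{(1)}(w)-\frac{1}{k+1}K^{(1)}(z)$; the right side is obtained by the same bookkeeping with the arguments of $F$ exchanged and the operator factors written in the opposite order. The key observation is that the two top-degree currents $T^{(3)}(z)$ and $\frac{k}{k+1}K^{(3)}(w)$ occur identically on the two sides and therefore cancel: they always appear paired with the zeroth-order factors $K^{(0)}=1$ and $T^{(0)}=\frac{k}{k+1}$, which are central, so the ordering is irrelevant for them. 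This cancellation is exactly what turns the $\hbar^3$ relation into a constraint on $T^{(2)}$, $K^{(2)}$, $K^{(1)}$ alone, and verifying it is the main consistency point. What survives is precisely the first displayed identity of the proposition.

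The last step is to move all operator products to one side and recognize the commutators. The two $(Ds)$-contributions differ only in the argument of $(Ds)$, so their difference is $4(k+2)\bigl((Ds)(z/w)-(Ds)(w/z)\bigr)$ acting on $-\frac{1}{k+1}K^{(1)}(z)+\frac{k}{k+1}K^{(1)}(w)$; invoking the identity $(D\delta)(w/z)=(Ds)(w/z)-(Ds)(z/w)$ from the $s$--$\delta$ lemma rewrites this as $-4(k+2)(D\delta)(w/z)\bigl(\cdots\bigr)$, while the operator-product differences assemble into $[T^{(2)}(z),K^{(1)}(w)]-\frac{1}{k+1}[K^{(1)}(z),K^{(2)}(w)]$. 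Multiplying through by $-1$ then yields \eqref{T2K1}. I expect the only real obstacle to be the careful bookkeeping: confirming that the unknown top-degree pieces $T^{(3)},K^{(3)}$ drop out rather than contribute, and tracking which argument each $(Ds)$ carries so that the antisymmetric combination collapses correctly to $(D\delta)$. No deeper input is required beyond the already-established expansions and the elementary $s$--$\delta$ identities.
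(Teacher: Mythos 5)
Your proposal is correct and follows essentially the same route as the paper: expand $F(w/z)\,T(z)K(w)=F(z/w)\,K(w)T(z)$ to order $\hbar^{3}$, note that $T^{(3)}(z)$ and $\tfrac{k}{k+1}K^{(3)}(w)$ cancel between the two sides because they only ever pair with the central factors $K^{(0)}=1$ and $T^{(0)}=\tfrac{k}{k+1}$, and collapse the antisymmetric $(Ds)$ combination to $(D\delta)$ via $(D\delta)(w/z)=(Ds)(w/z)-(Ds)(z/w)$. One remark: your bookkeeping yields the common factor $-\tfrac{1}{k+1}K^{(1)}(z)+\tfrac{k}{k+1}K^{(1)}(w)$ on \emph{both} sides of the intermediate identity, whereas the paper's first display writes $\tfrac{1}{k+1}K^{(1)}(w)$ in the $(Ds)(z/w)$ term — that appears to be a typo in the paper, and your version is the one consistent with the final commutator relation \eqref{T2K1}.
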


\begin{dfn}
Define the normal ordered product $\no K^{(1)}(z)K^{(1)}(w)\no$ by
\begin{align*}
&\no K^{(1)}(z)K^{(1)}(w)\no= K^{-(1)}(z)K^{(1)}(w)+K^{(1)}(w)K^{+(1)}(z).
\end{align*}
\end{dfn}

\begin{lem}\label{noK}
We have 
\begin{align*}
 K^{(1)}(z)K^{(1)}(w)= \no K^{(1)}(z)K^{(1)}(w)\no+4k(k+2) (Ds)\left({w\over z} \right),
\end{align*}
and
\begin{align*}
\left((Ds)\left({w\over z} \right)\right)^2={1\over 6} (D^3s)\left({z\over w} \right)-{1\over 6} (Ds)\left({z\over w} \right).
\end{align*}
\end{lem}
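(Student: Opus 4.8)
The plan is to prove the two displayed identities of Lemma~\ref{noK} separately, as they are logically independent: the first is an operator statement about normal ordering, while the second is a combinatorial identity of one-variable formal power series.

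For the first identity, I would expand $K^{(1)}=K^{-(1)}+K^{+(1)}$ (Proposition~\ref{prp:K-ser}) and compare $K^{(1)}(z)K^{(1)}(w)$ term by term with $\no K^{(1)}(z)K^{(1)}(w)\no = K^{-(1)}(z)K^{(1)}(w)+K^{(1)}(w)K^{+(1)}(z)$. The four products agree except that normal ordering moves $K^{+(1)}(z)$ to the right of $K^{-(1)}(w)$, so the difference collapses to $[K^{+(1)}(z),K^{-(1)}(w)]+[K^{+(1)}(z),K^{+(1)}(w)]$. The second commutator vanishes: relation~\eqref{rr-2} gives $[K^+(z),K^+(w)]=0$ identically, and since $K^{+(0)}(z)=1$ its leading nontrivial $\hbar$-coefficient is exactly $[K^{+(1)}(z),K^{+(1)}(w)]=0$. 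Hence the difference reduces to the single c-number commutator $[K^{+(1)}(z),K^{-(1)}(w)]$.

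To evaluate it I would use \eqref{Kpm(z)}: with $q=e^\hbar$ the modes appear at leading order as $K^{\pm(1)}_{\pm m}=2H^{(0)}_{\pm m}$ for $m>0$, and the $\hbar\to0$ limit of \eqref{H-H} reads $[H^{(0)}_m,H^{(0)}_n]=k(k+2)\,m\,\delta_{m+n,0}$ (using $[u]\to u$). Thus $[K^{+(1)}_m,K^{-(1)}_{-n}]=4k(k+2)\,m\,\delta_{m,n}$ for $m,n>0$, while the zero modes contribute nothing since $K^{\pm(1)}_0=H_0$ commutes with the $K$-currents; the Heisenberg pairing forces $m=n$, giving $[K^{+(1)}(z),K^{-(1)}(w)]=4k(k+2)\sum_{m>0}m\,(w/z)^m=4k(k+2)(Ds)(w/z)$. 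As a consistency check, this is precisely the $(Ds)(w/z)$ half of the full commutator $[K^{(1)}(z),K^{(1)}(w)]=4k(k+2)(D\delta)(w/z)$ of Proposition~\ref{P5}, the complementary $(Ds)(z/w)$ half arising from $[K^{-(1)}(z),K^{+(1)}(w)]$. This settles the first identity.

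The second identity is a formal power-series identity in the single variable $x=w/z$ (so read this way the argument on the right-hand side should also be $w/z$, since $\bigl((Ds)(w/z)\bigr)^2$ contains only positive powers of $w/z$). I would compare the coefficient of $x^n$: on the left it is $\sum_{\ell=1}^{n-1}\ell(n-\ell)=\tfrac{n(n-1)(n+1)}{6}=\tfrac{n^3-n}{6}$, whereas $\tfrac16\bigl((D^3s)(x)-(Ds)(x)\bigr)$ has coefficient $\tfrac{\ell^3-\ell}{6}$ at $x^\ell$; both sides vanish for $n\le1$ and agree for all $n$. The only genuinely delicate step is the first identity, namely showing that the normal-ordering subtraction isolates exactly one frequency half of the commutator and produces the constant $4k(k+2)$; pinning this down through the Heisenberg normalization of \eqref{H-H} (cross-checked against Proposition~\ref{P5}) removes the main obstacle, after which the series identity is routine.
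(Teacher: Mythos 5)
Your proof is correct. The paper states Lemma~\ref{noK} without proof, so there is no argument to compare against; what you give is the natural one implicit in the surrounding text. The decomposition of $K^{(1)}(z)K^{(1)}(w)-\no K^{(1)}(z)K^{(1)}(w)\no$ into $[K^{+(1)}(z),K^{-(1)}(w)]+[K^{+(1)}(z),K^{+(1)}(w)]$, the vanishing of the second commutator via the $\hbar^2$ coefficient of \eqref{rr-2}, and the evaluation $K^{\pm(1)}_{\pm m}=2H^{(0)}_{\pm m}$ ($m>0$) with $[H^{(0)}_m,H^{(0)}_n]=k(k+2)\,m\,\delta_{m+n,0}$ from \eqref{Kpm(z)} and \eqref{H-H} all check out, and the cross-check against Proposition~\ref{P5} (your $(Ds)(w/z)$ being exactly the "positive-frequency half" of $(D\delta)(w/z)$) is the right sanity test. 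You are also right that the second identity as printed has mismatched arguments and can only hold with the same variable on both sides: as formal series the left side lives in powers of $w/z$ and the right side in powers of $z/w$; the coefficient computation $\sum_{\ell=1}^{n-1}\ell(n-\ell)=\tfrac{n^3-n}{6}$ settles the corrected version, and the slip is presumably explained by the fact that the common rational function $x^2/(1-x)^4$ is invariant under $x\to 1/x$.
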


\begin{prp}
Taking the coefficients of the order of $\hbar^4$ in   the $\hbar$ expansion of the relation (\ref{TK=KT}), we have
\begin{align*}
&T^{(3)}(z)K^{(1)}(w)+T^{(2)}(z)K^{(2)}(w)-{1\over k+1} K^{(1)}(z)K^{(3)}(w)\\
&\qquad +4(k+2)(Ds)\left({w\over z} \right)\Bigl(T^{(2)}(z) -{1\over k+1} K^{(1)}(z)K^{(1)}(w)+{k\over k+1} K^{(2)}(w)\Bigr) \\
&\qquad + {2k(k+2)(k+3)^2\over 3(k+1)} (D^3s)\left({w\over z} \right)-
 {4k(k+2)^2\over 3(k+1)} (Ds)\left({w\over z} \right),\\
=&~K^{(1)}(w)T^{(3)}(z)+K^{(2)}(w)T^{(2)}(z)-{1\over k+1} K^{(3)}(w)K^{(1)}(z)\\
&\qquad +4(k+2)(Ds)\left({z\over w} \right)\Bigl(T^{(2)}(z) -{1\over k+1} K^{(1)}(w)K^{(1)}(z)+{k\over k+1} K^{(2)}(w)\Bigr) \\
&\qquad + {2k(k+2)(k+3)^2\over 3(k+1)} (D^3s)\left({z\over w} \right)-
 {4k(k+2)^2\over 3(k+1)} (Ds)\left({z\over w} \right).
 \end{align*}
By using Lemma \ref{noK} above, from these we obtain 
 \begin{align*}
&-[T^{(3)}(z),K^{(1)}(w)]-[T^{(2)}(z),K^{(2)}(w)]+{1\over k+1} [K^{(1)}(z),K^{(3)}(w)]\\
&= +4(k+2)(D\delta)\left({w\over z} \right)\Bigl(T^{(2)}(z) -{1\over k+1}\no  K^{(1)}(z)K^{(1)}(w)\no+
{k\over k+1} K^{(2)}(w)\Bigr) \\
&\qquad + {2k(k+1)(k+2)\over 3} (D^3\delta)\left({w\over z} \right)+
 {4k(k+2)^2\over 3(k+1)} (D\delta)\left({w\over z} \right).
\end{align*}
\end{prp}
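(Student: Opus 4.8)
The plan is to treat the combined relation (\ref{TK=KT})---an exact identity in $\Svir$ equivalent to (\ref{rr-4}) and (\ref{rr-5})---as a formal power series in $\hbar$ and to read off the coefficient of $\hbar^4$ on each side; since the identity holds exactly, this coefficientwise equality is automatic and the entire task is bookkeeping. First I would insert the expansions $K(w)=1+\sum_{i\geq 1}\hbar^i K^{(i)}(w)$ and $T(z)=\frac{k}{k+1}-\frac{\hbar}{k+1}K^{(1)}(z)+\sum_{i\geq 2}\hbar^i T^{(i)}(z)$ from Propositions \ref{prp:K-ser} and \ref{Exp-1}, together with the $\hbar$-expansion of the rational prefactor established earlier in this subsection, which contributes only at orders $\hbar^0,\hbar^2,\hbar^4$. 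Writing that prefactor as $1+\hbar^2R_2+\hbar^4R_4$ with $R_2(x)=4(k+2)(Ds)(x)$, the $\hbar^4$-coefficient of the left side is $\sum_{2p+i+j=4}R_{2p}(w/z)\,T^{(i)}(z)K^{(j)}(w)$ and that of the right side is the same sum with argument $(z/w)$ and the factors reordered as $K^{(j)}(w)T^{(i)}(z)$; enumerating the triples $(p,i,j)$ reproduces exactly the first displayed equality.

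Next I would subtract the two sides. The terms $T^{(4)}(z)$ and $\frac{k}{k+1}K^{(4)}(w)$ occur identically on both and cancel, while the three pairs coming from $R_0=1$ combine into the commutators $[T^{(3)}(z),K^{(1)}(w)]$, $[T^{(2)}(z),K^{(2)}(w)]$ and $-\frac{1}{k+1}[K^{(1)}(z),K^{(3)}(w)]$. Moving the surviving $(Ds)$- and prefactor-terms across and flipping the overall sign produces the left-hand side $-[T^{(3)}(z),K^{(1)}(w)]-[T^{(2)}(z),K^{(2)}(w)]+\frac{1}{k+1}[K^{(1)}(z),K^{(3)}(w)]$ of the target. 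For the pieces multiplying the single operators $K^{(2)}(w)$ and $T^{(2)}(z)$ I would use the identity $(Ds)(w/z)-(Ds)(z/w)=(D\delta)(w/z)$ from the Lemma in \S\ref{sub43} to collapse them into $(D\delta)(w/z)$ times those operators.

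The only delicate step is the term quadratic in $K^{(1)}$. Here I would apply Lemma \ref{noK} to write $K^{(1)}(z)K^{(1)}(w)=\no K^{(1)}(z)K^{(1)}(w)\no+4k(k+2)(Ds)(w/z)$ and, using the $z\leftrightarrow w$ symmetry of the normal-ordered product, $K^{(1)}(w)K^{(1)}(z)=\no K^{(1)}(z)K^{(1)}(w)\no+4k(k+2)(Ds)(z/w)$. The normal-ordered parts assemble into the $-\frac{1}{k+1}\no K^{(1)}(z)K^{(1)}(w)\no$ term inside the common factor $4(k+2)(D\delta)(w/z)$, exactly as in the statement, while the two contraction terms leave a purely numerical multiple of $(Ds)(w/z)^2-(Ds)(z/w)^2$. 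Invoking the second identity of Lemma \ref{noK} together with $(D^m s)(x)-(D^m s)(x^{-1})=(D^m\delta)(x)$ for $m=1,3$ turns this difference into $\frac16(D^3\delta)(w/z)-\frac16(D\delta)(w/z)$.

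It then remains only to collect the numerical $(D^3\delta)(w/z)$ and $(D\delta)(w/z)$ contributions, those from the contraction just computed and those from $R_4$. This is where the one genuine check occurs: the $(D^3\delta)(w/z)$ coefficient equals $\frac{2k(k+2)}{3(k+1)}\bigl[(k+3)^2-4(k+2)\bigr]$, and the algebraic simplification $(k+3)^2-4(k+2)=(k+1)^2$ collapses it to $\frac{2k(k+1)(k+2)}{3}$, matching the stated coefficient, while the $(D\delta)(w/z)$ pieces add to $\frac{4k(k+2)^2}{3(k+1)}$. Assembling these with the commutators and the $(D\delta)(w/z)$-operator term yields precisely the final identity. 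The main obstacle is thus not conceptual but the careful bookkeeping of the $\hbar^4$-coefficient and the correct treatment of the normal-ordering contractions; the lone nontrivial cancellation is the identity $(k+3)^2-4(k+2)=(k+1)^2$.
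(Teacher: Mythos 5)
Your proposal is correct and follows essentially the same route as the paper: read off the $\hbar^4$ coefficient of (\ref{TK=KT}) using the expansions of the prefactor, $T(z)$ and $K(w)$, antisymmetrize to form the commutators, and use Lemma \ref{noK} to split $K^{(1)}K^{(1)}$ into its normal-ordered part plus the contraction whose square of $(Ds)$ produces the $(D^3\delta)$ and $(D\delta)$ corrections, with the cancellation $(k+3)^2-4(k+2)=(k+1)^2$ giving the central-term coefficient. The only caveat is that you tacitly use $\bigl((Ds)(x)\bigr)^2=\tfrac16(D^3s)(x)-\tfrac16(Ds)(x)$ with the \emph{same} argument on both sides, which is the correct form (the argument flip printed in Lemma \ref{noK} appears to be a typo), so your bookkeeping goes through.
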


\begin{cor}\label{CorT3T1}
Antisymmetrizing this with respect to the interchange $z\leftrightarrow w$, we have 
\begin{align}\label{T3T1}
&-\Bigl( [T^{(3)}(z),K^{(1)}(w)]+[K^{(1)}(z),T^{(3)}(w)]
+[T^{(2)}(z),K^{(2)}(w)]+[K^{(2)}(z),T^{(2)}(w)]\Bigr)\nonumber\\
&+{1\over k+1}\Bigl( [K^{(1)}(z),K^{(3)}(w)]+[K^{(3)}(z),K^{(1)}(w)]\Bigr)  \CR
&= 4(k+2)(D\delta)\left({w\over z} \right)\Bigl(
T^{(2)}(z)+T^{(2)}(w)\Bigr) 
+{4k(k+2)\over k+1}(D\delta)\left({w\over z} \right)\Bigl(
K^{(2)}(z)+K^{(2)}(w)\Bigr) \CR
& -{8(k+2)\over k+1}(D\delta)\left({w\over z} \right)
\no  K^{(1)}(z)K^{(1)}(w)\no  \CR
&\qquad +  {4k(k+1)(k+2)\over 3} (D^3\delta )\left({w\over z} \right)+
{8k(k+2)^2\over 3(k+1)}  (D\delta)\left({w\over z} \right).
\end{align}
\end{cor}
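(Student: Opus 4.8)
The plan is to derive \eqref{T3T1} purely formally from the identity established in the preceding Proposition, by antisymmetrizing in the two spectral parameters. Writing that identity schematically as $P(z,w)=0$, I would form the combination $P(z,w)-P(w,z)$ and simplify each side separately. On the operator side the three commutators $-[T^{(3)}(z),K^{(1)}(w)]$, $-[T^{(2)}(z),K^{(2)}(w)]$ and $\frac{1}{k+1}[K^{(1)}(z),K^{(3)}(w)]$ are carried by $z\leftrightarrow w$ to the same expressions with arguments interchanged; applying $[A(w),B(z)]=-[B(z),A(w)]$ to flip each swapped commutator produces precisely the symmetrized combinations $[T^{(3)}(z),K^{(1)}(w)]+[K^{(1)}(z),T^{(3)}(w)]$, etc., on the left of \eqref{T3T1}.

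On the other side I would exploit the antisymmetry of the odd delta-derivatives, namely $(D\delta)(z/w)=-(D\delta)(w/z)$ and $(D^3\delta)(z/w)=-(D^3\delta)(w/z)$, both immediate from $(D^n\delta)(z^{-1})=(-1)^n(D^n\delta)(z)$. This doubles the two pure delta terms, yielding $\frac{4k(k+1)(k+2)}{3}(D^3\delta)(w/z)+\frac{8k(k+2)^2}{3(k+1)}(D\delta)(w/z)$ and matching the final line of \eqref{T3T1}; it also collapses the operator-valued term of $P(z,w)-P(w,z)$ into $4(k+2)(D\delta)(w/z)$ times $X(z,w)+X(w,z)$, where $X(z,w)=T^{(2)}(z)-\frac{1}{k+1}\no K^{(1)}(z)K^{(1)}(w)\no+\frac{k}{k+1}K^{(2)}(w)$.

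The one genuinely substantive point, and where I expect the only real care is needed, is the symmetry of the normal-ordered product $\no K^{(1)}(z)K^{(1)}(w)\no=\no K^{(1)}(w)K^{(1)}(z)\no$; this is what fuses the two single-normal-ordered pieces of $X(z,w)+X(w,z)$ into the factor $-\frac{2}{k+1}\no K^{(1)}(z)K^{(1)}(w)\no$ and hence the coefficient $-\frac{8(k+2)}{k+1}$ in \eqref{T3T1}. I would establish this symmetry by subtracting the two orderings via Lemma \ref{noK}, giving $\no K^{(1)}(z)K^{(1)}(w)\no-\no K^{(1)}(w)K^{(1)}(z)\no=[K^{(1)}(z),K^{(1)}(w)]-4k(k+2)\bigl((Ds)(w/z)-(Ds)(z/w)\bigr)$, and then noting $[K^{(1)}(z),K^{(1)}(w)]=4k(k+2)(D\delta)(w/z)$ by Proposition \ref{P5} together with $(D\delta)(w/z)=(Ds)(w/z)-(Ds)(z/w)$ from the $s$-function identities; the two contributions cancel exactly.

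With that symmetry in hand, the surviving diagonal pieces $T^{(2)}(z)+T^{(2)}(w)$ and $\frac{k}{k+1}\bigl(K^{(2)}(z)+K^{(2)}(w)\bigr)$ reassemble directly into the $4(k+2)(D\delta)(w/z)$ and $\frac{4k(k+2)}{k+1}(D\delta)(w/z)$ terms of \eqref{T3T1}, completing the derivation. The entire argument is thus bookkeeping on the antisymmetry of delta-derivatives and commutators, with the normal-ordering symmetry as its only nonformal ingredient.
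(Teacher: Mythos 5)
Your proposal is correct and follows exactly the antisymmetrization the paper intends (the paper states the corollary with no further argument): forming $P(z,w)-P(w,z)$, flipping the swapped commutators, and using $(D\delta)(z^{-1})=-(D\delta)(z)$, $(D^3\delta)(z^{-1})=-(D^3\delta)(z)$ to double the central terms. Your verification that $\no K^{(1)}(z)K^{(1)}(w)\no$ is symmetric under $z\leftrightarrow w$ — via Lemma \ref{noK} together with $[K^{(1)}(z),K^{(1)}(w)]=4k(k+2)(D\delta)(w/z)$ and $(D\delta)(z)=(Ds)(z)-(Ds)(z^{-1})$ — correctly supplies the one nontrivial step that the paper leaves implicit.
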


As we explained we should perform the exact $\hbar$ expansions up to $\hbar^4$ of
the relations among $K^{\pm}(z)$ and $T(z)$. The coefficients of $\hbar^4$ involve
$K^{(3)}(z)$ and $T^{(3)}(z)$ as we have seen above and will see the following subsections.
Note that in the definition \ref{CFTcurrents} of the generating currents of the ordinary $\mathcal{N}=2$
SCA we have neither $K^{(3)}(z)$ nor $T^{(3)}(z)$. As we will show,
we can eliminate them from the final commutation relations \eqref{N=2comrel-6} and \eqref{N=2comrel-7}.

\subsection{$\hbar$ expansions of the relations for $K$ vs. $K$ }\label{sub46}
{}From (\ref{rr-2}) and (\ref{rr-3}), we have
\begin{align}
&{(1-q^{2k+2}w/z)(1-q^{-2k-2}w/z)\over (1-q^{2}w/z)(1-q^{-2}w/z)}K(z)K(w) \CR
&\quad ={(1-q^{2k+2}w/z)(1-q^{-2k-2}w/z)\over (1-q^{2}w/z)(1-q^{-2}w/z)}K(w)K(z).\label{KK=KK}
\end{align}
We study the $\hbar$ expansion
of (\ref{KK=KK}),
considering the terms of the orders up to $\hbar^4$.

\begin{lem}
We have
\begin{align*}
&{(1-q^{2k+2}w/z)(1-q^{-2k-2}w/z)\over (1-q^{2}w/z)(1-q^{-2}w/z)}=
1+\hbar^2 \left(-4k(k+2)\right)(Ds)\left( {w\over z}\right)\\
&+\hbar^4 \left( -{8k(k+2)\over 3} (D^3s)\left( {w\over z}\right)
-{4k^2(k+2)^2\over 3} (Ds)\left( {w\over z}\right)\right)+{\mathcal O}(\hbar^6).
\end{align*}
\end{lem}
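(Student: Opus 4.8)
The plan is to prove this as an identity of formal power series in $\hbar$ (recall $q=e^\hbar$) and in $\zeta\seteq w/z$, treated in the domain $|w|\ll|z|$, by first passing to the logarithm to linearize the product. Writing
$$
\log\frac{(1-q^{2k+2}\zeta)(1-q^{-2k-2}\zeta)}{(1-q^{2}\zeta)(1-q^{-2}\zeta)}
=\sum_{\varepsilon=\pm}\bigl[\log(1-q^{\varepsilon(2k+2)}\zeta)-\log(1-q^{2\varepsilon}\zeta)\bigr],
$$
I would expand each factor via $\log(1-c\zeta)=-\sum_{\ell\ge 1}c^\ell\zeta^\ell/\ell$ and collect the four contributions into
$$
-\sum_{\ell\ge1}\frac{\zeta^\ell}{\ell}\Bigl(q^{(2k+2)\ell}+q^{-(2k+2)\ell}-q^{2\ell}-q^{-2\ell}\Bigr)
=-\sum_{\ell\ge1}\frac{2\zeta^\ell}{\ell}\bigl(\cosh((2k+2)\ell\hbar)-\cosh(2\ell\hbar)\bigr).
$$

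Next I would Taylor-expand the hyperbolic cosines in $\hbar$. The $\hbar^2$ and $\hbar^4$ coefficients of $\cosh((2k+2)\ell\hbar)-\cosh(2\ell\hbar)$ are controlled by $(2k+2)^2-2^2=4k(k+2)$ and $(2k+2)^4-2^4=16k(k+2)(k^2+2k+2)$ respectively. After dividing by $\ell$ and summing, the surviving powers $\ell^2$ and $\ell^4$ convert $\sum_{\ell\ge1}\ell\zeta^\ell$ and $\sum_{\ell\ge1}\ell^3\zeta^\ell$ into $(Ds)(\zeta)$ and $(D^3s)(\zeta)$, yielding
$$
\log(\cdots)=-4k(k+2)\,\hbar^2\,(Ds)(\zeta)-\tfrac{4}{3}k(k+2)(k^2+2k+2)\,\hbar^4\,(D^3s)(\zeta)+\mathcal{O}(\hbar^6).
$$

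Finally I would exponentiate through order $\hbar^4$: with $A=-4k(k+2)(Ds)(\zeta)$ denoting the $\hbar^2$ coefficient, the $\hbar^4$ coefficient of the exponential equals $-\tfrac43 k(k+2)(k^2+2k+2)(D^3s)(\zeta)+\tfrac12 A^2$. To reduce $A^2=16k^2(k+2)^2\bigl((Ds)(\zeta)\bigr)^2$ I would invoke the quadratic relation $\bigl((Ds)(\zeta)\bigr)^2=\tfrac16(D^3s)(\zeta)-\tfrac16(Ds)(\zeta)$ of Lemma \ref{noK}; substituting it and using the numerical collapse $k(k+2)-(k^2+2k+2)=-2$ reduces the $(D^3s)$ coefficient to $-\tfrac{8}{3}k(k+2)$ and leaves $-\tfrac{4}{3}k^2(k+2)^2$ multiplying $(Ds)(\zeta)$, which is precisely the asserted expansion. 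The argument is entirely routine and parallels the analogous factor computed in Section \ref{sub45}; the only point requiring care is the bookkeeping of the squaring identity (retaining the argument $w/z$ throughout), since this is exactly where the $(D^3s)$ and $(Ds)$ pieces coming from $\tfrac12 A^2$ must recombine with the genuine $\hbar^4$ term of the logarithm to give the stated coefficients.
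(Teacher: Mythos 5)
Your proof is correct and is exactly the computation the paper leaves implicit (the lemma is stated there without proof): pass to the logarithm, Taylor-expand the hyperbolic cosines in $\hbar$, and exponentiate using $\bigl((Ds)(w/z)\bigr)^2=\tfrac16(D^3s)(w/z)-\tfrac16(Ds)(w/z)$, with all coefficients checking out. Note that you have (correctly) used this squaring identity with the same argument $w/z$ on both sides, whereas Lemma \ref{noK} as printed inverts the argument on the right-hand side — evidently a typo, since the left-hand side contains only positive powers of $w/z$.
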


\begin{prp}
We have no nontrivial relations by 
taking the terms of the order of $\hbar^0$ and $\hbar^1$ in the $\hbar$ expansion of the relation (\ref{KK=KK}).
Taking the coefficients of the order of $\hbar^2$ in that, we have
\begin{equation*}
K^{(1)}(z)K^{(1)}(w)-4k(k+2)(Ds)\left( {w\over z}\right)\\
=~K^{(1)}(w)K^{(1)}(z)-4k(k+2)(Ds)\left( {z\over w}\right),
\end{equation*}
namely
\begin{equation*}
 [K^{(1)}(z),K^{(1)}(w)]=4k(k+2)(D\delta)\left({w\over z} \right),
\end{equation*}
obtaining no new relation.
\end{prp}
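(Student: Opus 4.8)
The plan is to read off the $\hbar^0$, $\hbar^1$ and $\hbar^2$ coefficients of the operator identity (\ref{KK=KK}) directly, feeding in the two expansions already available: the series $K(z)=1+\sum_{i\ge 1}\hbar^i K^{(i)}(z)$ from Proposition \ref{prp:K-ser}, and the $\hbar$-expansion of the rational prefactor supplied by the preceding lemma, whose leading nontrivial term is $-4k(k+2)\hbar^2(Ds)(w/z)$. First I would form the two products and record their low-order terms: at order $\hbar^0$ both $K(z)K(w)$ and $K(w)K(z)$ equal $1$; at order $\hbar^1$ both equal $K^{(1)}(z)+K^{(1)}(w)$; and at order $\hbar^2$ they equal $K^{(2)}(z)+K^{(2)}(w)+K^{(1)}(z)K^{(1)}(w)$ and $K^{(2)}(z)+K^{(2)}(w)+K^{(1)}(w)K^{(1)}(z)$ respectively, the only asymmetry being the order of the quadratic term.

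The one genuinely delicate point — and the step I expect to be the main obstacle — is the correct reading of the prefactor. Its numerator and denominator are both degree-two reciprocal polynomials in $w/z$, so the rational function is palindromic under $w/z\leftrightarrow z/w$; this is why the identical symbol decorates both sides of (\ref{KK=KK}), and the entire content of the relation is lodged in the expansion domain. On the left the prefactor multiplies $K(z)K(w)$ and must be expanded as a Taylor series in $w/z$, contributing $-4k(k+2)(Ds)(w/z)$ at order $\hbar^2$; on the right it multiplies $K(w)K(z)$ and must be expanded in $z/w$, contributing $-4k(k+2)(Ds)(z/w)$. Treating the two prefactors as the literal same power series would cancel these contributions and spuriously force $[K^{(1)}(z),K^{(1)}(w)]=0$, so respecting this convention (the same one fixed for (\ref{rr-3}) by the ``$|z|\ll|w|$'' remark following Proposition \ref{generating_functions}) is exactly where the nontriviality sits.

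Granting this, I would equate the two sides order by order. The orders $\hbar^0$ and $\hbar^1$ are tautologies, yielding no relation. At order $\hbar^2$ the symmetric pieces $K^{(2)}(z)+K^{(2)}(w)$ cancel between the two sides, leaving
\[
K^{(1)}(z)K^{(1)}(w)-4k(k+2)(Ds)(w/z)=K^{(1)}(w)K^{(1)}(z)-4k(k+2)(Ds)(z/w),
\]
which is the displayed equation. I would then rearrange this into a commutator and invoke the elementary identity $(D\delta)(x)=(Ds)(x)-(Ds)(x^{-1})$ to obtain $[K^{(1)}(z),K^{(1)}(w)]=4k(k+2)(D\delta)(w/z)$. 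The closing remark is that this coincides verbatim with the relation already extracted from (\ref{TK=KT}) in Proposition \ref{P5}; hence no new relation is produced, and this agreement between the $K$--$K$ and $T$--$K$ channels is precisely the consistency the proposition is meant to record.
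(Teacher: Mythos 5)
Your proposal is correct and follows essentially the same route as the paper: expand $K(z)=1+\hbar K^{(1)}(z)+\hbar^2K^{(2)}(z)+\cdots$ and the prefactor $1-4k(k+2)\hbar^2(Ds)(w/z)+\mathcal{O}(\hbar^4)$, match coefficients order by order, cancel the symmetric $K^{(2)}$ pieces at order $\hbar^2$, and convert the result to a commutator via $(D\delta)(x)=(Ds)(x)-(Ds)(x^{-1})$. Your reading of the prefactor on the right-hand side as the Taylor series in $z/w$ (rather than the literal repeated $w/z$ printed in (\ref{KK=KK})) is exactly the intended convention, as the displayed $(Ds)(z/w)$ in the proposition confirms, and your identification with the relation already obtained from (\ref{TK=KT}) correctly accounts for the phrase ``obtaining no new relation.''
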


\begin{prp}
Taking the terms of the order of $\hbar^3$  in the $\hbar$ expansion of the relation (\ref{KK=KK}), we have
\begin{align*}
&K^{(2)}(z)K^{(1)}(w)+K^{(1)}(z)K^{(2)}(w)-4k(k+2)(Ds)\left( {w\over z}\right)
\Bigl(K^{(1)}(z)+K^{(1)}(w) \Bigr)\\
=&~K^{(1)}(w)K^{(2)}(z)+K^{(2)}(w)K^{(1)}(z)-4k(k+2)(Ds)\left( {z\over w}\right)
\Bigl(K^{(1)}(z)+K^{(1)}(w) \Bigr),
\end{align*}
namely
\begin{align*}
& [K^{(2)}(z),K^{(1)}(w)]+[K^{(1)}(z),K^{(2)}(w)]=
4k(k+2)(D\delta)\left({w\over z} \right)\Bigl(K^{(1)}(z)+K^{(1)}(w) \Bigr).
\end{align*}
\end{prp}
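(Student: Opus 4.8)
The plan is to substitute the $\hbar$-expansions of both factors into (\ref{KK=KK}) and read off the coefficient of $\hbar^3$. Two inputs are needed. From the preceding Lemma the rational prefactor carries only even powers of $\hbar$,
\[
{(1-q^{2k+2}w/z)(1-q^{-2k-2}w/z)\over (1-q^{2}w/z)(1-q^{-2}w/z)}
=1-4k(k+2)\,\hbar^2 (Ds)\Bigl({w\over z}\Bigr)+{\mathcal O}(\hbar^4),
\]
while Proposition \ref{prp:K-ser} gives $K(z)=1+\sum_{i\geq 1}\hbar^i K^{(i)}(z)$, so that the $\hbar^1$- and $\hbar^3$-coefficients of $K(z)K(w)$ are $K^{(1)}(z)+K^{(1)}(w)$ and $K^{(3)}(z)+K^{(2)}(z)K^{(1)}(w)+K^{(1)}(z)K^{(2)}(w)+K^{(3)}(w)$ respectively, with the analogous expressions for $K(w)K(z)$.

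First I would collect the order-$\hbar^3$ terms. The key structural point, which I would emphasise, is that since the prefactor has no odd powers of $\hbar$, the $\hbar^3$-coefficient on each side receives exactly two contributions: the bare $\hbar^3$-term of the product of $K$'s, and the $\hbar^2$-coefficient of the prefactor times the $\hbar^1$-term of that product. On the left this produces $K^{(3)}(z)+K^{(2)}(z)K^{(1)}(w)+K^{(1)}(z)K^{(2)}(w)+K^{(3)}(w)-4k(k+2)(Ds)(w/z)\bigl(K^{(1)}(z)+K^{(1)}(w)\bigr)$, and on the right the mirror expression, in which the prefactor is evaluated at the reciprocal argument $z/w$ (as dictated by combining (\ref{rr-2}) and (\ref{rr-3}), exactly as in the order-$\hbar^2$ computation above). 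Equating the two sides yields the first displayed identity of the proposition once the generators $K^{(3)}(z)$ and $K^{(3)}(w)$ — which occur identically on both sides — cancel. This cancellation is really the only thing one has to watch: it is what keeps the resulting relation closed on $K^{(1)}$ and $K^{(2)}$ and free of the cubic generators $K^{(3)}$ (which do not appear in Definition \ref{CFTcurrents}).

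Finally I would put the identity into commutator form. Transposing the $K^{(2)}K^{(1)}$ products gives $[K^{(2)}(z),K^{(1)}(w)]+[K^{(1)}(z),K^{(2)}(w)]$, while the two prefactor contributions combine into $4k(k+2)\bigl((Ds)(w/z)-(Ds)(z/w)\bigr)\bigl(K^{(1)}(z)+K^{(1)}(w)\bigr)$. Applying the identity $(D\delta)(x)=(Ds)(x)-(Ds)(x^{-1})$ recorded earlier then converts this into $4k(k+2)(D\delta)(w/z)\bigl(K^{(1)}(z)+K^{(1)}(w)\bigr)$, which is the assertion. I do not anticipate a genuine obstacle here: the work is purely bookkeeping of the operator order in the products $K^{(i)}(z)K^{(j)}(w)$ together with the tracking of the reciprocal arguments $w/z$ and $z/w$, whose antisymmetric difference is precisely what manufactures the delta-derivative on the right-hand side.
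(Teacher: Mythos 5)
Your proposal is correct and follows the same route as the paper: expand the symmetric prefactor (even in $\hbar$, with $-4k(k+2)\hbar^2(Ds)$ at second order) against $K(z)K(w)=1+\hbar(K^{(1)}(z)+K^{(1)}(w))+\cdots$, observe that the $K^{(3)}$ terms appear identically on both sides and cancel, and convert the antisymmetric difference $(Ds)(w/z)-(Ds)(z/w)$ into $(D\delta)(w/z)$. The bookkeeping of the operator orderings and of the reciprocal arguments is exactly what the paper's computation amounts to, and your explicit remark about the cancellation of $K^{(3)}$ is a correct and worthwhile clarification.
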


\begin{prp}
Taking the terms of the order of $\hbar^4$  in the $\hbar$ expansion of the relation (\ref{KK=KK}), we have
\begin{align*}
&K^{(3)}(z)K^{(1)}(w)+K^{(2)}(z)K^{(2)}(w)+K^{(1)}(z)K^{(3)}(w)\\
&\qquad -4k(k+2)(Ds)\left( {w\over z}\right)
\Bigl(K^{(2)}(z)+K^{(1)}(z)K^{(1)}(w)+K^{(2)}(w) \Bigr)\\
&\qquad  -{8k(k+2)\over 3} (D^3s)\left( {w\over z}\right)
-{4k^2(k+2)^2\over 3} (Ds)\left( {w\over z}\right)\\
=&~K^{(1)}(w)K^{(3)}(z)+K^{(2)}(w)K^{(2)}(z)+K^{(3)}(w)K^{(1)}(z)\\
&\qquad -4k(k+2)(Ds)\left( {z\over w}\right)
\Bigl(K^{(2)}(z)+K^{(1)}(w)K^{(1)}(z)+K^{(2)}(w) \Bigr)\\
&\qquad  -{8k(k+2)\over 3} (D^3s)\left( {z\over w}\right)
-{4k^2(k+2)^2\over 3} (Ds)\left( {z\over w}\right),
\end{align*}
namely 
\begin{align}\label{K3K1}
&[K^{(3)}(z),K^{(1)}(w)]+[K^{(2)}(z),K^{(2)}(w)]+[K^{(1)}(z),K^{(3)}(w)] \CR
&=4k(k+2)(D\delta)\left( {w\over z}\right)
\Bigl(K^{(2)}(z)+\no K^{(1)}(z)K^{(1)}(w)\no +K^{(2)}(w) \Bigr) \CR
&\qquad + {8k(k+1)^2(k+2)\over 3} (D^3\delta )\left( {w\over z}\right)
-{4k^2(k+2)^2\over 3} (D\delta )\left( {w\over z}\right).
\end{align}
\end{prp}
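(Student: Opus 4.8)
The plan is to extract the coefficient of $\hbar^4$ from the bilinear relation \eqref{KK=KK} and then to reorganize it into the commutator form \eqref{K3K1}, exactly as in the order-$\hbar^2$ and order-$\hbar^3$ computations that precede it. Abbreviating the rational prefactor as $g(x)=\frac{(1-q^{2k+2}x)(1-q^{-2k-2}x)}{(1-q^{2}x)(1-q^{-2}x)}$, the relation \eqref{KK=KK} reads $g(w/z)\,K(z)K(w)=g(z/w)\,K(w)K(z)$. Substituting the expansion $K(z)=1+\sum_{i\geq1}\hbar^iK^{(i)}(z)$ from \eqref{K=1+} together with the order-by-order form of $g$ recorded in the Lemma of \S\ref{sub46}, and collecting the $\hbar^4$ terms on both sides, I would read off the first displayed identity. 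Here the key point, emphasized at the end of \S\ref{sub45}, is that the undetermined currents $K^{(4)}(z),K^{(4)}(w)$ enter only symmetrically and therefore cancel between the two sides, leaving an identity among $K^{(1)},K^{(2)},K^{(3)}$ and scalar delta-derivatives.

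Next I would pass to commutators. Moving the $K^{(4)}$-free part of $[K(z)K(w)]_{\hbar^4}-[K(w)K(z)]_{\hbar^4}$ to the left produces exactly $[K^{(3)}(z),K^{(1)}(w)]+[K^{(2)}(z),K^{(2)}(w)]+[K^{(1)}(z),K^{(3)}(w)]$, the left-hand side of \eqref{K3K1}. The terms carrying a single $K^{(2)}$ recombine, using the parity identity $(Ds)(z/w)-(Ds)(w/z)=-(D\delta)(w/z)$ from \S\ref{sub43}, into $4k(k+2)(D\delta)(w/z)\bigl(K^{(2)}(z)+K^{(2)}(w)\bigr)$, which is the first contribution on the right of \eqref{K3K1}.

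The one delicate step is the part quadratic in $K^{(1)}$. I would rewrite both orderings through Lemma \ref{noK}, in the forms $K^{(1)}(z)K^{(1)}(w)=\no K^{(1)}(z)K^{(1)}(w)\no+4k(k+2)(Ds)(w/z)$ and $K^{(1)}(w)K^{(1)}(z)=\no K^{(1)}(z)K^{(1)}(w)\no+4k(k+2)(Ds)(z/w)$, the latter obtained from the former and the order-$\hbar^2$ commutator $[K^{(1)}(z),K^{(1)}(w)]=4k(k+2)(D\delta)(w/z)$ of Proposition \ref{P5}. Substituting these, the antisymmetric combination of products collapses to $4k(k+2)(D\delta)(w/z)\no K^{(1)}(z)K^{(1)}(w)\no$ plus a purely scalar remainder proportional to $(Ds)(w/z)^2-(Ds)(z/w)^2$. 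The scalar remainder is then linearized by the quadratic identity of Lemma \ref{noK}, $(Ds)(x)^2=\tfrac16(D^3s)(x)-\tfrac16(Ds)(x)$, which via the parities $(D^3\delta)(x)=(D^3s)(x)-(D^3s)(x^{-1})$ and $(D\delta)(x)=(Ds)(x)-(Ds)(x^{-1})$ converts it into $\tfrac{8k^2(k+2)^2}{3}\bigl((D^3\delta)(w/z)-(D\delta)(w/z)\bigr)$.

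Finally I would add the order-$\hbar^4$ scalar contribution coming from $g$ itself and collect coefficients. The $(D^3\delta)(w/z)$ coefficient becomes $\tfrac{8k^2(k+2)^2}{3}+\tfrac{8k(k+2)}{3}=\tfrac{8k(k+2)}{3}\bigl(k(k+2)+1\bigr)=\tfrac{8k(k+1)^2(k+2)}{3}$, and the residual $(D\delta)(w/z)$ coefficient combines to $-\tfrac{4k^2(k+2)^2}{3}$, matching \eqref{K3K1} exactly. The main obstacle is this last bookkeeping of one-sided series: one must track precisely which reflection $x\leftrightarrow x^{-1}$ accompanies each $(Ds)$ and $(D^3s)$ so that the antisymmetrization assembles the two-sided $(D\delta)$ and $(D^3\delta)$ with the correct signs, and must verify that the factor $(k+1)^2$ genuinely arises from $k(k+2)+1$ rather than being corrupted by a stray term. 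Everything else is the routine order-by-order expansion already carried out for the lower powers of $\hbar$ in this section.
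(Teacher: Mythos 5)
Your derivation is correct and follows the same route the paper intends: extract the $\hbar^4$ coefficient (the symmetric $K^{(4)}(z)+K^{(4)}(w)$ terms cancel between the two sides), antisymmetrize to form the commutators, recombine the single-$K^{(2)}$ terms via $(Ds)(w/z)-(Ds)(z/w)=(D\delta)(w/z)$, and reduce the quadratic-in-$K^{(1)}$ piece with Lemma \ref{noK}; your final bookkeeping of the $(D^3\delta)$ and $(D\delta)$ coefficients, including $k(k+2)+1=(k+1)^2$, checks out. The only remark worth making is that you have tacitly corrected the argument in the paper's identity $\bigl((Ds)(w/z)\bigr)^2=\tfrac16(D^3s)-\tfrac16(Ds)$, whose right-hand side should indeed be evaluated at $w/z$ (not $z/w$) as you use it, and you have correctly justified the symmetry $\no K^{(1)}(w)K^{(1)}(z)\no=\no K^{(1)}(z)K^{(1)}(w)\no$ via the c-number commutator.
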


\begin{prp}\label{prpT3K1}
Combining (\ref{K3K1}) with the relation (\ref{T3T1}) in Corollary \ref{CorT3T1}, we have
\begin{align}\label{T3K1}
&-\Bigl( [T^{(3)}(z),K^{(1)}(w)]+[K^{(1)}(z),T^{(3)}(w)]\Bigr) \CR
 &=
 \Bigl( [T^{(2)}(z),K^{(2)}(w)]+[K^{(2)}(z),T^{(2)}(w)]\Bigr)
+{1\over k+1}[K^{(2)}(z),K^{(2)}(w)] \CR
& +4(k+2)(D\delta)\left({w\over z} \right)\Bigl(
T^{(2)}(z)+T^{(2)}(w)\Bigr) 
 -{4(k+2)^2\over k+1}(D\delta)\left({w\over z} \right)
\no  K^{(1)}(z)K^{(1)}(w) \CR
&\qquad- {4k(k+1)(k+2)\over 3} (D^3\delta)\left({w\over z} \right)+
 {4k(k+2)^3\over 3(k+1)} (D\delta)\left({w\over z} \right).
\end{align}
\end{prp}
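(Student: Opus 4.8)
The plan is to obtain \eqref{T3K1} as a purely algebraic consequence of the two relations already established, namely \eqref{T3T1} from Corollary \ref{CorT3T1} and \eqref{K3K1}; no further $\hbar$-expansion is required. First I would solve \eqref{T3T1} for the combination $-\bigl([T^{(3)}(z),K^{(1)}(w)]+[K^{(1)}(z),T^{(3)}(w)]\bigr)$ by transposing every other term to the right. This places on the right the commutators of $T^{(2)}$ with $K^{(2)}$, namely $[T^{(2)}(z),K^{(2)}(w)]+[K^{(2)}(z),T^{(2)}(w)]$, the combination $[K^{(1)}(z),K^{(3)}(w)]+[K^{(3)}(z),K^{(1)}(w)]$ carrying a factor $-\tfrac{1}{k+1}$, and the explicit $(D\delta)$, $(D^3\delta)$ and normal-ordered $\no K^{(1)}(z)K^{(1)}(w)\no$ terms recorded in \eqref{T3T1}.

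The second step eliminates the unwanted $K^{(3)}$ commutators, which is exactly what \eqref{K3K1} affords: that relation rewrites $[K^{(3)}(z),K^{(1)}(w)]+[K^{(1)}(z),K^{(3)}(w)]$ as $-[K^{(2)}(z),K^{(2)}(w)]$ together with a term $4k(k+2)(D\delta)(w/z)\bigl(K^{(2)}(z)+\no K^{(1)}(z)K^{(1)}(w)\no+K^{(2)}(w)\bigr)$ and the scalar $(D^3\delta)$, $(D\delta)$ pieces. Substituting this into the isolated form of \eqref{T3T1} removes $K^{(3)}$ completely. The contribution $-\tfrac{1}{k+1}\bigl(-[K^{(2)}(z),K^{(2)}(w)]\bigr)=+\tfrac{1}{k+1}[K^{(2)}(z),K^{(2)}(w)]$ that it produces is precisely the commutator appearing on the right of \eqref{T3K1}, so the entire operator part of the target---the commutators of $T^{(2)}$ with $K^{(2)}$ together with $\tfrac{1}{k+1}[K^{(2)}(z),K^{(2)}(w)]$---emerges at once.

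What is left is coefficient bookkeeping, which I expect to be the only real obstacle, albeit a routine one. I would group the delta-function terms by type and verify each cancellation: the $(D\delta)(w/z)\bigl(K^{(2)}(z)+K^{(2)}(w)\bigr)$ pieces, with coefficient $\tfrac{4k(k+2)}{k+1}$ from \eqref{T3T1} and $-\tfrac{4k(k+2)}{k+1}$ from the substituted \eqref{K3K1}, cancel entirely; the coefficients of $(D\delta)(w/z)\no K^{(1)}(z)K^{(1)}(w)\no$ combine as $-\tfrac{8(k+2)}{k+1}-\tfrac{4k(k+2)}{k+1}=-\tfrac{4(k+2)^2}{k+1}$; the $(D^3\delta)(w/z)$ coefficients give $\tfrac{4k(k+1)(k+2)}{3}-\tfrac{8k(k+1)(k+2)}{3}=-\tfrac{4k(k+1)(k+2)}{3}$; and the scalar $(D\delta)(w/z)$ coefficients give $\tfrac{8k(k+2)^2}{3(k+1)}+\tfrac{4k^2(k+2)^2}{3(k+1)}=\tfrac{4k(k+2)^3}{3(k+1)}$. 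The delicate point is simply to carry the overall factor $-\tfrac{1}{k+1}$ through all of \eqref{K3K1} consistently, and to confirm that the $K^{(2)}$ delta terms cancel while the remaining three collapse into the compact coefficients displayed in \eqref{T3K1}. Everything else is transposition.
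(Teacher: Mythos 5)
Your proposal is correct and follows the paper's own proof exactly: isolate $-\bigl([T^{(3)}(z),K^{(1)}(w)]+[K^{(1)}(z),T^{(3)}(w)]\bigr)$ from \eqref{T3T1}, substitute \eqref{K3K1} to eliminate the $K^{(3)}$ commutators, and collect coefficients. All four of your coefficient checks — the cancellation of the $(D\delta)\bigl(K^{(2)}(z)+K^{(2)}(w)\bigr)$ terms, $-\tfrac{4(k+2)^2}{k+1}$ for the normal-ordered $K^{(1)}K^{(1)}$ term, $-\tfrac{4k(k+1)(k+2)}{3}$ for $(D^3\delta)$, and $\tfrac{4k(k+2)^3}{3(k+1)}$ for the scalar $(D\delta)$ — agree with the paper's computation.
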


\begin{proof}
We have
\begin{align*}
&-\Bigl( [T^{(3)}(z),K^{(1)}(w)]+[K^{(1)}(z),T^{(3)}(w)]\Bigr)
= \Bigl( [T^{(2)}(z),K^{(2)}(w)]+[K^{(2)}(z),T^{(2)}(w)]\Bigr) \\
& \qquad -{1\over k+1}\Bigl( [K^{(1)}(z),K^{(3)}(w)]+[K^{(3)}(z),K^{(1)}(w)]\Bigr) 
 +4(k+2)(D\delta)\left({w\over z} \right)\Bigl( T^{(2)}(z)+T^{(2)}(w)\Bigr) \\
& \qquad +{4k(k+2)\over k+1}(D\delta)\left({w\over z} \right)\Bigl(
K^{(2)}(z)+K^{(2)}(w)\Bigr)
 -{8(k+2)\over k+1}(D\delta)\left({w\over z} \right)
\no  K^{(1)}(z)K^{(1)}(w)\no \\
&\qquad + {4k(k+1)(k+2)\over 3} (D^3\delta)\left({w\over z} \right)+
 {8k(k+2)^2\over 3(k+1)} (D\delta)\left({w\over z} \right)\\
 &=
 \Bigl( [T^{(2)}(z),K^{(2)}(w)]+[K^{(2)}(z),T^{(2)}(w)]\Bigr)
+{1\over k+1}[K^{(2)}(z),K^{(2)}(w)]\\
& +4(k+2)(D\delta)\left({w\over z} \right)\Bigl(
T^{(2)}(z)+T^{(2)}(w)\Bigr) 
 -{4(k+2)^2\over k+1}(D\delta)\left({w\over z} \right)
\no  K^{(1)}(z)K^{(1)}(w)\no \\
&\qquad- {4k(k+1)(k+2)\over 3} (D^3\delta)\left({w\over z} \right)+
 {4k(k+2)^3\over 3(k+1)} (D\delta)\left({w\over z} \right).
\end{align*}

\end{proof}

\subsection{$\hbar$ expansions of the relations for $T$ vs. $T$ }\label{sub47}
Finally, we study the $\hbar$ expansion
of the relation (\ref{rr-11}),
considering the terms of the orders up to $\hbar^4$.

\begin{lem}
We have
\begin{align}
&[T(z),T(w)]
=\hbar^2\,\,{1\over (k+1)^2} [K^{(1)}(z),K^{(1)}(w)]\nonumber \\
&-
\hbar^3\,\, {1\over k+1}\Bigl( [K^{(1)}(z),T^{(2)}(w)]+[T^{(2)}(z),K^{(1)}(w)]\Bigr) \nonumber\\
&-\hbar^4\,\,{1\over k+1}\Bigl( [K^{(1)}(z),T^{(3)}(w)]+[T^{(3)}(z),K^{(1)}(w)]\Bigr)
+\hbar^4[T^{(2)}(z),T^{(2)}(w)]+{\mathcal O}(\hbar^5).
\end{align}
\end{lem}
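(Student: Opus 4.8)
The plan is to treat this purely as an exercise in the bilinearity of the commutator, feeding in the low-order data for $T(z)$ already extracted in Proposition \ref{Exp-1}. First I would record the $\hbar$-expansion
\begin{align*}
T(z)=\sum_{i\geq 0}\hbar^i T^{(i)}(z),\qquad T^{(0)}(z)={k\over k+1},\qquad T^{(1)}(z)=-{1\over k+1}K^{(1)}(z),
\end{align*}
the latter two identities being exactly \eqref{T0} and \eqref{T1}. Expanding the bracket termwise gives
\begin{align*}
[T(z),T(w)]=\sum_{i,j\geq 0}\hbar^{i+j}\,[T^{(i)}(z),T^{(j)}(w)],
\end{align*}
and the entire content of the lemma is the identification of which summands survive at each order through $\hbar^4$.

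The single observation that does all the work is that $T^{(0)}(z)=k/(k+1)$ is a scalar, hence central, so every commutator having $T^{(0)}$ as one of its two factors drops out. Collecting order by order: at orders $\hbar^0$ and $\hbar^1$ only brackets containing $T^{(0)}$ occur, so both vanish. At order $\hbar^2$ the sole survivor is $[T^{(1)}(z),T^{(1)}(w)]$, and substituting $T^{(1)}=-K^{(1)}/(k+1)$ produces the factor $1/(k+1)^2$ multiplying $[K^{(1)}(z),K^{(1)}(w)]$. At order $\hbar^3$ the survivors are $[T^{(1)}(z),T^{(2)}(w)]+[T^{(2)}(z),T^{(1)}(w)]$, which after the same substitution yield $-{1\over k+1}\bigl([K^{(1)}(z),T^{(2)}(w)]+[T^{(2)}(z),K^{(1)}(w)]\bigr)$. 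At order $\hbar^4$ the survivors are $[T^{(1)}(z),T^{(3)}(w)]+[T^{(2)}(z),T^{(2)}(w)]+[T^{(3)}(z),T^{(1)}(w)]$; substituting $T^{(1)}=-K^{(1)}/(k+1)$ in the two outer terms and leaving the middle one untouched gives precisely the stated $\hbar^4$ coefficient.

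There is no genuine obstacle here: the lemma is pure bookkeeping whose only inputs are the centrality of $T^{(0)}$ and the identity $T^{(1)}=-K^{(1)}/(k+1)$ established in Proposition \ref{Exp-1}. The one point requiring a little care is merely to keep straight which mixed brackets survive at each order and to carry the two minus signs from $T^{(1)}$ correctly; no cancellation or rearrangement beyond this is needed. The reason for isolating this expansion as a separate lemma is to put the left-hand side of the $T$-$T$ relation \eqref{rr-11} into a form ready for term-by-term comparison against the $\hbar$-expansion of its right-hand side, from which the final Virasoro relation \eqref{N=2comrel-7} will be read off using the already-derived identities \eqref{T2K1}, \eqref{T3T1} and \eqref{T3K1}.
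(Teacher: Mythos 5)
Your proof is correct and is exactly the argument the paper leaves implicit (the lemma is stated there without proof, being direct substitution of $T^{(0)}=k/(k+1)$ and $T^{(1)}=-K^{(1)}/(k+1)$ from \eqref{T0}--\eqref{T1} into the bilinear expansion of the commutator). The order-by-order bookkeeping and the handling of the signs are all accurate.
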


\begin{lem}
Both in the {\rm NS} sector and in the {\rm R} sector, we have
\begin{align*}
&W(z)={1\over [k+1]} \widetilde{K}^{\rm A}(z)-c^{\rm A}_0(k+2) +\widetilde{T}^{\rm A}(z)\\
&=
{k\over (k+1)^2(k+2)}+ \hbar\,\, 
\left(-{1\over (k+1)^2}\right)K^{(1)}(z)\\
&+\hbar^2\,\, 
\left({1\over k+1}T^{(2)}(z) +{1\over 2(k+1)^2}K^{(2)}(z) -
{k(k+3)\over 12(k+1)(k+2)} \right)+{\mathcal O}(\hbar^3).
\end{align*}
\end{lem}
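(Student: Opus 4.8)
The plan is to set $q=e^\hbar$ in the right-hand side of \eqref{W-current} and expand order by order in $\hbar$, discarding everything beyond $O(\hbar^2)$. The first point is that the normal-ordered term $-(q-q^{-1})^3\no G^+(z)G^-(z)\no$ can be dropped: since $q-q^{-1}=2\sinh\hbar=O(\hbar)$ we have $(q-q^{-1})^3=O(\hbar^3)$, while by Proposition \ref{Exp-1} the current $\no G^+(z)G^-(z)\no=\no G^{+(0)}(z)G^{-(0)}(z)\no+O(\hbar)$ already starts at order $\hbar^0$; hence this contribution is $O(\hbar^3)$ and is invisible to the stated expansion. This is exactly what justifies the first equality in the statement, and it reduces the problem to expanding the three bosonic pieces ${1\over[k+1]}\widetilde K^{\rm A}(z)$, $-c^{\rm A}_0(k+2)$ and $\widetilde T^{\rm A}(z)$.

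Next I would record the elementary expansions, valid for any argument $u$: $[u]^{-1}={1\over u}+{1-u^2\over 6u}\hbar^2+O(\hbar^4)$ and ${1\over 2}(q^u+q^{-u})=\cosh(u\hbar)=1+{u^2\over 2}\hbar^2+O(\hbar^4)$, both even in $\hbar$. Writing $c_{\rm even}(u)=1/[u]$ and $c_{\rm odd}(u)=\cosh(u\hbar)/[u]$ for the two values taken by $c^{\rm A}_\alpha(u)$ in \eqref{cAdef}, these agree at order $\hbar^0$ (both equal $1/u$) and first differ at order $\hbar^2$, with $c_{\rm odd}(u)-c_{\rm even}(u)={u\over 2}\hbar^2+O(\hbar^4)$. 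I would then substitute $K(z)=1+\hbar K^{(1)}(z)+\hbar^2 K^{(2)}(z)+O(\hbar^3)$ from \eqref{K=1+} and Proposition \ref{prp:K-ser}, together with $T(z)={k\over k+1}-{\hbar\over k+1}K^{(1)}(z)+\hbar^2 T^{(2)}(z)+O(\hbar^3)$ coming from \eqref{T0} and \eqref{T1}, splitting each current into its even and odd parts in $z$ so that $\widetilde K^{\rm A}=c^{\rm A}_0(2k+2)K_{\rm even}+c^{\rm A}_1(2k+2)K_{\rm odd}$ and similarly for $\widetilde T^{\rm A}$. Because the $c^{\rm A}_\alpha$ carry no $\hbar^1$ term, the only order-$\hbar$ contribution to $W(z)$ comes from the linear terms of $K$ and $T$, which is what produces the term proportional to $K^{(1)}(z)$.

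The structurally important step, and where I expect to spend the most care, is checking that the {\rm NS} and {\rm R} sectors give the same answer through order $\hbar^2$, as the statement asserts. Since the $c^{\rm A}_\alpha$ differ only at $O(\hbar^2)$ and only through the swap of even/odd parities between the two sectors, the sector-dependent part of each piece is governed by $c_{\rm odd}-c_{\rm even}$ evaluated against the leading ($\hbar^0$) values $K^{(0)}=1$ and $T^{(0)}={k\over k+1}$. A short computation then shows that, for the difference ${\rm (NS)}-{\rm (R)}$, the piece ${1\over[k+1]}\widetilde K^{\rm A}$ contributes $-\hbar^2$, the piece $\widetilde T^{\rm A}$ contributes $-{k\over 2}\hbar^2$, and $-c^{\rm A}_0(k+2)$ contributes $+{k+2\over 2}\hbar^2$; these sum to zero, and the dropped $G^+G^-$ terms differ only at $O(\hbar^3)$. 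Thus the whole sector-dependence cancels, and it suffices to finish the bookkeeping in the {\rm NS} sector. Collecting the $\hbar^0$, $\hbar^1$ and $\hbar^2$ coefficients then produces the claimed constant, the term proportional to $K^{(1)}(z)$, and the term built from $T^{(2)}(z)$, $K^{(2)}(z)$ and a scalar. The only genuinely delicate part is the $\hbar^2$ scalar arithmetic, in which the $O(\hbar^2)$ tails of $[u]^{-1}$ and $\cosh(u\hbar)$ at the three arguments $2k+2$, $k+1$, $k+2$ must be combined correctly; this is routine but error-prone, and I would cross-check the result against the $q\to1$ data encoded in Definition \ref{CFTcurrents} and against Lemma \ref{GG=W}.
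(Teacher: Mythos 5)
Your strategy is the right one (and essentially the only one): discard $-(q-q^{-1})^3\no G^+(z)G^-(z)\no$ as $O(\hbar^3)$, expand $1/[u]$ and $\tfrac12(q^u+q^{-u})$, feed in $K(z)=1+\hbar K^{(1)}+\hbar^2K^{(2)}+O(\hbar^3)$ and $T(z)=\tfrac{k}{k+1}-\tfrac{\hbar}{k+1}K^{(1)}+\hbar^2T^{(2)}+O(\hbar^3)$, and observe that the sector dependence enters only through $c_{\rm odd}(u)-c_{\rm even}(u)=\tfrac{u}{2}\hbar^2+O(\hbar^4)$ acting on the $\hbar^0$ data; your check that the {\rm NS}$-${\rm R} difference is $\bigl(-1-\tfrac{k}{2}+\tfrac{k+2}{2}\bigr)\hbar^2=0$ is correct. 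The gap is in the one step you assert instead of perform: ``collecting the $\hbar^0$, $\hbar^1$ and $\hbar^2$ coefficients then produces the claimed constant [and] the term proportional to $K^{(1)}(z)$.'' It does not. Since $c^{\rm A}_1-c^{\rm A}_0=O(\hbar^2)$ multiplies quantities that are $O(\hbar)$, through order $\hbar^2$ one simply has $\widetilde{K}^{\rm A}(z)=K(z)/[2k+2]+O(\hbar^3)$ and $\widetilde{T}^{\rm A}(z)=T(z)/[k+1]+O(\hbar^3)$, so the $K$-piece contributes $\tfrac{1}{[k+1][2k+2]}K(z)=\tfrac{1}{2(k+1)^2}\bigl(1+\hbar K^{(1)}(z)\bigr)+O(\hbar^2)$ and the $T$-piece contributes $\tfrac{k}{(k+1)^2}-\tfrac{\hbar}{(k+1)^2}K^{(1)}(z)+O(\hbar^2)$. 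Adding $-1/[k+2]$ gives
\begin{align*}
W(z)=\frac{(2k+1)(k+2)-2(k+1)^2}{2(k+1)^2(k+2)}-\frac{\hbar}{2(k+1)^2}K^{(1)}(z)+O(\hbar^2)
=\frac{k}{2(k+1)^2(k+2)}-\frac{\hbar}{2(k+1)^2}K^{(1)}(z)+O(\hbar^2),
\end{align*}
i.e.\ exactly \emph{half} of the constant and half of the $K^{(1)}$-coefficient printed in the statement.

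These halved values, not the printed ones, are the ones the rest of \S\ref{sub47} actually needs: the $\hbar^2$ coefficient of the right-hand side of \eqref{rr-11} is $8(k+2)^2W^{(0)}(D\delta)(w/z)$, which matches the stated $\tfrac{4k(k+2)}{(k+1)^2}(D\delta)(w/z)$ (equivalently $[K^{(1)},K^{(1)}]=4k(k+2)(D\delta)$) only for $W^{(0)}=\tfrac{k}{2(k+1)^2(k+2)}$, and the $\hbar^3$ coefficient requires $W^{(0)}+W^{(1)}_{\rm scal}=-\tfrac{1}{(k+1)^2(k+2)}$, which holds for the halved pair and fails for the printed one. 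By contrast, the $\hbar^2$ line of the lemma --- the part you single out as delicate --- does come out exactly as printed: the operator part is $\tfrac{1}{k+1}T^{(2)}+\tfrac{1}{2(k+1)^2}K^{(2)}$ and the scalar assembles as $\tfrac{2-5(k+1)^2}{12(k+1)^2}+\tfrac{k(1-(k+1)^2)}{6(k+1)^2}+\tfrac{(k+1)(k+3)}{6(k+2)}=-\tfrac{k(k+3)}{12(k+1)(k+2)}$. So the statement as printed carries a factor-of-two slip in its $\hbar^0$ and $\hbar^1$ terms, and your write-up, by declaring that the ``routine'' bookkeeping reproduces the printed coefficients without carrying it out, lands precisely on the point where the computation and the statement part ways. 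A complete proof must either exhibit the corrected coefficients or explain why the printed ones are right; as it stands, yours does neither.
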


\begin{prp}
Taking the coefficients of $\hbar^2$ in (\ref{rr-11}), we have
\begin{align*}
&{1\over (k+1)^2} [K^{(1)}(z),K^{(1)}(w)]={4k (k+2)\over (k+1)^2} (D\delta)\left( {w\over z}\right).
\end{align*}
\end{prp}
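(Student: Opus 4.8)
The plan is to extract the coefficient of $\hbar^2$ from each side of \eqref{rr-11} and equate them. On the left-hand side there is nothing to compute: the preceding lemma already records the $\hbar$-expansion of $[T(z),T(w)]$, whose $\hbar^2$ term is $\tfrac{1}{(k+1)^2}[K^{(1)}(z),K^{(1)}(w)]$, and this is exactly the left member of the asserted identity. So the entire content is the computation of the $\hbar^2$ coefficient of the right-hand side.

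First I would isolate the scalar prefactor $(q-q^{-1})\tfrac{[k+2]^2}{[k+1]}$. Since $q-q^{-1}=2\sinh\hbar$ is odd in $\hbar$ while $[k+2]^2$ and $[k+1]^{-1}$ are even, the prefactor is odd and begins at order $\hbar$, with leading coefficient $\tfrac{2(k+2)^2}{k+1}$. Consequently, to read off the $\hbar^2$ coefficient of the right-hand side I only need the order-$\hbar$ part of the operator factor
\[
\Phi:=\sum_{\epsilon=\pm1}\epsilon\,\delta\!\left(q^{2(k+1)\epsilon}\tfrac{w}{z}\right)K^-(q^{(k+1)\epsilon}w)\,W(q^{(k+1)\epsilon}w)\,K^+(q^{(k+1)\epsilon}w).
\]
I would then expand each ingredient to the needed order: $\delta(q^{2(k+1)\epsilon}w/z)=\delta(w/z)+2(k+1)\epsilon\hbar\,(D\delta)(w/z)+\mathcal O(\hbar^2)$, together with $K^\pm(q^{(k+1)\epsilon}w)=1+\mathcal O(\hbar)$ and $W(q^{(k+1)\epsilon}w)=W_0+\mathcal O(\hbar)$, where $W_0$ is the constant leading term of the $W$-current.

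The step I expect to be the crux is the bookkeeping of the $\epsilon$-antisymmetrisation, which is what makes the answer so simple. At order $\hbar^0$ one has $\Phi=W_0\,\delta(w/z)\sum_\epsilon\epsilon=0$. At order $\hbar^1$ there are two types of contribution: those whose $\mathcal O(\hbar)$ correction comes from the dressings $K^\pm$ or from the subleading part of $W$ all multiply the undifferentiated $\delta(w/z)$ and are annihilated by $\sum_\epsilon\epsilon=0$; only the contribution in which the $\mathcal O(\hbar)$ correction comes from the argument shift $q^{2(k+1)\epsilon}$ of the delta function carries an explicit $\epsilon$ and survives through $\sum_\epsilon\epsilon^2=2$. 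Hence the order-$\hbar$ part of $\Phi$ collapses to $2\cdot 2(k+1)\,W_0\,(D\delta)(w/z)=4(k+1)W_0\,(D\delta)(w/z)$, governed solely by the scalar $W_0$.

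Finally I would assemble the two pieces. Multiplying by the prefactor gives the $\hbar^2$ coefficient $\tfrac{2(k+2)^2}{k+1}\cdot 4(k+1)W_0\,(D\delta)(w/z)=8(k+2)^2 W_0\,(D\delta)(w/z)$. Because everything rests on the single constant $W_0$, I would pin it down directly from \eqref{W-current} by setting $K(z)\mapsto1$ and $T(z)\mapsto\tfrac{k}{k+1}$, which gives $W_0=\tfrac{k}{2(k+1)^2(k+2)}$; substituting this produces exactly $\tfrac{4k(k+2)}{(k+1)^2}(D\delta)(w/z)$. Equating with the left-hand side coefficient yields the claim, and as a consistency check it reproduces the Heisenberg relation $[K^{(1)}(z),K^{(1)}(w)]=4k(k+2)(D\delta)(w/z)$ already obtained from the $K$–$K$ and $K$–$T$ expansions.
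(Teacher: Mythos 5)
Your proof is correct and follows the only natural route, which is also the paper's (implicit) one: read off the $\hbar^2$ coefficient of the left side from the expansion lemma for $[T(z),T(w)]$, and on the right side observe that the prefactor $(q-q^{-1})[k+2]^2/[k+1]$ starts at order $\hbar$ while the $\epsilon$-antisymmetrisation kills everything at order $\hbar^0$ and leaves only the delta-argument shift at order $\hbar^1$, so that the whole answer is controlled by the constant $W_0$. One remark in your favour: your value $W_0=\frac{k}{2(k+1)^2(k+2)}$ (and likewise the order-$\hbar$ coefficient $-\frac{1}{2(k+1)^2}K^{(1)}(z)$ of $W(z)$) disagrees by a factor of $2$ with the constant term $\frac{k}{(k+1)^2(k+2)}$ recorded in the paper's Lemma on the $\hbar$-expansion of $W(z)$; a direct check from the definition of $W(z)$ (e.g.\ $k=1$ gives $\frac{1}{8}+\frac{1}{4}-\frac{1}{3}=\frac{1}{24}$) and the consistency with both this proposition and the subsequent $\hbar^3$ proposition confirm that your values are the correct ones, so the discrepancy is a typo in that lemma rather than a flaw in your argument.
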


\begin{prp}
Taking the coefficients of $\hbar^3$ in (\ref{rr-11}), we have
\begin{align*}
&-{1\over k+1} \Bigl( [K^{(1)}(z),T^{(2)}(w)]+[T^{(2)}(z),K^{(1)}(w)]\Bigr)\\
&=
-{8(k+2)\over (k+1)^2} (D\delta)\left( {w\over z}\right)K^{(1)}(w)
-{4(k+2)\over (k+1)^2} \delta\left( {w\over z}\right)(DK^{(1)})(w),
\end{align*}
which, using Lemma \ref{Ddelta} below,  is simplified as 
\begin{align}
& [K^{(1)}(z),T^{(2)}(w)]+[T^{(2)}(z),K^{(1)}(w)]
=
{4(k+2)\over k+1}(D\delta)\left( {w\over z}\right)\Bigl( K^{(1)}(z)+K^{(1)}(w)\Bigr).\label{K1T2}
\end{align}
\end{prp}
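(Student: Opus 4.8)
The plan is to read off the $\hbar^3$-coefficient on each side of \eqref{rr-11} and match them. On the left-hand side this coefficient is already isolated in the expansion of $[T(z),T(w)]$ recorded just above, namely $-\tfrac{1}{k+1}\big([K^{(1)}(z),T^{(2)}(w)]+[T^{(2)}(z),K^{(1)}(w)]\big)$, so the whole task reduces to computing the $\hbar^3$-coefficient of the right-hand side of \eqref{rr-11}. I would write that right-hand side as $P(\hbar)\,S(\hbar)$, where $P(\hbar)=(q-q^{-1})\tfrac{[k+2]^2}{[k+1]}$ and $S(\hbar)=\sum_{\epsilon=\pm 1}\epsilon\,\delta\!\big(q^{2(k+1)\epsilon}w/z\big)\,K^-(q^{(k+1)\epsilon}w)\,W(q^{(k+1)\epsilon}w)\,K^+(q^{(k+1)\epsilon}w)$, with $W(z)$ as in \eqref{W-current}.

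First I would expand the scalar prefactor. Setting $q=e^\hbar$ gives $P(\hbar)=2\sinh^2\!\big((k+2)\hbar\big)/\sinh\!\big((k+1)\hbar\big)$, which is odd in $\hbar$ and starts at order $\hbar^1$ with leading coefficient $\tfrac{2(k+2)^2}{k+1}$; write $P(\hbar)=P_1\hbar+P_3\hbar^3+\cdots$. Next I would expand the body $S(\hbar)$. The argument shift is $q^{(k+1)\epsilon}w=e^{(k+1)\epsilon\hbar}w$, so each current $f$ becomes $e^{(k+1)\epsilon\hbar D}f$, i.e.\ $f(w)+(k+1)\epsilon\hbar\,(Df)(w)+\cdots$, while the delta factor expands as $\delta(w/z)+2(k+1)\epsilon\hbar\,(D\delta)(w/z)+2(k+1)^2\hbar^2(D^2\delta)(w/z)+\cdots$. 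Substituting the expansions of $K^\pm(z)$ from Proposition \ref{prp:K-ser} (using $K^{(1)}=K^{-(1)}+K^{+(1)}$) and of $W(z)$ established just above, I obtain $S(\hbar)$ order by order.

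The decisive structural point is the parity in $\epsilon$: since $\sum_{\epsilon=\pm1}\epsilon=0$ and $\sum_{\epsilon=\pm1}\epsilon^2=2$, every contribution even in $\epsilon$ cancels and every odd one is doubled. In particular the $\hbar^0$-part of $S$ vanishes, so $S(\hbar)=S_1\hbar+S_2\hbar^2+\cdots$ and the sought $\hbar^3$-coefficient of the right-hand side is simply $P_1S_2$. The surviving $\epsilon$-odd pieces in $S_2$ are exactly two: the first-order shift-derivatives of the currents at order $\hbar^2$ (whose $K^\pm$ parts telescope via $DK^{-(1)}+DK^{+(1)}=DK^{(1)}$, producing a $\delta(w/z)(DK^{(1)})(w)$ term), and the cross term of the order-$\hbar$ delta with the order-$\hbar$ body (producing a $(D\delta)(w/z)K^{(1)}(w)$ term). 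Collecting these yields the first displayed identity. Finally I would invoke Lemma \ref{Ddelta} to rewrite $(D\delta)(w/z)K^{(1)}(w)=(D\delta)(w/z)K^{(1)}(z)-\delta(w/z)(DK^{(1)})(w)$, which absorbs the $\delta(w/z)(DK^{(1)})(w)$ term and symmetrizes the answer into $\tfrac{4(k+2)}{k+1}(D\delta)(w/z)\big(K^{(1)}(z)+K^{(1)}(w)\big)$, the second displayed form.

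The main obstacle is the bookkeeping in the second step: one must expand a product of three operator-valued shifted currents to order $\hbar^2$ while correctly tracking the $\epsilon$-parity of every term. What makes this tractable is that the order-$\hbar$ coefficients of the individual factors $K^\pm$ and $W$ carry no $\epsilon$-dependence (the shifts enter only at order $\hbar^2$), so all products of two order-$\hbar$ pieces are $\epsilon$-even and drop out; the only $\epsilon$-odd survivors at order $\hbar^2$ are the single shift-derivatives of one factor. A useful internal check is that matching the $\hbar^2$-coefficients via the same $\epsilon$-sum reproduces the relation $[K^{(1)}(z),K^{(1)}(w)]=4k(k+2)(D\delta)(w/z)$ of Proposition \ref{P5}, which confirms the normalizations of $P_1$ and of the leading terms of $W(z)$.
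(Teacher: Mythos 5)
Your proposal is correct and is essentially the computation the paper intends: isolate the $\hbar^3$ coefficient, use the oddness of the scalar prefactor $P(\hbar)$ together with the $\epsilon$-antisymmetry of the body (and the fact that $K^{\pm(0)}$ and $W^{(0)}$ are constants, so shift-derivatives first enter at order $\hbar^2$) to reduce $S_2$ to the delta-cross term and the single shift-derivative term, and then symmetrize with Lemma \ref{Ddelta}. One caveat for when you carry out the arithmetic: your own $\hbar^2$ cross-check against Proposition \ref{P5} forces $W^{(0)}=\frac{k}{2(k+1)^2(k+2)}$ and $W^{(1)}=-\frac{1}{2(k+1)^2}K^{(1)}$ --- half the constants printed in the preceding lemma --- and only with these values does $P_1S_2$ land on the stated coefficients $-\frac{8(k+2)}{(k+1)^2}$ and $-\frac{4(k+2)}{(k+1)^2}$.
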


\begin{lem}\label{Ddelta}
We have
\begin{align}
  (D\delta)\left( {w\over z}\right)f(w)= (D\delta)\left( {w\over z}\right)f(z)-\delta\left( {w\over z}\right)(Df)(w).
\end{align}
\end{lem}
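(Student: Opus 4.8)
The plan is to recognize the statement as the integer-mode specialization of the already-established Lemma~\ref{DdeltaA}: taking $\mathrm{A}=\mathrm{R}$ there gives $\mathbb{Z}^{\mathrm{R}}=\mathbb{Z}$ and $\delta^{\mathrm{R}}=\delta$, and after renaming $g$ to $f$ one recovers precisely the asserted identity. So the shortest route is simply to invoke that lemma. Since in the application to \eqref{K1T2} the relevant $f$ is $K^{(1)}$, whose modes are indexed by $\mathbb{Z}$, this specialization is legitimate, and I would state that explicitly.

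For a self-contained derivation I would instead use two standard facts of formal delta-function calculus. First, the substitution property $\delta(w/z)\,f(w)=\delta(w/z)\,f(z)$, which holds for any series $f$ with integer modes because $\delta(w/z)=\sum_{m\in\mathbb{Z}}(w/z)^m$ formally enforces $w=z$; one checks it by the shift $p=m-n$ in $\sum_{m,n}w^{m-n}z^{-m}f_n$. Second, the Euler operator $w\,\partial_w$ reproduces $D$ on both factors: $w\,\partial_w\,\delta(w/z)=(D\delta)(w/z)$ and $w\,\partial_w f(w)=(Df)(w)$, directly from $w\,\partial_w(w/z)^m=m(w/z)^m$. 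I would then apply $w\,\partial_w$ to the substitution identity. The Leibniz rule on the left gives $(D\delta)(w/z)\,f(w)+\delta(w/z)\,(Df)(w)$, while on the right $f(z)$ is $w$-independent, so only the delta is differentiated, yielding $(D\delta)(w/z)\,f(z)$. Equating and rearranging gives
\begin{align*}
(D\delta)\left({w\over z}\right)f(w)=(D\delta)\left({w\over z}\right)f(z)-\delta\left({w\over z}\right)(Df)(w),
\end{align*}
which is the claim.

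As a cross-check I would also run the direct mode computation that mirrors the proof of Lemma~\ref{DdeltaA}: expanding $(D\delta)(w/z)\,f(w)=\sum_{m,n}m\,w^{m-n}z^{-m}f_n$ and splitting $m=(m-n)+n$, the $(m-n)$-piece resums to $(D\delta)(w/z)\,f(z)$ after setting $p=m-n\in\mathbb{Z}$, and the $n$-piece resums to $-\delta(w/z)\,(Df)(w)$. There is no substantive obstacle here: the identity is a one-line consequence of the product rule combined with the substitution property, and the only point demanding care is the bookkeeping of the shifted summation index, exactly as in Lemma~\ref{DdeltaA}.
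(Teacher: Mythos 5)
Your proposal is correct. The paper states Lemma \ref{Ddelta} without proof, and it is precisely the $\mathrm{A}=\mathrm{R}$ specialization of Lemma \ref{DdeltaA} (whose proof is the index-shift computation you reproduce as your cross-check), so your first route is exactly the intended argument. Your self-contained Leibniz derivation --- applying $w\partial_w$ to the substitution identity $\delta(w/z)f(w)=\delta(w/z)f(z)$ --- is a genuinely different and arguably cleaner packaging of the same bookkeeping: the product rule on the left produces the $\delta(w/z)(Df)(w)$ term automatically, while the $w$-independence of $f(z)$ removes it on the right, so the index shift $p=m-n$ is hidden inside the one-line substitution property. The only point to make explicit in either version is the sign convention $(Df)(w)=w\partial_w f(w)=\sum_n(-n)f_n w^{-n}$, which is what makes the second term appear with a minus sign and matches the convention used in the paper's proof of Lemma \ref{DdeltaA}.
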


\begin{prp}\label{P6}
Combining (\ref{K1T2}) with (\ref{T2K1}) in Proposition \ref{prpT2K1}, we have
\begin{align*}
&  [K^{(1)}(z),T^{(2)}(w)+{1\over k+1}K^{(1)}(w)]=
4(k+2)(D\delta)\left( {w\over z}\right)K^{(1)}(w).
\end{align*}
\end{prp}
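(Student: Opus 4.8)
The plan is to obtain the identity as a single linear combination of the two relations \eqref{K1T2} and \eqref{T2K1}, taking advantage of the fact that the mixed commutator $[T^{(2)}(z),K^{(1)}(w)]$ enters \eqref{K1T2} with coefficient $+1$ and \eqref{T2K1} with coefficient $-1$. First I would add \eqref{K1T2} and \eqref{T2K1} side by side. On the left-hand side the two copies of $[T^{(2)}(z),K^{(1)}(w)]$ cancel, and the remaining left-hand terms group, by bilinearity of the commutator in its second argument, into the single bracket $[K^{(1)}(z),\,T^{(2)}(w)+\tfrac{1}{k+1}K^{(1)}(w)]$ displayed in the assertion.

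The substantive step is the simplification of the right-hand side. Both right-hand sides carry the common prefactor $4(k+2)(D\delta)(w/z)$, so after adding I would collect, separately, the coefficients multiplying $K^{(1)}(z)$ and those multiplying $K^{(1)}(w)$. From \eqref{K1T2} the factor $\tfrac{1}{k+1}$ multiplies both $K^{(1)}(z)$ and $K^{(1)}(w)$, while from \eqref{T2K1} the factor is $-\tfrac{1}{k+1}$ for $K^{(1)}(z)$ and $\tfrac{k}{k+1}$ for $K^{(1)}(w)$. Hence the $K^{(1)}(z)$ contributions cancel ($\tfrac{1}{k+1}-\tfrac{1}{k+1}=0$) and the $K^{(1)}(w)$ contributions combine through the elementary identity $\tfrac{1}{k+1}+\tfrac{k}{k+1}=1$. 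The entire right-hand side therefore collapses to $4(k+2)(D\delta)(w/z)\,K^{(1)}(w)$, exactly as claimed.

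The step I expect to demand the most care is precisely this right-hand bookkeeping: one must keep straight which $(D\delta)$-weighted term is attached to $K^{(1)}(z)$ as opposed to $K^{(1)}(w)$ in each of \eqref{K1T2} and \eqref{T2K1}, since the whole conclusion hinges on the exact cancellation of the $K^{(1)}(z)$ pieces and on the coefficients of the surviving $K^{(1)}(w)$ summing to unity. No new structural input is needed beyond the two cited relations; once the cancellation is verified the left-hand side is already in the desired single-commutator form, so the proof is a one-line consequence of \eqref{K1T2} and \eqref{T2K1} together with the coefficient identity $\tfrac{1}{k+1}+\tfrac{k}{k+1}=1$.
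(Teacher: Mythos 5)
Your combination is precisely the paper's: add \eqref{K1T2} to \eqref{T2K1} so that the mixed commutator $[T^{(2)}(z),K^{(1)}(w)]$ cancels, and your right-hand bookkeeping is correct — the $K^{(1)}(z)$ coefficients cancel and the $K^{(1)}(w)$ coefficients sum to $4(k+2)$ via $\tfrac{1}{k+1}+\tfrac{k}{k+1}=1$. The gap is on the left-hand side: the surviving term contributed by \eqref{T2K1} is $\tfrac{1}{k+1}[K^{(1)}(z),K^{(2)}(w)]$, \emph{not} $\tfrac{1}{k+1}[K^{(1)}(z),K^{(1)}(w)]$, so what the addition actually produces is
\begin{align*}
\Bigl[K^{(1)}(z),\,T^{(2)}(w)+\tfrac{1}{k+1}K^{(2)}(w)\Bigr]
=4(k+2)(D\delta)\left({w\over z}\right)K^{(1)}(w),
\end{align*}
with $K^{(2)}(w)$ inside the bracket. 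The $K^{(1)}(w)$ in the displayed proposition is evidently a misprint: the sentence following it identifies the result with \eqref{N=2comrel-6}, and since $\Lv(w)$ contains $\tfrac{1}{4(k+1)(k+2)}K^{(2)}(w)$ (Definition \ref{CFTcurrents}), only the $K^{(2)}$ version reproduces $[\I(z),\Lv(w)]=(D\delta)(w/z)\,\I(w)$.

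Moreover, the statement as you prove it is actually false, so the grouping step is a genuine failure rather than a harmless relabeling. Subtracting the two versions, your identity would force $[K^{(1)}(z),K^{(2)}(w)]=[K^{(1)}(z),K^{(1)}(w)]=4k(k+2)(D\delta)(w/z)$, i.e.\ that $[K^{(1)}(z),K^{(2)}(w)]$ be central. It is not: by Proposition \ref{prp:K-ser}, $K^{(2)}(w)$ contains the quadratic piece $K^{-(1)}(w)K^{+(1)}(w)$, and since the brackets $[K^{(1)}(z),K^{\pm(1)}(w)]$ are $c$-number distributions, $[K^{(1)}(z),K^{-(1)}(w)K^{+(1)}(w)]$ is linear in the Heisenberg modes, hence non-central. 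Consistently, the $\hbar^3$ coefficient of \eqref{KK=KK} only fixes the symmetrized combination $[K^{(2)}(z),K^{(1)}(w)]+[K^{(1)}(z),K^{(2)}(w)]=4k(k+2)(D\delta)(w/z)\bigl(K^{(1)}(z)+K^{(1)}(w)\bigr)$, whose right-hand side is manifestly operator-valued. In short: same method as the paper, correct cancellation and coefficient arithmetic, but the final bracket must read $T^{(2)}(w)+\tfrac{1}{k+1}K^{(2)}(w)$; with $K^{(1)}(w)$ the claimed identity does not hold, and a complete answer should have flagged the discrepancy between the printed statement and what \eqref{K1T2} plus \eqref{T2K1} actually yield.
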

Since the constant term is irrelevant to the commutation relation, this is nothing but \eqref{N=2comrel-6}.

\begin{prp}
Taking the coefficients of $\hbar^4$ in (\ref{rr-11}), we have
\begin{align}\label{T3r11}
&-{1\over k+1} \Bigl( [K^{(1)}(z),T^{(3)}(w)]+[T^{(3)}(z),K^{(1)}(w)]\Bigr)+[T^{(2)}(z),T^{(2)}(w)] \CR
&=
{4(k+2)^2\over k+1} (D\delta)\left( {w\over z}\right)\Bigl( T^{(2)}(z)+T^{(2)}(w)\Bigr)
+{4(k+2)\over k+1} (D\delta)\left( {w\over z}\right)\Bigl( K^{(2)}(z)+K^{(2)}(w)\Bigr) \CR
&
-{2(k+2)^2\over (k+1)^2} (D\delta)\left( {w\over z}\right)\Bigl( K^{-(1)}(z)K^{(1)}(z)+K^{(1)}(z)K^{+(1)}(z) \CR
&\qquad\qquad\qquad\qquad +K^{-(1)}(w)K^{(1)}(w)+K^{(1)}(w)K^{+(1)}(w)\Bigr) \CR
&+{8k(k+2)\over 3} (D^3\delta)\left( {w\over z}\right)+
{4k(k+2)^2\over 3(k+1)^2} (D\delta)\left( {w\over z}\right).
\end{align}
\end{prp}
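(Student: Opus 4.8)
The plan is to read off both sides of \eqref{rr-11} at order $\hbar^4$ and match them. The left-hand side requires no new work: by the first Lemma of this subsection the coefficient of $\hbar^4$ in $[T(z),T(w)]$ is precisely $-\frac{1}{k+1}\bigl([K^{(1)}(z),T^{(3)}(w)]+[T^{(3)}(z),K^{(1)}(w)]\bigr)+[T^{(2)}(z),T^{(2)}(w)]$, which is the left-hand side of \eqref{T3r11}. Thus the whole task is to expand the right-hand side of \eqref{rr-11} to order $\hbar^4$.

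First I would expand the scalar prefactor. With $q=e^\hbar$ one has $(q-q^{-1})\frac{[k+2]^2}{[k+1]}=\frac{2\sinh^2((k+2)\hbar)}{\sinh((k+1)\hbar)}=\frac{2(k+2)^2}{k+1}\hbar+\mathcal{O}(\hbar^3)$, so this factor is odd in $\hbar$ and of order $\hbar$. Consequently I only need the order-$\hbar^3$ and order-$\hbar^1$ parts of the $\epsilon$-sum $\sum_{\epsilon=\pm1}\epsilon\,\delta(q^{2(k+1)\epsilon}w/z)\,K^-(q^{(k+1)\epsilon}w)\,W(q^{(k+1)\epsilon}w)\,K^+(q^{(k+1)\epsilon}w)$. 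Into it I substitute the expansion of $W(z)$ from the second Lemma of this subsection (a constant at order $\hbar^0$, then $-\frac{1}{(k+1)^2}K^{(1)}(z)$ at order $\hbar$, then the order-$\hbar^2$ term built from $T^{(2)}$ and $K^{(2)}$), together with $K^\pm(z)=1+\hbar K^{\pm(1)}(z)+\hbar^2K^{\pm(2)}(z)+\mathcal{O}(\hbar^3)$.

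Next I would dispose of the multiplicative $q$-shifts. Each shifted argument is handled by $f(q^{(k+1)\epsilon}w)=f(e^{(k+1)\epsilon\hbar}w)=\sum_{n\ge0}\frac{((k+1)\epsilon\hbar)^n}{n!}(D^nf)(w)$ with $D=w\partial_w$, and similarly $\delta(q^{2(k+1)\epsilon}w/z)=\sum_{n\ge0}\frac{(2(k+1)\epsilon\hbar)^n}{n!}(D^n\delta)(w/z)$. Because of the explicit $\epsilon$ and the sum $\sum_{\epsilon=\pm1}$, only terms carrying an odd total power of $\epsilon$ survive, and since $\epsilon$ is tied to $\hbar$ in every shift, this is exactly the antisymmetrization that turns the shift data into $(D\delta)(w/z)$ and $(D^3\delta)(w/z)$. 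Two channels then produce the answer: the shifts of the delta against the constant term of $W$ (with $K^\pm$ frozen at $1$ and the scalar prefactor expanded to order $\hbar^3$), which yield the central $(D^3\delta)$ and $(D\delta)$ contributions, and the single shift of the delta against the order-$\hbar^2$ content of $K^-(w)W(w)K^+(w)$, which yields the $(D\delta)$ terms carrying $T^{(2)}$, $K^{(2)}$ and the pieces quadratic in $K^{(1)}$. Multiplying by the $\hbar$ prefactor gives the order-$\hbar^4$ identity \eqref{T3r11}.

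The main obstacle will be the bookkeeping at order $\hbar^2$ inside $K^-(w)W(w)K^+(w)$, where the two $K^\pm$ factors flanking $W$ must be kept in order: the constant part of $W$ dressed by $K^{-(1)}$ and $K^{+(1)}$ produces $K^{-(1)}(w)K^{(1)}(w)+K^{(1)}(w)K^{+(1)}(w)$, the precise noncommutative combination recorded on the right of \eqref{T3r11}, and this must be combined consistently with the order-$\hbar$ part of $W$ dressed once by each $K^{\pm(1)}$. The remaining work is to fold the various $(D^n\delta)(w/z)$ together with the odd-power expansion of the scalar prefactor and, using Lemma \ref{Ddelta} and the delta-function identities of this section, to recast the $w$-localized coefficients into the symmetric $z\leftrightarrow w$ form displayed in \eqref{T3r11}. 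This raw identity is then ready to be combined with \eqref{T3K1} of Proposition \ref{prpT3K1} so as to eliminate $T^{(3)}$ and $K^{(3)}$ and reach the final relation \eqref{N=2comrel-7}.
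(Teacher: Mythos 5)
Your strategy is the right one and is exactly the computation the paper leaves implicit: read off the $\hbar^4$ coefficient of the left side from the $[T(z),T(w)]$ expansion lemma, note that the prefactor $(q-q^{-1})[k+2]^2/[k+1]=\tfrac{2(k+2)^2}{k+1}\hbar+\mathcal{O}(\hbar^3)$ is odd in $\hbar$, pair it with the $\hbar^1$ and $\hbar^3$ parts of the $\epsilon$-antisymmetrized sum, use the parity-in-$\epsilon$ argument to convert the shifts into $(D\delta)$ and $(D^3\delta)$, and symmetrize in $z\leftrightarrow w$ with Lemma \ref{Ddelta}.

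Two cautions before you carry out the bookkeeping. First, the coefficients of the $W$-expansion lemma that you quote are too large by a factor of $2$ at orders $\hbar^0$ and $\hbar^1$: computing directly from \eqref{W-current} one finds
\begin{equation*}
W(z)=\frac{k}{2(k+1)^2(k+2)}-\hbar\,\frac{1}{2(k+1)^2}K^{(1)}(z)+\mathcal{O}(\hbar^2),
\end{equation*}
and only these halved values are consistent with the already-established $\hbar^2$ identity $\frac{1}{(k+1)^2}[K^{(1)}(z),K^{(1)}(w)]=\frac{4k(k+2)}{(k+1)^2}(D\delta)(w/z)$, which forces $8(k+2)^2W^{(0)}=\frac{4k(k+2)}{(k+1)^2}$, and with the $\hbar^3$ proposition. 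If you substitute the printed values $\frac{k}{(k+1)^2(k+2)}$ and $-\frac{1}{(k+1)^2}K^{(1)}$ literally, you will land on $\frac{16k(k+2)}{3}(D^3\delta)$ and on $-\frac{4(k+2)^2}{(k+1)^2}$ for the term quadratic in $K^{(1)}$, i.e.\ not on \eqref{T3r11}. Second, you have the two order-$\hbar^2$ channels swapped: the noncommutative combination $K^{-(1)}K^{(1)}+K^{(1)}K^{+(1)}$ arises from the order-$\hbar$ part of $W$ (which is proportional to $K^{(1)}$) flanked once by $K^{-(1)}$ on the left or $K^{+(1)}$ on the right, whereas the constant $W^{(0)}$ dressed by the $K^{\pm}$ expansion contributes $W^{(0)}\bigl(K^{-(2)}+K^{-(1)}K^{+(1)}+K^{+(2)}\bigr)=W^{(0)}K^{(2)}$ and therefore feeds the $K^{(2)}$ term. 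With these corrections your two-channel computation closes, and the $T^{(2)}$, $K^{(2)}$, quadratic-$K^{(1)}$, $(D^3\delta)$ and residual $(D\delta)$ coefficients of \eqref{T3r11} all come out as stated.
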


\begin{prp}\label{P7}
Combining (\ref{T3r11}) with the relation (\ref{T3K1}) in Proposition \ref{prpT3K1}, we have
\begin{align}
&[T^{(2)}(z),T^{(2)}(w)]+{1\over k+1}  \Bigl( [T^{(2)}(z),K^{(2)}(w)]+[K^{(2)}(z),T^{(2)}(w)]\Bigr) \CR
&\qquad+
{1\over (k+1)^2}[K^{(2)}(z),K^{(2)}(w)] \CR
&=
4(k+2)(D\delta)\left( {w\over z}\right)\Bigl( T^{(2)}(z)+T^{(2)}(w)\Bigr)
+{4(k+2)\over k+1} (D\delta)\left( {w\over z}\right)\Bigl( K^{(2)}(z)+K^{(2)}(w)\Bigr) \CR
&+4k (k+2)\Bigl((D^3\delta)\left( {w\over z}\right)- (D\delta)\left( {w\over z}\right)\Bigr)
+{4k(k+2)(2k+1)\over 3(k+1)} (D\delta)\left( {w\over z}\right),
\end{align}
which is recast as
\begin{align*}
&[\Lv(z),\Lv(w)]=
 (D\delta) \left({w\over z}\right)\Bigl( \Lv(z)+\Lv(w)\Bigr) 
+ {1\over 12}  {3k\over k+2} \Bigl((D^3\delta) \left({w\over z}\right)
- \delta\left({w\over z}\right) \Bigr).
\end{align*}

\end{prp}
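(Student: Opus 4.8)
The plan is to eliminate the auxiliary current $T^{(3)}(z)$, which is not among the generators of Definition~\ref{CFTcurrents}, between the two relations produced at order $\hbar^4$, and then to repackage the surviving quadratic data in $T^{(2)}$ and $K^{(2)}$ through the definition of $\Lv(z)$. The relation (\ref{T3r11}) is the $\hbar^4$-coefficient of the $T$-$T$ relation (\ref{rr-11}); its left-hand side carries the single offending combination $[K^{(1)}(z),T^{(3)}(w)]+[T^{(3)}(z),K^{(1)}(w)]$ together with the target bracket $[T^{(2)}(z),T^{(2)}(w)]$. Proposition~\ref{prpT3K1} furnishes exactly the companion identity (\ref{T3K1}), in which $-\bigl([T^{(3)}(z),K^{(1)}(w)]+[K^{(1)}(z),T^{(3)}(w)]\bigr)$ is expressed purely through $[T^{(2)},K^{(2)}]$, $[K^{(2)},K^{(2)}]$, the normal-ordered product $\no K^{(1)}(z)K^{(1)}(w)\no$, and $c$-number distributions. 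First I would multiply (\ref{T3K1}) by $\tfrac{1}{k+1}$ and substitute it into (\ref{T3r11}); this removes every occurrence of $T^{(3)}$ at once.

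After the substitution the three operator brackets combine, with the scalar weights $\tfrac{1}{k+1}$ and $\tfrac{1}{(k+1)^2}$ dictated by the substitution, into precisely
\begin{align*}
[T^{(2)}(z),T^{(2)}(w)]
+\tfrac{1}{k+1}\bigl([T^{(2)}(z),K^{(2)}(w)]+[K^{(2)}(z),T^{(2)}(w)]\bigr)
+\tfrac{1}{(k+1)^2}[K^{(2)}(z),K^{(2)}(w)],
\end{align*}
the left-hand side of the asserted identity. On the right the $(D\delta)$-terms attached to $T^{(2)}(z)+T^{(2)}(w)$ and to $K^{(2)}(z)+K^{(2)}(w)$ come already matched from the two sources. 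The remaining pieces are reduced by two inputs: the normal-ordered term $\no K^{(1)}(z)K^{(1)}(w)\no$ produced by (\ref{T3K1}) is converted back to $K^{(1)}(z)K^{(1)}(w)$ by Lemma~\ref{noK}, at the cost of a controlled $(Ds)$ contribution, and Lemma~\ref{Ddelta} is applied whenever a factor $(D\delta)$ multiplies a current whose argument has to be moved from $z$ to $w$. Consolidating the resulting $(D^3\delta)$ and $(D\delta)$ distributions yields the explicit form stated in the proposition.

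For the recasting I would insert $\Lv(z)=\tfrac{1}{4(k+2)}T^{(2)}(z)+\tfrac{1}{4(k+1)(k+2)}K^{(2)}(z)+\tfrac{k(2k+1)}{24(k+1)(k+2)}$ from Definition~\ref{CFTcurrents}. Since the additive constant drops out of every commutator, the left-hand combination above is identically $16(k+2)^2\,[\Lv(z),\Lv(w)]$, while the operator part of the right-hand side becomes $16(k+2)^2\,(D\delta)(w/z)\bigl(\Lv(z)+\Lv(w)\bigr)$ once the constant in $\Lv$ is reinstated. It then remains to check that the leftover $c$-number distribution equals $16(k+2)^2\cdot\tfrac{c}{12}$ times the Virasoro central combination of $(D^3\delta)$ and $(D\delta)$, with $c=\tfrac{3k}{k+2}$; this is a direct numerical identity in $k$.

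The main obstacle I anticipate is bookkeeping rather than conceptual: one must track the competing rational functions of $k$ through the substitution so that all $T^{(3)}$ contributions (and, via (\ref{T3K1}), the $K^{(3)}$ contributions already eliminated in Proposition~\ref{prpT3K1}) cancel exactly, and then confirm that the residual scalar distribution reorganizes into the standard central-extension form. The delicate point is that the precise value of the additive constant $\tfrac{k(2k+1)}{24(k+1)(k+2)}$ in the definition of $\Lv(z)$ is exactly what is needed to bring the $c$-number part into Virasoro shape and to pin down the central charge $c=\tfrac{3k}{k+2}$; getting this single constant right is the crux of the final verification.
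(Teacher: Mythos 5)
Your overall route is the same as the paper's: eliminate $T^{(3)}$ by substituting $\tfrac{1}{k+1}\times$\eqref{T3K1} into \eqref{T3r11}, collect the surviving brackets into the combination with weights $1$, $\tfrac{1}{k+1}$, $\tfrac{1}{(k+1)^2}$, and then divide by $16(k+2)^2$ and reinstate the additive constant of $\Lv$ to land on the Virasoro form with $c=\tfrac{3k}{k+2}$. That part of your plan is sound, and your accounting of how the constant $\tfrac{k(2k+1)}{24(k+1)(k+2)}$ absorbs the residual $(D\delta)$ term is correct.

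There is, however, a genuine gap in your treatment of the terms quadratic in $K^{(1)}$. After the substitution, the cross term $+\tfrac{4(k+2)^2}{(k+1)^2}(D\delta)(w/z)\no K^{(1)}(z)K^{(1)}(w)\no$ coming from \eqref{T3K1} must be combined with the diagonal terms $-\tfrac{2(k+2)^2}{(k+1)^2}(D\delta)(w/z)\bigl(\no K^{(1)}(z)K^{(1)}(z)\no+\no K^{(1)}(w)K^{(1)}(w)\no\bigr)$ already present in \eqref{T3r11}, and the sum is the \emph{operator-valued} expression
\begin{align*}
-\tfrac{2(k+2)^2}{(k+1)^2}\,(D\delta)\!\left(\tfrac{w}{z}\right)\no\bigl(K^{(1)}(z)-K^{(1)}(w)\bigr)^2\no .
\end{align*}
Your proposal to convert $\no K^{(1)}(z)K^{(1)}(w)\no$ back to the unordered product via Lemma \ref{noK} only shifts things by a $c$-number $(Ds)$ contribution; it does not make this operator term disappear, and Lemma \ref{Ddelta} (which moves the argument of a \emph{single} current under $(D\delta)$) does not apply to it either. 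The identity actually needed is the separate vanishing statement $(D\delta)(w/z)\,\no(K^{(1)}(z)-K^{(1)}(w))^2\no=0$, which holds because the normal-ordered square vanishes to second order on the support of the delta function while $D\delta$ detects only first-order data; the paper isolates exactly this as its final lemma. Without it your right-hand side retains an uncancelled quadratic operator term, so you should add this step explicitly.
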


\begin{proof}

\begin{align*}
&[T^{(2)}(z),T^{(2)}(w)]+{1\over k+1}  \Bigl( [T^{(2)}(z),K^{(2)}(w)]+[K^{(2)}(z),T^{(2)}(w)]\Bigr) \CR
& \qquad + {1\over (k+1)^2}[K^{(2)}(z),K^{(2)}(w)]\\
&=
4(k+2)(D\delta)\left( {w\over z}\right)\Bigl( T^{(2)}(z)+T^{(2)}(w)\Bigr)
+{4(k+2)\over k+1} (D\delta)\left( {w\over z}\right)\Bigl( K^{(2)}(z)+K^{(2)}(w)\Bigr)\\
&
-{2(k+2)^2\over (k+1)^2} (D\delta)\left( {w\over z}\right)
\no \Bigl( K^{(1)}(z)-K^{(1)}(w)\Bigr)^2\no  \\
&+4k (k+2)\Bigl((D^3\delta)\left( {w\over z}\right)- (D\delta)\left( {w\over z}\right)\Bigr)
+{4k(k+2)(2k+1)\over 3(k+1)} (D\delta)\left( {w\over z}\right).
\end{align*}
We use the lemma below to obtain the desired result. 
\end{proof}


\begin{lem}
We have
\begin{align*}
&(D\delta)\left( {w\over z}\right)
\no \Bigl( K^{(1)}(z)-K^{(1)}(w)\Bigr)^2\no=0.
\end{align*}
\end{lem}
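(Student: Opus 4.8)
The plan is to read the bracketed object as the symmetric combination
\[
\no\bigl(K^{(1)}(z)-K^{(1)}(w)\bigr)^2\no
= N(z,z)-2\,N(z,w)+N(w,w),\qquad N(z,w):=\no K^{(1)}(z)K^{(1)}(w)\no,
\]
and then to show that every Fourier coefficient, in $z^{-p}w^{-r}$, of $(D\delta)(w/z)$ times this combination vanishes. The entire argument rests on one structural fact, so I would establish it first: the normal-ordered bilinear is symmetric, $N(z,w)=N(w,z)$, and its coincident-point value $N(z,z)=\no K^{(1)}(z)^2\no$ is well defined because the normal ordering has removed the singular contraction $4k(k+2)(Ds)(w/z)$ recorded in Lemma~\ref{noK}.

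The symmetry follows directly from the Heisenberg relation of Propositions~\ref{Hei} and~\ref{P5}, namely $[K^{(1)}_m,K^{(1)}_n]=4k(k+2)\,m\,\delta_{m+n,0}$: two creation modes (both of negative index) commute, and two annihilation modes (both of positive index) commute, since in either case $m+n\neq 0$. Expanding the definition $\no K^{(1)}(z)K^{(1)}(w)\no= K^{-(1)}(z)K^{(1)}(w)+K^{(1)}(w)K^{+(1)}(z)$ into its four creation/annihilation pieces and applying these two commutativity facts, I would check that exchanging $z\leftrightarrow w$ leaves $N(z,w)$ invariant; this also identifies $\no K^{(1)}(w)K^{(1)}(z)\no$ with $\no K^{(1)}(z)K^{(1)}(w)\no$, which is what lets me collapse the expansion of the square to $N(z,z)-2N(z,w)+N(w,w)$.

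With symmetry in hand I would pass to modes. Writing $N(z,w)=\sum_{a,b}N_{a,b}z^{-a}w^{-b}$ with $N_{a,b}=N_{b,a}$, so that $N(z,z)=\sum_c M_c z^{-c}$ with $M_c=\sum_{a+b=c}N_{a,b}=\sum_{\ell}N_{p-\ell,\ell+r}$ when $c=p+r$, a short bookkeeping with $(D\delta)(w/z)=\sum_\ell \ell\, z^{-\ell}w^{\ell}$ yields the coefficient of $z^{-p}w^{-r}$ in $(D\delta)(w/z)\bigl[N(z,z)-2N(z,w)+N(w,w)\bigr]$ as
\[
\sum_{\ell}\bigl[(p-r)-2\ell\bigr]\,N_{p-\ell,\,\ell+r}.
\]
I would then finish by exhibiting the involution $\ell\mapsto p-r-\ell$ on the summation index: by the symmetry $N_{a,b}=N_{b,a}$ it fixes the operator $N_{p-\ell,\ell+r}$, while it sends the scalar factor $(p-r)-2\ell$ to its negative. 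Hence the sum is antisymmetric and cancels in pairs, the solitary fixed point $\ell=(p-r)/2$ carrying scalar factor $0$; this proves the claim.

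The computation itself is routine; the one genuinely load-bearing step — and the only place an error could hide — is the symmetry $N(z,w)=N(w,z)$. A tempting shortcut is to invoke the formal identity $(z-w)^2(D\delta)(w/z)=0$ and treat $\no(K^{(1)}(z)-K^{(1)}(w))^2\no$ as ``$(z-w)^2\times(\text{regular field})$,'' but this is not literally valid, since $K^{(1)}(z)$ and $K^{(1)}(w)$ fail to commute (their commutator is exactly the nonzero $(D\delta)$ of Proposition~\ref{P5}). It is precisely the normal ordering that symmetrizes the operator product and legitimizes a coincidence argument, so I would retain the mode-level proof rather than the heuristic.
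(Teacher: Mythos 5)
Your proof is correct. The paper states this lemma without proof (it is only invoked at the end of the proof of Proposition \ref{P7}), so there is no argument of the authors' to compare against; what you supply is a complete justification of a step the paper leaves implicit. Your reading of $\no \bigl(K^{(1)}(z)-K^{(1)}(w)\bigr)^2\no$ as $N(z,z)-2N(z,w)+N(w,w)$ is exactly the one forced by how the term is assembled in the proof of Proposition \ref{P7}, and the two load-bearing steps both hold: the symmetry $N(z,w)=N(w,z)$ follows from $[K^{\pm(1)}(z),K^{\pm(1)}(w)]=0$ (the order-$\hbar^2$ content of \eqref{rr-2}, equivalently your observation that $[K^{(1)}_m,K^{(1)}_n]=4k(k+2)\,m\,\delta_{m+n,0}$ vanishes when $m$ and $n$ have the same sign, the zero mode being central), and the coefficient of $z^{-p}w^{-r}$ is indeed $\sum_{\ell}\bigl[(p-r)-2\ell\bigr]N_{p-\ell,\ell+r}$, which the involution $\ell\mapsto p-r-\ell$ annihilates. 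The one point you leave tacit is that this $\ell$-sum, and the diagonal value $N(z,z)$, are locally finite: for fixed $p+r$ only finitely many $N_{p-\ell,\ell+r}$ act nontrivially on a given vector, precisely because normal ordering puts the annihilation part on the right. That is what legitimizes the pairwise cancellation and deserves one sentence, but it is not a gap. Your closing caveat is also apt: the shortcut via $(z-w)^2(D\delta)(w/z)=0$ requires exactly the regularity on the diagonal that normal ordering supplies, and your mode-level computation is the honest form of that Taylor-expansion argument.
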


\section{Wakimoto representation of the quantum affine algebra $U_q(\widehat{\mathfrak{sl}}_2)$}

Before embarking on the construction of the Heisenberg representation of $\Svir$,
we need briefly recall the Wakimoto representation of the quantum affine algebra $U_q(\widehat{\mathfrak{sl}}_2)$ 
\cite{Matsuo:1992va}, \cite{Matsuo:1994nc} (see also \cite{Shiraishi:1992nqz}),
fixing our notations and recalling some operator product expansion (OPE) formulas.

\subsection{Heisenberg algebras and vertex operators $V^\pm(z)$, $Y^\pm(z)$ and  $W_\pm(z)$ }
\begin{dfn}\label{Matsuo-parafermion}
Introduce Heisenberg algebras 
generated by $\alpha_n,\overline{\alpha}_n,\beta_n$ $(n\in \mathbb{Z})$ and $Q_\alpha,Q_{\overline{\alpha}},Q_\beta$, 
with the commutation relations:
\begin{align*}
&[\alpha_n,\alpha_m]={[2n][kn]\over n}\delta_{n+m,0},\qquad\qquad \,\,[\alpha_n,Q_\alpha]=\delta_{n,0},\\
&[\overline{\alpha}_n,\overline{\alpha}_m]=-{[2n][kn]\over n}\delta_{n+m,0},\qquad\quad \,\,\, 
[\overline{\alpha}_n,Q_{\overline{\alpha}}]=-\delta_{n,0}\\
&[\beta_n,\beta_m]={[2n][(k+2)n]\over n}\delta_{n+m,0},\qquad [\beta_n,Q_{\beta}]=\delta_{n,0},
\end{align*}
and all the other commutators being vanishing.
\end{dfn}
We regard
the non negative Fourier modes  $\alpha_n,\overline{\alpha}_n,\beta_n$ $(n\geq 0)$ 
being the annihilation operators, and 
the negative Fourier modes  $\alpha_n,\overline{\alpha}_n,\beta_n$ $(n< 0)$ and 
$Q_\alpha,Q_{\overline{\alpha}},Q_\beta$ being the creation operators. Accordingly 
we use the symbol $:\bullet:$ for the normal ordering for the Heisenberg generators. 
Namely we move all the creation operators to the left of annihilation ones given in the symbol $:\bullet:$.

\begin{dfn}\label{Wakimoto-vertex}
Let $V^\pm(z)$, $Y^\pm(z)$ and  $W_\pm(z)$ be the vertex operators as
\begin{align*}
&V^\pm(z)=
e^{\pm 2Q_\alpha}z^{\pm {1\over k}\alpha_0}
:\exp\left( \mp \sum_{m\neq 0} q^{\mp {k\over 2} |m|} {z^{-m}\over [km]} \alpha_m\right):,\\
&Y^\pm(z)=e^{\pm 2Q_{\overline{\alpha}}}z^{\pm {1\over k}\overline{\alpha}_0}
:\exp\left( \mp \sum_{m\neq 0} q^{\mp {k\over 2} |m|} {z^{-m}\over [km]} \overline{\alpha}_m\right):,\\
&Z_\pm(z)=
\exp\left( \mp(q-q^{-1}) \sum_{m=1}^\infty  z^{\mp m} {[m]\over [2m]} \overline{\alpha}_{\pm m}\right)
q^{\mp {1\over 2} \overline{\alpha}_0},\\
&W_\pm(z)=
\exp\left( \mp(q-q^{-1}) \sum_{m=1}^\infty  z^{\mp m} {[m]\over [2m]} \beta_{\pm m}\right)
q^{\mp {1\over 2} \beta_0}.
\end{align*}
\end{dfn}

\subsection{Wakimoto representation of $U_q(\widehat{\mathfrak{sl}}_2)$}

\begin{dfn}\label{parafermion}
Introduce the following shorthand notations
\begin{align*}
&e_\pm(z)=~:Y^+(z)Z_\pm(q^{\mp {k+2\over 2}}z) W_\pm (q^{\mp{k\over 2}}z):,\\
&f_\pm(z)=~:Y^-(z)Z_\pm(q^{\pm{k+2\over 2}}z) W_\pm (q^{\pm{k\over 2}}z)^{-1}:.
\end{align*}
\end{dfn}

In the undeformed case the $\mathcal{N}=2$ SCA and the affine Lie algebra $\widehat{\mathfrak{sl}}_2$
have a common sector, called $\mathbb{Z}_k$ parafermion \cite{Fateev:1985mm}, \cite{Fateev:1985ig}, \cite{Zamolodchikov:1986gh}.
In the Wakimoto representation of $U_q(\widehat{\mathfrak{sl}}_2)$, the $q$-deformed parafermion sector
is generated by $\overline{\alpha}_n,\beta_n$ with the zero modes $Q_{\overline{\alpha}},Q_\beta$.
The operators $e_\pm(z)$ and $f_\pm(z)$ are fundamental vertex operators from the deformed parafermion.
One of the earliest references for deformed parafermion is \cite{DF0}.
In \cite{Jimbo:2000ff} the deformed parafermion derived from the Wakimoto representation of $U_q(\widehat{\mathfrak{sl}}_2)$
was employed for a free field computation of the the Andrews-Baxter-Forrester models in regime II. See also \cite{Konno}.

\begin{dfn}\label{EandF}
Set
\begin{align}
&E(z)=E_+(z)-E_-(z),\qquad E_\epsilon(z)=+{1\over q-q^{-1}} V^+(z) e_\epsilon (z),\label{E}\\
&F(z)=F_+(z)-F_-(z),\qquad F_\epsilon (z)=-{1\over q-q^{-1}} V^-(z) f_\epsilon (z),\label{F}
\end{align}
and 
\begin{align}
&\psi_\pm (z)=~:V^+(q^{\pm k/2}z) V^-(q^{\mp k/2}z):.\label{psi}
\end{align}
\end{dfn}

\begin{thm}\label{Wakimoto}
The operators $E(z),F(z)$ and $\psi_\pm(z)$ satisfy the defining relations for the 
quantum affine algebra $U_q(\widehat{\mathfrak{sl}}_2)$
\begin{align}
&\psi_\pm(z)\psi_\pm(w)=\psi_\pm(w)\psi_\pm(z),\label{Uq-1}\\
&\psi_-(z)\psi_+(w)={g(q^{-k}z/w)\over g(q^{+k}z/w)}\psi_+(w)\psi_-(z),\label{Uq-2}\\
&\psi_-(z)E(w)=g(q^{-k/2}z/w)E(w)\psi_-(z),\quad \psi_-(z)F(w)=g(q^{+k/2}z/w)^{-1}F(w)\psi_-(z),\label{Uq-3}\\
&E(z)\psi_+(w)=g(q^{-k/2}z/w)\psi_+(w)E(z),\quad F(z)\psi_+(w)=g(q^{+k/2}z/w)^{-1}\psi_+(w)F(z),\label{Uq-4}\\
&(z-q^{+2} w)E(z)E(w)+(w-q^{+2}z)E(w)E(z)=0,\label{Uq-5}\\
&(z-q^{-2} w)F(z)F(w)+(w-q^{-2}z)F(w)F(z)=0,\label{Uq-6}\\
&[E(z),F(w)]={1\over q-q^{-1}}
\Biggl(\delta\left( q^k{ w\over z}\right) \psi_+(q^{k/2}w)- \delta\left(q^{-k} {w\over  z}\right) \psi_-(q^{-k/2}w)\Biggr), \label{Uq-7}
\end{align}
where $g(z)^{\pm 1}$ are  the invertible Taylor series
\begin{align*}
g(z)={q^{-2}-z\over 1-q^{-2}z}=(q^{-2}-z)\sum_{n\geq 0} q^{-2n}z^n,\quad
g(z)^{-1}={q^{2}-z\over 1-q^{2}z}=(q^{2}-z)\sum_{n\geq 0} q^{2n}z^n.
\end{align*}

\end{thm}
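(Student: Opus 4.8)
The plan is to prove all of \eqref{Uq-1}--\eqref{Uq-7} by a direct free-field computation, exploiting the fact that each current factorizes into a $U(1)$-boson part built from the $\alpha$-oscillators through $V^\pm(z)$, and a deformed-parafermion part built from the $\overline{\alpha}$- and $\beta$-oscillators through $Y^\pm$, $Z_\pm$, $W_\pm$. Since the $\alpha$-sector commutes with the $\{\overline{\alpha},\beta\}$-sector (Definition~\ref{Matsuo-parafermion}), every operator product $A(z)B(w)$ normal-orders with a scalar contraction factor that is the \emph{product} of a $U(1)$ contraction and a parafermion contraction. The former I would compute directly from the $\alpha$-commutators; the latter is exactly the content of the OPE formulas for $e_\pm(z)$, $f_\pm(z)$ and the $Y,Z,W$ vertices collected in Appendix~A. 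Thus the whole theorem reduces to multiplying two families of known scalar factors and reading off the pole structure.

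First I would dispose of the exchange relations \eqref{Uq-1}--\eqref{Uq-4}. Because $\psi_\pm(z)=\,:V^+(q^{\pm k/2}z)V^-(q^{\mp k/2}z):$ lives purely in the $\alpha$-sector, \eqref{Uq-1} and \eqref{Uq-2} follow from the $V^\pm$--$V^\pm$ contractions alone, the ratio of the two orderings producing $g(q^{-k}z/w)/g(q^{+k}z/w)$. For \eqref{Uq-3} and \eqref{Uq-4} I would contract $\psi_\pm$ against $E=E_+-E_-$ and $F=F_+-F_-$: the $U(1)$ factor supplies $g(q^{\mp k/2}z/w)^{\pm1}$ while the parafermion part of $E$, $F$ does not couple to $\psi$, and one checks that the zero-mode prefactors $e^{\pm2Q_\alpha}$, $z^{\pm\alpha_0/k}$ give the correct overall power. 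The key bookkeeping here is that the same scalar multiplies $E_+$ and $E_-$, so the relation survives the subtraction $E=E_+-E_-$.

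Next come the quadratic relations \eqref{Uq-5}, \eqref{Uq-6}. I would expand $E(z)E(w)=(E_+(z)-E_-(z))(E_+(w)-E_-(w))$, normal-order each of the four terms, and collect the scalar contraction factors. The product of the $V^+$--$V^+$ contraction with the $e_\epsilon$--$e_{\epsilon'}$ parafermion contraction is arranged so that the scalar factor of $E(z)E(w)$, multiplied by $(z-q^{2}w)$, is symmetric under $z\leftrightarrow w$ (equivalently, equals $(q^{2}z-w)$ times the scalar factor of $E(w)E(z)$), while the normal-ordered part $:E(z)E(w):$ is itself symmetric. These two facts give \eqref{Uq-5} term by term. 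The essential point is that the $U(1)$ pole and the parafermion zero combine into a single simple factor, and that the relative sign in $E_+-E_-$ kills the cross terms; \eqref{Uq-6} follows identically with $q\to q^{-1}$ in the exponents.

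Finally, for the commutator \eqref{Uq-7} I would compute $E(z)F(w)$ and $F(w)E(z)$ separately. The $V^+$--$V^-$ contraction has simple poles at $z=q^{\pm k}w$, and subtracting the two orderings localizes these poles to the delta functions $\delta(q^{k}w/z)$ and $\delta(q^{-k}w/z)$ via the standard identity for the difference of the two Taylor expansions of $1/(1-q^{\pm k}w/z)$. Evaluating the surviving normal-ordered product at each pole, the $\alpha$-part collapses to $:V^+(q^{\pm k/2}w)V^-(q^{\mp k/2}w):=\psi_\pm(q^{\pm k/2}w)$, while the parafermion factor becomes $1$ at the resonance; tracking the $(q-q^{-1})^{-1}$ prefactors then reproduces the right-hand side of \eqref{Uq-7}. \textbf{The main obstacle} I anticipate is the parafermion-sector bookkeeping: one must verify that the fractional-power factors generated by the zero modes combine with the Appendix~A contractions to give honest rational functions, and that after summing over $\epsilon=\pm$ the parafermion contributions supply exactly the zeros and ones needed for the clean pole structure in \eqref{Uq-5}--\eqref{Uq-7}, with no spurious poles. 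This cancellation between the $U(1)$ and parafermion sectors is the crux, and it is precisely the mechanism anticipated in the remark following Lemma~\ref{GG=W}.
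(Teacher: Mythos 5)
Your proposal follows essentially the same route as the paper's own proof in Appendix~\ref{proof-Wakimoto}: factorize each current into its $U(1)$ ($\alpha$-sector) and deformed-parafermion ($\overline{\alpha},\beta$-sector) pieces, combine the two scalar contraction factors into the simple rational functions of Propositions~\ref{ee}, \ref{ff}, \ref{ef}, verify \eqref{Uq-5}, \eqref{Uq-6} term by term in $(\epsilon_1,\epsilon_2)$, and obtain \eqref{Uq-7} from the difference of the two orderings of the simple-pole factor together with $:e_\pm(q^{\pm k}w)f_\pm(w):\,=1$. The one cosmetic imprecision is your remark that the sign in $E_+-E_-$ ``kills the cross terms'' in \eqref{Uq-5}: in fact each pair $(\epsilon_1,\epsilon_2)$ satisfies the Serre-type relation individually (no cancellation between pairs is needed there; the $\epsilon$-dependent cancellation only matters in \eqref{Uq-7}, where $(q^{-\epsilon_1}-q^{\epsilon_2})$ vanishes for $\epsilon_1=-\epsilon_2$), but this does not affect the validity of the argument.
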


For the reader's convenience, we recall the proof of Theorem \ref{Wakimoto} in Appendix \ref{proof-Wakimoto}.



\section{Twist of the $U(1)$ boson from $q$ deformed $Y$-algebra}\label{Yalgebra}

\subsection{$Y$-algebra and its gluing}

In \cite{Gaiotto:2017euk} a vertex operator algebra (VOA) called $Y$-algebra was introduced. 
The algebra denoted by $Y_{L,M,N}[\Psi]$ is indexed by three non-negative integers and has 
a parameter $\Psi$.  Associated with the $Y$-algebra is a five-brane junction with $D3$ branes (see Figure \ref{5brane-web} and
Table \ref{D5-D3}). The integers $(L,M,N)$ represent the number of $D3$ branes stretched between $5$ branes. 
The figure \ref{5brane-web} describes the $2$-$3$ plane of the brane configuration of Table \ref{D5-D3}. 
In this brane configuration the two dimensional plane with a complex coordinate $z=x_0 + ix_1$ is common to $D3$, $NS5$ and $D5$ branes.
The $Y$-algebra is regarded as a VOA on this complex plane and hence we have a $Y$-algebra associated with each trivalent vertex. 
Introducing parameters $\epsilon_i,~i=1,2,3$ with $\epsilon_1 + \epsilon_2 + \epsilon_3=0$ the parameter $\Psi$ is expressed as 
\begin{equation}
\Psi= - \frac{\epsilon_2}{\epsilon_1}.
\end{equation}
we can denote the $Y$-algebra by $Y_{N_1, N_2, N_2}^{\epsilon_1, \epsilon_2, \epsilon_3} := Y_{N_1, N_2, N_3} [ - \frac{\epsilon_2}{\epsilon_1}]$. 
Then we have a symmetry under a simultaneous cyclic permutation of $N_i$ and $\epsilon_i$. 
In the five-brane web the slope represents the five brane charge $(p,q)$, on which the $S$ duality group $SL(2, \mathbb{Z})$ acts. 
The $3$ branes are invariant under  $SL(2, \mathbb{Z})$. 
The $SL(2, \mathbb{Z})$ action on the parameter $\Psi$  is
\begin{equation}
M \cdot \Psi = \frac{p_1 \Psi + p_2}{q_1 \Psi + q_2}, \qquad
M = \left(
\begin{array}{cc}
p_1 & p_2 \\
q_1 & q_2
\end{array}
\right) \in SL(2, \mathbb{Z}).
\end{equation}


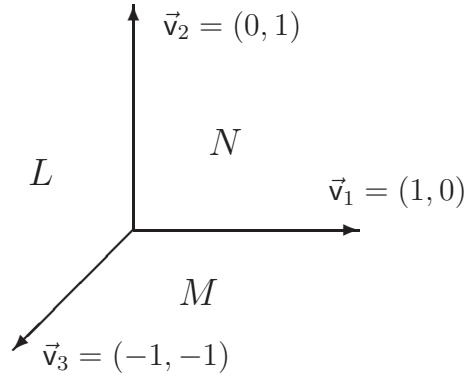
\begin{figure}[h]
\unitlength 2mm
\begin{center}
\begin{picture}(25,25)
\thicklines
\put(10,10){\vector(1,0){15}}
\put(10,10){\vector(0,1){15}}
\put(10,10){\vector(-1,-1){8}}
\put(15,15){\large$N$}
\put(13,5){\large$M$}
\put(3,13){\large$L$}
\put(23,12){$\vec{\mathsf{v}}_1=(1,0)$}
\put(12,23){$\vec{\mathsf{v}}_2=(0,1)$}
\put(4,1){$\vec{\mathsf{v}}_3 =(-1,-1)$}
\end{picture}
\end{center}
\caption{$5$-brane junction with $D3$ brane configuration (see also the table \ref{D5-D3}).
Our convention of the ordering is counterclockwise. The orientation of the edges is outgoing.}
\label{5brane-web}
\end{figure}

\begin{table}[h]
\begin{tabular}{|c||c|c|c|c|c|c|c|c|c|c|}\hline
Coordinates  &  0 & 1 &2 &  3 & 4 &5&  6 & 7 &8 & 9\\ \hline
D3 & $\circ$  & $\circ$ & $\circ$ &  $\circ$   & $-$ & $-$ & $-$ & $-$ & $-$ & $-$ \\ \hline 
NS 5 &  $\circ$ & $\circ$ & $-$ &  $\circ$   & $\circ$ & $\circ$ & $\circ$ & $-$ & $-$ & $-$ \\ \hline
D5 & $\circ$  & $\circ$ & $\circ$ & $-$  & $\circ$ & $\circ$ & $\circ$ & $-$ & $-$ & $-$ \\ \hline
\end{tabular}
\bigskip
\caption{Configuration of $5$ brane junction with $D3$ branes}
\label{D5-D3}
\end{table}

The vacuum character of $Y_{L,M,N}[\Psi]$ coincides with the character of the MacMahon module of $W_{1+\infty}$ algebra, 
or the affine $\mathfrak{gl}_1$ Yangian with a \lq\lq pit\rq\rq\ at $(L+1, M+1, N+1)$. This mathematically means that it is isomorphic to 
the $W_{1+\infty}$ algebra quotient by the (monomial) ideal $\mathcal{I}_{L,M,N}$ coming from the pit (See also \cite{Creutzig:2020zaj}). 
In particular, when $L=0$, it reduces to a Fock module. Consequently the $Y$-algebra $Y_{0,M,N}$ is identified with
the $\mathcal{W}$ algebra associated with the Lie superalgebra $\mathfrak{sl}_{N \vert M}$.

The $Y$-algebra of our concern is $Y_{0,1,2}[\Psi]$. In \cite{Gaiotto:2017euk} by examining the character it was shown that
$Y_{0,1,2}[\Psi]$ is related to the $\mathbb{Z}_\kappa$ parefermion algebra $\mathrm{Pf}_{\kappa} := SU(2)_\kappa/ U(1)_{2\kappa}$;
\begin{equation}
Y_{0,1,2}[\Psi] = \mathrm{Pf}_{\Psi -2} \times U(1)_{\Psi^{-1}(\Psi-1)(\Psi-2)},
\end{equation}
where $U(1)_\ell$ denotes $U(1)$ current algebra with level $\ell$. In fact from the general argument $Y_{0,1,2}[\Psi]$ is
the $\mathcal{W}$ algebra associated with $\mathfrak{sl}_{2 \vert 1}$ and its relation to the parafermion algebra 
from the viewpoint of the quantum Hamiltonian reduction was discussed in \cite{DF1}, \cite{DF2}. 
See also \cite{Kojima:2019ewe} for a recent study on the deformed $\mathcal{W}$ superalgebra of $\mathfrak{sl}_{2\vert 1}$.
Furthermore, in \cite{Prochazka:2017qum}, \cite{Prochazka:2018tlo} it was proposed that by gluing the $Y$-algebras according a web of five brane junction, 
we can systematically construct the VOA of $\mathcal{W}$ algebra type. 
Figure \ref{ALEweb} is an example of the five brane web describing the ALE space of $A_1$ type
(the Eguchi-Hanson space). The parameter $\Psi$ is shifted by 1, because the direction of the $NS$-$5$ brane 
is $(0,1)$ at $v_1$, but it is $(1,1)$ at $v_2$. This means the parameters $(\epsilon_1', \epsilon_2', \epsilon_3')$ at $v_2$
are related to $(\epsilon_1, \epsilon_2, \epsilon_3)$ at $v_1$ by $\epsilon_1' = \epsilon_1, \epsilon_2'= \epsilon_1 + \epsilon_2$.
Hence $\Psi' = - \epsilon_2' / \epsilon_1' = -1 + \Psi$. 


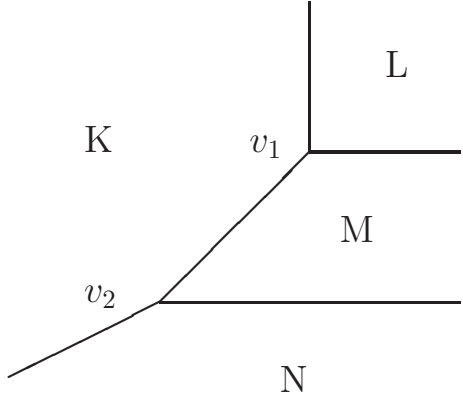
\begin{figure}[bht]
\unitlength 2mm
\begin{center}
\begin{picture}(25,30)
\thicklines
\put(10,10){\line(-2,-1){10}}
\put(10,10){\line(1,0){20}}
\put(10,10){\line(1,1){10}}
\put(20,20){\line(1,0){10}}
\put(20,20){\line(0,1){10}}
\put(5,20){\large{K}}
\put(25,25){\large{L}}
\put(22,14){\large{M}}
\put(18,4){\large{N}}
\put(16,20){\large$v_1$}
\put(5,10){\large$v_2$}
\end{picture}
\end{center}
\caption{Gluing of two $Y$-algebras $Y_{K,M,L}[\Psi]$ and $Y_{K,N,M}[\Psi-1]$
according to the toric diagram of ALE space of type $A_1$.}
\label{ALEweb}
\end{figure}

From the viewpoint of constructing VOA by gluing of $Y$-algebras, 
the current algebra $\widehat{\mathfrak{gl}}_2 = \widehat{\mathfrak{sl}}_2 \times \mathfrak{u}_1$
and the $\mathcal{N}=2$ superconformal algebra with additional $\mathfrak{u}_1$ factor are 
obtained from the brane configurations described in Figure \ref{conifold}; \cite{Gaiotto:2017euk}, \cite{Prochazka:2017qum}.
For example, in \cite{Harada:2018bkb} it is shown how the $\mathcal{N}=2$ unitary minimal models are 
realized by gluing plane partitions which are representation spaces of the $Y$-algebra. 
Since the brane configurations consist of two vertices, each algebra is constructed by gluing two $Y$-algebras.
Common to the both case is $Y_{0,1,2}$ which is the parafermion algebra and the
difference is the second $Y$-algebra $Y_{0,0,1}$ which is $\mathfrak{u}_1$ algebra.


\begin{figure}[bht]
\unitlength 2mm
\begin{center}
\begin{picture}(30,25)
\thicklines
\put(0,13){\line(-1,-1){5}}
\put(0,13){\line(1,0){8}}
\put(0,13){\line(0,1){8}}
\put(-5,8){\line(0,-1){8}}
\put(-5,8){\line(-1,0){8}}
\put(-6,13){\large 0}
\put(3,16){\large 2}
\put(0,6){\large 1}
\put(-10,3){\large 0}
\put(24,7){\line(-2,-1){10}}
\put(24,7){\line(1,0){15}}
\put(30,13){\line(-1,-1){6}}
\put(30,13){\line(1,0){8}}
\put(30,13){\line(0,1){8}}
\put(20,13){\large 0}
\put(33,16){\large 2}
\put(32,9){\large 1}
\put(30,3){\large 0}
\end{picture}
\end{center}
\caption{$\mathcal{N}=2$ superconformal algebra $\times U(1)$ (left) vs. $\widehat{\mathfrak{sl}}_2 \times U(1)$ (right)
by the gluing of two $Y$-algebras. }
\label{conifold}
\end{figure}
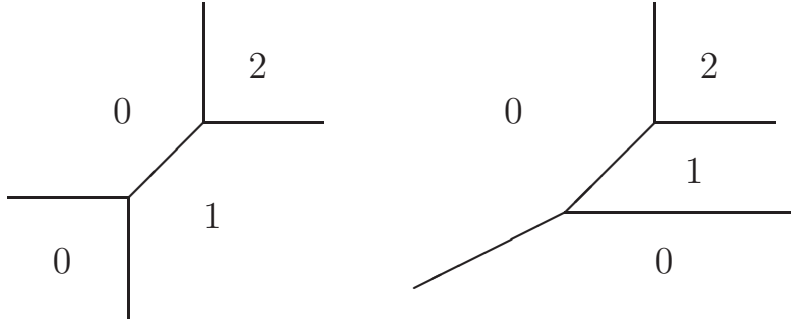

\subsection{Deformed $Y$-algebra and Ding-Iohara-Miki algebra}

What is relevant to us is a $q$ deformed version of the $Y$-algebra, 
which is discussed in \cite{Harada:2020woh}, \cite{Harada:2021xnm}.
Since the quantum toroidal algebra of type $\mathfrak{gl}_1$, or the Ding-Iohara-Miki (DIM)
algebra is a $q$ deformation of the $W_{1+\infty}$ algebra, or affine $\mathfrak{gl}_1$ Yangian algebra,
we can employ a Fock representation of the DIM algebra as a basic building block \cite{FHHSY}. 

Recall that the quantum toroidal algebra of type $\mathfrak{gl}_1$ has three parameters $(q_1,q_2,q_3)$ with $q_1q_2q_3=1$.
The algebra enjoys a $S_3$ symmetry under the permutation of $q_i$. 
The relation to the parameters of the $Y$-algebra is given by $q_i = e^{\epsilon_i}$. 
The Fock representation breaks the $S_3$ symmetry, we have to choose a preferred direction to reduce
the MacMahon module to a Fock module. Accordingly there are three types of the Fock module
whose central charge is $c=q_i^{\frac{1}{2}},~i=1,2,3$ \cite{BFM}. 
It is defined by the deformed Heisenberg algebra;
\beq\label{BFMdic}
\left[ a_n, a_m \right] = - \frac{n}{\kappa_n} (q_i^{\frac{n}{2}} - q_i^{-\frac{n}{2}})^3 \delta_{n+m, 0},
\eeq
with
\begin{align}\label{kappa}
\kappa_n &= \prod_{i=1}^{3} (q_i^{\frac{n}{2}}-q_i^{-\frac{n}{2}}) = \prod_{i=1}^{3} (q_i^{n}-1) 
= \prod_{i=1}^{3} (1 - q_i^{-n}) = \sum_{i=1}^{3} (q_i^{n}-q_i^{-n}).
\end{align}
We follow \cite{BFM} for the normalization of the deformed Heisenberg algebra. 
The multiple tensor product
\begin{equation}\label{multiple-Fock}
\overbrace{\mathcal{F}_1 \otimes \cdots \otimes \mathcal{F}_1}^L
~\otimes~
\overbrace{\mathcal{F}_2 \otimes \cdots \otimes \mathcal{F}_2}^M
~\otimes~ 
\overbrace{\mathcal{F}_3 \otimes \cdots \otimes \mathcal{F}_3}^N
\end{equation}
gives a free field representation of the deformed $Y$-algebra $q$-$Y_{L,M,N}$,
where $\mathcal{F}_i$ stands for the Fock module with the central charge $q_i^{\frac{1}{2}}$. 
When $L=M=0$, it reproduces the construction of the deformed $\mathcal{W}_N$ algebra from
the $N$-tuple tensor product of the Fock module of the quantum toroidal algebra.\footnote{Precisely
speaking we have to decouple an appropriate $U(1)$ factor.}

When we have the tensor product of the Fock modules of {\it different} central charge\footnote{When the central charges
are the same, there are a pair of the screening currents for each adjacent pair of Fock modules.},
$\mathcal{F}_{c_1}^{(1)} \otimes \mathcal{F}_{c_2}^{(2)}$,
we have a {\it unique} screening current of the DIM algebra \cite{BFM};
\begin{equation}\label{DIM-screening}
S(z) = ~e^{\frac{\epsilon_2}{\epsilon_1}Q^{(1)} - \frac{\epsilon_1}{\epsilon_2}Q^{(2)}}
z^{\frac{\epsilon_2}{\epsilon_1}a_0^{(1)} - \frac{\epsilon_1}{\epsilon_2}a_2^{(2)} + \frac{\epsilon_2}{\epsilon_3}}
\cdot \exp \left( -\sum_{n>0}\frac{1}{n}s_{-n}z^n \right) \exp \left( \sum_{n>0} \frac{1}{n}s_{n}z^{-n} \right),
\end{equation}
where
\begin{align}\label{screening}
s_n &= \frac{c_2^{n} - c_2^{-n}}{c_1^{n} - c_1^{-n}} a_n^{(1)} - c_2^{n} \frac{c_1^{n} - c_1^{-n}}{c_2^{n} - c_2^{-n}} a_n^{(2)}, \CR
s_{-n} &= c_1^{-n} \frac{c_2^{n} - c_2^{-n}}{c_1^{n} - c_1^{-n}} a_n^{(1)} - \frac{c_1^{n} - c_1^{-n}}{c_2^{n} - c_2^{-n}} a_n^{(2)},
\end{align}
and $a_n^{(1)} = a_n \otimes 1, a_n^{(2)} = 1 \otimes a_n, Q^{(1)} = Q \otimes 1, Q^{(2)} = 1 \otimes Q$.
The commutation relation of the zero modes is $[a_0, Q] = 1$, which implies that $S(z)$ is actually fermionic.

\subsection{ $\hbox{q-W}(\mathfrak{gl}_{2\vert 1})$ and deformed parafermion} 

Let us parametrize $q_i$ as follows;
\beq\label{DIMparameters}
q_1 = \gq^{-1}\gd =\gq^{-(k+2)} , \qquad q_2=\gq^2, \qquad q_3 = \gq^{-1} \gd^{-1} =\gq^k,
\eeq
where $k$ is going to be identified with the level of the quantum affine algebra $U_\gq(\widehat{\mathfrak{gl}}_2)$.
Note that $\gq$ here agrees the deformation parameter of $U_\gq(\widehat{\mathfrak{gl}}_2)$. 
In \cite{FJM} (see also \cite{Feigin:2013fga}) it was argued that the quantum affine algebra $U_\gq(\widehat{\mathfrak{gl}}_2)$
can be uplifted to the quantum toroidal algebra $U_{\gq,\gd}(\widehat{\widehat{\mathfrak{gl}}}_2)$ with $q_3=\gq^k$.
Let us consider the quantum toroidal algebra of type $A_{N-1}$ ; $U_{\gq,\gd}(\widehat{\widehat{\mathfrak{gl}}}_N)$.
Then we have
\begin{prp}[\cite{Feigin:2013fga}, \cite{FJM}]
In the quantum toroidal algebra $U_{\gq,\gd}(\widehat{\widehat{\mathfrak{gl}}}_N) $ with parameters \eqref{DIMparameters},
there are mutually commuting DIM algebras $\mathcal{E}'_{1,m}, m= 0, \ldots, N-1$; 
$$
U_{\gq,\gd}(\widehat{\widehat{\mathfrak{gl}}}_N) \supset 
\mathcal{E}'_{1,0} \otimes \mathcal{E}'_{1,1}
\otimes \cdots \otimes  \mathcal{E}'_{1,N-1},
$$
where the twisted parameters of the $m$-th DIM algebra $\mathcal{E}'_{1,m}$ are
\beq\label{q-parameters}
q_1^{(m)} = q_2, \qquad q_2^{(m)}= q_1^{m+1} q_3^{-N+m+1}, \qquad q_3^{(m)}=  q_1^{-m}q_3^{N-m}.
\eeq
\end{prp}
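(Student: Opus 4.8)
The plan is to derive the statement from the Drinfeld current presentation of $\mathcal{E}_N := U_{\gq,\gd}(\widehat{\widehat{\mathfrak{gl}}}_N)$ together with its free-field realisation on the tensor product of Fock modules \eqref{multiple-Fock}, reproducing the argument of \cite{Feigin:2013fga}, \cite{FJM}. First I would fix the Drinfeld currents $E_i(z), F_i(z), K^\pm_i(z)$ indexed by the nodes $i \in \mathbb{Z}/N\mathbb{Z}$ of the affine diagram $A^{(1)}_{N-1}$, recording the structure functions that govern their relations; the cyclic adjacency of the nodes and the $\gd$-twist on adjacent currents are the two features that will matter. In parallel I would recall the Drinfeld presentation of the DIM algebra $\mathcal{E}$ of type $\mathfrak{gl}_1$, with its single triple of currents and structure function
\[
g(z) = \frac{(1-q_1 z)(1-q_2 z)(1-q_3 z)}{(1-q_1^{-1}z)(1-q_2^{-1}z)(1-q_3^{-1}z)},
\]
so that the object of comparison for each $\mathcal{E}'_{1,m}$ is unambiguous.

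The computational heart is the Cartan (Heisenberg) sector. Expanding $K^\pm_i(z)$ into Heisenberg modes $h_{i,n}$ ($n \neq 0$), the bracket $[h_{i,n}, h_{j,m}] = \tfrac{1}{n} c^{(n)}_{ij}\,\delta_{n+m,0}$ is controlled by the $\gq$-deformed symmetrised Cartan matrix $c^{(n)}$ of $A^{(1)}_{N-1}$. Since the Dynkin diagram is a cycle and the $\gd$-twist acts uniformly around it, $c^{(n)}$ is \emph{circulant} in the indices $i,j \in \mathbb{Z}/N\mathbb{Z}$, hence is simultaneously diagonalised for every $n$ by the discrete Fourier transform $\widehat h^{(m)}_n = \sum_{j \in \mathbb{Z}/N\mathbb{Z}} \omega^{mj}\, h_{j,n}$, with $\omega$ a primitive $N$-th root of unity. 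I would then compute the diagonal entries and verify that each decoupled mode $\widehat h^{(m)}$ satisfies exactly the deformed Heisenberg relation \eqref{BFMdic}--\eqref{kappa} with $\kappa^{(m)}_n = \prod_{i=1}^3 \bigl( (q^{(m)}_i)^{n/2} - (q^{(m)}_i)^{-n/2}\bigr)$. Matching the exponents forces the twist \eqref{q-parameters}: the common direction $q^{(m)}_1 = q_2$ reflects the untwisted Fourier-invariant mode, while the linear-in-$m$ exponents in $q^{(m)}_2$ and $q^{(m)}_3$ record the cumulative $\gd$-twist picked up in passing from node $0$ to node $m$ around the cycle. As a consistency check one confirms $q^{(m)}_1 q^{(m)}_2 q^{(m)}_3 = q_1 q_2 q_3 = 1$, so each Fourier sector is genuinely of $\mathfrak{gl}_1$ type.

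It then remains to promote this abelian decoupling to the full DIM currents and to prove mutual commutativity. I would define the generators of $\mathcal{E}'_{1,m}$ by dressing the Fourier-rotated modes $\widehat h^{(m)}$ with the appropriate vertex-operator combinations of $E_i(z), F_i(z)$ — equivalently, by transporting the generators of $\mathcal{E}$ through the free-field realisation of $\mathcal{E}_N$ — check the DIM relations by reducing each to a rational identity in $z/w$ assembled from the diagonalised structure functions, and verify that the cross structure function between the $m$-th and $m'$-th sectors collapses to $1$ when $m \neq m'$, which gives $[\mathcal{E}'_{1,m}, \mathcal{E}'_{1,m'}] = 0$ and the tensor decomposition.

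The main obstacle I expect lies precisely in this last step. The circulant diagonalisation decouples the abelian Cartan part cleanly and pins down \eqref{q-parameters}, but the nonabelian currents $E_i, F_i$ do not Fourier-transform into decoupled families by any elementary manipulation; one must keep careful track of the $\gd$-dependent twist together with the cyclic index $m$ to see the mutual OPE cancellations, and separately argue that the resulting map $\mathcal{E}^{\otimes N} \hookrightarrow \mathcal{E}_N$ is injective rather than merely a homomorphism. This is where the coproduct/shuffle-algebra machinery of \cite{Feigin:2013fga}, \cite{FJM} is needed; alternatively, in keeping with the free-field strategy of the present paper, I would verify the DIM relations and the commutativity directly on the Fock representation \eqref{multiple-Fock}, where the decoupling is manifest boson by boson.
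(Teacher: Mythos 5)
The paper offers no proof of this proposition: it is imported verbatim from \cite{Feigin:2013fga}, \cite{FJM}, and the only internal evidence is the explicit $N=2$ realisation on Fock modules worked out later in Section \ref{Yalgebra}. Your proposal must therefore be judged as a reconstruction of the cited argument, and its one concrete computational step fails. The deformed Cartan matrix of $U_{\gq,\gd}(\widehat{\widehat{\mathfrak{gl}}}_N)$ is indeed circulant and is diagonalised by the discrete Fourier transform, but for $N\geq 3$ the resulting eigenvalues are of the form $(\gq^{n}+\gq^{-n})-\gd^{n}\omega^{m}-\gd^{-n}\omega^{-m}$ with $\omega$ a primitive $N$-th root of unity: as functions of $n$ these are Laurent polynomials in $\gq^{n},\gd^{n}$ with coefficients in $\mathbb{Z}[\omega]$, whereas the quantities $\kappa^{(m)}_n$ built from \eqref{q-parameters} via \eqref{kappa} have integer coefficients and contain no roots of unity. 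So ``matching the exponents'' cannot force \eqref{q-parameters}. Moreover, a rank-one Heisenberg algebra is determined only up to the rescaling $a_n\mapsto c_n a_n$, which multiplies $[a_n,a_{-n}]$ by $c_nc_{-n}$; a single structure constant can therefore never pin down the three parameters $q_i^{(m)}$. Those are fixed by the structure function acting on the full currents, not by the Cartan sector.

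Even in the one case where the Fourier transform does yield a legitimate orthogonal decomposition, $N=2$, it yields the \emph{wrong} one. The paper's own computation (Section \ref{Yalgebra}) shows that the Heisenberg generators of the two commuting DIM factors are $h_n$ and $\widetilde{a}_n$, related to the toroidal Cartan modes of \eqref{gl2Cartan} by $H_{1,n}\propto h_n+\widetilde{a}_n$ and $H_{0,n}\propto -(q_3^{-n}h_n+q_1^{-n}\widetilde{a}_n)$; inverting, $h_n$ and $\widetilde{a}_n$ are combinations of $H_{0,n},H_{1,n}$ with $n$-dependent monomial coefficients in $q_1,q_3$, not the constant coefficients $\pm 1$ of the DFT. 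The combinations $H_{0,n}\pm H_{1,n}$ do mutually commute, but each mixes $h_n$ with $\widetilde{a}_n$ and neither generates a DIM factor. The correct ``rotation'' replaces your root of unity $\omega^{mj}$ by $n$-th powers of monomials in $q_1,q_3$ accumulated around the cycle, and in \cite{Feigin:2013fga}, \cite{FJM} it is not postulated but read off from the genuinely non-abelian construction: the currents of $\mathcal{E}'_{1,m}$ are normally ordered fused products of consecutive Drinfeld currents $E_i$, $F_i$ with $\gd$-shifted arguments, and the DIM relations, mutual commutativity and injectivity are verified from the quadratic and Serre relations (or on the Fock modules \eqref{multiple-Fock}). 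Your closing admission that this last stage requires the machinery of the references is accurate, but since the abelian stage is also not the Fourier one, essentially the entire argument has to be taken from \cite{Feigin:2013fga}, \cite{FJM} rather than from the plan as written.
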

It is remarkable that the values of the twisted parameters \eqref{q-parameters} exactly match with what we obtain
from the brane web (or the toric diagram) of the ALE space of type $A_{N-1}$.\footnote{The diagram has
$N-1$ internal edges that correspond to a chain of $N-1$ rational curves whose intersection pairing agrees with
$(-1)$ times the $A_{N-1}$ Cartan matrix. The chain of $N-1$ rational curves comes from a resolution of the $A_{N-1}$ singularity.}
The toric diagram of the ALE space of type $A_{N-1}$ has $N$ vertices and the slopes of edges at the $k$-th vertex are
$\mathsf{v}_1^{(k)}=(1,0),\mathsf{v}_2^{(k)}=(k,1)$ and $\mathsf{v}_3^{(k)}=(-1-k,-1)$.\footnote{The ordering of commuting DIM
algebras and the vertices in the diagram is reversed.}

In the case of $N=2$ we have two commuting DIM algebras $\mathcal{E}'_{1,1}[q_2, q_1^2, q_1^{-1}q_3]$ and 
$\mathcal{E}'_{1,0}[q_2, q_1q_3^{-1}, q_3^2]$. The first toroidal algebra will produce the deformed $\mathcal{W}$ algebra
$\hbox{q-W}(\mathfrak{gl}_{2\vert 1})$, which is the parafermion sector and the second algebra is identified with
the deformed Heisenberg algebra of the $U(1)$ boson sector (See figure \ref{A-gluing}).
According to the prescription \eqref{multiple-Fock} we take the tensor product
$\mathcal{F}^{(1)}_{\gd^{-1}} \otimes \mathcal{F}^{(2)}_{q_1}
\otimes \mathcal{F}^{(3)}_{\gd^{-1}}$ of three Fock module of $\mathcal{E}'_{1,1}[q_2, q_1^2, q_1^{-1}q_3]$.
Note that $[q_2, q_1^2, q_1^{-1}q_3] = [\gq^{2}, q_1^2, \gd^{-2}]$.
In our normalization \eqref{BFMdic}, we thus employ the following deformed bosons;\footnote{The boson of the $k$-th factor 
of the tensor product is denoted by $a_n^{(k)}$.}
\beqa
&&
\left[ a_n^{(1)}, a_m^{(1)} \right] = \left[ a_n^{(3)}, a_m^{(3)} \right]
= -n \frac{(\gd^n - \gd^{-n})^2}{(\gq^n - \gq^{-n})(q_1^n - q_1^{-n})}\delta_{n+m,0}, \\
&&
\left[ a_n^{(2)}, a_m^{(2)} \right] 
= n \frac{(q_1^{n} - q_1^{-n})^2}{(\gq^n - \gq^{-n})(\gd^n - \gd^{-n})}\delta_{n+m,0}.
\eeqa


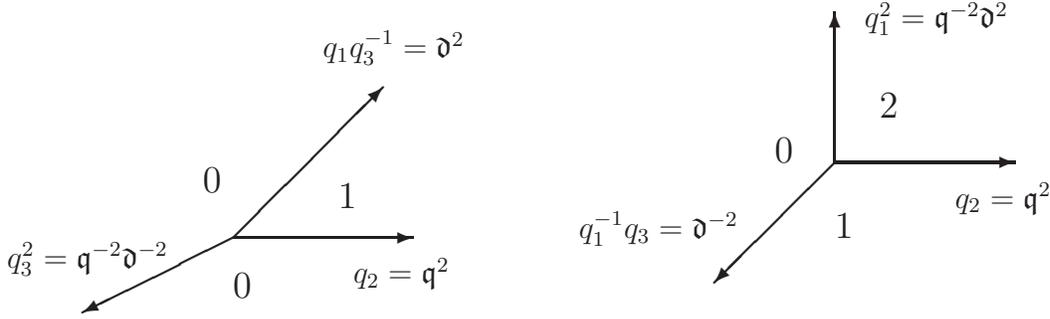
\begin{figure}[t]
\unitlength 2mm
\begin{center}
\begin{picture}(50,20)
\thicklines
\put(40,10){\vector(1,0){12}}
\put(40,10){\vector(0,1){10}}
\put(40,10){\vector(-1,-1){8}}
\put(43,13){\large$2$}
\put(40,5){\large$1$}
\put(36,10){\large$0$}
\put(48,7){$q_2=\gq^2$}
\put(42,19){$q_1^2=\gq^{-2}\gd^2$}
\put(23,5){$q_1^{-1}q_3=\gd^{-2}$}

\put(0,5){\vector(1,0){12}}
\put(0,5){\vector(1,1){10}}
\put(0,5){\vector(-2,-1){10}}
\put(7,7){\large$1$}
\put(0,1){\large$0$}
\put(-2,8){\large$0$}
\put(8,2){$q_2=\gq^2$}
\put(6,17){$q_1q_3^{-1}=\gd^2$}
\put(-15,3){$q_3^2= \gq^{-2}\gd^{-2}$}
\end{picture}
\end{center}
\caption{Gluing for $U_q(\widehat{\mathfrak{sl}}_2)$}
\label{A-gluing}
\end{figure}

By the formula \eqref{screening}, there are two fermionic screening currents in the form \eqref{DIM-screening},
with the following modes;
\begin{align}
s_n^{(12)} &= - \frac{q_1^{n} - q_1^{-n}}{\gd^{n} - \gd^{-n}} a_{n}^{(1)}
+ q_1^{n} \frac{\gd^{n} - \gd^{-n}}{q_1^{n} - q_1^{-n}} a_{n}^{(2)},
\CR
s_{-n}^{(12)} &= - \gd^{n} \frac{q_1^{n} - q_1^{-n}}{\gd^{n} - \gd^{-n}} a_{-n}^{(1)} 
+ \frac{\gd^{n} - \gd^{-n}}{q_1^{n} - q_1^{-n}} a_{-n}^{(2)},
\end{align}
and
\begin{align}
s_n^{(23)} &= - \frac{\gd^{n} - \gd^{-n}}{q_1^{n} - q_1^{-n}} a_{n}^{(2)} 
+ \gd^{-n} \frac{q_1^{n} - q_1^{-n}}{\gd^{n} - \gd^{-n}} a_{n}^{(3)},
\CR
s_{-n}^{(23)} &= - q_1^{-n} \frac{\gd^{n} - \gd^{-n}}{q_1^{n} - q_1^{-n}} a_{-n}^{(2)} 
+ \frac{q_1^{n} - q_1^{-n}}{\gd^{n} - \gd^{-n}} a_{-n}^{(3)}.
\end{align}
There are two possibilities of the choice of the root system of $\mathfrak{gl}_{2\vert 1}$.
One of them is purely fermionic and the Dynkin diagram has two fermionic nodes;
\begin{picture}(20,3)(0,0)
\thicklines
\put(0,0){$\otimes$}
\put(8,3){\line(1,0){12}}
\put(20,0){$\otimes$}
\end{picture}\hspace{4mm}. The fermionic screening currents $S^{(12)}(z)$ and $S^{(23)}(z)$ 
correspond to these fermionic nodes.

Up to overall normalization the $U(1)$ boson associated with the Cartan subalgebra of $\hbox{q-W}(\mathfrak{gl}_{2\vert 1})$ is 
fixed by the commutativity with these screening currents;
\begin{align}
h_n &= (q_1^n -q_1^{-n}) \left(\frac{a_n^{(1)}}{\gd^n - \gd^{-n}} - \gd^{n} \frac{a_n^{(2)}}{q_1^{n} - q_1^{-n}} 
+  \gq^{n} \frac{a_n^{(3)}}{\gd^{n} - \gd^{-n}} \right), \CR
h_{-n} &= (q_1^n -q_1^{-n}) \left(\gq^{-n} \frac{a_{-n}^{(1)}}{\gd^n - \gd^{-n}} - \gd^{-n} \frac{a_{-n}^{(2)}}{q_1^{n} - q_1^{-n}} 
+  \frac{a_{-n}^{(3)}}{\gd^{n} - \gd^{-n}} 
\right).
\end{align}
The commutation relation is 
\begin{align}
[h_n, h_m] &=
- n (q_1^{n} - q_1^{-n})^2 \left(\frac{\gq^{n} + \gq^{-n}}{(q_1^{n} - q_1^{-n})(\gq^{n} - \gq^{-n})}
- \frac{1}{(\gd^{n} - \gd^{-n})(\gq^{n} - \gq^{-n})} \right) \delta_{n+m,0} \CR
&= n \frac{(q_1^{n} - q_1^{-n})}{(\gq^n - \gq^{-n})(\gd^{n} - \gd^{-n})}
\left( (q_1^{n} - q_1^{-n})- (\gd^{n} - \gd^{-n})(\gq^{n} + \gq^{-n}) \right) \delta_{n+m,0}\CR
&= n \frac{(q_1^{n} - q_1^{-n})(q_3^{n} - q_3^{-n})}{(\gq^n - \gq^{-n})(\gd^{n} - \gd^{-n})} \delta_{n+m,0}.
\end{align}

The commutation relations for the screening currents are
\beq
\left[ s_{n}^{I}, s_{m}^J \right] = n~\delta_{n+m,0} \times
\begin{cases} 1 . \qquad (I=J), \\
- \frac{\gd^n - \gd^{-n}}{\gq^n - \gq^{-n}} , \qquad (I \neq J)
\end{cases}
\eeq
for $I, J = (12), (23)$. Hence, the orthogonal combinations are
\begin{align}
s^{(+)}_{\pm n} :=& \frac{[n]}{\sqrt{2}n}(s_{\pm n}^{(12)} + s_{\pm n}^{(23)}), \\
s^{(-)}_{\pm n} :=& \frac{[n]}{\sqrt{2}n}(s_{\pm n}^{(12)} - s_{\pm n}^{(23)}).
\end{align}
Their commutation relations are
\begin{align}
\left[ s^{(+)}_{n}, s^{(+)}_{m} \right] 
&= \frac{[n]}{n(\gq - \gq^{-1})} ((\gq^{n} - \gq^{-n}) - (\gd^{n} - \gd^{-n})) \delta_{n+m,0} \CR
&= - \frac{[n]}{n(\gq - \gq^{-1})}(q_1^{n/2} - q_1^{-n/2})(q_3^{n/2} +  q_3^{-n/2})  \delta_{n+m,0}, 
\\
\left[ s^{(-)}_{n}, s^{(-)}_{m} \right] 
&= \frac{[n]}{n(\gq - \gq^{-1})}((\gq^{n} - \gq^{-n}) + (\gd^{n} - \gd^{-n})) \delta_{n+m,0}\CR
&=  - \frac{[n]}{n(\gq - \gq^{-1})}(q_3^{n/2} - q_3^{-n/2})(q_1^{n/2} + q_1^{-n/2}) \delta_{n+m,0}.
\end{align}
Then the scaling 
\begin{align}
\beta_{n} &= \left(\frac{(q_1^{n/2} + q_1^{-n/2})(q_2^{n/2} + q_2^{-n/2})}{q_3^{n/2} + q_3^{-n/2}} \right)^{1/2}s^{(+)}_{n}, \\
\overline{\alpha}_{n} &= \left(\frac{(q_2^{n/2} + q_2^{-n/2})(q_3^{n/2} + q_3^{-n/2})}{q_1^{n/2} + q_1^{-n/2}} \right)^{1/2} s^{(-)}_{n}
\end{align}
reproduces the standard parafermion sector for the Wakimoto representation of $U_\gq (\widehat{\mathfrak{sl}}_2)$ with level $k$
(see Definition \ref{Matsuo-parafermion});
\begin{align}
\left[ \beta_{n}, \beta_{m}\right] &= \frac{[2n][(k+2)n]}{n}  \delta_{n+m,0},
\\
\left[ \overline{\alpha}_{n}, \overline{\alpha}_{m} \right] &=  - \frac{[2n][kn]}{n}  \delta_{n+m,0}.
\end{align}

\subsection{Adding $U(1)$ sector}

We can obtain a free field representation of $U_\gq(\widehat{\mathfrak{gl}}_2)$ 
by adding a $U(1)$ boson to the deformed parafermion sector.
From the diagram (Figure \ref{A-gluing}), we expect the appropriate deformed Heisenberg algebra to be added is 
the Fock module $\mathcal{F}_{q_3}$ of the second DIM algebra $\mathcal{E}'_{1,0}[q_2, q_1q_3^{-1}, q_3^2]$.
Hence we introduce the deformed Heisenberg algebra
\begin{equation}\label{tilde-a-com}
[ \widetilde{a}_{n}, \widetilde{a}_{m} ] = - n\frac{(q_3^n - q_3^{-n})^2}{(\gq^n - \gq^{-n})(\gd^{n} - \gd^{-n})} \delta_{n+m,0}.
\end{equation}
Then we can check the combination
\begin{equation}
\alpha_n = \frac{[n]}{n} (h_n  + \widetilde{a}_{n})
\end{equation}
reproduces the commutation relation for the $U(1)$ boson sector of the Wakimoto representation of $U_\gq(\widehat{\mathfrak{sl}}_2)$ 
in Definition \ref{Matsuo-parafermion} as follows;
\begin{align}
[ \alpha_{n}, \alpha_{m} ] 
&= \frac{[n]^2}{n} \frac{(q_3^n - q_3^{-n})}{(\gq^n - \gq^{-n})(\gd^n - \gd^{-n})} 
\left((q_1^{n} - q_1^{-n}) - (q_3^{n} - q_3^{-n}) \right) \delta_{n+m,0} \CR
&= \frac{(q_2^{n}-q_2^{-n})(q_3^n - q_3^{-n})}{n (\gq - \gq^{-1})^2} \delta_{n+m,0} =  \frac{[2n][kn]}{n} \delta_{n+m,0}.
\end{align}


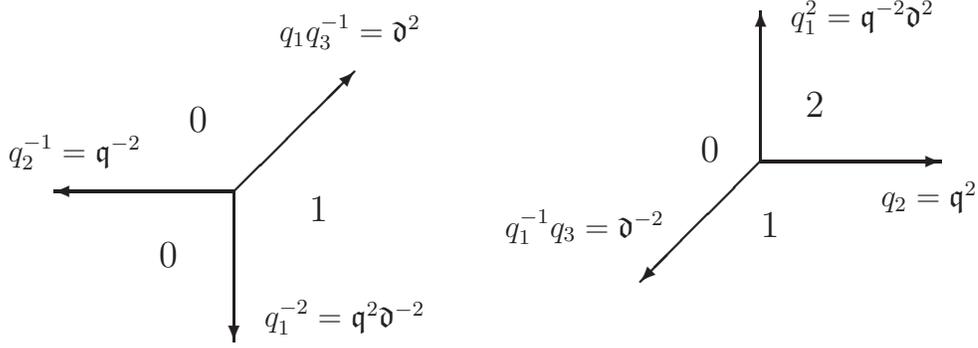
\begin{figure}[t]
\unitlength 2mm
\begin{center}
\begin{picture}(50,20)
\thicklines
\put(40,12){\vector(1,0){12}}
\put(40,12){\vector(0,1){10}}
\put(40,12){\vector(-1,-1){8}}
\put(43,15){\large$2$}
\put(40,7){\large$1$}
\put(36,12){\large$0$}
\put(48,9){$q_2=\gq^2$}
\put(42,21){$q_1^2=\gq^{-2}\gd^2$}
\put(23,7){$q_1^{-1}q_3=\gd^{-2}$}

\put(5,10){\vector(1,1){8}}
\put(5,10){\vector(0,-1){10}}
\put(5,10){\vector(-1,0){12}}
\put(10,8){\large$1$}
\put(0,5){\large$0$}
\put(2,14){\large$0$}
\put(8,20){$q_1q_3^{-1}=\gd^2$}
\put(-10,12){$q_2^{-1}=\gq^{-2}$}
\put(7,1){$q_1^{-2}=\gq^2\gd^{-2}$}
\end{picture}
\end{center}
\caption{Gluing for $\mathcal{N}=2$ superconformal algebra}
\label{B-gluing}
\end{figure}


On the other hand, from the diagram for the $\mathcal{N}=2$ Virasoro algebra  (See Figure \ref{B-gluing}),
we now employ the DIM algebra with different parameters as $U(1)$ sector to obtain $\mathcal{N}=2$ Virasoro $\times U(1)$.
We introduce
\beq\label{bar-a-com}
\left[ \overline{a}_{n}, \overline{a}_{m} \right] 
= n \frac{(\gq^n - \gq^{-n})^2}{(q_1^{n} - q_1^{-n})(\gd^{n} - \gd^{-n})} \delta_{n+m,0}.
\eeq
This is what we expect from the deformed boson from 
$\hbox{q-W}(\mathfrak{gl}_{1\vert 0})[q_1^{-2}, \gd^{2}, \gq^{-2}] \sim \mathcal{F}_{\gq}$.
This means that compared with the parameter for the $U(1)$ boson of $U_\gq(\widehat{\mathfrak{gl}}_2)$, we should make 
a replacement of parameter
\begin{equation}\label{T-transform}
(\gq, \gd) \longrightarrow  (\gq\gd^{-1}, \gd),
\end{equation}
which is nothing but the (inverse of) $T$ transformation in the base $(\gq, \gd)$.
In fact the left vertex in Figure \ref{B-gluing} is obtained by
applying the transformation \eqref{T-transform} to the corresponding vertex in Figure \ref{A-gluing}.
Similarly the commutation relation \eqref{bar-a-com} is obtained from \eqref{tilde-a-com}
by the same transformation.

Recall that the $U(1)$ boson vertex operator $V^\pm(z)$ in the Wakimoto
representation of $U_q(\widehat{\mathfrak{sl}}_2)$ involves the oscillator mode $\alpha_n$ as follows;
\begin{equation}
:\exp\left( \mp \sum_{n\neq 0} q^{\mp {k\over 2} |n|} {z^{-n}\over [kn]} \alpha_n \right):.
\end{equation}
See Definition \ref{Wakimoto-vertex}. To figure out the $U(1)$ boson for the deformed $\mathcal{N}=2$ SCA,
let us look at the combination
\begin{equation}
q_3^{\mp\frac{|n|}{2}} \frac{\gq - \gq^{-1}}{q_3^n - q_3^{-n}}\alpha_n
= q_3^{\mp\frac{|n|}{2}} \frac{\gq^n - \gq^{-n}}{n(q_3^n - q_3^{-n})}(h_n + \widetilde{a}_n).
\end{equation}
We keep the first term which comes from the parafermion sector but replace $\widetilde{a}_n$ with $\overline{a}_n$. 
We should also change the coefficient of the second term according to the transformation \eqref{T-transform};
\begin{equation}
q_3^{\mp\frac{|n|}{2}}\frac{\gq^n - \gq^{-n}}{n(q_3^n - q_3^{-n})} 
\longrightarrow - q_3^{\mp\frac{|n|}{2}}\gd^{\mp\frac{|n|}{2}} \frac{q_1^n - q_1^{-n}}{n(\gq^n - \gq^{-n})}.
\end{equation}
Hence, we see that the $U(1)$ boson for the deformed $\mathcal{N}=2$ SCA is given by\footnote{
The vertex operator $\widetilde{V}^\pm(z)$ does not involve the monomial factor $q_3^{\mp\frac{|n|}{2}}$.
See Definition \ref{tildeV}.}
\begin{equation}
\widetilde{\alpha}_n = q_3^{\mp\frac{|n|}{2}} \left( \frac{[n]}{n} h_{n} - \gd^{\mp\frac{|n|}{2}} 
\frac{[kn]}{n}\frac{q_1^{n} - q_1^{-n}}{\gq^n - \gq^{-n}} 
\cdot \overline{a}_{n} \right).
\end{equation}
The commutation relation is 
\begin{align}
\left[ \widetilde{\alpha}_n, \widetilde{\alpha}_m \right] 
&=  q_3^{\mp |n|} \frac{(q_1^n - q_1^{-n})(q_3^n - q_3^{-n})}{n(\gq - \gq^{-1})^2} \left[ \frac{\gq^n - \gq^{-n}}{\gd^n - \gd^{-n}}
+ \frac{\gd^{\mp |n|} (q_3^{n} - q_3^{-n})}{\gd^n - \gd^{-n}} \right] \delta_{n+m,0} \CR
&= \frac{[(k+2)n][kn]}{n} \delta_{n+m,0},
\end{align}
which exactly agrees with the commutation relation \eqref{tildealpha} we used in section \ref{Twist-Wakimoto}.

\subsection{Towards an uplift to quantum toroidal algebra} 

In the last section we see that $H_{1,n} := \alpha_n$ give the Cartan modes of $U_\gq(\widehat{\mathfrak{gl}}_2)$. 
The combination which is orthogonal to $H_{1, \pm n}$ is
\begin{align}
Z_{n} &:= \frac{[n]}{n} \left[\sqrt{q_1^n -q_1^{-n}} \left(\frac{a_n^{(1)}}{\gd^n - \gd^{-n}} - \gd^{n} 
\frac{a_n^{(2)}}{q_1^{n} - q_1^{-n}} 
+  \gq^{n} \frac{a_n^{(3)}}{\gd^{n} - \gd^{-n}} \right) + \frac{1}{\sqrt{q_3^{n} -q_3^{-n}}} \widetilde{a}_n \right], \CR
Z_{-n} &:= \frac{[n]}{n} \left [\sqrt{q_1^n -q_1^{-n}} \left(\gq^{-n} 
\frac{a_{-n}^{(1)}}{\gd^n - \gd^{-n}} - \gd^{-n} \frac{a_{-n}^{(2)}}{q_1^{n} - q_1^{-n}} 
+  \frac{a_{-n}^{(3)}}{\gd^{n} - \gd^{-n}} \right) + \frac{1}{\sqrt{q_3^{n} -q_3^{-n}}} \widetilde{a}_{-n} \right]. \CR
&&
\end{align}
We obtain $U_\gq(\widehat{\mathfrak{sl}}_2)$ after decoupling $Z_{\pm n}$.

Now let us introduce 
\begin{equation}
H_{0, \pm n} = - \frac{[n]}{n} (q_3^{\mp n} h_{\pm n} + q_1^{\mp n} \widetilde{a}_{\pm n} ).
\end{equation}
It is clear that the commutation relation of $H_{0,\pm n}$ is the same as $H_{1,\pm n}$.
We can see that they give the Cartan modes of the quantum toroidal algebra $U_{\gq,\gd}(\widehat{\widehat{\mathfrak{gl}}}_2)$.
\beq
\left[ H_{i,n}, H_{j,m} \right] 
=  a_{ij}(n) \frac{C^n - C^{-n}}{\gq - \gq^{-1}} \delta_{n+m,0},
\eeq
where the $(\gq, \gd)$ deformed Cartan matrix is
\beq\label{gl2Cartan}
a_{ij} (n) = \frac{[n]}{n} \times
\begin{cases}
\gq^n + \gq^{-n} \quad (i=j), \\
- \gd^n - \gd^{-n} \quad (i \neq j),
\end{cases}
\qquad
i,j \in \{ 0, 1 \}
\eeq
and we put $C=q_3=\gq^{k}$ for the center. In fact we can compute
\beqa
\left[ H_{0,n}, H_{1,-n} \right] 
&=& - q_3^{-n} \frac{(\gq^n - \gq^{-n})(q_1^n - q_1^{-n})(q_3^n - q_3^{-n})}{n(\gq - \gq^{-1})^2(\gd^n - \gd^{-n})}
+ q_1^{-n}\frac{(\gq^n - \gq^{-n})(q_3^n - q_3^{-n})^2}{n(\gq - \gq^{-1})^2(\gd^n - \gd^{-n})} \CR
&=&   \frac{(\gq^n - \gq^{-n})(q_3^n - q_3^{-n})}{n(\gq - \gq^{-1})^2(\gd^n - \gd^{-n})}
\left( - q_3^{-n} (q_1^n - q_1^{-n}) + q_1^{-n} (q_3^n - q_3^{-n}) \right) \CR
&=& -  \frac{(\gq^n - \gq^{-n})(q_3^n - q_3^{-n})(\gd^n + \gd^{-n})}{n(\gq - \gq^{-1})^2}.
\eeqa
Hence, the Wakimoto representation of the quantum affine algebra $U_\gq(\widehat{\mathfrak{gl}}_2)$ can be uplifted to 
the evaluation representation of the quantum toroidal algebra $U_{\gq,\gd}(\widehat{\widehat{\mathfrak{gl}}}_2)$.
See also \cite{Feigin:2018moi} for a Heisenberg representation of $U_{\gq,\gd}(\widehat{\widehat{\mathfrak{gl}}}_2)$.

\begin{figure}[t]
\centering
\includegraphics{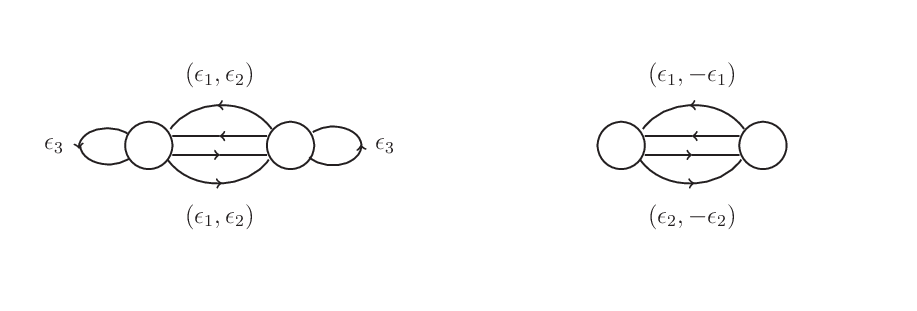}

\vspace{-16mm}
\caption{Quivers for the quantum toroidal algebra $U_{\gq,\gd}(\widehat{\widehat{\mathfrak{gl}}}_2)$ (left) and
 $U_{\gq,\gd}(\widehat{\widehat{\mathfrak{gl}}}_{1\vert 1})$ (right). The parameters associated with edges 
 satisfy the Calabi-Yau condition $\epsilon_1 + \epsilon_2 + \epsilon_3 =0$.}\label{Fig:quivers}
\end{figure}

It is an interesting problem to see that the twisted Wakimoto representation of 
the deformed $\mathcal{N}=2$ superconformal algebra allows a similar
uplift to a representation of some quantum toroidal algebra. 
The recent proposal of quiver quantum toroidal algebra 
\cite{Noshita:2021ldl}, \cite{Galakhov:2021vbo}, \cite{Noshita:2021dgj}, \cite{Bao:2022jhy} seems 
to provide a good starting point to tackle this task. 
The quivers corresponding to the local toric $CY_3$ geometries $\mathbb{C} \times (\mathbb{C}^2/\mathbb{Z}_2)$ 
and the resolved conifold are given in Figure \ref{Fig:quivers}.
(See also Fig. A.1 in \cite{Bao:2022jhy}; Fig.27 (a) and Fig.7 in \cite{Noshita:2021dgj}.)
The toric diagrams of these local $CY_3$ geometries agree with the web-diagrams for the $Y$-algebras we are looking at. 
In \cite{Galakhov:2021vbo} and \cite{Noshita:2021dgj} 
the structure functions of the quantum toroidal algebra are defined via data of the quiver diagram. 
According to eq. (4.2.10) in \cite{Noshita:2021ldl}, given a quiver, we can write down the commutation relation 
of the Cartan modes as\footnote{Compare this with (C.6) in \cite{Galakhov:2021vbo}.}
\beq
\left[ H_{i, r} , H_{j,s} \right] = \delta_{r+s, 0} \frac{C^r - C^{-r}}{r} \left( \sum_{I \in \{ j \to i\}} q_I^r 
- \sum_{I \in \{i \to j\}} q_I^{-r} \right). 
\eeq
Applying the dictionary;
\begin{align*}
\epsilon_1  &\longrightarrow  q_1=\gd \gq^{-1}, \\
\epsilon_2  &\longrightarrow  q_3= \gd^{-1} \gq^{-1}, \\
\epsilon_3  &\longrightarrow  q_2 = \gq^{2},
\end{align*}
to the left quiver in Figure \ref{Fig:quivers},
we have 
\beq
\left[ H_{i, r} , H_{i,s} \right] = \delta_{r+s, 0} \frac{C^r - C^{-r}}{r} \left( q_2^r - q_2^{-r} \right)
= \delta_{r+s, 0} \frac{C^r - C^{-r}}{r} \left(\gq^{2r} - \gq^{-2r} \right),
\eeq
and 
\beqa
\left[ H_{i, r} , H_{j,s} \right] &=& \delta_{r+s, 0} \frac{C^r - C^{-r}}{r}
 \left( q_1^{r} + q_3^{r} - q_1^{-r} - q_3^{-r} \right) \CR
 &=& - \delta_{r+s, 0} \frac{C^r - C^{-r}}{r}(\gq^r - \gq^{-r})(\gd^r + \gd^{-r}),
\qquad (i \neq j). 
\eeqa
which reproduces the deformed Cartan matrix \eqref{gl2Cartan} up to the normalization factor $(\gq^r - \gq^{-r})$. 
On the other hand, applying the same dictionary to the right quiver in Figure \ref{Fig:quivers}
\beq\label{diagonal=0}
\left[ H_{i, r} , H_{i,s} \right] = 0,
\eeq
and
\beqa\label{off-diagonal}
\left[ H_{i, r} , H_{j,s} \right] &=& \delta_{r+s, 0}~\epsilon_{ij}~\frac{C^r - C^{-r}}{r}
 \left( q_3^{r} + q_3^{-r} - q_1^{r} - q_1^{-r} \right) \CR
 &=& - \delta_{r+s, 0}~\epsilon_{ij}~\frac{C^r - C^{-r}}{r}(\gq^r - \gq^{-r})(\gd^r - \gd^{-r}), \qquad (i \neq j),
\eeqa
where $\epsilon_{01} = - \epsilon_{10} =1$.
In Section 4.4 of \cite{Noshita:2021dgj}, this is identified as the quantum toroidal algebra associated 
with the Lie superalgebra $\mathfrak{gl}_{1\vert 1}$. However, from the viewpoint of defining 
the quantum toroidal algebra based on the Cartan matrix \cite{Bezerra:2019dmp}, the case of $\mathfrak{gl}_{1\vert 1}$
looks quite irregular. For example, the commutation relations \eqref{diagonal=0} and \eqref{off-diagonal} would
imply the zero Cartan matrix in the limit $\gd \to 1$. 
It seems non-trivial to realize the commutation relations \eqref{diagonal=0} and \eqref{off-diagonal} in terms of deformed free bosons. 
Hence, an uplift of the $q$-deformed $\mathcal{N}=2$ SCA
to the quantum toroidal algebra $U_{\gq,\gd}(\widehat{\widehat{\mathfrak{gl}}}_{1\vert 1})$  is not straightforward. We leave this issue for future work.


\section{Twisted Wakimoto representation for realizing $\Svir$}\label{Twist-Wakimoto}

In the undeformed case the $\mathcal{N}=2$ superconformal algebra and the affine Lie algebra $\widehat{\mathfrak{sl}}_2$
have a common sector, called parafermion sector \cite{Fateev:1985mm}, \cite{Fateev:1985ig}. 
In the Wakimoto representation the parafermion sector
is generated by $\overline{\alpha}_n,\beta_n$ with the zero modes $Q_{\overline{\alpha}},Q_\beta$.
To construct the Wakimoto representation of $\Svir$, we will keep the ($q$-deformed) parafermion sector
and twist the $U(1)$ boson following the result in the previous section.
 
\begin{dfn}\label{modified-Cartan}
While keeping the generators $\overline{\alpha}_n,\beta_n$ and $Q_{\overline{\alpha}},Q_\beta$ as they are, 
replace the generators $\alpha_n$ and $Q_\alpha$ with the 
the modified ones $\widetilde{\alpha}_n$ and $Q_{\widetilde{\alpha}}$
satisfying the commutation relations
\begin{align}\label{tildealpha}
&[\widetilde{\alpha}_n,\widetilde{\alpha}_m]={[(k+2)n][kn]\over n}\delta_{n+m,0},\qquad\qquad 
\,\,[\widetilde{\alpha}_n,Q_{\widetilde{\alpha}}]=\delta_{n,0},
\end{align}
and all the other commutators being vanishing.
\end{dfn}

The commutation relation \eqref{tildealpha} of $\widetilde{\alpha}_n$ 
is the same as that of the Heisenberg generators $H_m$ of $\Svir$
introduced in section \ref{Heisenberg}. 
Hence, we have $H_m=\widetilde{\alpha}_m$ in the Wakimoto representation of $\Svir$.

\begin{dfn}\label{tildeV}
Introduce the modified vertex operators $\widetilde{V}^\pm(z)$ as\footnote{
There is a freedom of making $\widetilde{\alpha}_{\pm m} \to c_m \widetilde{\alpha}_{\pm m}$.
For example, we have $c_m=q^{\mp k|m|/2}$ in the case of $V^\pm (z)$, 
see Definition \ref{Wakimoto-vertex}. We may eliminate $c_m$ by a redefinition of 
the Heisenberg generators $H_m$, which affects the commutation relations among generators of $\Svir$. }
\begin{align*}
&\widetilde{V}^\pm (z)=e^{\pm (k+2)Q_{\widetilde{\alpha}}}z^{\pm {1\over k} \widetilde{\alpha}_0}
:\exp\left( \mp \sum_{m\neq 0} {z^{-m} \over [km]} \widetilde{\alpha}_m\right):.
\end{align*}

\end{dfn}

Recall that the basic vertex operators $e_{\pm}(x)$ and $f_{\pm}(z)$ 
of the parafermion sector are introduced in Definition \ref{parafermion}.

\begin{dfn}\label{mod-Wakimoto}
Let $\mathbf{K}^\pm(z),\mathbf{K}(z),\mathbf{T}(z)$ and  $\mathbf{G}^\pm(z)$ be the following combinations of the vertex operators
\begin{align}
&\mathbf{K}^\pm(z)=q^{\widetilde{\alpha}_0}
\exp\left((q-q^{-1})\sum_{\pm m>0} \widetilde{\alpha}_m z^{-m}\right),\label{Kpm-al}\\
&\mathbf{K}(z)=~\mathbf{K}^-(z)\mathbf{K}^+(z)=~:\widetilde{V}^+(q^{k}z)\widetilde{V}^-(q^{-k}z):,\\
&\mathbf{T}(z)= ~
:\widetilde{V}^+(q^{-1}z)\widetilde{V}^-(q^{+1}z): \mathbf{T}_{\mathrm{PF}}(z), \CR
&\mathbf{T}_{\mathrm{PF}}(z)= ~q^{-1} :e_+(q^{-{k+2\over 2}}z)f_-(q^{{k+2\over 2}}z):
-{[k+2]\over [k+1]} :e_+(q^{-{k+2\over 2}}z)f_+(q^{{k+2\over 2}}z):  \CR
&\qquad\qquad + q^{+1} :e_-(q^{-{k+2\over 2}}z)f_+(q^{{k+2\over 2}}z):, \label{T-parafermi} \\
&\mathbf{G}^\pm(z)= z^{1/2} \sum_{\epsilon=\pm}  \epsilon \,\mathbf{G}_\epsilon^\pm(z)
= z^{1/2} \left( \mathbf{G}_{+}^\pm(z) - \mathbf{G}_{-}^\pm(z) \right),
\end{align}
where
\begin{align}\label{G+-def}
&\mathbf{G}_\epsilon^+(z)= + {1\over q-q^{-1}}\widetilde{V}^+(q^{-k-2}z) e_\epsilon(q^{-{3k+4\over2}}z),\\
&\mathbf{G}_\epsilon^-(z)=-  {1\over q-q^{-1}}\widetilde{V}^-(q^{+k+2}z) f_\epsilon(q^{+{3k+4\over2}}z).
\label{G--def}
\end{align}
\end{dfn}
Note that there is an additional factor $z^{1/2}$ in the definition of $\mathbf{G}^\pm(z)$.
Namely, we suppose that in the NS sector $\mathbf{G}_\epsilon^{\pm}(z)$ are expanded in the integral powers of $z$
and in the R sector they are expanded in the half-integral powers of $z$. See Remark \ref{NSvsR}.
Thus it is the mode expansion of $\mathbf{G}_\epsilon^{\pm}(z)$, not $\mathbf{G}^\pm(z)$, that agrees 
with the standard convention of two dimensional superconformal field theory. 
This in particular means that we can employ the same momentum lattice of zero modes as
the NS and R sectors of the standard superconformal theory. 
The definition of $\mathbf{G}^\pm(z)$ may be compared with that of $E(z)$ and $F(z)$ for $U_q(\widehat{\mathfrak{sl}}_2)$
in Definition \ref{EandF}.
The expression of $\mathbf{T}_{\mathrm{PF}}(z)$ in the language of $U_q(\widehat{\mathfrak{sl}}_2)$ is
given in Appendix \ref{TGandTT}. See also the operator $L(z)$ in Prop. 4.2 of \cite{DF1}.


\subsection{(Dual) Fock representations of $\Svir$ in the {\rm NS} and {\rm R} sectors}
\label{sec:Fock-rep}
Recall that 
the zero modes of the twisted Wakimoto representation are $\widetilde{\alpha}_0, \overline{\alpha}_0, \beta_0$
and $Q_{\widetilde{\alpha}}, Q_{\overline{\alpha}}, Q_{\beta}$ with the commutation relations;
\begin{equation}
[\widetilde{\alpha}_0,Q_{\widetilde{\alpha}}]=1, \qquad
[\overline{\alpha}_0,Q_{\overline{\alpha}}]=-1, \qquad
[\beta_0,Q_{\beta}]=1.
\end{equation}
Set  
\begin{equation}
\vert \xi,\rho,\sigma\rangle =e^{\xi Q_{\widetilde{\alpha}} +\rho Q_{\overline{\alpha}}+\sigma Q_\beta}\vert 0\rangle
\end{equation}
with
\begin{equation*}
\widetilde{\alpha}_n \vert 0\rangle = \overline{\alpha}_n \vert 0\rangle 
= \beta_n \vert 0\rangle =0, \qquad n \geq 0.
\end{equation*}
The zero mode dependence of $\mathbf{G}_{\epsilon}^\pm(z)$ reads
\begin{equation*}
\mathbf{G}_{\epsilon}^\pm(z) \sim e^{\pm (k+2)Q_{\widetilde{\alpha}}} 
\left(q^{\mp(k+2)}z\right)^{\pm \frac{1}{k}\widetilde{\alpha}_0} \cdot 
e^{\pm 2 Q_{\overline{\alpha}}} \left(q^{\mp{k+2\over 2}}z\right)^{\pm \frac{1}{k}\overline{\alpha}_0}
q^{-\frac{\epsilon}{2}\overline{\alpha}_0} q^{\mp\frac{\epsilon}{2}\beta_0}.
\end{equation*}
Take the additional factor $z^{1/2}$, as in ${\bf G}^{\pm}(z)=z^{1/2}(\mathbf{G}_{+}^\pm(z)-\mathbf{G}_{-}^\pm(z))$, into account. 

\begin{lem}\label{G-zero}
We have
\begin{align*}
&{\bf G}^{\pm}(z) \vert \xi,\rho,\sigma\rangle
\propto z^{{1\over 2}\pm{\xi-\rho \over k}}~{\rm Osc.}(z)\vert\xi\pm(k+2),\rho\pm 2,\sigma\rangle,
\end{align*}
where ${\rm Osc.}(z)$ stands for some invertible element in the algebra of negative Fourier modes, {\it i.e.} 
some power series in $z$
with a non vanishing constant term.
\end{lem}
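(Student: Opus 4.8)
The plan is to reduce the whole statement to zero-mode bookkeeping, using the displayed expression for the zero-mode dependence of $\mathbf{G}_\epsilon^\pm(z)$ recorded just above the lemma, and to treat the oscillator (non-zero-mode) part as a black box that I only need to recognize as a formal power series in $z$ with a non-vanishing constant term. First I would record how the three commuting zero modes act on the momentum state: from $[\widetilde{\alpha}_0,Q_{\widetilde{\alpha}}]=1$, $[\overline{\alpha}_0,Q_{\overline{\alpha}}]=-1$, $[\beta_0,Q_\beta]=1$ together with $\widetilde{\alpha}_0\vert 0\rangle=\overline{\alpha}_0\vert 0\rangle=\beta_0\vert 0\rangle=0$ one gets $\widetilde{\alpha}_0\vert\xi,\rho,\sigma\rangle=\xi\vert\xi,\rho,\sigma\rangle$, $\overline{\alpha}_0\vert\xi,\rho,\sigma\rangle=-\rho\vert\xi,\rho,\sigma\rangle$ and $\beta_0\vert\xi,\rho,\sigma\rangle=\sigma\vert\xi,\rho,\sigma\rangle$. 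The exponentiated momenta $e^{\pm(k+2)Q_{\widetilde{\alpha}}}$ and $e^{\pm 2Q_{\overline{\alpha}}}$ occurring in $\mathbf{G}_\epsilon^\pm(z)$ then shift the labels to $\xi\pm(k+2)$ and $\rho\pm2$, while $\sigma$ is untouched since no $Q_\beta$ appears; this produces the target ket $\vert\xi\pm(k+2),\rho\pm2,\sigma\rangle$.

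Next I would collect the powers of $z$. Since each factor $z^{a_0}$ sits to the right of the corresponding $e^{Q}$, it acts on the incoming momenta, so $\left(q^{\mp(k+2)}z\right)^{\pm\frac{1}{k}\widetilde{\alpha}_0}$ contributes $z^{\pm\xi/k}$ and $\left(q^{\mp\frac{k+2}{2}}z\right)^{\pm\frac{1}{k}\overline{\alpha}_0}$ contributes $z^{\mp\rho/k}$, the remaining $q$-monomials being scalars absorbed into the proportionality constant. Including the explicit prefactor $z^{1/2}$ from $\mathbf{G}^\pm(z)=z^{1/2}\bigl(\mathbf{G}_{+}^\pm(z)-\mathbf{G}_{-}^\pm(z)\bigr)$ gives the exponent $\frac{1}{2}\pm\frac{\xi-\rho}{k}$. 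For the oscillator part I would note that every vertex operator entering $\mathbf{G}_\epsilon^\pm(z)$ — namely $\widetilde{V}^\pm$ of Definition \ref{tildeV} and $e_\epsilon,f_\epsilon$ of Definition \ref{parafermion} (built from $Y^\pm,Z_\epsilon,W_\epsilon$) — is normal ordered, so after the positive modes annihilate $\vert 0\rangle$ only the creation-mode exponentials survive; these are power series in $z$ with constant term $1$, hence invertible in the algebra of negative Fourier modes.

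The one point demanding care, and the step I expect to be the main obstacle, is the $\epsilon$-difference. The $\epsilon=+$ and $\epsilon=-$ terms carry the identical $z$-exponent and land on the same shifted ket, differing only by the $\epsilon$-dependent scalar $q^{-\frac{\epsilon}{2}\overline{\alpha}_0}q^{\mp\frac{\epsilon}{2}\beta_0}=q^{\frac{\epsilon}{2}(\rho\mp\sigma)}=:c_\epsilon$ and by their respective oscillator series $\mathrm{Osc}_\epsilon(z)$. Thus
\[
\mathbf{G}^\pm(z)\vert\xi,\rho,\sigma\rangle
= z^{\frac{1}{2}\pm\frac{\xi-\rho}{k}}\bigl(c_+\,\mathrm{Osc}_+(z)-c_-\,\mathrm{Osc}_-(z)\bigr)
\vert\xi\pm(k+2),\rho\pm2,\sigma\rangle,
\]
and I must verify that the bracket is still invertible. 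Each $\mathrm{Osc}_\epsilon(z)$ has constant term $1$ (a rescaling of the argument by a $q$-monomial does not change this), so the constant term of the bracket is $c_+-c_-$. Since $c_+/c_-=q^{\rho\mp\sigma}$ is generically $\neq 1$ for generic $q,\rho,\sigma$, this constant term is nonzero, so the difference of two invertible series with distinct leading scalars is again an invertible element of the algebra of negative Fourier modes. This yields the claimed form with $\mathrm{Osc}(z)=c_+\mathrm{Osc}_+(z)-c_-\mathrm{Osc}_-(z)$ and completes the proof.
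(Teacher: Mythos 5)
Your proof is correct and follows essentially the same route as the paper, which simply records the zero-mode content of $\mathbf{G}^\pm_\epsilon(z)$ displayed just before the lemma and reads off the exponent $\tfrac12\pm\tfrac{\xi-\rho}{k}$ and the momentum shifts. Your extra check that the constant term $c_+-c_-=q^{\frac12(\rho\mp\sigma)}-q^{-\frac12(\rho\mp\sigma)}$ of the combined oscillator series is generically nonzero is a detail the paper leaves implicit, and it is exactly the right point to verify.
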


In the NS sector, ${\bf G}^{\pm}(z)$ should be expanded in the half-integral powers of $z$. 
The condition for  $\vert \xi,\rho,\sigma\rangle$ being a highest weight vector 
gives us ${\bf G}^{\pm}_m\vert \xi,\rho,\sigma\rangle=0$ for $m=1/2,3/2,\ldots$, 
which in view of Lemma \ref{G-zero} requires $\xi=\rho$. 

\begin{lem} When $\xi=\rho$, 
\begin{align*}
&{\bf K}^\pm(z)  |\xi,\rho,\sigma\rangle=q^{\rho} |\xi,\rho,\sigma\rangle+\cdots,\\
&{\bf T}(z) |\xi,\rho,\sigma\rangle=
q^{-{2\over k}\xi}
\left(
q^{-1}q^{{k+2\over k} \rho }q^{-\sigma}
 -{[k+2]\over [k+1]}q^{{k+2\over k} \rho } q^\rho 
  +q^{+1}q^{{k+2\over k} \rho }q^{\sigma} \right) |\xi,\rho,\sigma\rangle+\cdots\\
  &\qquad =
\left(
q^{-1}q^{\rho }q^{-\sigma}
 -{[k+2]\over [k+1]}q^{2\rho}
  +q^{+1}q^{\rho }q^{\sigma} \right) |\xi,\rho,\sigma\rangle+\cdots.
\end{align*}
\end{lem}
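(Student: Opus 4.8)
The plan is to use that each of $\mathbf{K}^\pm(z)$ and $\mathbf{T}(z)$ is a (normal-ordered) product of vertex operators acting on a Fock highest weight vector, so the asserted leading coefficient is nothing but a product of zero-mode eigenvalues, all positive Fourier modes annihilating $|\xi,\rho,\sigma\rangle$ and all negative ones producing the descendants absorbed in ``$\cdots$''. First I would record the zero-mode eigenvalues on $|\xi,\rho,\sigma\rangle = e^{\xi Q_{\widetilde{\alpha}}+\rho Q_{\overline{\alpha}}+\sigma Q_\beta}|0\rangle$: from $[\widetilde{\alpha}_0,Q_{\widetilde{\alpha}}]=1$, $[\overline{\alpha}_0,Q_{\overline{\alpha}}]=-1$ and $[\beta_0,Q_\beta]=1$ one gets $\widetilde{\alpha}_0\mapsto\xi$, $\overline{\alpha}_0\mapsto-\rho$, $\beta_0\mapsto\sigma$, the sign in $\overline{\alpha}_0\mapsto-\rho$ being the one place to be careful.

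For $\mathbf{K}^\pm(z)=q^{\widetilde{\alpha}_0}\exp\bigl((q-q^{-1})\sum_{\pm m>0}\widetilde{\alpha}_m z^{-m}\bigr)$ the claim is immediate: the oscillator sum contains only $\widetilde{\alpha}_m$ with $\pm m>0$, so on $|\xi,\rho,\sigma\rangle$ the prefactor $q^{\widetilde{\alpha}_0}$ gives $q^{\xi}=q^{\rho}$ (for $\mathbf{K}^+$ the exponential acts trivially, for $\mathbf{K}^-$ it merely generates the $\cdots$). For $\mathbf{T}(z)={:}\widetilde{V}^+(q^{-1}z)\widetilde{V}^-(qz){:}\,\mathbf{T}_{\mathrm{PF}}(z)$ I would exploit that the $\widetilde{\alpha}$-sector and the parafermion $(\overline{\alpha},\beta)$-sector commute, so the two factors act independently. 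The zero-mode part of ${:}\widetilde{V}^+(q^{-1}z)\widetilde{V}^-(qz){:}$ is $q^{-\frac{2}{k}\widetilde{\alpha}_0}$, since the $Q_{\widetilde{\alpha}}$-charges cancel and $(q^{-1}z)^{\frac1k\widetilde{\alpha}_0}(qz)^{-\frac1k\widetilde{\alpha}_0}=q^{-\frac2k\widetilde{\alpha}_0}$ is $z$-independent; this reproduces the overall factor $q^{-\frac{2}{k}\xi}$. Next, for each of the three normal-ordered products ${:}e_\epsilon(q^{-\frac{k+2}{2}}z)f_{\epsilon'}(q^{\frac{k+2}{2}}z){:}$ I would read the zero-mode factors off the definitions of $e_\pm,f_\pm$ through $Y^\pm,Z_\pm,W_\pm$: the $e^{\pm 2Q_{\overline{\alpha}}}$ charges cancel (so $|\xi,\rho,\sigma\rangle$ stays in its Fock sector), the $Y^+Y^-$ momenta give the universal factor $\bigl(q^{\frac{k+2}{2}}z/q^{-\frac{k+2}{2}}z\bigr)^{\rho/k}=q^{\frac{k+2}{k}\rho}$ upon evaluating $\overline{\alpha}_0=-\rho$ (the $z$-powers from $Y^+$ and $Y^-$ cancelling), and the residual shifts $q^{\mp\frac12\overline{\alpha}_0}$ from $Z_\pm$ and $q^{\mp\frac12\beta_0}$ from $W_\pm$ yield, at $\overline{\alpha}_0=-\rho,\beta_0=\sigma$, the distinguishing factors $q^{-\sigma}$, $q^{\rho}$, $q^{\sigma}$ for $(\epsilon,\epsilon')=(+,-),(+,+),(-,+)$ respectively.

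Assembling the three contributions with the coefficients $q^{-1}$, $-\frac{[k+2]}{[k+1]}$, $q^{+1}$ and the common prefactor $q^{-\frac{2}{k}\xi}$ gives exactly the first displayed form of $\mathbf{T}(z)|\xi,\rho,\sigma\rangle$; specializing $\xi=\rho$ and using $q^{-\frac2k\rho}q^{\frac{k+2}{k}\rho}=q^{\rho}$ collapses the prefactor into the bracket and produces the second form. The work is essentially bookkeeping rather than conceptual, and the main obstacle is precisely to track correctly the half-integer exponents $q^{\mp\frac12\overline{\alpha}_0}$ and $q^{\mp\frac12\beta_0}$ coming from the $Z_\pm$ and $W_\pm$ pieces, which is what separates the three terms $q^{-\sigma}$, $q^{\rho}$, $q^{\sigma}$, together with the sign $\overline{\alpha}_0\mapsto-\rho$ and the shifts $q^{\mp\frac{k+2}{2}}z$ in the arguments of $e$ and $f$ that build the universal factor $q^{\frac{k+2}{k}\rho}$. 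One should also verify at the end that every $z$-power cancels, so that the surviving eigenvalue is genuinely $z$-independent, i.e.\ it is the $\mathbf{T}_0$ (resp.\ $\mathbf{K}^\pm_0$) eigenvalue while all nontrivial $z$-dependence is carried by $\cdots$.
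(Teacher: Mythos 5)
Your computation is correct and is exactly the verification the paper leaves implicit: the lemma is a pure zero-mode bookkeeping exercise, and you track the relevant pieces correctly, including the sign $\overline{\alpha}_0\mapsto-\rho$, the $z$-independence of the $Y^+Y^-$ momentum factor $q^{\frac{k+2}{k}\rho}$, and the $Z_\pm$, $W_\pm^{\pm1}$ contributions that distinguish the three terms as $q^{-\sigma}$, $q^{\rho}$, $q^{\sigma}$. No gaps.
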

Compare these with the definition of parameters $u$ and $h$ for the Verma module in the NS sector (see \eqref{hwv}). 
From the parametrization \eqref{huv} of $h$, we can identity 
\begin{equation}
u=q^\rho,\qquad v=q^\sigma.
\end{equation}
Note that
\begin{align*}
|\rho+(k+2)n,\rho+2n,\sigma\rangle \mbox{  has the $x$-degree } n, \mbox{ and the $p$-degree } n^2/2.
\end{align*}

Set
$$
{\mathcal F}(\xi,\rho,\sigma) \seteq
\mathbb{C}[ \widetilde{\alpha}_{-1}, \widetilde{\alpha}_{-2},\ldots, \overline{\alpha}_{-1},\overline{\alpha}_{-2},\ldots,
\beta_{-1},\beta_{-2},\ldots]|\xi,\rho,\sigma\rangle,
$$
and
\begin{align}
 {\mathcal F}_{\rm NS}(u,v) \seteq
 \bigoplus_{n \in \mathbb{Z}} {\mathcal F}(\rho+(k+2)n,\rho+2n,\sigma). \label{NS-Fock}
\end{align}
\begin{prp}
Thanks to the Theorem \ref {Wakimoto-Svir} below, 
we have a representation of $\Svir$ in the {\rm NS} sector on the 
Fock space $ {\mathcal F}_{\rm NS}(u,v)$
with the highest weight vector $|\rho,\rho,\sigma\rangle$ satisfying
\begin{align*}
&{\bf K}^\pm_0|\rho,\rho,\sigma\rangle=u |\rho,\rho,\sigma\rangle,\qquad 
{\bf T}_0|\rho,\rho,\sigma\rangle=h(u,v) |\rho,\rho,\sigma\rangle,
\end{align*}
where $h(u,v)$ is defined in (\ref{huv}). 
We have the character for $ {\mathcal F}_{\rm NS}(u,v)$
\begin{align*}
{\rm ch}_{ {\mathcal F}_{\rm NS}(u,v)}(p,x)=\prod_{i=0}^\infty {(1+p^{i+1/2}x)(1+p^{i+1/2}x^{-1})\over (1-p^{i+1}) (1-p^{i+1})},
\qquad  |p|, |x| <1.
\end{align*}

\end{prp}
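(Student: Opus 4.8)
The plan is to establish the two assertions of the proposition separately: first that $|\rho,\rho,\sigma\rangle$ is a highest weight vector with the eigenvalues prescribed in (\ref{hwv}), and second the character identity. Throughout I would take for granted Theorem \ref{Wakimoto-Svir}, which guarantees that the operators $\mathbf{K}^\pm(z),\mathbf{T}(z),\mathbf{G}^\pm(z)$ of Definition \ref{mod-Wakimoto} satisfy the defining relations (\ref{rr-2})--(\ref{rr-11}). Granting this, the only structural point to note is closure: since $\mathbf{G}^\pm(z)$ shifts the zero-mode momenta by $(\pm(k+2),\pm 2,0)$ (Lemma \ref{G-zero}) while $\mathbf{K}^\pm(z),\mathbf{T}(z)$ preserve them, the full lattice sum (\ref{NS-Fock}) over $n\in\mathbb{Z}$ is exactly what is needed for $\mathcal{F}_{\rm NS}(u,v)$ to be $\Svir$-stable.

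For the highest weight conditions I would argue mode by mode from the vertex-operator expressions. The eigenvalue statements $\mathbf{K}^\pm_0|\rho,\rho,\sigma\rangle=u|\rho,\rho,\sigma\rangle$ and $\mathbf{T}_0|\rho,\rho,\sigma\rangle=h(u,v)|\rho,\rho,\sigma\rangle$ are read off the Lemma preceding the proposition evaluated at $\xi=\rho$, upon setting $u=q^{\rho}$, $v=q^{\sigma}$ and comparing with the parametrization (\ref{huv}). The bosonic annihilation conditions $\mathbf{K}^+_m|\rho,\rho,\sigma\rangle=\mathbf{T}_m|\rho,\rho,\sigma\rangle=0$ for $m>0$ follow because, acting on $|\rho,\rho,\sigma\rangle$, each of $\mathbf{K}^+(z)$ and $\mathbf{T}(z)$ produces a series in nonnegative powers of $z$ (the positive oscillator modes annihilate the ground state, only the zero mode surviving at order $z^{0}$), so the coefficient of $z^{-m}$ with $m>0$ vanishes. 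The fermionic conditions $\mathbf{G}^\pm_m|\rho,\rho,\sigma\rangle=0$ for $m\geq 1/2$ are precisely where the constraint $\xi=\rho$ enters: by Lemma \ref{G-zero}, $\mathbf{G}^\pm(z)|\rho,\rho,\sigma\rangle\propto z^{1/2}\,\mathrm{Osc.}(z)|\ldots\rangle$ is a series containing only the powers $z^{1/2+n}$ with $n\geq 0$, whence the coefficient of $z^{-m}$ vanishes unless $m\leq -1/2$.

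For the character I would first observe that each summand $\mathcal{F}(\xi,\rho,\sigma)$ is freely generated over its ground state by the three commuting families $\widetilde{\alpha}_{-i},\overline{\alpha}_{-i},\beta_{-i}$ $(i\geq 1)$, every generator carrying $p$-degree $i$ and $x$-degree $0$, so its contribution to the trace is $\prod_{i\geq 1}(1-p^{i})^{-3}$. Summing over the lattice (\ref{NS-Fock}) and using that the ground state $|\rho+(k+2)n,\rho+2n,\sigma\rangle$ carries $p$-degree $n^{2}/2$ and $x$-degree $n$ (as recorded just above the proposition), one gets
$$\mathrm{ch}_{\mathcal{F}_{\rm NS}(u,v)}(p,x)=\frac{1}{\prod_{i\geq 1}(1-p^{i})^{3}}\sum_{n\in\mathbb{Z}}p^{n^{2}/2}x^{n}.$$
The target formula then results from the Jacobi triple product identity $\sum_{n\in\mathbb{Z}}p^{n^{2}/2}x^{n}=\prod_{i\geq 1}(1-p^{i})\prod_{i\geq 0}(1+p^{i+1/2}x)(1+p^{i+1/2}x^{-1})$, which trades one factor $\prod_{i\geq 1}(1-p^{i})^{-1}$ together with the theta series for the fermionic numerator, leaving $\prod_{i\geq 1}(1-p^{i})^{-2}$.

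I expect the main obstacle to be the bookkeeping of the grading rather than any deep difficulty. One must make sure that the $p$- and $x$-degrees induced on the Fock space by the oscillator construction agree with the degrees assigned to the $\Svir$ generators $K^\pm_{-m},T_{-m},G^\pm_{-m}$; in particular the extra $z^{1/2}$ prefactor in $\mathbf{G}^\pm(z)$ and the NS/R convention of Remark \ref{NSvsR} must be tracked consistently, and the ground-state energy $n^{2}/2$ must be normalized correctly against the metric (\ref{tildealpha}) of $\widetilde{\alpha}$. Once this grading is pinned down, the character identity is purely the Jacobi triple product and the highest weight verification is a direct reading of Lemma \ref{G-zero} and the preceding Lemma.
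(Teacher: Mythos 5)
Your proposal is correct and follows exactly the route the paper intends: the highest weight conditions are read off from Lemma \ref{G-zero} and the eigenvalue lemma at $\xi=\rho$ (with the annihilation conditions for $\mathbf{K}^+_m,\mathbf{T}_m,\mathbf{G}^\pm_m$ coming from the mode structure of the vertex operators), the representation itself is guaranteed by Theorem \ref{Wakimoto-Svir}, and the character is the free-boson count $\prod_{i\geq 1}(1-p^i)^{-3}$ times the lattice theta series $\sum_n p^{n^2/2}x^n$, reduced by the Jacobi triple product. The paper leaves these steps implicit, and your reconstruction, including the degree bookkeeping for the ground states $|\rho+(k+2)n,\rho+2n,\sigma\rangle$, matches it.
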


\begin{rmk}
Observe that the character ${\rm ch}_{ {\mathcal F}_{\rm NS}(u,v)}(p,x)$ and the conjectural  Verma module character
${\rm ch}_{\rm NS}(p,x)$ in (\ref{ch-NS}) are identical.
Hence this Fock representation is expected to be irreducible for generic $u$ and $v$. 
\end{rmk}

In the R sector, ${\bf G}^{\pm}(z)$ should be expanded in the integral powers of $z$. 
The condition for  $\vert \xi,\rho,\sigma\rangle$ being a highest weight vector 
gives us ${\bf G}^{+}_m\vert \xi,\rho,\sigma\rangle=0$ for $m=0,1,\ldots$, and 
${\bf G}^{-}_m\vert \xi,\rho,\sigma\rangle=0$ for $m=1,2,\ldots$, 
which in view of Lemma \ref{G-zero} requires $\xi = \rho + k/2$. 

\begin{lem}
When $\xi = \rho + k/2$, 
\begin{align*}
&{\bf K}^\pm(z)  |\xi,\rho,\sigma\rangle=q^{\xi} |\xi,\rho,\sigma\rangle+\cdots,\\
&{\bf T}(z) |\xi,\rho,\sigma\rangle=
q^{-{2\over k}\xi}
\left(
q^{-1}q^{{k+2\over k} \rho }q^{-\sigma}
 -{[k+2]\over [k+1]}q^{{k+2\over k} \rho } q^\rho 
  +q^{+1}q^{{k+2\over k} \rho }q^{\sigma} \right) |\xi,\rho,\sigma\rangle+\cdots\\
&\qquad =
q^{-1}
\left(
q^{-1}q^{\rho}q^{-\sigma}
 -{[k+2]\over [k+1]}q^{2\rho}
  +q^{+1}q^{\rho}q^{\sigma} \right) |\xi,\rho,\sigma\rangle+\cdots.
\end{align*}
\end{lem}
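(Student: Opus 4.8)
The plan is to establish both displayed eigenvalue formulas by a direct computation in the twisted Wakimoto realization of Definition \ref{mod-Wakimoto}, extracting only the zero-mode action on the highest weight vector $\vert\xi,\rho,\sigma\rangle$; every contribution coming from a creation oscillator $\widetilde{\alpha}_{-m},\overline{\alpha}_{-m},\beta_{-m}$ ($m>0$) will be pushed into the tail ``$+\cdots$''. This is the exact R-sector counterpart of the preceding NS-sector lemma, the only difference being that the highest weight condition ${\bf G}^{+}_m\vert\xi,\rho,\sigma\rangle=0$ ($m\ge 0$) forces $\xi=\rho+k/2$ via Lemma \ref{G-zero}, rather than $\xi=\rho$. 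Throughout I would use $\widetilde{\alpha}_0\vert\xi,\rho,\sigma\rangle=\xi\vert\xi,\rho,\sigma\rangle$, $\overline{\alpha}_0\vert\xi,\rho,\sigma\rangle=-\rho\vert\xi,\rho,\sigma\rangle$, and $\beta_0\vert\xi,\rho,\sigma\rangle=\sigma\vert\xi,\rho,\sigma\rangle$, where the crucial sign on the middle eigenvalue comes from $[\overline{\alpha}_0,Q_{\overline{\alpha}}]=-1$ in Definition \ref{Matsuo-parafermion}.

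For ${\bf K}^\pm(z)$ the statement is immediate from the closed form \eqref{Kpm-al}: the oscillator exponential $\exp((q-q^{-1})\sum_{\pm m>0}\widetilde{\alpha}_m z^{-m})$ contributes only descendants (its annihilation part acts as the identity on the leading term, its creation part produces the tail), so ${\bf K}^\pm(z)\vert\xi,\rho,\sigma\rangle=q^{\widetilde{\alpha}_0}\vert\xi,\rho,\sigma\rangle+\cdots=q^{\xi}\vert\xi,\rho,\sigma\rangle+\cdots$. With $\xi=\rho+k/2$ and the identification $u=q^{\rho}$ this reads $q^{\xi}=q^{k/2}u$, matching the R-sector highest weight condition \eqref{hwvR}.

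For ${\bf T}(z)$ I would exploit the fact that the $U(1)$ factor $:\widetilde{V}^+(q^{-1}z)\widetilde{V}^-(qz):$ and the parafermion factor ${\bf T}_{\mathrm{PF}}(z)$ of \eqref{T-parafermi} act on the disjoint oscillator systems $\{\widetilde{\alpha}_n\}$ and $\{\overline{\alpha}_n,\beta_n\}$, so their zero-mode eigenvalues simply multiply. The $U(1)$ factor collapses to $(q^{-1}z)^{\widetilde{\alpha}_0/k}(qz)^{-\widetilde{\alpha}_0/k}=q^{-2\widetilde{\alpha}_0/k}\to q^{-2\xi/k}$, the $z$-powers cancelling. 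For the parafermion factor I would insert the vertex operators of Definitions \ref{Wakimoto-vertex} and \ref{parafermion}, track the shifted arguments $q^{\pm\#}z$, and read off from the $q^{\pm\frac12\overline{\alpha}_0}$ pieces (from $Z_\pm$), the $q^{\mp\frac12\beta_0}$ pieces (from $W_\pm^{\pm1}$) and the $Y^{\pm}$ zero modes that the three terms $:e_+f_-:$, $:e_+f_+:$, $:e_-f_+:$ contribute, respectively, $q^{(k+2)\rho/k}q^{-\sigma}$, $q^{(k+2)\rho/k}q^{\rho}$, $q^{(k+2)\rho/k}q^{\sigma}$; again the net power of $z$ is zero, which confirms that only ${\bf T}_0$ survives on the highest weight vector. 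Combining these with the scalar prefactors $q^{-1}$, $-[k+2]/[k+1]$, $q^{+1}$ and the $U(1)$ factor $q^{-2\xi/k}$ gives the first displayed formula; substituting $\xi=\rho+k/2$, so that $q^{-2\xi/k}q^{(k+2)\rho/k}=q^{-1}q^{\rho}$, then yields the second.

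The routine but error-prone part, and hence the main obstacle, is the exact bookkeeping of the zero modes: the nested shifts inside $e_\pm(q^{\mp(k+2)/2}z)$ and $f_\pm(q^{\pm(k+2)/2}z)$ must be composed with the internal arguments of $Y^\pm,Z_\pm,W_\pm$, and the sign convention $[\overline{\alpha}_0,Q_{\overline{\alpha}}]=-1$ must be applied consistently, or else the $(k+2)/k$ exponents and the $q^{\pm\sigma}$ factors come out wrong. I would also record once and for all that the normal ordering in Definition \ref{mod-Wakimoto} places all annihilation modes to the right, guaranteeing that everything beyond the pure zero-mode scalar is a genuine descendant and therefore legitimately absorbed into ``$+\cdots$''.
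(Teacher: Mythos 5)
Your proposal is correct and is exactly the computation the paper intends (the lemma is stated without proof as a routine zero-mode evaluation): the $U(1)$ factor $:\widetilde{V}^+(q^{-1}z)\widetilde{V}^-(qz):$ collapses to $q^{-2\widetilde{\alpha}_0/k}$, the zero modes of $Y^\pm$, $Z_\pm$, $W_\pm^{\pm1}$ in the three parafermion terms give $q^{\frac{k+2}{k}\rho}q^{-\sigma}$, $q^{\frac{k+2}{k}\rho}q^{\rho}$, $q^{\frac{k+2}{k}\rho}q^{\sigma}$ once the sign $\overline{\alpha}_0\mapsto-\rho$ is used, and the substitution $\xi=\rho+k/2$ produces the overall $q^{-1}$ distinguishing the R sector from the NS sector. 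Your bookkeeping of the shifted arguments and of the normal ordering (annihilation modes acting trivially, creation modes absorbed into $+\cdots$) checks out.
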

Recall we have defined the highest weight conditions in the $R$ sector with some $q$-shifts 
attached to $u$ and $h$ (see \eqref{hwvR}). 
We see these $q$-shifts are consistently derived 
with the identifications 
\begin{equation}
q^{k/2}u=q^\xi,\qquad u=q^\rho,\qquad 
 v=q^\sigma.
\end{equation}

We have
\begin{align*}
|k/2+\rho+(k+2)n,\rho+2n,\sigma\rangle \mbox{  has the $x$-degree } n, \mbox{ and the $p$-degree } n(n+1)/2.
\end{align*}

Set
\begin{align}
 {\mathcal F}_{\rm R}(u,v) \seteq
 \bigoplus_{n \in \mathbb{Z}} {\mathcal F}(k/2+\rho+(k+2)n,\rho+2n,\sigma). 
\end{align}
\begin{prp}
Thanks to the Theorem \ref {Wakimoto-Svir} below, 
we have a representation of $\Svir$ in the {\rm R} sector on the 
Fock space $ {\mathcal F}_{\rm R}(u,v)$
with the highest weight vector $|k/2+\rho,\rho,\sigma\rangle$ satisfying
\begin{align*}
&{\bf K}^\pm_0|k/2+\rho,\rho,\sigma\rangle=q^{k/2}u |k/2+\rho,\rho,\sigma\rangle,\qquad 
{\bf T}_0|k/2+\rho,\rho,\sigma\rangle=q^{-1}h(u,v) |k/2+\rho,\rho,\sigma\rangle,
\end{align*}
where $h(u,v)$ is defined in (\ref{huv}). 
We have the character for $ {\mathcal F}_{\rm R}(u,v)$
\begin{align*}
{\rm ch}_{ {\mathcal F}_{\rm R}(u,v)}(p,x)=\prod_{i=0}^\infty {(1+p^{i+1}x)(1+p^{i}x^{-1})\over (1-p^{i+1}) (1-p^{i+1})},
\qquad |p|, |x| <1.
\end{align*}
\end{prp}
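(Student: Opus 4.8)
The plan is to deduce the statement from Theorem~\ref{Wakimoto-Svir}, which guarantees that the explicit vertex operators $\mathbf{K}^\pm(z)$, $\mathbf{T}(z)$ and $\mathbf{G}^\pm(z)$ of Definition~\ref{mod-Wakimoto} satisfy the defining relations \eqref{rr-2}--\eqref{rr-11} of $\Svir$ in the R sector. Granting this, the only remaining points are that $\mathcal{F}_{\rm R}(u,v)$ is actually preserved by the action, that its cyclic vector obeys the R-sector highest weight conditions \eqref{hwvR}, and that its character is the claimed product.

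First I would establish invariance. The currents $\mathbf{K}^\pm(z)$ and $\mathbf{T}(z)$ are built from $q^{\widetilde{\alpha}_0}$ and from normal-ordered products $\widetilde{V}^+\widetilde{V}^-$, $e_\epsilon f_{\epsilon'}$ in which the zero-mode shift operators $e^{\pm(k+2)Q_{\widetilde\alpha}}$ and $e^{\pm 2Q_{\overline\alpha}}$ cancel pairwise; hence these currents preserve every momentum label $(\xi,\rho,\sigma)$. By Lemma~\ref{G-zero} the currents $\mathbf{G}^\pm(z)$ shift $(\xi,\rho,\sigma)\mapsto(\xi\pm(k+2),\rho\pm2,\sigma)$. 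Since the momentum lattice $\{(k/2+\rho+(k+2)n,\rho+2n,\sigma)\}_{n\in\mathbb{Z}}$ defining $\mathcal{F}_{\rm R}(u,v)$ is stable under precisely these shifts, while each Fock component $\mathcal{F}(\xi',\rho',\sigma)$ is stable under all oscillator modes, the space $\mathcal{F}_{\rm R}(u,v)$ is a $\Svir$-submodule. The highest weight conditions then follow: the choice $\xi=\rho+k/2$ is exactly what Lemma~\ref{G-zero} forces so that $\mathbf{G}^+_m$ annihilates $|k/2+\rho,\rho,\sigma\rangle$ for $m\geq0$ and $\mathbf{G}^-_m$ for $m\geq1$, reproducing \eqref{hwvR}, while $\mathbf{K}^+_m$ and $\mathbf{T}_m$ annihilate the vector for $m>0$ because their positive modes involve only annihilation oscillators. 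The eigenvalues $\mathbf{K}^\pm_0=q^{k/2}u$ and $\mathbf{T}_0=q^{-1}h(u,v)$ are read off from the Lemma preceding the statement together with the identifications $q^\rho=u$, $q^\sigma=v$ and the parametrization \eqref{huv}.

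For the character I would argue as follows. Each Fock component $\mathcal{F}(\xi',\rho',\sigma)$ is a free polynomial algebra in the three oscillator families $\widetilde\alpha_{-m},\overline\alpha_{-m},\beta_{-m}$ ($m\geq1$), all of $x$-degree $0$, so its generating function is $\prod_{m\geq1}(1-p^m)^{-3}$; by the stated gradings the ground state of the $n$-th summand contributes $p^{n(n+1)/2}x^{n}$. Hence
\begin{align*}
{\rm ch}_{\mathcal{F}_{\rm R}(u,v)}(p,x)=\Biggl(\sum_{n\in\mathbb{Z}}p^{n(n+1)/2}x^{n}\Biggr)\prod_{m\geq1}\frac{1}{(1-p^m)^{3}}.
\end{align*}
The Jacobi triple product identity, applied with $z=p^{1/2}x$, gives
\begin{align*}
\sum_{n\in\mathbb{Z}}p^{n(n+1)/2}x^{n}=\prod_{m\geq1}(1-p^m)\prod_{i\geq0}(1+p^{i+1}x)(1+p^{i}x^{-1}),
\end{align*}
and cancelling one factor $\prod_{m\geq1}(1-p^m)$ yields the asserted formula.

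The substantive input is entirely Theorem~\ref{Wakimoto-Svir}; within this proof the chief thing to get right is the bookkeeping of the zero-mode lattice in Lemma~\ref{G-zero}. The delicate point is the R-sector double degeneracy of the ground state: in the product $z=p^{1/2}x$ the terms $n=0$ and $n=-1$ both sit at $p$-degree $0$, with $x$-degrees $0$ and $-1$, matching the factor $(1+p^{0}x^{-1})$ in the numerator. It is exactly this matching that pins down the momentum assignment $\xi=\rho+k/2$, and verifying its consistency is the part where care is required rather than the otherwise routine oscillator counting.
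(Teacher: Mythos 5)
Your proposal is correct and follows essentially the same route the paper takes (the paper leaves this proposition as an immediate consequence of Theorem \ref{Wakimoto-Svir} together with the zero-mode lemmas and the degree bookkeeping stated just before it): invariance of the momentum lattice under the shifts of $\mathbf{G}^\pm$, the highest weight conditions forced by $\xi=\rho+k/2$ via Lemma \ref{G-zero}, and the character from the three free oscillator families combined with the Jacobi triple product applied to $\sum_n p^{n(n+1)/2}x^n$. Your verification that the $n=0$ and $n=-1$ ground states account for the doubly degenerate R-sector vacuum and the factor $1+p^{0}x^{-1}$ is exactly the consistency check the paper's degree assignment encodes.
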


\begin{rmk}
Observe that the character ${\rm ch}_{ {\mathcal F}_{\rm R}(u,v)}(p,x)$ and the conjectural  Verma module character
${\rm ch}_{\rm R}(p,x)$ in (\ref{ch-R}) are identical.
Hence we expect that this Fock representation is irreducible for generic $u$ and $v$. 
\end{rmk}

Next, we turn to the dual Fock space generated by 
\begin{equation}
\langle  \xi,\rho,\sigma\vert =
\langle 0 \vert  e^{-\xi Q_{\widetilde{\alpha}} -\rho Q_{\overline{\alpha}}-\sigma Q_\beta},
\end{equation}
with
\begin{equation*}
\langle 0 \vert \widetilde{\alpha}_n  = \langle 0 \vert  \overline{\alpha}_n 
=\langle 0 \vert  \beta_n  =0, \qquad n \leq 0.
\end{equation*}


\begin{lem}\label{G-zero-dual}
We have
\begin{align*}
&\langle  \xi,\rho,\sigma\vert  {\bf G}^{\pm}(z)
\propto z^{{1\over 2}\pm{\xi\mp(k+2)-(\rho\mp 2) \over k}}~\langle  \xi\mp(k+2) ,\rho\mp 2 ,\sigma\vert  {\rm Osc.}(z)\\
&\quad \quad\qquad \qquad = z^{-{1\over 2}\pm{\xi-\rho \over k}}~\langle  \xi\mp(k+2) ,\rho\mp 2 ,\sigma\vert  {\rm Osc.}(z),
\end{align*}
where ${\rm Osc.}(z)$ stands for some invertible element in the algebra of positive Fourier modes, {\it i.e.} 
some power series in $z^{-1}$
with a non vanishing constant term.
\end{lem}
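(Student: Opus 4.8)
The plan is to carry out, on the dual Fock space, the exact transpose of the computation behind Lemma~\ref{G-zero}, reading off the momentum shifts and the power of $z$ from the zero-mode factors of the vertex operators. First I would recall from Definitions~\ref{tildeV}, \ref{parafermion} and \ref{mod-Wakimoto} that each summand $\mathbf{G}^\pm_\epsilon(z)$ is a normal-ordered product of $\widetilde V^\pm$, $Y^\pm$, $Z_\epsilon$ and $W_\epsilon$, all evaluated at arguments of the form $q^c z$. Since these arguments differ only by powers of $q$, every cross-contraction among the constituents depends solely on the ratio of their arguments, which is a constant power of $q$; hence normal-ordering the product produces a $z$-independent scalar, and all of the $z$-dependence resides in the zero-mode monomial $z^{\pm\frac1k\widetilde\alpha_0}z^{\pm\frac1k\overline\alpha_0}$ and in a single non-zero-mode exponential.

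Next I would evaluate the zero-mode data against the bra. On $\langle\xi,\rho,\sigma|$ one has $\langle\xi,\rho,\sigma|\widetilde\alpha_0=\xi\langle\xi,\rho,\sigma|$ and $\langle\xi,\rho,\sigma|\overline\alpha_0=-\rho\langle\xi,\rho,\sigma|$, while the charges $e^{\pm(k+2)Q_{\widetilde\alpha}}$ and $e^{\pm2Q_{\overline\alpha}}$ inside $\mathbf{G}^\pm_\epsilon$, acting from the right on $\langle0|e^{-\xi Q_{\widetilde\alpha}-\rho Q_{\overline\alpha}-\sigma Q_\beta}$, shift the labels to $\langle\xi\mp(k+2),\rho\mp2,\sigma|$; the signs are opposite to those of Lemma~\ref{G-zero} because here the charges stand to the left of $\langle0|$'s exponential rather than to the right of $|0\rangle$'s. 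The crucial bookkeeping point is that in $\widetilde V^\pm=e^{\pm(k+2)Q_{\widetilde\alpha}}z^{\pm\frac1k\widetilde\alpha_0}:\!\exp:$ the charge $e^{\pm(k+2)Q_{\widetilde\alpha}}$ sits to the left of $z^{\pm\frac1k\widetilde\alpha_0}$, so on a bra it acts \emph{first}; thus $z^{\pm\frac1k\widetilde\alpha_0}$ sees the already-shifted eigenvalue $\xi\mp(k+2)$, and likewise $z^{\pm\frac1k\overline\alpha_0}$ sees $-(\rho\mp2)$. Together with the explicit factor $z^{1/2}$ in $\mathbf{G}^\pm(z)=z^{1/2}(\mathbf{G}^\pm_+(z)-\mathbf{G}^\pm_-(z))$ this yields precisely the exponent $\tfrac12\pm\frac{\xi\mp(k+2)-(\rho\mp2)}{k}$, and the identity $\mp(k+2)\pm2=\mp k$ rewrites it as $-\tfrac12\pm\frac{\xi-\rho}{k}$.

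It then remains to identify $\mathrm{Osc.}(z)$ and confirm it lies in the algebra of positive Fourier modes. I would observe that, after the shift, the surviving bra is still annihilated from the right by $\widetilde\alpha_n,\overline\alpha_n,\beta_n$ for $n\le0$; hence the negative-mode (creation) half of the normal-ordered exponential collapses to $1$ against the bra, and only the positive-mode half survives. Since these modes enter through $(q^c z)^{-m}$ with $m>0$, $\mathrm{Osc.}(z)$ is a power series in $z^{-1}$, and its constant term is the scalar $C_+-C_-$ coming from the two values $\epsilon=\pm$, which share the same momentum shift and the same power of $z$ (the operators $Z_\epsilon$, $W_\epsilon$ carrying neither a charge nor a $z^{\text{zero mode}}$ factor); its non-vanishing is inherited from the corresponding statement in Lemma~\ref{G-zero}.

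The computation is routine bosonization; the only genuine subtlety is the sign bookkeeping in passing from kets to bras — the reversal of the charge-shift directions, the sign of the $\overline\alpha_0$ eigenvalue, and the fact that it is now the \emph{positive} Fourier modes (a power series in $z^{-1}$) that survive against the dual vacuum. Since each of these is the literal transpose of a step already verified for Lemma~\ref{G-zero}, no essentially new difficulty arises.
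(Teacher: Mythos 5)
Your proposal is correct and follows exactly the computation the paper intends (the paper only sketches the ket-side zero-mode bookkeeping before Lemma \ref{G-zero} and states the dual version without proof): track the charge shifts $e^{\pm(k+2)Q_{\widetilde\alpha}}$, $e^{\pm 2Q_{\overline\alpha}}$ acting first on the bra, let $z^{\pm\widetilde\alpha_0/k}$ and $z^{\pm\overline\alpha_0/k}$ pick up the shifted eigenvalues $\xi\mp(k+2)$ and $-(\rho\mp2)$, and include the overall $z^{1/2}$. The sign conventions ($\langle\xi,\rho,\sigma|\overline\alpha_0=-\rho\langle\xi,\rho,\sigma|$, reversed charge shifts on the bra, survival of only the positive Fourier modes as a series in $z^{-1}$) are all handled correctly, and the simplification $\mp(k+2)\pm 2=\mp k$ reproducing the exponent $-\tfrac12\pm\tfrac{\xi-\rho}{k}$ checks out.
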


In the NS sector ${\bf G}^{\pm}(z)$ should be expanded in the half-integral powers of $z$. 
The condition for  $\langle \xi,\rho,\sigma\vert$ being a highest weight vector 
gives us $\langle \xi,\rho,\sigma\vert {\bf G}^{\pm}_m=0$ for $m=-1/2,-3/2,\ldots$, 
which in view of Lemma \ref{G-zero-dual} requires $\xi=\rho$. 
\begin{lem} When $\xi=\rho$, 
\begin{align*}
&\langle \xi,\rho,\sigma\vert {\bf K}^\pm(z) =q^{\rho}\langle \xi,\rho,\sigma\vert+\cdots,\\
&\langle \xi,\rho,\sigma\vert{\bf T}(z)=
q^{-{2\over k}\xi}
\left(
q^{-1}q^{{k+2\over k} \rho }q^{-\sigma}
 -{[k+2]\over [k+1]}q^{{k+2\over k} \rho } q^\rho 
  +q^{+1}q^{{k+2\over k} \rho }q^{\sigma} \right)\langle \xi,\rho,\sigma\vert  +\cdots\\
  &\qquad =
\left(
q^{-1}q^{\rho }q^{-\sigma}
 -{[k+2]\over [k+1]}q^{2\rho}
  +q^{+1}q^{\rho }q^{\sigma} \right)\langle \xi,\rho,\sigma\vert +\cdots.
\end{align*}
\end{lem}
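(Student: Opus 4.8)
The plan is to prove this dual statement by the same mechanism as the corresponding ket lemma, namely by reading off the zero-mode eigenvalues of the operators acting on $\langle\xi,\rho,\sigma|$ from the right. On the dual Fock vacuum one has $\langle 0|\widetilde\alpha_n=\langle 0|\overline\alpha_n=\langle 0|\beta_n=0$ for $n\leq 0$, so that the positive Fourier modes are the creation operators acting from the right and each such mode pushes the bra into a genuine descendant of nonzero weight shift; this is exactly the content hidden in the symbol $\cdots$. First I would record the three zero-mode eigenvalues on the bra, $\langle\xi,\rho,\sigma|\widetilde\alpha_0=\xi\langle\xi,\rho,\sigma|$, $\langle\xi,\rho,\sigma|\overline\alpha_0=-\rho\langle\xi,\rho,\sigma|$ and $\langle\xi,\rho,\sigma|\beta_0=\sigma\langle\xi,\rho,\sigma|$, which follow directly from $[\widetilde\alpha_0,Q_{\widetilde\alpha}]=1$, $[\overline\alpha_0,Q_{\overline\alpha}]=-1$ and $[\beta_0,Q_\beta]=1$ by commuting the zero mode through the exponential of the momenta.

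For $\mathbf{K}^\pm(z)$ the computation is immediate from \eqref{Kpm-al}: the prefactor $q^{\widetilde\alpha_0}$ contributes the eigenvalue $q^\xi$, while in $\mathbf{K}^+(z)$ the exponential involves only positive modes $\widetilde\alpha_m$ $(m>0)$, which generate the descendant terms $\cdots$, and in $\mathbf{K}^-(z)$ it involves only negative modes $\widetilde\alpha_m$ $(m<0)$, which annihilate the bra so that the exponential acts as the identity (the $\cdots$ being then identically zero). Setting $\xi=\rho$, the highest-weight condition established in Lemma \ref{G-zero-dual}, yields $q^\rho$ in both cases.

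The substantive part is $\mathbf{T}(z)$, which I would factor through its definition as $:\widetilde V^+(q^{-1}z)\widetilde V^-(q z):\mathbf{T}_{\mathrm{PF}}(z)$. The $U(1)$ factor contributes only the zero-mode operator $q^{-\frac{2}{k}\widetilde\alpha_0}$, since the exponents $(q^{-1}z)^{\frac{1}{k}\widetilde\alpha_0}$ and $(q z)^{-\frac{1}{k}\widetilde\alpha_0}$ combine while the charge shifts $e^{\pm(k+2)Q_{\widetilde\alpha}}$ cancel; on the bra this is the overall prefactor $q^{-\frac{2}{k}\xi}$. For each of the three normal-ordered summands of $\mathbf{T}_{\mathrm{PF}}(z)$ in \eqref{T-parafermi} I would then collect, using Definitions \ref{Matsuo-parafermion}, \ref{Wakimoto-vertex} and \ref{parafermion}, the $\overline\alpha_0$-dependence coming from the $z^{\pm\frac{1}{k}\overline\alpha_0}$ of $Y^\pm$ together with the $q^{\mp\frac{1}{2}\overline\alpha_0}$ of $Z_\pm$, and the $\beta_0$-dependence coming from the $q^{\mp\frac{1}{2}\beta_0}$ of $W_\pm^{\pm1}$. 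Because each $e$-$f$ pair carries opposite $\overline\alpha$-charge, the $z$-powers cancel and the surviving zero-mode operators are pure powers of $q$; inserting the eigenvalues above produces $q^{\frac{k+2}{k}\rho}q^{-\sigma}$, $q^{\frac{k+2}{k}\rho}q^{\rho}$ and $q^{\frac{k+2}{k}\rho}q^{\sigma}$ for the first, second and third terms, with their coefficients $q^{-1}$, $-[k+2]/[k+1]$ and $q^{+1}$. Assembling these with the $U(1)$ prefactor reproduces the first displayed expression, and the specialization $\xi=\rho$ together with $q^{-\frac{2}{k}\rho}q^{\frac{k+2}{k}\rho}=q^\rho$ gives the second.

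The one point requiring care, and the step I expect to be the main obstacle, is the bookkeeping of the zero-mode operators $Q$ and $\alpha_0$ inside the normal-ordered products: one must verify that reordering them via the Heisenberg commutators produces no extra $q$- or $z$-dependent $c$-numbers beyond those recorded. This holds precisely because the total $\widetilde\alpha$-, $\overline\alpha$- and $\beta$-charge shift of $\mathbf{T}(z)$ vanishes, so $\langle\xi,\rho,\sigma|\mathbf{T}(z)$ stays proportional to $\langle\xi,\rho,\sigma|$ at leading order. Since this is the bra-analogue of the verification already carried out for the ket lemma, no genuinely new difficulty arises.
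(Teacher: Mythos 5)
Your proposal is correct and follows exactly the (implicit) route the paper intends for this routine lemma: the paper states it without proof, and the content is precisely the zero-mode bookkeeping you carry out — eigenvalues $\xi$, $-\rho$, $\sigma$ of $\widetilde{\alpha}_0$, $\overline{\alpha}_0$, $\beta_0$ on the bra, the factor $q^{-\frac{2}{k}\widetilde{\alpha}_0}$ from $\mathbf{T}_{\mathrm{U}(1)}$, and the $q^{-\frac{k+2}{k}\overline{\alpha}_0-\frac{\epsilon_1+\epsilon_2}{2}\overline{\alpha}_0+\frac{\epsilon_2-\epsilon_1}{2}\beta_0}$ from each $:e_{\epsilon_1}f_{\epsilon_2}:$ pair, with the oscillator exponentials contributing $1$ plus dual descendants. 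Your verification that the result reassembles into $h(u,v)$ with $u=q^{\rho}$, $v=q^{\sigma}$ confirms consistency with the proposition that follows, so nothing further is needed.
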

Compare these with the definition of parameters $u$ and $h$ for the Verma module in the NS sector (see \eqref{hwv-dual}). 
From the parametrization \eqref{huv} of $h$, we can identity 
\begin{equation}
u=q^\rho,\qquad v=q^\sigma.
\end{equation}

Set
$$
{\mathcal F}^*(\xi,\rho,\sigma) \seteq
\langle \xi,\rho,\sigma\vert
\mathbb{C}[ \widetilde{\alpha}_{1}, \widetilde{\alpha}_{2},\ldots, \overline{\alpha}_{1},\overline{\alpha}_{2},\ldots,\beta_1,\beta_2,\ldots],
$$
and
\begin{align}
 {\mathcal F}^*_{\rm NS}(u,v) \seteq
 \bigoplus_{n \in \mathbb{Z}} {\mathcal F}^*(\rho+(k+2)n,\rho+2n,\sigma). \label{NS-dualFock}
\end{align}
\begin{prp}
We have a representation of $\Svir$ in the {\rm NS} sector on the dual
Fock space $ {\mathcal F}^*_{\rm NS}(u,v)$
with the highest weight vector $\langle \rho,\rho,\sigma\vert$ satisfying
\begin{align*}
&\langle \rho,\rho,\sigma\vert {\bf K}^\pm_0=u\langle \rho,\rho,\sigma\vert,\qquad 
\langle \rho,\rho,\sigma\vert {\bf T}_0=h(u,v)\langle \rho,\rho,\sigma\vert,
\end{align*}
where $h(u,v)$ is defined in (\ref{huv}). 

\end{prp}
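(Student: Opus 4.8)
The plan is to mirror, with bras in place of kets, the argument that was used for the (non-dual) Fock representation $\mathcal{F}_{\rm NS}(u,v)$. The essential input is Theorem \ref{Wakimoto-Svir}, which asserts that the twisted vertex operators $\mathbf{K}^\pm(z),\mathbf{T}(z),\mathbf{G}^\pm(z)$ of Definition \ref{mod-Wakimoto} satisfy all of the defining relations \eqref{rr-2}--\eqref{rr-11} of $\Svir$ as operators on the free-boson Fock space. Granting this, the currents automatically define a $\Svir$-action on any subspace they stabilize, so only two things remain: (i) the dual Fock space $\mathcal{F}^*_{\rm NS}(u,v)$ of \eqref{NS-dualFock} is stable under all generators, and (ii) the vector $\langle \rho,\rho,\sigma|$ satisfies the dual highest weight conditions of Definition \ref{hwv-dual} with the stated eigenvalues.

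For stability I would argue as follows. The negative-in-$z$ oscillator parts of $\mathbf{K}^\pm(z)$ and $\mathbf{T}(z)$ involve only $\widetilde{\alpha}_m,\overline{\alpha}_m,\beta_m$ with $m\neq 0$ and no zero-mode shift, so they act within each summand $\mathcal{F}^*(\rho+(k+2)n,\rho+2n,\sigma)$. For the fermionic currents I would invoke Lemma \ref{G-zero-dual}: acting from the right, $\mathbf{G}^\pm(z)$ sends the momenta $(\xi,\rho,\sigma)\mapsto(\xi\mp(k+2),\rho\mp 2,\sigma)$, which carries the $n$-th summand to the $(n\mp 1)$-th one and hence preserves the direct sum \eqref{NS-dualFock}. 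To establish the highest weight conditions I would specialize to the lattice, where $\xi=\rho$: Lemma \ref{G-zero-dual} then gives $\langle \rho,\rho,\sigma|\,\mathbf{G}^\pm(z)\propto z^{-1/2}\,\langle\cdots|\,\mathrm{Osc.}(z)$ with $\mathrm{Osc.}(z)$ a power series in $z^{-1}$; reading off $\mathbf{G}^\pm(z)=\sum_m \mathbf{G}^\pm_m z^{-m}$ shows $\langle\rho,\rho,\sigma|\mathbf{G}^\pm_m=0$ for all $m\leq -1/2$, exactly as required in the NS sector. For the even currents I would use the preceding lemma computing $\langle\xi,\rho,\sigma|\mathbf{K}^\pm(z)$ and $\langle\xi,\rho,\sigma|\mathbf{T}(z)$ at $\xi=\rho$: since $\mathbf{K}^-(z)$ involves only $\widetilde{\alpha}_{m<0}$, which annihilate $\langle 0|$ from the right, one gets $\langle\rho,\rho,\sigma|\mathbf{K}^-(z)=q^{\rho}\langle\cdots|$, i.e. $\langle\rho,\rho,\sigma|K^-_m=0$ for $m<0$ and $K^\pm_0$-eigenvalue $q^{\rho}=u$ (the latter from $\langle\xi,\rho,\sigma|q^{\widetilde{\alpha}_0}=q^{\xi}\langle\xi,\rho,\sigma|$); the same computation for $\mathbf{T}(z)$ yields $\langle\rho,\rho,\sigma|T_m=0$ for $m<0$ and the $T_0$-eigenvalue equal to $h(u,v)$ of \eqref{huv}. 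This verifies Definition \ref{hwv-dual}.

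The main obstacle I expect is bookkeeping rather than anything conceptual. One must consistently track the opposite vacuum convention for bras (now $\widetilde{\alpha}_n,\overline{\alpha}_n,\beta_n$ with $n\leq 0$ annihilate $\langle 0|$), keep the extra factor $z^{1/2}$ and the fermionic zero-mode pieces $q^{\mp\frac{\epsilon}{2}\overline{\alpha}_0}q^{\mp\frac{\epsilon}{2}\beta_0}$ inside $\mathbf{G}^\pm_\epsilon(z)$ correct so that the half-integral NS mode expansion and the leading power $z^{-1/2}$ come out as claimed in Lemma \ref{G-zero-dual}, and confirm that the sign of the lattice shift there matches the summation range in \eqref{NS-dualFock}. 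Once these normalizations are pinned down, assertion (i) and (ii) follow directly, and since the dual oscillator Fock space has the same graded dimensions as the Fock space, the character coincidence (though not needed for the statement) is immediate.
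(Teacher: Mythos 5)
Your proposal is correct and follows essentially the same route as the paper: the representation property is imported wholesale from Theorem \ref{Wakimoto-Svir}, the lattice stability and the vanishing of $\langle\rho,\rho,\sigma|\mathbf{G}^{\pm}_m$ for $m\leq -1/2$ are read off from Lemma \ref{G-zero-dual} at $\xi=\rho$, and the eigenvalues $u$ and $h(u,v)$ come from the preceding lemma on $\langle\xi,\rho,\sigma|\mathbf{K}^{\pm}(z)$ and $\langle\xi,\rho,\sigma|\mathbf{T}(z)$. The bookkeeping points you flag (bra vacuum convention, the $z^{1/2}$ factor, the momentum shift $n\mapsto n\mp 1$ matching the direct sum in \eqref{NS-dualFock}) are exactly the checks the paper performs in those lemmas, so nothing further is needed.
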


In the R sector, ${\bf G}^{\pm}(z)$ should be expanded in the integral powers of $z$. 
The condition for  $\langle  \xi,\rho,\sigma\vert$ being a highest weight vector 
gives us $\langle  \xi,\rho,\sigma\vert {\bf G}^{+}_m=0$ for $m=1,2,\ldots$, and 
$\langle  \xi,\rho,\sigma\vert {\bf G}^{-}_m=0$ for $m=0,1,\ldots$, 
which in view of Lemma \ref{G-zero-dual} requires $\xi = \rho + k/2$. 

\begin{lem}
When $\xi = \rho + k/2$, 
\begin{align*}
&\langle  \xi,\rho,\sigma\vert {\bf K}^\pm(z) =q^{\xi}\langle  \xi,\rho,\sigma\vert+\cdots,\\
&\langle  \xi,\rho,\sigma\vert{\bf T}(z)=
q^{-{2\over k}\xi}
\left(
q^{-1}q^{{k+2\over k} \rho }q^{-\sigma}
 -{[k+2]\over [k+1]}q^{{k+2\over k} \rho } q^\rho 
  +q^{+1}q^{{k+2\over k} \rho }q^{\sigma} \right) \langle  \xi,\rho,\sigma\vert+\cdots\\
&\qquad =
q^{-1}
\left(
q^{-1}q^{\rho}q^{-\sigma}
 -{[k+2]\over [k+1]}q^{2\rho}
  +q^{+1}q^{\rho}q^{\sigma} \right) \langle  \xi,\rho,\sigma\vert+\cdots.
\end{align*}
\end{lem}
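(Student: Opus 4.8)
The plan is to compute the action of each generating current on the dual highest-weight bra by separating the zero-mode (momentum) factors from the oscillator factors, exactly as in the companion lemmas for the kets and for the dual NS bra; only the momentum assignment $\xi=\rho+k/2$ will differ. The key structural observation is that the leading term (the piece proportional to $\langle\xi,\rho,\sigma|$ itself, as opposed to the descendant bras abbreviated by $+\cdots$) is produced entirely by the zero modes. Indeed, $\mathbf{K}^\pm(z)$ and each of the three summands of $\mathbf{T}(z)$ are jointly normal ordered, and the $U(1)$ oscillators $\widetilde{\alpha}_n$ commute with the parafermion oscillators $\overline{\alpha}_n,\beta_n$; hence there are no internal contractions, the negative oscillator modes annihilate the bra from the right (since $\langle 0|\widetilde{\alpha}_n=\langle 0|\overline{\alpha}_n=\langle 0|\beta_n=0$ for $n\le 0$), and the positive oscillator modes only create descendants. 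Thus it suffices to collect the constant terms of the oscillator exponentials together with the zero-mode operators $q^{\pm c\,\widetilde{\alpha}_0}$, $z^{\pm\frac1k\widetilde{\alpha}_0}$, $e^{\pm cQ}$, $q^{\mp\frac12\overline{\alpha}_0}$, $q^{\mp\frac12\beta_0}$, $z^{\pm\frac1k\overline{\alpha}_0}$ carried by the vertex operators in Definitions \ref{Wakimoto-vertex}, \ref{tildeV}, \ref{parafermion} and \ref{mod-Wakimoto}.

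Next I would record that the bra is a simultaneous eigenvector of the three momenta. Using the zero-mode commutators $[\widetilde{\alpha}_0,Q_{\widetilde{\alpha}}]=1$, $[\overline{\alpha}_0,Q_{\overline{\alpha}}]=-1$, $[\beta_0,Q_\beta]=1$ together with $\langle 0|\widetilde{\alpha}_0=\langle 0|\overline{\alpha}_0=\langle 0|\beta_0=0$, a one-line commutation through the exponential $e^{-\xi Q_{\widetilde{\alpha}}-\rho Q_{\overline{\alpha}}-\sigma Q_\beta}$ gives
\begin{align*}
\langle\xi,\rho,\sigma|\,\widetilde{\alpha}_0=\xi\,\langle\xi,\rho,\sigma|,\qquad
\langle\xi,\rho,\sigma|\,\overline{\alpha}_0=-\rho\,\langle\xi,\rho,\sigma|,\qquad
\langle\xi,\rho,\sigma|\,\beta_0=\sigma\,\langle\xi,\rho,\sigma|.
\end{align*}
(The sign in the $\overline{\alpha}_0$ eigenvalue, coming from $[\overline{\alpha}_0,Q_{\overline{\alpha}}]=-1$, is the one point where care is needed.) Since the only zero-mode operator in $\mathbf{K}^\pm(z)=q^{\widetilde{\alpha}_0}\exp\!\big((q-q^{-1})\sum_{\pm m>0}\widetilde{\alpha}_m z^{-m}\big)$ is $q^{\widetilde{\alpha}_0}$, this already yields $\langle\xi,\rho,\sigma|\mathbf{K}^\pm(z)=q^\xi\langle\xi,\rho,\sigma|+\cdots$, which with $\xi=\rho+k/2$ is $q^{k/2}u\,\langle\ldots|+\cdots$ as demanded by \eqref{hwvR}.

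For $\mathbf{T}(z)=\,:\widetilde{V}^+(q^{-1}z)\widetilde{V}^-(qz):\,\mathbf{T}_{\mathrm{PF}}(z)$ I would treat the two commuting factors separately. In $:\widetilde{V}^+(q^{-1}z)\widetilde{V}^-(qz):$ the charge shifts $e^{\pm(k+2)Q_{\widetilde{\alpha}}}$ cancel, the $z$-exponents of $z^{\pm\frac1k\widetilde{\alpha}_0}$ cancel, and the surviving zero-mode factor is $(q^{-1})^{\frac1k\widetilde{\alpha}_0}(q)^{-\frac1k\widetilde{\alpha}_0}=q^{-\frac2k\widetilde{\alpha}_0}\to q^{-\frac2k\xi}$. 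In each term of $\mathbf{T}_{\mathrm{PF}}(z)$ the two $Y^\pm$ factors inside $e_{\pm}$ and $f_{\pm}$ likewise have cancelling $z$-exponents and together contribute $q^{-\frac{k+2}{k}\overline{\alpha}_0}\to q^{+\frac{k+2}{k}\rho}$, while the remaining $q^{\mp\frac12\overline{\alpha}_0}$ from $Z_\pm$ and $q^{\mp\frac12\beta_0}$ from $W_\pm^{\pm1}$ combine, term by term, to $q^{-\sigma}$, $q^{+\rho}$ and $q^{+\sigma}$ for the three summands. Multiplying by the scalar prefactors $q^{-1}$, $-[k+2]/[k+1]$, $q^{+1}$ reproduces the first displayed identity. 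Finally substituting $\xi=\rho+k/2$, so that $q^{-\frac2k\xi}=q^{-1}q^{-\frac2k\rho}$ and $q^{-\frac2k\rho}q^{\frac{k+2}{k}\rho}=q^{\rho}$, collapses the prefactor to the stated $q^{-1}(\cdots)$ form, which equals $q^{-1}h(u,v)$ with $u=q^\rho,v=q^\sigma$ as in \eqref{huv} and \eqref{hwvR}.

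The computation is mechanical once the zero-mode reduction is justified; the main obstacle is purely bookkeeping—keeping straight the many $q$-shifts in the arguments of $e_\pm,f_\pm$, the inverse $W_\pm^{-1}$ inside $f_\pm$, and the $\overline{\alpha}_0$ sign. A useful internal consistency check, which I would include, is that all explicit $z$-powers cancel in the leading term: this must happen for $\langle\xi,\rho,\sigma|\mathbf{T}(z)$ to be a $z$-independent multiple of the bra (namely the $\mathbf{T}_0$ eigenvalue), and it confirms that no $z$-dependent zero-mode factor has been overlooked.
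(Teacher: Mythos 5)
Your proposal is correct and is exactly the computation the paper intends (the lemma is stated without proof precisely because it reduces to this zero-mode bookkeeping): the leading term is the product of the zero-mode eigenvalues, with $\langle\xi,\rho,\sigma|\widetilde{\alpha}_0=\xi\langle\cdots|$, $\langle\xi,\rho,\sigma|\overline{\alpha}_0=-\rho\langle\cdots|$, $\langle\xi,\rho,\sigma|\beta_0=\sigma\langle\cdots|$, and the $q$-shifted arguments of $\widetilde{V}^\pm$, $e_\pm$, $f_\pm$ producing the factors $q^{-\frac{2}{k}\xi}$ and $q^{\frac{k+2}{k}\rho}$ together with $q^{\mp\sigma}$, $q^{\rho}$ from $Z_\pm$ and $W_\pm^{\pm1}$. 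All signs and exponents check out, including the $\overline{\alpha}_0$ sign and the cancellation of the explicit $z$-powers in the leading term.
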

Hence we have the identification of the parameters:
\begin{equation}
q^{k/2}u=q^\xi,\qquad u=q^\rho,\qquad 
 v=q^\sigma.
\end{equation}

Set
\begin{align}
 {\mathcal F}^*_{\rm R}(u,v) \seteq
 \bigoplus_{n \in \mathbb{Z}} {\mathcal F}^*(k/2+\rho+(k+2)n,\rho+2n,\sigma). 
\end{align}
\begin{prp}
We have a representation of $\Svir$ in the {\rm R} sector on the dual
Fock space $ {\mathcal F}_{\rm R}(u,v)$
with the highest weight vector $\langle vk/2+\rho,\rho,\sigma\vert$ satisfying
\begin{align*}
&\langle k/2+\rho,\rho,\sigma\vert {\bf K}^\pm_0=q^{k/2}u \langle k/2+\rho,\rho,\sigma\vert,\quad 
\langle k/2+\rho,\rho,\sigma\vert {\bf T}_0=q^{-1}h(u,v) \langle k/2+\rho,\rho,\sigma\vert,
\end{align*}
where $h(u,v)$ is defined in (\ref{huv}). 
\end{prp}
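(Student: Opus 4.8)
The plan is to obtain this proposition as a corollary of Theorem~\ref{Wakimoto-Svir}, which guarantees that the twisted Wakimoto currents $\mathbf{K}^\pm(z)$, $\mathbf{T}(z)$ and $\mathbf{G}^\pm(z)$ of Definition~\ref{mod-Wakimoto} satisfy, as operators on the bosonic Fock space, all the generating-function relations \eqref{rr-2}--\eqref{rr-11} defining $\Svir$. Granting this, any subspace stable under the (right) action of these currents, on which $\mathbf{G}^\pm(z)$ carries the R-sector integral mode expansion of Remark~\ref{NSvsR}, is automatically a right $\Svir$-module in the R sector. Hence the whole content reduces to two bookkeeping steps: (i) checking that the dual Fock space $\mathcal{F}^*_{\rm R}(u,v)=\bigoplus_{n\in\mathbb{Z}}\mathcal{F}^*(k/2+\rho+(k+2)n,\rho+2n,\sigma)$ is such a stable subspace, and (ii) reading off the zero-mode eigenvalues on its cyclic covector $\langle k/2+\rho,\rho,\sigma\vert$.

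For step (i) I would first record that $\mathbf{K}^\pm(z)$ and $\mathbf{T}(z)$ carry total charge zero in each of $Q_{\widetilde\alpha},Q_{\overline\alpha},Q_\beta$: indeed $\mathbf{K}(z)=\,:\!\widetilde{V}^+(q^{k}z)\widetilde{V}^-(q^{-k}z)\!:$ and the three terms of $\mathbf{T}_{\mathrm{PF}}(z)$ are built from the charge-neutral pairings $:\!e_\pm f_\mp\!:$ and $:\!e_\pm f_\pm\!:$, so these currents preserve each summand $\mathcal{F}^*(k/2+\rho+(k+2)n,\rho+2n,\sigma)$. By Lemma~\ref{G-zero-dual}, $\mathbf{G}^\pm(z)$ shifts $(\xi,\rho,\sigma)\mapsto(\xi\mp(k+2),\rho\mp 2,\sigma)$, hence sends the $n$-th summand to the $(n\mp 1)$-th, so the direct sum over $n\in\mathbb{Z}$ is closed. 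I would then note that the offset $-\tfrac12\pm\tfrac{\xi-\rho}{k}$ of Lemma~\ref{G-zero-dual} reduces, at $\xi=\rho+k/2$, to $-\tfrac12\pm\tfrac12\in\{0,-1\}$, an integer; this is exactly the R-sector selection rule of Remark~\ref{NSvsR}, and it is what singles out $\xi=\rho+k/2$ (the $n=0$ summand) as the cyclic covector. Consistency of the integral $\mathbf{G}^\pm$-grading across the lattice is then matched to the $p$-degree assignment $n(n+1)/2$ already recorded for $\langle k/2+\rho+(k+2)n,\rho+2n,\sigma\vert$.

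For step (ii) I would invoke the Lemma immediately preceding the statement: at $\xi=\rho+k/2$ one has $\langle\xi,\rho,\sigma\vert\mathbf{K}^\pm(z)=q^{\xi}\langle\xi,\rho,\sigma\vert+\cdots$ and $\langle\xi,\rho,\sigma\vert\mathbf{T}(z)=q^{-1}\bigl(q^{-1}q^{\rho}q^{-\sigma}-\tfrac{[k+2]}{[k+1]}q^{2\rho}+q^{+1}q^{\rho}q^{\sigma}\bigr)\langle\xi,\rho,\sigma\vert+\cdots$. Under the identification $q^{k/2}u=q^{\xi}$, $u=q^{\rho}$, $v=q^{\sigma}$, the first gives $\mathbf{K}^\pm_0\mapsto q^{k/2}u$, while the second equals $q^{-1}h(u,v)$ by the parametrization \eqref{huv}, so $\mathbf{T}_0\mapsto q^{-1}h(u,v)$. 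Together with the companion vanishing statements that kill the positive modes on $\langle k/2+\rho,\rho,\sigma\vert$ (as forced by Lemma~\ref{G-zero-dual} and charge neutrality), this identifies the covector as a genuine dual highest-weight vector in the R-sector sense.

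The genuinely hard input is Theorem~\ref{Wakimoto-Svir} itself: establishing the quartic relations \eqref{rr-8}--\eqref{rr-11} at the operator level rests on the deformed-parafermion OPE computations of Appendix~A and on the cancellation between the parafermion and $U(1)$ structure functions emphasized after Proposition~\ref{generating_functions}. Relative to that, the present proposition is routine. The only subtlety I would flag is the duality matching: one should verify that the right action on $\mathcal{F}^*_{\rm R}(u,v)$ furnished by the same currents is compatible with the pairing against $\mathcal{F}_{\rm R}(u,v)$, so that $\mathcal{F}^*_{\rm R}(u,v)$ is indeed the contragredient module and the eigenvalues transfer verbatim; the coincidence of the two characters (cf.\ the Remark following the $\mathcal{F}_{\rm R}$ proposition) then confirms that no covectors are lost.
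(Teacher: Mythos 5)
Your proposal is correct and follows essentially the same route as the paper, which states this proposition without a written proof precisely because it is meant to follow from Theorem \ref{Wakimoto-Svir} together with Lemma \ref{G-zero-dual} (whose exponent $-\tfrac12\pm\tfrac{\xi-\rho}{k}$ forces $\xi=\rho+k/2$ via the R-sector dual highest-weight conditions $\langle\cdot\vert\mathbf{G}^+_m=0$ for $m<0$ and $\langle\cdot\vert\mathbf{G}^-_m=0$ for $m\leq 0$) and the immediately preceding eigenvalue lemma identifying $q^{\xi}=q^{k/2}u$ and $q^{-1}h(u,v)$. Your added bookkeeping (charge-neutrality of $\mathbf{K}^\pm,\mathbf{T}$ preserving each summand, $\mathbf{G}^\pm$ shifting $n\mapsto n\mp1$ so the lattice sum is closed) is exactly the content the paper leaves implicit.
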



\subsection{Commutation relations of the generating currents}

In this subsection we prove;

\begin{thm}\label{Wakimoto-Svir}
The operators $\mathbf{K}(z),\mathbf{T}(z)$ and  $\mathbf{G}^\pm(z)$ given in Definition \ref{mod-Wakimoto}
satisfy the relations in Proposition \ref{generating_functions}.
\end{thm}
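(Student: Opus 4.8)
The plan is to exploit the factorization of each generating current into a twisted $U(1)$ boson piece, built from $\widetilde{V}^\pm(z)$, and a deformed parafermion piece, built from $e_\epsilon(z)$ and $f_\epsilon(z)$, and to reduce every relation in Proposition \ref{generating_functions} to a product of operator product expansions (OPEs) in these two mutually commuting sectors. The boson-sector contractions follow directly from the commutation relation \eqref{tildealpha}: since $\mathbf{K}^\pm(z)$ depends only on $\widetilde{\alpha}_n$, the radially ordered products $\widetilde{V}^\epsilon(z)\widetilde{V}^{\epsilon'}(w)$ produce elementary scalar factors that one reads off from $[\widetilde{\alpha}_n,\widetilde{\alpha}_m]$. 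The parafermion-sector OPEs among $e_\epsilon$ and $f_\epsilon$ are collected in Appendix A. With both sets of contraction factors in hand, I would verify the relations one family at a time, in close analogy with the proof of Theorem \ref{Wakimoto} for $U_q(\widehat{\mathfrak{sl}}_2)$.

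First, the relations involving only $\mathbf{K}^\pm$, namely \eqref{rr-2} and \eqref{rr-3}, are immediate: they are the exponentiated form of the Heisenberg relation \eqref{tildealpha}, already recorded in Proposition \ref{Hei}. Next, for \eqref{rr-4}--\eqref{rr-7}, the current $\mathbf{K}^\pm$ contracts only with the $\widetilde{V}^\pm$ factors inside $\mathbf{T}$ and $\mathbf{G}^\pm$, because the parafermion operators commute with the boson $\widetilde{\alpha}_n$; the rational prefactors in these relations are then exactly the boson contraction factors, and one only needs to match the $q$-shifts in the arguments prescribed in Definition \ref{mod-Wakimoto}.

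The substantive part is the quartet \eqref{rr-8}--\eqref{rr-11}, where both sectors contribute simultaneously. The mechanism is that a boson contraction between $\widetilde{V}^+$ and $\widetilde{V}^-$ supplies a simple pole in $z/w$, while the parafermion OPE $e_\epsilon f_{\epsilon'}$ supplies a compensating factor; the crucial point, emphasized in the Remark following Lemma \ref{GG=W}, is that the product of these two factors is a rational function with only simple poles, so that the difference of the two radial orderings collapses to a delta function by the standard expansion identity. For \eqref{rr-8} the two contributions combine into an overall antisymmetric scalar, forcing the anticommutator to vanish. For \eqref{rr-9} the poles sit at the loci $q^{2(k+2)}w/z$, $q^{2(k+1)}w/z$ and $q^{4(k+1)}w/z$ appearing on the right-hand side, and the residues reproduce the $\delta^{\mathrm A}$ term, $T$ and $K$ respectively; here the summation over $\epsilon=\pm$ in $\mathbf{G}^\pm(z)=z^{1/2}(\mathbf{G}_+^\pm(z)-\mathbf{G}_-^\pm(z))$ is precisely what selects the correct parafermion combination.

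The main obstacle will be \eqref{rr-11}, the $\mathbf{T}$--$\mathbf{T}$ relation. Since $\mathbf{T}_{\mathrm{PF}}(z)$ is a three-term combination of $e_+f_-$, $e_+f_+$ and $e_-f_+$, computing $\mathbf{T}(z)\mathbf{T}(w)$ requires all cross OPEs in the parafermion sector together with the boson contraction, and one must show that the antisymmetrized product collapses to the single delta-supported expression $K^-(q^{\pm(k+1)}w)\,W(q^{\pm(k+1)}w)\,K^+(q^{\pm(k+1)}w)$. The appearance of $W(z)$, with its normal-ordered $\no G^+G^-\no$ contribution, is handled through Lemma \ref{GG=W}, which identifies $G^+(z)G^-(z)$ with $-(q-q^{-1})^3 W(z)$; the remaining delicate bookkeeping of normal orderings, and the cancellation of the would-be structure functions between the two sectors, is deferred to Appendix B. Once \eqref{rr-11} is settled, \eqref{rr-10} follows by the same reasoning with one fewer parafermion factor, completing the verification.
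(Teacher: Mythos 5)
Your proposal follows essentially the same route as the paper: factorizing each current into the twisted $U(1)$ boson piece and the deformed parafermion piece, reading off the boson contractions from \eqref{tildealpha}, importing the parafermion OPEs from Appendix \ref{proof-Wakimoto}, exploiting the cancellation of structure functions between the two sectors for the $G$--$G$, $G$--$T$ and $T$--$T$ relations, and invoking Lemma \ref{GG=W} to produce the $W$-current in \eqref{rr-11}. The only cosmetic difference is that the paper establishes \eqref{rr-10} directly (and before \eqref{rr-11}) rather than deducing it afterwards, but the underlying Wick-theorem computation is the same.
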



\subsubsection{Relations for $\mathbf{K}^\pm(z)$ vs. $\mathbf{K}^\pm(w)$,  $\mathbf{K}^\pm(z)$ vs. 
$\mathbf{G}^\pm(w)$, and  $\mathbf{K}^\pm(z)$ vs. $\mathbf{T}(w)$}

Since the operators $\mathbf{K}^\pm(z)$ and $\widetilde{V}^\pm(z)$ involve 
only the twisted $U(1)$ modes $\widetilde{\alpha}_n$ and $Q_{\widetilde{\alpha}}$, it is straightforward 
to obtain the OPE's among them from the commutation relations given by Definition \ref{modified-Cartan}.
In order to write down these OPE's, it is convenient to introduce
the invertible Fourier series $\widetilde{g}(z)$ by
\begin{equation}
\widetilde{g}(z)^{\pm 1}=q^{\pm (k+2)}
\exp\left( \pm  \sum_{m>0}  {1\over m}(q^{(k+2)m}-q^{-(k+2)m}) z^m \right)=
{q^{\pm (k+2)}-z\over 1-q^{\pm (k+2)} z}.
\end{equation}
Note that when we evaluate the OPE of $A(u)B(v)$, the ordering $|u| > |v|$ is always assumed.

\begin{lem}\label{tVtV}
The OPE's among the vertex operators $\mathbf{K}^\pm(z)$ and $\widetilde{V}^\pm(z)$ read 
as follows;
\begin{enumerate}
\item
\begin{align*}
&\mathbf{K}^\pm(z)\mathbf{K}^\pm(w)=~:\mathbf{K}^\pm(z)\mathbf{K}^\pm(w):,\\
&\mathbf{K}^-(z)\mathbf{K}^+(w)=~:\mathbf{K}^-(z)\mathbf{K}^+(w):,\\
&
\mathbf{K}^+(w)\mathbf{K}^-(z)=~:\mathbf{K}^-(z)\mathbf{K}^+(w):
\frac{(1-q^{2}z/w)(1-q^{-2}z/w)}{(1-q^{2k+2}z/w)(1-q^{-2k-2}z/w)}.
\end{align*}
\item
\begin{align*}
&\mathbf{K}^-(z) \widetilde{V}^\pm (w)=~:\mathbf{K}^-(z) \widetilde{V}^\pm (w):q^{\pm (k+2)},\\
&
\widetilde{V}^\pm (w)\mathbf{K}^-(z) =~:\mathbf{K}^-(z) \widetilde{V}^\pm (w):q^{\pm (k+2)}\widetilde{g}(z/w)^{\mp 1},\\
&\widetilde{V}^\pm (z )\mathbf{K}^+(w)=~:\widetilde{V}^\pm (z )\mathbf{K}^+(w):,\\
&\mathbf{K}^+(w)\widetilde{V}^\pm (z )=~:\widetilde{V}^\pm (z )\mathbf{K}^+(w):\widetilde{g}(z/w)^{\pm 1}.
\end{align*}
\item
\begin{align*}
&\widetilde{V}^\pm(z)\widetilde{V}^\pm (w)=~:\widetilde{V}^\pm(z)\widetilde{V}^\pm (w):z^{+{k+2\over k}}
\exp\left(  -\sum_{m>0} {[(k+2)m]\over m [km]} (w/z)^m\right),\\
&\widetilde{V}^\pm(z)\widetilde{V}^\mp(w)=~:\widetilde{V}^\pm(z)\widetilde{V}^\mp(w):z^{-{k+2\over k}}
\exp\left( + \sum_{m>0}  {[(k+2)m]\over m [km]} (w/z)^m\right).
\end{align*}
\end{enumerate}
\end{lem}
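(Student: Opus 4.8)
The plan is to exploit the fact that every operator in the lemma --- $\mathbf{K}^\pm(z)$ and $\widetilde{V}^\pm(z)$ --- is a vertex operator built solely from the single twisted Heisenberg algebra generated by $\widetilde{\alpha}_n$ ($n\in\mathbb{Z}$) and $Q_{\widetilde{\alpha}}$, with commutation relations \eqref{tildealpha}. Hence each OPE reduces to one application of the standard normal-ordering (Baker--Campbell--Hausdorff) formula: for two operators that are exponentials of linear forms in the modes, the product equals the normal-ordered product times the exponential of the contraction of the annihilation part of the left factor with the creation part of the right factor, times a zero-mode rearrangement factor. First I would record this formula once, together with the decomposition of each operator into its creation part (negative modes and $Q_{\widetilde{\alpha}}$), its zero-mode part ($\widetilde{\alpha}_0$), and its annihilation part (positive modes and $\widetilde{\alpha}_0$); recall here that $\widetilde{\alpha}_0$ counts as an annihilation operator while $Q_{\widetilde{\alpha}}$ is a creation operator.

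Next I would carry out the oscillator contractions, whose only input is \eqref{tildealpha}. In every case the relevant sum collapses, via $[\widetilde{\alpha}_m,\widetilde{\alpha}_{-m}]=[(k+2)m][km]/m$, to one of two elementary series. For the $\mathbf{K}^+$--$\mathbf{K}^-$ contraction the key identity is
\[
(q-q^{-1})^2[(k+2)m][km]=q^{(2k+2)m}-q^{2m}-q^{-2m}+q^{-(2k+2)m},
\]
which upon summation against $\sum_{m>0}\zeta^m/m=-\log(1-\zeta)$ produces exactly the rational prefactor $(1-q^{2}z/w)(1-q^{-2}z/w)/[(1-q^{2k+2}z/w)(1-q^{-2k-2}z/w)]$. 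For the mixed $\mathbf{K}^\mp$--$\widetilde{V}^\pm$ and the $\widetilde{V}^\pm$--$\widetilde{V}^\pm$ contractions the sums are $\sum_{m>0}[(k+2)m]/m\,(z/w)^m$ and $\sum_{m>0}[(k+2)m]/(m[km])\,(w/z)^m$ respectively; the former is recognized as the logarithm of $\widetilde{g}(z/w)^{\pm1}$ up to its constant $q^{\pm(k+2)}$, and the latter reproduces the exponential factors of part (3) directly.

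The zero-mode bookkeeping is where I would be most careful, and it is the part most likely to cause sign or exponent errors. Every $q^{\pm(k+2)}$ and every power $z^{\pm(k+2)/k}$ in the statement arises from moving $q^{\widetilde{\alpha}_0}$ or $z^{\pm\widetilde{\alpha}_0/k}$ past $e^{\pm(k+2)Q_{\widetilde{\alpha}}}$ using
\[
q^{\widetilde{\alpha}_0}e^{cQ_{\widetilde{\alpha}}}=q^{c}\,e^{cQ_{\widetilde{\alpha}}}q^{\widetilde{\alpha}_0},
\qquad
z^{\widetilde{\alpha}_0/k}e^{cQ_{\widetilde{\alpha}}}=z^{c/k}\,e^{cQ_{\widetilde{\alpha}}}z^{\widetilde{\alpha}_0/k},
\]
which follow from $[\widetilde{\alpha}_0,Q_{\widetilde{\alpha}}]=1$. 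For each of the nine products I would tabulate which zero-mode factor must cross which exponential of $Q_{\widetilde{\alpha}}$, and check that the accumulated constant matches the displayed one. The delicate cases are the $\widetilde{g}$-dependent ones, where the interplay between the $q^{\pm(k+2)}$ produced by the zero modes and the $q^{\mp(k+2)}$ carried in the constant term of $\widetilde{g}^{\pm1}$ must be tracked: these cancel in the $\mathbf{K}^+$--$\widetilde{V}^\pm$ orderings (leaving a clean $\widetilde{g}^{\pm1}$) but survive in the $\widetilde{V}^\pm$--$\mathbf{K}^-$ orderings (leaving $q^{\pm(k+2)}\widetilde{g}^{\mp1}$). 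The remaining commuting cases --- the equal-sign $\mathbf{K}^\pm$ products and the already normal-ordered orderings --- follow at once, since in those products the two operators share no contractible pair and no zero-mode factor needs to cross a $Q_{\widetilde{\alpha}}$. Combining the oscillator and zero-mode factors then yields each of the stated identities.
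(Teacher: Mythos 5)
Your proposal is correct and is exactly the argument the paper has in mind: the paper offers no written proof of this lemma beyond the remark that it follows "straightforwardly" from the twisted Heisenberg relations of Definition \ref{modified-Cartan}, and your Wick/BCH computation with the identity $(q-q^{-1})^2[(k+2)m][km]=q^{(2k+2)m}-q^{2m}-q^{-2m}+q^{-(2k+2)m}$ and the zero-mode crossings $q^{\widetilde{\alpha}_0}e^{cQ_{\widetilde{\alpha}}}=q^{c}e^{cQ_{\widetilde{\alpha}}}q^{\widetilde{\alpha}_0}$, $z^{\widetilde{\alpha}_0/k}e^{cQ_{\widetilde{\alpha}}}=z^{c/k}e^{cQ_{\widetilde{\alpha}}}z^{\widetilde{\alpha}_0/k}$ reproduces every stated factor. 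The only quibbles are cosmetic: your mixed-contraction sum is missing an overall $(q-q^{-1})$ (the contraction is $\sum_{m>0}(q^{(k+2)m}-q^{-(k+2)m})(z/w)^m/m$), and in the $\widetilde{V}^\pm\mathbf{K}^-$ ordering the prefactor $q^{\pm(k+2)}$ comes entirely from rewriting the oscillator exponential as $\widetilde{g}^{\mp1}$ rather than from a zero-mode crossing (there is none in that ordering), but neither affects the result.
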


Since the operator $\mathbf{K}^\pm(z)$ is independent of the modes of the parafermion sector,
there are no contributions from the parafermion sector in the relations 
$\mathbf{K}^\pm(z)$ vs. $\mathbf{G}^\pm(w)$ and  $\mathbf{K}^\pm(z)$ vs. $\mathbf{T}(w)$.
Note that the $U(1)$ part of $\mathbf{T}(z)$ is given by 
\begin{equation*}
\mathbf{T}_\mathrm{U(1)}(z) = :\widetilde{V}^+(q^{-1}z)\widetilde{V}^-(q^{+1}z): 
= q^{-\frac{2}{k}\widetilde{\alpha}_0} 
: \exp \left( - (q - q^{-1}) \sum_{m \neq 0} \frac{[m]}{[km]}\widetilde{\alpha}_{-m}  z^m \right) :.
\end{equation*}
Then the OPE's in Lemma \ref{tVtV} are enough to obtain the following relations.
\begin{prp}
\begin{align*}
&\mathbf{K}^\pm(z)\mathbf{K}^\pm(w)=\mathbf{K}^\pm(w)\mathbf{K}^\pm(z),\\
&\mathbf{K}^-(z)\mathbf{K}^+(w)=\mathbf{K}^+(w)\mathbf{K}^-(z)
\frac{(1-q^{2k+2}z/w)(1-q^{-2k-2}z/w)}{(1-q^{2}z/w)(1-q^{-2}z/w)},\\
&\mathbf{K}^-(z)\mathbf{T}(w)=\mathbf{T}(w)\mathbf{K}^-(z) \widetilde{g}(q z/w) \widetilde{g}(q^{-1} z/w)^{-1},\\
&\mathbf{T}(z)\mathbf{K}^+(w)=\mathbf{K}^+(w)\mathbf{T}(z) \widetilde{g}(q z/w) \widetilde{g}(q^{-1} z/w)^{-1},\\
&\mathbf{K}^-(z)\mathbf{G}^\pm(w)=\mathbf{G}^\pm(w)\mathbf{K}^-(z) \widetilde{g}(q^{\pm (k+2)} z/w)^{\pm 1},\\
&\mathbf{G}^\pm(z)\mathbf{K}^+(w)=\mathbf{K}^+(w)\mathbf{G}^\pm(z) \widetilde{g}(q^{\mp (k+2)} z/w)^{\mp 1}.
\end{align*}
\end{prp}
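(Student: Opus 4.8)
The plan is to exploit the fact that $\mathbf{K}^\pm(z)$ is built solely from the twisted Heisenberg modes $\widetilde{\alpha}_n$ and $Q_{\widetilde{\alpha}}$ (Definition \ref{modified-Cartan} and \eqref{Kpm-al}), which by construction commute with every generator of the parafermion sector, namely $\overline{\alpha}_n,\beta_n,Q_{\overline{\alpha}},Q_\beta$. Consequently, in each of the six relations the parafermion factors $\mathbf{T}_{\mathrm{PF}}(w)$, $e_\epsilon(\cdot)$ and $f_\epsilon(\cdot)$ occurring in $\mathbf{T}(w)$ and $\mathbf{G}^\pm(w)$ pass through $\mathbf{K}^\pm(z)$ without producing any contraction. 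Thus every relation reduces to a scalar OPE computation in the $U(1)$ sector, all of whose ingredients are already tabulated in Lemma \ref{tVtV}.

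First, the two $\mathbf{K}$--$\mathbf{K}$ relations are read off directly from Lemma \ref{tVtV}(1): the normal ordered products $:\!\mathbf{K}^\pm(z)\mathbf{K}^\pm(w)\!:$ and $:\!\mathbf{K}^-(z)\mathbf{K}^+(w)\!:$ are symmetric under exchange of the two arguments, so comparing the (trivial) scalar prefactor of $\mathbf{K}^-(z)\mathbf{K}^+(w)$ with that of $\mathbf{K}^+(w)\mathbf{K}^-(z)$ and inverting yields the stated rational factor. Next, for the $\mathbf{K}$--$\mathbf{T}$ relations I would use that the $U(1)$ part of $\mathbf{T}(w)$ is $\mathbf{T}_{\mathrm{U(1)}}(w)=\,:\!\widetilde{V}^+(q^{-1}w)\widetilde{V}^-(q^{+1}w)\!:$. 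Commuting $\mathbf{K}^-(z)$ across $\widetilde{V}^+(q^{-1}w)$ and $\widetilde{V}^-(q^{+1}w)$ via Lemma \ref{tVtV}(2) with these shifted arguments, the left ordering contributes the scalar $q^{(k+2)}q^{-(k+2)}=1$, while the right ordering contributes $q^{(k+2)}\widetilde{g}(qz/w)^{-1}\cdot q^{-(k+2)}\widetilde{g}(q^{-1}z/w)$; dividing produces exactly $\widetilde{g}(qz/w)\widetilde{g}(q^{-1}z/w)^{-1}$. The relation for $\mathbf{T}(z)\mathbf{K}^+(w)$ follows by the same manipulation using instead the last two lines of Lemma \ref{tVtV}(2).

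Finally, for the $\mathbf{K}$--$\mathbf{G}$ relations I would note that the $U(1)$ content of $\mathbf{G}^\pm(w)$ is the single vertex operator $\widetilde{V}^\pm(q^{\mp(k+2)}w)$, which is independent of $\epsilon$; hence both the prefactor $z^{1/2}$ and the sum $\sum_{\epsilon}\epsilon\,(\cdot)$ defining $\mathbf{G}^\pm$ factor cleanly out of the OPE, and one may work with a single $\widetilde{V}^\pm$. Commuting $\mathbf{K}^-(z)$ past $\widetilde{V}^\pm(q^{\mp(k+2)}w)$ by Lemma \ref{tVtV}(2) and taking the ratio of the two orderings gives precisely $\widetilde{g}(q^{\pm(k+2)}z/w)^{\pm1}$; the analogous commutation of $\widetilde{V}^\pm(q^{\mp(k+2)}z)$ with $\mathbf{K}^+(w)$ gives $\widetilde{g}(q^{\mp(k+2)}z/w)^{\mp1}$, establishing the last pair of relations.

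All of this is mechanical once Lemma \ref{tVtV} is in hand, so there is no serious conceptual obstacle; the only points demanding care are the bookkeeping of the spectral shifts ($q^{\mp1}w$ in $\mathbf{T}$ and $q^{\mp(k+2)}w$ in $\mathbf{G}^\pm$) together with the signs of the exponents on $\widetilde{g}$, and the verification that the parafermion sector genuinely decouples, so that no hidden OPE factor survives in the $\epsilon$-sum or in the $z^{1/2}$ prefactor of $\mathbf{G}^\pm$.
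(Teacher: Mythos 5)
Your proposal is correct and follows essentially the same route as the paper: the paper likewise observes that $\mathbf{K}^\pm(z)$ involves only the twisted modes $\widetilde{\alpha}_n$, so the parafermion factors in $\mathbf{T}$ and $\mathbf{G}^\pm$ decouple, and then reads the scalar OPE factors off Lemma \ref{tVtV} with the appropriate spectral shifts. Your bookkeeping of the shifts $q^{\mp 1}w$ and $q^{\mp(k+2)}w$ and of the exponents of $\widetilde{g}$ checks out.
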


Thus, we recover the relations \eqref{rr-2} -- \eqref{rr-7} in Proposition \ref{generating_functions}.

\subsubsection{Relations for
$\mathbf{G}^\pm(z)$ vs. $\mathbf{G}^\pm(w)$}

Since $\mathbf{G}^\pm(z)$ involves the vertex operators from the parafermion sector,
we need the OPE's among $e_{\pm}(z)$ and $f_{\pm}(z)$ which are worked out in Appendix \ref{proof-Wakimoto}
(see Propositions \ref{ee}, \ref{ff} and \ref{ef}.) 

\begin{prp}
We have
\begin{align*}
&\mathbf{G}^\pm(z)\mathbf{G}^\pm(w)+\mathbf{G}^\pm(w)\mathbf{G}^\pm(z)=0,\\
&
\mathbf{G}^+(z)\mathbf{G}^-(w)
+
\mathbf{G}^-(w)\mathbf{G}^+(z)\\
=& 
{1\over (q-q^{-1})^2}\Biggl(
\delta^{\rm A}\left(q^{4k+4} { w\over z}\right){1\over [k+1]}\mathbf{K}(q^{2k+2}w)
-\delta^{\rm A}\left(q^{2k+4} { w\over  z}\right) \\
&\qquad\qquad +\delta^{\rm A}\left(q^{2k+2} { w\over   z}\right)  \mathbf{T}(q^{k+1}w) \Biggr),\qquad ({\rm A}={\rm NS},{\rm R}).
\end{align*}

\end{prp}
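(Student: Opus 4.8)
The plan is to reduce both relations to operator product expansions of the constituent vertex operators, organized into a $U(1)$-boson factor and a deformed-parafermion factor. Writing $\mathbf{G}^\pm(z)=z^{1/2}\sum_{\epsilon=\pm}\epsilon\,\mathbf{G}^\pm_\epsilon(z)$ as in Definition \ref{mod-Wakimoto}, each elementary product $\mathbf{G}^{\pm}_\epsilon(z)\mathbf{G}^{\pm}_{\epsilon'}(w)$ factorizes, up to the scalar $\pm(q-q^{-1})^{-2}$, into the OPE of two operators $\widetilde{V}^{\pm}$ (governed by Lemma \ref{tVtV}) times the OPE of two parafermion operators $e_\epsilon,f_{\epsilon'}$ (governed by Propositions \ref{ee}, \ref{ff} and \ref{ef} of Appendix \ref{proof-Wakimoto}). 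The decisive structural input is that the product of the $U(1)$ OPE coefficient with the parafermion OPE coefficient is a \emph{rational} function whose only singularities are simple poles: the transcendental exponential factor $\exp\!\big(\mp\sum_{m>0}\tfrac{[(k+2)m]}{m[km]}(w/z)^m\big)$ produced by $\widetilde{V}^{\pm}$ in Lemma \ref{tVtV}(3) cancels against the analogous factor carried by the parafermion OPE, leaving only plain rational factors. This cancellation is exactly the phenomenon announced in the Remark following Lemma \ref{GG=W}, and it is the reason why no structure function appears in front of the left-hand sides.

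First I would treat $\mathbf{G}^{\pm}(z)\mathbf{G}^{\pm}(w)$. Here the $U(1)$ part is the $\widetilde{V}^{\pm}\widetilde{V}^{\pm}$ OPE of Lemma \ref{tVtV}(3) and the parafermion part is the $e_\epsilon e_{\epsilon'}$ (resp. $f_\epsilon f_{\epsilon'}$) OPE of Proposition \ref{ee} (resp. \ref{ff}). After the cancellation above, and after restoring the prefactor $z^{1/2}w^{1/2}$ and summing over $\epsilon,\epsilon'$ with the signs $\epsilon\epsilon'$, the combined scalar coefficient is regular and antisymmetric under $z\leftrightarrow w$, while the normal-ordered operator it multiplies is symmetric; hence $\mathbf{G}^{\pm}(z)\mathbf{G}^{\pm}(w)+\mathbf{G}^{\pm}(w)\mathbf{G}^{\pm}(z)$ vanishes identically. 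The $G^-$ case follows either by the same computation with $f_\epsilon$ in place of $e_\epsilon$ or, more economically, by invoking the involution of Remark \ref{Inv}.

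The substantive computation is the mixed relation. For each pair $(\epsilon,\epsilon')$ the product $\mathbf{G}^{+}_\epsilon(z)\mathbf{G}^{-}_{\epsilon'}(w)$ combines the $\widetilde{V}^{+}(q^{-k-2}z)\widetilde{V}^{-}(q^{k+2}w)$ coefficient of Lemma \ref{tVtV}(3) with the $e_\epsilon(q^{-\frac{3k+4}{2}}z)f_{\epsilon'}(q^{\frac{3k+4}{2}}w)$ coefficient of Proposition \ref{ef}; after the structure-function cancellation the combined coefficient is rational with simple poles at the three loci $z/w=q^{4k+4},\,q^{2k+4},\,q^{2k+2}$. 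The anticommutator $\mathbf{G}^{+}(z)\mathbf{G}^{-}(w)+\mathbf{G}^{-}(w)\mathbf{G}^{+}(z)$ is the difference of the $|z|\gg|w|$ and $|z|\ll|w|$ expansions of this coefficient, and since the difference of the two expansions of a simple pole $(1-q^{a}w/z)^{-1}$ is the delta function supported at $z/w=q^{a}$, it localizes onto those three poles, producing $\delta^{\rm A}(q^{4k+4}w/z)$, $\delta^{\rm A}(q^{2k+4}w/z)$ and $\delta^{\rm A}(q^{2k+2}w/z)$. I would then read off the residues by collapsing the four-operator normal-ordered product: at $z/w=q^{4k+4}$ the surviving operator is the current $\mathbf{K}(q^{2k+2}w)=\,:\!\widetilde{V}^{+}(q^{k}\cdot)\widetilde{V}^{-}(q^{-k}\cdot)\!:$ with the parafermion part contracting to the scalar $[k+1]^{-1}$; at $z/w=q^{2k+4}$ the operator content cancels and only the constant $-1$ survives; and at $z/w=q^{2k+2}$ the three sign combinations $(\epsilon,\epsilon')$ reassemble, with weights $q^{-1}$, $-\tfrac{[k+2]}{[k+1]}$ and $q^{+1}$, into exactly the three terms $:\!e_+f_-\!:$, $:\!e_+f_+\!:$, $:\!e_-f_+\!:$ defining $\mathbf{T}_{\mathrm{PF}}$, whence the residue is $\mathbf{T}(q^{k+1}w)$.

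The main obstacle will be the residue bookkeeping at the last pole: one must verify that the three parafermion residues supplied by Proposition \ref{ef}, once multiplied by the $U(1)$ prefactors and the sign $\epsilon\epsilon'$, reproduce precisely the coefficients $q^{-1}$, $-\tfrac{[k+2]}{[k+1]}$, $q^{+1}$ appearing in the definition of $\mathbf{T}_{\mathrm{PF}}(z)$, and likewise that the numerology at the other two poles gives clean $[k+1]^{-1}$ and $-1$. A secondary point requiring care is the NS/R dichotomy: the prefactor $z^{1/2}w^{1/2}$ together with the fractional zero-mode power $z^{\pm(\xi-\rho)/k}$ from Lemma \ref{G-zero} converts the naive delta into $\delta^{\rm A}$ with the sector-correct index, which I would control using Lemma \ref{delta-flip}; the shift of the arguments of $\mathbf{K}$ and $\mathbf{T}$ to $q^{2k+2}w$ and $q^{k+1}w$ is then dictated by the same lemma together with the evaluation $K^{\pm}(w)\delta^{\rm A}(w/z)=K^{\pm}(z)\delta^{\rm A}(w/z)$.
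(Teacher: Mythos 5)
Your proposal follows essentially the same route as the paper's proof: factorize each $\mathbf{G}^{\pm}_{\epsilon}\mathbf{G}^{\pm}_{\epsilon'}$ product into its $\widetilde{V}^{\pm}$ and parafermion OPEs, use the cancellation of the exponential structure functions to get purely rational coefficients, obtain the like-sign anticommutator from the antisymmetry of the resulting linear factor, and localize the mixed anticommutator onto delta functions at $z/w=q^{2k+2},q^{2k+4},q^{4k+4}$ whose residues reassemble into $\mathbf{T}$, the constant, and $\mathbf{K}$. The only small imprecision is in the like-sign case, where the factor $(q^{\pm\epsilon}z-q^{\pm\epsilon'}w)$ is antisymmetric under the \emph{simultaneous} exchange $(z,\epsilon)\leftrightarrow(w,\epsilon')$ rather than under $z\leftrightarrow w$ alone, but this does not affect the argument.
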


There relations are nothing but \eqref{rr-8} and \eqref{rr-9} in Proposition \ref{generating_functions}.

\begin{proof}
Combining the formulas in Lemma \ref{tVtV} (3) and the OPE's among $e_{\pm}(z)$ 
and $f_{\pm}(z)$, we compute the factors coming from the normal ordering.
There is a nice cancellation between the OPE coefficients of the $U(1)$ boson  part 
and the parafermion part, which leads to the simple relations 
for $\mathbf{G}^\pm(z)$ vs. $\mathbf{G}^\pm(w)$.
Firstly, the normal ordering of the zero modes gives\footnote{In the case of $U_q(\widehat{\mathfrak{sl}}_2)$, we do not have 
the factor $z$ from the normal ordering of the zero modes. The additional factor $z$ is responsible 
for the fermionic nature of $\mathbf{G}^\pm(z)$.}
\begin{equation*}
 (q^{\mp(k+2)}z)^{\frac{k+2}{k}} (q^{\mp\frac{1}{2}(3k+4)}z)^{-\frac{2}{k}} = z q^{\mp(k+1)},
\end{equation*}
where the first factor on the left hand side is from $\widetilde{V}^{\pm}(q^{\mp(k+2)}z)$
and the second factor is from the parafermion sector. 
The normal ordering of the oscillators produces the factor
\begin{equation*}
\exp \left( - \sum_{m>0} \frac{[(k+2)m]}{m [km]} (w/z)^m \right)
\exp \left( \sum_{m>0} \frac{[2m]}{m [km]} q^{\mp km}  (w/z)^m  \right) 
= (1 - q^{\pm 2}(w/z))
\end{equation*}
with the additional factor $(q^{\pm \epsilon_1}z - q^{\pm \epsilon_2}w)/(z - q^{\pm 2} w)$. 
Multiplying all these factors, we obtain
\begin{align*}
&\mathbf{G}^\pm_{\epsilon_1}(z)\mathbf{G}^\pm_{\epsilon_2}(w)=~q^{\mp(k+1)}
:\mathbf{G}^\pm_{\epsilon_1}(z)\mathbf{G}^\pm_{\epsilon_2}(w):
(q^{\pm \epsilon_1}z-q^{\pm \epsilon_2}w),
\end{align*}
hence 
\begin{align*}
&\mathbf{G}^\pm_{\epsilon_1}(z)\mathbf{G}^\pm_{\epsilon_2}(w)+
\mathbf{G}^\pm_{\epsilon_2}(w)\mathbf{G}^\pm_{\epsilon_1}(z)=0.
\end{align*}

Similarly for the OPE's between $\mathbf{G}^+_{\epsilon_1}(z)$ and $\mathbf{G}^-_{\epsilon_2}(w)$,
we have 
\begin{equation*}
 (q^{-k-2}z)^{-\frac{k+2}{k}} (q^{-\frac{1}{2}(3k+4)}z)^{\frac{2}{k}} = z^{-1} q^{k+1}
\end{equation*}
and 
\begin{equation*}
\exp \left( \sum_{m>0} \frac{[(k+2)m]}{m [km]} (q^{(2k+4)}w/z)^m \right)
\exp \left( - \sum_{m>0} \frac{[2m]}{m [km]} (q^{(3k+4)}w/z)^m \right) 
= (1-q^{2k+2}w/z)^{-1}.
\end{equation*}
Hence, together with the factors that depend on $\epsilon_1$ and $\epsilon_2$, we have
\begin{align*}
&\mathbf{G}^+_{\epsilon_1}(z)\mathbf{G}^-_{\epsilon_2}(w)=~:\mathbf{G}^+_{\epsilon_1}(z)\mathbf{G}^-_{\epsilon_2}(w):
q^{k+1}
{1\over z-q^{2k+2}w}{q^{- \epsilon_1}z-q^{(k+1)\epsilon_2+3k+4}w\over z-q^{k \epsilon_2+3k+4}w},\\
&\mathbf{G}^-_{\epsilon_2}(w)\mathbf{G}^+_{\epsilon_1}(z)=~:\mathbf{G}^+_{\epsilon_1}(z)\mathbf{G}^-_{\epsilon_2}(w):
q^{-k-1}
{1\over w-q^{-2k-2}z}{q^{(k+1)\epsilon_2+3k+4}w-q^{- \epsilon_1}z\over q^{k \epsilon_2+3k+4}w-z}.
\end{align*}
By using the lemma below and the relation\footnote{Since the mode expansion of $\mathbf{G}^\pm_{\epsilon}(z)$ is
flipped, compared with $\mathbf{G}^\pm(z)$, we should take it into account, when we apply Lemma \ref{delta-flip}.}
\begin{align*}
:\mathbf{G}^+_{\epsilon_1}(z)\mathbf{G}^-_{\epsilon_2}(w):\delta^\mathrm{NS} (q^\alpha w/z)
&= :\mathbf{G}^+_{\epsilon_1}(q^\alpha w)\mathbf{G}^-_{\epsilon_2}(w):\delta^\mathrm{NS}(q^\alpha w/z), \qquad\hbox{in the NS sector,}\\
:\mathbf{G}^+_{\epsilon_1}(z)\mathbf{G}^-_{\epsilon_2}(w):\delta^\mathrm{NS} (q^\alpha w/z)
&= :\mathbf{G}^+_{\epsilon_1}(q^\alpha w)\mathbf{G}^-_{\epsilon_2}(w): \delta(q^\alpha w/z), \qquad\hbox {in the R sector,}
\end{align*}
we finally obtain
\begin{align*}
&\mathbf{G}^+(z) \mathbf{G}^-(w) + \mathbf{G}^-(w) \mathbf{G}^+(z) 
= z^{1/2} w^{1/2} \sum_{\epsilon_1, \epsilon_2} \epsilon_1\epsilon_2 \left(\mathbf{G}^+_{\epsilon_1}(z) \mathbf{G}^-_{\epsilon_2}(w) 
+ \mathbf{G}^-_{\epsilon_2}(w) \mathbf{G}^+_{\epsilon_1}(z) \right) \CR
&=~\frac{1}{(q-q^{-1})^2} \left[
{1\over [k+1]} \delta^{\rm A}\left(q^{4k+4} {w\over z}\right) :\widetilde{V}^+(q^{3k+2} w)\widetilde{V}^-(q^{k+2} w):
-~\delta^{\rm A}\left(q^{2k+4} {w\over z}\right) \right.\CR
& \qquad\qquad 
\left. +\delta^{\rm A}\left(q^{2k+2} {w\over z}\right) :\widetilde{V}^+(q^k w)\widetilde{V}^-(q^{k+2} w): 
\mathbf{T}_\mathrm{PF}(q^{k+1}w) \right] \CR
&=~\frac{1}{(q-q^{-1})^2} \left[
{1\over [k+1]} \delta^{\rm A}\left(q^{4k+4} {w\over z}\right) \mathbf{K}(q^{2k+2}w) 
-~\delta^{\rm A}\left(q^{2k+4} {w\over z}\right)
+\delta^{\rm A}\left(q^{2k+2} {w\over z}\right) \mathbf{T}(q^{k+1}w) \right].
\end{align*}
\end{proof}

\begin{lem}
We have
\begin{align*}
&{q^{k+1} z^{1/2} w^{1/2} \over z-q^{2k+2}w}{q^{- \epsilon_1}z-q^{(k+1)\epsilon_2+3k+4}w\over z-q^{k \epsilon_2+3k+4}w}+
{q^{-k-1} z^{1/2} w^{1/2} \over w-q^{-2k-2}z}{q^{(k+1)\epsilon_2+3k+4}w-q^{- \epsilon_1}z\over q^{k \epsilon_2+3k+4}w-z}\\
&=
\begin{cases}
\displaystyle 
{[k+2]\over [k+1]}\delta^\mathrm{NS} \left(q^{2k+2} {w\over z}\right)  - 
{1\over [k+1]}\delta^\mathrm{NS} \left(q^{4k+4} {w\over z}\right) \qquad&(\epsilon_1,\epsilon_2)=(+,+), \\
q^{-1}\delta^\mathrm{NS} \left(q^{2k+2} {w\over z}\right) \qquad&(\epsilon_1,\epsilon_2)=(+,-), \\
q^{+1}\delta^\mathrm{NS} \left(q^{2k+2} {w\over z}\right) \qquad&(\epsilon_1,\epsilon_2)=(-,+), \\
\delta^\mathrm{NS} \left(q^{2k+4} {w\over z}\right) \qquad&(\epsilon_1,\epsilon_2)=(-,-),
\end{cases}
\end{align*}
and
\begin{align*}
&:\mathbf{G}^+_{\epsilon_1}(q^{2k+2}w)\mathbf{G}^-_{\epsilon_2}(w):~=~
-{1\over (q-q^{-1})^2}:\widetilde{V}^+(q^k w)\widetilde{V}^-(q^{k+2} w)e_{\epsilon_1}(q^{k\over 2}w)f_{\epsilon_2}(q^{3k+4\over 2}w):,\\
&:\mathbf{G}^+_{+}(q^{4k+4}w)\mathbf{G}^-_{+}(w):~=~
-{1\over (q-q^{-1})^2}:\widetilde{V}^+(q^{3k+2} w)\widetilde{V}^-(q^{k+2} w):,\\
&:\mathbf{G}^+_{-}(q^{2k+4}w)\mathbf{G}^-_{-}(w):~=~
-{1\over (q-q^{-1})^2}.
\end{align*}
\end{lem}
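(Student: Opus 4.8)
The plan is to read off both displayed identities directly from the two single‑ordering OPE formulas for $\mathbf{G}^+_{\epsilon_1}(z)\mathbf{G}^-_{\epsilon_2}(w)$ and $\mathbf{G}^-_{\epsilon_2}(w)\mathbf{G}^+_{\epsilon_1}(z)$ established just above, treating the scalar rational factors and the collapsed normal‑ordered operators separately.

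For the delta‑function identity I would first observe that the two rational prefactors are expansions of one and the same rational function in the opposite domains $|z|\gg|w|$ and $|w|\gg|z|$. Using $w-q^{-2k-2}z=-q^{-2k-2}(z-q^{2k+2}w)$, $q^{k\epsilon_2+3k+4}w-z=-(z-q^{k\epsilon_2+3k+4}w)$, and $q^{(k+1)\epsilon_2+3k+4}w-q^{-\epsilon_1}z=-(q^{-\epsilon_1}z-q^{(k+1)\epsilon_2+3k+4}w)$, the second prefactor becomes $-q^{k+1}z^{1/2}w^{1/2}R(z,w)\big|_{|w|\gg|z|}$, where
\[
R(z,w)=\frac{cz-dw}{(z-aw)(z-bw)},\qquad a=q^{2k+2},\; b=q^{k\epsilon_2+3k+4},\; c=q^{-\epsilon_1},\; d=q^{(k+1)\epsilon_2+3k+4},
\]
while the first prefactor is $+q^{k+1}z^{1/2}w^{1/2}R(z,w)\big|_{|z|\gg|w|}$. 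Hence their sum is the difference of the two domain expansions of a single rational function. I would then apply partial fractions in $z$ together with the elementary formula $\frac{1}{z-aw}\big|_{|z|\gg|w|}-\frac{1}{z-aw}\big|_{|w|\gg|z|}=z^{-1}\delta(aw/z)$ to each simple pole, and the half‑integer bookkeeping $(w/z)^{1/2}\delta(aw/z)=a^{-1/2}\delta^{\mathrm{NS}}(aw/z)$, which is exactly what converts $\delta$ into $\delta^{\mathrm{NS}}$. The residues $\tfrac{ca-d}{a-b}$ at $z=aw$ and $\tfrac{cb-d}{b-a}$ at $z=bw$ I would evaluate in the four sign cases. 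A feature I would emphasize is that the numerator $cz-dw$ is tuned so that for $(\epsilon_1,\epsilon_2)=(+,-),(-,+)$ one residue vanishes and for $(-,-)$ the other does, leaving a single $\delta^{\mathrm{NS}}$, whereas for $(+,+)$ both survive and reassemble, after elementary $q$‑number manipulation, into the coefficients $\tfrac{[k+2]}{[k+1]}$ and $-\tfrac{1}{[k+1]}$.

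For the three collapsed operator identities I would substitute the prescribed shifted arguments into \eqref{G+-def}--\eqref{G--def}, which gives
\[
:\mathbf{G}^+_{\epsilon_1}(z)\mathbf{G}^-_{\epsilon_2}(w):
=-\frac{1}{(q-q^{-1})^2}:\widetilde{V}^+(q^{-k-2}z)\widetilde{V}^-(q^{k+2}w)\,e_{\epsilon_1}(q^{-\frac{3k+4}{2}}z)\,f_{\epsilon_2}(q^{\frac{3k+4}{2}}w):,
\]
and then track the arguments at $z=q^{2k+2}w$, $z=q^{4k+4}w$, and $z=q^{2k+4}w$ respectively. The $U(1)$ factor collapses via $:\widetilde{V}^+(x)\widetilde{V}^-(x):=1$ (the zero modes and the oscillators of $\widetilde{V}^\pm$ cancel at coincident points), while the parafermion collapses $:e_+(q^{k}x)f_+(x):=1$ and $:e_-(q^{-k}x)f_-(x):=1$ follow from the $e$‑$f$ normal‑ordering computations in Appendix \ref{proof-Wakimoto} (Propositions \ref{ee}, \ref{ff} and \ref{ef}). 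Combining these yields the three stated equalities; in particular the case $z=q^{2k+2}w$ packages $:\widetilde{V}^+(q^{k}w)\widetilde{V}^-(q^{k+2}w):$ and the surviving parafermion pair into $\mathbf{K}(q^{2k+2}w)$ and $\mathbf{T}_{\mathrm{PF}}$, precisely as \eqref{rr-9} requires.

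The main obstacle I anticipate is not any single calculation but the careful coordination of the half‑integer factor $z^{1/2}w^{1/2}$ with the formal expansions: one must take the delta‑function difference in the domains consistent with the $|u|>|v|$ ordering convention, and apply the conversion $\delta\rightsquigarrow\delta^{\mathrm{NS}}$ together with the argument‑shift flip of Lemma \ref{delta-flip} uniformly, so that both the NS and the R sectors are covered by the same scalar computation. The residue cancellations are what make the four sign cases collapse to the clean right‑hand sides, so verifying those vanishings (rather than the surrounding $q$‑number arithmetic) is the real content of the first identity.
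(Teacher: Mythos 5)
Your proposal is correct and follows essentially the same route as the paper: partial fractions in $z$, the recognition that the two prefactors are opposite-domain expansions of one rational function so each simple pole yields a $\delta^{\mathrm{NS}}$ after absorbing the $z^{1/2}w^{1/2}$ factor, and the collapse of the operator parts via $:e_\pm(q^{\pm k}x)f_\pm(x):=1$ from Proposition \ref{ef} (the paper only writes out the $(+,+)$ case and leaves the operator identities to the definitions, but your residue bookkeeping $\frac{ca-d}{a-b}$, $\frac{cb-d}{b-a}$ reproduces all four cases correctly).
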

\begin{proof}
We check the case $(\epsilon_1,\epsilon_2)=(+,+)$. Other cases are similar. 
\begin{align*}
&{q^{k+1} z^{1/2} w^{1/2} \over z-q^{2k+2}w}{q^{-1}z-q^{4k+5}w\over z-q^{4k+4}w}+
{q^{-k-1} z^{1/2} w^{1/2} \over w-q^{-2k-2}z}{q^{4k+5}w-q^{-1}z\over q^{4k+4}w-z}\\
&=  \frac{[k+2]}{[k+1]} \frac{ q^{k+1} z^{1/2} w^{1/2}}{z-q^{2k+2}w} 
- \frac{1}{[k+1]} \frac{q^{2k+2} z^{1/2} w^{1/2}}{z-q^{4k+4}w} \\
& \qquad + \frac{[k+2]}{[k+1]} \frac{ q^{-k-1} z^{1/2} w^{1/2}}{w-q^{-2k-2}z} 
- \frac{1}{[k+1]} \frac{q^{-2k-2} z^{1/2} w^{1/2}}{w-q^{-4k-4}z} \\
&=   \frac{[k+2]}{[k+1]} \frac{ (q^{2k+2} w/z)^{\frac{1}{2}}}{1-q^{2k+2}w/z} 
- \frac{1}{[k+1]} \frac{(q^{4k+4}w/z)^{\frac{1}{2}}}{1-q^{4k+4}w/z} \\
& \qquad + \frac{[k+2]}{[k+1]} \frac{(q^{-2k-2}z/w)^{\frac{1}{2}}}{1-q^{-2k-2}z/w} 
- \frac{1}{[k+1]} \frac{(q^{-4k-4}z/w)^{\frac{1}{2}}}{1-q^{-4k-4}z/w} \\
&= \frac{[k+2]}{[k+1]} \delta^\mathrm{NS} \left(q^{2k+2} {w\over z}\right) 
- \frac{1}{[k+1]} \delta^\mathrm{NS} \left(q^{4k+4} {w\over z}\right) . 
\end{align*}
\end{proof}

Proofs of G-T relation and T-T relation namely \eqref{rr-10} -- \eqref{rr-11} 
in Proposition \ref{generating_functions} are relegated to Appendix \ref{TGandTT}.


\vspace{5mm}

\begin{ack}
We would like to thank M.Bershtein, O.Blondeau-Fournier, M.Fukuda, R.Kodera, H.Konno, 
Y.Matsuo, H.Nakajima, G.Noshita, R.Ohkawa, Y.Ohkubo, Y.Saito, Y.Sugawara and Y.Yoshida 
for useful discussions. 
Our work is supported in part by Grants-in-Aid for Scientific Research (Kakenhi);
23K03087 (H.K.) and 19K03512, 19K03530, 21K03180, 24K06753 (J.S.).
\end{ack}

\medskip

The data used for the current study will be made available on reasonable request.
The authors have no competing interests to declare that are relevant to the content of this article.

\medskip



\appendix

\section{Details of Wakimoto representation of $U_q(\widehat{\mathfrak{sl}}_2)$}\label{proof-Wakimoto}

\subsection{Relations for $\psi_\pm(z)$ vs. $\psi_\pm(w)$,  $\psi_\pm(z)$ vs. $E(w)$, and  $\psi_\pm(z)$ vs. $F(w)$}
One finds from the definition (\ref{psi}) and the OPE's given in Lemma \ref{VV} below that
\begin{align*}
&\psi_\pm(z)\psi_\pm(w)=~:\psi_\pm(z)\psi_\pm(w):,\quad \psi_\pm(w)\psi_\pm(z)=~:\psi_\pm(z)\psi_\pm(w):,\\
&\psi_-(z)\psi_+(w)=~:\psi_-(z)\psi_+(w):,\quad
\psi_+(w)\psi_-(z)=~:\psi_-(z)\psi_+(w): g(q^kz/w)g(q^{-k}z/w)^{-1},\\
&\psi_-(z) V^\pm(w)=~:\psi_-(z) V^\pm(w):q^{\mp 2} ,\quad\, V^\pm(w)\psi_-(z) =~:\psi_-(z) V^\pm(w):
q^{\mp 2} g(q^{\mp k/2}z/w)^{\mp 1},\\
&\psi_+(z) V^\pm(w)=~:\psi_-(z) V^\pm(w):g(q^{\mp k/2}z/w)^{\mp1},\quad 
V^\pm(w)\psi_+(z)=~:\psi_-(z) V^\pm(w):.
\end{align*}
Hence in view of the definitions  (\ref{E}) and (\ref{F})  we have the relations (\ref{Uq-1})-(\ref{Uq-4}).

\begin{lem}\label{VV}
The OPE's among the vertex operators $V^\pm(z)$ and $V^\pm(w)$ read
\begin{align*}
&V^\pm(z)V^\pm (w)=~:V^\pm(z)V^\pm (w):z^{+{2\over k}}
\exp\left(  -\sum_{m>0} {[2m]\over m [km]} q^{\mp km} (w/z)^m\right),\\
&V^\pm(z)V^\mp(w)=~:V^\pm(z)V^\mp(w):z^{-{2\over k}}
\exp\left( + \sum_{m>0}  {[2m]\over m [km]} (w/z)^m\right).
\end{align*}
\end{lem}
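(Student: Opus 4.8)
The plan is to treat $V^\pm(z)$ as standard free-boson vertex operators built from the single Heisenberg algebra generated by $\alpha_n$ and $Q_\alpha$ in Definition \ref{Matsuo-parafermion}, and to evaluate the product of two such operators by normal ordering. Writing each $V^\pm(z)$ from Definition \ref{Wakimoto-vertex} as a zero-mode factor $e^{\pm 2Q_\alpha}z^{\pm \alpha_0/k}$ times a normal-ordered oscillator exponential, I would first dispose of the zero modes. Since $[\alpha_0,Q_\alpha]=1$, the elementary rule $z^{a\alpha_0}e^{bQ_\alpha}=z^{ab}e^{bQ_\alpha}z^{a\alpha_0}$ produces a scalar prefactor when we move the $z^{+\alpha_0/k}$ of the left operator past the $e^{\pm 2 Q_\alpha}$ of the right one: this gives $z^{+2/k}$ when the two superscript signs agree (for $V^\pm(z)V^\pm(w)$) and $z^{-2/k}$ when they are opposite (for $V^\pm(z)V^\mp(w)$), which already accounts for the monomial prefactors $z^{\pm 2/k}$ in the statement.

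For the oscillator part I would invoke the contraction identity $:e^{A(z)}::e^{B(w)}:=\exp\big([A(z)_+,B(w)_-]\big):e^{A(z)}e^{B(w)}:$, where $A(z)_+$ collects the annihilation modes ($m>0$) of the exponent of the left operator, $B(w)_-$ the creation modes ($m<0$) of the right, and the bracket is a $c$-number. Reading off the exponents, the left factor contributes $A(z)_+=\mp\sum_{m>0}q^{\mp km/2}\frac{z^{-m}}{[km]}\alpha_m$, and after re-indexing $n=-m$ (using $[-km]=-[km]$) the right factor contributes $B(w)_-=\pm\sum_{m>0}q^{\mp km/2}\frac{w^{m}}{[km]}\alpha_{-m}$. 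Applying $[\alpha_m,\alpha_{-m}]=[2m][km]/m$, the $[km]^2$ in the denominator combines with the $[km]$ from the commutator, leaving
\[
[A(z)_+,B(w)_-]=\mp\sum_{m>0}q^{\mp km}\,\frac{[2m]}{m[km]}\,(w/z)^m
\]
in the equal-sign case, and
\[
[A(z)_+,B(w)_-]=+\sum_{m>0}\frac{[2m]}{m[km]}\,(w/z)^m
\]
in the opposite-sign case, where the $q^{\mp km/2}$ monomials multiply to $q^{\mp km}$ in the former and cancel to $q^{0}=1$ in the latter. Exponentiating reproduces exactly the two displayed OPE factors.

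This computation is entirely routine, so there is no genuine conceptual obstacle; the only thing requiring care is the sign and factor bookkeeping. In particular I would be attentive to the three independent sign choices (the overall $\pm$ from the superscript of each operator, the $\mp$ inside the exponent, and the reindexing sign from $[-km]=-[km]$) and to the $q^{\mp k|m|/2}$ monomial factors, whose product is what distinguishes the $q^{\mp km}$ appearing in the like-charge case from its absence in the opposite-charge case. Once these are tracked correctly, combining the zero-mode prefactor with the oscillator contraction yields the lemma directly.
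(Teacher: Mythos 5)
Your proof is correct and follows the standard normal-ordering/Wick computation that the paper leaves implicit for this lemma: the zero-mode exchange $z^{a\alpha_0}e^{bQ_\alpha}=e^{bQ_\alpha}z^{a\alpha_0}z^{ab}$ produces the monomial $z^{\pm 2/k}$, and the single contraction of the annihilation modes of the left operator against the creation modes of the right produces the exponential factor. One bookkeeping remark: in your like-charge case the overall prefactor of the commutator should be an unambiguous $-1$ rather than $\mp$, since it is the product of the $\mp$ from $A(z)_+$ with the $\pm$ from $B(w)_-$, which equals $-1$ for either choice of sign; with that correction the exponent is $-\sum_{m>0}q^{\mp km}\frac{[2m]}{m[km]}(w/z)^m$ and both displayed OPEs match the lemma exactly.
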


\subsection{Relations for $E(z)$ vs. $E(w)$}

We move on to the check of the relation (\ref{Uq-5}).

\begin{prp}\label{ee}
We have
\begin{align*}
z^{{2\over k}}
\exp\left(  -\sum_{m>0} {[2m]\over m [km]} q^{-km} (w/z)^m\right)
e_{\epsilon_1}(z)e_{\epsilon_2}(w)=~
:e_{\epsilon_1}(z)e_{\epsilon_2}(w): {q^{\epsilon_1}z-q^{\epsilon_2}w \over z-q^2 w}.
\end{align*}
\end{prp}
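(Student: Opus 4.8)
The plan is to compute the product $e_{\epsilon_1}(z)e_{\epsilon_2}(w)$ by the standard free--boson contraction rule: for two jointly normal--ordered exponentials of Heisenberg modes, $A(z)B(w)=\,:\!A(z)B(w)\!:\,\exp\big(\langle A(z)B(w)\rangle\big)$, where the scalar contraction $\langle A(z)B(w)\rangle$ collects exactly the commutators of the annihilation modes of $A(z)$ with the creation modes of $B(w)$, together with the zero--mode rearrangements. Since $e_\epsilon(z)=\,:\!Y^+(z)\,Z_\epsilon(q^{-\epsilon(k+2)/2}z)\,W_\epsilon(q^{-\epsilon k/2}z)\!:$ is built from the mutually commuting families $\overline{\alpha}_n$ and $\beta_n$ of Definition \ref{Matsuo-parafermion}, the full contraction factorizes into an $\overline{\alpha}$--part and a $\beta$--part, which I would evaluate separately.

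First I would isolate the $Y^+(z)$--$Y^+(w)$ self--contraction. Because $Y^\pm$ is built from $\overline{\alpha}$, whose commutator carries the opposite sign to that of $\alpha$, this contraction is the reciprocal of the $V^+$--$V^+$ OPE of Lemma \ref{VV}, namely $z^{-2/k}\exp\big(\sum_{m>0}\frac{[2m]}{m[km]}q^{-km}(w/z)^m\big)$. This is precisely the inverse of the prefactor $z^{2/k}\exp\big(-\sum_{m>0}\frac{[2m]}{m[km]}q^{-km}(w/z)^m\big)$ appearing on the left--hand side of the Proposition, so multiplying through by it cancels the $Y^+Y^+$ piece. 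The claim then reduces to showing that the product of all remaining contractions --- $Y^+$ against $Z$, $Z$ against $Y^+$, $Z$ against $Z$, $W$ against $W$, plus the $\overline{\alpha}$ zero--mode terms from $e^{2Q_{\overline{\alpha}}}$, $z^{\frac1k\overline{\alpha}_0}$ and the $q^{\mp\frac12\overline{\alpha}_0}$, $q^{\mp\frac12\beta_0}$ factors in $Z$ and $W$ --- equals $\frac{q^{\epsilon_1}z-q^{\epsilon_2}w}{z-q^2w}$.

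Next I would substitute the shifted arguments (namely $Z_\epsilon$ at $q^{-\epsilon(k+2)/2}\cdot$ and $W_\epsilon$ at $q^{-\epsilon k/2}\cdot$) into the mode coefficients of Definitions \ref{Wakimoto-vertex} and \ref{parafermion} and collect the surviving oscillator series into a single exponent. A key bookkeeping point is that several contractions are present only for particular signs: $Z_{\epsilon_1}$ supplies annihilation modes only when $\epsilon_1=+$ and creation modes only when $\epsilon_1=-$, and likewise for $W$ and for the right factor, so the four cases $(\epsilon_1,\epsilon_2)\in\{\pm\}^2$ must be organized accordingly. Using the explicit commutators $[\overline{\alpha}_n,\overline{\alpha}_{-n}]=-\frac{[2n][kn]}{n}$ and $[\beta_n,\beta_{-n}]=\frac{[2n][(k+2)n]}{n}$, each surviving contraction produces a series $\sum_{m\geq1}\frac1m(q^{\ast}w/z)^m$ which resums via $\sum_{m\geq1}x^m/m=-\log(1-x)$ into the logarithm of a rational function; I would then exponentiate and multiply, reading off the zero--mode rearrangements (via $[\overline{\alpha}_0,Q_{\overline{\alpha}}]=-1$) as the source of the $\epsilon$--dependent $q^{\epsilon_i}$ factors in the numerator.

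The step I expect to be the main obstacle is verifying that all the level--dependent denominators $[km]$ coming from $Y$ cancel against the $[m]/[2m]$ and $[2m][(k+2)m]/m$ factors of $Z$ and $W$, so that the exponent becomes independent of $k$ and resums cleanly to $\log\!\big((q^{\epsilon_1}z-q^{\epsilon_2}w)/(z-q^2w)\big)$. Concretely, one must check that the combination of $q^{-km}[2m]/[km]$ (from $Y$), $[m]/[2m]$ (from $Z$), and $[2m][(k+2)m][m]^2/(m[2m]^2)$ (from $W$), weighted by the powers of $q$ carried by the shifted arguments, telescopes in each of the four sign sectors; getting these $q$--exponents correct simultaneously in all sectors is where the computation is delicate. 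This mirrors, and can be cross--checked against, the simpler Lemma \ref{VV} and the analogous $f$--$f$ and $e$--$f$ computations that follow.
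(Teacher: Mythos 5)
Your plan coincides with the paper's own proof: the paper likewise proceeds by Wick's theorem, records the $Y^{+}$--$Y^{+}$ contraction (Lemma \ref{YY}), whose inverse is precisely the prefactor on the left-hand side, and tabulates the surviving $Z$--$Y$, $Y$--$Z$ and $ZW$--$ZW$ contractions together with the zero-mode factors $q^{\pm1}$ separately in the four sign sectors $(\epsilon_1,\epsilon_2)$ (Lemma \ref{for-ee}), after which the product resums to the rational factor $(q^{\epsilon_1}z-q^{\epsilon_2}w)/(z-q^{2}w)$. There is no gap; your cancellation check in the final paragraph is exactly the "straightforward calculation" the paper leaves to the reader.
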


\begin{proof}
This can be checked by performing straightforward calculations using the OPE's in Lemmas \ref{YY} and \ref{for-ee} below.
\end{proof}

\begin{cor}
In view of the definition  (\ref{E}) and the OPE's in Lemma \ref{VV}, we have
\begin{align*}
E_{\epsilon_1}(z)E_{\epsilon_2}(w)=~
:E_{\epsilon_1}(z)E_{\epsilon_2}(w): {q^{\epsilon_1}z-q^{\epsilon_2}w \over z-q^2 w}.
\end{align*}
Hence we have (\ref{Uq-5}).
\end{cor}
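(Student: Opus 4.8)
The plan is to exploit the factorized structure of $E_\epsilon(z)$. By (\ref{E}) we have $E_\epsilon(z)=\frac{1}{q-q^{-1}}V^+(z)e_\epsilon(z)$, where $V^\pm(z)$ is built solely from the Heisenberg modes $\alpha_m,Q_\alpha$, while $e_\epsilon(z)$ (Definition \ref{parafermion}) is built solely from the parafermion modes $\overline{\alpha}_m,\beta_m,Q_{\overline{\alpha}},Q_\beta$. Since all cross-commutators between the $\alpha$-sector and the $\overline{\alpha},\beta$-sectors vanish (Definition \ref{Matsuo-parafermion}), the operators $V^+(z)$ and $e_\epsilon(w)$ commute exactly, with no contraction factor. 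Hence I would first reorder
\[
E_{\epsilon_1}(z)E_{\epsilon_2}(w)=\frac{1}{(q-q^{-1})^2}\,V^+(z)V^+(w)\,e_{\epsilon_1}(z)e_{\epsilon_2}(w),
\]
so that the two sectors can be contracted independently.

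The second step is to insert the two known OPEs. Lemma \ref{VV} gives $V^+(z)V^+(w)=\,:\!V^+(z)V^+(w)\!:\,z^{2/k}\exp\!\big(-\sum_{m>0}\frac{[2m]}{m[km]}q^{-km}(w/z)^m\big)$, while Proposition \ref{ee}, read as $e_{\epsilon_1}(z)e_{\epsilon_2}(w)=z^{-2/k}\exp\!\big(+\sum_{m>0}\frac{[2m]}{m[km]}q^{-km}(w/z)^m\big):\!e_{\epsilon_1}(z)e_{\epsilon_2}(w)\!:\frac{q^{\epsilon_1}z-q^{\epsilon_2}w}{z-q^2w}$, supplies exactly the inverse scalar prefactor. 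The monomial $z^{\pm 2/k}$ and the two exponential series cancel termwise --- this cancellation is precisely why Proposition \ref{ee} was packaged with that prefactor on the left --- leaving $E_{\epsilon_1}(z)E_{\epsilon_2}(w)=\,:\!E_{\epsilon_1}(z)E_{\epsilon_2}(w)\!:\frac{q^{\epsilon_1}z-q^{\epsilon_2}w}{z-q^2w}$, which is the displayed formula of the corollary.

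Finally, to deduce the quadratic relation (\ref{Uq-5}) I would clear the denominator. Multiplying the OPE by $(z-q^2w)$ turns the right-hand side into the Laurent polynomial $:\!E_{\epsilon_1}(z)E_{\epsilon_2}(w)\!:(q^{\epsilon_1}z-q^{\epsilon_2}w)$, now an honest operator identity rather than a formal ratio. Writing $E(z)=E_+(z)-E_-(z)$ and summing over $\epsilon_1,\epsilon_2$ with signs, I would invoke the symmetry $:\!E_{\epsilon_1}(z)E_{\epsilon_2}(w)\!:\,=\,:\!E_{\epsilon_2}(w)E_{\epsilon_1}(z)\!:$ of the normal-ordered product; then in $(w-q^2z)E(w)E(z)$ the relabelling $\epsilon_1\leftrightarrow\epsilon_2$ produces the same normal-ordered operators multiplied by $(q^{\epsilon_2}w-q^{\epsilon_1}z)$, so that $(z-q^2w)E(z)E(w)+(w-q^2z)E(w)E(z)$ carries the numerator $(q^{\epsilon_1}z-q^{\epsilon_2}w)+(q^{\epsilon_2}w-q^{\epsilon_1}z)=0$ in every term.

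The one point requiring care --- the only genuine obstacle in an otherwise mechanical computation --- is the region of expansion: the factor $\frac{q^{\epsilon_1}z-q^{\epsilon_2}w}{z-q^2w}$ is defined by its $|z|>|w|$ expansion in $E_{\epsilon_1}(z)E_{\epsilon_2}(w)$ but by the $|w|>|z|$ expansion in the reversed product, and these two formal series differ by a delta-function supported at $z=q^2w$. Multiplying by $(z-q^2w)$ annihilates that pole and forces both expansions to coincide as polynomials, which is exactly what legitimizes adding the two operator orderings in the last step. I expect the Heisenberg bookkeeping in the first two steps to be routine once Lemmas \ref{YY} and \ref{for-ee}, already used in Proposition \ref{ee}, are in hand.
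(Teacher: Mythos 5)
Your proposal is correct and follows essentially the same route as the paper: the displayed OPE is obtained exactly by combining Lemma \ref{VV} with Proposition \ref{ee} (using that the $\alpha$-sector of $V^+$ commutes with the parafermion sector of $e_\epsilon$, so the scalar prefactors cancel), and \eqref{Uq-5} then follows by clearing the pole with $(z-q^2w)$ and antisymmetrizing, just as in the paper's proof of the corollary. Your closing remark about the two expansion domains differing by a delta function at $z=q^2w$, killed by the factor $(z-q^2w)$, is a correct and worthwhile justification that the paper leaves implicit.
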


\begin{proof}
We have
\begin{align*}
(z-q^2 w)E_{\epsilon_1}(z)E_{\epsilon_2}(w)=~
:E_{\epsilon_1}(z)E_{\epsilon_2}(w): (q^{\epsilon_1}z-q^{\epsilon_2}w ),\\
(w-q^2 z)E_{\epsilon_2}(w)E_{\epsilon_1}(z)=~
:E_{\epsilon_1}(z)E_{\epsilon_2}(w): (q^{\epsilon_2}w-q^{\epsilon_1}z ).
\end{align*}
Therefore we have
\begin{align*}
&(z-q^2 w)E_{\epsilon_1}(z)E_{\epsilon_2}(w)+(w-q^2 z)E_{\epsilon_2}(w)E_{\epsilon_1}(z)=0,\\
&(z-q^2 w)E(z)E(w)+(w-q^2 z)E(w)E(z)=0,
\end{align*}
proving  (\ref{Uq-5}).
\end{proof}

\begin{lem}\label{YY}
The OPE's among the vertex operators $Y^\pm(z)$ and $Y^\pm(w)$ read
\begin{align*}
&Y^\pm(z)Y^\pm (w)=~:Y^\pm(z)Y^\pm (w):z^{-{2\over k}}
\exp\left(  +\sum_{m>0} {[2m]\over m [km]} q^{\mp km} (w/z)^m\right),\\
&Y^\pm(z)Y^\mp(w)=~:Y^\pm(z)Y^\mp(w):z^{+{2\over k}}
\exp\left( - \sum_{m>0}  {[2m]\over m [km]} (w/z)^m\right).
\end{align*}
\end{lem}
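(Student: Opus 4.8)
The plan is to prove Lemma \ref{YY} by a direct application of Wick's theorem for normal-ordered exponentials of the Heisenberg modes $\overline{\alpha}_m$. The only general fact I would invoke is the elementary reordering identity: if $X=X_++X_-$ and $Y=Y_++Y_-$, where $X_+,Y_+$ collect the annihilation modes ($\overline{\alpha}_m$, $m\geq 0$) and $X_-,Y_-$ the creation modes ($\overline{\alpha}_m$, $m<0$, and $Q_{\overline{\alpha}}$), then $:\!e^{X}\!::\!e^{Y}\!:\,=\,:\!e^{X+Y}\!:\,e^{[X_+,Y_-]}$ whenever $[X_+,Y_-]$ is a c-number. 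First I would write, for $\epsilon=\pm 1$,
\[
Y^\epsilon(z)=e^{2\epsilon Q_{\overline{\alpha}}}\,z^{\frac{\epsilon}{k}\overline{\alpha}_0}\,:\!e^{\phi^\epsilon(z)}\!:,\qquad \phi^\epsilon(z)=-\epsilon\sum_{m\neq 0}q^{-\epsilon\frac{k}{2}|m|}\frac{z^{-m}}{[km]}\overline{\alpha}_m,
\]
and split $\phi^\epsilon=\phi^\epsilon_++\phi^\epsilon_-$ into its $m>0$ and $m<0$ parts.

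The heart of the computation is the single contraction $[\phi^{\epsilon_1}_+(z),\phi^{\epsilon_2}_-(w)]$. After relabelling the creation sum by $m\mapsto -m$ (using $[-km]=-[km]$), I find $\phi^{\epsilon_2}_-(w)=\epsilon_2\sum_{m>0}q^{-\epsilon_2\frac{k}{2}m}\frac{w^{m}}{[km]}\overline{\alpha}_{-m}$, so that with $[\overline{\alpha}_m,\overline{\alpha}_{-m}]=-\frac{[2m][km]}{m}$ one obtains
\[
[\phi^{\epsilon_1}_+(z),\phi^{\epsilon_2}_-(w)]=\epsilon_1\epsilon_2\sum_{m>0}q^{-(\epsilon_1+\epsilon_2)\frac{k}{2}m}\frac{[2m]}{m[km]}\,(w/z)^m.
\]
Specializing $\epsilon_1=\epsilon_2=\epsilon$ collapses the $q$-power to $q^{-\epsilon km}$ and gives $+\sum_{m>0}q^{-\epsilon km}\frac{[2m]}{m[km]}(w/z)^m$, while $\epsilon_1=-\epsilon_2$ makes the $q$-power trivial and gives $-\sum_{m>0}\frac{[2m]}{m[km]}(w/z)^m$; these are precisely the two exponents in the statement.

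It then remains to extract the scalar $z$-power from the zero modes. The only reordering needed is moving the annihilation-type factor $z^{\frac{\epsilon_1}{k}\overline{\alpha}_0}$ of $Y^{\epsilon_1}(z)$ to the right of the creation-type factor $e^{2\epsilon_2 Q_{\overline{\alpha}}}$ of $Y^{\epsilon_2}(w)$; since $[\overline{\alpha}_0,Q_{\overline{\alpha}}]=-1$ we get $\overline{\alpha}_0\,e^{2\epsilon_2 Q_{\overline{\alpha}}}=e^{2\epsilon_2 Q_{\overline{\alpha}}}(\overline{\alpha}_0-2\epsilon_2)$, hence the factor $z^{-2\epsilon_1\epsilon_2/k}$, which equals $z^{-2/k}$ when $\epsilon_1=\epsilon_2$ and $z^{+2/k}$ when $\epsilon_1=-\epsilon_2$. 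No further cross terms appear because $\overline{\alpha}_m$ with $m\neq 0$ commutes with both $\overline{\alpha}_0$ and $Q_{\overline{\alpha}}$. Assembling the contraction factor with this zero-mode factor yields the two asserted operator product expansions.

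I expect no genuine obstacle here, as the argument is a routine free-field computation. The one point demanding care is the bookkeeping of the two minus signs peculiar to this algebra — the $-\frac{[2m][km]}{m}$ in $[\overline{\alpha}_n,\overline{\alpha}_m]$ and the $-1$ in $[\overline{\alpha}_0,Q_{\overline{\alpha}}]$ — together with the identity $[-km]=-[km]$ used in the relabelling, since an error in any of these would flip the sign of an exponent or of the $z$-power and spoil the match with the stated formula.
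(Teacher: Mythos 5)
Your computation is correct and is exactly the standard Wick-theorem/normal-ordering argument that the paper implicitly relies on (Lemma \ref{YY} is stated there without proof as a routine free-field OPE). All the sign bookkeeping checks out: the contraction $[\phi^{\epsilon_1}_+(z),\phi^{\epsilon_2}_-(w)]=\epsilon_1\epsilon_2\sum_{m>0}q^{-(\epsilon_1+\epsilon_2)\frac{k}{2}m}\frac{[2m]}{m[km]}(w/z)^m$ and the zero-mode factor $z^{-2\epsilon_1\epsilon_2/k}$ reproduce both stated OPE's, with the two minus signs in $[\overline{\alpha}_n,\overline{\alpha}_m]$ and $[\overline{\alpha}_0,Q_{\overline{\alpha}}]$ correctly accounting for the sign flips relative to the $V^\pm$ case of Lemma \ref{VV}.
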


\begin{lem}\label{for-ee}
The non trivial OPE's we need for the calculation of the products 
$e_{\epsilon_1}(z)e_{\epsilon_2}(w)$ are the following; 
\newline
\noindent
(1) for the cases $\epsilon_1=+, \epsilon_2=\pm $
\begin{align*}
&Z_+(q^{-{k+2\over 2}}z) Y^+(w)=~:Z_+(z) Y^+(w): q^{+1}
\exp\left(  \sum_{m>0}{1\over m}( q^{2m}-1)(w/z)^m\right),
\end{align*}
(2) for the cases $\epsilon_1=\pm, \epsilon_2=-$
\begin{align*}
&Y^+(z) Z_-(q^{{k+2\over 2}}w)= ~:Y^+(z) Z_-(q^{{k+2\over 2}}w):
 \exp\left( \sum_{m>0} {1\over m}( q^{2m}-1) (w/z)^m\right),
\end{align*}
(3) for the case $\epsilon_1=+,\epsilon_2=-$
\begin{align*}
&Z_+(q^{-{k+2\over 2}}z)W_+ (q^{-{k\over 2}}z)Z_-(q^{{k+2\over 2}}w)W_- (q^{{k\over 2}}w)\\
=&:Z_+(q^{-{k+2\over 2}}z)W_+ (q^{-{k\over 2}}z)Z_-(q^{{k+2\over 2}}w)W_- (q^{{k\over 2}}w):
\exp\left(-\sum_{m>0} {1\over m }(q^{2m}-2+q^{-2m})(w/z)^m\right),
\end{align*}
and (4) for the cases $\epsilon_1=-,\epsilon_2=\pm$
\begin{align*}
&Z_-(q^{+{k+2\over 2}}z) Y^+(w)=~:Z_-(q^{+{k+2\over 2}}z) Y^+(w): q^{-1}.
\end{align*}
\end{lem}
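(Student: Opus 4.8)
The plan is to evaluate each of the four products directly from the definitions in Definition~\ref{Wakimoto-vertex} by means of the standard normal-ordering rule for Heisenberg vertex operators: if $A(z)=\,:e^{a(z)}:$ and $B(w)=\,:e^{b(w)}:$ are exponentials linear in the oscillators, then
\begin{align*}
A(z)B(w)=\,:A(z)B(w):\,\exp\bigl([a(z)_{+},\,b(w)_{-}]\bigr),
\end{align*}
where $a(z)_{+}$ collects the annihilation modes (non-negative Fourier modes) of $a(z)$, $b(w)_{-}$ collects the creation modes (negative Fourier modes together with the momentum $Q$) of $b(w)$, and the bracket is the $c$-number fixed by the commutators of Definition~\ref{Matsuo-parafermion}. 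The first thing I would record is that $Z_\pm$ and $Y^\pm$ involve only $\overline{\alpha}_n$ (with $Y^\pm$ additionally carrying $e^{\pm 2Q_{\overline{\alpha}}}$), while $W_\pm$ involves only $\beta_n$; since $[\overline{\alpha}_n,\beta_m]=0$, the $\overline{\alpha}$- and $\beta$-contractions decouple and each listed OPE reduces to independent single-boson computations. All expansions are taken in the domain $|z|>|w|$, so the resulting geometric series converge as written.

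For (1), (2) and (4) only the $\overline{\alpha}$ boson enters. The single point beyond bookkeeping is that $[x]$ is odd, so the creation modes of $Y^+$ read $+\sum_{m>0}q^{-km/2}\tfrac{w^{m}}{[km]}\overline{\alpha}_{-m}$, and that $[\overline{\alpha}_m,\overline{\alpha}_{-m}]=-\tfrac{[2m][km]}{m}$ carries a minus sign. Combining these with the argument shift $q^{\mp(k+2)/2}z$ collapses the $q$-power prefactors to $q^{m}$ and produces $(q-q^{-1})[m]=q^{m}-q^{-m}$, giving the common exponent $\sum_{m>0}\tfrac1m(q^{2m}-1)(w/z)^m$ of (1) and (2). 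The scalar prefactors come from reordering the momentum zero modes: the annihilation-type factor $q^{\mp\frac12\overline{\alpha}_0}$ of $Z_\pm$ moved past $e^{2Q_{\overline{\alpha}}}$ in $Y^+$ yields $\exp\bigl([\mp\tfrac12\overline{\alpha}_0\ln q,\,2Q_{\overline{\alpha}}]\bigr)=q^{\pm1}$, which accounts for the $q^{+1}$ in (1); in (4) the operator $Z_-$ has no annihilation oscillators at all, so the oscillator contraction is empty and only this zero-mode factor survives, leaving the bare $q^{-1}$.

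The heart of the argument is (3), where both bosons contribute. I would compute the two contractions separately: the $\overline{\alpha}$-pairing of $Z_+(q^{-(k+2)/2}z)$ with $Z_-(q^{(k+2)/2}w)$ gives $+\,(q-q^{-1})^2\sum_{m>0}\tfrac{[m]^2[km]}{m[2m]}\,q^{(k+2)m}(w/z)^m$, while the $\beta$-pairing of $W_+(q^{-k/2}z)$ with $W_-(q^{k/2}w)$ gives $-\,(q-q^{-1})^2\sum_{m>0}\tfrac{[m]^2[(k+2)m]}{m[2m]}\,q^{km}(w/z)^m$, the opposite sign arising from $[\beta_m,\beta_{-m}]=+\tfrac{[2m][(k+2)m]}{m}$. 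Adding the two, the crucial cancellation is the telescoping identity
\begin{align*}
[km]\,q^{(k+2)m}-[(k+2)m]\,q^{km}=-[2m],
\end{align*}
which I would verify by writing each $q$-number as $(q^{\bullet}-q^{-\bullet})/(q-q^{-1})$. This reduces the combined exponent to $-(q-q^{-1})^2\sum_{m>0}\tfrac{[m]^2}{m}(w/z)^m=-\sum_{m>0}\tfrac1m(q^{2m}-2+q^{-2m})(w/z)^m$, exactly the exponent in (3); since $Z_\pm$ and $W_\pm$ carry no $Q$-momenta, no scalar prefactor appears.

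I expect the only genuine obstacle to be the consistent tracking of signs and $q$-numbers—the minus sign in $[\overline{\alpha}_n,\overline{\alpha}_m]$, the oddness of $[x]$, and the various argument shifts must all be handled in concert—together with recognizing the telescoping identity in (3) that fuses the parafermionic ($\overline{\alpha}$) and $U(1)$-type ($\beta$) contributions into a single clean factor. Once that identity is isolated, each of the four OPEs is a one-line geometric resummation.
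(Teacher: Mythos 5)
Your computation is correct and is precisely the straightforward Wick-contraction argument the paper leaves implicit (Lemma \ref{for-ee} is stated without proof and invoked as a "straightforward calculation" in Proposition \ref{ee}); in particular your signs for $[\overline{\alpha}_m,\overline{\alpha}_{-m}]$ versus $[\beta_m,\beta_{-m}]$, the zero-mode factors $q^{\pm 1}$ from $[\overline{\alpha}_0,Q_{\overline{\alpha}}]=-1$, and the key identity $[km]\,q^{(k+2)m}-[(k+2)m]\,q^{km}=-[2m]$ in case (3) all check out. The only cosmetic remark is that the right-hand side of item (1) in the paper should read $:Z_+(q^{-\frac{k+2}{2}}z)Y^+(w):$ rather than $:Z_+(z)Y^+(w):$, a typo your derivation implicitly corrects.
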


\subsection{Relations for $F(z)$ vs. $F(w)$}
\begin{prp}\label{ff}
We have
\begin{align*}
z^{{2\over k}}
\exp\left(  -\sum_{m>0} {[2m]\over m [km]} q^{+km} (w/z)^m\right)
f_{\epsilon_1}(z)f_{\epsilon_2}(w)=~
:f_{\epsilon_1}(z)f_{\epsilon_2}(w): {q^{-\epsilon_1}z-q^{-\epsilon_2}w \over z-q^{-2}w}.
\end{align*}

\end{prp}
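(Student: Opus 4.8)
Two routes are available, and the plan is to present the short one while keeping the brute-force computation in reserve. The short route is to observe that Proposition~\ref{ff} is the image of Proposition~\ref{ee} under an explicit symmetry of the construction. Concretely, I would consider the substitution
\[
q\longrightarrow q^{-1},\qquad \overline{\alpha}_n\longrightarrow -\overline{\alpha}_n,\qquad Q_{\overline{\alpha}}\longrightarrow -Q_{\overline{\alpha}},
\]
leaving $\beta_n,Q_\beta$ untouched. Since $[2m]$ and $[km]$ are invariant under $q\to q^{-1}$ and the two sign flips cancel in $[\overline{\alpha}_n,\overline{\alpha}_m]$ and in $[\overline{\alpha}_n,Q_{\overline{\alpha}}]$, this substitution preserves all the Heisenberg relations of Definition~\ref{Matsuo-parafermion}.

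The first task is then to check, directly from Definition~\ref{Wakimoto-vertex} and Definition~\ref{parafermion}, that this substitution carries the vertex operators entering $e_\epsilon(z)$ into those entering $f_\epsilon(z)$. I expect it to send $Y^+(z)\mapsto Y^-(z)$ (the sign flip of $\overline{\alpha}$ converts the $-$ into a $+$ in the oscillator sum and flips the zero-mode charge), to fix $Z_\pm(z)$ while turning the argument $q^{\mp(k+2)/2}z$ into $q^{\pm(k+2)/2}z$, and to send $W_\pm(q^{\mp k/2}z)\mapsto W_\pm(q^{\pm k/2}z)^{-1}$ — the last because on the untouched $\beta$-sector only $q\to q^{-1}$ acts, flipping the sign of both the prefactor $\mp(q-q^{-1})$ and the zero-mode exponent $q^{\mp\beta_0/2}$, which is exactly inversion of $W_\pm$. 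Combining these, $e_\epsilon(z)\mapsto f_\epsilon(z)$.

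Since the substitution respects products and the assignment of creation versus annihilation modes (positive modes stay positive), it maps normal-ordered products to normal-ordered products, so applying it to the identity of Proposition~\ref{ee} replaces $q^{-km}$ by $q^{+km}$ in the universal prefactor $z^{2/k}\exp(-\sum_{m>0}\tfrac{[2m]}{m[km]}(\cdots)(w/z)^m)$, replaces $e_{\epsilon_i}$ by $f_{\epsilon_i}$, and replaces the rational factor $(q^{\epsilon_1}z-q^{\epsilon_2}w)/(z-q^2w)$ by $(q^{-\epsilon_1}z-q^{-\epsilon_2}w)/(z-q^{-2}w)$ — precisely the assertion of Proposition~\ref{ff}.

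The main obstacle is exactly the well-definedness step of the middle paragraph: one must verify that the substitution is a genuine symmetry of the full OPE calculus and not merely of the commutators, in particular that it commutes with the chosen normal orderings and with the formal expansion in the domain $|z|\gg|w|$ used throughout. Should any scruple arise there, the fallback is the direct route mirroring the proof of Proposition~\ref{ee}: factor $f_{\epsilon_1}(z)f_{\epsilon_2}(w)$ through the commuting $\overline{\alpha}$- and $\beta$-sectors, compute the $Y^-$-$Y^-$ contraction from Lemma~\ref{YY} (which supplies the stated prefactor) together with the $F$-analogues of the contractions in Lemma~\ref{for-ee} for $Z_\pm$ and $W_\pm^{-1}$, and collapse the remaining exponentials via $\exp(\sum_{m>0}\tfrac1m c^m x^m)=(1-cx)^{-1}$ into the claimed rational function; there the only difficulty is the sign-and-$q$-power bookkeeping, which the symmetry argument is designed to avoid.
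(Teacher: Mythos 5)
Your symmetry argument is correct, and it is genuinely different from the paper's route: the paper proves Proposition~\ref{ff} by the same direct Wick computation as Proposition~\ref{ee}, using Lemma~\ref{YY} together with the explicit contractions listed in Lemma~\ref{for-ff} (the $f$-analogue of Lemma~\ref{for-ee}). Your substitution $q\to q^{-1}$, $\overline{\alpha}_n\to-\overline{\alpha}_n$, $Q_{\overline{\alpha}}\to-Q_{\overline{\alpha}}$ does check out in detail: it preserves the Heisenberg relations of Definition~\ref{Matsuo-parafermion} (since $[u]$ is invariant under $q\to q^{-1}$ and the two sign flips cancel), it sends $Y^+(z)\mapsto Y^-(z)$, fixes $Z_\pm$ while moving its argument from $q^{\mp(k+2)/2}z$ to $q^{\pm(k+2)/2}z$, and inverts $W_\pm$ while flipping its argument, so that $e_\epsilon(z)\mapsto f_\epsilon(z)$ exactly with no stray sign; and since it preserves the grading by mode number it commutes with normal ordering, while the expansion domain $|w/z|<1$ is untouched because the identities hold coefficientwise in $w/z$ with coefficients rational in $q$ and $q^k$. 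Applied to Proposition~\ref{ee} it therefore yields Proposition~\ref{ff}, and one can check it also maps each entry of Lemma~\ref{for-ee} to the corresponding entry of Lemma~\ref{for-ff}, which is a useful consistency check. What each approach buys: yours avoids redoing the sign-and-$q$-power bookkeeping and exposes a structural symmetry of the Wakimoto construction (essentially the free-field shadow of the involution in Remark~\ref{Inv} exchanging $G^+$ and $G^-$); the paper's direct computation is uniform with the neighbouring Propositions~\ref{ee} and \ref{ef} and records the individual contractions of Lemma~\ref{for-ff} explicitly. The only caveat is that your middle paragraph is phrased as an expectation; to be a complete proof those three verifications ($Y^+\mapsto Y^-$, $Z_\pm$ fixed up to the argument shift, $W_\pm\mapsto W_\pm^{-1}$) must actually be written out, but each is a one-line check from Definition~\ref{Wakimoto-vertex}, so there is no gap of substance.
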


\begin{proof}
This can be checked by performing straightforward calculations using the OPE's in Lemma \ref{YY} and Lemma \ref{for-ff} below.
\end{proof}

\begin{cor}
In view of the definition  (\ref{F}) and the OPE's in Lemma \ref{VV}, we have
\begin{align*}
F_{\epsilon_1}(z)F_{\epsilon_2}(w)=~
:F_{\epsilon_1}(z)F_{\epsilon_2}(w): {q^{-\epsilon_1}z-q^{-\epsilon_2}w \over z-q^{-2}w}.
\end{align*}
Hence we have (\ref{Uq-6}).
\end{cor}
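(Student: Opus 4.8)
The plan is to run, essentially verbatim, the argument already used for $E_{\epsilon_1}(z)E_{\epsilon_2}(w)$ in the corollary following Proposition \ref{ee}, exploiting the factorization of the two commuting Heisenberg sectors. Recall from \eqref{F} that $F_\epsilon(z)=-\frac{1}{q-q^{-1}}V^-(z)f_\epsilon(z)$, where $V^-(z)$ is built solely from the $\alpha$-oscillators while $f_\epsilon(z)$ is built from the parafermion oscillators $\overline{\alpha}_n,\beta_n$. Since these Heisenberg algebras mutually commute, $f_{\epsilon_1}(z)$ passes freely through $V^-(w)$, so the product splits as
\[
F_{\epsilon_1}(z)F_{\epsilon_2}(w)=\frac{1}{(q-q^{-1})^2}\,\bigl(V^-(z)V^-(w)\bigr)\,\bigl(f_{\epsilon_1}(z)f_{\epsilon_2}(w)\bigr).
\]

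First I would normal-order the $U(1)$ factor using Lemma \ref{VV}, which for the $(-,-)$ choice reads $V^-(z)V^-(w)=\,:V^-(z)V^-(w):\,z^{2/k}\exp\bigl(-\sum_{m>0}\frac{[2m]}{m[km]}q^{km}(w/z)^m\bigr)$. The key observation is that this contraction factor is \emph{exactly} the prefactor standing on the left-hand side of Proposition \ref{ff}. Consequently, once $V^-(z)V^-(w)$ is put in normal-ordered form and substituted into the factorized product, the surviving $\alpha$-sector factor is absorbed by Proposition \ref{ff}, which turns $f_{\epsilon_1}(z)f_{\epsilon_2}(w)$ (carrying that prefactor) into $:f_{\epsilon_1}(z)f_{\epsilon_2}(w):$ times the rational answer. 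Reassembling the two normal-ordered pieces gives
\[
F_{\epsilon_1}(z)F_{\epsilon_2}(w)=\,:F_{\epsilon_1}(z)F_{\epsilon_2}(w):\,\frac{q^{-\epsilon_1}z-q^{-\epsilon_2}w}{z-q^{-2}w},
\]
which is the first displayed identity.

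To deduce \eqref{Uq-6}, I would clear the denominator, writing $(z-q^{-2}w)F_{\epsilon_1}(z)F_{\epsilon_2}(w)=\,:F_{\epsilon_1}(z)F_{\epsilon_2}(w):\,(q^{-\epsilon_1}z-q^{-\epsilon_2}w)$, then perform the simultaneous swap $z\leftrightarrow w$, $\epsilon_1\leftrightarrow\epsilon_2$ and use the symmetry $:F_{\epsilon_2}(w)F_{\epsilon_1}(z):=\,:F_{\epsilon_1}(z)F_{\epsilon_2}(w):$ of the normal-ordered product. Adding the two expressions makes the right-hand sides cancel, yielding $(z-q^{-2}w)F_{\epsilon_1}(z)F_{\epsilon_2}(w)+(w-q^{-2}z)F_{\epsilon_2}(w)F_{\epsilon_1}(z)=0$; summing over $\epsilon_1,\epsilon_2=\pm$ with $F(z)=F_+(z)-F_-(z)$ then gives \eqref{Uq-6}. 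Because the heavy parafermion OPE computation is already packaged into Proposition \ref{ff}, there is no genuine obstacle at the level of the corollary, and the antisymmetrization is purely formal, just as in the $E(z)E(w)$ case. The one point demanding care is the sign of the $q$-exponent: I must check that the $V^-(z)V^-(w)$ contraction produces $q^{+km}$ (the value attached to the $-$ superscript in Lemma \ref{VV}), matching Proposition \ref{ff}, rather than the $q^{-km}$ that would accompany $V^+$.
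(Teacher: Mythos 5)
Your proposal is correct and follows essentially the same route as the paper: the first display is obtained by combining the $V^-(z)V^-(w)$ contraction from Lemma \ref{VV} (with the $q^{+km}$ exponent, exactly cancelling the prefactor in Proposition \ref{ff}) with the factorization into the two commuting Heisenberg sectors, and \eqref{Uq-6} then follows by clearing the denominator, antisymmetrizing, and summing over $\epsilon_1,\epsilon_2$, which is precisely the paper's argument.
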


\begin{proof}
We have
\begin{align*}
&( z-q^{-2}w)F_{\epsilon_1}(z)F_{\epsilon_2}(w)=~
:F_{\epsilon_1}(z)F_{\epsilon_2}(w): (q^{-\epsilon_1}z-q^{-\epsilon_2}w ),\\
&( w-q^{-2}z)F_{\epsilon_2}(w)F_{\epsilon_1}(z)=~
:F_{\epsilon_1}(z)F_{\epsilon_2}(w): (q^{-\epsilon_2}w-q^{-\epsilon_1}z ).
\end{align*}
Therefore we have
\begin{align*}
& (z-q^{-2}w)F_{\epsilon_1}(z)F_{\epsilon_2}(w)+( w-q^{-2}z)F_{\epsilon_2}(w)F_{\epsilon_1}(z)=0,\\
&(z-q^{-2}w)F(z)F(w)+( w-q^{-2}z)F(w)F(z)=0,
\end{align*}
proving  (\ref{Uq-6}).
\end{proof}

\begin{lem}\label{for-ff}
The non trivial OPE's we need for the calculation of the products 
$f_{\epsilon_1}(z)f_{\epsilon_2}(w)$ are the following; 
\newline
\noindent
(1) for the cases $\epsilon_1=+, \epsilon_2=\pm $
\begin{align*}
&Z_+(q^{+{k+2\over 2}}z) Y^-(w)=~:Z_+(z) Y^-(w): q^{-1}
\exp\left( - \sum_{m>0}{1\over m}(1- q^{-2m})(w/z)^m\right),
\end{align*}
(2) for the cases $\epsilon_1=\pm, \epsilon_2=-$
\begin{align*}
&Y^-(z) Z_-(q^{-{k+2\over 2}}w)=~ :Y^-(z) Z_-(q^{-{k+2\over 2}}w):
 \exp\left(- \sum_{m>0} {1\over m}( 1-q^{-2m}) (w/z)^m\right),
\end{align*}
(3) for the case $\epsilon_1=+,\epsilon_2=-$
\begin{align*}
&Z_+(q^{{k+2\over 2}}z)W_+ (q^{{k\over 2}}z)^{-1}W_- (q^{-{k\over 2}}z)^{-1}Z_-(q^{-{k+2\over 2}}w)\\
=&:Z_+(q^{{k+2\over 2}}z)W_+ (q^{{k\over 2}}z)^{-1}W_- (q^{-{k\over 2}}z)^{-1}Z_-(q^{-{k+2\over 2}}w):
\exp\left(-\sum_{m>0} {1\over m }(q^{2m}-2+q^{-2m})(w/z)^m\right),
\end{align*}
and (4) for the cases $\epsilon_1=-,\epsilon_2=\pm$
\begin{align*}
&Z_-(q^{-{k+2\over 2}}z) Y^-(w)=~:Z_-(q^{-{k+2\over 2}}z) Y^-(w): q^{+1}.
\end{align*}
\end{lem}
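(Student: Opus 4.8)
The four identities are all instances of the standard bosonic contraction (normal-ordering) rule, so the plan is to set that rule up once and then apply it four times, the only real work being the careful bookkeeping of the argument shifts $q^{\pm(k+2)/2}$, $q^{\pm k/2}$ and of the zero-mode reorderings. Writing each vertex operator as a normal-ordered exponential $:\exp\bigl(\sum_m a_m\phi_m\bigr):$ in one of the sectors $\phi=\overline{\alpha}$ or $\phi=\beta$, the reordered product of a left factor $A$ and a right factor $B$ equals $:\!AB\!:$ times $\exp\bigl(\sum_{m>0}a_m b_{-m}[\phi_m,\phi_{-m}]\bigr)$, where $a_m$ are the coefficients of the annihilation modes of $A$ and $b_{-m}$ those of the creation modes of $B$. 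First I would record this together with the commutators $[\overline{\alpha}_m,\overline{\alpha}_{-m}]=-[2m][km]/m$ and $[\beta_m,\beta_{-m}]=[2m][(k+2)m]/m$ of Definition \ref{Matsuo-parafermion}, and the zero-mode rule $q^{a\overline{\alpha}_0}e^{bQ_{\overline{\alpha}}}=q^{-ab}e^{bQ_{\overline{\alpha}}}q^{a\overline{\alpha}_0}$ coming from $[\overline{\alpha}_0,Q_{\overline{\alpha}}]=-1$. The explicit forms of $Y^{-},Z_\pm,W_\pm$ are those of Definition \ref{Wakimoto-vertex}.

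For (1), (2) and (4) only the $\overline{\alpha}$ sector is active. In (4) the operator $Z_-(q^{-(k+2)/2}z)$ carries no annihilation modes, so there is no oscillator contraction at all and the answer $q^{+1}$ is purely the zero-mode factor $q^{-ab}$ with $a=\tfrac12$, $b=-2$, obtained by commuting $q^{\frac12\overline{\alpha}_0}$ past $e^{-2Q_{\overline{\alpha}}}$ in $Y^-(w)$. In (1) and (2) the single oscillator sum collapses: inserting the commutator cancels the factors $[km]$ and $[2m]$, the argument shift combines with the factor $q^{km/2}$ sitting inside $Y^{-}$ to leave exactly $q^{-m}(w/z)^m$, and then $(q-q^{-1})[m]=q^m-q^{-m}$ together with $(q^m-q^{-m})q^{-m}=1-q^{-2m}$ reproduces $\exp\bigl(-\sum_{m>0}\frac1m(1-q^{-2m})(w/z)^m\bigr)$; the prefactor $q^{-1}$ in (1) is again the zero-mode contribution (now $a=-\tfrac12$, $b=-2$), while (2) has no $Q$ to cross and hence no prefactor.

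The substantive case is (3), where both Heisenberg sectors contribute. Here the $\overline{\alpha}$ contraction comes from $Z_+$ against $Z_-$ and the $\beta$ contraction from the two $W^{-1}$ factors (one with argument proportional to $z$, the other to $w$); since $\overline{\alpha}$ and $\beta$ commute, the total exponent is the sum of the two pieces. I expect each piece to retain $k$-dependence separately — schematically $+q^{-(k+2)m}[km]$ from the $\overline{\alpha}$ side and $-q^{-km}[(k+2)m]$ from the $\beta$ side, both multiplied by the common factor $(q-q^{-1})^2[m]^2(w/z)^m/(m[2m])$ — so the decisive step will be the $q$-number telescoping identity
\[
q^{-(k+2)m}[km]-q^{-km}[(k+2)m]=-[2m],
\]
which removes all $k$-dependence and leaves $-(q-q^{-1})^2[m]^2/m=-\frac1m(q^{2m}-2+q^{-2m})$, precisely the stated exponent. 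This cross-sector cancellation is the main obstacle, though it is computational rather than conceptual: it demands that every $q^{\pm k/2}$ and $q^{\pm(k+2)/2}$ shift be tracked exactly and that one recognize the identity above. It is also, as emphasized in the introduction, the source of the unexpectedly simple structure functions of $\Svir$. The elementary contractions assembled here, together with the $Y^\pm$ OPEs of Lemma \ref{YY}, then feed directly into the proof of Proposition \ref{ff}.
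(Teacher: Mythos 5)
Your proposal is correct and is exactly the computation the paper leaves implicit (the lemma is stated as a ``straightforward calculation''): the BCH/Wick contraction rule with the Heisenberg commutators of Definition \ref{Matsuo-parafermion}, the zero-mode factors $q^{\mp 1}$ from $[\overline{\alpha}_0,Q_{\overline{\alpha}}]=-1$ in cases (1) and (4), and in case (3) the telescoping identity $q^{-(k+2)m}[km]-q^{-km}[(k+2)m]=-[2m]$ all check out against the stated exponents. You also correctly read the argument of the third factor in (3) as $q^{-k/2}w$ (the printed $z$ is a typo, as comparison with $f_-(w)$ in Definition \ref{parafermion} and with Lemma \ref{for-ee}(3) confirms), without which the $\beta$-sector contraction would not produce the required $(w/z)^m$ dependence.
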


\subsection{Relations for $E(z)$ vs. $F(w)$}

\begin{prp}\label{ef}
We have
\begin{align*}
&z^{-{2\over k}}
\exp\left(  \sum_{m>0} {[2m]\over m [km]} (w/z)^m\right)
e_{\epsilon_1}(z)f_{\epsilon_2}(w)=~
:e_{\epsilon_1}(z)f_{\epsilon_2}(w): {q^{-\epsilon_1}z-q^{(k+1)\epsilon_2}w \over z- q^{k\epsilon_2}w}\\
&\qquad\qquad=~
:e_{\epsilon_1}(z)f_{\epsilon_2}(w): {q^{-(k+1)\epsilon_1}z-q^{\epsilon_2}w \over q^{-k\epsilon_1}z- w},\\
&z^{-{2\over k}}
\exp\left(  \sum_{m>0} {[2m]\over m [km]} (w/z)^m\right)
f_{\epsilon_1}(z)e_{\epsilon_2}(w)=~
:f_{\epsilon_1}(z)e_{\epsilon_2}(w): {q^{\epsilon_1}z-q^{-(k+1)\epsilon_2}w \over z- q^{-k\epsilon_2}w}\\
&\qquad\qquad =
~
:f_{\epsilon_1}(z)e_{\epsilon_2}(w): {q^{(k+1)\epsilon_1}z-q^{-\epsilon_2}w \over q^{k\epsilon_1}z- w},
\end{align*}
and 
\begin{align*}
&:e_{\pm}(q^{\pm k} w)f_{\pm }(w): ~=1.
\end{align*}
\end{prp}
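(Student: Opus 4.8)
The plan is to treat this exactly as its companions Propositions \ref{ee} and \ref{ff} were treated: it is a free-field computation in which the cross products $e_{\epsilon_1}(z)f_{\epsilon_2}(w)$ and $f_{\epsilon_1}(z)e_{\epsilon_2}(w)$ factorize into products of pairwise contractions, since the $\overline{\alpha}$- and $\beta$-oscillators commute and every constituent vertex operator of Definition \ref{parafermion} is an exponential of a linear form in the modes. First I would substitute $e_{\epsilon_1}(z)=\;:Y^+(z)Z_{\epsilon_1}(q^{-\epsilon_1(k+2)/2}z)W_{\epsilon_1}(q^{-\epsilon_1 k/2}z):$ and $f_{\epsilon_2}(w)=\;:Y^-(w)Z_{\epsilon_2}(q^{\epsilon_2(k+2)/2}w)W_{\epsilon_2}(q^{\epsilon_2 k/2}w)^{-1}:$, and organize the normal ordering into five groups of contractions: $Y^+$ against $Y^-$, $Y^+$ against $Z_{\epsilon_2}$, $Z_{\epsilon_1}$ against $Y^-$, $Z_{\epsilon_1}$ against $Z_{\epsilon_2}$, and $W_{\epsilon_1}$ against $W_{\epsilon_2}^{-1}$.

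The decisive observation is that the single $Y^+(z)Y^-(w)$ contraction, read off from Lemma \ref{YY}, supplies precisely the factor $z^{2/k}\exp\!\left(-\sum_{m>0}\tfrac{[2m]}{m[km]}(w/z)^m\right)$, which is the inverse of the scalar prefactor standing on the left-hand side of the claimed identity; these cancel identically. What remains are the elementary $Y$--$Z$, $Z$--$Z$ and $W$--$W^{-1}$ contractions, each of the shape $(1-q^{a}w/z)^{\pm1}$ up to a power of $q$. I would record these in a companion lemma strictly analogous to Lemmas \ref{for-ee} and \ref{for-ff}; multiplying them and simplifying collapses the answer to the single rational function $\dfrac{q^{-\epsilon_1}z-q^{(k+1)\epsilon_2}w}{z-q^{k\epsilon_2}w}$. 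The $f e$ relation is obtained by the identical bookkeeping with the roles of $Y^+$ and $Y^-$ interchanged (equivalently, by the $q\to q^{-1}$, $\epsilon\to-\epsilon$ symmetry already built into the definitions of $Z_{\pm}$ and $W_{\pm}$).

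The two displayed forms of the rational prefactor are not merely two ways of writing the computation but a genuine algebraic identity, which I would verify by cross-multiplication: the coefficients of $z^2$ and $w^2$ match trivially, while the mixed $zw$ coefficients $-\big(q^{-\epsilon_1}+q^{(k+1)\epsilon_2-k\epsilon_1}\big)$ and $-\big(q^{\epsilon_2}+q^{k\epsilon_2-(k+1)\epsilon_1}\big)$ are checked to coincide for each of the four sign choices $(\epsilon_1,\epsilon_2)\in\{\pm\}^2$. For the special value $:e_{\pm}(q^{\pm k}w)f_{\pm}(w):\,=1$ I would substitute $z=q^{\pm k}w$ directly into the normal-ordered product and track the zero modes and oscillators separately. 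The charge factors $e^{\pm 2Q_{\overline{\alpha}}}$ of $Y^+$ and $Y^-$ cancel; at $z=q^{\pm k}w$ the momentum factors $z^{\pm\overline{\alpha}_0/k}$, $w^{\mp\overline{\alpha}_0/k}$ combine with the two $q^{\mp\overline{\alpha}_0/2}$ coming from the pair $Z_{\pm}$ to give $1$; the $W_{\pm}(q^{\pm k/2}w)$ from $e_{\pm}$ meets $W_{\pm}(q^{\pm k/2}w)^{-1}$ from $f_{\pm}$ at the \emph{same} argument and cancels exactly, removing the entire $\beta$-sector; and the residual $\overline{\alpha}_m$ contributions from $Y^+, Y^-, Z_{\pm}$ are checked to cancel mode by mode.

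I expect the main obstacle to be purely the bookkeeping of the $q$-shifts $q^{\pm k/2}$, $q^{\pm (k+2)/2}$, $q^{\pm k}$ concealed in the arguments of $Z_{\pm}$ and $W_{\pm}$ and in the mode factors $q^{\mp(k/2)|m|}$ of $Y^{\pm}$: arranging all of these so that the $Y$--$Z$ and $Z$--$Z$ contractions conspire to produce exactly $q^{-\epsilon_1}z-q^{(k+1)\epsilon_2}w$ in the numerator, and confirming the reciprocal $W$--$W^{-1}$ cancellation in the special-value identity, is where sign and exponent slips are most likely. The computation is otherwise routine and runs verbatim parallel to the already-established Propositions \ref{ee} and \ref{ff}.
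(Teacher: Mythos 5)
Your proposal is correct and follows essentially the same route as the paper: the paper's proof likewise reduces everything to the $Y^+$--$Y^-$ contraction of Lemma \ref{YY} (whose factor cancels the scalar prefactor on the left-hand side) together with the remaining pairwise contractions, which the paper records in the companion Lemmas \ref{for-ef} and \ref{for-fe} exactly as you propose. Your additional checks (cross-multiplication for the two forms of the rational prefactor, and the zero-mode/oscillator cancellation at $z=q^{\pm k}w$ for $:e_{\pm}(q^{\pm k}w)f_{\pm}(w):\,=1$) are sound and consistent with the definitions.
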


\begin{proof}
This can be checked by performing straightforward calculations using the OPE's in Lemma \ref{YY} together with 
Lemma \ref{for-ef} and Lemma \ref {for-fe} presented below.
\end{proof}

\begin{cor}
In view of the definitions  (\ref{E}), (\ref{F})  and the OPE's in Lemma \ref{VV}, we have
\begin{align*}
E_{\epsilon_1}(z)F_{\epsilon_2}(w)=~
:E_{\epsilon_1}(z)F_{\epsilon_2}(w): {q^{-\epsilon_1}z-q^{(k+1)\epsilon_2}w \over z- q^{k\epsilon_2}w},\\
F_{\epsilon_2}(w)E_{\epsilon_1}(z)=~
:E_{\epsilon_1}(z)F_{\epsilon_2}(w): {q^{(k+1)\epsilon_2}w-q^{-\epsilon_1}z \over q^{k\epsilon_2}w- z},
\end{align*}
and 
\begin{align*}
&:E_{\pm}(q^{\pm k} w)F_{\pm}(w): ~=-{1\over (q-q^{-1})^2}\psi_\pm (q^{\pm k/2}w).
\end{align*}
Hence we have (\ref{Uq-7}).
\end{cor}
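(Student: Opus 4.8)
The plan is to reduce the whole computation to the parafermion OPEs of Proposition~\ref{ef} combined with the $U(1)$-boson OPEs of Lemma~\ref{VV}, exploiting that the factors $V^\pm$ (built from $\alpha$) commute with $e_\epsilon,f_\epsilon$ (built from $\overline{\alpha},\beta$), all cross-commutators vanishing by Definition~\ref{Matsuo-parafermion}. First I would substitute (\ref{E}) and (\ref{F}) to write $E_{\epsilon_1}(z)F_{\epsilon_2}(w)=-\tfrac{1}{(q-q^{-1})^2}\,V^+(z)V^-(w)\,e_{\epsilon_1}(z)f_{\epsilon_2}(w)$, where $e_{\epsilon_1}(z)$ has already been moved past $V^-(w)$ since they act on distinct Heisenberg sectors. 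The crucial observation is that the OPE coefficient $z^{-2/k}\exp\bigl(\sum_{m>0}\tfrac{[2m]}{m[km]}(w/z)^m\bigr)$ produced by $V^+(z)V^-(w)$ in Lemma~\ref{VV} is exactly the transcendental prefactor on the left-hand side of Proposition~\ref{ef}. Hence the two combine, leaving only the rational factor $\frac{q^{-\epsilon_1}z-q^{(k+1)\epsilon_2}w}{z-q^{k\epsilon_2}w}$ multiplying the fully normal-ordered product $:V^+(z)V^-(w)e_{\epsilon_1}(z)f_{\epsilon_2}(w):\,=-(q-q^{-1})^2:E_{\epsilon_1}(z)F_{\epsilon_2}(w):$, which gives the first asserted identity. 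The reversed product $F_{\epsilon_2}(w)E_{\epsilon_1}(z)$ follows from the identical computation carried out in the opposite domain $|w|>|z|$, giving the same rational function expanded the other way.

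Next I would establish the special-point identity $:E_\pm(q^{\pm k}w)F_\pm(w):\,=-\tfrac{1}{(q-q^{-1})^2}\psi_\pm(q^{\pm k/2}w)$. Factorising the normal-ordered product into its $U(1)$ part $:V^+(q^{\pm k}w)V^-(w):$ and its parafermion part $:e_\pm(q^{\pm k}w)f_\pm(w):$, the latter equals $1$ by the last line of Proposition~\ref{ef}, while the former is precisely $\psi_\pm(q^{\pm k/2}w)$ by the definition (\ref{psi}), since $\psi_\pm(z)=:V^+(q^{\pm k/2}z)V^-(q^{\mp k/2}z):$ becomes $:V^+(q^{\pm k}w)V^-(w):$ at $z=q^{\pm k/2}w$. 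Multiplying by the scalar $-1/(q-q^{-1})^2$ from (\ref{E})--(\ref{F}) yields the claim.

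Finally, to obtain (\ref{Uq-7}) I would form $[E(z),F(w)]=\sum_{\epsilon_1,\epsilon_2}\epsilon_1\epsilon_2\bigl(E_{\epsilon_1}(z)F_{\epsilon_2}(w)-F_{\epsilon_2}(w)E_{\epsilon_1}(z)\bigr)$ and use that the difference of the two domain-expansions of $1/(z-q^{k\epsilon_2}w)$ is $z^{-1}\delta(q^{k\epsilon_2}w/z)$; the commutator is thus supported at $z=q^{k\epsilon_2}w$. Writing the diagonal numerator as $q^{-\epsilon}(z-q^{(k+2)\epsilon}w)$ for $\epsilon_1=\epsilon_2=\epsilon$, one checks the off-diagonal numerators vanish identically at the pole, so only the diagonal terms survive. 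On the support the normal-ordered product collapses, by the previous step, to $:E_\epsilon(q^{k\epsilon}w)F_\epsilon(w):\,=-\tfrac{1}{(q-q^{-1})^2}\psi_\epsilon(q^{\epsilon k/2}w)$, and the residual scalar $q^{-\epsilon}(1-q^{2\epsilon})=-\epsilon(q-q^{-1})$ combines with the prefactors to give exactly $\frac{1}{q-q^{-1}}\bigl(\delta(q^kw/z)\psi_+(q^{k/2}w)-\delta(q^{-k}w/z)\psi_-(q^{-k/2}w)\bigr)$. The main obstacle is this last step: one must track the delta-function conventions carefully and verify that the off-diagonal numerators cancel at the pole, since it is precisely this cancellation, a zero of the numerator coinciding with the pole of the denominator, that reduces the four $\epsilon_1,\epsilon_2$ terms to the clean two-term answer of (\ref{Uq-7}).
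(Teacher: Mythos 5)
Your proposal is correct and follows essentially the same route as the paper: the transcendental prefactor from $V^+(z)V^-(w)$ in Lemma \ref{VV} is exactly the one appearing on the left of Proposition \ref{ef}, the two domain expansions of the resulting rational function differ by $(q^{-\epsilon_1}-q^{\epsilon_2})\,\delta\!\left(q^{k\epsilon_2}w/z\right)$, and this coefficient vanishes precisely for the off-diagonal pairs $\epsilon_1=-\epsilon_2$, leaving the two diagonal terms that produce \eqref{Uq-7} via $:E_\pm(q^{\pm k}w)F_\pm(w):\,=-\tfrac{1}{(q-q^{-1})^2}\psi_\pm(q^{\pm k/2}w)$. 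All signs and scalar factors check out against the paper's proof.
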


\begin{proof}
Note that we have
\begin{align*}
{q^{-\epsilon_1}z-q^{(k+1)\epsilon_2}w \over z- q^{k\epsilon_2}w}-{q^{(k+1)\epsilon_2}w-q^{-\epsilon_1}z \over q^{k\epsilon_2}w- z}
=(q^{-\epsilon_1}-q^{\epsilon_2})\delta\left( q^{k\epsilon_2} {w\over z}\right).
\end{align*}
Therefore we have
\begin{align*}
&[E_{\epsilon_1}(z),F_{\epsilon_2}(w)]=
(q^{-\epsilon_1}-q^{\epsilon_2})\delta\left( q^{k\epsilon_2} {w\over z}\right):E_{\epsilon_1}( q^{k\epsilon_2} w)F_{\epsilon_2}(w):,\\
&[E(z),F(w)]={1\over q-q^{-1}}
\left( \delta\left( q^{k} {w\over z}\right)\psi_+ (q^{k/2}w)- \delta\left( q^{-k} {w\over z}\right)\psi_- (q^{-k/2}w) \right),
\end{align*}
proving  (\ref{Uq-7}).
\end{proof}

\begin{lem}\label{for-ef}
The nontrivial OPE's we need for the calculation of the products
$e_{\epsilon_1}(z)f_{\epsilon_2}(w)$ are the following; 
\newline
\noindent
(1) for the cases $\epsilon_1=+, \epsilon_2=\pm $
\begin{align*}
&Z_+(q^{-{k+2\over 2}}z) Y^-(w)=~:Z_+(q^{-{k+2\over 2}}z) Y^-(w): q^{-1}
\exp\left( - \sum_{m>0}{1\over m}( q^{2m}-1)q^{km}(w/z)^m\right),
\end{align*}
(2) for the cases $\epsilon_1=\pm, \epsilon_2=-$
\begin{align*}
&Y^+(z) Z_-(q^{-{k+2\over 2}}w)=~ :Y^+(z) Z_-(q^{-{k+2\over 2}}w):
 \exp\left( \sum_{m>0} {1\over m}(1- q^{-2m}) q^{-km}(w/z)^m\right),
\end{align*}
(3) for the case $\epsilon_1=+,\epsilon_2=-$
\begin{align*}
&Z_+(q^{-{k+2\over 2}}z)W_+ (q^{-{k\over 2}}z)Z_-(q^{-{k+2\over 2}}w)W_- (q^{-{k\over 2}}w)^{-1}\\
=&:Z_+(q^{-{k+2\over 2}}z)W_+ (q^{-{k\over 2}}z)Z_-(q^{-{k+2\over 2}}w)W_- (q^{-{k\over 2}}w)^{-1}:\\
&\times
\exp\left(- \sum_{m>0} {1\over m }\left(
q^{km}(1-q^{2m})+q^{-km}(1-q^{-2m})
\right)(w/z)^m\right),
\end{align*}
and (4) for the cases $\epsilon_1=-,\epsilon_2=\pm$
\begin{align*}
&Z_-(q^{+{k+2\over 2}}z) Y^-(w)=~:Z_-(q^{+{k+2\over 2}}z) Y^-(w): q^{+1}.
\end{align*}
\end{lem}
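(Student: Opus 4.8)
The plan is to verify each of the four products directly by the standard Wick-type normal-ordering rule for exponentials of Heisenberg generators, reading the single-mode commutators off Definition \ref{Matsuo-parafermion}. The crucial structural facts are that the three sectors $\alpha,\overline{\alpha},\beta$ mutually commute, and that (Definition \ref{Wakimoto-vertex}) the operators $Z_\pm$ and $Y^\pm$ are built only from $\overline{\alpha}$-modes while $W_\pm$ is built only from $\beta$-modes. Hence for a product $A(z)B(w)$ of such vertex operators one has $A(z)B(w)=\,:\!A(z)B(w)\!:\exp\bigl(\sum_{m>0}[a_m,b_{-m}]\bigr)$, where $a_m$ (resp. $b_{-m}$) are the coefficient-weighted annihilation (resp. creation) modes of $A$ (resp. $B$), each contraction being a $c$-number. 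The zero-mode pieces $q^{\mp\frac12\overline{\alpha}_0}$, $q^{\mp\frac12\beta_0}$ and the exponentials $e^{\pm2Q_{\overline{\alpha}}}$ inside $Y^\pm$ contribute extra scalar $q$-powers via $[\overline{\alpha}_0,Q_{\overline{\alpha}}]=-1$ and $[\beta_0,Q_\beta]=1$; these produce the bare $q^{-1}$ and $q^{+1}$ prefactors visible in the statement.

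First I would dispose of the single-sector cases (1), (2), (4). In (1) and (2) only the $\overline{\alpha}$-oscillators of one $Z_\pm$ contract with those of one $Y^\mp$; inserting $[\overline{\alpha}_m,\overline{\alpha}_{-m}]=-[2m][km]/m$, the factors $[m]/[2m]$ and $1/[km]$ cancel against $[2m][km]/m$, and after absorbing the $q$-power shifts carried by the arguments $q^{\mp\frac{k+2}{2}}z$ the exponent collapses to $\pm\sum_{m>0}\frac1m\bigl(q^{\mp(k+2)m}-q^{\mp km}\bigr)(w/z)^m$, i.e. exactly the claimed $q^{km}(1-q^{2m})$- and $(1-q^{-2m})q^{-km}$-type forms. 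Case (4) is special: $Z_-(q^{\frac{k+2}{2}}z)$ carries no annihilation oscillators, so the oscillator contraction is empty and the whole result is the scalar $q^{+1}$ obtained by commuting $q^{+\frac12\overline{\alpha}_0}$ through $e^{-2Q_{\overline{\alpha}}}$ in $Y^-$.

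Next I would treat (3), the product of four operators. Because $\overline{\alpha}$ and $\beta$ commute, the $Z_+$--$Z_-$ contraction and the $W_+$--$W_-^{-1}$ contraction enter the exponent additively, sharing the prefactor $(q-q^{-1})^2[m]^2/(m[2m])$ that multiplies $[km]$ from the $\overline{\alpha}$-sector and $[(k+2)m]$ from the $\beta$-sector. The remaining step is the $q$-number simplification
\[
(q-q^{-1})^2\,\frac{[m]^2}{[2m]}\bigl([km]+[(k+2)m]\bigr)
=(q^m-q^{-m})\bigl(q^{(k+1)m}-q^{-(k+1)m}\bigr)
=q^{(k+2)m}+q^{-(k+2)m}-q^{km}-q^{-km},
\]
which is precisely $-\bigl(q^{km}(1-q^{2m})+q^{-km}(1-q^{-2m})\bigr)$ and reproduces the stated exponent after the $1/m$ and $(w/z)^m$ are restored in the sum.

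The main obstacle is organizational rather than conceptual: correctly sorting annihilation from creation modes in the $Y^\pm$ exponents, where the $|m|$ in $q^{\mp\frac k2|m|}$ must be evaluated as $q^{\mp\frac k2 m}$ on annihilation modes but flips on creation modes, and then faithfully tracking the cumulative $q$-power shifts coming from the rescaled arguments $q^{\mp\frac{k+2}{2}}z$ and $q^{\mp\frac k2}z$ so that every geometric series closes to the exact claimed form. The only genuine algebraic manipulation is the $q$-number identity in (3); everything else is careful bookkeeping of the contraction prefactors together with the zero-mode reorderings.
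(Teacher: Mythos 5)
Your proposal is correct and matches the paper's (implicit) approach: the paper states this lemma without proof as the input to Proposition \ref{ef}, declaring it a ``straightforward calculation,'' and your Wick-contraction verification --- including the zero-mode factors $q^{\mp 1}$ from $[\overline{\alpha}_0,Q_{\overline{\alpha}}]=-1$ and the $q$-number identity $(q-q^{-1})^2\frac{[m]^2}{[2m]}\bigl([km]+[(k+2)m]\bigr)=(q^m-q^{-m})\bigl(q^{(k+1)m}-q^{-(k+1)m}\bigr)$ in case (3) --- is exactly that calculation. One small caution: besides the $|m|$-flip you flag, the sign in cases (1)--(2) also hinges on $[km]\to[k(-m)]=-[km]$ in the denominator of the $Y^\pm$ exponent on creation modes, which your compact $\pm\sum_{m>0}\frac1m\bigl(q^{\mp(k+2)m}-q^{\mp km}\bigr)(w/z)^m$ glosses over (as written, one reading of the correlated signs gives the wrong sign for case (2)), though your asserted final forms agree with the lemma.
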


\begin{lem}\label{for-fe}
The nontrivial OPE's we need for the calculation of the products
$f_{\epsilon_1}(z)e_{\epsilon_2}(w)$ are the following;
\newline
\noindent
(1) for the cases $\epsilon_1=+, \epsilon_2=\pm $
\begin{align*}
&Z_+(q^{+{k+2\over 2}}z) Y^+(w)=~:Z_+(q^{+{k+2\over 2}}z) Y^+(w): q^{+1}
\exp\left( - \sum_{m>0}{1\over m}( q^{-2m}-1)q^{-km}(w/z)^m\right),
\end{align*}
(2) for the cases $\epsilon_1=\pm, \epsilon_2=-$
\begin{align*}
&Y^-(z) Z_-(q^{+{k+2\over 2}}w)=~ :Y^-(z) Z_-(q^{+{k+2\over 2}}w):
 \exp\left( \sum_{m>0} {1\over m}(1- q^{2m}) q^{km}(w/z)^m\right),
\end{align*}
(3) for the case $\epsilon_1=+,\epsilon_2=-$
\begin{align*}
&Z_+(q^{+{k+2\over 2}}z)W_+ (q^{+{k\over 2}}z)^{-1}Z_-(q^{+{k+2\over 2}}w)W_- (q^{+{k\over 2}}w)\\
=&:Z_+(q^{+{k+2\over 2}}z)W_+ (q^{+{k\over 2}}z)^{-1}Z_-(q^{+{k+2\over 2}}w)W_- (q^{+{k\over 2}}w):\\
&\times
\exp\left(- \sum_{m>0} {1\over m }\left(
q^{km}(1-q^{2m})+q^{-km}(1-q^{-2m})
\right)(w/z)^m\right),
\end{align*}
and (4) for the cases $\epsilon_1=-,\epsilon_2=\pm$
\begin{align*}
&Z_-(q^{-{k+2\over 2}}z) Y^+(w)=~:Z_-(q^{-{k+2\over 2}}z) Y^+(w): q^{-1}.
\end{align*}

\end{lem}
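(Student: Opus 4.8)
The plan is to establish all four operator product expansions by the standard free-field (Wick) contraction technique, using that every operator in the statement is a product of the exponential of a linear form in the oscillators $\overline{\alpha}_m,\beta_m$ ($m\neq 0$) and a zero-mode factor built from $\overline{\alpha}_0,\beta_0,Q_{\overline{\alpha}},Q_{\beta}$ (Definitions \ref{Matsuo-parafermion} and \ref{Wakimoto-vertex}). For two such exponentials $A(z)=e^{X}$, $B(w)=e^{Y}$, the commutator of the annihilation part of $X$ with the creation part of $Y$ is central, so the Baker--Campbell--Hausdorff formula truncates and $A(z)B(w)={:}A(z)B(w){:}\,\exp\bigl([X_{\mathrm{ann}},Y_{\mathrm{cre}}]\bigr)$, up to a scalar $q$-power produced by reordering the zero modes via $[\overline{\alpha}_0,Q_{\overline{\alpha}}]=-1$. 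Since the $\overline{\alpha}$- and $\beta$-subalgebras commute, the $\overline{\alpha}$- and $\beta$-contractions are computed independently and add inside the exponential. Each identity thus reduces to one or two elementary power-series summations plus a zero-mode bookkeeping.

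I would begin with case (4), where $Z_-(q^{-(k+2)/2}z)$ contains only creation modes $\overline{\alpha}_{-m}$; hence no annihilation mode on the left meets a creation mode on the right, the oscillator contraction is empty, and the only contribution is the zero-mode reordering $q^{\frac12\overline{\alpha}_0}e^{2Q_{\overline{\alpha}}}=e^{2Q_{\overline{\alpha}}}q^{\frac12\overline{\alpha}_0}\,q^{-1}$, giving the stated factor $q^{-1}$. For cases (1) and (2) exactly one $\overline{\alpha}$-contraction survives. Inserting $[\overline{\alpha}_m,\overline{\alpha}_{-m}]=-[2m][km]/m$, the factors $[2m]$ and $[km]$ cancel against the $[m]/[2m]$ of $Z_\pm$ and the $1/[km]$ of $Y^\pm$, leaving $(q-q^{-1})[m]/m=(q^m-q^{-m})/m$ times the $q$-power hidden in the $q^{\pm(k+2)/2}$ shift of the argument of $Z_\pm$. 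Summing over $m>0$ gives $\sum_{m>0}\tfrac1m q^{-km}(1-q^{-2m})(w/z)^m$ in case (1) and $\sum_{m>0}\tfrac1m q^{km}(1-q^{2m})(w/z)^m$ in case (2), which are the stated exponents; the zero-mode reordering then supplies $q^{+1}$ in (1) and nothing in (2).

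The substantive case is (3), the product of the four operators $Z_+,W_+^{-1},Z_-,W_-$. Here the cross terms $[\overline{\alpha}_m,\beta_n]=0$ kill all mixed contractions, leaving one $\overline{\alpha}$-contraction (between $Z_+$ and $Z_-$) and one $\beta$-contraction (between $W_+^{-1}$ and $W_-$). Because both $Z$'s and both $W$'s carry the coefficient $[m]/[2m]$ while the commutators contribute only a single $[2m]$, each contraction yields, as coefficient of $\tfrac1m(w/z)^m$, the quantities $(q^m-q^{-m})(q^{km}-q^{-km})/(q^m+q^{-m})$ and $(q^m-q^{-m})(q^{(k+2)m}-q^{-(k+2)m})/(q^m+q^{-m})$ respectively, the undesired denominator $q^m+q^{-m}$ coming from $[2m]/[m]=q^m+q^{-m}$. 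The key step is to add these and simplify using the factorization
\[
(q^{km}-q^{-km})+(q^{(k+2)m}-q^{-(k+2)m})=(q^m+q^{-m})(q^{(k+1)m}-q^{-(k+1)m}),
\]
which cancels the denominator and produces $\sum_{m>0}\tfrac1m(q^m-q^{-m})(q^{(k+1)m}-q^{-(k+1)m})(w/z)^m$; this equals the stated exponent by the identity $q^{km}(1-q^{2m})+q^{-km}(1-q^{-2m})=-(q^m-q^{-m})(q^{(k+1)m}-q^{-(k+1)m})$, while the $\beta$ zero modes reorder to $1$. I expect this cancellation of the $[2m]$ denominators---an algebraic shadow of the clean parafermionic structure---together with the careful tracking of the various $q^{\pm(k+2)/2}$ and $q^{\pm k/2}$ shifts in the arguments, to be the only real obstacle; the remaining manipulations are entirely parallel to those of the companion Lemma \ref{for-ef}.
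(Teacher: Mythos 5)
Your proposal is correct and is exactly the computation the paper intends: the lemma is stated as a ``straightforward calculation'' with no printed proof, and the standard route is precisely your Wick/Baker--Campbell--Hausdorff contraction of the $\overline{\alpha}$- and $\beta$-exponentials together with the zero-mode reordering via $[\overline{\alpha}_0,Q_{\overline{\alpha}}]=-1$. All four cases check out against the definitions, including the scalar factors $q^{\pm1}$ in (1) and (4) and, in (3), the cancellation of the $[2m]/[m]=q^m+q^{-m}$ denominator through the factorization $(q^{km}-q^{-km})+(q^{(k+2)m}-q^{-(k+2)m})=(q^m+q^{-m})(q^{(k+1)m}-q^{-(k+1)m})$.
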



\section{Proof of $T$-$G$ and $T$-$T$ relations}\label{TGandTT}

In this appendix we show \eqref{rr-10} and \eqref{rr-11} in Proposition \ref{generating_functions}.
%
For later convenience let us introduce the notations for the energy-momentum tensor of the $U(1)$ boson sector and 
the parafermion sector.
\begin{equation}
\mathbf{T}(z) = \mathbf{T}_{\mathrm{U(1)}}(z) \cdot \mathbf{T}_{\mathrm{PF}}(z),
\qquad \mathbf{T}_{\mathrm{U(1)}}(z) =~:\widetilde{V}^{+}(q^{-1}z)\widetilde{V}^{-}(qz):,
\end{equation}
where $\mathbf{T}_{\mathrm{PF}}(z)$ is given by the following sum\footnote{The sum comes from the fact
that the parafermion sector is identified with the deformed $W$-algebra of $\mathfrak{sl}_{2\vert 1}$ 
as explained in Section \ref{Yalgebra}. See also \cite{Jimbo:2000ff}.};
\begin{align}
\mathbf{T}_{\mathrm{PF}}(z)=&{~} \Lambda_1(z)+ \Lambda_0(z)+\Lambda_{-1}(z),\\
\Lambda_1(z)=&{~}q^{-1} :e_+(q^{-\frac{k+2}{2}}z)f_-(q^{\frac{k+2}{2}}z):,\CR
\Lambda_0(z)=&{~}-{[k+2]\over [k+1]}  :e_+(q^{-\frac{k+2}{2}}z)f_+(q^{\frac{k+2}{2}}z):,\CR
\Lambda_{-1}(z) =&{~}q :e_-(q^{-\frac{k+2}{2}}z)f_+(q^{\frac{k+2}{2}}z): \nonumber.
\end{align}
Recall that, compared with the Wakimoto representation of quantum affine algebra $U_q(\widehat{\mathfrak{sl}}_2)$,
the $U(1)$ boson sector is twisted, but the parafermion sector is kept intact.
Hence we can use the result of Appendix \ref{proof-Wakimoto} for the parafermion sector.

\subsection{$T$-$G$ relation}

Recall that in the main text we defined 
\begin{align*}
&\mathbf{G}_\epsilon^+(z)= + {1\over q-q^{-1}}\widetilde{V}^+(q^{-k-2}z) e_\epsilon(q^{-{3k+4\over2}}z),\\
&\mathbf{G}_\epsilon^-(z)=-  {1\over q-q^{-1}}\widetilde{V}^-(q^{+k+2}z) f_\epsilon(q^{+{3k+4\over2}}z).
\end{align*}

Since $\mathbf{T}(z)$ is bilinear in the vertex operators we need the following lemmas to compute
the commutation relations with $\mathbf{G}^{\pm}(w)$. 
These lemmas are easily derived from the Wick's theorem for the normal ordered products. 

\begin{lem}\label{Triple-V}
We have the following OPE relations between $\widetilde{V}^{\pm}(z)$ and the $q$-shifted normal ordered product of $\widetilde{V}^{\pm}(w)$;
\begin{align*}
&\widetilde{V}^{\pm}(z):\widetilde{V}^+(q^\mu w)\widetilde{V}^-( w) :\\
=&~\exp\left( \pm \sum_{m>0} {[(k+2)m]\over m[k m]} ( 1 -q^{\mu m})(w/z)^m\right)
:\widetilde{V}^{\pm}(z)\widetilde{V}^+(q^\mu w)\widetilde{V}^-( w):,\\
&:\widetilde{V}^+(q^\mu w)\widetilde{V}^-( w) :\widetilde{V}^{\pm}(z)\\
=&~q^{\pm \mu{k+2\over k}}
\exp\left( \pm \sum_{m>0} {[(k+2)m]\over m[k m]} (1 -q^{-\mu m})(z/w)^m\right)
:\widetilde{V}^{\pm}(z)\widetilde{V}^+(q^\mu w)\widetilde{V}^-( w):.
%
\end{align*}
\end{lem}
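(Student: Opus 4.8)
The plan is to obtain both families of identities as a direct application of Wick's theorem, feeding in the pairwise operator product expansions of the $U(1)$ vertex operators already recorded in Lemma~\ref{tVtV}(3). The structural observation that makes this immediate is that contracting a single $\widetilde{V}^{\pm}(z)$ against the normal ordered pair $:\widetilde{V}^+(q^\mu w)\widetilde{V}^-(w):$ factorizes: the internal contraction between $\widetilde{V}^+(q^\mu w)$ and $\widetilde{V}^-(w)$ has already been removed inside the normal ordering symbol, so the only new contractions are those of $\widetilde{V}^{\pm}(z)$ with each factor separately, and since the $\widetilde{\alpha}_n$ form a single Heisenberg algebra these multiply. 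Thus I would first establish
\begin{align*}
\widetilde{V}^{\pm}(z):\widetilde{V}^+(q^\mu w)\widetilde{V}^-(w):
= C_{\pm,+}\,C_{\pm,-}\,:\widetilde{V}^{\pm}(z)\widetilde{V}^+(q^\mu w)\widetilde{V}^-(w):,
\end{align*}
where $C_{\pm,+}$ and $C_{\pm,-}$ are the scalar OPE factors attached to the pairs $(\widetilde{V}^{\pm}(z),\widetilde{V}^+(q^\mu w))$ and $(\widetilde{V}^{\pm}(z),\widetilde{V}^-(w))$ read off from Lemma~\ref{tVtV}(3).

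Next I would substitute these two factors. For equal superscripts Lemma~\ref{tVtV}(3) supplies $z^{(k+2)/k}\exp\bigl(-\sum_{m>0}\tfrac{[(k+2)m]}{m[km]}(q^{\mu}w/z)^m\bigr)$, and for opposite superscripts $z^{-(k+2)/k}\exp\bigl(+\sum_{m>0}\tfrac{[(k+2)m]}{m[km]}(w/z)^m\bigr)$. Forming the product $C_{\pm,+}C_{\pm,-}$, the zero mode powers $z^{\pm(k+2)/k}$ cancel and the two oscillator sums collapse to $\exp\bigl(\pm\sum_{m>0}\tfrac{[(k+2)m]}{m[km]}(1-q^{\mu m})(w/z)^m\bigr)$, which is the first family of identities.

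For the second family I would run the identical argument with $\widetilde{V}^{\pm}(z)$ placed on the right, so that every pairwise OPE is taken in the reversed order. The oscillator parts recombine in the same fashion, now into $\exp\bigl(\pm\sum_{m>0}\tfrac{[(k+2)m]}{m[km]}(1-q^{-\mu m})(z/w)^m\bigr)$, but the zero mode prefactors are now carried by the argument of the left operator, namely $(q^{\mu}w)^{\pm(k+2)/k}$ and $w^{\mp(k+2)/k}$; their product no longer cancels and instead leaves the monomial $q^{\pm\mu(k+2)/k}$ appearing in the statement.

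The computation is routine once the factorization is in place; the single point that requires care is the bookkeeping of the zero mode contributions, i.e.\ distinguishing $z^{\pm(k+2)/k}$ (when $\widetilde{V}^{\pm}(z)$ sits on the left) from $(q^{\mu}w)^{\pm(k+2)/k}$ and $w^{\mp(k+2)/k}$ (when it sits on the right). This asymmetry is precisely what produces the trivial prefactor in the first family versus the factor $q^{\pm\mu(k+2)/k}$ in the second, and it is the only place where an exponent or sign could plausibly be misplaced; the oscillator contractions themselves are entirely mechanical.
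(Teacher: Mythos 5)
Your proposal is correct and coincides with the paper's own (only sketched) argument: Lemma \ref{Triple-V} is obtained by applying Wick's theorem with the pairwise OPE's of Lemma \ref{tVtV}(3), the oscillator contractions combining into the stated exponential and the zero-mode monomials cancelling to $1$ when $\widetilde{V}^{\pm}(z)$ is on the left but leaving $(q^{\mu}w)^{\pm(k+2)/k}w^{\mp(k+2)/k}=q^{\pm\mu(k+2)/k}$ when it is on the right. Your bookkeeping of these zero-mode factors is exactly right, so nothing is missing.
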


The corresponding OPE relations in the parafermion sector are obtained from the result in Appendix \ref{proof-Wakimoto} 
(see Propositions \ref{ee}, \ref{ff} and \ref{ef}).

\begin{lem}\label{Triple-ef}
We have 
\newline
\noindent
(1)~For the OPE relations $e_\epsilon(z)$ with $\Lambda_i(w)$
\begin{align*}
&\exp \left(\sum_{m>0}{[2m]\over m[km]}
\left( -q^{-km} q^{-(k+2)m} +1\right)(w/z)^m\right)
e_{\epsilon_1}(q^{\frac{k+2}{2}}z):e_{\epsilon_2}(q^{-\frac{k+2}{2}}w)f_{\epsilon_3}(q^{\frac{k+2}{2}}w):\\
=&{~}
{q^{\epsilon_1} z-q^{-k-2}q^{\epsilon_2}w\over z-q^{-k-2}q^2 w}\cdot {q^{-(k+1)\epsilon_1}z-q^{\epsilon_3}w\over q^{-k \epsilon_1}z-w}
:e_{\epsilon_1}(q^{\frac{k+2}{2}}z)e_{\epsilon_2}(q^{-\frac{k+2}{2}}w)f_{\epsilon_3}(q^{\frac{k+2}{2}}w):,\\
&q^{-\frac{2}{k}(k+2)} \cdot
\exp \left(\sum_{m>0}{[2m]\over m[km]}
\left( -q^{-km} q^{(k+2)m} +1\right)(z/w)^m\right)
 :e_{\epsilon_2}(q^{-\frac{k+2}{2}}w)f_{\epsilon_3}(q^{\frac{k+2}{2}}w):e_{\epsilon_1}(q^{\frac{k+2}{2}}z)\\
=&{~}
{q^{\epsilon_2} q^{-k-2}w-q^{\epsilon_1}z\over q^{-k-2} w-q^{2}z}\cdot {q^{(k+1)\epsilon_3}w-q^{-\epsilon_1}z\over q^{k \epsilon_3}w-z}
:e_{\epsilon_1}(q^{\frac{k+2}{2}}z)e_{\epsilon_2}(q^{-\frac{k+2}{2}}w)f_{\epsilon_3}(q^{\frac{k+2}{2}}w):,\\
\end{align*}
(2)~For the OPE relations $f_\epsilon(z)$ with $\Lambda_i(w)$
\begin{align*}
&\exp \left(\sum_{m>0}{[2m]\over m[km]}
\left(q^{-(k+2)m} -q^{km}\right)(w/z)^m\right)
 f_{\epsilon_1}(q^{\frac{k+2}{2}}z):e_{\epsilon_2}(q^{-\frac{k+2}{2}}w)f_{\epsilon_3}(q^{\frac{k+2}{2}}w):\\
=&{~}
{q^{(k+1)\epsilon_1} z-q^{-k-2}q^{-\epsilon_2}w\over q^{k \epsilon_1 }z-q^{-k-2} w}\cdot
 {q^{-\epsilon_1}z-q^{-\epsilon_3}w\over z-q^{-2}w}
:f_{\epsilon_1}(q^{\frac{k+2}{2}}z)e_{\epsilon_2}(q^{-\frac{k+2}{2}}w)f_{\epsilon_3}(q^{\frac{k+2}{2}}w):,\\
&q^{\frac{2}{k}(k+2)}
\exp \left(\sum_{m>0}{[2m]\over m[km]}
\left(  q^{(k+2)m} -q^{km}\right)(z/w)^m\right)
 :e_{\epsilon_2}(q^{-\frac{k+2}{2}}w)f_{\epsilon_3}(q^{\frac{k+2}{2}}w):f_{\epsilon_1}(q^{\frac{k+2}{2}}z)\\
=&{~}
{q^{-(k+1)\epsilon_2} q^{-k-2}w-q^{\epsilon_1}z\over q^{-k\epsilon_2}q^{-k-2} w-z}\cdot
 {q^{-\epsilon_3}w-q^{-\epsilon_1}z\over w-q^{-2}z}
:f_{\epsilon_1}(q^{\frac{k+2}{2}}z)e_{\epsilon_2}(q^{-\frac{k+2}{2}}w)f_{\epsilon_3}(q^{\frac{k+2}{2}}w):.\\
\end{align*}

\end{lem}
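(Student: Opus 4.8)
The plan is to derive both families of identities from the pairwise OPE coefficients already established in Propositions \ref{ee}, \ref{ff} and \ref{ef}, by invoking the factorization (Wick) property of free-field vertex operators. Each $e_\epsilon(z)$ and $f_\epsilon(z)$ is a normal-ordered exponential in the Heisenberg modes underlying $Y^\pm, Z_\pm, W_\pm$, and the logarithm of a vertex-operator OPE coefficient is bilinear, hence additive, in the oscillator and zero-mode charges. Consequently, for a single operator $X(z)$ contracted against a product $:A(w')B(w''):$ that is already normal ordered, the coefficient splits: $X(z):A(w')B(w''): = c_{XA}\,c_{XB}\,:X(z)A(w')B(w''):$, where $c_{XA}$ and $c_{XB}$ are precisely the scalar factors produced in the pairwise contractions of $X$ with $A$ and with $B$. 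No mutual $A$-$B$ contraction reappears because $A$ and $B$ sit inside the same normal-ordering symbol. Thus it suffices to read off the two pairwise contractions and multiply them.

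First I would treat the $e$-family, taking $X=e_{\epsilon_1}$ at argument $q^{\frac{k+2}{2}}z$ and $A=e_{\epsilon_2}$, $B=f_{\epsilon_3}$ at the arguments $q^{-\frac{k+2}{2}}w$ and $q^{\frac{k+2}{2}}w$ dictated by $\Lambda_i(w)$. The contraction $c_{XA}$ is Proposition \ref{ee} after the substitution $(z,w)\mapsto(q^{\frac{k+2}{2}}z,q^{-\frac{k+2}{2}}w)$, which converts its rational factor into $\frac{q^{\epsilon_1}z-q^{-k-2}q^{\epsilon_2}w}{z-q^{-k-2}q^2 w}$ and its prefactor into $q^{(k+2)/k}z^{2/k}\exp(-\sum_{m>0}\frac{[2m]}{m[km]}q^{-km}q^{-(k+2)m}(w/z)^m)$; the contraction $c_{XB}$ is Proposition \ref{ef} after $(z,w)\mapsto(q^{\frac{k+2}{2}}z,q^{\frac{k+2}{2}}w)$, contributing $\frac{q^{-(k+1)\epsilon_1}z-q^{\epsilon_3}w}{q^{-k\epsilon_1}z-w}$ (the second form of that proposition) and the prefactor $q^{-(k+2)/k}z^{-2/k}\exp(\sum_{m>0}\frac{[2m]}{m[km]}(w/z)^m)$. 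The monomials $z^{\pm 2/k}$ and the powers $q^{\pm(k+2)/k}$ cancel between the two prefactors, the two exponentials add to exactly the factor displayed on the left-hand side of the first relation, and the product of the two rational functions is its right-hand side. The reversed-ordering formula follows identically, expanding the same rational functions in the opposite domain $|w|\gg|z|$ and using the alternate form of the pairwise $e$-$f$ coefficient; the residual monomial $q^{-\frac{2}{k}(k+2)}$ is precisely the zero-mode reordering factor.

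The $f$-family is handled in complete parallel, now drawing $c_{XA}$ from the $f$-$e$ part of Proposition \ref{ef} and $c_{XB}$ from Proposition \ref{ff}, with the same two argument shifts; after cancellation of the $z^{\pm 2/k}$ and $q^{\pm(k+2)/k}$ pieces the two scalar prefactors combine into $\exp(\sum_{m>0}\frac{[2m]}{m[km]}(q^{-(k+2)m}-q^{km})(w/z)^m)$, matching the stated left-hand side, and the residual $q^{+\frac{2}{k}(k+2)}$ again arises from the zero-mode transposition in the reversed ordering. I expect no conceptual obstacle: the genuine difficulty is organizational rather than mathematical, namely carrying the numerous $q$-shifts through the substitutions uniformly, confirming that the zero-mode monomials cancel, and keeping the eight sign choices $(\epsilon_1,\epsilon_2,\epsilon_3)$ together with the two operator orderings correctly aligned. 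This is the step where arithmetic slips are easiest, so I would organize the computation by first recording the substituted prefactors and rational factors for each pairing once, and only then assembling the four cases.
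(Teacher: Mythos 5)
Your proposal is correct and is essentially the paper's own argument: the paper proves this lemma (and the analogous Lemma on the quadratic products) by exactly this Wick-factorization of the pairwise OPE coefficients from Propositions \ref{ee}, \ref{ff} and \ref{ef}, with the same argument shifts and the same observation that the zero-mode monomials cancel (or, in the reversed ordering, leave the residual $q^{\mp\frac{2}{k}(k+2)}$). Your detailed bookkeeping of the substituted prefactors and rational factors checks out.
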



Applying Lemma \ref{Triple-V} with $z \to q^{\mp (k+2)} z, w \to qw$ and $\mu=-2$ we obtain the OPE relations 
of $\mathbf{T}_{\mathrm{U(1)}}(w)$ with $\mathbf{G}^{\pm}(z)$ (see the $U(1)$ boson sector in the definition of $\mathbf{G}^{\pm}$),
\begin{align}
&\widetilde{V}^{\pm}(q^{\mp (k+2)} z)\mathbf{T}_{\mathrm{U(1)}}(w) \CR
&=
\exp\left( \pm \sum_{m>0} {[(k+2)m]\over m[k m]} (q^m -q^{-m})q^{\pm (k+2)m} (w/z)^m\right)
:\widetilde{V}^{\pm}(q^{\mp (k+2)} z)\mathbf{T}_{\mathrm{U(1)}}(w):, \label{G+-T_U1}\\
&\mathbf{T}_{\mathrm{U(1)}}(w)\widetilde{V}^{\pm}(q^{\mp (k+2)} z) \CR
&=
q^{\mp 2{k+2\over k}}
\exp\left( \mp \sum_{m>0} {[(k+2)m]\over m[k m]} (q^m -q^{-m})q^{\mp (k+2)m} (z/w)^m\right)
:\widetilde{V}^{\pm}(q^{\mp (k+2)} z)\mathbf{T}_{\mathrm{U(1)}}(w): \label{G--T_U1}.
\end{align}

%
%

Similarly from Lemma \ref{Triple-ef} we can compute the commutation relations of $\mathbf{T}_{\mathrm{PF}}(q^{\mp(k+1)}w)$ 
and $\mathbf{G}^{\pm}(z)$. We apply Lemma \ref{Triple-ef} by substituting $z \to q^{-2k - 3}z$ for $e_\epsilon (z)$
and $z \to q^{k + 1}z$ for $f_\epsilon(z)$. Hence we have $(w/z) \to q^{k+2}(w/z)$ for the case $\mathbf{G}^+(z)$,
while $w/z$ is invariant for the case $\mathbf{G}^-(z)$.
Using the relation $:e_{\pm}(q^{\pm k}z)f_{\pm}(z):=1$ (See Prop. A.9), we have
\begin{align}\label{G+-T_PF}
&{1-q^{2} w/z\over 1- w/z}
\exp \left(\sum_{m>0}{[2m]\over m[km]}
( -q^{-km}  + q^{(k+2)m} )(w/z)^m\right) 
\mathbf{G}^+(z)
\mathbf{T}_{\mathrm{PF}} (q^{-k-1}w)
\CR
-&q^{-{2(k+2)\over k}}
{1-q^{2} z/w\over 1-z/w}
\exp \left(\sum_{m>0}{[2m]\over m[km]}
( -q^{-km} +  q^{-(k+2)m}) (z/w)^m \right) 
\mathbf{T}_{\mathrm{PF}} (q^{-k-1}w) 
\mathbf{G}^+(z)\CR
=&~q {[k+2]\over [k+1]} \delta\left( q^{2k+2} w\over z\right) z^{1/2}
:\widetilde{V}^+(q^{-k-2}z) \left( e_+(q^{-\frac{1}{2}(3k+4)}w)- e_-(q^{-\frac{1}{2}(3k+4)}w) \right):,
\end{align}

\begin{align}\label{G--T_PF}
&{1-q^{-2} w/z\over 1-w/z}
\exp \left(\sum_{m>0}{[2m]\over m[km]}
\left( q^{-(k+2)m} -q^{km} \right)(w/z)^m\right) \mathbf{G}^-(z) \mathbf{T}_{\mathrm{PF}} (q^{k+1}w)\CR
-&q^{{2(k+2)\over k}}
{1-q^{-2} z/w\over 1-z/w}
\exp \left(\sum_{m>0}{[2m]\over m[km]}
\left(  q^{(k+2)m} -q^{km}\right)(z/w)^m\right) \mathbf{T}_{\mathrm{PF}} (q^{k+1}w) \mathbf{G}^-(z) \CR
=&~q^{-1}{[k+2]\over [k+1]}
\delta\left( q^{-2k-2} w\over z\right) z^{1/2} :\widetilde{V}^+(q^{k+2}z)  
\left( f_+(q^{\frac{1}{2}(3k+4)}w)- f_-(q^{\frac{1}{2}(3k+4)}w) \right):.
\end{align}

Combining \eqref{G+-T_U1} and \eqref{G--T_U1}, where we substitute $w \to q^{\mp(k+1)} w$, 
with \eqref{G+-T_PF} and \eqref{G--T_PF},
we obtain the total contribution to the commutation relations of $\mathbf{G}^{\pm}(z)$ and $\mathbf{T}(q^{\mp(k+1)}w)$.
It is remarkable the structure functions from the $U(1)$ boson sector and the parafermion sector exactly cancel and
we have commutation relations without a structure function. For example the following identity implies such a cancellation.
\begin{align}\label{cancellation}
& \frac{1-q^2z}{1-z} \exp \left( \sum_{m>0} \frac{[2m]}{m[km]} (-q^{-km} + q^{\pm (k+2)m}) z^m \right) \CR
=&{~} \exp \left(\sum_{m>0}\frac{1}{m[km]}\left( [km] (1-q^{2m}) + [2m] (-q^{-km} + q^{\pm (k+2)m})\right)  z^m \right) \CR
=&{~} \exp \left( \pm \sum_{m>0}\frac{[(k+2)m]}{m[km]}q^{\pm m}(q^m - q^{-m})  z^m\right).
\end{align}
Thus, we obtain\footnote{We use $z^{1/2}\delta\left( q^{\pm(2k+2)}{ w\over z}\right) 
= q^{\pm(k+1)}w^{1/2}\delta^\mathrm{NS}\left( q^{\pm(2k+2)}{ w\over z}\right)$.}
\begin{align}
& \mathbf{G}^{+}(z) \mathbf{T}(q^{-k-1}w) - \mathbf{T}(q^{-k-1}w) \mathbf{G}^{+}(z) \CR
=&{~}q^{k+2}{[k+2]\over [k+1]}
\delta^\mathrm{A} \left( q^{2k+2}{ w\over z}\right) w^{1/2} \CR
& \qquad \times : \left( e_+(q^{-\frac{1}{2}(3k+4)}w)- e_-(q^{-\frac{1}{2}(3k+4)}w) \right)
\widetilde{V}^{+}(q^{-k}w) \mathbf{T}^{+}_{\mathrm{U(1)}}(q^{-k-1}w): \CR
=&{~}(q-q^{-1})q^{k+2}{[k+2]\over [k+1]}
\delta^\mathrm{A} \left( q^{2k+2}{ w\over z}\right) :\mathbf{G}^{+}(w)\mathbf{K}(w):,
\end{align}
and
\begin{align}
& \mathbf{G}^{-}(z) \mathbf{T}(q^{k+1}w) - \mathbf{T}(q^{k+1}w) \mathbf{G}^{-}(z) \CR
=&{~}q^{-k-2} {[k+2]\over [k+1]}
\delta^\mathrm{A} \left( q^{-(2k+2)}{ w\over z}\right) w^{1/2} \CR
& \qquad \times :\left( f_+(q^{\frac{1}{2}(3k+4)}w)- f_-(q^{\frac{1}{2}(3k+4)}w) \right)
 \widetilde{V}^{-}(q^{k} w) \mathbf{T}^{-}_{\mathrm{U(1)}}(q^{k+1}w): \CR
=&{~}-(q-q^{-1})q^{-k-2}{[k+2]\over [k+1]}
\delta^\mathrm{A} \left( q^{-(2k+2)}{ w\over z}\right) :\mathbf{G}^{-}(w)\mathbf{K}(w):.
\end{align}
Since the zero mode of $\mathbf{K}^\pm(z)$ is $q^{\widetilde{\alpha}_0}$ and 
that of $\mathbf{G}^{\pm}(z)$ involves $e^{\pm (k+2) Q_{\widetilde\alpha}}$, we have
\begin{equation}
\mathbf{K}^{-}(z) \mathbf{G}^{\pm}(z) \mathbf{K}^{+}(z) = q^{\pm(k+2)} :\mathbf{G}^\pm(z) \mathbf{K}(z):.
\end{equation}
Hence, we recover \eqref{rr-10}.
\begin{prp}
The commutation relations of $\mathbf{G}^{\pm}(z)$ and $\mathbf{T}(w)$ are
\begin{align*}
& \mathbf{G}^{\pm}(z) \mathbf{T}(w) - \mathbf{T}(w) \mathbf{G}^{\pm}(z)\\
=& \pm (q-q^{-1}){[k+2]\over [k+1]}
\delta^\mathrm{A} \left( q^{\pm (3k+3)}{ w\over z}\right) 
\mathbf{K}^{-}(q^{\pm(k+1)}w)\mathbf{G}^{\pm}(q^{\pm(k+1)}w)\mathbf{K}^{+}(q^{\pm(k+1)}w).
\end{align*}
\end{prp}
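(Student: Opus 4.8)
The plan is to exploit the tensor factorization $\mathbf{T}(z)=\mathbf{T}_{\mathrm{U(1)}}(z)\,\mathbf{T}_{\mathrm{PF}}(z)$ together with the analogous factorization of each $\mathbf{G}^\pm_\epsilon(z)$ into a twisted $U(1)$ vertex $\widetilde{V}^\pm$ and a parafermion vertex $e_\epsilon$ (for the $+$ current) or $f_\epsilon$ (for the $-$ current). Since the two Heisenberg sectors commute, the radially ordered product $\mathbf{G}^\pm(z)\,\mathbf{T}(w)$ factorizes into a product of the $U(1)$-sector OPE and the parafermion-sector OPE, so it suffices to compute each separately and multiply. First I would evaluate $\mathbf{T}$ at the shifted argument $q^{\mp(k+1)}w$, since this is where the delta-function localization will occur, and treat the two currents $\mathbf{G}^\pm$ in parallel.

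For the $U(1)$ sector I would apply Lemma~\ref{Triple-V} with $z\to q^{\mp(k+2)}z$, $w\to qw$ and $\mu=-2$ to obtain the two orderings \eqref{G+-T_U1} and \eqref{G--T_U1}; these supply the structure function coming from the boson together with the zero-mode monomial $q^{\mp 2(k+2)/k}$. For the parafermion sector I would apply Lemma~\ref{Triple-ef}, substituting $z\to q^{-2k-3}z$ in the $e_\epsilon$ factor and $z\to q^{k+1}z$ in the $f_\epsilon$ factor, and then sum over the three pieces $\Lambda_1,\Lambda_0,\Lambda_{-1}$ of $\mathbf{T}_{\mathrm{PF}}$. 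The key simplification here is the contraction identity $:\!e_\pm(q^{\pm k}z)f_\pm(z)\!:=1$ from Proposition~\ref{ef}, which collapses the triple normal-ordered products and localizes the commutator onto a single delta function, yielding \eqref{G+-T_PF} and \eqref{G--T_PF}.

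The crux of the argument is that the rational structure functions produced by the two sectors cancel exactly. I would make this precise through the identity \eqref{cancellation}, which shows that the product of the boson factor $\tfrac{1-q^{2}z}{1-z}$ and the parafermion exponential collapses to the pure exponential $\exp(\pm\sum_{m>0}\tfrac{[(k+2)m]}{m[km]}q^{\pm m}(q^m-q^{-m})z^m)$, precisely reciprocal to the remaining boson contraction. After this cancellation the commutator $[\mathbf{G}^\pm(z),\mathbf{T}(q^{\mp(k+1)}w)]$ is a delta function $\delta^{\mathrm A}(q^{\pm(2k+2)}w/z)$ times a single normal-ordered product; tracking the $z^{1/2}$ factor and using $z^{1/2}\delta(q^{\pm(2k+2)}w/z)=q^{\pm(k+1)}w^{1/2}\delta^{\mathrm{NS}}(q^{\pm(2k+2)}w/z)$ (the $\delta\leftrightarrow\delta^{\mathrm{NS}}$ exchange of Lemma~\ref{delta-flip}) I would bring the expression to the form $(q-q^{-1})q^{\pm(k+2)}\tfrac{[k+2]}{[k+1]}\,\delta^{\mathrm A}(q^{\pm(2k+2)}w/z)\,:\!\mathbf{G}^\pm(w)\mathbf{K}(w)\!:$.

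Finally I would reinstate the unshifted argument by the replacement $w\to q^{\pm(k+1)}w$, which turns $\mathbf{T}(q^{\mp(k+1)}w)$ into $\mathbf{T}(w)$ and the delta argument $q^{\pm(2k+2)}$ into $q^{\pm(3k+3)}$, and then absorb the monomial $q^{\pm(k+2)}$ via the zero-mode relation $\mathbf{K}^-(z)\mathbf{G}^\pm(z)\mathbf{K}^+(z)=q^{\pm(k+2)}:\!\mathbf{G}^\pm(z)\mathbf{K}(z)\!:$, which follows from the $e^{\pm(k+2)Q_{\widetilde{\alpha}}}$ zero-mode content of $\mathbf{G}^\pm$ against the $q^{\widetilde{\alpha}_0}$ content of $\mathbf{K}^\pm$. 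This reproduces \eqref{rr-10}. I expect the main obstacle to be the exact cancellation of structure functions: it requires assembling the boson exponential, the parafermion exponential, and all the $\epsilon_1,\epsilon_2$-dependent rational prefactors with the correct $q$-powers, and verifying through \eqref{cancellation} that nothing survives except the localizing delta. The subsidiary bookkeeping of the $z^{1/2}$ factors and of the NS versus R delta functions is delicate but routine once the cancellation is in hand.
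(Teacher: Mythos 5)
Your proposal follows the paper's own proof essentially step for step: the same factorization into $U(1)$ and parafermion sectors, the same applications of Lemmas \ref{Triple-V} and \ref{Triple-ef} with the same shifts, the same cancellation identity \eqref{cancellation}, and the same final bookkeeping of the $z^{1/2}$ factor, the $\delta\leftrightarrow\delta^{\mathrm{NS}}$ exchange, and the zero-mode relation $\mathbf{K}^-(z)\mathbf{G}^\pm(z)\mathbf{K}^+(z)=q^{\pm(k+2)}:\!\mathbf{G}^\pm(z)\mathbf{K}(z)\!:$. The approach is correct and matches the paper's argument in Appendix \ref{TGandTT}.
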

%
%
%

\subsection{$T$-$T$ relation}


Similarly to the case of $G$-$T$ commutation relations, we start with two lemmas which are derived from the Wick's theorem. 
\begin{lem}\label{quadratic-Cartan}
We have the following OPE relation
\begin{align*}
&:\widetilde{V}^+(q^\mu z)\widetilde{V}^-(z):
:\widetilde{V}^+(q^\mu w)\widetilde{V}^-(w): \CR
&= \exp
\left(\sum_{m>0}{[(k+2)m]\over m[km]} ( q^{\frac{\mu}{2}m} - q^{-\frac{\mu}{2}m})^2  (w/z)^m
\right)~:\widetilde{V}^+(q^\mu z)\widetilde{V}^-(z)
\widetilde{V}^+(q^\mu w)\widetilde{V}^-(w):.
\end{align*}
\end{lem}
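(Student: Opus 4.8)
The plan is to prove this identity by a direct application of Wick's theorem, equivalently by assembling the pairwise operator product expansions already recorded in Lemma~\ref{tVtV}(3), since both quadratic blocks involve only the twisted $U(1)$ oscillators $\widetilde{\alpha}_m$ and the zero mode $Q_{\widetilde{\alpha}}$. First I would rewrite each block as a single normal-ordered exponential. Using Definition~\ref{tildeV} and the cancellation of the charge factors $e^{+(k+2)Q_{\widetilde{\alpha}}}$ and $e^{-(k+2)Q_{\widetilde{\alpha}}}$, together with $(q^\mu z)^{\frac{1}{k}\widetilde{\alpha}_0}z^{-\frac{1}{k}\widetilde{\alpha}_0}=q^{\frac{\mu}{k}\widetilde{\alpha}_0}$, one obtains
\begin{align*}
:\widetilde{V}^+(q^\mu z)\widetilde{V}^-(z): = q^{\frac{\mu}{k}\widetilde{\alpha}_0}
:\exp\Bigl( \sum_{m\neq 0} \frac{z^{-m}}{[km]}\bigl(1-q^{-\mu m}\bigr)\widetilde{\alpha}_m \Bigr):,
\end{align*}
and similarly with $z\to w$. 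Since each block is charge-neutral in $Q_{\widetilde{\alpha}}$, the zero-mode monomials $q^{\frac{\mu}{k}\widetilde{\alpha}_0}$ commute through everything relevant and contribute no extra power of $z$ or $w$; the whole computation therefore reduces to contracting the two oscillator exponentials.

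Next I would apply the bosonic contraction formula: for two normal-ordered exponentials $:\exp(\sum_m A_m\widetilde{\alpha}_m):$ and $:\exp(\sum_n B_n\widetilde{\alpha}_n):$, the scalar prefactor relating their product to the fully normal-ordered quadruple product is $\exp\bigl(\sum_{m>0} A_m B_{-m}[\widetilde{\alpha}_m,\widetilde{\alpha}_{-m}]\bigr)$. Inserting $A_m = \frac{z^{-m}}{[km]}(1-q^{-\mu m})$, $B_{-m} = \frac{w^{m}}{[-km]}(1-q^{\mu m})$, and the commutator $[\widetilde{\alpha}_m,\widetilde{\alpha}_{-m}] = \frac{[(k+2)m][km]}{m}$ from \eqref{tildealpha}, the prefactor becomes
\begin{align*}
\exp\Bigl( -\sum_{m>0} \frac{[(k+2)m]}{m\,[km]}\bigl(1-q^{-\mu m}\bigr)\bigl(1-q^{\mu m}\bigr)(w/z)^m \Bigr),
\end{align*}
where the overall minus sign arises from the oddness $[-km]=-[km]$.

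Finally I would use the elementary identity $(1-q^{-\mu m})(1-q^{\mu m}) = -\bigl(q^{\frac{\mu}{2}m}-q^{-\frac{\mu}{2}m}\bigr)^2$, which converts the prefactor into exactly the claimed exponential. The point to watch—rather than a genuine obstacle—is the sign bookkeeping: the minus sign from $[-km]=-[km]$ and the minus sign hidden in $(1-q^{-\mu m})(1-q^{\mu m})$ must cancel to produce the \emph{positive} coefficient in front of the perfect square. As a cross-check I would redo the computation via the four cross-contraction factors of Lemma~\ref{tVtV}(3), verifying en route that the four $z^{\pm(k+2)/k}$ prefactors there cancel identically and that the four exponents sum to $-1+q^{-\mu m}+q^{\mu m}-1=(q^{\frac{\mu}{2}m}-q^{-\frac{\mu}{2}m})^2$. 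Once the square is recognized the identity follows, and it specializes to the relations already used in the text (for instance $\mu=2k$ reproduces the self-OPE of $\mathbf{K}(z)$), providing a further consistency check.
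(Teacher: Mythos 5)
Your proposal is correct and follows essentially the same route as the paper, whose proof is precisely ``apply Wick's theorem with Lemma~\ref{tVtV}(3); the monomial factors from the zero modes cancel.'' Your repackaging of each block as a single normal-ordered exponential and your explicit verification that $-1+q^{-\mu m}+q^{\mu m}-1=(q^{\mu m/2}-q^{-\mu m/2})^2$ simply fill in the details the paper leaves implicit.
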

\begin{proof}
Apply the Wick's theorem with Lemma \ref{tVtV} (3).
The monomial factors coming from the normal ordering of the zero modes exactly cancel.
\end{proof}

\begin{lem}\label{quadratic-parafermi}
\begin{align*}
& \exp\left(
\sum_{m>0}{[2m]\over m[km]}
\left(-q^{-km}+q^{-\mu m}+q^{\mu m}-q^{km}\right)(w/z)^m
\right):e_{\epsilon_1}(q^\mu z)f_{\epsilon_2}( z):
:e_{\epsilon_3}(q^\mu w)f_{\epsilon_4}( w): \CR
&=
{q^{\epsilon_1}z-q^{\epsilon_3}w\over z-q^2 w} \cdot
{q^{-(k+1)\epsilon_1}q^{\mu}z-q^{\epsilon_4}w\over q^{-k\epsilon_1}q^{\mu}z-w} \cdot
{q^{(k+1)\epsilon_2}z-q^{-\epsilon_3}q^{\mu}w\over q^{k\epsilon_2}z-q^{\mu}w} \cdot
{q^{-\epsilon_2}z-q^{-\epsilon_4}w\over z-q^{-2} w} \CR
& \qquad \times :e_{\epsilon_1}(q^\mu z)f_{\epsilon_2}( z) e_{\epsilon_3}(q^\mu w)f_{\epsilon_4}( w):.
\end{align*}
\end{lem}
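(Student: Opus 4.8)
The plan is to evaluate the product of the two normal-ordered blocks $:e_{\epsilon_1}(q^\mu z)f_{\epsilon_2}(z):$ and $:e_{\epsilon_3}(q^\mu w)f_{\epsilon_4}(w):$ by Wick's theorem, in exactly the same spirit as the proof of Lemma \ref{quadratic-Cartan}. Bringing the two blocks together in the region $|z|\gg|w|$ produces the fully normal-ordered product $:e_{\epsilon_1}(q^\mu z)f_{\epsilon_2}(z)e_{\epsilon_3}(q^\mu w)f_{\epsilon_4}(w):$ multiplied by the four cross-contractions between the operators of the left block and those of the right block. Since the whole computation takes place in the parafermion sector, the twisting of the $U(1)$ boson plays no role and I may quote directly the OPEs established in Appendix \ref{proof-Wakimoto}.

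First I would read off each of the four cross-contractions. The $e$--$e$ contraction of $e_{\epsilon_1}(q^\mu z)$ with $e_{\epsilon_3}(q^\mu w)$ is supplied by Proposition \ref{ee}; the $f$--$f$ contraction of $f_{\epsilon_2}(z)$ with $f_{\epsilon_4}(w)$ by Proposition \ref{ff}; and the two mixed contractions, $e_{\epsilon_1}(q^\mu z)$ with $f_{\epsilon_4}(w)$ and $f_{\epsilon_2}(z)$ with $e_{\epsilon_3}(q^\mu w)$, by the two equivalent forms of Proposition \ref{ef}. Substituting the shifted arguments and cancelling the common $q^\mu$ in the $ee$ contraction, the rational prefactors become respectively ${q^{\epsilon_1}z-q^{\epsilon_3}w\over z-q^2 w}$, ${q^{-\epsilon_2}z-q^{-\epsilon_4}w\over z-q^{-2}w}$, ${q^{-(k+1)\epsilon_1}q^\mu z-q^{\epsilon_4}w\over q^{-k\epsilon_1}q^\mu z-w}$, and ${q^{(k+1)\epsilon_2}z-q^{-\epsilon_3}q^\mu w\over q^{k\epsilon_2}z-q^\mu w}$, which are precisely the four factors displayed on the right-hand side. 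I would then collect the oscillator pieces: each OPE carries an exponential over $m>0$ whose per-mode contraction exponent, weighted by ${[2m]\over m[km]}(w/z)^m$, is $q^{-km}$ (from $ee$), $q^{km}$ (from $ff$), $-q^{-\mu m}$ (from $ef$, using $(w/q^\mu z)^m$), and $-q^{\mu m}$ (from $fe$, using $(q^\mu w/z)^m$). Their sum is $q^{-km}+q^{km}-q^{-\mu m}-q^{\mu m}$, so transposing the total oscillator contraction to the left of the product replaces it by its inverse, giving $\exp\bigl(\sum_{m>0}{[2m]\over m[km]}(-q^{-km}+q^{-\mu m}+q^{\mu m}-q^{km})(w/z)^m\bigr)$, which is exactly the prefactor in the statement. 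Finally, the monomial factors in $z$ coming from the $Y^\pm$ zero modes, namely $(q^\mu z)^{\mp 2/k}$ from the $ee$ and $ef$ contractions and $z^{\mp 2/k}$ from the $ff$ and $fe$ contractions, cancel in pairs, just as noted for Lemma \ref{quadratic-Cartan}.

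I do not expect a genuine obstacle here; the content is bookkeeping rather than a new idea. The only points requiring care are selecting the correct one of the two forms of Proposition \ref{ef} for each mixed contraction so that the four denominators line up as written, keeping the ordering $|z|\gg|w|$ consistent across all four contractions, and confirming that the residual $q$-power zero-mode contributions from the $Z_\pm$ and $W_\pm$ factors do not spoil the cancellation. Once these are checked, multiplying the four rational factors and the combined (inverted) exponential against the fully normal-ordered product yields the claimed identity.
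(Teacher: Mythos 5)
Your proposal is correct and follows exactly the route of the paper's own (very terse) proof: apply Wick's theorem using Propositions \ref{ee}, \ref{ff} and \ref{ef} for the four cross-contractions and observe that the zero-mode monomial factors cancel. The bookkeeping you describe — the four rational factors, the per-mode exponents $q^{-km}$, $q^{km}$, $-q^{-\mu m}$, $-q^{\mu m}$ summing to the inverse of the stated prefactor, and the pairwise cancellation of the $(q^\mu z)^{\mp 2/k}$ and $z^{\mp 2/k}$ monomials — all checks out.
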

\begin{proof}
Apply the Wick's theorem with Propositions \ref{ee}, \ref{ff} and \ref{ef}.
The monomial factors coming from the normal ordering of the zero modes cancel in this case too.
\end{proof}

By Lemma \ref{quadratic-Cartan} with $\mu=-2$, we obtain
\begin{equation}\label{T-Cartan-OPE}
\mathbf{T}_{\mathrm{U(1)}}(z) \mathbf{T}_{\mathrm{U(1)}}(w) 
= \exp
\left(\sum_{m>0}{[(k+2)m]\over m[km]} ( q^{m} - q^{-m})^2  (w/z)^m
\right)~:\mathbf{T}_{\mathrm{U(1)}}(z) \mathbf{T}_{\mathrm{U(1)}}(w):.
\end{equation}

To compute the OPE coefficient of $\Lambda_1(z)$ and $\Lambda_1(w)$, we choose
$\epsilon_1=\epsilon_3=+1, \epsilon_2=\epsilon_4=-1$ and $\mu = -(k+2)$ in Lemma \ref{quadratic-parafermi}.
Then we obtain 
\begin{align}\label{PFstructure}
&{(1-q^2 z)(1-q^{-2} z)\over (1-z)^2}\cdot
\exp\left( \sum_{m>0}{[2m]\over m[km]}
\left(-q^{-km}+q^{(k+2) m}+q^{-(k+2) m}-q^{km}\right)z^m
\right)\CR
&= \widetilde{h}(z) = \exp\left(\sum_{m>0} \frac{[(k+2)m]}{m[km]} (q^m - q^{-m})^2 z^m\right).
\end{align}
It is interesting that $\widetilde{h}(z)$ is exactly the same as the OPE coefficient in \eqref{T-Cartan-OPE}. 
In fact this identity is an analogue of what we have seen in proving $G$-$T$ commutation relation (see \eqref{cancellation}).
Other OPE's among $\Lambda_i(z)$ are evaluated similarly by Lemma \ref{quadratic-parafermi}
and we find the following list.
\begin{lem}\label{LambdaOPE}
With the structure function \eqref{PFstructure} the OPE's among $\Lambda_i(z)$ are 
\begin{align*}
&\widetilde{h}(w/z)\Lambda_1(z)\Lambda_1(w)={~}:\Lambda_1(z)\Lambda_1(w):,\\
&\widetilde{h}(w/z)\Lambda_1(z)\Lambda_0(w)={~}
:\Lambda_1(z)\Lambda_0(w):{(1-q^{-2}w/z)(1-q^{2k+4}w/z)\over (1-w/z)(1-q^{2k+2}w/z)},\\
&\widetilde{h}(w/z)\Lambda_1(z)\Lambda_{-1}(w)={~}
:\Lambda_1(z)\Lambda_{-1}(w):{(1-q^{-2}w/z)(1-q^{2k+4}w/z)\over (1-w/z)(1-q^{2k+2}w/z)},
\end{align*}
\begin{align*}
&\widetilde{h}(w/z)\Lambda_0(z)\Lambda_1(w)={~}:\Lambda_0(z)\Lambda_1(w):
{(1-q^{2}w/z)(1-q^{-2k-4}w/z)\over (1-w/z)(1-q^{-2k-2}w/z)}, \\
&\widetilde{h}(w/z)\Lambda_0(z)\Lambda_0(w)={~}:\Lambda_0(z)\Lambda_0(w):
{(1-q^{-2k-4}w/z)(1-q^{2k+4}w/z)\over (1-q^{-2k-2}w/z)(1-q^{2k+2}w/z)}, \\
&\widetilde{h}(w/z)\Lambda_0(z)\Lambda_{-1}(w)={~}
:\Lambda_0(z)\Lambda_{-1}(w):{(1-q^{-2}w/z)(1-q^{2k+4}w/z)\over (1-w/z)(1-q^{2k+2}w/z)},
\end{align*}
\begin{align*}
&\widetilde{h}(w/z)\Lambda_{-1}(z)\Lambda_1(w)={~}:\Lambda_{-1}(z)\Lambda_1(w):
{(1-q^{2}w/z)(1-q^{-2k-4}w/z)\over (1-w/z)(1-q^{-2k-2}w/z)}, \\
&\widetilde{h}(w/z)\Lambda_{-1}(z)\Lambda_0(w)={~}:\Lambda_{-1}(z)\Lambda_0(w):
{(1-q^{2}w/z)(1-q^{-2k-4}w/z)\over (1-w/z)(1-q^{-2k-2}w/z)},\\
&\widetilde{h}(w/z)\Lambda_{-1}(z)\Lambda_{-1}(w)={~}:\Lambda_{-1}(z)\Lambda_{-1}(w):.
\end{align*}
\end{lem}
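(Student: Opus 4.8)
The plan is to obtain all nine operator product expansions by one application of Lemma \ref{quadratic-parafermi} in each case, exploiting the fact that every $\Lambda_i(z)$ has the common shape $:e_{\epsilon_1}(q^{-(k+2)/2}z)f_{\epsilon_2}(q^{(k+2)/2}z):$ up to a scalar. First I would match the two frameworks: setting $\mu=-(k+2)$ in Lemma \ref{quadratic-parafermi} and then rescaling $z\mapsto q^{(k+2)/2}z$, $w\mapsto q^{(k+2)/2}w$ turns the operators $:e_{\epsilon_1}(q^\mu z)f_{\epsilon_2}(z):$ appearing there into exactly the normal ordered pairs defining $\Lambda_i$. Since every scalar factor on both sides depends only on $w/z$, this rescaling changes nothing except the arguments inside the fully normal ordered product on the right, which become those of $:\Lambda_i(z)\Lambda_j(w):$.

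The decisive observation is that the exponential prefactor on the left of Lemma \ref{quadratic-parafermi}, namely $\exp\bigl(\sum_{m>0}\frac{[2m]}{m[km]}(-q^{-km}+q^{(k+2)m}+q^{-(k+2)m}-q^{km})(w/z)^m\bigr)$ at $\mu=-(k+2)$, is independent of the four signs $\epsilon_i$; call it $\exp_0(w/z)$. Consequently, for every pair $(i,j)$ the lemma yields
\begin{align*}
\Lambda_i(z)\Lambda_j(w)=\frac{1}{\exp_0(w/z)}\,R_{ij}(w/z)\,:\Lambda_i(z)\Lambda_j(w):,
\end{align*}
where $R_{ij}$ is the product of the four rational factors of Lemma \ref{quadratic-parafermi} with the signs dictated by $\Lambda_i,\Lambda_j$ (the scalar prefactors of the $\Lambda$'s cancel between the two sides). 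Multiplying by $\widetilde h(w/z)$ and invoking the identity \eqref{PFstructure}, which reads $\widetilde h(x)/\exp_0(x)=(1-q^2x)(1-q^{-2}x)/(1-x)^2$, reduces the whole claim to the purely rational identities
\begin{align*}
\frac{(1-q^2x)(1-q^{-2}x)}{(1-x)^2}\,R_{ij}(x)=\bigl(\text{the stated factor for }(i,j)\bigr),\qquad x=w/z.
\end{align*}

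Then I would verify these rational identities. Each of the four factors in Lemma \ref{quadratic-parafermi} is, after substituting the signs, either a constant power of $q$ or a ratio $(1-q^ax)/(1-q^bx)$; for instance in the case $(\Lambda_1,\Lambda_1)$ (signs $\epsilon_1=\epsilon_3=+$, $\epsilon_2=\epsilon_4=-$) the second and third factors collapse to $q^{-1}$ each, the first and fourth give $q(1-x)/(1-q^2x)$ and $q(1-x)/(1-q^{-2}x)$, so $R_{11}=(1-x)^2/[(1-q^2x)(1-q^{-2}x)]$ and the coefficient is $1$, as required. The remaining cases are mechanical simplifications of the same kind. To cut the labour I would use the symmetries of the table: interchanging $z\leftrightarrow w$ relates $(\Lambda_i,\Lambda_j)$ to $(\Lambda_j,\Lambda_i)$ and sends the ``upper'' family of coefficients to the ``lower'' one, while the involution $q\to q^{-1}$, $\epsilon\to-\epsilon$ of Remark \ref{Inv} exchanges $\Lambda_1\leftrightarrow\Lambda_{-1}$ and fixes $\Lambda_0$; together these reduce the nine entries to the three representatives $(\Lambda_1,\Lambda_1)$, $(\Lambda_1,\Lambda_0)$ and $(\Lambda_0,\Lambda_0)$.

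The only genuine work, and the one place where care is needed, is bookkeeping the telescoping of the two ``long'' factors (the second and third) whose numerators and denominators carry the exponents $\pm(k+1),\pm k$ together with $q^\mu=q^{-(k+2)}$; it is exactly there that the level-dependent poles $1-q^{\pm(2k+2)}x$ and zeros $1-q^{\pm(2k+4)}x$ of the stated coefficients are produced, and one must track the signs $\epsilon_1,\epsilon_2,\epsilon_3,\epsilon_4$ correctly to land on the right combination. I expect no conceptual obstacle beyond this algebra, since the crucial cancellation $\widetilde h/\exp_0$ has already been isolated in \eqref{PFstructure} and mirrors the mechanism used for the $G$–$T$ relation in \eqref{cancellation}.
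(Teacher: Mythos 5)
Your proposal is correct and follows essentially the same route as the paper: the paper likewise obtains each entry by specializing Lemma \ref{quadratic-parafermi} to $\mu=-(k+2)$ with the signs dictated by $\Lambda_i,\Lambda_j$, identifies the sign-independent exponential with the one appearing in \eqref{PFstructure}, and notes that the remaining cases are evaluated "similarly." Your explicit factorization $\widetilde h/\exp_0=(1-q^2x)(1-q^{-2}x)/(1-x)^2$ and the $z\leftrightarrow w$ / $q\to q^{-1}$ symmetry reductions are just a tidier bookkeeping of the same mechanical verification, and your sample computations (e.g.\ $R_{11}$) check out.
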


Summing up OPE relations in Lemma \ref{LambdaOPE} we have
\begin{prp}
\begin{align*}
&\widetilde{h}(w/z)\mathbf{T}_{\mathrm{PF}}(z)\mathbf{T}_{\mathrm{PF}}(w)
-\widetilde{h}(z/w)\mathbf{T}_{\mathrm{PF}}(w)\mathbf{T}_{\mathrm{PF}}(z)\\
=&-{(1-q^{-2})(1-q^{2k+4})\over 1-q^{2k+2}}
\left( \delta\left(q^{2k+2}{w\over z} \right)\mathbf{T}_{\mathrm{PF}}^{(2)}(w)
- \delta\left(q^{-2k-2}{w\over z} \right) \mathbf{T}_{\mathrm{PF}}^{(2)}(z)\right),
\end{align*}
where
\begin{align*}
&\mathbf{T}_{\mathrm{PF}}^{(2)}(z)
={~}:\Lambda_1(q^{2k+2}z)\Lambda_0(z):+
:\Lambda_1(q^{2k+2}z)\Lambda_{-1}(z):\\
&+{[k+1][2k+3]\over [k+2][2k+2]}:\Lambda_0(q^{2k+2}z)\Lambda_0(z):
+:\Lambda_0(q^{2k+2}z)\Lambda_{-1}(z):\\
=&{~}-q^{-1}{[k+2]\over [k+1]}:e_+(q^{-\frac{1}{2}(k+2)}z)f_-(q^{\frac{1}{2}(5k+6)}z):+:e_-(q^{-\frac{1}{2}(k+2)})f_-(q^{\frac{1}{2}(5k+6)}z):\\
&{~}+{[k+2][2k+3]\over [k+1][2k+2]}:e_+(q^{-\frac{1}{2}(k+2)}z)f_+(q^{\frac{1}{2}(5k+6)}z):-
q{[k+2]\over [k+1]}:e_-(q^{-\frac{1}{2}(k+2)}z)f_+(q^{\frac{1}{2}(5k+6)}z):.
\end{align*}
We have used the relation $:e_{+}(q^{k}z)f_{+}(z):{~}=1$ (See Prop. A.9).
\end{prp}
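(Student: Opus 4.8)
The plan is to reduce everything to Lemma~\ref{LambdaOPE}, which records the nine OPE's in the form $\widetilde{h}(w/z)\Lambda_i(z)\Lambda_j(w) = :\Lambda_i(z)\Lambda_j(w): R_{ij}(w/z)$ with the common structure function $\widetilde{h}$ already stripped off. First I would substitute $\mathbf{T}_{\mathrm{PF}} = \Lambda_1 + \Lambda_0 + \Lambda_{-1}$ and write the left-hand side as a sum over ordered pairs $(i,j)\in\{1,0,-1\}^2$. Since the Heisenberg normal ordering is insensitive to the order of its factors, $:\Lambda_i(w)\Lambda_j(z): = :\Lambda_j(z)\Lambda_i(w):$, so after relabelling the second group of terms the whole expression collapses to
\begin{equation*}
\widetilde{h}(w/z)\mathbf{T}_{\mathrm{PF}}(z)\mathbf{T}_{\mathrm{PF}}(w)-\widetilde{h}(z/w)\mathbf{T}_{\mathrm{PF}}(w)\mathbf{T}_{\mathrm{PF}}(z)=\sum_{i,j}:\Lambda_i(z)\Lambda_j(w):\bigl(R_{ij}(w/z) - R_{ji}(z/w)\bigr).
\end{equation*}

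Next I would observe that the rational functions in Lemma~\ref{LambdaOPE} take only three shapes, which I call $A,B,C$: $R_{10}=R_{1,-1}=R_{0,-1}=A$, the transpose $R_{01}=R_{-1,1}=R_{-1,0}=B$, the diagonal $R_{00}=C$, and $R_{11}=R_{-1,-1}=1$. A direct check gives $A(x)=B(1/x)$ and $C(x)=C(1/x)$, so each bracket $R_{ij}(w/z)-R_{ji}(z/w)$ is a single rational function expanded in the two opposite domains $|w/z|<1$ and $|w/z|>1$; such a difference is a sum of formal delta functions supported at its poles, which sit at $w/z\in\{1,\,q^{-2k-2},\,q^{2k+2}\}$. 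Computing residues, the $A$-type pairs contribute $\delta(w/z)$ and $\delta(q^{2k+2}w/z)$, the $B$-type pairs $\delta(w/z)$ and $\delta(q^{-2k-2}w/z)$, while $C$ has no pole at $x=1$ and contributes only $\delta(q^{\pm(2k+2)}w/z)$.

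The heart of the argument, and the step I expect to be the main obstacle, is to show that the unwanted $\delta(w/z)$ terms cancel. Here I would pair each off-diagonal $(i,j)$ with its transpose $(j,i)$: a short residue computation shows that the residue of $A$ at $x=1$ equals $\tfrac{(1-q^{-2})(1-q^{2k+4})}{1-q^{2k+2}}$ while that of $B$ at $x=1$ is exactly its negative, and on the support of $\delta(w/z)$ one has $:\Lambda_i(z)\Lambda_j(w):\,\to\,:\Lambda_i(w)\Lambda_j(w):\,=\,:\Lambda_j(w)\Lambda_i(w):$, so the two contributions cancel term by term. What survives are the poles at $q^{\pm(2k+2)}$; using $\delta(q^{2k+2}w/z)$ to set $z=q^{2k+2}w$ (and symmetrically $\delta(q^{-2k-2}w/z)$ to set $w=q^{2k+2}z$) turns $:\Lambda_i(z)\Lambda_j(w):$ into $:\Lambda_i(q^{2k+2}w)\Lambda_j(w):$, precisely the building blocks of $\mathbf{T}_{\mathrm{PF}}^{(2)}$.

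Finally I would factor out the common constant $\tfrac{(1-q^{-2})(1-q^{2k+4})}{1-q^{2k+2}}$ from the residues at $x=q^{\mp(2k+2)}$. For the three mixed pairs this constant appears with relative weight $1$, whereas for the diagonal $(0,0)$ pair the residue of $C$ at $x=q^{-2k-2}$ relative to that of $A$ works out, after writing $1-q^{-2m}=q^{-m}(q-q^{-1})[m]$ and cancelling the equal $q$-powers, to $\tfrac{[k+1][2k+3]}{[k+2][2k+2]}$ — the only nontrivial coefficient in $\mathbf{T}_{\mathrm{PF}}^{(2)}$. Collecting the surviving terms reproduces the stated identity, with $\mathbf{T}_{\mathrm{PF}}^{(2)}(w)$ and $\mathbf{T}_{\mathrm{PF}}^{(2)}(z)$ attached to the two delta functions. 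The second, explicit expression for $\mathbf{T}_{\mathrm{PF}}^{(2)}$ then follows by Wick-contracting the adjacent $f_{\epsilon}$ and $e_{\epsilon'}$ factors in each $:\Lambda_i(q^{2k+2}z)\Lambda_j(z):$ and invoking $:e_{\pm}(q^{\pm k}z)f_{\pm}(z):\,=1$ from Proposition~\ref{ef} to collapse the quartic vertex-operator products into the listed single $e$–$f$ products.
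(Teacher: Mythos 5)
Your proposal is correct and follows exactly the route the paper intends: the paper's entire proof is the phrase ``Summing up OPE relations in Lemma~\ref{LambdaOPE},'' and your write-up is precisely that summation carried out in detail — the grouping of the nine structure functions into the types $A$, $B=A(1/\cdot)$, $C$ and the trivial diagonal, the cancellation of the $\delta(w/z)$ terms between transposed pairs via the commutativity of the normal ordering, the extraction of $\delta(q^{\pm(2k+2)}w/z)$ with common residue $\pm(1-q^{-2})(1-q^{2k+4})/(1-q^{2k+2})$, the relative weight $[k+1][2k+3]/([k+2][2k+2])$ for the $(0,0)$ term, and the final collapse to single $e$--$f$ products via $:e_{\pm}(q^{\pm k}z)f_{\pm}(z):{}=1$. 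All of these steps check out numerically, so no gap remains.
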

The quadratic relations for $\mathbf{T}_{\mathrm{PF}}^{(2)}(z)$ have been worked out in \cite{Kojima:2019ewe}
and there appears no additional current. It is not straightforward to obtain $\mathbf{T}_{\mathrm{PF}}^{(2)}(z)$
from the standard Miura transformation of the fundamental currents $\Lambda_i(x)$, since it involves 
a diagonal term $:\Lambda_0(q^{2k+2}z)\Lambda_0(z):$.

The commutation relation of $\mathbf{T}(z)$ is the product of the contribution from the $U(1)$ boson part and 
the parafermion part. We observe the structure functions from the $U(1)$ boson part and from the parafermion part 
cancel, which leads to a simple commutation relation of $\mathbf{T}(z)$;
\begin{align}
&\mathbf{T}(z)\mathbf{T}(w)-\mathbf{T}(w)\mathbf{T}(z) \CR
=&{~}-{(1-q^{-2})(1-q^{2k+4})\over 1-q^{2k+2}}
\left( \delta\left(q^{2k+2}{w\over z} \right):\mathbf{T}_{\mathrm{U(1)}}(q^{2k+2}w)\mathbf{T}_{\mathrm{U(1)}}(w): 
\mathbf{T}_{\mathrm{PF}}^{(2)}(w)\right. \CR
&{~}\left.- \delta\left(q^{-2k-2}{w\over z} \right):\mathbf{T}_{\mathrm{U(1)}}(q^{2k+2}z)\mathbf{T}_{\mathrm{U(1)}}(z):
\mathbf{T}_{\mathrm{PF}}^{(2)}(z)\right).
\end{align}

Now we have the following lemmas.

\begin{lem}
\begin{align*}
&:\mathbf{T}_{\mathrm{U(1)}}(q^{2k+2}z) \mathbf{T}_{\mathrm{U(1)}}(z):=
:\widetilde{V}^+(q^{2k+1}z) \widetilde{V}^-(q^{2k+3}z)
\widetilde{V}^+(q^{-1}z) \widetilde{V}^-(qz): \CR
&={~} :\widetilde{V}^-(q^{2k+3}z)
\widetilde{V}^+(q^{-1}z) \mathbf{K}(q^{k+1}z):.
\end{align*}
\end{lem}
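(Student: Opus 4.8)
The plan is to reduce both claimed equalities to elementary manipulations of normal-ordered products of vertex operators that are built from the single twisted Heisenberg algebra of Definition \ref{modified-Cartan}. The key structural fact I would invoke throughout is that, since the operators $\widetilde{V}^{\pm}(z)$ involve only the bosonic generators $\widetilde{\alpha}_n$ and $Q_{\widetilde{\alpha}}$, the normal-ordered product $:\widetilde{V}^{\epsilon_1}(z_1)\cdots \widetilde{V}^{\epsilon_r}(z_r):$ is symmetric under any permutation of its factors, and the normal ordering of a product of already normal-ordered operators is simply the normal ordering of all of their constituent factors. No OPE contractions, delta functions, or structure functions enter; the content is purely combinatorial bookkeeping of the arguments.

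For the first equality I would just substitute arguments into $\mathbf{T}_{\mathrm{U(1)}}(z)=~:\widetilde{V}^{+}(q^{-1}z)\widetilde{V}^{-}(qz):$ from Definition \ref{mod-Wakimoto}. Replacing $z$ by $q^{2k+2}z$ gives $\mathbf{T}_{\mathrm{U(1)}}(q^{2k+2}z)=~:\widetilde{V}^{+}(q^{2k+1}z)\widetilde{V}^{-}(q^{2k+3}z):$, and collecting this together with $\mathbf{T}_{\mathrm{U(1)}}(z)=~:\widetilde{V}^{+}(q^{-1}z)\widetilde{V}^{-}(qz):$ under the outer normal ordering produces precisely the four-fold normal-ordered product $:\widetilde{V}^{+}(q^{2k+1}z)\widetilde{V}^{-}(q^{2k+3}z)\widetilde{V}^{+}(q^{-1}z)\widetilde{V}^{-}(qz):$ on the right. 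For the second equality I would rewrite $\mathbf{K}(q^{k+1}z)$ using $\mathbf{K}(z)=~:\widetilde{V}^{+}(q^{k}z)\widetilde{V}^{-}(q^{-k}z):$ (again Definition \ref{mod-Wakimoto}), which yields $\mathbf{K}(q^{k+1}z)=~:\widetilde{V}^{+}(q^{2k+1}z)\widetilde{V}^{-}(qz):$. Expanding the claimed right-hand side then gives $:\widetilde{V}^{-}(q^{2k+3}z)\widetilde{V}^{+}(q^{-1}z)\widetilde{V}^{+}(q^{2k+1}z)\widetilde{V}^{-}(qz):$, which is the same multiset of four factors $\{\widetilde{V}^{+}(q^{2k+1}z),\widetilde{V}^{-}(q^{2k+3}z),\widetilde{V}^{+}(q^{-1}z),\widetilde{V}^{-}(qz)\}$ appearing in the middle expression; by the permutation symmetry of the normal-ordered product the two agree.

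Since everything is a matter of matching the four arguments $q^{2k+1}z,\, q^{2k+3}z,\, q^{-1}z,\, qz$, there is no genuine obstacle: the only place where care is required is in correctly propagating the powers of $q$ when shifting the arguments of $\mathbf{T}_{\mathrm{U(1)}}$ and of $\mathbf{K}$, and that $q$-power bookkeeping is essentially the entire substance of the lemma. I would therefore present the proof as two short lines of substitution followed by an appeal to the symmetry of normal ordering.
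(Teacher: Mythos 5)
Your proposal is correct and follows essentially the same route as the paper: the paper's proof is the one-line observation that factors can be exchanged inside the normal-ordered product, so the claim follows from the definitions of $\mathbf{T}_{\mathrm{U(1)}}(z)$ and $\mathbf{K}(z)$. Your more explicit bookkeeping of the four arguments $q^{2k+1}z$, $q^{2k+3}z$, $q^{-1}z$, $qz$ and the appeal to permutation symmetry of the purely bosonic normal ordering is exactly the intended argument.
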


\begin{proof}
Since we can exchange the operators $\widetilde{V}^{\pm}(z)$ in the normal ordered product,
it follows from the definitions of $\mathbf{T}_{\mathrm{U(1)}}(z)$ and $ \mathbf{K}(z)$. 
\end{proof}

\begin{lem}
\begin{equation*}
\mathbf{G}^+(z)\mathbf{G}^-(z) ={1\over (q-q^{-1})^3}{1\over [k+2]}
 :\widetilde{V}^+(q^{-k-2)}z) \widetilde{V}^-(q^{k+2}z) \mathbf{T}_{\mathrm{PF}}^{(2)}(q^{-k-1}z):.
\end{equation*}
\end{lem}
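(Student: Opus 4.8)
The plan is to work directly from Definition \ref{mod-Wakimoto}, expanding both factors into their $\epsilon$-components. Since $\mathbf{G}^\pm(z)=z^{1/2}\sum_{\epsilon}\epsilon\,\mathbf{G}^\pm_\epsilon(z)$, the product becomes $\mathbf{G}^+(z)\mathbf{G}^-(z)=z\sum_{\epsilon_1,\epsilon_2}\epsilon_1\epsilon_2\,\mathbf{G}^+_{\epsilon_1}(z)\mathbf{G}^-_{\epsilon_2}(z)$, so it suffices to evaluate the four coincident-point products $\mathbf{G}^+_{\epsilon_1}(z)\mathbf{G}^-_{\epsilon_2}(z)$. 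First I would compute the two-point OPE $\mathbf{G}^+_{\epsilon_1}(z)\mathbf{G}^-_{\epsilon_2}(w)$ at generic distinct arguments, separating the $U(1)$-boson contraction $\widetilde{V}^+(q^{-k-2}z)\widetilde{V}^-(q^{k+2}w)$, handled by Lemma \ref{tVtV}(3), from the parafermion contraction $e_{\epsilon_1}(q^{-(3k+4)/2}z)f_{\epsilon_2}(q^{(3k+4)/2}w)$, handled by Proposition \ref{ef}.

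The key simplification, which I would reuse from the proof of \eqref{rr-9}, is that the two exponential structure functions cancel: the $U(1)$ factor $\exp\bigl(\sum_{m>0}\tfrac{[(k+2)m]}{m[km]}(q^{2k+4}w/z)^m\bigr)$ times the parafermion factor $\exp\bigl(-\sum_{m>0}\tfrac{[2m]}{m[km]}(q^{3k+4}w/z)^m\bigr)$ collapses to $(1-q^{2k+2}w/z)^{-1}$, while the zero-mode monomials combine to $z^{-1}q^{k+1}$. After collecting these, each $\mathbf{G}^+_{\epsilon_1}(z)\mathbf{G}^-_{\epsilon_2}(w)$ equals a scalar rational factor times the normal-ordered operator $:\!\widetilde{V}^+(q^{-k-2}z)\widetilde{V}^-(q^{k+2}w)e_{\epsilon_1}(q^{-(3k+4)/2}z)f_{\epsilon_2}(q^{(3k+4)/2}w)\!:$. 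Because $q$ and $k$ are generic ($q^{2k+2}\ne 1$, $q^{2k+4}\ne 1$, $q^{4k+4}\ne 1$), the only potentially singular denominators $z-q^{2k+2}w$ and $z-q^{k\epsilon_2+3k+4}w$ remain regular as $w\to z$, so the coincident-point limit exists and is obtained by setting $w=z$; the prefactor $z$ then cancels the resulting $1/z$, leaving a genuine local current.

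It then remains to match coefficients. I would observe that shifting $\mathbf{T}_{\mathrm{PF}}^{(2)}$ by $q^{-k-1}$ sends the arguments of its $e_{\epsilon_1}$ and $f_{\epsilon_2}$ to exactly $q^{-(3k+4)/2}z$ and $q^{(3k+4)/2}z$, so the four normal-ordered operators occurring on the two sides are literally identical. Hence the statement reduces to the scalar check, for each $(\epsilon_1,\epsilon_2)$, that
\[
\frac{[k+2]}{[k+1]}\,\epsilon_1\epsilon_2\,\frac{q^{-\epsilon_1}-q^{(k+1)\epsilon_2+3k+4}}{1-q^{k\epsilon_2+3k+4}}
\]
equals the coefficient of $:\!e_{\epsilon_1}f_{\epsilon_2}\!:$ in $\mathbf{T}_{\mathrm{PF}}^{(2)}$, namely $-q^{-1}[k+2]/[k+1]$, $1$, $[k+2][2k+3]/([k+1][2k+2])$, and $-q[k+2]/[k+1]$. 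Here I would first use the elementary $q$-number identity $-q^{k+1}(q-q^{-1})[k+2]/(1-q^{2k+2})=[k+2]/[k+1]$ to produce the overall constant $1/\bigl((q-q^{-1})^3[k+2]\bigr)$, and then verify that the four rational factors simplify as needed (the $(+,+)$ case to $[2k+3]/[2k+2]$, the $(-,-)$ case to $[k+1]/[k+2]$, and the off-diagonal cases to $q^{\mp1}$).

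The computation is essentially mechanical once the structure-function cancellation is in hand, so the main obstacle is bookkeeping: tracking the half-integer $q$-shifts in the arguments and the zero-mode powers of $z$ and $q$ accurately, and confirming that the coincident-point limit is legitimate (no surviving poles) under the genericity hypotheses on $q$ and $k$.
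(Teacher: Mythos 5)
Your proposal is correct and follows essentially the same route as the paper's own proof in Appendix B: expand $\mathbf{G}^{\pm}$ into $\epsilon$-components, use the cancellation of the $U(1)$ and parafermion structure functions down to $(1-q^{2k+2}w/z)^{-1}$ together with the zero-mode factor $z^{-1}q^{k+1}$, take the regular coincident-point limit, and match the four $\epsilon$-dependent rational factors against the coefficients of $:\!e_{\epsilon_1}f_{\epsilon_2}\!:$ in $\mathbf{T}_{\mathrm{PF}}^{(2)}(q^{-k-1}z)$. Your explicit scalar checks (the $[2k+3]/[2k+2]$, $[k+1]/[k+2]$ and $q^{\mp 1}$ simplifications) are the step the paper compresses into ``combining these formulas, we obtain the desired result,'' and they are all verified correctly.
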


\begin{proof}
We have
\begin{align}
\mathbf{G}^+(z)\mathbf{G}^-(z) 
&= - \frac{z}{(q-q^{-1})^2}  \widetilde{V}^+(q^{-k-2}z) \widetilde{V}^-(q^{k+2}z) 
 \left( e_{+} (q^{-\frac{1}{2}(3k+4)}z)- e_{-} (q^{-\frac{1}{2}(3k+4)}z) \right) \CR
& \qquad \times \left( f_{+} (q^{\frac{1}{2}(3k+4)}z) - f_{-} (q^{\frac{1}{2}(3k+4)}z) \right).
\end{align}
The normal ordering of the zero modes of $\widetilde{V}^{\pm}(q^{\mp(k+2)}z)$  and 
those of $e_{\epsilon_1} (q^{-\frac{1}{2}(3k+4)}z )$ and $f_{\epsilon_2} (q^{\frac{1}{2}(3k+4)}z) $ gives
\begin{equation}
(q^{-k-2}z)^{-\frac{k+2}{k}} (q^{-\frac{1}{2}(3k+4)}z)^{\frac{2}{k}} = z^{-1} q^{k+1}.
\end{equation}
From the normal ordering of the oscillators we have the following factor which is independent of $\epsilon_1$ and $\epsilon_2$;
\begin{equation}
\exp \left( \sum_{m>0} \frac{[(k+2)m]}{m [km]} q^{(2k+4)m} \right)
\exp \left( - \sum_{m>0} \frac{[2m]}{m [km]} q^{(3k+4)m} \right) 
= (1-q^{2k+2})^{-1}.
\end{equation}
On the other hand the OPE factor that depends on  $\epsilon_1$ and $\epsilon_2$ is
\begin{equation}
\frac{q^{-\epsilon_1(k+1)} - q^{\epsilon_2 + 3k+4}}{q^{-\epsilon_1 k} - q^{3k+4}}.
\end{equation}
Combining these formulas, we obtain the desired result.
\end{proof}

The above two lemmas imply
\begin{lem}
\begin{align*}
&:\mathbf{T}_{\mathrm{U(1)}}(q^{2k+2}z) \mathbf{T}_{\mathrm{U(1)}}(z) 
\mathbf{T}_{\mathrm{PF}}^{(2)}(z): \CR
&= (q-q^{-1})^3 [k+2]~\mathbf{K}^{-}(q^{k+1}z)
\mathbf{G}^+(q^{k+1}z)\mathbf{G}^-(q^{k+1}z) \mathbf{K}^{+}(q^{k+1}z).
\end{align*}
\end{lem}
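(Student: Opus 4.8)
The plan is to read the identity from right to left, feeding in the two lemmas just established and exploiting that the twisted $U(1)$ sector (generated by $\widetilde{\alpha}_n,Q_{\widetilde{\alpha}}$) and the parafermion sector (generated by $\overline{\alpha}_n,\beta_n,Q_{\overline{\alpha}},Q_\beta$) are mutually commuting. First I would apply the second lemma with $z$ replaced by $q^{k+1}z$, which (after multiplying through by $(q-q^{-1})^3[k+2]$) gives
\[
(q-q^{-1})^3[k+2]\,\mathbf{G}^+(q^{k+1}z)\mathbf{G}^-(q^{k+1}z)
=\,:\widetilde{V}^+(q^{-1}z)\widetilde{V}^-(q^{2k+3}z)\,\mathbf{T}_{\mathrm{PF}}^{(2)}(z):\,.
\]
Multiplying on the left by $\mathbf{K}^-(q^{k+1}z)$ and on the right by $\mathbf{K}^+(q^{k+1}z)$, and using that $\mathbf{K}^\pm$ involve only the $\widetilde{\alpha}$-modes so that they commute with the parafermion factor $\mathbf{T}_{\mathrm{PF}}^{(2)}(z)$, the problem reduces to the purely bosonic identity
\[
\mathbf{K}^-(q^{k+1}z)\,:\widetilde{V}^+(q^{-1}z)\widetilde{V}^-(q^{2k+3}z):\,\mathbf{K}^+(q^{k+1}z)
=\,:\widetilde{V}^-(q^{2k+3}z)\widetilde{V}^+(q^{-1}z)\,\mathbf{K}(q^{k+1}z):\,,
\]
whose right-hand side is exactly $:\mathbf{T}_{\mathrm{U(1)}}(q^{2k+2}z)\mathbf{T}_{\mathrm{U(1)}}(z):$ by the first lemma.

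The key structural point for the bosonic identity is that $\mathbf{K}^-(w)$ is built solely from the creation modes $\widetilde{\alpha}_{-n}$ $(n>0)$ together with the zero mode $q^{\widetilde{\alpha}_0}$, while $\mathbf{K}^+(w)$ is built solely from the annihilation modes $\widetilde{\alpha}_n$ $(n>0)$ and $q^{\widetilde{\alpha}_0}$. Consequently, sandwiching the already normal-ordered product $:\widetilde{V}^+\widetilde{V}^-:$ between $\mathbf{K}^-$ on the far left and $\mathbf{K}^+$ on the far right produces no oscillator (Wick) contractions at all: the creation modes of $\mathbf{K}^-$ sit to the left of everything and the annihilation modes of $\mathbf{K}^+$ sit to the right of everything, so the three factors are already in normal order. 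I would then use $\mathbf{K}^-(w)\mathbf{K}^+(w)=\mathbf{K}(w)$ and the insensitivity of the normal-ordering symbol to the internal order of its arguments to rearrange the result into $:\widetilde{V}^-\widetilde{V}^+\mathbf{K}:$.

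The only genuine bookkeeping is the reordering of the zero modes: moving the factor $q^{\widetilde{\alpha}_0}$ of $\mathbf{K}^-(q^{k+1}z)$ past the charge operators $e^{+(k+2)Q_{\widetilde{\alpha}}}$ of $\widetilde{V}^+(q^{-1}z)$ and $e^{-(k+2)Q_{\widetilde{\alpha}}}$ of $\widetilde{V}^-(q^{2k+3}z)$ produces the opposite powers $q^{+(k+2)}$ and $q^{-(k+2)}$, which cancel, while the $q^{\widetilde{\alpha}_0}$ of $\mathbf{K}^+$ already sits in normal position relative to those charge operators. I expect this cancellation of the zero-mode monomials to the value $1$ to be the main (though purely mechanical) obstacle, and it is in the same spirit as the cancellations of structure functions between the two sectors that recur throughout this appendix. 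Finally, recombining the now-identified $U(1)$ factor $:\mathbf{T}_{\mathrm{U(1)}}(q^{2k+2}z)\mathbf{T}_{\mathrm{U(1)}}(z):$ with the untouched parafermion factor $\mathbf{T}_{\mathrm{PF}}^{(2)}(z)$ — legitimate because the two sectors commute and each factor is separately normal ordered — collapses them into the single normal-ordered triple product $:\mathbf{T}_{\mathrm{U(1)}}(q^{2k+2}z)\mathbf{T}_{\mathrm{U(1)}}(z)\mathbf{T}_{\mathrm{PF}}^{(2)}(z):$, which is precisely the claimed identity.
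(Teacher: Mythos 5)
Your proof is correct and follows essentially the same route as the paper, which simply asserts that the two preceding lemmas imply the identity; you supply the missing details (the shift $z\to q^{k+1}z$ in the $\mathbf{G}^+\mathbf{G}^-$ lemma, the absence of oscillator contractions when sandwiching between $\mathbf{K}^-$ and $\mathbf{K}^+$, and the cancellation of the zero-mode monomials $q^{\pm(k+2)}$), all of which check out.
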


%
In summary we have
\begin{prp}
\begin{align*}
&\mathbf{T}(z)\mathbf{T}(w)-\mathbf{T}(w)\mathbf{T}(z)\\
=&-(q-q^{-1})^4{[k+2][k+2]\over [k+1]}
\left( \delta\left(q^{2k+2}{w\over z} \right)
\mathbf{K}^{-}(q^{k+1}w) \mathbf{G}^+(q^{k+1}w)\mathbf{G}^-(q^{-k-1}z) \mathbf{K}^{+}(q^{-k-1}z)
\right.\\
& \left.- \delta\left(q^{-2k-2}{w\over z} \right)
\mathbf{K}^{-}(q^{k+1}z) \mathbf{G}^+(q^{k+1}z)\mathbf{G}^-(q^{-k-1}w) \mathbf{K}^{+}(q^{-k-1}w)\right).
\end{align*}
\end{prp}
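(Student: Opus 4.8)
The strategy is to assemble the relation from the three results that immediately precede it in this appendix. First I would take the preceding Proposition, which after the cancellation of the bosonic and parafermionic structure functions writes the commutator in the factorized form
\begin{align*}
\mathbf{T}(z)\mathbf{T}(w)-\mathbf{T}(w)\mathbf{T}(z)
=&-{(1-q^{-2})(1-q^{2k+4})\over 1-q^{2k+2}}
\Bigl( \delta\left(q^{2k+2}{w\over z} \right)
:\mathbf{T}_{\mathrm{U(1)}}(q^{2k+2}w)\mathbf{T}_{\mathrm{U(1)}}(w): \mathbf{T}_{\mathrm{PF}}^{(2)}(w) \\
&\qquad - \delta\left(q^{-2k-2}{w\over z} \right)
:\mathbf{T}_{\mathrm{U(1)}}(q^{2k+2}z)\mathbf{T}_{\mathrm{U(1)}}(z): \mathbf{T}_{\mathrm{PF}}^{(2)}(z)\Bigr),
\end{align*}
and substitute into each normal-ordered triple product the last Lemma of the appendix,
\[
:\mathbf{T}_{\mathrm{U(1)}}(q^{2k+2}z)\mathbf{T}_{\mathrm{U(1)}}(z)\mathbf{T}_{\mathrm{PF}}^{(2)}(z):=(q-q^{-1})^3[k+2]\,\mathbf{K}^-(q^{k+1}z)\mathbf{G}^+(q^{k+1}z)\mathbf{G}^-(q^{k+1}z)\mathbf{K}^+(q^{k+1}z).
\]
This turns the right-hand side into a combination of $\mathbf{K}^\pm$ and $\mathbf{G}^\pm$ with all four currents evaluated at the single point $q^{k+1}w$ (respectively $q^{k+1}z$).

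Next I would collapse the scalar factor. Writing the three $q$-monomials as $1-q^{-2}=q^{-1}(q-q^{-1})$, $1-q^{2k+4}=-q^{k+2}(q-q^{-1})[k+2]$ and $1-q^{2k+2}=-q^{k+1}(q-q^{-1})[k+1]$, the coefficient becomes
\[
-{(1-q^{-2})(1-q^{2k+4})\over 1-q^{2k+2}}\,(q-q^{-1})^3[k+2]
=-(q-q^{-1})^4\,{[k+2]^2\over[k+1]},
\]
which is precisely the overall constant claimed in the statement.

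It then remains to display the currents at the split arguments of the claim. On the support of $\delta(q^{2k+2}w/z)$ the two evaluation points coincide, $q^{-k-1}z=q^{k+1}w$, so the factor $\mathbf{G}^-(q^{k+1}w)\mathbf{K}^+(q^{k+1}w)$ may be replaced by $\mathbf{G}^-(q^{-k-1}z)\mathbf{K}^+(q^{-k-1}z)$: the argument shift of the even current $\mathbf{K}^+$ is governed by \eqref{delta-3} and that of the odd current $\mathbf{G}^-$ by \eqref{delta-2} in Lemma \ref{delta-flip}, the latter being the one place where the interchange $\delta\leftrightarrow\delta^{\mathrm{NS}}$ in the NS sector has to be tracked. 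Performing the identical relabelling in the $\delta(q^{-2k-2}w/z)$ term (where $q^{-k-1}w=q^{k+1}z$ on the support) reproduces the second line, completing the derivation.

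I expect no conceptual obstacle here: the substantive content — the exact cancellation of the two structure functions and the identification of the composite current $\mathbf{T}_{\mathrm{PF}}^{(2)}$ — has already been isolated in the cited Proposition and Lemmas, so the present argument reduces to the prefactor identity above together with the delta-function bookkeeping, which is the only step requiring genuine care. As a consistency check I would finally note that the same-argument form obtained after the first paragraph is, through Lemma \ref{GG=W}, the $W$-current expression of \eqref{rr-11}, confirming that the twisted Wakimoto realization indeed satisfies the abstract defining relation.
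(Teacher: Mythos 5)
Your proposal is correct and follows essentially the same route as the paper: the paper's own proof of this proposition is precisely the substitution of the final Lemma ($:\mathbf{T}_{\mathrm{U(1)}}(q^{2k+2}z)\mathbf{T}_{\mathrm{U(1)}}(z)\mathbf{T}_{\mathrm{PF}}^{(2)}(z):=(q-q^{-1})^3[k+2]\,\mathbf{K}^-\mathbf{G}^+\mathbf{G}^-\mathbf{K}^+$) into the factorized commutator of the preceding proposition, and your prefactor computation $-\frac{(1-q^{-2})(1-q^{2k+4})}{1-q^{2k+2}}(q-q^{-1})^3[k+2]=-(q-q^{-1})^4\frac{[k+2]^2}{[k+1]}$ checks out. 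Your explicit handling of the delta-function relabelling that splits the coincident arguments into the $z$/$w$ form (which the paper leaves implicit in its ``In summary'' step, and which is what renders the right-hand side well defined without normal ordering, as the Remark after Lemma \ref{GG=W} notes) is a welcome addition rather than a deviation.
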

By Lemma \ref{GG=W} we finally obtain \eqref{rr-11}.


\section{Screening operators and the vanishing lines}
\label{app:screening}

For the Virasoro algebra the embedding structure
of the Fock modules derived from  the screening operators (BGG or BRST resolution)
plays a key role in the proof of the Kac determinant formula \cite{FF}, \cite{Felder}.
In this appendix as a first step to the proof of our conjecture 
of the Kac determinant of $\Svir$, we investigate the screening operators,
which are intertwines among the Fock representations of $\Svir$ obtained in Section \ref{Twist-Wakimoto}.
We will see that the vanishing lines predicted by the screening
operators exhaust the factors in the Kac determinant. 
Both the fermionic and the bosonic screening operators we employ are the same as those 
for $U_q(\widehat{\mathfrak{sl}}_2)$ \cite{Matsuo:1994nc}.
They involve only the modes $\overline{\alpha}_m, \beta_m, Q_{\overline{\alpha}}, Q_\beta$ in the parafermion sector 
and, hence, have trivial actions on the $U(1)$ sector.

\subsection{Degree operators $J$ and $d$}

Recall that we have introduced the Fock spaces $\mathcal{F}_{NS}(u,v)$ and $\mathcal{F}_{R}(u,v)$
with $u=q^{\rho}, v=q^\sigma$ in Subsection \ref{sec:Fock-rep}.
We consider the representations of $\Svir$ on the Fock spaces $\mathcal{F}_{A}(q^{\rho},q^\sigma)$, where $({\rm A}={\rm NS},{\rm R})$,
and the intertwiners (the screening charges) between these Fock representations.

\begin{dfn}
Set
\begin{align*}
&J={1\over k}(\widetilde{\alpha}_0+\overline{\alpha}_0),\\
&d={\widetilde{\alpha}_0^2 \over 2k(k+2)}-{\overline{\alpha}_0^2 \over 4k}+
{\beta_0(\beta_0+2)\over 4(k+2)}+
\sum_{m>0}{m^2\over [(k+2)m][km]}\widetilde{\alpha}_{-m}\widetilde{\alpha}_{m}\\
&\qquad -
\sum_{m>0}{m^2\over [2m][km]}\overline{\alpha}_{-m}\widetilde{\alpha}_{m}
+
\sum_{m>0}{m^2\over [2m][(k+2)m]}\beta_{-m}\beta_{m},\\
&d_0(\xi,\rho,\sigma)=
{\xi^2 \over 2k(k+2)}-{\rho^2 \over 4k}+
{\sigma(\sigma+2)\over 4(k+2)}.
\end{align*}
\end{dfn}

As we see from the lemme below, $d_0(\xi,\rho,\sigma)$ is the eigenvalue of the degree operator $d$ on the highest weight 
state $\vert \xi,\rho,\sigma\rangle$. 

\begin{lem}\label{J-d}
We have
\begin{align}
&J \vert \xi,\rho,\sigma\rangle={\xi-\rho\over k}  \vert \xi,\rho,\sigma\rangle,
\qquad 
 \langle \xi,\rho,\sigma\vert J={\xi-\rho\over k}  \langle \xi,\rho,\sigma\vert,\nonumber \\
&d \vert \xi,\rho,\sigma\rangle=d_0(\xi,\rho,\sigma)\vert \xi,\rho,\sigma\rangle,
\qquad \langle \xi,\rho,\sigma\vert d= d_0(\xi,\rho,\sigma)\langle \xi,\rho,\sigma\vert,\nonumber\\
&d_0(\rho+(k+2)n,\rho+2 n,\sigma)={n^2\over 2}+d_0(\rho,\rho,\sigma),\nonumber \\
&d_0({k/2}+\rho+(k+2)n,\rho+2 n,\sigma)={n(n+1)\over 2}+d_0({k/2}+\rho,\rho,\sigma),\nonumber\\
&d_0(\xi,\rho,\sigma)=
-1-{\mp \rho+\sigma\over 2}+
d_0(\xi,\rho\pm k,\sigma+k+2),
\label{d_0-S^pm-1}\\
&d_0(\xi,\rho,\sigma)=
{\mp \rho+\sigma\over 2}+
d_0(\xi,\rho\mp k,\sigma-k-2),\label{d_0-S^pm-2}\\
&d_0(\xi,\rho,\sigma)=
{r(\sigma-r+1)\over k+2}+
d_0(\xi,\rho,\sigma-2 r),\label{d_0-S^pm-3}\\
&
d_0(\xi,\rho,\sigma)=-{r(\sigma+r+1)\over k+2}+
d_0(\xi,\rho,\sigma+2 r).\label{d_0-S^pm-4}
\end{align}
\end{lem}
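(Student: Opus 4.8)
The plan is to reduce the whole lemma to the action of the three zero modes on the (co)vacuum, together with a handful of elementary quadratic manipulations. First I would record the eigenvalues of $\widetilde{\alpha}_0,\overline{\alpha}_0,\beta_0$ on the highest weight vector. Because $[\widetilde{\alpha}_0,Q_{\widetilde{\alpha}}]=1$, $[\overline{\alpha}_0,Q_{\overline{\alpha}}]=-1$ and $[\beta_0,Q_\beta]=1$, while each of these zero modes commutes with the other two exponentiated charges, the identity $[A_0,e^{cQ}]=c\,[A_0,Q]\,e^{cQ}$ together with $\widetilde{\alpha}_0\vert 0\rangle=\overline{\alpha}_0\vert 0\rangle=\beta_0\vert 0\rangle=0$ gives
\[
\widetilde{\alpha}_0\vert\xi,\rho,\sigma\rangle=\xi\vert\xi,\rho,\sigma\rangle,\qquad
\overline{\alpha}_0\vert\xi,\rho,\sigma\rangle=-\rho\vert\xi,\rho,\sigma\rangle,\qquad
\beta_0\vert\xi,\rho,\sigma\rangle=\sigma\vert\xi,\rho,\sigma\rangle.
\]
The first line of the lemma is then immediate from $J=\tfrac1k(\widetilde{\alpha}_0+\overline{\alpha}_0)$, and the covector statements follow identically from $\langle 0\vert A_0=0$ and $\langle\xi,\rho,\sigma\vert=\langle 0\vert e^{-\xi Q_{\widetilde{\alpha}}-\rho Q_{\overline{\alpha}}-\sigma Q_\beta}$, the sign changes in the exponent compensating those in the commutators so that the eigenvalues are unchanged.

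Next I would show that $d$ acts by the scalar $d_0$. Every summand of $d$ carrying a nonzero mode index, namely $\widetilde{\alpha}_{-m}\widetilde{\alpha}_m$, $\overline{\alpha}_{-m}\widetilde{\alpha}_m$ and $\beta_{-m}\beta_m$ with $m>0$, has an annihilation operator ($\widetilde{\alpha}_m$ or $\beta_m$, $m>0$) on the right; these commute with all three charges $Q$ and kill $\vert 0\rangle$, hence annihilate $\vert\xi,\rho,\sigma\rangle$. Only the zero-mode part of $d$ survives, and inserting the eigenvalues above yields
\[
d\vert\xi,\rho,\sigma\rangle=\Bigl(\tfrac{\xi^2}{2k(k+2)}-\tfrac{\rho^2}{4k}+\tfrac{\sigma(\sigma+2)}{4(k+2)}\Bigr)\vert\xi,\rho,\sigma\rangle=d_0(\xi,\rho,\sigma)\vert\xi,\rho,\sigma\rangle.
\]
For the covector the leftmost factor of each nonzero-index summand ($\widetilde{\alpha}_{-m}$ or $\overline{\alpha}_{-m}$, $m>0$) kills $\langle 0\vert$, so the same scalar results.

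Finally, the six displayed relations for $d_0$ are polynomial identities in $\xi,\rho,\sigma,n,r$, each proved by direct substitution into $d_0(\xi,\rho,\sigma)=\tfrac{\xi^2}{2k(k+2)}-\tfrac{\rho^2}{4k}+\tfrac{\sigma(\sigma+2)}{4(k+2)}$. For the shift $\xi=\rho+(k+2)n,\ \rho\mapsto\rho+2n$ the terms linear in $\rho n$ cancel and the coefficient of $n^2$ collapses to $\tfrac{(k+2)-2}{2k}=\tfrac12$, giving $\tfrac{n^2}{2}$; the R-sector version $\xi=k/2+\rho+(k+2)n$ carries the extra $\tfrac{an}{k}$ with $a=k/2+\rho$, producing an additional $\tfrac{n}{2}$ and hence $\tfrac{n(n+1)}{2}$. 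For \eqref{d_0-S^pm-3} and \eqref{d_0-S^pm-4} one uses $\sigma(\sigma+2)-(\sigma\mp2r)(\sigma\mp2r+2)=\pm4r(\sigma\mp r+1)$, and for \eqref{d_0-S^pm-1}, \eqref{d_0-S^pm-2} one combines the corresponding $\rho$- and $\sigma$-differences, e.g.\ $-\tfrac{\rho^2-(\rho+k)^2}{4k}=\tfrac{\rho}{2}+\tfrac{k}{4}$ and $\tfrac{\sigma(\sigma+2)-(\sigma+k+2)(\sigma+k+4)}{4(k+2)}=-\tfrac{\sigma}{2}-\tfrac{k+4}{4}$, whose sum is $\tfrac{\rho-\sigma}{2}-1$, as required. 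There is no genuine obstacle here; the only points needing care are the bookkeeping of the $-\rho$ eigenvalue of $\overline{\alpha}_0$ (harmless, since $\rho$ enters $d_0$ only through $\rho^2$) and the check that the cross term $\overline{\alpha}_{-m}\widetilde{\alpha}_m$ annihilates both the vector and the covector.
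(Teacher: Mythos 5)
Your proof is correct, and it is exactly the straightforward verification one would give: the paper states Lemma \ref{J-d} without proof, and your argument (zero-mode eigenvalues $\xi$, $-\rho$, $\sigma$ from the commutators with the charges, annihilation of the oscillator bilinears on the highest weight vector and covector, and direct polynomial checks of the six $d_0$ identities) fills that in accurately, including the sign bookkeeping for $\overline{\alpha}_0$.
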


Set for simplicity that $\varepsilon_{\rm NS}=0$ for the NS sector and $\varepsilon_{\rm R}=k/2$ for the R sector.
We use the following notations for the states in the Fock space\footnote{Compare these notations with similar ones for the Verma module.}
\begin{align*}
&
\vert\lambda,\mu,\alpha,\beta;\rho,\sigma\rangle=
\mathbf{K}^-_{-\lambda} \mathbf{T}_{-\mu}\mathbf{G}^+_{-\alpha} \mathbf{G}^-_{-\alpha} 
\vert \varepsilon_{\rm A}+\rho,\rho,\sigma\rangle,\\
&
\langle \lambda,\mu,\alpha,\beta;\rho,\sigma\vert=
\langle \varepsilon_{\rm A}+\rho,\rho,\sigma\vert
\mathbf{G}^+_{\beta}\mathbf{G}^-_{\alpha} \mathbf{T}_{\mu}\mathbf{K}^+_{\lambda}.
\end{align*}
The reason why we should impose the condition $\xi=\varepsilon_{\rm A}+\rho$ on the highest weight vector
$\vert \xi,\rho,\sigma\rangle$ is explained in Subsection \ref{sec:Fock-rep}.
We define the degrees as follows.
\begin{center}
\begin{tabular}{c|c|c}
state& $d$-degree& $J$-degree\\[2mm]\hline
$ |\lambda,\mu,\alpha,\beta;\rho,\sigma\rangle$&
$d_0(\varepsilon_{\rm A}+\rho,\rho,\sigma)+|\lambda|+|\mu|+|\alpha|+|\beta|$& ${\varepsilon_{\rm A}/ k}+\ell(\alpha)-\ell(\beta)$\\[2mm]\hline
$\langle \lambda,\mu,\alpha,\beta;\rho,\sigma|$&$d_0(\varepsilon_{\rm A}+\rho,\rho,\sigma)+ |\lambda|+|\mu|+|\alpha|+|\beta|$
& ${\varepsilon_{\rm A}/ k}+\ell(\alpha)-\ell(\beta)$
\end{tabular}
\end{center}
We also use
\begin{center}
\begin{tabular}{c|c|c}
state& $p$-degree& $x$-degree\\[2mm]\hline
$ |\lambda,\mu,\alpha,\beta;\rho,\sigma\rangle$&
$|\lambda|+|\mu|+|\alpha|+|\beta|$& $\ell(\alpha)-\ell(\beta)$\\[2mm]\hline
$\langle \lambda,\mu,\alpha,\beta;\rho,\sigma|$&$|\lambda|+|\mu|+|\alpha|+|\beta|$
& $\ell(\alpha)-\ell(\beta)$
\end{tabular}
\end{center}
for convenience.

\subsection{Fermionic screening currents $S^\pm(z)$}

\begin{dfn}[\cite{Matsuo:1994nc}, \cite{DF1}]
Set 
\begin{align*}
&S^\pm(z)=
\exp\left( \sum_{m=1}^\infty {z^m\over [2m]} (q^{\pm {k\over 2}m} \beta_{-m}\pm  q^{\pm {k+2\over 2}m} \overline{\alpha}_{-m}) \right)\\
&\qquad \cdot 
\exp\left(-\sum_{m=1}^\infty {z^{-m}\over [2m]} (q^{\pm {k\over 2}m} \beta_{m}\pm  q^{\pm {k+2\over 2}m} \overline{\alpha}_{m})  \right)
e^{(k+2)Q_{\beta} \pm k Q_{\overline{\alpha}}}z^{{1\over 2 }(\beta_0\pm \overline{\alpha}_0)}.
\end{align*}
We call $S^\pm(z)$ the fermionic screening currents.
\end{dfn}

\begin{lem}
We have 
\begin{align*}
&[J,\mathbf{K}^\pm(z)]=0,\,\,
[J ,\mathbf{T}(z)]=0 ,\,\,
[J, \mathbf{G}^\pm(z)]=\pm \mathbf{G}^\pm(z),\,\,
[J, S^\pm(z)]=\mp S^\pm(z),\\
&q^d \mathbf{K}^\pm(z)=\mathbf{K}^\pm(qz)q^d ,\,\, 
q^d \mathbf{T}(z)=\mathbf{T}(qz)q^d ,\,\, 
q^d \mathbf{G}^\pm(z)=\mathbf{G}^\pm(qz)q^d ,\,\, 
q^d S^\pm(z)=qS^\pm(qz) q^d,\\
&S^\pm(z)S^\pm(w)=(z-w):S^\pm(z)S^\pm(w):, \\
& S^\pm(z)S^\mp(w)=z^{k+1}
{(q^{-k}w/z;q^4)_\infty (q^{-k+2}w/z;q^4)_\infty  \over (q^{k+4}w/z;q^4)_\infty (q^{k+2}w/z;q^4)_\infty }
:S^\pm(z)S^\mp(w):.
\end{align*}
\end{lem}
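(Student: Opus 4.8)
The plan is to read every relation as a free-field computation in the Heisenberg algebra of Definition \ref{Matsuo-parafermion} (together with the modified oscillators $\widetilde{\alpha}_n$), splitting each current and each screening current into a creation exponential, an annihilation exponential, and a zero-mode (momentum/cocycle) factor. The relations with $J$ and $q^{d}$ are homogeneity statements for the $(J,d)$-bigrading, while the last two displays are genuine operator product expansions obtained by normal ordering.

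First I would dispose of the $J$-relations. Since $J=\tfrac1k(\widetilde{\alpha}_0+\overline{\alpha}_0)$ is built from commuting zero modes, $[J,\cdot\,]$ merely reads off the total $(Q_{\widetilde{\alpha}},Q_{\overline{\alpha}})$-charge of each operator via $[\widetilde{\alpha}_0,Q_{\widetilde{\alpha}}]=1$ and $[\overline{\alpha}_0,Q_{\overline{\alpha}}]=-1$. The currents $\mathbf{K}^{\pm}(z)$ and $\mathbf{T}(z)$ are charge-neutral; $\mathbf{G}^{\pm}(z)$ carries $Q_{\widetilde{\alpha}}$-charge $\pm(k+2)$ and $Q_{\overline{\alpha}}$-charge $\pm2$, so $[J,\mathbf{G}^{\pm}]=\tfrac1k\bigl(\pm(k+2)\mp2\bigr)\mathbf{G}^{\pm}=\pm\mathbf{G}^{\pm}$; and $S^{\pm}(z)$ carries $Q_{\overline{\alpha}}$-charge $\pm k$, giving $[J,S^{\pm}]=\mp S^{\pm}$. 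For the grading relations $q^{d}A(z)q^{-d}=A(qz)$ I would compute the adjoint action $\ad_d$ on the generating oscillators and momenta from the defining coefficients of $d$, showing that $d$ implements the Euler scaling $z\mapsto qz$ on each current; the extra factor in $q^{d}S^{\pm}(z)=q\,S^{\pm}(qz)q^{d}$ reflects the weight-one normalization of the screening density (its modes carry degree $1-m$ rather than $-m$). This is direct bookkeeping once the action of $\ad_d$ on the building blocks is recorded.

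The substance of the lemma is the two screening OPEs, which I would obtain from $e^{A}e^{B}=e^{[A,B]}\,{:}e^{A}e^{B}{:}$, valid when $[A,B]$ is scalar. The only nontrivial contraction is between the annihilation oscillators of the left factor and the creation oscillators of the right factor, and with $[\beta_n,\beta_{-n}]=\tfrac{[2n][(k+2)n]}{n}$ and $[\overline{\alpha}_n,\overline{\alpha}_{-n}]=-\tfrac{[2n][kn]}{n}$ this produces a sum $\sum_{m>0}\tfrac{(w/z)^m}{m}c^{\epsilon\epsilon'}_m$. For equal signs the key identity $-q^{\pm km}[(k+2)m]+q^{\pm(k+2)m}[km]=-[2m]$ forces $c^{\pm\pm}_m=-1$, so the contraction exponentiates to $1-w/z$; combined with the zero-mode rearrangement of $z^{\frac12(\beta_0\pm\overline{\alpha}_0)}$ past $e^{(k+2)Q_{\beta}\pm kQ_{\overline{\alpha}}}$, which yields a factor $z$, this gives $S^{\pm}(z)S^{\pm}(w)=(z-w)\,{:}S^{\pm}(z)S^{\pm}(w){:}$, whose manifest antisymmetry in $z\leftrightarrow w$ re-proves that $S^{\pm}$ is fermionic. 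For opposite signs the relevant identity is $\tfrac{[(k+2)m]+[km]}{[2m]}=\tfrac{[(k+1)m]}{[m]}$, so the contraction becomes $-\sum_{m>0}\tfrac{(w/z)^m}{m}\tfrac{[(k+1)m]}{[m]}$, and the zero-mode factor now evaluates to $z^{k+1}$.

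The main obstacle is recognizing this last $q$-series as the logarithm of the stated $q^{4}$-Pochhammer ratio. I would use $\log\,(x;q^4)_\infty=-\sum_{m>0}\tfrac{x^m}{m(1-q^{4m})}$ together with the factorization $\tfrac{[(k+1)m]}{[m]}=\tfrac{q^{-km}-q^{(k+2)m}}{1-q^{2m}}=\tfrac{(1+q^{2m})(q^{-km}-q^{(k+2)m})}{1-q^{4m}}$, splitting the numerator as $q^{-km}+q^{(-k+2)m}-q^{(k+2)m}-q^{(k+4)m}$; the four resulting geometric series of ratio $q^{4}$ reproduce exactly $\tfrac{(q^{-k}w/z;q^4)_\infty(q^{-k+2}w/z;q^4)_\infty}{(q^{k+4}w/z;q^4)_\infty(q^{k+2}w/z;q^4)_\infty}$. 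The other point demanding care is tracking the momentum cocycle and the explicit $z$-powers so that they give precisely $z$ and $z^{k+1}$ without stray signs. The NS/R distinction is irrelevant throughout, since $S^{\pm}$ involves only the parafermion modes $\overline{\alpha}_m,\beta_m,Q_{\overline{\alpha}},Q_{\beta}$ and acts trivially on the twisted $U(1)$ sector.
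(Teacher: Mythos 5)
Your proposal is correct: the paper states this lemma without proof, and your free-field computation (charge counting through the zero modes for the $J$- and $q^{d}$-relations, Wick contraction plus reordering of the momentum factor $z^{\frac12(\beta_0\pm\overline{\alpha}_0)}$ past $e^{(k+2)Q_{\beta}\pm kQ_{\overline{\alpha}}}$ for the OPEs) is exactly the standard verification the authors intend. In particular the key identities $q^{\pm km}[(k+2)m]-q^{\pm(k+2)m}[km]=[2m]$ and $([(k+2)m]+[km])/[2m]=[(k+1)m]/[m]$, the zero-mode powers $z$ and $z^{k+1}$, and the resummation of $-\sum_{m>0}\frac{(w/z)^m}{m}\frac{[(k+1)m]}{[m]}$ into the stated $q^{4}$-Pochhammer ratio all check out.
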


\begin{lem}
We have
\begin{align*}
&\mathbf{ G}^+_\epsilon(z)S^+(w)={1\over q^{-\kappa+\epsilon}z-w}:\mathbf{ G}^+_\pm(z)S^+(w):, \\
& S^+(w)\mathbf{ G}^+_\epsilon(z)={1\over w-q^{-\kappa+\epsilon}z}:S^+(w)\mathbf{ G}^+_\pm(z):,\\
&\mathbf{ G}^-_\epsilon(z)S^+(w)=( q^{\kappa+\epsilon(k+1)}z-w):\mathbf{ G}^-_\pm(z)S^+(w):, \\
& S^+(w)\mathbf{ G}^-_\epsilon(z)=(w-q^{\kappa+\epsilon(k+1)}z):S^+(w)\mathbf{ G}^-_\pm(z):,\\
&\mathbf{ G}^+_\epsilon(z)S^-(w)=( q^{-\kappa-\epsilon(k+1)}z-w):\mathbf{ G}^+_\pm(z)S^-(w):, \\
& S^-(w)\mathbf{ G}^+_\epsilon(z)=(w-q^{-\kappa-\epsilon(k+1)}z):S^-(w)\mathbf{ G}^+_\pm(z):,\\
&\mathbf{ G}^-_\epsilon(z)S^-(w)={1\over q^{\kappa-\epsilon}z-w}:\mathbf{ G}^-_\pm(z)S^-(w):, \\
& S^-(w)\mathbf{ G}^-_\epsilon(z)={1\over w-q^{\kappa-\epsilon}z}:S^-(w)\mathbf{ G}^-_\pm(z):,
\end{align*}
where $\kappa:=\frac{1}{2}(3k+4)$.\footnote{The shift of $q^\kappa$ comes from the definitions \eqref{G+-def}  and \eqref{G--def} of $\mathbf{ G}^\pm_\epsilon$.}
Under the condition that we have the Fourier expansion $S^\pm(z)=\sum_{n\in \mathbb{Z}} S^\pm_n z^{-n}$
(see Lemma \ref{screening-expansion} below),  
we have 
\begin{align*}
&\mathbf{ G}^+(z)S^+(w)+S^+(w)\mathbf{ G}^+(z)={A^+(z)\over (q-q^{-1})w} \Bigl(  \delta(q^{\kappa -1}w/z)
-  \delta(q^{\kappa +1}w/z)\Bigr),\\
&\mathbf{ G}^-(z)S^+(w)+S^+(w)\mathbf{ G}^-(z)=0,\quad \mathbf{ K}^\pm(z)S^+(w)-S^+(w)\mathbf{ K}^\pm(z)=0.
\end{align*}
and
\begin{align*}
&\mathbf{ G}^+(z)S^-(w)+S^-(w)\mathbf{ G}^+(z)=0,\quad \mathbf{ K}^\pm(z)S^-(w)-S^-(w)\mathbf{ K}^\pm(z)=0,\\
&\mathbf{ G}^-(z)S^-(w)+S^-(w)\mathbf{ G}^-(z)={A^-(z)\over (q-q^{-1})w} \Bigl(  \delta(q^{-\kappa-1}w/z)
-  \delta(q^{-\kappa +1}w/z)\Bigr),
\end{align*}
where 
\begin{align*}
A^+(z) \seteq &~z^{1/2}:\mathbf{ G}^+_+(z)S^+(q^{-\kappa +1}z):{~}
=~z^{1/2}:\mathbf{ G}^+_-(z)S^+(q^{-\kappa -1}z):, \\
A^-(z) \seteq &~z^{1/2}:\mathbf{ G}^-_+(z)S^-(q^{\kappa -1}z):{~}
=~z^{1/2}:\mathbf{ G}^-_-(z)S^-(q^{\kappa +1}z):.
\end{align*}

\end{lem}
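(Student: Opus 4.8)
The plan is to reduce everything to a normal-ordering computation in the parafermion sector and then convert the two operator orderings into formal delta functions. The decisive structural observation is that the fermionic screening current $S^\pm(w)$ is built solely from the parafermion oscillators $\overline\alpha_m,\beta_m$ and the zero modes $Q_{\overline\alpha},Q_\beta$; by Definition \ref{Matsuo-parafermion} these commute with the twisted $U(1)$ generators $\widetilde\alpha_m,Q_{\widetilde\alpha}$ that make up the vertex operators $\widetilde V^\pm$. Since $\mathbf{K}^\pm(z)$ is a pure function of the $\widetilde\alpha$'s, the relations $\mathbf{K}^\pm(z)S^\pm(w)-S^\pm(w)\mathbf{K}^\pm(z)=0$ are immediate. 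For $\mathbf{G}^\pm_\epsilon(z)$, which by Definition \ref{mod-Wakimoto} factorizes as $\widetilde V^\pm(\cdots z)$ times the parafermion vertex $e_\epsilon$ or $f_\epsilon$ evaluated at $q^{\mp\kappa}z$, the $\widetilde V^\pm$ factor passes through $S^\pm$ with no contraction, so the entire singular structure of $\mathbf{G}^\pm_\epsilon(z)S^\pm(w)$ is that of $e_\epsilon(q^{-\kappa}z)S^\pm(w)$ or $f_\epsilon(q^{\kappa}z)S^\pm(w)$.

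First I would compute these parafermion OPEs by Wick's theorem, using $[\overline\alpha_m,\overline\alpha_{-m}]=-[2m][km]/m$ and $[\beta_m,\beta_{-m}]=[2m][(k+2)m]/m$ together with the zero-mode phases coming from $z^{\frac12(\beta_0\pm\overline\alpha_0)}$ and the shifts $e^{(k+2)Q_\beta\pm kQ_{\overline\alpha}}$. These are the same building blocks already assembled in Appendix \ref{proof-Wakimoto} (Lemmas \ref{for-ef}, \ref{for-fe} and Proposition \ref{ef}), so the exponential sums in the contractions telescope exactly as there and collapse to a single elementary factor. The upshot is that each product acquires either a simple pole $1/(q^{-\kappa+\epsilon}z-w)$ (when the fermion parities match, e.g. $\mathbf{G}^+_\epsilon$ against $S^+$) or a single zero $(q^{\kappa+\epsilon(k+1)}z-w)$ (in the cross cases), reproducing the four displayed OPE formulas; the reversed orderings $S^\pm(w)\mathbf{G}^\pm_\epsilon(z)$ are the expansions of the very same rational functions in the opposite domain $|w|>|z|$.

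Next I would assemble the anticommutators using $\mathbf{G}^\pm(z)=z^{1/2}(\mathbf{G}^\pm_+(z)-\mathbf{G}^\pm_-(z))$ and the mode expansion $S^\pm(z)=\sum_n S^\pm_n z^{-n}$ of Lemma \ref{screening-expansion}. Since the normal-ordered product is insensitive to the order of the two operators, adding $\mathbf{G}^\pm(z)S^\pm(w)$ and $S^\pm(w)\mathbf{G}^\pm(z)$ replaces each rational prefactor by the difference of its two domain expansions, which is the formal delta function $\frac1w\delta(q^{\kappa-\epsilon}w/z)$. For the cross cases the prefactor is a polynomial, hence unambiguous, and the two orderings cancel termwise, giving $\mathbf{G}^-(z)S^+(w)+S^+(w)\mathbf{G}^-(z)=0$ and its $S^-$ analogue. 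In the matching cases the delta function localizes $w=q^{-\kappa+\epsilon}z$; summing $\epsilon=+$ with weight $+1$ and $\epsilon=-$ with weight $-1$ produces precisely $\delta(q^{\kappa-1}w/z)-\delta(q^{\kappa+1}w/z)$, and collecting the normalization constants yields the stated prefactor $A^\pm(z)/((q-q^{-1})w)$.

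The step that requires genuine care — and where I expect the main obstacle to lie — is showing that the operator sitting in front of both delta functions is one and the same $A^\pm(z)$, i.e. that $z^{1/2}:\mathbf{G}^+_+(z)S^+(q^{-\kappa+1}z):\,=\,z^{1/2}:\mathbf{G}^+_-(z)S^+(q^{-\kappa-1}z):$ and its $f/S^-$ counterpart. These two normal-ordered products differ both in the parafermion factor ($e_+$ versus $e_-$, whose definitions in Definition \ref{parafermion} differ by the signs of the argument shifts in $Z_\pm,W_\pm$) and in the screening argument ($q^{-\kappa+1}z$ versus $q^{-\kappa-1}z$); the claim is that on the support of the respective delta these two differences compensate exactly. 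I would verify it by writing both products out explicitly via Definition \ref{parafermion} and checking the cancellation of the oscillator sums and zero-mode monomials. Alongside this, the bookkeeping of the extra $z^{1/2}$ in $\mathbf{G}^\pm$ and the attendant interchange $\delta\leftrightarrow\delta^{\rm NS}$ between the NS and R mode expansions (as in Lemma \ref{delta-flip}) must be tracked carefully so that the plain $\delta$ appearing on the right-hand side is the correct one.
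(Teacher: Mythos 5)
Your proposal is correct and follows essentially the same route the paper takes: the pairwise OPEs are obtained by Wick's theorem in the parafermion sector (the $\widetilde V^\pm$ and $\mathbf{K}^\pm$ factors decouple since $S^\pm$ involves only $\overline\alpha,\beta$ modes), and the anticommutators are then assembled by converting the two domain expansions of the rational prefactors into formal delta functions, exactly as the paper does in its proof of the analogous proposition for the bosonic screening current $\mathbf{S}(z)$. You also correctly isolate the one step the paper leaves implicit — the equality of the two normal-ordered expressions defining $A^\pm(z)$ — which indeed must be checked by writing out both products via Definition \ref{parafermion}.
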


\begin{lem}\label{screening-expansion}
The necessary and sufficient condition for having 
the Fourier expansion $S^\pm(z)=\sum_{n\in \mathbb{Z}} S^\pm_m z^{-m}$ on $ \mathcal{F}_{\rm A}(q^{\rho'},q^{\sigma'})$,
 $ \mathcal{F}^*_{\rm A}(q^{\rho'},q^{\sigma'})$
is  ${\mp \rho'+\sigma' \over2}\in \mathbb{Z}$.
Under this condition, we have
 the linear maps
\begin{align*}
& S^\pm_m : \mathcal{F}_{\rm A}(q^{\rho'},q^{\sigma'})\rightarrow\mathcal{F}_{\rm A}(q^\rho,q^\sigma):=
\mathcal{F}_{\rm A}(q^{\rho'\pm(k+2)},q^{\sigma'+k+2}),\\
&  S^\pm_m :  \mathcal{F}_{\rm A}^*(q^{\rho'},q^{\sigma'})\rightarrow\mathcal{F}_{\rm A}^*(q^\rho,q^\sigma):=
\mathcal{F}_{\rm A}^*(q^{\rho'\mp(k+2)},q^{\sigma'-k-2}).
\end{align*}

\end{lem}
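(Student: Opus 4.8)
The plan is to split each screening current into its oscillator part and its zero–mode part, and to track separately (i) the fractional power of $z$ and (ii) the shift of the Fock–space labels produced by the zero–mode factor $e^{(k+2)Q_\beta\pm kQ_{\overline\alpha}}z^{(\beta_0\pm\overline\alpha_0)/2}$. The two exponentials of oscillators in the definition of $S^\pm(z)$ expand into integral powers of $z$ only, so the entire fractional $z$–dependence is carried by the factor $z^{(\beta_0\pm\overline\alpha_0)/2}$. Since $\beta_0$ and $\overline\alpha_0$ commute with all the creation oscillators $\beta_{-m},\overline\alpha_{-m}$ (Definition \ref{Matsuo-parafermion}), they act as the scalars $\sigma'$ and $-\rho'$ on the whole Fock space $\mathcal F(\xi',\rho',\sigma')$, not merely on its highest weight vector; hence on that summand $S^\pm(z)=z^{(\sigma'\mp\rho')/2}\cdot(\text{integral powers of }z)$.

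First I would establish the integrality criterion. On the summand of $\mathcal F_{\rm A}(q^{\rho'},q^{\sigma'})$ indexed by $n$ (see \eqref{NS-Fock} and its R–analogue) the $\overline\alpha_0$–label is $\rho'+2n$ and the $\beta_0$–label is $\sigma'$, so the exponent of $z$ equals $(\sigma'\mp(\rho'+2n))/2=(\mp\rho'+\sigma')/2\mp n$. As $n$ ranges over $\mathbb Z$ this is an integer on every summand precisely when $(\mp\rho'+\sigma')/2\in\mathbb Z$; this single computation yields both necessity and sufficiency of the stated condition and shows it to be a property of the module label $\rho'$ rather than of the individual Fock spaces. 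The dual case is identical: acting on $\langle\xi',\rho',\sigma'|$ the operators $\beta_0,\overline\alpha_0$ contribute the left eigenvalues $\sigma',-\rho'$, giving the same exponent $(\mp\rho'+\sigma')/2$.

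Next I would read off the label shifts. Using $[\beta_0,Q_\beta]=1$ and $[\overline\alpha_0,Q_{\overline\alpha}]=-1$, the factor $e^{(k+2)Q_\beta\pm kQ_{\overline\alpha}}$ sends a state of $\mathcal F(\xi',\rho',\sigma')$ to one with $\beta_0$–label $\sigma'+(k+2)$ and $\overline\alpha_0$–label $-(\rho'\pm k)$, while leaving the $\widetilde\alpha_0$–label $\xi'$ untouched (the screening involves no $Q_{\widetilde\alpha}$). Thus on each summand $S^\pm_m$ effects the triple shift $(\xi',\rho'+2n,\sigma')\mapsto(\xi',\rho'+2n\pm k,\sigma'+k+2)$, consistent with the degree relation \eqref{d_0-S^pm-1}. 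The remaining, and main, point is to reconcile these individual shifts—$\pm k$ in the $\overline\alpha_0$–label but $+(k+2)$ in the module label claimed in the statement. For the $n$–th summand $\mathcal F(\varepsilon_{\rm A}+\rho'+(k+2)n,\rho'+2n,\sigma')$ of $\mathcal F_{\rm A}(q^{\rho'},q^{\sigma'})$, one checks that the image $\mathcal F(\varepsilon_{\rm A}+\rho'+(k+2)n,\rho'+2n\pm k,\sigma'+k+2)$ coincides with the $(n\mp1)$–th summand of $\mathcal F_{\rm A}(q^{\rho'\pm(k+2)},q^{\sigma'+k+2})$: matching the $\widetilde\alpha_0$– and $\overline\alpha_0$–labels against the target lattice $\bigl(\varepsilon_{\rm A}+\rho+(k+2)n',\,\rho+2n'\bigr)$ with $\rho=\rho'\pm(k+2)$ forces $n'=n\mp1$ from both equations simultaneously. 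This is the only genuinely delicate step, since the apparent mismatch between $k$ and $k+2$ is resolved entirely by the shear of the momentum lattice. The dual assertion follows by the same matching, the right action of $e^{(k+2)Q_\beta\pm kQ_{\overline\alpha}}$ on $\langle\xi',\rho',\sigma'|$ producing the opposite label shifts $\rho'\to\rho'\mp k$, $\sigma'\to\sigma'-(k+2)$ and hence the module map into $\mathcal F^*_{\rm A}(q^{\rho'\mp(k+2)},q^{\sigma'-k-2})$ via \eqref{NS-dualFock}.
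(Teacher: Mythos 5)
Your proof is correct and follows essentially the same route as the paper's: both isolate the fractional power of $z$ in the zero-mode factor $z^{(\beta_0\pm\overline{\alpha}_0)/2}$, evaluate it as $(\mp\rho'+\sigma')/2\mp n$ (up to an integer shift) on the $n$-th summand, and match the shifted momenta to the $(n\mp 1)$-th summand of the target lattice. The only cosmetic differences are that the paper obtains the necessity direction via the degree operator $d$ and the identities \eqref{d_0-S^pm-1}--\eqref{d_0-S^pm-2} rather than directly from the exponent, and that in the dual case the exponent acquires an additional integer $-1$ (because $e^{(k+2)Q_{\beta}\pm kQ_{\overline{\alpha}}}$ acts on the bra before $z^{(\beta_0\pm\overline{\alpha}_0)/2}$ does), which does not affect the integrality criterion you state.
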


\begin{proof}
If we have the Fourier expansion $S^\pm(z)=\sum_{n\in \mathbb{Z}} S^\pm_m z^{-m}$ on 
$ \mathcal{F}_{\rm A}(q^{\rho'},q^{\sigma'})$ or
 $ \mathcal{F}^*_{\rm A}(q^{\rho'},q^{\sigma'})$, we have
\begin{align*}
&\{\mbox{$d$-degrees of }S^\pm(z) \vert \varepsilon_{\rm A}+\rho', \rho',\sigma'\rangle\}\subset 
\{\mbox{$d$-degrees of }\mathcal{F}(q^{\varepsilon_{\rm A}+\rho'}, q^{\rho'\pm k},q^{\sigma'+k+2})\},\\
&
\{\mbox{$d$-degrees of }\langle\varepsilon_{\rm A}+ \rho', \rho',\sigma'\vert S^\pm(z)\}\subset  
\{\mbox{$d$-degrees of }\mathcal{F}^*(q^{\varepsilon_{\rm A}+\rho'}, q^{\rho'\mp k},q^{\sigma'-k-2})\}.
\end{align*}
Then from (\ref{d_0-S^pm-1}) and (\ref{d_0-S^pm-2}), we have the condition ${\mp \rho'+\sigma' \over2}\in \mathbb{Z}$.

Suppose that ${\mp \rho'+\sigma' \over2}\in \mathbb{Z}$. 
For any $p\in \mathbb{C}[\widetilde{\alpha}_{-1}, \ldots, \overline{\alpha}_{-1},\ldots,\beta{_{-1},\ldots}]$ 
and 
$p*\in\mathbb{C}[\widetilde{\alpha}_{1}, \ldots, \overline{\alpha}_{1},\ldots,\beta{_{1},\ldots}]$,
we have
\begin{align*}
&S^\pm(z) p\vert \varepsilon_{\rm A}+\rho'+(k+2)n', \rho'+2n',\sigma'\rangle\\
&\quad= z^{{\mp \rho'+\sigma' \over2}\mp n'}S_{osc}^\pm(z)p
  \vert \varepsilon_{\rm A}+\rho'+(k+2)n', \rho'+2n'\pm k,\sigma'+k+2\rangle,\\
  &\quad= z^{{\mp \rho+\sigma \over2}\mp n'}S_{osc}^\pm(z)p
  \vert \varepsilon_{\rm A}+\rho+(k+2)(n'\mp 1), \rho+2(n'\mp 1),\sigma\rangle,\\
&
\langle\varepsilon_{\rm A}+ \rho'+(k+2)n', \rho'+2n',\sigma'\vert p^*S^\pm(z)\\
&\quad =
 z^{-1+{\mp \rho'+\sigma' \over2}\mp n'} 
 \langle \varepsilon_{\rm A}+\rho'+(k+2)n', \rho'+2n'\mp k,\sigma'-k-2\vert p^* S_{osc}^{\pm}(z)\\
 &\quad =
 z^{-1+{\mp \rho+\sigma \over2}\mp n'} 
 \langle \varepsilon_{\rm A}+\rho+(k+2)(n'+1), \rho+2(n'+1),\sigma\vert p^* S_{osc}^{\pm}(z),
\end{align*}
where $S_{osc}^\pm(z) $  denotes the  oscillator part of $S^\pm(z) $.
Hence we have the expansion $S^\pm(z)=\sum_{n\in \mathbb{Z}} S^\pm_n z^{-n}$.
\end{proof}

\begin{prp}

Under the condition that we have the Fourier expansion $S^\pm(z)=\sum_{n\in \mathbb{Z}} S^\pm_n z^{-n}$,  
we have 
the fermionic screening charges
\begin{align*}
Q^\pm={1\over 2 \pi  \sqrt{-1}}\oint S^+(z)dz=S^\pm_1,
\end{align*}
satisfying
\begin{align*}
d Q^\pm=Q^\pm d,\quad
\mathbf{K}_mQ^\pm=Q^\pm\mathbf{K}_m  ,\quad \mathbf{T}_mQ^\pm=Q^\pm\mathbf{T}_m,\quad 
\mathbf{G}^\pm_mQ^\pm=-Q^\pm\mathbf{G}^\pm_m.
\end{align*}
Hence, if ${\mp \rho'+\sigma' \over2}={\mp \rho+\sigma \over2}\in \mathbb{Z}$, $Q^\pm$ 
is an intertwiner of the Fock representations of the algebra $\Svir$
\begin{align*}
& Q^\pm : \mathcal{F}_{\rm A}(q^{\rho'},q^{\sigma'})\rightarrow\mathcal{F}_{\rm A}(q^\rho,q^\sigma)=
\mathcal{F}_{\rm A}(q^{\rho'\pm(k+2)},q^{\sigma'+k+2}),\\
&  Q^\pm :  \mathcal{F}_{\rm A}^*(q^{\rho'},q^{\sigma'})\rightarrow\mathcal{F}_{\rm A}^*(q^\rho,q^\sigma)=
\mathcal{F}_{\rm A}^*(q^{\rho'\mp(k+2)},q^{\sigma'-k-2}),
\end{align*}
where ${\rm A}={\rm NS,R}$.

\end{prp}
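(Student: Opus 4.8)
The plan is to derive each of the listed relations from the operator product expansions established in the two preceding lemmas, and then read off the intertwining property from Lemma \ref{screening-expansion}. Throughout I use that the contour integral extracts the single mode $S^\pm_1$, so that $Q^\pm=\frac{1}{2\pi\sqrt{-1}}\oint S^\pm(z)\,dz$ is the coefficient of $z^{-1}$ in $S^\pm(z)=\sum_n S^\pm_n z^{-n}$. For the commutation with $d$, I would start from $q^d S^\pm(z)q^{-d}=qS^\pm(qz)$, integrate over $z$, and change variables $u=qz$:
\begin{align*}
q^d Q^\pm q^{-d}=\frac{1}{2\pi\sqrt{-1}}\oint qS^\pm(qz)\,dz=\frac{1}{2\pi\sqrt{-1}}\oint S^\pm(u)\,du=Q^\pm,
\end{align*}
so $q^d$ commutes with $Q^\pm$, hence so does $d$ for generic $q$.

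The relation $\mathbf{K}^\pm(z)S^\pm(w)-S^\pm(w)\mathbf{K}^\pm(z)=0$ of the preceding lemma shows immediately that $Q^\pm$ commutes with every mode of $\mathbf{K}^\pm(z)$, hence with $\mathbf{K}(z)=\mathbf{K}^-(z)\mathbf{K}^+(z)$ and all $\mathbf{K}_m$. For the supercurrents the key input is the $\mathbf{G}$--$S$ anticommutator, which in the $S^+$ case reads
\begin{align*}
\mathbf{G}^+(z)S^+(w)+S^+(w)\mathbf{G}^+(z)=\frac{A^+(z)}{(q-q^{-1})w}\bigl(\delta(q^{\kappa-1}w/z)-\delta(q^{\kappa+1}w/z)\bigr),
\end{align*}
with the $S^-$ case analogous. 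Integrating $S^+(w)$ over $w$ and using $\frac{1}{2\pi\sqrt{-1}}\oint\frac{dw}{w}\delta(aw/z)=1$ independently of $a$, the two delta terms contribute equally and cancel, giving $\{\mathbf{G}^\pm_m,Q^\pm\}=0$, i.e. $\mathbf{G}^\pm_m Q^\pm=-Q^\pm\mathbf{G}^\pm_m$. The companion relations $\mathbf{G}^\mp(z)S^\pm(w)+S^\pm(w)\mathbf{G}^\mp(z)=0$ show likewise that $Q^\pm$ anticommutes with all modes of $\mathbf{G}^\mp$ as well.

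For the commutation with $\mathbf{T}$, I would exploit relation \eqref{rr-9}, which expresses $\{\mathbf{G}^+(z),\mathbf{G}^-(w)\}$ as a $\mathbf{K}$-term, a scalar, and a term proportional to $\delta^{\mathrm A}(q^{2(k+1)}w/z)\,\mathbf{T}(q^{k+1}w)$. Since $Q^\pm$ anticommutes with both $\mathbf{G}^+$ and $\mathbf{G}^-$, the signs combine so that it commutes with each quadratic product $\mathbf{G}^+(z)\mathbf{G}^-(w)$ and $\mathbf{G}^-(w)\mathbf{G}^+(z)$, and therefore with the entire right-hand side of \eqref{rr-9}. As $Q^\pm$ already commutes with the $\mathbf{K}$-current and the scalar, and as the three delta functions appearing there are supported at generically distinct points, the $\mathbf{T}$-term must commute with $Q^\pm$ separately; extracting modes yields $[\mathbf{T}_m,Q^\pm]=0$. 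Finally, Lemma \ref{screening-expansion} provides, under the integrality condition $\frac{\mp\rho'+\sigma'}{2}\in\mathbb{Z}$, the well-defined maps $Q^\pm=S^\pm_1$ between the Fock spaces with the stated shifts of $(\rho,\sigma)$; the even generators $\mathbf{K}_m,\mathbf{T}_m$ commuting with $Q^\pm$ together with the odd generators $\mathbf{G}^\pm_m$ anticommuting with it are exactly the intertwining conditions for an odd morphism of $\Svir$-modules.

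The main obstacle I expect is the $\mathbf{T}$-step: one must ensure that commutation with the full combination in \eqref{rr-9} genuinely forces commutation with $\mathbf{T}$ alone. This rests on the linear independence of $\delta^{\mathrm A}(q^{4(k+1)}w/z)$, $\delta^{\mathrm A}(q^{2(k+2)}w/z)$ and $\delta^{\mathrm A}(q^{2(k+1)}w/z)$ for generic $q,k$, combined with the already-established commutation with the $\mathbf{K}$-current and the scalar. Care is also needed in tracking the fermionic signs when sliding the odd operator $Q^\pm$ past the odd currents $\mathbf{G}^\pm$, and in verifying that the residue computation $\frac{1}{2\pi\sqrt{-1}}\oint\frac{dw}{w}\delta(aw/z)=1$ applies uniformly to both delta terms so that their difference annihilates the anticommutator.
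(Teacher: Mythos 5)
Your argument is correct and uses exactly the ingredients the paper supplies for this proposition (the $d$--$S$, $\mathbf{K}$--$S$ and $\mathbf{G}$--$S$ lemmas, Lemma \ref{screening-expansion}, and relation \eqref{rr-9} for the $\mathbf{T}$-commutation, which is the only route available since no $\mathbf{T}$--$S$ OPE is given). Your worry about the $\mathbf{T}$-step dissolves if you extract modes directly: the coefficient of $z^{-m}w^{-n}$ in the $\mathbf{T}$-term of \eqref{rr-9} is the nonzero multiple $q^{(k+1)(m-n)}(q-q^{-1})^{-2}$ of $\mathbf{T}_{m+n}$, so once the $\mathbf{K}$-term and the scalar are known to commute with $Q^\pm$, each $[Q^\pm,\mathbf{T}_N]$ vanishes without any appeal to independence of the delta functions.
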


Let us look at possible singular vectors obtained from the intertwiners $Q^\pm$ between Fock representations.
In the $\rm{NS}$ sector, if 
\begin{align*}
&d_0(\rho',\rho',\sigma')-d_0(\rho',\rho'\pm k,\sigma'+k+2)=
-1-{\mp \rho+\sigma\over 2}=m\in \mathbb{Z}_{\geq 0}, \\
& \rho'=\rho\mp (k+2), \qquad \sigma'=\sigma-(k+2),
\end{align*}
then we have the non-vanishing image of the highest weight vector $\vert \rho', \rho',\sigma'\rangle \in 
\mathcal{F}_{\rm NS}(q^{\rho'},q^{\sigma'})$;
\begin{align*}
0\neq Q^\pm \vert \rho', \rho',\sigma'\rangle \in 
\mathcal{F}(\rho \mp (k+2),\rho \mp 2 ,\sigma)\subset
\mathcal{F}_{\rm NS}(q^{\rho},q^{\sigma}),
\end{align*}
providing us with a singular vector in $\mathcal{F}_{\rm NS}(q^{\rho},q^{\sigma})$,
which has the $p$-degree $m+{1\over2}$ and the $x$-degree $\mp 1$.
On the other hand, for the dual Fock space, if 
\begin{align*}
&d_0(\rho',\rho',\sigma')-d_0(\rho',\rho'\mp k,\sigma'-k-2)=
+{\mp \rho+\sigma\over 2}=m\in \mathbb{Z}_{\geq 0},\\
&  \rho'=\rho\pm (k+2), \qquad \sigma'=\sigma+(k+2),
\end{align*}
then we have the non-vanishing image of the highest weight vector $\langle \rho', \rho',\sigma'\vert \in 
\mathcal{F}^*_{\rm NS}(q^{\rho'},q^{\sigma'})$;
\begin{align*}
0\neq \langle \rho', \rho',\sigma'\vert Q^\pm \in 
\mathcal{F}^*(\rho \pm (k+2),\rho \pm 2 ,\sigma)\subset
\mathcal{F}^*_{\rm NS}(q^{\rho},q^{\sigma}),
\end{align*}
providing us with a singular vector in $\mathcal{F}^*_{\rm NS}(q^{\rho},q^{\sigma})$,
which has the $p$-degree $m+{1\over2}$ and the $x$-degree $\pm 1$.
These  arguments show the vanishing line
\begin{align*}
g(\pm(2m+1);u,v)\propto
\left(q^{m +1} u^{\pm {1\over 2}}  v^{{1\over 2}} - q^{-m-1} u^{\mp{1\over 2}}  v^{-{1\over 2}} \right)
\left(q^{-m } u^{\mp {1\over 2}}  v^{{1\over 2}} - q^{m } u^{\pm{1\over 2}}  v^{-{1\over 2}} \right),
\end{align*}
in the Kac determinant  ${\rm det}^{\rm NS}_{m+{1\over 2},\pm 1}$.

Let us turn to the $\rm{R}$ sector. If 
\begin{align*}
&d_0(k/2+\rho',\rho',\sigma')-d_0(k/2+\rho',\rho'\pm k,\sigma'+k+2)=
-1-{\mp \rho+\sigma\over 2}=m\in \mathbb{Z}_{\geq 0}, \\
&\quad \rho'=\rho\mp (k+2), \qquad \sigma'=\sigma-(k+2),
\end{align*}
then we have the non-vanishing image of the highest weight vector $\vert k/2+\rho', \rho',\sigma'\rangle \in 
\mathcal{F}_{\rm R}(q^{\rho'},q^{\sigma'})$
\begin{align*}
0\neq Q^\pm \vert k/2+ \rho', \rho',\sigma'\rangle \in 
\mathcal{F}(q^{k/2+\rho \mp (k+2)},q^{\rho \mp 2} ,q^{\sigma})\subset
\mathcal{F}_{\rm R}(q^{\rho},q^{\sigma}),
\end{align*}
providing us with a singular vector in $\mathcal{F}_{\rm R}(q^{\rho},q^{\sigma})$,
which has the $p$-degree $m+(1\mp1)/2$ and the $x$-degree $\mp 1$.
Similarly for the dual Fock space, if 
\begin{align*}
&d_0(k/2+\rho',\rho',\sigma')-d_0(k/2+\rho',\rho'\mp k,\sigma'-k-2)=
+{\mp \rho+\sigma\over 2}=m\in \mathbb{Z}_{\geq 0}, \\
& \rho'=\rho\mp (k+2), \qquad \sigma'=\sigma-(k+2),
\end{align*}
then we have the non-vanishing image of the highest weight vector $\langle k/2+\rho', \rho',\sigma'\vert \in 
\mathcal{F}^*_{\rm R}(q^{\rho'},q^{\sigma'})$
\begin{align*}
0\neq \langle k/2+ \rho', \rho',\sigma'\vert Q^\pm \in 
\mathcal{F}^*(q^{k/2+\rho \pm (k+2)},q^{\rho \pm 2} ,q^{\sigma})\subset
\mathcal{F}^*_{\rm R}(q^{\rho},q^{\sigma}),
\end{align*}
providing us with a singular vector in $\mathcal{F}^*_{\rm R}(q^{\rho},q^{\sigma})$,
which has the $p$-degree $m+(1\pm 1)/2$ and the $x$-degree $\pm 1$.
Thus, we find the vanishing line
\begin{align*}
g(\pm(2m+1);u,v)\propto
\left(q^{m +1} u^{\pm {1\over 2}}  v^{{1\over 2}} - q^{-m-1} u^{\mp{1\over 2}}  v^{-{1\over 2}} \right)
\left(q^{-m } u^{\mp {1\over 2}}  v^{{1\over 2}} - q^{m } u^{\pm{1\over 2}}  v^{-{1\over 2}} \right),
\end{align*}
in the Kac determinant ${\rm det}^{\rm R}_{m+(1\pm 1)/2,\pm 1}$.

\subsection{Bosonic screening current $\mathbf{S}(z)$}

Write $p=q^{2(k+2)}$ for simplicity, and assume that $|p|<1$. 
We use the  modified Jacobi theta function $\theta(z;p)=\theta(z)$
with the argument $z$ and the nome $p$ 
\begin{align}
&\theta(z;p)=\sum_{n\in \mathbb{Z}} (-1)^n p^{n(n-1)/2} z^n=(z;p)_\infty (p/z;p)_\infty (p;p)_\infty, \\
&\theta(pz;p)=-z^{-1} \theta(z;p). \label{theta-pshift}
\end{align}
\begin{dfn}[\cite{Matsuo:1994nc}]
Set
\begin{align*}
&\mathbf{S}_\pm(z)= {~}: U(z) Z_\pm(q^{\mp{k+2\over 2}}z )^{-1} 
W_\pm(q^{\mp{k\over 2}}z )^{-1} :,\\
&U(z)=
\exp\left( -\sum_{m=1}^\infty {z^m q^{- {k+2\over 2}m}\over [(k+2)m]} \beta_{-m} \right)
\exp\left(\sum_{m=1}^\infty {z^{-m}q^{- {k+2\over 2}m} \over [(k+2)m]}\beta_{m} \right)
e^{-2Q_{\beta}}
z^{-{1\over k+2 }\beta_0},\\
&\mathbf{S}(z)={-1\over (q-q^{-1})z} (\mathbf{S}_+(z)-\mathbf{S}_-(z)).
\end{align*}
We call $\mathbf{S}(z)$ the bosonic screening current.
\end{dfn}

\begin{lem}\label{S-G,G-S}
We have
\begin{align*}
&
q^d\mathbf{S}_\pm(w)=\mathbf{S}_\pm(qw) q^d, \quad
q^d\mathbf{S}(w)=q\mathbf{S}(qw) q^d, \quad [J,\mathbf{S}(w)]=0,\\
&\mathbf{S}_{\epsilon_1}(w) \mathbf{G}^+_{\epsilon_2}(z)=
{q^{-\epsilon_1}w  -q^{\epsilon_2-\kappa}z \over w- q^{-\kappa}z} :\mathbf{S}_{\epsilon_1}(w) \mathbf{G}^+_{\epsilon_2}(z):,\\
& \mathbf{G}^+_{\epsilon_2}(z)\mathbf{S}_{\epsilon_1}(w)=
{q^{\epsilon_2-\kappa}z -q^{-\epsilon_1}w  \over q^{-\kappa}z-w} : \mathbf{G}^+_{\epsilon_2}(z)\mathbf{S}_{\epsilon_1}(w):,\\
&\mathbf{S}_{\epsilon_1}(w) \mathbf{G}^-_{\epsilon_2}(z)=
{q^{\epsilon_1}w  -q^{(k+1)\epsilon_2+\kappa}z \over w-q^{(k+2)\epsilon_2+\kappa}z} :\mathbf{S}_{\epsilon_1}(w) \mathbf{G}^-_{\epsilon_2}(z):,\\
& \mathbf{G}^-_{\epsilon_2}(z)\mathbf{S}_{\epsilon_1}(w)=
{q^{(k+1)\epsilon_2+\kappa}z-q^{\epsilon_1}w   \over q^{(k+2)\epsilon_2+\kappa}z-w}: \mathbf{G}^-_{\epsilon_2}(z)\mathbf{S}_{\epsilon_1}(w):,
\end{align*}
where $\kappa:= \frac{1}{2}(3k+4)$, and
\begin{align*}
\mathbf{S}_{\epsilon_1}(w_1) \mathbf{S}_{\epsilon_2}(w_2)=
w_1^{2\over k+2} 
{(q^{-2} w_2/w_1;q^{2(k+2)})_\infty \over (q^{2} w_2/w_1;q^{2(k+2)})_\infty}
{q^{-\epsilon_1}w_1-q^{-\epsilon_2}w_2\over w_1-q^{-2}w_2} 
:\mathbf{S}_{\epsilon_1}(w_1) \mathbf{S}_{\epsilon_2}(w_2):.
\end{align*}

\end{lem}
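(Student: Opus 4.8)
The plan is to treat every identity as a free-field normal-ordering computation, using Wick's theorem (equivalently the Baker--Campbell--Hausdorff formula) for the Heisenberg algebras of Definition \ref{Matsuo-parafermion}. The structural observation that organizes the whole proof is that $\mathbf{S}_\pm(z)$ is built only from $U(z)$, $Z_\pm$ and $W_\pm$, hence involves only the parafermion oscillators $\overline{\alpha}_m,\beta_m$ together with $Q_{\overline{\alpha}},Q_\beta,\overline{\alpha}_0,\beta_0$; it is completely inert on the twisted $U(1)$ sector generated by $\widetilde{\alpha}_m,Q_{\widetilde{\alpha}}$. Consequently the factors $\widetilde{V}^\pm$ occurring in $\mathbf{G}^\pm_\epsilon(z)$ (see \eqref{G+-def}, \eqref{G--def}) commute through $\mathbf{S}_\pm$ with no contraction, so each $\mathbf{S}$--$\mathbf{G}^\pm$ OPE collapses to an OPE of $\mathbf{S}_\pm(w)$ against the parafermion currents $e_\epsilon$ and $f_\epsilon$ of Definition \ref{parafermion}, evaluated at the shifted arguments dictated by \eqref{G+-def}--\eqref{G--def}.

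First I would dispose of the grading relations. Since $\mathbf{S}_\pm$ carries no $\widetilde{\alpha}_0$ and no $Q_{\overline{\alpha}}$, while $J=\frac{1}{k}(\widetilde{\alpha}_0+\overline{\alpha}_0)$ detects only those, one gets $[J,\mathbf{S}_\pm(w)]=0$ at once, hence $[J,\mathbf{S}(w)]=0$. The operator $d$ is a genuine degree operator: its oscillator terms count modes and its zero-mode terms act diagonally, so conjugation by $q^d$ rescales each Fourier mode and implements $z\mapsto qz$ on the currents, exactly as in Lemma \ref{J-d}. This yields $q^d\mathbf{S}_\pm(w)=\mathbf{S}_\pm(qw)\,q^d$, and the extra factor $q$ in $q^d\mathbf{S}(w)=q\,\mathbf{S}(qw)\,q^d$ is produced solely by the explicit prefactor $-1/((q-q^{-1})z)$ in the definition of $\mathbf{S}(z)$.

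Next I would compute the $\mathbf{S}$--$\mathbf{G}^\pm$ OPEs. The only nontrivial cross-contractions are $U(w)$ with $W_\epsilon$, $W_\pm^{-1}(w)$ with $W_\epsilon$, and $Z_\pm^{-1}(w)$ with $Y^\pm$ and with $Z_\epsilon$; these are precisely the constituent two-point functions already tabulated in Lemmas \ref{YY}, \ref{for-ee}, \ref{for-ef} and \ref{for-fe}. Assembling them by Wick's theorem, the oscillator exponentials resum to a single rational factor while the zero-mode normal ordering contributes a monomial in $z/w$; the shifts $q^{\pm\kappa}$ with $\kappa=\tfrac12(3k+4)$ are exactly the argument offsets of $e_\epsilon,f_\epsilon$ inside $\mathbf{G}^\pm_\epsilon$. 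The two displayed forms of each product are the two analytic continuations of one rational function across the regions $|w|>|z|$ and $|w|<|z|$, so I would verify that they agree as formal expressions and record both orderings.

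The genuinely delicate step, and the one I expect to be the main obstacle, is the self-contraction $\mathbf{S}_{\epsilon_1}(w_1)\mathbf{S}_{\epsilon_2}(w_2)$. Here the $\beta$-oscillator contractions coming from the $U$ and $W_\pm^{-1}$ pieces, governed by $[\beta_n,\beta_m]=\frac{[2n][(k+2)n]}{n}\delta_{n+m,0}$, must be resummed; the combination is arranged so that the $m$-th term telescopes and exponentiates into the infinite product $(q^{-2}w_2/w_1;q^{2(k+2)})_\infty/(q^{2}w_2/w_1;q^{2(k+2)})_\infty$ with nome $p=q^{2(k+2)}$, which is the elliptic structure flagged by \eqref{theta-pshift}. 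The remaining $\overline{\alpha}$-sector contraction from $Z_\pm^{-1}$, together with the $\epsilon$-dependent zero-mode normalizations, produces the prefactor $w_1^{2/(k+2)}$ and the single rational factor $(q^{-\epsilon_1}w_1-q^{-\epsilon_2}w_2)/(w_1-q^{-2}w_2)$. The bookkeeping that all monomial zero-mode factors and sector contributions combine into exactly the stated expression, with no spurious powers of $w_1,w_2$, is where the care is required; a useful consistency check is that the quasi-periodicity \eqref{theta-pshift} reproduces the nome and that the $\overline{\alpha},\beta$ central terms enter with the relative signs fixed by Definition \ref{Matsuo-parafermion}.
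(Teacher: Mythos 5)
Your proposal is correct and follows the same route the paper implicitly takes: the lemma is stated without proof precisely because it reduces to the routine Wick/normal-ordering computation you describe, with the twisted $U(1)$ sector inert, the rational prefactors coming from the $Z,W$ contractions, and the infinite product with nome $p=q^{2(k+2)}$ arising from resumming the $U$--$U$ contraction. The only small inaccuracy is your claim that all needed two-point functions are already tabulated in Lemmas \ref{YY}, \ref{for-ee}, \ref{for-ef} and \ref{for-fe}: the contractions of $U$ with $W_\epsilon$ and of $U$ with itself do not appear in Appendix \ref{proof-Wakimoto} and must be computed directly from $[\beta_n,\beta_m]=\frac{[2n][(k+2)n]}{n}\delta_{n+m,0}$, though these are one-line calculations that do not affect the validity of your argument.
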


\begin{prp}
We have
\begin{align*}
\mathbf{S}(w_2) \mathbf{S}(w_1)=q^2 \left( w_2\over w_1\right)^{2\over k+2} 
 {\theta(q^{-2} w_1/w_2;p) \over \theta(q^{+2} w_1/w_2;p) }\mathbf{S}(w_1) \mathbf{S}(w_2) .
\end{align*}
In the {\rm A$=$NS,R} sector, 
we have
\begin{align*}
&[\mathbf{S}(w),\mathbf{G}^+(z)]=0,\quad [\mathbf{S}(w),\mathbf{K}^\pm(z)]=0,\\
&[\mathbf{S}(w),\mathbf{G}^-(z)]=
{1\over (q-q^{-1})w}
\Biggl(
\delta^{\rm A}\left({q^\kappa z\over q^{k+2} w} \right)
{\bf A}(q^{k+2} w)
-
\delta^{\rm A}\left(q^{k+2+\kappa}z\over w\right){\bf A}(q^{-k-2} w)
\Biggr),
\end{align*}
where 
\begin{align*}
{\bf A}(w)=& :\mathbf{S}_{-}(q^{-k-2}w) w^{1/2}\mathbf{G}^-_{-}(q^{-\kappa} w):{~}={~}
:\mathbf{S}_{+}(q^{k+2}w) w^{1/2}\mathbf{G}^-_{+}(q^{-\kappa} w): \\
=& w^{1/2} Y^{-}(w) 
\exp\left( -\sum_{m=1}^\infty {w^m q^{{k+2\over 2}m}\over [(k+2)m]} \beta_{-m} \right)
\exp\left(\sum_{m=1}^\infty {w^{-m}q^{{k+2\over 2}m} \over [(k+2)m]}\beta_{m} \right)
e^{-2Q_{\beta}}.
\end{align*}

\end{prp}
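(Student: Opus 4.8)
The plan is to derive every assertion from the operator product expansions collected in Lemma~\ref{S-G,G-S}, together with the fact that $\mathbf{S}(z)$ is built solely from the parafermionic oscillators $\beta_m,\overline{\alpha}_m$ and the zero mode $Q_\beta$, whereas $\mathbf{K}^\pm(z)$ depends only on the twisted $U(1)$ modes $\widetilde{\alpha}_m,Q_{\widetilde{\alpha}}$. Since these two sets of generators commute, $[\mathbf{S}(w),\mathbf{K}^\pm(z)]=0$ is immediate and requires no computation. Each of the remaining three statements then reduces to reorganizing a single scalar OPE coefficient.

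For the exchange relation I would write $\mathbf{S}(w_i)=\frac{-1}{(q-q^{-1})w_i}\sum_{\epsilon}\epsilon\,\mathbf{S}_\epsilon(w_i)$ and insert the OPE $\mathbf{S}_{\epsilon_1}(w_1)\mathbf{S}_{\epsilon_2}(w_2)=F_{\epsilon_1\epsilon_2}(w_1,w_2):\mathbf{S}_{\epsilon_1}(w_1)\mathbf{S}_{\epsilon_2}(w_2):$ from Lemma~\ref{S-G,G-S}. Because the normal ordered product is symmetric under interchange of its two factors, the exchange relation holds as soon as the ratio $F_{\epsilon_2\epsilon_1}(w_2,w_1)/F_{\epsilon_1\epsilon_2}(w_1,w_2)$ is independent of $(\epsilon_1,\epsilon_2)$; a short check shows the $\epsilon$-dependent factor $(q^{-\epsilon_1}w_1-q^{-\epsilon_2}w_2)$ cancels, leaving the rational part $-(w_1-q^{-2}w_2)/(w_2-q^{-2}w_1)=q^2(w_1-q^{-2}w_2)/(w_1-q^2 w_2)$. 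It then remains to convert the surviving infinite products into the theta quotient, using $\theta(z;p)=(z;p)_\infty(p/z;p)_\infty(p;p)_\infty$ with $p=q^{2(k+2)}$ and the shift $(q^2w_2/w_1;p)_\infty=(1-q^2w_2/w_1)(pq^2w_2/w_1;p)_\infty$, while tracking the prefactor $(w_2/w_1)^{2/(k+2)}$ and the scalar $q^2$.

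For the commutators with $\mathbf{G}^\pm(z)$ I would again expand both currents as signed sums over $\epsilon$ and compare the two orderings in Lemma~\ref{S-G,G-S}. The two orderings are expansions of one rational coefficient in the regions $|w|>|z|$ and $|z|>|w|$, so their difference is a delta function supported at the pole of that coefficient, times the normal ordered product evaluated there (the exchange $\delta\leftrightarrow\delta^{\rm A}$ arising from the fermionic mode expansion of $\mathbf{G}^\pm$ via Lemma~\ref{delta-flip}). For $\mathbf{G}^+$ the pole sits at $w=q^{-\kappa}z$ for every $\epsilon$, and the numerator $q^{-\epsilon_1}w-q^{\epsilon_2-\kappa}z$ vanishes there unless $\epsilon_1=\epsilon_2$; the two surviving residues, from $(\epsilon_1,\epsilon_2)=(+,+)$ and $(-,-)$, carry opposite signs and, by the normal-ordering identity $:\mathbf{S}_+(w)\mathbf{G}^+_+(q^{\kappa}w):\,=\,:\mathbf{S}_-(w)\mathbf{G}^+_-(q^{\kappa}w):$, multiply identical operators, hence cancel to give $[\mathbf{S}(w),\mathbf{G}^+(z)]=0$. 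For $\mathbf{G}^-$ the pole location $w=q^{(k+2)\epsilon_2+\kappa}z$ depends on $\epsilon_2$, producing exactly the two delta functions $\delta^{\rm A}(q^{\kappa-k-2}z/w)$ and $\delta^{\rm A}(q^{k+2+\kappa}z/w)$; again only $\epsilon_1=\epsilon_2$ contributes, and the defining identity $:\mathbf{S}_-(q^{-k-2}w)w^{1/2}\mathbf{G}^-_-(q^{-\kappa}w):\,=\,:\mathbf{S}_+(q^{k+2}w)w^{1/2}\mathbf{G}^-_+(q^{-\kappa}w):\,=\,\mathbf{A}(w)$ packages the two residues into $\mathbf{A}(q^{k+2}w)$ and $\mathbf{A}(q^{-k-2}w)$ with the stated signs.

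The main obstacle is not the theta bookkeeping but the two normal-ordering gluing identities: the stated one defining $\mathbf{A}(w)$ and its unstated companion $:\mathbf{S}_+(w)\mathbf{G}^+_+(q^{\kappa}w):\,=\,:\mathbf{S}_-(w)\mathbf{G}^+_-(q^{\kappa}w):$. Each asserts that the two branches $\epsilon=\pm$ of a product of vertex operators coincide after the specific $q$-shift that places their arguments at the OPE pole, and verifying them requires matching both the oscillator exponentials and the zero-mode (momentum and cocycle) factors, in the spirit of the identities $:e_\pm(q^{\pm k}z)f_\pm(z):\,=1$ already used in Appendix~\ref{proof-Wakimoto}. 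Once these identities are established, the residue evaluations and the assembly of the delta-function terms become routine.
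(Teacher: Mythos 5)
Your proposal is correct and follows essentially the same route as the paper: expand $\mathbf{S}$ and $\mathbf{G}^\pm$ over $\epsilon$, take the difference of the two OPE orderings from Lemma \ref{S-G,G-S} to produce delta functions at the poles, observe that only $\epsilon_1=\epsilon_2$ survives, and invoke the $\epsilon$-independence of the glued normal-ordered products (which holds because the $Z_\epsilon$, $W_\epsilon$ factors of $e_\epsilon$, $f_\epsilon$ at the shift $q^{\kappa}$ cancel their inverses inside $\mathbf{S}_\epsilon$). Your treatment of the exchange relation and of $[\mathbf{S},\mathbf{K}^\pm]=0$, which the paper leaves implicit, is also correct.
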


\begin{proof}
Recall that in the A$=$NS,R sector, we have
$\delta(z/w) \mathbf{G}^+(z)=\delta^{\rm A} (z/w) \mathbf{G}^+(w)$, 
$\delta(z/w) z^{1/2}\mathbf{G}^+_\pm(z)=\delta^{\rm A} (z/w) w^{1/2}\mathbf{G}^+_\pm (w)$.
From Lemma \ref{S-G,G-S}, we have
\begin{align*}
&[\mathbf{S}(w),\mathbf{G}^+(z)]=
{-1\over (q-q^{-1})w }{z^{1/2}\over (q-q^{-1})}
\sum_{\epsilon_1,\epsilon_2=\pm}[\mathbf{S}_{\epsilon_1}(w),\mathbf{G}^+_{\epsilon_2}(z)]\\
&=
{-1\over (q-q^{-1})w }{z^{1/2}\over (q-q^{-1})}
\sum_{\epsilon_1,\epsilon_2=\pm}
(q^{-\epsilon_1}-q^{\epsilon_2})\delta(q^{-\kappa} z/w)
:\mathbf{S}_{\epsilon_1}(w)\mathbf{G}^+_{\epsilon_2}(z):\\
&=
{1\over (q-q^{-1})w}
\delta^{\rm A}(q^{-\kappa} z/w) \left(
:\mathbf{S}_{+}(w)w^{1/2}\mathbf{G}^+_{+}(q^\kappa w):-
:\mathbf{S}_{-}(w)w^{1/2}\mathbf{G}^+_{-}(q^\kappa w): \right)=0,\\
&[\mathbf{S}(w),\mathbf{G}^-(z)]=
{-1\over (q-q^{-1})w }{z^{1/2}\over (q-q^{-1})}
\sum_{\epsilon_1,\epsilon_2=\pm}[\mathbf{S}_{\epsilon_1}(w),\mathbf{G}^-_{\epsilon_2}(z)]\\
&=
{-1\over (q-q^{-1})w }{z^{1/2}\over (q-q^{-1})}
\sum_{\epsilon_1,\epsilon_2=\pm}
(q^{\epsilon_1}-q^{-\epsilon_2})\delta(q^{(k+2)\epsilon_2+\kappa}z/w)
:\mathbf{S}_{\epsilon_1}(w)\mathbf{G}^-_{\epsilon_2}(z):\\
&=
{1\over (q-q^{-1})w}
\Biggl(
\delta^{\rm A}\left({q^{\kappa} z\over q^{k+2} w} \right)
:\mathbf{S}_{-}(w) (q^{k+2} w)^{1/2}\mathbf{G}^-_{-}(q^{k+2-\kappa} w):\\
&\qquad \qquad\qquad 
-
\delta^{\rm A}\left(q^{k+2+\kappa}z\over w\right)
:\mathbf{S}_{+}(w)(q^{-k-2} w)^{1/2}\mathbf{G}^-_{+}(q^{-k-2-\kappa} w):
\Biggr)\\
&=
{1\over (q-q^{-1})w}
\Biggl(
\delta^{\rm A}\left({q^\kappa z\over q^{k+2} w} \right)
{\bf A}(q^{k+2} w)
-
\delta^{\rm A}\left(q^{k+2+\kappa}z\over w\right){\bf A}(q^{-k-2} w)
\Biggr).
\end{align*}
\end{proof}

Introducing a $p$-shift invariant function
$w^{\beta_0\over k+2}{\theta(q^{2 \beta_0} w;p)/ \theta( w;p)}$,
we set
$$
{\bf X}(w)=\mathbf{S}(w) w^{\beta_0\over k+2}{\theta(q^{2 \beta_0} w;p)\over \theta( w;p)}.
$$
Since ${\bf X}(w)$ is single valued in $w$, we have the Laurent series expansion of ${\bf X}(w)$ 
defined on the annulus $|p|<w<1$, and the contour integral 
along the circle $C:|w|=|p^{1/2}|$
\begin{align*}
\oint_{C} {dw} {\bf X}(w)
\end{align*}
is well-defined. 

\begin{prp}[\cite{Jimbo:1996vu}]
For $r=1,2,3,\ldots$, 
set
\begin{align*}
&{\bf Q}_r=
\left( \oint_{C} {dw} {\bf X}(w)\right)^r.
\end{align*}
Then, after the symmetrization of the integrand, we have
\begin{align}\label{symmetrization}
&{\bf Q}_r=
{\prod_{i=1}^r \theta(q^{2i};p)\over  r! \cdot \theta(q^2;p)^r}
\oint_{C} {dw_1}\cdots \oint_{C} {dw_r}
\mathbf{S}(w_1)\cdots \mathbf{S}(w_r)
\prod_{i=1}^r  w_i^{\beta_0-2(r-i)\over k+2}\CR
&\cdot \prod_{1\leq i< j\leq r}{ \theta(w_i/w_j;p) \over  \theta(q^2 w_i/w_j;p)}
\cdot \prod_{i=1}^r {\theta(p^{ \beta_0-(r-1)\over k+2}w_i;p)  \over  \theta(w_i;p)}.
\end{align}
\end{prp}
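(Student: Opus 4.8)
The plan is to read $\mathbf{Q}_r$ as an honest $r$-fold contour integral and then symmetrize the integrand. Writing $\psi(w)\seteq w^{\beta_0/(k+2)}\theta(q^{2\beta_0}w;p)/\theta(w;p)$ so that $\mathbf{X}(w)=\mathbf{S}(w)\psi(w)$, the definition gives $\mathbf{Q}_r=\oint_C dw_1\cdots\oint_C dw_r\,\mathbf{X}(w_1)\cdots\mathbf{X}(w_r)$ with all contours equal to the circle $C$. Since the contours are identical and $\mathbf{X}(w)$ is single-valued in each variable (the factor $\psi$ was built precisely to cancel the quasi-periodicity of $\mathbf{S}(w)$, using $p^{1/(k+2)}=q^2$ and $\theta(pz;p)=-z^{-1}\theta(z;p)$, so that indeed $\psi(pw)=\psi(w)$), the integral is unchanged if we replace the integrand by its average $\tfrac{1}{r!}\sum_{\sigma\in\mathfrak{S}_r}\mathbf{X}(w_{\sigma(1)})\cdots\mathbf{X}(w_{\sigma(r)})$. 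The goal is to show that this average equals the closed-form integrand on the right-hand side of \eqref{symmetrization} once the operators are placed in the reference order $\mathbf{S}(w_1)\cdots\mathbf{S}(w_r)$.

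First I would move all scalar factors to the right of the operator string. Because $\psi(w)$ depends only on the zero mode $\beta_0$ and each $\mathbf{S}$ carries $e^{-2Q_\beta}$ with $[\beta_0,Q_\beta]=1$, commuting $\psi(w_i)$ past one screening current sends $\beta_0\mapsto\beta_0-2$; passing it to the far right past the $r-i$ currents $\mathbf{S}(w_{i+1}),\dots,\mathbf{S}(w_r)$ therefore replaces $\beta_0$ by $\beta_0-2(r-i)$. Using $p^{1/(k+2)}=q^2$, this produces exactly the power $w_i^{(\beta_0-2(r-i))/(k+2)}$ together with the theta factor $\theta(q^{2\beta_0-4(r-i)}w_i;p)/\theta(w_i;p)$. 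Next, in each permuted term I would apply the quadratic exchange relation
\[
\mathbf{S}(w_2)\mathbf{S}(w_1)=q^2\Bigl(\tfrac{w_2}{w_1}\Bigr)^{2/(k+2)}\frac{\theta(q^{-2}w_1/w_2;p)}{\theta(q^2w_1/w_2;p)}\,\mathbf{S}(w_1)\mathbf{S}(w_2)
\]
of the preceding Proposition to bring $\mathbf{S}(w_{\sigma(1)})\cdots\mathbf{S}(w_{\sigma(r)})$ into the reference order, accumulating one exchange function per inversion of $\sigma$.

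What remains is a purely scalar identity: the $\mathfrak{S}_r$-sum of the exchange functions, the permuted power factors and the shifted single-variable thetas must collapse to $\prod_{i<j}\theta(w_i/w_j;p)/\theta(q^2w_i/w_j;p)$ times $\prod_i\theta(q^{2\beta_0-2(r-1)}w_i;p)/\theta(w_i;p)$ times the constant $\prod_{i=1}^r\theta(q^{2i};p)/(r!\,\theta(q^2;p)^r)$. The symmetric argument $q^{2\beta_0-2(r-1)}$ is forced because $-2(r-1)$ is the average over $i$ of the position-dependent shifts $-4(r-i)$, so the symmetrization redistributes the individual shifts evenly; this is the heart of the matter and the step I expect to be the main obstacle. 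I would prove it by induction on $r$: the case $r=1$ is just the definition of $\mathbf{X}$, and the case $r=2$ reduces, after clearing $\theta(q^2w_1/w_2;p)$, to the Riemann three-term theta relation
\[
\theta(q^2\tfrac{w_1}{w_2})\theta(q^{2\beta_0-4}w_1)\theta(q^{2\beta_0}w_2)+q^2\theta(q^{-2}\tfrac{w_1}{w_2})\theta(q^{2\beta_0}w_1)\theta(q^{2\beta_0-4}w_2)=\tfrac{\theta(q^4)}{\theta(q^2)}\theta(\tfrac{w_1}{w_2})\theta(q^{2\beta_0-2}w_1)\theta(q^{2\beta_0-2}w_2)
\]
(all thetas with nome $p$). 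For the inductive step I would peel off the $w_r$-dependence: regarded as a function of $w_r$ with the remaining variables fixed, both sides are $p$-elliptic with the same simple poles (at the zeros of $\theta(q^2 w_i/w_r;p)$ and of $\theta(w_r;p)$) and the same quasi-periodicity, so a Liouville/residue argument shows they agree up to a factor independent of $w_r$; matching a single residue and invoking the inductive hypothesis in $r-1$ variables fixes that factor and reproduces the constant $\theta(q^{2r};p)/\theta(q^2;p)$ gained at each step. Alternatively, this elliptic symmetrization lemma is exactly the statement established in \cite{Jimbo:1996vu}, which may be cited directly.
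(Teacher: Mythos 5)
Your proposal is correct, and it supplies an argument where the paper gives none: the proposition is stated with the attribution [Jimbo--Lashkevich--Miwa--Pugai] and no proof is written out, so your reconstruction is exactly the content being deferred to that reference. The individual steps check out. The zero-mode bookkeeping is right: since $[\beta_0,Q_\beta]=1$, pushing the scalar $w_i^{\beta_0/(k+2)}\theta(q^{2\beta_0}w_i;p)/\theta(w_i;p)$ to the right past $r-i$ factors of $e^{-2Q_\beta}$ produces $\beta_0\mapsto\beta_0-2(r-i)$, which reproduces the exponents $w_i^{(\beta_0-2(r-i))/(k+2)}$ in \eqref{symmetrization}; the target's uniform shift $p^{(\beta_0-(r-1))/(k+2)}=q^{2\beta_0-2(r-1)}$ is indeed the average of the position-dependent shifts, and redistributing them is the genuine elliptic identity. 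Your $r=2$ identity is the correct one: the power prefactors cancel because $(w_2/w_1)^{2/(k+2)}w_2^{(\beta_0-2)/(k+2)}w_1^{\beta_0/(k+2)}=w_1^{(\beta_0-2)/(k+2)}w_2^{\beta_0/(k+2)}$, and the resulting three-term relation passes the obvious consistency checks (both sides vanish at $w_1=w_2$ via $\theta(z^{-1};p)=-z^{-1}\theta(z;p)$, and the constant $\theta(q^4;p)/\theta(q^2;p)$ matches $\prod_{i=1}^2\theta(q^{2i};p)/\theta(q^2;p)^2$ after the $1/2!$ from averaging). Two points remain informal but are standard: (i) writing $\mathbf{Q}_r$ as an $r$-fold integral over a common contour requires the nested operator products $\mathbf{X}(w_1)\cdots\mathbf{X}(w_r)$ to be analytically continued and the contours deformed without crossing the poles of the exchange kernels $\theta(q^2w_i/w_j;p)^{-1}$ — worth a sentence, since this is where the hypothesis on $|p|$ and genericity of $q$ enters; and (ii) the general-$r$ symmetrization identity, for which your induction-plus-ellipticity (Liouville) scheme is the standard route and is, as you note, precisely what is established in the cited reference.
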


Following the arguments in \cite{Jimbo:1996vu}, we can see that 
under the condition \eqref{integrality-1} or \eqref{integrality-2} below, the quasi-periodicity \eqref{theta-pshift}
implies a cancellation of the poles of the theta functions in the last factor of \eqref{symmetrization},
which allows us to shift the contour $C$ in the suitable direction. 
Taking the $p$-shift invariance into account, we obtain the following results;

\begin{cor}
Set
$$
\rho=\rho', \qquad \sigma=\sigma'-2 r.
$$
When 
\begin{equation}\label{integrality-1}
{\sigma'-r+1 \over k+2}={\sigma+r+1 \over k+2}=s\in \mathbb{Z},
\end{equation}
${\bf Q}_r$ is an intertwiner of the Fock representations of $\Svir$ 
\begin{align*}
{\bf Q}_r:\mathcal{F}_{\rm A}(q^{\rho'},q^{\sigma'})
\rightarrow \mathcal{F}_{\rm A}(q^{\rho},q^{\sigma}).
\end{align*}
Similarly, set
$$
\rho=\rho', \qquad \sigma=\sigma'+2 r.
$$
When 
\begin{equation}\label{integrality-2}
{\sigma'+r+1\over k+2}={\sigma-r+1\over k+2}=-s\in \mathbb{Z},
\end{equation}
${\bf Q}_r$ is an intertwiner of the dual Fock representations of $\Svir$ 
\begin{align*}
{\bf Q}_r:\mathcal{F}^*_{\rm A}(q^{\rho'},q^{\sigma'})
\rightarrow \mathcal{F}^*_{\rm A}(q^{\rho},q^{\sigma}).
\end{align*}

\end{cor}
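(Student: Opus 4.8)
The plan is to prove that ${\bf Q}_r$ commutes with every generating current $\mathbf{K}^\pm(z)$, $\mathbf{T}(z)$ and $\mathbf{G}^\pm(z)$ of $\Svir$, so that it carries $\Svir$-module maps, and then to read off the parameter shift of the Fock module from the zero-mode content of $\mathbf{S}(w)$. The integrality hypotheses \eqref{integrality-1} and \eqref{integrality-2} will enter precisely at the point where the $r$-fold elliptic contour integral must be made single-valued.

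First I would record, from the preceding Proposition and from Lemma \ref{S-G,G-S}, that the single bosonic screening current satisfies $[\mathbf{S}(w),\mathbf{G}^+(z)]=0$ and $[\mathbf{S}(w),\mathbf{K}^\pm(z)]=0$, whereas $[\mathbf{S}(w),\mathbf{G}^-(z)]$ is a difference of two delta functions with $q$-shifted arguments multiplying the single operator ${\bf A}$. The first two relations give at once $[{\bf Q}_r,\mathbf{G}^+(z)]=[{\bf Q}_r,\mathbf{K}^\pm(z)]=0$ for any contour-integrated power of $\mathbf{S}$. For the third, the delta-function difference in $[\mathbf{S}(w),\mathbf{G}^-(z)]$ is a total $q$-difference in the integration variable, so its contour integral telescopes to a boundary term; I would show this term vanishes once the contour is closed and the symmetrized integrand single-valued, giving $[{\bf Q}_r,\mathbf{G}^-(z)]=0$. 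Commutation with $\mathbf{T}(z)$ is then not an independent check: relation \eqref{rr-9} of Proposition \ref{generating_functions} expresses $\mathbf{G}^+(z)\mathbf{G}^-(w)+\mathbf{G}^-(w)\mathbf{G}^+(z)$ through $\mathbf{K}$, a constant, and $\mathbf{T}(q^{k+1}w)$, so commutation with $\mathbf{T}$ follows from the three relations just established.

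The crux is the well-definedness of the contour integral, which is where \eqref{integrality-1} (resp. \eqref{integrality-2}) is used. Here I would follow the argument of \cite{Jimbo:1996vu}: the symmetrized integrand in \eqref{symmetrization} carries the quasi-periodic weight $\prod_{i=1}^r \theta(p^{(\beta_0-(r-1))/(k+2)}w_i;p)/\theta(w_i;p)$, and the quasi-periodicity \eqref{theta-pshift} shows that exactly when $(\sigma'-r+1)/(k+2)=s\in\mathbb{Z}$ (resp. $(\sigma'+r+1)/(k+2)=-s\in\mathbb{Z}$) the poles of the theta functions cancel so that each contour $C:|w_i|=|p^{1/2}|$ can be shifted by the period $p$ without crossing a singularity. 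This makes $\oint_C dw\,{\bf X}(w)$, hence ${\bf Q}_r$, single-valued between the stated spaces and simultaneously forces the boundary term in $[{\bf Q}_r,\mathbf{G}^-(z)]$ to vanish. The parameter shift is then obtained by reading the zero-mode factors $e^{-2Q_\beta}w^{-\beta_0/(k+2)}$ together with the $Z_\pm,W_\pm$ contributions of $\mathbf{S}(w)$: each application leaves $\rho$ fixed and sends $\sigma\mapsto\sigma-2$ (resp. $\sigma\mapsto\sigma+2$ on the dual), so ${\bf Q}_r$ sends $\mathcal{F}_{\rm A}(q^{\rho'},q^{\sigma'})$ to $\mathcal{F}_{\rm A}(q^{\rho'},q^{\sigma'-2r})$, in agreement with the degree identities \eqref{d_0-S^pm-3} and \eqref{d_0-S^pm-4} of Lemma \ref{J-d}.

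The main obstacle I expect is precisely this contour analysis. Unlike the fermionic screenings, whose charges are single modes $S^\pm_1$, the bosonic charge is a genuine $r$-fold elliptic integral, and controlling the interplay between the multivaluedness of $\mathbf{S}$, the theta weight introduced to make ${\bf X}$ single-valued, and the pole cancellation that permits the period shift is the delicate step. One must also confirm that the symmetrization in \eqref{symmetrization} produces no spurious contributions from the diagonal collisions $w_i=w_j$ governed by $\theta(w_i/w_j;p)/\theta(q^2 w_i/w_j;p)$, and that the resulting intertwiner is nonzero; disposing of these points completes the argument.
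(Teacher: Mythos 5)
Your proposal is correct and follows essentially the same route as the paper: the commutation relations $[\mathbf{S}(w),\mathbf{K}^\pm(z)]=[\mathbf{S}(w),\mathbf{G}^+(z)]=0$ together with the total $p$-difference structure of $[\mathbf{S}(w),\mathbf{G}^-(z)]$, the single-valuedness of ${\bf X}(w)$, and the pole cancellation in the theta weight of \eqref{symmetrization} under the integrality condition (following \cite{Jimbo:1996vu}) that permits the contour shift. The paper states this argument only in outline, deferring to \cite{Jimbo:1996vu}; your expansion, including deducing the commutation with $\mathbf{T}(z)$ from \eqref{rr-9} and reading the shift $\sigma\mapsto\sigma\mp 2r$ off the zero modes, is a faithful filling-in of the same proof.
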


As in the case of the fermionic screening operators, 
we can obtain the vanishing lines in the Kac determinant as follows;

Firstly, if 
\begin{align*}
d_0(\varepsilon^{\rm A}+\rho',\rho',\sigma') -
d_0(\varepsilon^{\rm A}+\rho',\rho',\sigma'-2r)=
{r(\sigma'-r+1)\over k+2}=rs>0,
\end{align*}
namely if 
\begin{align*}
v=q^\sigma=q^{-r-1+(k+2)s},\qquad s>0,
\end{align*}
there is a singular vector 
in $\mathcal{F}_{\rm A}(q^{\rho},q^{\sigma})$ with the $p$-degree $rs$ and $x$-degree $0$
given as the image of ${\bf Q}_r $ as
\begin{align*}
{\bf Q}_r \vert \varepsilon^{\rm A}+\rho',\rho',\sigma'\rangle 
\in \mathcal{F}( \varepsilon^{\rm A}+\rho,\rho,\sigma) \subset \mathcal{F}_{\rm A}(q^{\rho},q^{\sigma}).
\end{align*}

Secondly, on the dual side, if 
\begin{align*}
d_0(\varepsilon^{\rm A}+\rho',\rho',\sigma') -
d_0(\varepsilon^{\rm A}+\rho',\rho',\sigma'+2r)=
-{r(\sigma'+r+1)\over k+2}=rs>0,
\end{align*}
namely if 
\begin{align*}
v=q^\sigma=q^{r-1-(k+2)s},\qquad s>0,
\end{align*}
there is  a singular vector 
in $\mathcal{F}_{\rm A}^*(q^{\rho},q^{\sigma})$ with the $p$-degree $rs$ and $x$-degree $0$
given as the image of ${\bf Q}_r $ as
\begin{align*}
 \langle \varepsilon^{\rm A}+\rho',\rho',\sigma'\vert {\bf Q}_r
\in \mathcal{F}^*( \varepsilon^{\rm A}+\rho,\rho,\sigma) \subset \mathcal{F}_{\rm A}^*(q^{\rho},q^{\sigma}).
\end{align*}

In summary, we have the vanishing line
\begin{align*}
f(r,s;u,v) \propto
(q^{1-r +(k+2) s} v-q^{-1+r -(k+2) s} v^{-1})
(q^{-1-r +(k+2) s} v^{-1}-q^{1+r -(k+2) s} v).
\end{align*}
in the Kac determinant ${\rm det}^{\rm A}_{rs,0} $.




\begin{thebibliography}{99}
\parskip=-2pt


\bibitem{Ademollo:1975an}
M.~Ademollo, L.~Brink, A.~D'Adda, R.~D'Auria, E.~Napolitano, S.~Sciuto, E.~Del Giudice, P.~Di Vecchia, S.~Ferrara and F.~Gliozzi, \textit{et al.}
``Supersymmetric Strings and Color Confinement,''
Phys. Lett. B \textbf{62} (1976), 105-110


\bibitem{Ademollo:1976pp}
M.~Ademollo, L.~Brink, A.~D'Adda, R.~D'Auria, E.~Napolitano, S.~Sciuto, E.~Del Giudice, P.~Di Vecchia, S.~Ferrara and F.~Gliozzi, \textit{et al.}
``Dual String with U(1) Color Symmetry,''
Nucl. Phys. B \textbf{111} (1976), 77-110


\bibitem{Alarie-Vezina:2013qfa}
L.~Alarie-V\'ezina, P.~Desrosiers and P.~Mathieu,
``Ramond singular vectors and Jack superpolynomials,''
J. Phys. A \textbf{47} (2013), 035202
[arXiv:1309.7965 [hep-th]].

\bibitem{Alarie-Vezina:2018obz}
L.~Alarie-V\'ezina, L.~Lapointe and P.~Mathieu,
``The $\mathcal{N}=2$ supersymmetric Calogero\textendash{}Sutherland model and its eigenfunctions,''
Adv. Theor. Math. Phys. \textbf{25} (2021) no.3, 543-590
[arXiv:1801.02421 [math-ph]].

\bibitem{Alarie-Vezina:2019ohz}
L.~Alarie-V\'ezina, O.~Blondeau-Fournier, P.~Desrosiers, L.~Lapointe and P.~Mathieu,
``Symmetric functions in superspace: a compendium of results and open problems (including a SageMath worksheet),''
[arXiv:1903.07777 [math-ph]].

\bibitem{Awata:1995zk}
H.~Awata, H.~Kubo, S.~Odake and J.~Shiraishi,
``Quantum W(N) algebras and Macdonald polynomials,''
Commun. Math. Phys. \textbf{179} (1996), 401-416
[arXiv:q-alg/9508011 [math.QA]].


\bibitem{Awata:2009ur}
H.~Awata and Y.~Yamada,
``Five-dimensional AGT Conjecture and the Deformed Virasoro Algebra,''
JHEP \textbf{01} (2010), 125
[arXiv:0910.4431 [hep-th]].


\bibitem{Bao:2022jhy}
J.~Bao,
``Quiver Yangians and $\mathcal{W}$-algebras for generalized conifolds,''
J. Phys. A \textbf{56} (2023) no.22, 225203
[arXiv:2208.13395 [hep-th]].


\bibitem{Bauer:1991qm}
M.~Bauer, P.~Di Francesco, C.~Itzykson and J.~B.~Zuber,
``Singular vectors of the Virasoro algebra,''
Phys. Lett. B \textbf{260} (1991), 323-326

\bibitem{Benoit:1988aw}
L.~Benoit and Y.~Saint-Aubin,
``Degenerate Conformal Field Theories and Explicit Expression for Some Null Vectors,''
Phys. Lett. B \textbf{215} (1988), 517-522. 

\bibitem{BFM}
M.~Bershtein, B.~Feigin and G.~Merzon,
``Plane partitions with a pit: generating function and representation theory,''
Selecta Mathematica \textbf{24} (2018), 21-62.
[arXiv:1512.08779 [math.CO]].


\bibitem{Bezerra:2019dmp}
L.~Bezerra and E.~Mukhin,
``Quantum toroidal algebra associated with $\mathfrak{gl}_{m|n}$,''
Algebras and Representation Theory 24 (2021), no. 2, 541-564,
[arXiv:1904.07297 [math.QA]].


\bibitem{Blondeau-Fournier:2016jth}
O.~Blondeau-Fournier, P.~Mathieu, D.~Ridout and S.~Wood,
``The super-Virasoro singular vectors and Jack superpolynomials relationship revisited,''
Nucl. Phys. B \textbf{913} (2016), 34-63
[arXiv:1605.08621 [math-ph]].

\bibitem{Boucher:1986bh}
W.~Boucher, D.~Friedan and A.~Kent,
``Determinant Formulae and Unitarity for the N=2 Superconformal Algebras 
in Two-Dimensions or Exact Results on String Compactification,''
Phys. Lett. B \textbf{172} (1986), 316.

\bibitem{BP}
P.~Bouwknegt and K.~Pilch,
``The deformed Virasoro algebra at roots of unity,''
Commun. Math. Phys. \textbf{196} (1998), 249-288,
[arXiv:q-alg/9710026 [math.QA]].

%

\bibitem{Creutzig:2020zaj}
T.~Creutzig and A.~R.~Linshaw,
``Trialities of $\mathcal{W}$-algebras,''
Camb. J. Math. vol. 10, no. 1 (2022), 69-194,
[arXiv:2005.10234 [math.RT]].

\bibitem{Desrosiers:2012zt}
P.~Desrosiers, L.~Lapointe and P.~Mathieu,
``Superconformal field theory and Jack superpolynomials,''
JHEP \textbf{09} (2012), 037
[arXiv:1205.0784 [hep-th]].

\bibitem{DF0}
J.~Ding and B.~Feigin,
``Difference equations of quantum current operators 
and quantum parafermion construction,''
[arXiv:q-alg/9610023].


\bibitem{DF1}
J.~Ding and B.~Feigin,
``Quantized W-algebra of $\mathfrak{sl}(2,1)$ and Quantum Parafermions of $U_q(\hat{\mathfrak{sl}}_2)$,''
Journal of Nonlinear Mathematical Physics 7.2 (2000): 170-183.

\bibitem{DF2}
J.~Ding and B.~Feigin,
``Quantized W-algebra of $\mathfrak{sl}(2,1)$: a construction from the quantization of screening operators,''
[arXiv math/9801084].


\bibitem{DiVecchia:1985ief}
P.~Di Vecchia, J.~L.~Petersen and H.~B.~Zheng,
``N=2 Extended Superconformal Theories in Two-Dimensions,''
Phys. Lett. B \textbf{162} (1985), 327-332


\bibitem{DiVecchia:1986fwg}
P.~Di Vecchia, J.~L.~Petersen, M.~Yu and H.~B.~Zheng,
``Explicit Construction of Unitary Representations of the N=2 Superconformal Algebra,''
Phys. Lett. B \textbf{174} (1986), 280-284



\bibitem{Eguchi:1990vz}
T.~Eguchi and S.~K.~Yang,
``N=2 superconformal models as topological field theories,''
Mod. Phys. Lett. A \textbf{5} (1990), 1693-1701.


\bibitem{Fateev:1985mm}
V.~A.~Fateev and A.~B.~Zamolodchikov,
``Parafermionic Currents in the Two-Dimensional Conformal Quantum Field Theory 
and Selfdual Critical Points in Z(n) Invariant Statistical Systems,''
Sov. Phys. JETP \textbf{62} (1985), 215-225.

\bibitem{Fateev:1985ig}
V.~A.~Fateev and A.~B.~Zamolodchikov,
``Representations of the Algebra of 'Parafermion Currents' of Spin 4/3 
in Two-dimensional Conformal Field Theory. Minimal Models and the Tricritical Potts $Z$(3) Model,''
Theor. Math. Phys. \textbf{71} (1987), 451-462


\bibitem{Feigin:1995sf}
B.~Feigin and E.~Frenkel,
``Quantum W algebras and elliptic algebras,''
Commun. Math. Phys. \textbf{178} (1996), 653-678
[arXiv:q-alg/9508009 [math.QA]].

\bibitem{FF}
B.L.~Feigin and D.B.~Fuchs, {\it Representations of the Virasoro algebra}, 
Adv. Stud. Contemp. Math. \textbf{7}, 465?554, 
Gordon and Breach Science Publ. New York, 1990.

\bibitem{FHHSY}
  B.~Feigin, K.~Hashizume, A.~Hoshino, J.~Shiraishi, S.~Yanagida,
  ``A commutative algebra on degenerate $\mathbb{CP}^1$ and Macdonald polynomials,''
  J. Math. Phys. {\bf 50} (2009) no. 095215, arXiv:0904.2291.
  
\bibitem{FHSSY} 
B.~Feigin, A.~Hoshino, J.~Shibahara, J.~Shiraishi and S~Yanagida, 
``Kernel Function and Quantum Algebras,'' RIMS Kokyuroku {\bf 1689} (2010), 133--152,
arXiv: 1002.2485.


\bibitem{FJM}
B.~Feigin, M.~Jimbo and E.~Mukhin
``Evaluation modules for quantum toroidal algebras,''
[arXiv:1709.01592 [math.QA]].

 
\bibitem{Feigin:2018moi}
B.~Feigin, M.~Jimbo and E.~Mukhin,
``Towards trigonometric deformation of $\widehat{\mathfrak{sl}}_2$ coset VOA,''
J. Math. Phys. \textbf{60}, no.7, 073507 (2019)
[arXiv:1811.02056 [math.QA]].


\bibitem{Feigin:2013fga}
B.~Feigin, M.~Jimbo, T.~Miwa and E.~Mukhin,
``Branching rules for quantum toroidal gl$_n$,''
Adv. Math. \textbf{300}, 229-274 (2016)
[arXiv:1309.2147 [math.QA]].
%

\bibitem{Felder}
G.~Felder, ``BRST approach to minimal models, ''
Nucl. Phys. \textbf{B 317}, (1989), 215-236, Erratum \textbf{B 324}, (1989), 548.

\bibitem{FR1995}
E.~Frenkel and N.~Reshetikhin,
``Quantum affine algebras and deformations of the Virasoro and $\mathcal{W}$-albebras,''
Commun. Math. Phys. \textbf{178} (1996), 237-264
[arXiv:q-alg/9505025 [math.QA]].


\bibitem{Gaiotto:2017euk}
D.~Gaiotto and M.~Rap$\check{\mathrm{c}}$\'{a}k,
``Vertex Algebras at the Corner,''
JHEP \textbf{01}, 160 (2019)
[arXiv:1703.00982 [hep-th]].


\bibitem{Galakhov:2021vbo}
D.~Galakhov, W.~Li and M.~Yamazaki,
``Toroidal and elliptic quiver BPS algebras and beyond,''
JHEP \textbf{02} (2022), 024
[arXiv:2108.10286 [hep-th]].


\bibitem{Harada:2020woh}
K.~Harada,
``Quantum deformation of Feigin-Semikhatov's W-algebras and 
5d AGT correspondence with a simple surface operator,''
[arXiv:2005.14174 [hep-th]].

\bibitem{Harada:2018bkb}
K.~Harada and Y.~Matsuo,
``Plane partition realization of (web of) $\mathcal{W}$-algebra minimal models,''
JHEP \textbf{02} (2019), 050
[arXiv:1810.08512 [hep-th]].


\bibitem{Harada:2021xnm}
K.~Harada, Y.~Matsuo, G.~Noshita and A.~Watanabe,
``$q$-deformation of corner vertex operator algebras by Miura transformation,''
JHEP \textbf{04} (2021), 202
[arXiv:2101.03953 [hep-th]].

%
\bibitem{Jimbo:1996vu}
M.~Jimbo, M.~Lashkevich, T.~Miwa and Y.~Pugai,
``Lukyanov's screening operators for the deformed Virasoro algebra,''
Phys. Lett. A \textbf{229} (1997), 285-292
[arXiv:hep-th/9607177 [hep-th]].

%
\bibitem{Jimbo:2000ff}
M.~Jimbo, H.~Konno, S.~Odake, Y.~Pugai and J.~Shiraishi,
``Free field construction for the ABF models in regime II,''
J. Statist. Phys. \textbf{102} (2001), 883-921
[arXiv:math/0001071].

%

\bibitem{Kato:1986td}
M.~Kato and S.~Matsuda,
``Null Field Construction and Kac Formulae of $N=2$ Superconformal Algebras in Two-dimensions,''
Phys. Lett. B \textbf{184} (1987), 184-190.


\bibitem{KM-adv}
M.~Kato and S.~Matsuda,
``Null Field Construction in Conformal and Superconformal Algebras,''
Adv. Stud. in Pure Math. \textbf{16} (1988), 205-254.


\bibitem{Kojima:2019ewe}
T.~Kojima,
``Quadratic relations of the deformed $W$-superalgebra ${\mathcal W}_{q, t}(\mathfrak{sl}(2|1))$,''
J. Math. Phys. \textbf{62} (2021), 051702,
[arXiv:1912.03096 [math.QA]].


\bibitem{Konno}
H.~Konno, 
``An Elliptic Algebra $U_ {q, p}(\hat {sl_2})$ 
and the Fusion RSOS Model,'' 
Comm. Math. Phys. \textbf{195} (1998), 373-403,
[arXiv:q-alg/9709013].


\bibitem{Lerche:1989uy}
W.~Lerche, C.~Vafa and N.~P.~Warner,
``Chiral Rings in N=2 Superconformal Theories,''
Nucl. Phys. B \textbf{324} (1989), 427-474.


\bibitem{Lukyanov:1996qs}
S.~L.~Lukyanov and Y.~Pugai,
``Multipoint local height probabilities in the integrable RSOS model,''
Nucl. Phys. B \textbf{473} (1996), 631-658
[arXiv:hep-th/9602074 [hep-th]].



\bibitem{Matsuo:1992va}
A.~Matsuo,
``Free field realization of $q$-deformed primary fields for $U_q$(sl(2)),''
[arXiv:hep-th/9212040 [hep-th]].


\bibitem{Matsuo:1994nc}
A.~Matsuo,
``A $q$-deformation of Wakimoto modules, primary fields and screening operators,''
Commun. Math. Phys. \textbf{160}, 33-48 (1994).

%
%

\bibitem{MillionschikovA}
D.V.~Millionschikov,
``Singular Virasoro vectors and Lie algebra cohomology,''
[arXiv:1405.6734 [math.RT]].


\bibitem{MillionschikovB}
D.V.~Millionschikov,
``Virasoro singular vectors,''
Functional Analysis and Its Applications \textbf{50}
(2016): 219-224.


\bibitem{Nam:1985qe}
S.~Nam,
``The Kac Formula for the $N=1$ and the $N=2$ Superconformal Algebras,''
Phys. Lett. B \textbf{172} (1986), 323-327.


\bibitem{Noshita:2021ldl}
G.~Noshita and A.~Watanabe,
``A note on quiver quantum toroidal algebra,''
JHEP \textbf{05} (2022), 011
[arXiv:2108.07104 [hep-th]].

\bibitem{Noshita:2021dgj}
G.~Noshita and A.~Watanabe,
``Shifted quiver quantum toroidal algebra and subcrystal representations,''
JHEP \textbf{05} (2022), 122
[arXiv:2109.02045 [hep-th]].


\bibitem{Odake:1999un}
S.~Odake,
``Beyond CFT: Deformed Virasoro and elliptic algebras,''
Theoretical Physics at the End of the Twentieth Century: 
Lecture Notes of the 9th CRM Summer School, Springer (2002) 307-449,
[arXiv:hep-th/9910226 [hep-th]].


\bibitem{Polchinsky}
J.~Polchinski, {\it String Theory vol.2, Chap.19},
Cambridge University Press (1998).


\bibitem{Prochazka:2017qum}
T.~Proch\'{a}zka and M.~Rap$\check{\mathrm{c}}$\'{a}k,
``Webs of W-algebras,''
JHEP \textbf{11}, 109 (2018)
[arXiv:1711.06888 [hep-th]].



\bibitem{Prochazka:2018tlo}
T.~Proch\'{a}zka and M.~Rap$\check{\mathrm{c}}$\'{a}k,
``$ \mathcal{W} $ -algebra modules, free fields, and Gukov-Witten defects,''
JHEP \textbf{05}, 159 (2019)
[arXiv:1808.08837 [hep-th]].

\bibitem{Shiraishi:1992nqz}
J.~Shiraishi,
``Free boson representation of $U_q$(sl(2)),''
Phys. Lett. A \textbf{171}, 243-248 (1992)
[arXiv:hep-th/9209014 [hep-th]].
%

\bibitem{Shiraishi-SGC}
J.~Shiraishi, Lectures on Quantum Integrable Systems, 
vol 28, SGC Library, Saiensusha (2003) (in Japanese).


\bibitem{Shiraishi:1995rp}
J.~Shiraishi, H.~Kubo, H.~Awata and S.~Odake,
``A Quantum deformation of the Virasoro algebra and the Macdonald symmetric functions,''
Lett. Math. Phys. \textbf{38} (1996), 33-51
[arXiv:q-alg/9507034 [math.QA]].


\bibitem{Witten:1991zz}
E.~Witten,
``Mirror manifolds and topological field theory,''
AMS/IP Stud. Adv. Math. \textbf{9} (1998), 121-160
[arXiv:hep-th/9112056 [hep-th]].


\bibitem{Zamolodchikov:1986gh}
A.~B.~Zamolodchikov and V.~A.~Fateev,
``Disorder Fields in Two-Dimensional Conformal Quantum Field Theory and N=2 Extended Supersymmetry,''
Sov. Phys. JETP \textbf{63} (1986), 913-919.

\end{thebibliography}
\end{document}